\documentclass[12pt,leqno]{amsart}
\usepackage{amsmath,stmaryrd}
\usepackage{amssymb}
\usepackage{amsthm}
\usepackage{epsf}
\usepackage{graphicx}
\usepackage{esint} 
\usepackage{dsfont}
\usepackage[pagebackref]{hyperref} 
\hypersetup{pdfpagemode=FullScreen,  colorlinks=true} 
\usepackage{enumerate} 
\usepackage{xcolor,bbm,easybmat,bm}
\usepackage{tikz}
\usepackage{pgfplots}
\pgfplotsset{compat=1.18}
\usetikzlibrary{decorations.pathreplacing,arrows.meta}

\usepackage{amsfonts}
\usepackage{amsrefs}
\usepackage{verbatim}
\usepackage{booktabs}
\usepackage{color}
\usepackage{graphicx,float}

\numberwithin{equation}{section}

\newcommand\footnoteref[1]{\protected@xdef\@thefnmark{\ref{#1}}\@footnotemark}
\makeatother

\newcommand{\dr}{\partial}

\DeclareMathOperator{\divg}{div}

\DeclareMathOperator{\dist}{dist}
\DeclareMathOperator{\supp}{supp}
\DeclareMathOperator{\diam}{diam}

\DeclareMathOperator{\loc}{loc}

\newcommand{\1}{{\mathds 1}}

\newcommand{\ms}{\medskip}

\newcommand{\R}{\mathbb R}
\newcommand{\N}{\mathbb N}
\newcommand{\cF}{\mathcal F}
\renewcommand{\H}{\mathcal H}

\newcommand{\ep}{\hfill $\square$ \medskip}

\newcommand{\wt}{\widetilde}

\newcommand{\Base}{\mathcal{O}}

\newcommand{\LL}{\mathcal L}
\renewcommand{\L}{L}

\newcommand{\p}{\mathfrak p}

\newcommand{\C}{\mathcal C}

\newcommand{\OO}{\mathcal O}
\newcommand{\cS}{\mathcal S}
\newcommand{\cSS}{\mathcal S (q,\tau,r,r_0,\frac16\hbar)}
\newcommand{\cSSm}{\mathcal S (q,\tau,r,r_0,\frac1{12}\hbar)}
\newcommand{\W}{\mathcal W}

\newcommand{\Rn}{\mathbb R^n}
\newcommand{\norm}[1]{\left\Vert#1\right\Vert}
\newcommand{\abs}[1]{\left\vert#1\right\vert}
\newcommand{\br}[1]{\left(#1\right)}
\newcommand{\set}[1]{\left\{#1\right\}}

\renewcommand{\d}{\, \mathrm{d}} 

\newcommand{\om}{\Omega}
\newcommand{\pom}{\partial\Omega}

\newcommand{\E}{\mathsf{E}} 
\newcommand{\Lloc}{\L_{\operatorname{loc}}} 
\newcommand{\HT}{H_t} 
\newcommand{\dhalf}{D_t^{1/2}} 
\newcommand{\Hdot}{\dot{H}\protect{\vphantom{H}}} 
\newcommand{\mS}{{\mathcal S}} 
\newcommand{\IC}{\mathbb{C}}
\newcommand{\pd}{\partial}
\newcommand{\cl}[1]{\overline{#1}} 

\newcommand{\ree}{{\mathbb{R}^{n}}}

\newcommand{\dint}{\int\!\!\!\!\!\int}
\def\Yint#1{\mathchoice
	{\YYint\displaystyle\textstyle{#1}}%
	{\YYint\textstyle\scriptstyle{#1}}%
	{\YYint\scriptstyle\scriptscriptstyle{#1}}%
	{\YYint\scriptscriptstyle\scriptscriptstyle{#1}}%
	\!\dint}
\def\YYint#1#2#3{{\setbox0=\hbox{$#1{#2#3}{\iint}$}
		\vcenter{\hbox{$#2#3$}}\kern-.51\wd0}}
\def\longdash{\mkern-1.5mu{-}\mkern-7.5mu{-}} 

\def\fiint{\Yint\longdash}

\newcommand{\Z}{{\mathbb Z}}

\theoremstyle{plain}
\newtheorem{theorem}[equation]{Theorem}
\newtheorem{lemma}[equation]{Lemma}

\newtheorem{proposition}[equation]{Proposition}

\newtheorem{definition}[equation]{Definition}

\theoremstyle{definition}

\theoremstyle{remark}
\newtheorem{remark}[equation]{Remark}

\begin{document}

\title[The $L^p$ Neumann problem for parabolic operators]{The $L^p$ Neumann problem for parabolic operators with coefficients satisfying small Carleson condition}

\author[Dindo\v{s}]{Martin Dindo\v{s}}
\address{School of Mathematics, 
The University of Edinburgh and Maxwell Institute of Mathematical Sciences, Edinburgh, UK}
\email{M.Dindos@ed.ac.uk}

\author[Li]{Linhan Li}
\address{School of Mathematics, The University of Edinburgh and Maxwell Institute of Mathematical Sciences, Edinburgh, UK}
\email{linhan.li@ed.ac.uk}

\author[Pipher]{Jill Pipher}
\address{Department of Mathematics, 
 Brown University, RI, US}
\email{jill\_pipher@brown.edu}


\begin{abstract} 
In this paper, we resolve the question of whether the Neumann problem for the parabolic PDE
$-\partial_tu + \divg(A\nabla u)=0$ on a Lipschitz cylinder $\mathcal O\times\R$ is solvable for some $p\in (1,\infty)$ under the assumption that the matrix $A$ is elliptic with bounded and measurable coefficients that satisfy a natural Carleson condition (a parabolic analog of the so-called DKP-condition).

We prove that for any $1<p<\infty$ the Neumann problem is solvable under the assumption that both the Carleson norm of coefficients and the Lipschitz constant of the domain are sufficiently small (with dependence on $p$).  The question of what happens in the \lq\lq large Carleson norm/large Lipschitz constant" regime remains open, and even for elliptic PDEs this question has only been resolved in two dimensions.

This paper complements results from our recent manuscript in which the parabolic regularity problem has been fully resolved
in both the small and large Carleson norm regimes. Previously, the Dirichlet problem had been resolved under the same conditions by various authors. 
On Lipschitz graph domains we also estimate the boundary and nontangential
half-time derivatives for all $1<p<\infty$, allowing a Carleson drift term.
\end{abstract}

\maketitle


\ms\noindent 
\subjclass{2020 Mathematics Subject Classification: 35K20, 35K10}

\tableofcontents

\section{Introduction}

In this paper, we present the first results on the solvability of the
Neumann problem in all dimensions for a class of parabolic operators in divergence form with {\it time-varying coefficients} satisfying a minimal smoothness assumption. It is assumed that the coefficients of the matrix (not necessarily symmetric) defining the operators under consideration satisfy a well-studied Carleson measure condition. When the norm of this measure is sufficiently small, we obtain nontangential maximal function estimates on the gradient of the solution when the Neumann data belong to $L^p$, for all $1<p<\infty$.

Quantifying the regularity of these divergence form operators via a
Carleson measure condition on the coefficients
has a long history. This regularity condition on the coefficients arose from a 
particular change of 
variables that is relevant to the study of solutions to smooth elliptic or parabolic equations in Lipschitz domains (\cites{D,N}).  Dahlberg had conjectured that this condition gives rise to a ``good" class (in terms of solvability of boundary value problems) of
elliptic operators.  The conjecture, in the case of the Dirichlet problem, was resolved for elliptic equations in \cite{KP} and \cite{DPP1} and extended to parabolic equations in \cite{DDH} and \cite{DPP2}.

A related boundary value problem, the Regularity problem,  imposes one derivative of additional smoothness on the Dirichlet data, and asks for nontangential maximal function estimates on gradients of solutions. For elliptic operators, both the Regularity and Neumann problems were settled in \cite{DPR} in the small Carleson norm regime. When the Carleson condition is not small, matters are much less well understood. It took a long time to settle the elliptic
Regularity problem in this regime - this was only accomplished in 2023 independently and simultaneously in \cite{DHP} and (for very general domains) in \cite{MPT}. In dimension two, \cite{KR} had observed that there is a special dual relationship between Neumann problems and Regularity problems within a class of operators. When the regularity assumptions on the operator are preserved under this duality, Neumann and Regularity are equivalent. This does not persist in higher dimensions. 

Turning to the parabolic theory, the operators we consider here
have the form 
\begin{equation}\label{E:pde}
			\LL u:= -\dr_t u +\divg (A \nabla u)=0   \quad\text{in } \Omega, 
\end{equation}
and will be defined in a domain $\Omega = \mathcal O \times \R$ where $\mathcal O = \R^n_+$ or $\mathcal O$ is a bounded or unbounded Lipschitz domain.

The matrix $A= [a_{ij}(X, t)]$ is an $n\times n$ matrix satisfying the uniform ellipticity condition with $X \in \mathcal O$, $t\in \R$.
That is, there exist positive constants $\lambda$ and $\Lambda$ such that
\begin{equation}
	\label{E:elliptic}
	\lambda |\xi|^2 \leq \sum_{i,j} a_{ij}(X,t) \xi_i \xi_j \leq \Lambda |\xi|^2
\end{equation}
for almost every $(X,t) \in \Omega$ and all $\xi \in \R^n$. We do not assume that $A$ is symmetric, and for a nonsymmetric matrix the second inequality does not bound its entries. We therefore also require that $|A(X,t)|\le\Lambda$ for almost every $(X,t)\in\Omega$, as is implicit in calling $A$ bounded.

The importance of understanding the behavior of solutions to parabolic operators with rough coefficients or in domains with rough boundaries has been evident since the work of Nash in \cite{Nash}, which made it possible, and interesting, to study boundary value problems for equations with minimal smoothness. 
Many of the earlier developments for operators in such settings have focused on the time-independent case, $A(X,t) = A(X)$, or on
scalar equations in time-varying domains. New difficulties arise when the matrix depends on the time variable as well, for example in trying to use
Fourier transform methods. Despite the obstacles, the time-varying parabolic Dirichlet problems have now been settled for the operators under consideration in this paper in both the small and large Carleson norm regimes. 
The first results on solvability of the Dirichlet problem for the small Carleson norm regime were obtained in \cite{DH18}, for 
$p \geq 2$, and improved to $p > 1$ in \cite{DDH}.  The solvability of the Dirichlet problem for the large Carleson norm regime was 
established in \cite{DPP2}. For more history on solvability of parabolic boundary value problems with coefficients satisfying various mild regularity conditions, we refer the reader to  the introductions of \cite{DDH} and \cite{Din23}.
Recently, adapting new ideas and methods introduced in \cite{DHP}, the solvability of the Regularity problem for parabolic operators was obtained in \cite{DLP1}, with sharp results in both small and large norm regimes.

As in the elliptic setting, the parabolic Neumann problem is harder to understand and resolve. The only results we are aware of for non-smooth coefficients in the time-varying setting are in \cite{AEN}. There, the authors imposed structural conditions on their matrices different from those considered here, namely, requiring independence of the transversal spatial variable, and other constraints. This, in turn, built upon the prior results of \cite{Ny2016}, where the additional condition of time independence was assumed on the matrices. 
We also note that the parabolic Dirichlet and Regularity problems for transversally independent matrices without any additional assumptions on the coefficient matrix (beyond the natural ones: real, bounded measurable and elliptic) were resolved in \cite{AEN2} and \cite{DPU}, respectively.

In this paper, our matrices are allowed to be non-symmetric and time-varying, thus obtaining the full parabolic analog of the elliptic Neumann results of
\cite{DPR} as well as complementing our study of the parabolic Regularity problem in \cite{DLP1}.

\medskip

Specifically for parabolic equations, we assume that 
the measure defined by
\begin{equation}\label{E:1:carl}
d\mu = {\sup_{B_{\delta(X,t)/2}(X,t)}}\left( \delta(Y,s)|\nabla A|^2 + \delta(Y,s)^3|\partial_t A|^2  \right) dX\,dt
\end{equation}
is a Carleson measure on $\Omega$ with Carleson norm $\|\mu\|_C$,  or alternatively we ask for
\begin{equation}\label{E:1:carl2}
d\mu =  \delta^{-1}(X,t)\left({\mathrm{osc}_{B_{\delta(X,t)/2}(X,t)}} A\right)^2dX\,dt,
\end{equation}
to be a Carleson measure with $\|\mu\|_C<\infty$. This is an alternative condition introduced in \cite{DPR}.  Note that \eqref{E:1:carl} implies that
\begin{equation}\label{E:1:bound}
\delta(X,t)|\nabla A| + \delta(X,t)^2|\partial_t A| \leq K<\infty
\end{equation}
with $K\lesssim\norm{\mu}_C^{1/2}$.
Here and in the sequel, $\delta(X,t)$  denotes the parabolic distance to the boundary of $\om$, which is defined as: 
\[
\delta(X,t)=\inf_{(Y,\tau)\in\pom}\br{\abs{X-Y}^2+\abs{t-\tau}}^{1/2}\quad\mbox{and}\quad
\mathrm{osc}_B A=\sup_{(X,t),(Y,s)\in B}|A(X,t)-A(Y,s)|.
\]

(See \eqref{Cmeasure} in the next section for the definition of Carleson measures.)

\smallskip

Even in the elliptic setting, solvability of the Neumann problem without the small Carleson assumption remains open in dimensions larger than $2$.

Let us make some closer comparisons of our result with those of \cite{DLP1} and \cite{DPR}.
In \cite{DLP1}, we obtained solvability of the Regularity problem when the matrix $A(X,t)$ satisfied conditions \eqref{E:1:carl} and \eqref{E:1:bound}, for boundary data in $L^p$, in a range of $p$ dual to that of solvability of the Dirichlet problem for the adjoint operator. In particular, since the Dirichlet problem for such operators was solved for all $1<p<\infty$ under the
small Carleson condition in \cite{DDH}, this gives
solvability of the Regularity problem for all $1<p<\infty$ for operators with small Carleson norm.  
A key difference between these Regularity and Neumann boundary value problems 
is that the boundary data for the parabolic Regularity problem include a half derivative in time, $D^{1/2}_t f$, of the Dirichlet data, along with the spatial tangential derivatives. This is a further departure of the parabolic setting from the elliptic setting. We elaborate on this issue in the Appendix.

As alluded to earlier, in \cite{DPR},
the elliptic Regularity and Neumann problems were solved for boundary data in $L^p$ for all $1<p<\infty$ assuming that the Carleson norm, $\|\mu\|_C$, is sufficiently small.
However, to obtain this result in the parabolic setting, we cannot follow the elliptic roadmap precisely. In \cite{DPR}, the solvability of the elliptic Regularity problem was proven first, providing a bound on the nontangential maximal function in terms of the tangential derivatives on the boundary. Thus the solvability of the Neumann problem reduced to the $L^p$ equivalence of tangential and conormal derivatives on the boundary.  But in the parabolic setting, the Regularity data involve a complicating (non-local) half derivative in time. Hence a different approach is required to obtain the nontangential estimates for our Neumann problem. We will derive the appropriate nontangential maximal function estimates by 
making use of the so-called {\it $p$-adapted square function}, which first appeared in
\cite{DPP1}. Finally, a key component of the methodology for solving the Neumann problem in both the elliptic and parabolic settings is the discovery (first observed in \cite{DPR}) that there is a useful PDE satisfied by the conormal derivative. 

\medskip
We now state the main theorem of this paper.

\begin{theorem}\label{MainT} Let $\om=\OO\times\R$ with $\OO$ being a bounded or unbounded Lipschitz domain with Lipschitz character $(\ell,N,C_0)$ (see Definition~\ref{DefLipDomain}). Let $\LL= -\dr_t  +\divg (A \nabla \cdot)$ be a parabolic operator on $\om$ satisfying \eqref{E:elliptic} and either \eqref{E:1:carl} or \eqref{E:1:carl2}. Let $p\in(1,\infty)$. Then there exists $\delta=\delta(\lambda,\Lambda, n,p)$ such that if $\max\set{\ell,\norm{\mu}_{C}}<\delta$, then the $L^p$ Neumann problem $(N)_p^{\LL}$ for $\LL$ is solvable in $\om$.
In particular, for any $g\in\L^p(\pom)$ the solution $u$ solving $\LL u=0$ in $\Omega$ with Neumann datum $g$ satisfies the estimate
$$\|\tilde N(\nabla u)\|_{\L^p(\pom)}\le C(\lambda,\Lambda, n,p)\|g\|_{\L^p(\pom)}.$$
\end{theorem}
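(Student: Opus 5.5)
We argue first on the model domain $\om=\R^n_+\times\R$ and then transfer to Lipschitz cylinders by a pull-back. For the pull-back we use the Dahlberg--Kenig--Stein/Ne\v{c}as-type change of variables $\rho(x,x_n,t)=(x,\,\lambda x_n+(\theta_{x_n}*\phi)(x,t),\,t)$ with a parabolic mollifier $\theta$. It maps the half-space onto the region above the graph $x_n>\phi(x)$. The pulled-back operator is again of the form $-\partial_t+\divg(\tilde A\nabla\cdot)$, up to a drift term $B\cdot\nabla u$ with $|B|\delta\lesssim\ell$. Its coefficients satisfy \eqref{E:1:carl} with Carleson norm $\lesssim \norm{\mu}_C+\ell^2$, and the conormal derivative transforms into a conormal derivative with comparable $L^p$ norms. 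Bounded $\OO$ are handled by localization in coordinate cylinders together with the usual partition of unity. We also reduce \eqref{E:1:carl2} to \eqref{E:1:carl}: mollify $A$ at scale $\delta/2$ to get $\bar A$ satisfying \eqref{E:1:carl}, and absorb $A-\bar A$ by the perturbation theory for small oscillation, exactly as in \cite{DPR}. Hence it suffices to prove the estimate in $\R^n_+\times\R$ for an operator with small Carleson norm $\varepsilon$ and possibly a small Carleson drift term, and the drift terms are treated as error terms in every estimate below.

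The core estimate is a good-$\lambda$/square-function comparison for $\nabla u$. We take $g$ nice, and obtain $u$ by energy methods: it is the unique weak solution with $A\nabla u\cdot\nu=g$ and $\nabla u\in L^2$. The key steps, in order, are the following.

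Step 1. Establish the $p$-adapted square function bound
\[
\iint |\nabla^2 u|^2|\nabla u|^{p-2}\,x_n\,dX\,dt+\iint|\partial_t u|^2|\nabla u|^{p-2}x_n^{3}\ldots \lesssim \|g\|_p^p+\|\tilde N(\nabla u)\|_p^p\cdot(\text{small}),
\]
with the small factor coming from $\varepsilon$. To do this, we integrate by parts against $x_n$, using that each component $v_k=\partial_k u$ for $k<n$ satisfies the differentiated equation $\LL v_k=-\divg((\partial_kA)\nabla u)$, whose right-hand side is a Carleson-small perturbation.

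Step 2. Use the observation of \cite{DPR}: the conormal derivative $w=(A\nabla u)\cdot e_n$ satisfies a divergence-form parabolic PDE, up to Carleson-small error terms. Apply the same $p$-adapted square-function machinery to $w$, whose boundary data is exactly $g$. Combined with the equation, this controls $S_p(\partial_n u)$ by $S_p(\nabla_x u)$, $\|g\|_p$ and small multiples of $\|\tilde N(\nabla u)\|_p$.

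Step 3. Invert the square function to the nontangential maximal function. Here we follow \cite{DPP1}: the $p$-adapted square function of $\nabla u$, together with the boundary values, controls $\|\tilde N(\nabla u)\|_p$. For this we need boundary terms involving $\nabla_x u$ and the half time derivative $D^{1/2}_t u$ at $x_n=0$.

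The main obstacle is the boundary term that Step 1 produces: $\int_{\partial\om}|\nabla u|^{p-2}\nabla u\cdot(\ldots)$, involving the tangential gradient and the half-time derivative of $u$ on the boundary. Since the Regularity data involve $D^{1/2}_tu$, which is not controlled a priori by $g$, we would first try to prove a Rellich-type identity. It should show that $\|\nabla_x u\|_{L^p(\partial\om)}+\|D^{1/2}_tu\|_{L^p(\partial\om)}\lesssim\|g\|_p+C\varepsilon\|\tilde N(\nabla u)\|_p$. We would prove it by handling the half derivative through $H_tD^{1/2}_t$ and integrating the product $\partial_t u\cdot H_t D^{1/2}_t u$ along vertical lines. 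This gives the term $D^{1/2}_t$ of the boundary value, and the non-local commutator with $A$ is absorbed using \eqref{E:1:bound} and the Carleson condition. For the constant-coefficient operator $A_0$ this equivalence of tangential data and conormal data is classical, and smallness of $\varepsilon$ permits a perturbation. Finally, all $\varepsilon\|\tilde N(\nabla u)\|_p^p$ terms are absorbed into the left side, which is legitimate by a priori finiteness for nice $g$. Taking $\delta$ small depending on $p$ then gives the estimate, and density extends it to all $g\in L^p$.
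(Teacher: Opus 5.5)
There is a genuine gap where you close the argument.

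\textbf{The closing estimate.} In Step 1 the integration by parts for $\partial_k u$, $k<n$, produces the boundary term $\int_{\partial\Omega}|\partial_k u|^p$. That is tangential data, not $g$, and no half-time derivative appears. It is your Step 3, modelled on the Regularity problem, that brings in $D_t^{1/2}u$ on $\partial\Omega$. Everything therefore rests on the $L^p$ comparison $\|\nabla_x u\|_{L^p(\partial\Omega)}+\|D_t^{1/2}u\|_{L^p(\partial\Omega)}\lesssim\|g\|_p+C\varepsilon\|\tilde N(\nabla u)\|_p$. The route you sketch for it fails on two counts.

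First, a Rellich identity is quadratic in $u$. Integrating $\partial_tu\cdot H_tD_t^{1/2}u$ in height can only produce $L^2$ quantities such as $\int_{\partial\Omega}|D_t^{1/2}u|^2$. Because $D_t^{1/2}$ is nonlocal, there is no $L^p$ version obtained by pairing with $|D_t^{1/2}u|^{p-2}D_t^{1/2}u$. The theorem needs arbitrarily large $p$, and the available interpolation only lowers the exponent, so an $L^2$ identity cannot suffice.

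Second, there is no perturbation from a constant-coefficient operator, because a small Carleson norm does not make $A$ close to any constant matrix. Take $A=\Phi(\varepsilon\log(1+\rho))$ with $\rho=(|X|^4+t^2)^{1/4}$ and $\Phi$ a smooth periodic path of elliptic matrices. It has Carleson norm $O(\varepsilon^2)$, by the same computation the paper does for its logarithmic cutoffs $\eta^i$. Yet it traverses the whole path along $\partial\Omega$. The perturbation theory, by contrast, requires $\delta^{-1}\sup_{B_{X,t}}|A-A_0|^2\,dX\,dt$ to be a small Carleson measure, i.e.\ $A\approx A_0$ near the boundary.

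In the paper, $\|D_t^{1/2}u|_{\partial\Omega}\|_p$ is bounded only in terms of $\tilde N(\nabla u)$ and $S(\nabla u)$ (Theorem~\ref{Thalf}, via a sharp-function argument for $p>2$ and a bilinear height identity for $p<2$). Its control by $\|g\|_p$ is thus a \emph{consequence} of Neumann solvability, not an input. On bounded cylinders even $\tilde N(D_t^{1/2}u)$ is not controlled by $\|g\|_p$.

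\textbf{The missing idea} is that $D_t^{1/2}$ never has to enter. The paper proves $N<S_p$ separately for $\nabla_Tu$ and for $H=\sum_ja_{nj}\partial_ju$, by a local good-$\lambda$ argument on stopping-time graphs that uses no boundary data. The resulting errors $\varepsilon(\|\tilde N(\nabla u)\|_p+\|S(\nabla u)\|_p)$ are absorbed using a separately proved bound $\|S(\nabla u)\|_p\lesssim\|N(\nabla u)\|_p$ valid for non-block matrices.

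The boundary term $\int_{\partial\Omega}|\partial_ku|^p$ is then removed by writing $\partial_n\partial_ku=\partial_k\partial_nu$ with $\partial_nu=a_{nn}^{-1}(H-\sum_{i<n}a_{ni}\partial_iu)$ and integrating by parts. This turns it into the mixed solid integral $\iint|\nabla_Tu|^{p-2}|\nabla H|^2x_n$ plus small multiples of $\|N(\nabla u)\|_p^p+\|S(\nabla u)\|_p^p$ (Lemma~\ref{NPl1}). An induction over $k$ through $\iint|\nabla_Tu|^{p-k-2}|H|^k|\nabla H|^2x_n$, using the equation \eqref{eq.Hpde2} for $H$ normalized so that $b_{nn}=1$, ends at $\int_{\partial\Omega}|H|^p=\|g\|_p^p$ (Lemma~\ref{NPl2}).

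Your Step 2 uses the right equation, but its job is to drive this iteration, not to bound $S_p(\partial_nu)$ by $S_p(\nabla_xu)$. Since the iteration needs integer exponents, the paper runs it for $p=4m$. It obtains every $p\in(1,\infty)$ by interpolating the Neumann problem against adjoint Dirichlet solvability at the dual exponent.

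\textbf{Two further repairs} are needed in your reductions. The Ne\v{c}as-type map has non-constant Jacobian; the paper constructs a constant-Jacobian map precisely to avoid carrying a drift through the Neumann theory. On bounded cylinders, a partition of unity does not localize Neumann solutions. The paper instead uses causal local-energy solutions, a local estimate for zero-Neumann local solutions on an extended graph domain, absorption at a small time scale, and exponential decay in time.
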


We also ask whether the Neumann data control the nontangential maximal
functions of $D_t^{1/2}u$ and $H_tD_t^{1/2}u$, where $H_t$ is the Hilbert
transform in time. Such estimates are known for the $L^2$ Neumann problem
in the half-space under the assumptions of \cite{AEN}, and for the
Regularity problem by \cite{Din23}. Appendix \ref{APA} shows that they
fail on bounded spatial domains and proves the following result on unbounded
Lipschitz graph domains, without any smallness of the Carleson norm and we also allow presence of a drift term.

\begin{theorem}\label{Thalf}
Let $\Omega=\{(x',x_n):x_n>\phi(x')\}\times\R$, where $\phi$ is
Lipschitz, and let $A$ satisfy \eqref{E:elliptic} and \eqref{E:1:carl}.
Consider
\[
 \mathcal L_{\mathbf b}=-\partial_t+\divg(A\nabla\cdot)
                                  +\mathbf b\cdot\nabla.
\]
Suppose that $\mathbf b$ has weak first spatial and time derivatives,
and, with $\delta$ denoting distance to the boundary,
\begin{equation}\label{eq:halfDriftHyp}
\begin{split}
 \delta|\mathbf b|+\delta^2|\nabla\mathbf b|
                  +\delta^3|\partial_t\mathbf b|&\le K_{\mathbf b}<\infty,\\
 d\mu_{\mathbf b}:=\big(\delta|\mathbf b|^2
             +\delta^3|\nabla\mathbf b|^2
             +\delta^5|\partial_t\mathbf b|^2\big)\,dX\,dt
 &\quad\text{is Carleson}.
\end{split}
\end{equation}
For every $1<p<\infty$, each energy solution of
$\mathcal L_{\mathbf b}u=0$ with
$\tilde N(\nabla u),S(\nabla u)\in L^p(\partial\Omega)$ satisfies
\begin{equation}\label{eq:halfDriftConclusion}
\begin{split}
 &\|D_t^{1/2}u\big|_{\pom}\|_{L^p(\partial\Omega)}
 +\|\tilde N(D_t^{1/2}u)\|_{L^p(\partial\Omega)}
 +\|\tilde N(H_tD_t^{1/2}u)\|_{L^p(\partial\Omega)}\\
 &\hspace{15mm}\le C\big(\|\tilde N(\nabla u)\|_{L^p(\partial\Omega)}
                         +\|S(\nabla u)\|_{L^p(\partial\Omega)}\big).
\end{split}
\end{equation}
The constant depends only on $n,p,\lambda,\Lambda$, the two Carleson
norms, $K_{\mathbf b}$ and $\|\nabla\phi\|_\infty$. No smallness or
boundary solvability assumption is imposed.

When $\mathbf b=0$, the assumption $S(\nabla u)\in L^p$ is unnecessary
and the right-hand side of \eqref{eq:halfDriftConclusion} can be
replaced by $C\|\tilde N(\nabla u)\|_{L^p(\partial\Omega)}$.

In particular, for the operator $\mathcal L=\mathcal L_0$, let $g\in L^p(\partial\Omega)$
and let $u$ be its Neumann solution with datum $\partial_\nu^Au=g$.
Then
\[
 \|\tilde N(D_t^{1/2}u)\|_{L^p(\partial\Omega)}
 +\|\tilde N(H_tD_t^{1/2}u)\|_{L^p(\partial\Omega)}
 \le C\|g\|_{L^p(\partial\Omega)},
\]
provided $(N)_p^{\mathcal L}$ is solvable. The constant in this
last estimate also depends on the $(N)_p^{\mathcal L}$ solvability constant.
\end{theorem}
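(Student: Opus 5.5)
We work on the graph domain and first flatten the boundary with the Dahlberg–Kenig–Stein (Nečas) pullback $\rho(x',x_n,t)=(x',x_n+P_{\gamma x_n}\phi(x',t)... )$. Here we use the time-independent version, since $\phi$ depends only on $x'$. This map turns $\mathcal L_{\mathbf b}$ into an operator of the same form on $\R^n_+\times\R$. The new matrix again satisfies \eqref{E:elliptic} and \eqref{E:1:carl}, and the new drift satisfies \eqref{eq:halfDriftHyp}; the constants depend on $\|\nabla\phi\|_\infty$. The map preserves $\tilde N$, $S$ and $D_t^{1/2}$ up to constants, because it does not touch $t$.

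The core is an identity for $v:=D_t^{1/2}u$ (and $w:=H_tD_t^{1/2}u$), written through $\partial_t u$. Formally $D_t^{1/2}u=H_tD_t^{-1/2}\partial_t u$, and the equation gives $\partial_t u=\divg(A\nabla u)+\mathbf b\cdot\nabla u$. So $D_t^{1/2}u$ is a half-order time integral of the divergence of $A\nabla u$ plus a lower order term.

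To estimate $N(v)$ at a point, we test against a Whitney region. We write $u(\cdot,x_n,t)$ using the fundamental theorem of calculus from the boundary upward in $x_n$. We then apply the kernel $|t-s|^{-3/2}$ of $D_t^{1/2}$ to differences $u(X,t)-u(X,s)$. For $|t-s|\le x_n^2$ we control these differences by interior parabolic estimates through $\nabla u$ on parabolic boxes. For $|t-s|>x_n^2$ we integrate by parts in time using the equation, moving to a lateral integral of $A\nabla u\cdot\nabla\eta$. This produces bounds by $M(\tilde N(\nabla u))$, the Hardy–Littlewood maximal function in $t$. More concretely, following \cite{Din23} and the Regularity results in \cite{DLP1}, the plan is a good-$\lambda$ or pointwise inequality
$$\tilde N(D_t^{1/2}u)(q)\lesssim M\big(\tilde N(\nabla u)\big)(q)+M\big(S(\nabla u)\big)(q)+\text{(boundary term at }q).$$

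The drift enters only through $\mathbf b\cdot\nabla u$. We handle it by Cauchy–Schwarz against the Carleson measure $\delta|\mathbf b|^2$. This gives $\|\mu_{\mathbf b}\|^{1/2}\|S(\nabla u)\|$-type terms together with Carleson-duality terms controlled by $\tilde N(\nabla u)$. This is why $S(\nabla u)$ appears. When $\mathbf b=0$, the only place $\nabla A$ and $\partial_tA$ appear is in commuting $D_t^{1/2}$ past $A$. That commutator is bounded by the Carleson condition via the standard Carleson-embedding $\int|\nabla u|^2\delta|\nabla A|^2\lesssim\|\mu\|\,\|\tilde N(\nabla u)\|^2$, so $S$ is not needed.

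Next we bound the boundary trace $D_t^{1/2}u|_{\pom}$. We take nontangential limits, which follows from the previous bound together with $L^p$-boundedness of $M$. Then $H_tD_t^{1/2}$ follows from $L^p$ boundedness of $H_t$ on each time slice, combined with the same argument applied to $H_t u$. Note that $H_t$ commutes with $\partial_t$ but not with $A$, and the commutator $[H_t,A]$ is again handled by the $\delta^2|\partial_tA|$ bound through the Calderón commutator estimate. The final statement then follows by inserting $\|\tilde N(\nabla u)\|_p\le C\|g\|_p$ from $(N)_p^{\mathcal L}$ solvability.

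The main obstacle is the nonlocality of $D_t^{1/2}$. The long-time tail $|t-s|>x_n^2$ must be controlled without any decay assumption, and the Carleson-norm commutator terms must be absorbed without smallness. I would first try the decomposition of \cite{Din23} into a local part, controlled by $\tilde N(\nabla u)$ via Caccioppoli, and a tail part that telescopes into a maximal-function bound.
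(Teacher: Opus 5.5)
\textbf{There is a genuine gap: your argument never bounds the first term in \eqref{eq:halfDriftConclusion}, $\|D_t^{1/2}u|_{\pom}\|_{L^p}$, which is the heart of the theorem.}

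Your pointwise local/tail scheme is essentially the interior estimate of \cite{Din23}. At a point of height $x_n$, the short-time part is controlled by $\sup|\partial_tu|\lesssim x_n^{-1}\tilde N(\nabla u)$. For $|t-s|>x_n^2$ you have two options, and neither closes:
\begin{itemize}
\item You can subtract the trace $f=u|_{\pom}$. This reintroduces the truncated boundary half derivative of $f$, which is your ``boundary term at $q$''.
\item You can compare with spatial averages over bumps of size $|t-s|^{1/2}$ through the lateral integral. This only gives $|f(x,t)-f(x,s)|\lesssim|t-s|^{1/2}$ times averages of $\tilde N(\nabla u)$. Against the kernel $|t-s|^{-3/2}$ this diverges logarithmically: at infinity for interior points, and at both ends for boundary points.
\end{itemize}
So ``taking nontangential limits'' to bound $D_t^{1/2}f$ is circular. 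Time regularity of order $1/2$ of the trace is borderline, and it needs an orthogonality argument rather than pointwise bounds.

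The paper supplies this through a height identity. For a time-regularized $w=D_t^{1/2}u$ and $F(h)=\|w(\cdot,h,\cdot)\|_{L^2(\pom)}^2$, one has $\|w(\cdot,0,\cdot)\|_2^2=\int_0^\infty hF''(h)\,dh$, where $F''=2\|\partial_nw\|^2+2\langle -H_t\partial_tu,\partial_{nn}u\rangle$; see \eqref{eq:p2Fsecond}. The boundary value is therefore controlled by $\|x_n^{1/2}\nabla^2u\|_{L^2}$, $\|x_n^{3/2}\partial_t\nabla u\|_{L^2}$ and Carleson terms, that is, by $S(\nabla u)$, the area function and $\tilde N(\nabla u)$. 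This gives \eqref{A2}.
\begin{itemize}
\item For $p>2$ this is localized into a Fefferman--Stein sharp-function estimate (Propositions \ref{prop:L}, \ref{prop:far}, \ref{prop:sharp}), normalized at infinity by Lemma \ref{lem:traceNormalization}. Your integration by parts in time against a spatial bump is essentially the far-field part. That bump argument (Lemma \ref{lem:osc}) is where the unboundedness of $\mathcal O$ enters; the corresponding estimate fails on bounded cylinders (Appendix~\ref{APA}).
\item For $1<p<2$ a separate dual bilinear identity, combined with the vertical--conical comparison, is needed (Proposition \ref{prop:bilinearTrace}).
\end{itemize}
Only once $\|D_t^{1/2}f\|_p$ is bounded does \cite[Theorem 1.1]{Din23} give both maximal estimates. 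No commutator argument is needed for $H_tD_t^{1/2}u$, and ``the same argument applied to $H_tu$'' has no basis, since $H_tu$ is not a solution.

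\textbf{Your account of why $S(\nabla u)$ appears is also wrong.} It is not produced by the drift. It controls the principal term $\langle -H_t\partial_tu,\partial_{nn}u\rangle$, using $|\partial_tu|\lesssim|\nabla^2u|+(|\nabla A|+|\mathbf b|)|\nabla u|$, and that term is present when $\mathbf b=0$. Carleson embedding only handles the coefficient terms. Removing $S$ when $\mathbf b=0$ requires the separate estimate $\|S(\nabla u)\|_p\lesssim\|N(\nabla u)\|_p$ of Theorem~\ref{thm.S<Np}, proved without smallness.

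\textbf{Smaller points.}
\begin{itemize}
\item The Dahlberg--Kenig--Stein map has nonconstant Jacobian $J$, so it produces $-J\partial_t$. The paper uses a constant-Jacobian map (Lemma~\ref{lem:graphflat}) to keep the form $-\partial_t+\divg(B\nabla\cdot)$. Dividing by $J$ instead creates a drift, which spoils the appeal to Theorem~\ref{thm.S<Np} in the case $\mathbf b=0$.
\item Under the pullback, $S$ is preserved only up to an extra $\tilde N(\nabla u)$ term coming from $D^2\Phi$.
\item For general $g\in L^p$ the Neumann solution is not an energy solution. The final assertion therefore needs approximation by energy data and identification of $D_t^{1/2}$ of the limit trace (Lemmas \ref{lem:traceNormalization} and \ref{lem:weightedTrace}), not just substitution of $\|\tilde N(\nabla u)\|_p\lesssim\|g\|_p$.
\end{itemize}
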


\subsection{A guide to reading this paper}
Section \ref{Def} gives the definitions, constructs energy solutions,
and explains the change of variables for Lipschitz graph domains.
Section \ref{Sprf} assembles the proof of Theorem~\ref{MainT} in the
half-space. It also establishes the a priori finiteness of the gradient
maximal norm for smooth coefficients and data, which is needed to apply
the estimates proved later.

The proof uses the equations for the tangential derivatives and for
$H=\sum_j a_{nj}\partial_j u$.
Ellipticity then gives control of the remaining spatial derivative.
Section \ref{S.Caccio} proves the required Caccioppoli inequalities.
Section \ref{SS:43} bounds the gradient maximal function by the
$p$-adapted square function, and Section \ref{S.Sp_bd} estimates that
square function in terms of the Neumann data. Section \ref{S.S<N}
proves the usual square-function bound by the maximal function.

Section \ref{S.bddLip} treats bounded Lipschitz cylinders. The proof
combines localization, absorption at a small time scale, and exponential
decay away from the time support of the Neumann data.

Appendix \ref{APA} proves the half-time derivative estimates of
Theorem~\ref{Thalf} on Lipschitz graph domains and gives the obstruction
on bounded spatial domains. The solutionwise graph-domain proof uses
a bilinear height identity for $1<p\le2$ and a sharp-function estimate
for $p>2$, including the drift terms in \eqref{eq:halfDriftHyp}.
The Neumann-data assertion of Theorem~\ref{Thalf} follows from
$(N)_p^{\mathcal L}$ and approximation by energy data.

\section{Definitions}\label{Def}

\begin{definition}
$\Z \subset \R^n$ is an $\ell$-cylinder of diameter $d$ if there
exists an orthogonal coordinate system $(x,x_n)$  with $x\in\mathbb R^{n-1}$ and $x_n\in\mathbb R$ such that
\[
\Z = \{ (x,x_n)\; : \; |x|\leq d, \; -(\ell+1) d \leq x_n \leq (\ell+1) d \}
\]
and for $s>0$,
\[
s\Z:=\{(x,x_n)\;:\; |x|\le sd, -(\ell +1)s d \leq x_n \leq (\ell +1)s d \}.
\]
\end{definition}

\begin{definition}\label{DefLipDomain}
$\mathcal O\subset \R^n$ is a Lipschitz domain with Lipschitz
`character' $(\ell,N,C_0)$ if there exists a positive scale $r_0\in (0,
\infty]$ and
at most $N$ $\ell$-cylinders $\{{\Z}_j\}_{j=1}^N$ of diameter $d$, with
$\frac{r_0}{C_0}\leq d \leq C_0 r_0$ such that 
\vglue2mm

\noindent (i) $8{\Z}_j \cap {\partial\mathcal O}$ is the graph of a Lipschitz
function $\phi_j$, $\|\nabla\phi_j \|_\infty \leq \ell \, ;
\phi_j(0)=0$,\vglue2mm

\noindent (ii) $\displaystyle {\partial\mathcal O}=\bigcup_j ({\Z}_j \cap {\partial\mathcal O}
)$,

\noindent (iii) $\displaystyle{\Z}_j \cap \mathcal O \supset \left\{
(x,x_n)\in\mathcal O \; : \; |x|<d, \; \mathrm{dist}\left( (x,x_n),{\partial\mathcal O}
\right) \leq \frac{d}{2}\right\}$.

\noindent (iv) Each cylinder $\displaystyle{\Z}_j$ contains points from $\mathcal O^c={\mathbb R^n}\setminus\mathcal O$.

\noindent (v) If $r_0<\infty$ the domain $\mathcal O$ is a bounded set.
\vglue1mm
\end{definition}

\noindent{\it Remark.} If the scale $r_0$ is finite, then the domain $\mathcal O$ from the definition above is a bounded Lipschitz domain. However, we shall also allow both $r_0,\, d$ to be infinite, and in this case, since $\displaystyle{\Z}=\mathbb R^n$,
we have that  $\mathcal O$ can be written in some coordinate system as
\begin{equation}\label{eq.O=lipgph}
    \mathcal O = \{(x,x_n): x_n > \phi(x)\}\quad\mbox{ where $ \phi:\mathbb R^{n-1} \rightarrow \mathbb R$ is a Lipschitz function.}
\end{equation}

The set $\mathcal O\times\R$ will be called a parabolic Lipschitz cylinder with Lipschitz base $\mathcal O$.
We will need an extension of the definition of spaces $\dot L^p_{1,1/2}$ to Lipschitz cylinders.

 \begin{definition}
A parabolic cube on $\R^n\times\R$  centered at $(X,t)$ with sidelength $r$ is defined as
$$    Q_r(X,t):=\{ (Y, s) \in \R^{n}\times\R : |x_i - y_i| < r \ \text{ for } 1 \leq i \leq n, \ | t - s |^{1/2} < r \}.
$$
When writing a lowercase point $(x,t)$, we mean a boundary parabolic cube on $\R^{n-1}\times\R$,
which has an analogous definition but in one less spatial dimension:
\begin{equation}\label{eqdef.bdypcube}
    Q_r(x,t):=\{ (y, s) \in \R^{n-1}\times\R : |x_i - y_i| < r \ \text{ for } 1 \leq i \leq n-1, \ | t - s |^{1/2} < r \}.
\end{equation}
A parabolic ball on $\R^n\times\R$  centered at $(X,t)$ with radius $r$ is the ball
\begin{equation}
    B_r(X,t):=\{ (Y, s) \in \R^{n}\times\R : \|(X-Y,t-s)\|<r \},
\end{equation}
where $\|(\xi,\tau)\|$ on $\R^{n} \times \R$  is defined as
  the unique positive solution $\rho$ to the following equation
\begin{equation}
	\frac{|\xi|^2}{\rho^2} + \frac{\tau^2}{\rho^4} = 1.
\end{equation}
Recall that $\|\cdot\|$ scales as the parabolic distance function
\[
d_p((X,t),(Y,s)) := \br{\abs{X-Y}^2+\abs{t-s}}^{1/2}\sim \|(X-Y,t-s)\|.\]

For parabolic balls at the boundary we use notation $\Delta_r(X,t)=B_r(X,t)\cap \pom$. In the special case $\Omega=\R^n_+\times\R$ we drop the last coordinate and also write $\Delta_r(x,t)$ with the understanding that the ball is centered at $(X,t)=(x,0,t)$.

We sometimes use the notation $B_{X,t}:=B_{\delta(X,t)/2}(X,t)$ for $(X,t)\in\om$.  
\end{definition}

We define (parabolic) Carleson measures.

\begin{definition}
A measure $\mu$ is a Carleson measure on $\Omega = \mathcal O \times \R$  if there exists a constant $M$ such that for $(X,t) \in \partial \Omega$ and
$0<r<\diam(\Omega)$,
\begin{equation}\label{Cmeasure}
\mu(B_r(X,t) \cap \Omega) \leq  M r^{n+1}.
\end{equation}
The infimum over all $M$ such that \eqref{Cmeasure} holds is the Carleson norm, $\norm{\mu}_C$, of $\mu$.
\end{definition}

\begin{definition}
For $a>0$ and $(q,\tau)\in\pom$, unless otherwise defined, we denote the nontangential parabolic cones by
\begin{equation}
    \Gamma_a(q,\tau):=\set{(X,t)\in\om: d_p((X,t),(q,\tau))<(1+a)\delta(X,t)},
\end{equation}

and $\delta(\cdot)$ is the parabolic distance to the boundary: 
\[
\delta(X, t) = \inf_{(q,\tau)\in\pom}
d_p((X, t),(q, \tau)).\]
We also use the truncated parabolic cones: for $r>0$,
\[
\Gamma_a^r(q,\tau):=\set{(X,t)\in\om: d_p((X,t),(q,\tau))<(1+a)\delta(X,t),\, \delta(X,t)<r}
\]
is the parabolic cone with vertex $(q,\tau)$ truncated at height $r$.
\end{definition}

\begin{definition}
For $w\in L^\infty_{\loc}(\om)$, we define the nontangential maximal function of $w$ as
\[
 N_a(w)(q,\tau):=\sup_{(X,t)\in\Gamma_a(q,\tau)}\abs{w(X,t)} \quad\text{for }(q,\tau)\in\pom.
\]
If $w\in L^p_{\loc}(\om)$, $p\in(0,\infty)$, we need the modified nontangential maximal function
\begin{equation}\label{def.Nap}
    \wt N_{a,p}(w)(q,\tau):=\sup_{(X,t)\in\Gamma_a(q,\tau)}\br{\fiint_{B_{\delta(X,t)/2}(X,t)}\abs{w(Y,s)}^pdYds}^{1/p} \quad\text{for }(q,\tau)\in\pom,
\end{equation}
where $B_r(X,t)$ is a parabolic ball centered at $(X,t)$ with ``radius" $r$, that is, 
\[B_r(X,t):= \set{(Y,s)\in\Rn\times\R: d_p((Y,s),(X,t))<r}.\]
We simply denote 
\begin{equation}
    \wt N_{a}(w):=\wt N_{a,2}(w)
\end{equation}
when $p=2$ in \eqref{def.Nap}.
\end{definition}

\noindent We now introduce the parabolic square and area functions. 

\begin{definition}
For a function $w$ with $\nabla w\in L^2_{\loc}(\om)$, we define the square function of $w$ by 
\begin{equation}
S_a(w)(q,\tau):=\br{\iint_{\Gamma_a(q,\tau)}\abs{\nabla w(X,t)}^2\delta(X,t)^{-n}dX\,dt}^{1/2} \quad\text{for }(q,\tau)\in\pom,
\end{equation}
and for $p\in(1,\infty)$, define the $p$-adapted square function by
\begin{equation}
    S_{p,a}(w)(q,\tau):=\br{\iint_{\Gamma_a(q,\tau)}\abs{\nabla w(X,t)}^2|w|^{p-2}\delta(X,t)^{-n}dX\,dt}^{1/p} \quad\text{for }(q,\tau)\in\pom.
\end{equation}

We also need the following version of the area function defined as in \cite{DH18}. For a function $w: \om\to\R$, we define
\begin{equation}\label{DefArea}
     A_a(w)(q,\tau):=\left(\iint_{\Gamma_a(q,\tau)}|\partial_t w(X,t)|^2\delta(X,t)^{-n+2}\,dX\,dt\right)^{1/2}
\end{equation}
\end{definition}
If $w$ satisfies the parabolic PDE \eqref{E:pde}, then heuristically $\partial_t w\sim \nabla^2w$, and so the operators $A_a$ and $S_a$ can be related using Caccioppoli type estimates. In particular, as shown in \cite[Section 6]{DLP1}, for a solution $u$ to \eqref{E:pde}, one can prove the pointwise bound 
\begin{equation}
    A_a^2(\nabla u)(q,\tau)\lesssim S_a^2(\nabla u)(q,\tau)
+\iint_{\Gamma_{a'}(q,\tau)} (|\partial_t A|^{2}x_n^2+|\nabla A|^{2}) |\nabla u|^{2}x_n^{-n+2}\,dX\,dt,
\end{equation}
as well as the $L^p$ estimate for $p>1$
  \begin{equation}\label{A<S+cN.Lp}
    \|A_a(\nabla u)\|_{L^p(\R^{n-1}\times\R)}\le C \|S_a(\nabla u)\|_{L^p(\R^{n-1}\times\R)}+C\|\mu\|_C^{1/2}\|N(\nabla u)\|_{L^p(\R^{n-1}\times\R)}.
\end{equation}
We shall derive similar estimates for $H=a_{nj}\dr_ju$ in Section~\ref{ss1}. 
\medskip

It is well-known (using a level-sets argument) that for $p\in(0,\infty)$, the $L^p$ norms of $N_a(w)$, $\wt{N}_a(w)$, $S_a(w)$, $A_a(w)$ and $\tilde A_a(w)$ are invariant under changes of $a$ up to a constant multiple. For this reason, we  omit the dependence on the aperture $a$ of the cones when the aperture need not be specified.

\subsection{Reinforced weak and energy solutions}

We recall the reinforced weak solutions introduced in \cite{AEN}, using the weaker definition from \cite{Din23}, which requires only a local half derivative in time.
\vglue1mm

If $\mathcal O$ is an open subset of $\mathbb R^{n} $, we let $\H^1(\mathcal O)=\W^{1,2}(\mathcal O)$ be the standard Sobolev space of real-valued functions $v$ defined on $\mathcal O$, such that $v$ and $\nabla v$ are in $\L^{2}(\mathcal O;\R)$ and $\L^{2}(\mathcal O;\R^n)$, respectively. A subscripted `$\loc$' will indicate that these conditions hold locally.

We shall say that $u$ is a \emph{reinforced weak solution} of $-\partial_t u + \divg(A\nabla u)=0$ on $\Omega=\mathcal O\times \R$ if
\begin{align*}
 u\in \dot {\E}_{\loc}(\Omega):= \H_{\loc}^{1/2}(\R; \L^2_{\loc}(\mathcal O)) \cap \Lloc^2(\R; \W^{1,2}_{\loc}(\mathcal O))
\end{align*}
and if for all $\phi,\psi \in \C_0^\infty(\Omega)$,
\begin{equation}\label{2.10}
\iint_{\Omega}\left[
 A\nabla u\cdot{\nabla (\phi\psi)}+ \HT\dhalf (u\psi)\cdot {\dhalf \phi}+ \HT\dhalf (u\phi)\cdot {\dhalf \psi}\right]\, \d X \d t=0.
 \end{equation}
Here, $\dhalf$ is the half-order derivative and $\HT$ is the Hilbert transform with respect to the $t$ variable, normalized so that $\partial_{t}= \dhalf \HT \dhalf$. We have the following properties of $\dhalf$  and $\HT$:
\begin{equation}\label{eq:algebra}
D^{1/2}_tD^{1/2}_t=-H_t\partial_t,\quad
D^{1/2}_t\ \mbox{self-adjoint on }L^2(dt),\quad
H_t\ \mbox{skew-adjoint and isometric},
\end{equation}
$D^{1/2}_t$ commutes with $\nabla$ and annihilates constants, and
\begin{equation}
D^{1/2}_t f(t)=c_0\int_\R\frac{f(t)-f(s)}{|t-s|^{3/2}}\,ds,
\qquad
\int_{|\tau|>a^2}|\tau|^{-3/2}d\tau=\frac4a .
\end{equation}

The space $\Hdot^{1/2}(\R)$ is the homogeneous Sobolev space of order 1/2 - the completion of $\C_0^\infty(\R)$ in the norm $\|\dhalf (\cdot)\|_{2}$ -  and it embeds into the space $\mS'(\R)/\IC$ of tempered distributions, modulo constants.
The local space $\H_{\loc}^{1/2}(\R)$ consists of functions $u$ such that $u\phi\in \Hdot^{1/2}(\R)$ for all $\phi\in \C_0^\infty(\R)$. As shown in \cite{Din23} the space $\H_{\loc}^{1/2}(\R)$ is larger than $\Hdot^{1/2}(\R)$.
For  $u\in\Hdot^{1/2}(\R; \L^2_{\loc}(\mathcal O)), $ \eqref{2.10} simplifies to 
\begin{equation}\nonumber
\iint_{\Omega}\left[
 A\nabla u\cdot{\nabla \phi}+ \HT\dhalf u\cdot {\dhalf \phi}\right]\, \d X \d t=0.
 \end{equation}
  
 Our definition has the advantage that, in taking a cut-off of the function $u$,  we may improve the decay of $D^{1/2}_tu$ at infinity. 

At this point we remark that for any $u\in \Hdot^{1/2}(\R)$ and $\phi,\psi\in \C_0^\infty(\R)$ the formula
\begin{align*}
 \int_{\R} \left[\HT\dhalf (u\psi)\cdot {\dhalf\phi}+\HT\dhalf (u\phi)\cdot {\dhalf\psi}\right]\d t = - \int_{\R} u \cdot {\partial_{t}(\phi\psi)} \d t
\end{align*}
holds, where on the right-hand side we use the duality form extension of the complex inner product of $\L^2(\R)$, between $\Hdot^{1/2}(\R)$ and its dual $\Hdot^{-1/2}(\R)$. By taking $\psi=1$ on the set where $\phi$ is supported, it follows that a reinforced weak solution is a weak solution in the usual sense on $\Omega$ since it satisfies
$u\in \Lloc^2(\R; \W^{1,2}_{\loc}(\mathcal O))$ and for all $\phi\in \C_0^\infty(\Omega)$,
\begin{align*}
 \iint_{\Omega} A\nabla u\cdot{\nabla \phi} \d X \d t - \iint_{\Omega} u \cdot {\partial_{t}\phi} \d X  \d t=0.
\end{align*}
 This implies $\pd_{t}u\in \Lloc^2(\R; \W^{-1,2}_{\loc}(\mathcal O))$. Conversely, any  weak solution $u$ in    $ \H_{\loc}^{1/2}(\R; \L^2_{\loc}(\mathcal O))$ is a reinforced weak solution.\vglue2mm

Specializing to the case $\Omega=\mathcal O\times\mathbb R$, where $\mathcal O$ is either a bounded or an unbounded Lipschitz domain,
 we say that a 
reinforced weak solution $v\in \dot{\E}_{\loc}(\mathcal O\times\mathbb R)$ belongs to the \emph{energy class} $\dot \E(\mathcal O\times\mathbb R)$ if
\begin{align*}
 \|v\|_{\dot \E} := \bigg(\|\nabla v\|_{\L^2(\mathcal O\times\mathbb R)}^2 + \|\HT \dhalf v\|_{\L^2(\mathcal O\times\mathbb R)}^2 \bigg)^{1/2} < \infty.
\end{align*}
Consequently, these are called \emph{energy solutions}. When considered modulo constants, $\dot \E $ is a Hilbert space and it is in fact the closure of $\C_0^\infty\!\big(\,\cl{\mathcal O\times\mathbb R}\,\big)$ for the homogeneous norm $\|\cdot\|_{\dot \E}$
and it coincides with the space $\dot{L}^2_{1,1/2}(\Omega)$.

As shown in \cite{AEN} (with a small generalization), functions from $\dot \E $ have well-defined Dirichlet traces with values in
 the \emph{homogeneous parabolic Sobolev space} $\Hdot^{1/4}_{\pd_{t} - \Delta_x}(\partial\mathcal O\times\mathbb R)$. Here, $\Hdot^{s}_{\pm \pd_{t} - \Delta_x}(\R^n)$ is defined as the closure of Schwartz functions $v \in \mS(\ree)$ with Fourier support away from the origin in the norm $\|\cF^{-1}((|\xi|^2 \pm i \tau)^s \cF v)\|_2$. This yields a space of tempered  distributions modulo constants in $\Lloc^2(\ree)$ if $0 < s \leq 1/2$. 
 Conversely, any $g \in \Hdot^{1/4}_{\pd_{t} - \Delta_x}$ can be extended to a function $v \in  \dot \E$ with trace $v\big|_{\partial\mathbb R^{n+1}_+} = g$.  Using a partition of unity, we define 
 $\Hdot^{1/4}_{\pd_{t} - \Delta_x}(\partial\mathcal O\times\mathbb R)$ for $\mathcal O$ Lipschitz.
  
Hence, by the energy solution to $-\partial_tu + \divg(A\nabla u)=0$ with Dirichlet boundary datum $u\big|_{\partial\mathcal O\times\R} = f \in \Hdot^{1/4}_{\pd_{t} - \Delta_x}$ (understood in the trace sense) we mean $u \in \dot\E$ such that
\begin{align*}
a(u,v):= \iint_{\mathcal O\times\R} \left[A \nabla u \cdot{\nabla v} + \HT \dhalf u \cdot {\dhalf v}\right] \d X \d t = 0,
\end{align*}
holds for all $v \in \dot \E_0$, the subspace of $\dot \E$ with zero boundary trace.

Moving onto the Neumann problem, given any $u \in \dot \E(\Omega)$, the co-normal derivative $\partial^A_\nu u\Big|_{\partial\Omega}=:\langle A\nabla u,\nu\rangle \Big|_{\partial\Omega}=g$ is defined via the formula
\begin{align}\label{eq.NeumannBdy}
 \iint_{\mathcal O\times\R} \left[A \nabla u \cdot{\nabla v} + \HT \dhalf u \cdot {\dhalf v}\right] \d X \d t -\int_{\partial\mathcal O
 \times\R}gv \d x\d t= 0,
\end{align}
for all $v \in \dot \E$. Here, since the traces of $v$ belong to $\Hdot^{1/4}_{\pd_{t} - \Delta_x}(\partial\Omega)$ and all elements of the space $\Hdot^{1/4}_{\pd_{t} - \Delta_x}$ are realized by some $v \in \dot \E$,
 the Neumann boundary data must by duality  naturally belong to the space $\Hdot^{-1/4}_{\pd_{t} - \Delta_x}(\partial\Omega)$.

By \cite{AEN}, the key to solving these problems is the introduction of the modified sesquilinear form (introduced earlier in \cite{Ny2016}):
\begin{equation}
 a_\delta(u,v) := \iint_{\mathcal O\times\R} \left[A \nabla u \cdot {\nabla (1+\delta \HT) v} + \HT \dhalf u \cdot {\dhalf (1+\delta \HT) v}\right] \d X \d t,
\end{equation}
where $\delta$ is a  real number yet to be chosen. The Hilbert transform $\HT$ is a skew-symmetric isometric operator with inverse $-\HT$ on both $\dot \E$ and $\Hdot^{1/4}_{\pd_{t} - \Delta_x}$.  Hence, $1+\delta \HT$ is invertible on these spaces for any $\delta \in \R$. Hence for a fixed $\delta>0$  small enough, $a_\delta$ is coercive on $\dot \E$ since
\begin{equation}
 a_\delta(u,u) \ge (\lambda-\Lambda\delta )\|\nabla u\|_2^2 + \delta \|\HT \dhalf u \|_2^2.
\end{equation} 
In particular $\delta=\lambda/(\Lambda+1)$ would work.
To solve the Dirichlet problem we take an extension $w \in \dot \E$ of the data $f$ and apply the Lax-Milgram lemma to $a_\delta$ on $\dot \E_0$ to obtain some $u \in \dot \E_0$ such that 
\begin{align*}
 a_\delta(u,v) = - a_{\delta}(w,v) \qquad (v \in \dot \E_0).          \end{align*}
Hence, $u + w$ is an energy solution with data $f$. Should there exist another solution $v$, then $a_\delta(u+w-v,u+w-v) = 0$ and hence by coercivity $\|u +w - v \|_{\dot \E} = 0$. Thus the two solutions only differ by a constant. Thus the Dirichlet problem associated with our parabolic PDE is  \emph{well-posed} in the energy class. Similar arguments allow us to solve the Neumann problem by considering for the datum $g \in \Hdot^{-1/4}_{\pd_{t} - \Delta_x}(\partial\Omega)$ the solution $u
\in \dot \E$ such that
\begin{align*}
 a_\delta(u,v) = \langle g, \mbox{Tr } (1+\delta \HT)v\rangle\qquad (v \in \dot \E).          \end{align*}

\begin{definition}[$(D)_{p'}^{\LL^*}$]
    Let $p\in(1,\infty)$. We say that the $L^{p'}$ Dirichlet problem is solvable for $\LL^*$, denoted by $(D)_{p'}^{\LL^*}$, if there exists a constant $C>0$ such that for all $g \in \Hdot^{1/4}_{\pd_{t} - \Delta_x}\cap L^{p'}(\pom)$, the energy solution $u\in \dot \E(\om)$ to $\LL^* u=\partial_tu+\divg(A^T\nabla u) = 0$ in $\om$ with trace $u|_{\pom}=g$ satisfies the estimate
    \[
    \|N(u)\|_{L^{p'}(\pom)}\le C\|g\|_{L^{p'}(\pom)}.
    \]
\end{definition}
\begin{definition}[$(N)_p^{\LL}$]
     Let $p\in(1,\infty)$. We say that the $L^{p}$ Neumann problem is solvable for $\LL$, denoted by $(N)_p^{\LL}$, if there exists a constant $C>0$ such that for all $g \in \Hdot^{-1/4}_{\pd_{t} - \Delta_x}(\partial\Omega)\cap L^p(\pom)$, the energy solution $u\in \dot \E(\om)$ to $\LL u = 0$ in $\om$ with $\dr_\nu^Au|_{\pom}=g$ (defined as in \eqref{eq.NeumannBdy}) satisfies the estimate
    \[
    \|\wt N(\nabla u)\|_{L^p(\pom)}\le C\|g\|_{L^p(\pom)}.
    \]
\end{definition}

\subsection{Graph coordinates with constant Jacobian}

The change of variables used below must preserve the coefficient of the time
derivative as well as the spatial Carleson condition. We use the following
modification of the regularized graph map.
For a measure $\nu$ on $\R^n_+$, its spatial Carleson norm is
$\sup_Q\nu(Q\times(0,r))/|Q|$, where $Q\subset\R^{n-1}$ ranges over cubes
of side $r$. The spatial balls in the next lemma are Euclidean.

\begin{lemma}\label{lem:graphflat}
Let $\phi:\R^{n-1}\to\R$ be Lipschitz, $\ell=\|\nabla\phi\|_\infty$, and
$\OO=\{(x,x_n):x_n>\phi(x)\}$. There is a bi-Lipschitz map
$\Phi:\R^n_+\to\OO$, smooth in the interior, such that
\begin{equation}\label{eq:graphJac}
\Phi(x,0)=(x,\phi(x)),\qquad \det D\Phi\equiv c_0>0,
\qquad \delta(\Phi(x,h))\simeq_\ell h,
\end{equation}
where $c_0$ and the bi-Lipschitz constants depend only on $n$ and $\ell$.
Moreover $h|D^2\Phi(x,h)|\le C_\ell\ell$, and the measure
\begin{equation}\label{eq:graphGeomCM}
 d\nu_\Phi(x,h):=
 \sup_{(y,k)\in B_{h/2}(x,h)} k|D^2\Phi(y,k)|^2\,dx\,dh
\end{equation}
is a spatial Carleson measure on $\R^n_+$, with norm at most $C_\ell\ell^2$.
The constants are uniform for $\ell$ in a bounded interval.
\end{lemma}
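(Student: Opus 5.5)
We would construct $\Phi$ by modifying the Dahlberg--Kenig--Stein regularized map. Fix a smooth, even, compactly supported mollifier $\eta$ on $\R^{n-1}$ with $\int\eta=1$, and set $\phi_h=\eta_h*\phi$, where $\eta_h(x)=h^{-(n-1)}\eta(x/h)$. The standard map $(x,h)\mapsto(x,c h+\phi_{h}(x))$ does not have constant Jacobian, since its determinant is $c+\partial_h\phi_h(x)$. We therefore put the correction into the horizontal variables and write
\[
\Phi(x,h)=\bigl(x+\psi(x,h),\; c h+\phi_{\gamma h}(x)\bigr).
\]
Here $\gamma$ is a small parameter and $\psi$ is to be chosen so that $\det D\Phi\equiv c_0$. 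Constant Jacobian matters because the time-derivative coefficient must stay equal to a constant after the change of variables.

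A cleaner alternative is to build $\Phi$ as the composition $\Phi=\Phi_0\circ\Psi$. Here $\Phi_0(x,h)=(x,ch+\phi_{\gamma h}(x))$ is the usual map and $\Psi(x,h)=(x,\theta(x,h))$ reparametrizes each vertical line. The Jacobian of the composition is
\[
J_0(x,\theta)\,\partial_h\theta, \qquad J_0=c+\gamma\,(\partial_s\phi_s)|_{s=\gamma\theta}.
\]
We solve the ODE $\partial_h\theta=c_0/J_0(x,\theta)$ with $\theta(x,0)=0$. Explicitly, $\theta$ is the inverse in $h$ of $h\mapsto c_0^{-1}\int_0^h J_0(x,s)\,ds$. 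This gives the following properties:
\begin{itemize}
\item $\det D\Phi\equiv c_0$, which we fix as $c_0:=c$;
\item $\Phi(x,0)=(x,\phi(x))$;
\item $\theta\simeq h$, since $|\partial_s\phi_s|\lesssim\ell$ gives $|J_0-c|\lesssim\gamma\ell$.
\end{itemize}

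The key steps, in order, are as follows.

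First, recall the standard kernel estimates: for $k\ge1$ and $j+|\alpha|=k$,
\[
|\partial_s^j\partial_x^\alpha\phi_s|\lesssim s^{1-k}\,\ell\Bigl(\fint_{|y-x|<Cs}|\nabla\phi-\text{avg}|^2\Bigr)^{1/2}\le s^{1-k}\ell .
\]
Second, choose $c$ large relative to $\gamma\ell$ so that $\Phi_0$ is bi-Lipschitz and $\delta(\Phi_0(x,h))\simeq h$. Third, show that $\Psi$ is bi-Lipschitz with $h|D^2\Psi|\lesssim\ell$. This follows by differentiating the integral identity $\int_0^{\theta}J_0(x,s)\,ds=c_0h$ implicitly: $\nabla_x\theta$ is bounded by $\int_0^\theta|\nabla_xJ_0|/J_0\lesssim\ell$, and second derivatives pick up one factor $h^{-1}$. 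Combining these gives the bound $h|D^2\Phi|\le C_\ell\,\ell$ for the composition.

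The main obstacle is the Carleson estimate \eqref{eq:graphGeomCM}, because $D^2\Psi$ involves integrals in $s$ of $D^2J_0$. The second derivatives of $\Phi_0$ are controlled by $\partial^2\phi_s$ with $|\partial^2\phi_s|^2 s$. This is a Carleson measure with norm $\lesssim\|\nabla\phi\|_{BMO}^2\lesssim\ell^2$, by the standard Littlewood--Paley/Carleson argument: $s\,\partial^2\phi_s=s\,\nabla(\eta_s'*\nabla\phi)$ is a convolution of $\nabla\phi$ against kernels with mean zero. The supremum over $B_{h/2}$ is harmless, because the kernel bounds hold uniformly on Whitney balls with slightly enlarged kernels.

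For $\theta$, the second derivatives have the form
\[
h^{-1}\int_0^{\theta}s\,|\partial^2 J_0(x,s)|\,\frac{ds}{s}
\]
plus products of first derivatives. First derivatives are bounded by $\ell$, and their products with $h^{-1}\partial^2$ terms are again of the above type. We will handle these averaged terms with a Hardy-type inequality in the vertical variable. Writing $F(x,s)=s\,|\nabla^2\phi_{\gamma s}|$ and $G(x,h)=h^{-1}\int_0^h F(x,s)\,ds$, Cauchy--Schwarz with weight gives
\[
\int_0^r G^2\,h\,dh\lesssim\int_0^r F^2\,s^{-1}\cdots
\]
We will reorganize this as $\int G^2\,\frac{dh}{h}\lesssim\int F^2\,\frac{ds}{s}$ by the weighted Hardy inequality. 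Since $F^2\,ds/s=s|\nabla^2\phi|^2\,ds$, this transfers the Carleson bound from $\Phi_0$ to $\Phi$, yielding norm $C_\ell\ell^2$. Uniformity of all constants for $\ell$ in a bounded interval follows because each constant depends only on $c$, $\gamma$, $\eta$ and an upper bound for $\ell$.
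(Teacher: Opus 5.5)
Your main construction, $\Phi=\Phi_0\circ\Psi$ with a vertical reparametrization, cannot work, and the reason is structural. Both $\Phi_0$ and $\Psi(x,h)=(x,\theta(x,h))$ preserve vertical lines. Hence $\Phi(x,h)=(x,G(x,h))$ with $G=c\,\theta+\phi_{\gamma\theta}(x)$ and $\det D\Phi=\partial_hG$. A constant Jacobian $c_0$ together with $G(x,0)=\phi(x)$ forces $G(x,h)=\phi(x)+c_0h$. Indeed, your defining identity $\int_0^\theta J_0(x,s)\,ds=c_0h$ reads literally $c\theta+\phi_{\gamma\theta}(x)-\phi(x)=c_0h$. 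The reparametrization therefore undoes the mollification and returns the naive flattening $(x,\phi(x)+c_0h)$. That map is bi-Lipschitz with constant Jacobian, but $D^2\Phi(x,h)=D^2\phi(x)$ for every $h>0$. For $\phi=\ell|x_1|$ this is a singular measure on $\{x_1=0\}$. So $\Phi$ is not smooth in the interior, and neither $h|D^2\Phi|\le C_\ell\ell$ nor \eqref{eq:graphGeomCM} can hold.

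The error in your third step is a derivative count. Differentiating $\int_0^\theta J_0=c_0h$ in $x$ produces $\int_0^\theta\nabla_x^jJ_0\,ds=\nabla_x^j\phi_{\gamma\theta}(x)-\nabla^j\phi(x)$ for $j=1,2$. Your kernel bounds give only $|\nabla_x^jJ_0(x,s)|\lesssim_\gamma\ell s^{-j}$, and these integrals diverge at $s=0$. So $\int_0^\theta|\nabla_xJ_0|/J_0\lesssim\ell$ does not follow; $\nabla_x\theta$ is bounded only through the signed identity. Likewise $\nabla_x^2\theta$ contains $\nabla^2\phi(x)$ rather than gaining a factor $h^{-1}$. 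Your Hardy step uses $F=s|\nabla^2\phi_{\gamma s}|$, which has one derivative fewer than the integrand that actually occurs.

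The missing idea is that the Jacobian defect must be removed by a \emph{horizontal} displacement, as in the first ansatz $x+\psi(x,h)$ that you set aside. The paper does this by Moser's method.

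\begin{itemize}
\item It writes $q=\partial_hf=\divg T$ with $T=(-h\nabla_x(\vartheta_h*\phi),hq)$. Here $\vartheta$ is a compactly supported radial solution of $\Delta_x\vartheta=\partial_h^2\theta_h|_{h=1}$.
\item Then $|T|\lesssim\ell h$, $|DT|\lesssim\ell$, $T$ vanishes on $h=0$, and $hD^2T$ is again a sum of mean-zero kernels applied to $\nabla\phi$.
\item The flow of $V_s=-T/(c_0+sq)$, $0\le s\le1$, composed with $P(x,h)=(x,c_0h+f)$, has constant Jacobian $c_0$.
\item Its Carleson control comes from the square-function estimate \eqref{eq:graphKernelCM} for such kernels. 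This is propagated through Gronwall's inequality and the bi-Lipschitz invariance of the Carleson bounds, not through a vertical Hardy inequality.
\end{itemize}

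The horizontal components of $T$ are what let the divergence equation be solved locally at scale $h$. Solving it vertically instead means integrating $q$ down to $h=0$, which is exactly what reintroduces the rough $\phi$ in your construction.
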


\begin{proof}
Put $d=n-1$. Choose a radial $\theta\in C_0^\infty(\R^d)$ with integral one,
and write $\theta_h(x)=h^{-d}\theta(x/h)$,
$f(x,h)=\theta_h*\phi(x)$ and $q=\partial_h f$. Then
$\|\nabla_x f\|_\infty+\|q\|_\infty\le C\ell$. Choose
$c_0=C(1+\ell)$ large enough that $c_0+q\ge c_0/2$.
The map $P(x,h)=(x,c_0h+f(x,h))$ is bi-Lipschitz onto $\OO$, has the
required boundary values and satisfies $\det DP=c_0+q$.
We correct this determinant by a flow, following the volume-form method
of \cite{Moser65}; the estimates needed on this unbounded domain are given below.

The smooth radial function
$G=\left.\partial_h^2\theta_h\right|_{h=1}$ has compact support and integral
zero. The radial equation $\Delta_x\vartheta=G$ therefore has a smooth compactly
supported radial solution: its radial derivative is
$r^{1-d}\int_0^r s^{d-1}G(s)\,ds$, which vanishes outside the support of $G$,
and its additive constant is chosen to make $\vartheta$ vanish there. With
$\vartheta_h(x)=h^{-d}\vartheta(x/h)$ we obtain
$\Delta_x(\vartheta_h*\phi)=\partial_h^2 f$. Consequently the smooth vector field
\begin{equation}\label{eq:graphDiv}
 \begin{aligned}
 T(x,h)&:=\big(-h\nabla_x(\vartheta_h*\phi)(x),\ hq(x,h)\big),\\
 \divg T&=q,\qquad |T|\le C\ell h,\qquad |DT|\le C\ell.
 \end{aligned}
\end{equation}
For $0\le s\le1$ set
\[
 J_s=c_0+s q,\qquad V_s=-T/J_s,
 \qquad \partial_s J_s+\divg(J_sV_s)=0,
\]
and let $F_s$ solve
\begin{equation}\label{eq:graphFlow}
 \partial_s F_s(z)=V_s(F_s(z)),\qquad F_0(z)=z.
\end{equation}
The bounds $|V_s(x,h)|\le C_\ell h$ and $|DV_s|\le C_\ell$ are uniform in
$s$. Extend $V_s$ by zero on $h=0$. The flow and its inverse exist on the
whole half-space for $0\le s\le1$, are bi-Lipschitz, fix its boundary, and
satisfy $(F_s(x,h))_n\simeq_\ell h$. These statements follow from
Gronwall's inequality, first for the height and then for differences of
trajectories; $|V_s|\lesssim h$ also prevents finite-time escape to infinity.
Jacobi's formula and \eqref{eq:graphDiv} give
\[
 \frac{d}{ds}\big[J_s(F_s(z))\det DF_s(z)\big]=0,
 \qquad J_1(F_1(z))\det DF_1(z)=c_0.
\]
Thus $\Phi=P\circ F_1$ has constant determinant $c_0$ and the stated
boundary values and distance comparison.

We verify the derivative and Carleson bounds, including the local supremum
in \eqref{eq:graphGeomCM}. If $a\in C_0^\infty(\R^d)$ has integral zero
and $b\in L^\infty(\R^d)$, then
\begin{equation}\label{eq:graphKernelCM}
 \int_Q\int_0^r
 \sup_{\substack{|y-x|<h/2\\h/2<k<3h/2}}
 |a_k*b(y)|^2\,\frac{dh\,dx}{h}
 \le C_a|Q|\|b\|_\infty^2.
\end{equation}
Indeed, for $x\in Q$ and $h<r$ all convolutions use $b$ on a fixed dilate
of $Q$. For that restriction of $b$, Plancherel and
$\sup_{\xi\ne0}\int_0^\infty|\widehat a(h\xi)|^2\,dh/h<\infty$
give the estimate without the supremum. The local Sobolev inequality on
Whitney boxes bounds the supremum by averages of finitely many scaled
space and scale derivatives of $a_h*b$. Each is another smooth compactly
supported convolution kernel of integral zero, so the same argument applies.

By differentiation of the convolution formulas, every component of
\[
 hD^2P,\qquad hDq,\qquad h^2D^2q,\qquad hD^2T
\]
is a finite sum of kernels of this kind applied to components of
$\nabla\phi$. In particular they are bounded by $C\ell$ and satisfy
\eqref{eq:graphKernelCM} with right-hand side $C|Q|\ell^2$.
Differentiating $V_s=-T/J_s$ twice and using
$|T|\lesssim\ell h$, $|DT|\lesssim\ell$, $J_s\ge c_0/2$ and
$h|Dq|\lesssim\ell$ gives
\[
 h|D^2V_s|\le C_\ell
 \big(h|D^2T|+h|Dq|+h^2|D^2q|\big).
\]
Hence $hD^2V_s$ has the same pointwise and Carleson bounds, uniformly in $s$.
Differentiating \eqref{eq:graphFlow} twice and applying Gronwall yields
\[
 |D^2F_s(z)|\le C_\ell\int_0^s|D^2V_v(F_v(z))|\,dv.
\]
A bi-Lipschitz map fixing the flat boundary and preserving height up to a
constant preserves these Carleson bounds: images of boundary boxes lie
in fixed dilates of such boxes, and images of Whitney balls are covered
by boundedly many Whitney balls at comparable heights. Change of variables,
followed by Cauchy--Schwarz in $v$, therefore proves the pointwise bound
and \eqref{eq:graphGeomCM} for $F_s$ in place of $\Phi$.
The chain rule for $P\circ F_1$ proves both assertions for $\Phi$.
\end{proof}

\begin{lemma}[transfer of the equation]\label{lem:graphtransfer}
Let $\Phi$ be as in Lemma \ref{lem:graphflat}, put
$\rho(z,t)=(\Phi(z),t)$, and let $A$ be a bounded coefficient matrix on
$\OO\times\R$ satisfying \eqref{E:elliptic} and \eqref{E:1:carl}.
For $U=u\circ\rho$ define
\begin{equation}\label{eq:graphB}
 B(z,t)=D\Phi(z)^{-1}A(\Phi(z),t)D\Phi(z)^{-T}.
\end{equation}
Then $\LL u=0$ is equivalent to
$-\partial_t U+\divg(B\nabla U)=0$ on $\R^n_+\times\R$.
The matrix $B$ is bounded and elliptic and satisfies
\eqref{E:1:carl} and \eqref{E:1:bound}, with
\begin{equation}\label{eq:graphCM}
 \|\mu_B\|_C\le C_\ell\big(\|\mu_A\|_C+\|A\|_\infty^2\ell^2\big),
 \qquad K_B\le C_\ell\big(K_A+\|A\|_\infty\ell\big).
\end{equation}
The pullback preserves the energy class, commutes with $D_t^{1/2}$ and
$H_t$, and transfers Neumann and adjoint Dirichlet solvability at the same
exponents. In the Neumann case the transformed datum is
\begin{equation}\label{eq:graphg}
 g_0(x,t)=c_0^{-1}\sqrt{1+|\nabla\phi(x)|^2}\,
 g(x,\phi(x),t).
\end{equation}
\end{lemma}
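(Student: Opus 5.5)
The plan is to carry out the change of variables directly in the weak formulation, with the constant Jacobian $\det D\Phi\equiv c_0$ as the key device. Write $J=D\Phi(z)$, so $\nabla_z U = J^T(\nabla u)\circ\rho$. For a test function $\varphi$ on $\OO\times\R$ put $\psi=\varphi\circ\rho$. Then
\[
\iint A\nabla u\cdot\nabla\varphi\,dX\,dt=\iint c_0\,J^{-1}A J^{-T}\nabla U\cdot\nabla\psi\,dz\,dt ,
\]
and the time term satisfies $\iint u\,\partial_t\varphi\,dX\,dt=c_0\iint U\,\partial_t\psi\,dz\,dt$. The factor $c_0$ is the same in both terms, so dividing by it gives the weak form of $-\partial_tU+\divg(B\nabla U)=0$ with $B$ as in \eqref{eq:graphB}. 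Had the Jacobian not been constant, a variable factor would multiply $\partial_t U$, and this is exactly why Lemma \ref{lem:graphflat} was set up this way. The same computation applies to the reinforced form \eqref{2.10}, because $\rho$ acts only in $z$ and therefore commutes with $D_t^{1/2}$ and $H_t$. Boundedness and ellipticity of $B$ follow from $\|J\|,\|J^{-1}\|\le C_\ell$.

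Next come the Carleson bounds. Differentiating \eqref{eq:graphB} gives
\[
|\nabla_z B|\lesssim_\ell |(\nabla A)\circ\rho|+|A|\,|D^2\Phi|, \qquad |\partial_tB|\lesssim_\ell|(\partial_tA)\circ\rho| .
\]
The pointwise bound \eqref{E:1:bound} for $B$ follows from the one for $A$, since $\delta(\rho(z,t))\simeq h$ by \eqref{eq:graphJac}, together with $h|D^2\Phi|\le C_\ell\ell$. For \eqref{E:1:carl}, the $A$-terms transfer because $\rho$ is bi-Lipschitz, preserves distance to the boundary up to constants, maps Whitney balls into boundedly many comparable Whitney balls, and maps boundary balls into fixed dilates; this gives a bound $C_\ell\|\mu_A\|_C$. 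The geometric term is $\sup h|A|^2|D^2\Phi|^2$. It is $t$-independent, and its parabolic Carleson norm is controlled by the spatial Carleson norm of $\nu_\Phi$ in \eqref{eq:graphGeomCM}: integrating over $|t-\tau|<r^2$ produces a factor $r^2$, and $r^2\cdot r^{n-1}=r^{n+1}$. This yields \eqref{eq:graphCM}. I expect this step, matching the local suprema over parabolic Whitney balls with the spatial suprema in $\nu_\Phi$, to be the main bookkeeping obstacle. It is handled by noting that a parabolic ball $B_{h/2}$ projects into a spatial ball of radius $h/2$ at height comparable to $h$.

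For the boundary statements, the energy class is preserved because $\|\nabla U\|_2\simeq\|\nabla u\|_2$ and $\|H_tD_t^{1/2}U\|_2=c_0^{-1/2}\|H_tD_t^{1/2}u\|_2$. Traces also correspond, since $\rho$ restricted to the boundary is $(x,t)\mapsto(x,\phi(x),t)$, which is bi-Lipschitz; so Dirichlet data and $L^{p'}$ norms and nontangential maximal functions are comparable, cones being mapped into cones of comparable aperture. For the Neumann datum, insert $v=w\circ\rho$ into \eqref{eq.NeumannBdy}. The interior integral becomes $c_0$ times the $B$-form, and the surface measure satisfies $d\sigma=\sqrt{1+|\nabla\phi|^2}\,dx$. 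Hence $U$ has conormal derivative $g_0$ given by \eqref{eq:graphg}, with $\|g_0\|_{L^p}\simeq_\ell\|g\|_{L^p}$. Combined with $\tilde N(\nabla U)\simeq\tilde N(\nabla u)\circ\rho$, this transfers $(N)_p$ solvability, and similarly $(D)_{p'}$ solvability for the adjoint, whose transformed matrix is $B^T$.
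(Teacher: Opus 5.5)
Your proposal is correct and follows essentially the same route as the paper: pull back the weak form and divide out the constant Jacobian $c_0$, bound $\nabla B$ and $\partial_t B$ by the chain rule, transfer the $A$-part of the Carleson measure through the bi-Lipschitz map $\rho$ with a Whitney covering, control the geometric part by the time-independent spatial measure $\nu_\Phi$ (the time interval supplies the factor $r^2$), and read off $g_0$ from the surface measure. Two points need tightening. The geometric term becomes time-independent only after you bound $|A|\le\|A\|_\infty$. Also, $\tilde N(\nabla U)\simeq\tilde N(\nabla u)\circ\rho$ holds only up to changes of aperture, so the comparison is of $L^p$ norms rather than pointwise.
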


\begin{proof}
Set $M=D\Phi$. The chain rule gives
$\nabla U=M^T(\nabla u)\circ\rho$, while
$\partial_tU=(\partial_tu)\circ\rho$ and $dX\,dt=c_0\,dz\,dt$.
For a test function $\zeta$ on the half-space, the original weak form
therefore becomes
\[
 c_0\iint_{\R^n_+\times\R}
 \big[B\nabla U\cdot\nabla\zeta-U\partial_t\zeta\big]\,dz\,dt=0.
\]
Division by the constant $c_0$ proves the asserted equation. The same
calculation with a boundary term and
$d\sigma=\sqrt{1+|\nabla\phi|^2}\,dx\,dt$ proves \eqref{eq:graphg}.
Transposition gives $B^T=M^{-1}(A^T\circ\rho)M^{-T}$, so the adjoint
transfers in the same way.

The derivative identities imply
\[
 |\nabla B|\le C_\ell\big(|\nabla A|\circ\rho+
 \|A\|_\infty|D^2\Phi|\big),\qquad
 |\partial_tB|\le C_\ell|\partial_tA|\circ\rho.
\]
Use \eqref{eq:graphJac}, \eqref{eq:graphGeomCM} and a finite Whitney
covering of the images of the averaging balls to obtain \eqref{eq:graphCM}.
Since the geometric measure is independent of time, its parabolic
Carleson bound follows from the spatial one by multiplication by the
length of the time interval, comparable to the square of the radius.
Ellipticity follows directly from \eqref{eq:graphB} and the bounds for
$M$ and $M^{-1}$.

Time is unchanged, so both time operators commute with pullback. The
constant volume Jacobian and the bounds for $M,M^{-1}$ give equivalent
energy norms. Taking traces and then duals gives the corresponding
isomorphisms of the Dirichlet and Neumann energy trace spaces.
The boundary Jacobian in \eqref{eq:graphg} is bounded above
and below. Cones map into cones with fixed enlarged apertures, and their
Whitney averaging regions have comparable measures and admit finite
Whitney coverings. Thus the corresponding maximal-function norms are
equivalent at every fixed finite exponent, with aperture changes made
globally. The weak formulations, energy uniqueness and these norm
comparisons give the asserted transfer of solvability. In particular the
transformed Neumann datum has zero integral if the original datum does.
\end{proof}

\section{Proof of Theorem~\ref{MainT}\label{Sprf} for $\om=\Rn_+\times\R$}
In this section, we prove Theorem~\ref{MainT} for $\om=\Rn_+\times\R$ using results obtained in Sections~\ref{SS:43}-\ref{S.S<N}. The case where $\om=\OO\times\R$ with $\OO$ being an unbounded Lipschitz graph domain (that is, of the form \eqref{eq.O=lipgph}) with sufficiently small Lipschitz constant follows from Lemmas \ref{lem:graphflat} and \ref{lem:graphtransfer}, and the case where $\OO$ is a bounded Lipschitz domain with sufficiently small Lipschitz constant is treated in Section~\ref{S.bddLip}.
\medskip

We first state a perturbation result that is proven in \cite{Ulm25}.
\begin{theorem}[{\cite[Theorem 1.6]{Ulm25}}]\label{thm.pert}
    Let $\LL_0= -\dr_t  +\divg (A_0 \nabla \cdot)$, $\LL_1= -\dr_t  +\divg (A_1 \nabla \cdot)$ be two parabolic operators on $\om=\OO\times\R$, with $\OO\subset\Rn$ Lipschitz. Assume that 
    \begin{equation}
        d\nu:=\sup_{B_{X,t}}|A_0-A_1|^2\frac{dX\,dt}{\delta(X,t)}\quad\text{ is a Carleson measure on }\om. 
    \end{equation}
    If $(N)_{q_0}^{\LL_0}$ and $(D)_{q_0'}^{\LL_1^*}$ are solvable for some $q_0>1$, then there exists a small $\varepsilon>0$ depending on $n$, the ellipticity constants of $A_0$ and $A_1$, $\OO$ and $q_0$, such that if $\norm{\nu}_{C}\le\varepsilon$, then $(N)_{q_0}^{\LL_1}$ is solvable.
\end{theorem}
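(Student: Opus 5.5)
The plan is a perturbation-by-difference argument, run first for energy data $g\in \Hdot^{-1/4}_{\pd_t-\Delta_x}(\pom)\cap L^{q_0}(\pom)$. Let $u_0,u_1\in\dot\E$ be the energy solutions of $\LL_0u_0=0$ and $\LL_1u_1=0$ with the same Neumann datum $g$, and set $w=u_1-u_0$. Subtracting the two identities \eqref{eq.NeumannBdy} gives, for every $v\in\dot\E$,
\[
\iint_\om\big[A_1\nabla w\cdot\nabla v+\HT\dhalf w\cdot\dhalf v\big]\,dX\,dt=\iint_\om (A_0-A_1)\nabla u_0\cdot\nabla v\,dX\,dt .
\]
So $w$ solves $\LL_1 w=\divg\big((A_1-A_0)\nabla u_0\big)$, and its conormal datum is defined only in this weak sense. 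By $(N)_{q_0}^{\LL_0}$ we have $\|\wt N(\nabla u_0)\|_{q_0}\lesssim\|g\|_{q_0}$. It therefore suffices to prove
\[
\|\wt N(\nabla w)\|_{L^{q_0}}\lesssim \|\nu\|_C^{1/2}\big(\|\wt N(\nabla u_0)\|_{L^{q_0}}+\|\wt N(\nabla w)\|_{L^{q_0}}\big).
\]
We then absorb the last term when $\|\nu\|_C\le\varepsilon$; this absorption is where the smallness enters. It needs the a priori finiteness of $\|\wt N(\nabla w)\|_{q_0}$, which I would obtain by truncation and approximation, for instance by first working with $A_1$ replaced by $A_0$ outside a compact set and then passing to the limit. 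The general $L^{q_0}$ datum is handled by density.

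To bound $\wt N(\nabla w)$ I would use duality on Whitney regions. Fix $(X,t)\in\Gamma(q,\tau)$ and $F\in C_0^\infty(B_{X,t};\R^n)$ with $\|F\|_{L^2}\le |B_{X,t}|^{-1/2}$, and realise $\iint F\cdot\nabla w$ through an adjoint solution $v$ of $\LL_1^*v=-\divg F$. The adjoint Dirichlet hypothesis $(D)^{\LL_1^*}_{q_0'}$ gives, by the standard Green-function argument, $L^{q_0'}$ control of $S(v)$ and $\wt N(v/\delta)$ away from the support of $F$, with the usual decay. Testing the identity above with this $v$, the interior term becomes
\[
\iint (A_0-A_1)\nabla u_0\cdot\nabla v .
\]
Splitting into Whitney balls and using the Carleson inequality with $d\nu$, this term is bounded by $\|\nu\|_C^{1/2}\int_{\pom}\wt N(\nabla u_0)\,S(v)$. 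Averaging over $(X,t)$ and using the duality $\|\wt N(\nabla w)\|_{q_0}\approx\sup\int_{\pom}\wt N(\nabla w)\,h$, with $\|h\|_{q_0'}\le1$, linearises the maximal function. This produces the desired term $\|\nu\|_C^{1/2}\|\wt N(\nabla u_0)\|_{q_0}$.

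The main obstacle is the boundary pairing. If $v$ has zero Dirichlet trace, we pick up $\int_{\pom}w\,\partial^{A_1^T}_\nu v$, which involves the trace of $w$ itself. I would handle it as follows.
\begin{itemize}
\item Write $w$ on $\pom$ against the adjoint conormal derivative and integrate by parts tangentially. This controls the boundary term by the tangential gradient and $\dhalf$ of $w$ on $\pom$, paired with an $L^{q_0'}$ quantity for $v$ supplied by $(D)^{\LL_1^*}_{q_0'}$.
\item Then use that $(D)^{\LL_1^*}_{q_0'}$ yields the Regularity problem $(R)^{\LL_1}_{q_0}$, as in \cite{DLP1}. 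This reduces the tangential and half-time data of $w$ back to the interior pairing above.
\item Alternatively, dominate that data by $\wt N(\nabla w)$ with a small factor coming from $\|\nu\|_C$.
\end{itemize}
I expect this step to be the delicate one: the non-local $\dhalf$ in the parabolic boundary data prevents a direct elliptic-style equivalence of tangential and conormal derivatives. It may require a stopping-time or good-$\lambda$ localisation so that only the small Carleson factor multiplies $\|\wt N(\nabla w)\|_{q_0}$, after which the absorption closes.
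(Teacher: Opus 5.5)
The paper does not prove this theorem; it quotes it from \cite{Ulm25}. So I can only assess your argument on its own terms, and it has a genuine gap exactly at the step you flag. (Incidentally, $\LL_1 w=\divg((A_0-A_1)\nabla u_0)$, although your displayed weak identity has the correct sign.)

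Write $a_1(\phi,v)=\iint_\om[A_1\nabla\phi\cdot\nabla v+\HT\dhalf\phi\cdot\dhalf v]$. Let $v\in\dot\E_0$ be your zero-trace adjoint solution, so $a_1(\phi,v)=\iint F\cdot\nabla\phi$ for $\phi\in\dot\E_0$. Let $W$ be the $\LL_1$ energy solution with Dirichlet datum $w|_{\pom}$, so $a_1(W,\psi)=0$ for $\psi\in\dot\E_0$. Testing with $\phi=w-W\in\dot\E_0$, and using your difference identity with this $v$, gives exactly
\[
\iint F\cdot\nabla w=\iint(A_0-A_1)\nabla u_0\cdot\nabla v+\iint F\cdot\nabla W .
\]
So your duality only controls $Z=w-W$, the zero-trace solution of $\LL_1Z=\divg((A_0-A_1)\nabla u_0)$. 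That part is in the spirit of Regularity-perturbation theory, and it is where $(D)^{\LL_1^*}_{q_0'}$ naturally enters. The boundary pairing, however, is the whole of $\wt N(\nabla W)$ and carries no factor of $\norm{\nu}_C$.

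None of your three remedies changes this. Tangential integration by parts plus a Regularity estimate bounds it by $C_R(\|\nabla_Tw\|_{L^{q_0}(\pom)}+\|\dhalf w\|_{L^{q_0}(\pom)})$ with $C_R\gtrsim1$. Nothing gives $\|\nabla_Tw\|_{L^{q_0}(\pom)}\le\eta\|\wt N(\nabla w)\|_{q_0}$ with $\eta$ small. The trivial bound has constant one, and for a solution with small conormal derivative the tangential and half-time boundary data are precisely what carries its size. Moreover, the implication $(D)^{\LL^*}_{p'}\Rightarrow(R)^{\LL}_p$ of \cite{DLP1} is proved under Carleson hypotheses on the coefficients of $\LL$ itself, whereas here only $A_0-A_1$ is controlled.

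The missing ingredient is the one piece of small boundary information about $w$, namely its conormal derivative. Formally $\partial^{A_1}_\nu w=(A_0-A_1)\nabla u_0\cdot\nu$, and the Carleson condition gives $|A_0-A_1|\lesssim\norm{\nu}_C^{1/2}$ pointwise a.e. Testing only with Dirichlet-zero functions discards this. Consequently $W=w-Z$ has conormal data of size $O(\norm{\nu}_C^{1/2}\wt N(\nabla u_0))+O(\wt N(\nabla Z))$, and turning that into a small $\wt N(\nabla W)$ requires an $L^{q_0}$ Neumann estimate for $\LL_1$, which is the conclusion itself.

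The absorption therefore has to take place at the level of the Neumann constant $C_1$ of $\LL_1$, for approximating operators where it is known a priori to be finite. Schematically this reads $C_1\le C_0(1+C\norm{\nu}_C^{1/2})+C\,C_0\norm{\nu}_C^{1/2}C_1$. It cannot take place at the level of $\|\wt N(\nabla w)\|_{q_0}$ for a fixed datum, as you propose.

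Alternatively, since your difference identity holds for every $v\in\dot\E$, you may test with the adjoint solution having zero conormal derivative; this removes the boundary term altogether. But the bounds you then need for $v$ are Neumann-type bounds for $\LL_1^*$. The Green-function argument based on $(D)^{\LL_1^*}_{q_0'}$ does not provide them.
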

We need the perturbation result above because we want the coefficients to be regular enough to apply the estimates in Sections~\ref{S.Caccio}-\ref{S.S<N}. This can be achieved by the following lemma.
\begin{lemma}\label{lem.CarImprov}
    Suppose that the matrix $A$ satisfies \eqref{E:elliptic} and either \eqref{E:1:carl} or \eqref{E:1:carl2} in $\om=\Rn_+\times\R$.
    Then there exist matrices $B$ and $D$ such that $A= B+D$, where $B$ satisfies \eqref{E:elliptic} with the same constants $\lambda$ and $\Lambda$. In addition, $B$ has entries in $C^\infty(\om)$, and the following estimates hold:
    \begin{equation}\label{B+CCM}
       d\mu':= \sup_{B_{X,t}}\br{\abs{\nabla B}^2\delta+\abs{\dr_t B}^2\delta^3+\abs{ D}^2\delta^{-1}}dX\,dt \quad\text{ is a Carleson measure on }\om,
    \end{equation}
    with $\norm{\mu'}_C\le C\norm{\mu}_C$ for some $C>0$, and 
     \begin{equation}\label{D2Bbdd}
        \abs{x_n\nabla B(X,t)}+\abs{x_n^2\dr_t B(X,t)}+\abs{x_n^{i+2j}\nabla^i\partial_t^j B(X,t)}\le C\norm{\mu}_C^{1/2} \quad\text{ for all }(X,t)\in\om,
    \end{equation}
    for $i,j\in \N\cup \{0\}$, $i+j\ge 1$ where the constant $C$ depends only on $n$, $\Lambda$ (and possibly also $i$ and $j$).
\end{lemma}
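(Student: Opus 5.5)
Set $B$ to be a mollification of $A$ at the scale of the distance to the boundary, and $D=A-B$. The smoothing must respect the parabolic geometry, and its averaging radius must be a fixed fraction of $x_n$ (which in $\om=\Rn_+\times\R$ equals $\delta(X,t)$). Fix a nonnegative $\varphi\in C_0^\infty(B_1(0,0))$ with $\iint\varphi=1$, and put
\[
B(X,t)=\iint \varphi(Y,s)\,A\big(X-\tfrac{x_n}{8}Y,\;t-\tfrac{x_n^2}{64}s\big)\,dY\,ds .
\]
This is an average of $A$ over points of the parabolic ball $B_{x_n/8}(X,t)\subset B_{X,t}$. Since \eqref{E:elliptic} is preserved under convex combinations, $B$ satisfies it with the same $\lambda,\Lambda$, and $|B|\le\Lambda$.

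The substitution $Z=X-\frac{x_n}8Y$, $\tau=t-\frac{x_n^2}{64}s$ writes $B=\iint \varphi_{x_n}(X-Z,t-\tau)A(Z,\tau)\,dZ\,d\tau$, where $\varphi_{r}(Y,s)=(r/8)^{-n-2}\varphi(8Y/r,64s/r^2)$. Derivatives in $X$ (including through the $x_n$-dependence of the scale) and in $t$ then fall on the kernel. Because $\iint\partial\varphi_{x_n}=0$ for each such derivative, we may subtract a constant:
\[
|\nabla^i\partial_t^jB(X,t)|\lesssim x_n^{-i-2j}\,\mathrm{osc}_{B_{x_n/4}(X,t)}A .
\]
Under \eqref{E:1:carl2}, the Carleson bound on $\delta^{-1}(\mathrm{osc}\,A)^2$ over a single Whitney ball $B_{X,t}$, which has measure $\sim x_n^{n+2}$, bounds the local oscillation: a ball of radius $\sim x_n$ centered near the boundary contains $B_{X,t}$, and its measure is at most $\|\mu\|_C x_n^{n+1}$. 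Hence $\mathrm{osc}\lesssim\|\mu\|_C^{1/2}$ after comparing oscillations on nearby Whitney balls. Under \eqref{E:1:carl}, \eqref{E:1:bound} gives $\mathrm{osc}_{B_{x_n/4}}A\lesssim x_n\sup|\nabla A|+x_n^2\sup|\dr_tA|\lesssim\|\mu\|_C^{1/2}$ directly. Either way this yields \eqref{D2Bbdd}.

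For \eqref{B+CCM}, I use the same kernel bounds with $i+2j=1,2$:
\[
\sup_{B_{X,t}}\big(\delta|\nabla B|^2+\delta^3|\dr_tB|^2\big)\lesssim\delta^{-1}\big(\mathrm{osc}_{\widetilde B}A\big)^2,
\]
where $\widetilde B$ is an enlarged Whitney ball. Also $|D(X,t)|=|A(X,t)-B(X,t)|\le\mathrm{osc}_{B_{x_n/8}(X,t)}A$, so $\sup_{B_{X,t}}\delta^{-1}|D|^2$ is bounded by the same quantity. Thus $d\mu'\lesssim\sup\delta^{-1}(\mathrm{osc}_{\widetilde B}A)^2$. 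Under \eqref{E:1:carl2}, this measure with enlarged balls is Carleson with comparable norm: cover $\widetilde B$ by boundedly many Whitney balls of comparable size centered at nearby points, bound $\mathrm{osc}_{\widetilde B}$ by a sum of their oscillations, and integrate. Under \eqref{E:1:carl}, use the mean value inequality on the (convex-in-time, parabolically comparable) ball, $\mathrm{osc}_{\widetilde B}A\lesssim\delta\sup_{\widetilde B}|\nabla A|+\delta^2\sup_{\widetilde B}|\dr_tA|$, which gives $\delta^{-1}\mathrm{osc}^2\lesssim\sup_{\widetilde B}(\delta|\nabla A|^2+\delta^3|\dr_tA|^2)$. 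This is again controlled by \eqref{E:1:carl} after the same covering. One subtlety is that \eqref{E:1:carl} only gives a.e./weak derivatives, so the mean value step needs $A$ Lipschitz on Whitney balls. That follows from \eqref{E:1:bound}, which I take from the statement.

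The main obstacle is this enlargement-of-balls bookkeeping: the sup in $\mu'$ is over $B_{X,t}$, while the oscillation controlling it lives on a larger ball. The plan is to show that the Carleson norm of $\sup_{cB_{X,t}}(\cdot)$ is comparable to that with $B_{X,t}$ for any fixed $c<2$ by a Whitney-cube covering argument. To keep $\widetilde B$ away from the boundary, I choose the mollification scale small ($x_n/8$), so that all enlarged balls stay inside $\{y_n\sim x_n\}$. The smoothness $B\in C^\infty(\om)$ is automatic from the formula, since $x_n>0$ in the interior.
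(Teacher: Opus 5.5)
Your proposal is correct and is essentially the paper's argument. The paper takes $B$ to be the parabolic mollification of $A$ at a scale comparable to $\delta$ (the construction of \cite[Lemma 3.32]{DLP1}), obtains \eqref{B+CCM} from the Whitney-supremum form of \eqref{E:1:carl} or \eqref{E:1:carl2} with the same enlarged-ball covering you describe, and reads off \eqref{D2Bbdd} from Whitney-supremum Carleson bounds on $\nabla^i\dr_t^jB$, whereas you obtain \eqref{D2Bbdd} directly from the pointwise bound $\mathrm{osc}_{B_{x_n/4}(X,t)}A\lesssim\norm{\mu}_C^{1/2}$, which rests on the same single-Whitney-ball integration.
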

The construction in \cite[Lemma 3.32]{DLP1} gives this lemma with two
modifications: the Carleson estimates include Whitney-ball suprema, and
the bound \eqref{D2Bbdd} retains the factor $\norm{\mu}_C^{1/2}$.
The supremum condition on $A$ gives \eqref{B+CCM}. Differentiating the
same smoothing construction gives
 \begin{equation*}
        \sup_{B_{X,t}}\br{\abs{\nabla B}^2\delta+\abs{\dr_t B}^2\delta^3+\abs{\nabla^i\dr_t^j B}^2\delta^{2i+4j-1}}\,dX\,dt \quad\text{ is a Carleson measure on }\om,
    \end{equation*}
with Carleson norm at most $C\norm{\mu}_C$. The Whitney-ball supremum then gives \eqref{D2Bbdd}.
\medskip

Thanks to Lemma~\ref{lem.CarImprov} and Theorem~\ref{thm.pert}, it suffices to prove Theorem~\ref{MainT} for parabolic operators whose coefficient matrix satisfies the condition \eqref{D2Bbdd} instead of just \eqref{E:1:carl} or \eqref{E:1:carl2}. 

Let $p\in(1,\infty)$. Choose an integer $m\ge2$ such that $\p:=4m>p$; thus $\p\ge8$. We will show that 
\begin{equation}\label{pf0}
    \text{there exists }\delta>0 \text{ such that }(N)_{\p}^{\LL} \text{ is solvable if }\norm{\mu}_C<\delta \text{ in }\om=\Rn_+\times\R.
\end{equation} 
Once we obtain this, we choose $\delta$ small enough so that $(D)_{\p'}^{\LL^*}$ is solvable, which is possible due to \cite[Theorem 1.1]{DDH}. Then $(N)_p^{\LL}$ follows from interpolation of the Neumann problem (\cite[Theorem 1.6]{DLP2}), as desired.
Thus the direct estimates below are needed only for $\p=8,12,\dots$;
the interpolation step supplies every target exponent $p\in(1,\infty)$.

To prove \eqref{pf0}, let $g\in L^{\p}(\pom)\cap \Hdot^{-1/4}_{\pd_{t} - \Delta_x}(\partial\Omega)$ and let $u\in \dot \E$ be the energy solution to the Neumann problem $\LL u= 0$ in $\om$ with $\dr_{\nu}^A u|_{\pom}=g$. We assume a priori that $\norm{\wt N(\nabla u)}_{L^{\p}(\pom)}<\infty$.  We denote by $H:=a_{nj}\partial_ju$ (using summation convention) which at the boundary $\pom$ gives us the co-normal derivative of $u$. 
\medskip

We first bound the nontangential maximal function of $\nabla u$ by $p$-adapted square functions, which is done in Section~\ref{SS:43}. Precisely, for any $\varepsilon\in(0,1)$, Theorems~\ref{thm.NlessSdT} and \ref{thm.NleSpH} allow us to find $\delta>0$ such that if $\norm{\mu}_C<\delta$ (recall that we have additionally \eqref{D2Bbdd}), then there holds
\begin{equation}\label{pf.NdT<Sp}
    \|{\tilde N}(\nabla_Tu)\|_{L^{\p}(\pom)}^{\p}\le C\norm{S_{\p}(\nabla_T u)}_{L^{\p}(\pom)}^{\p}+ \varepsilon\br{\norm{\tilde N(\nabla u)}_{L^{\p}(\pom)}^{\p}+\norm{S_2(\nabla u)}_{L^{\p}(\pom)}^{\p}},
 \end{equation}  
 and 
\begin{equation}\label{pf.NH<Sp}
    \|{\tilde N}(H)\|_{L^{\p}(\pom)}^{\p}\le C\norm{S_{\p}(H)}_{L^{\p}(\pom)}^{\p}+ \varepsilon\br{\norm{\tilde N(\nabla u)}_{L^{\p}(\pom)}^{\p}+\norm{S_2(\nabla u)}_{L^{\p}(\pom)}^{\p}}.
 \end{equation} 
 Next, we control the $p$-adapted square functions $\norm{S_{\p}(\nabla_T u)}_{L^{\p}(\pom)}$ and $\norm{S_{\p}(H)}_{L^{\p}(\pom)}$ in Section~\ref{S.Sp_bd}. Specifically, the two key steps carried out in the proof of Lemma~\ref{NPl1} are 
 \begin{equation*}
     \norm{S_{\p}(\nabla_T u)}_{L^{\p}(\pom)}^{\p}\le C\norm{\nabla_Tu}_{L^{\p}(\pom)}^{\p}+ \varepsilon\br{\norm{\tilde N(\nabla u)}_{L^{\p}(\pom)}^{\p}+\norm{S_2(\nabla u)}_{L^{\p}(\pom)}^{\p}},
 \end{equation*}
 and 
 \begin{equation*}
     \norm{\nabla_Tu}_{L^{\p}(\pom)}^{\p}\le C \iint_{\Omega}|\nabla_T u|^{\p-2}|\nabla H|^2x_n\,dX\,dt +\varepsilon\br{\norm{\tilde N(\nabla u)}_{L^{\p}(\pom)}^{\p}+\norm{S_2(\nabla_T u)}_{L^{\p}(\pom)}^{\p}}.
 \end{equation*}
 Combining the two estimates with \eqref{pf.NdT<Sp}, one obtains the estimate summarized in Lemma~\ref{NPl1}:
 \begin{equation}\label{pf.NdT<H}
      \|{\tilde N}(\nabla_Tu)\|_{L^{\p}(\pom)}^{\p}\le C
\iint_{\Omega}|\nabla_T u|^{\p-2}|\nabla
H|^2x_n\,dX\,dt+\varepsilon\,C\br{\norm{\tilde N(\nabla u)}_{L^{\p}(\pom)}^{\p}+\norm{S_2(\nabla u)}_{L^{\p}(\pom)}^{\p}}.
 \end{equation}
We need to control the term $\iint_{\Omega}|\nabla_T u|^{\p-2}|\nabla
H|^2x_n\,dX\,dt$ and $\norm{S_{\p}(H)}_{L^{\p}(\pom)}$. Both terms are taken care of by Lemma~\ref{NPl2}, which shows that for any integer $0\le k\le \p-2$,
\begin{multline}\label{pf.iter}
\iint_{\Omega}|\nabla_T u|^{\p-k-2}|H|^{k}|\nabla
H|^2x_n\,dX\,dt\le C\int_{\partial\Omega}|H|^{\p}\,dx\,dt\\
+C(\p-k-2)\iint_{\Omega}|\nabla_T u|^{\p-k-3}|H|^{k+1}|\nabla
H|^2x_n\,dX\,dt+\varepsilon\,C\left[\|N(\nabla u)\|_{L^{\p}(\partial\Omega)}^{\p}+\|S(\nabla u)\|^{\p}_{L^{\p}(\partial\Omega)}\right].
\end{multline}
In fact, letting $k=\p-2$ in \eqref{pf.iter} gives that 
\[
\norm{S_{\p}(H)}_{L^{\p}(\pom)}^{\p}\le  C\norm{H}_{L^{\p}(\pom)}^{\p} + \varepsilon\,C\left[\|N(\nabla u)\|_{L^{\p}(\partial\Omega)}^{\p}+\|S(\nabla u)\|^{\p}_{L^{\p}(\partial\Omega)}\right],
\]
and applying \eqref{pf.iter} inductively for $k=0,1,\dots,\p-2$ gives that 
\[
\iint_{\Omega}|\nabla_T u|^{\p-2}|\nabla
H|^2x_n\,dX\,dt \le 
C\norm{H}_{L^{\p}(\pom)}^{\p} + \varepsilon\,C\left[\|N(\nabla u)\|_{L^{\p}(\partial\Omega)}^{\p}+\|S(\nabla u)\|^{\p}_{L^{\p}(\partial\Omega)}\right].
\]
Therefore, by \eqref{pf.NdT<H} and \eqref{pf.NH<Sp}, one obtains that 
\begin{equation*}
     \|{\tilde N}(\nabla_Tu)\|_{L^{\p}(\pom)}^{\p}\le C\norm{H}_{L^{\p}(\pom)}^{\p}+ \varepsilon\br{\norm{\tilde N(\nabla u)}_{L^{\p}(\pom)}^{\p}+\norm{S_2(\nabla u)}_{L^{\p}(\pom)}^{\p}},
\end{equation*}
and 
\begin{equation*}
      \|{\tilde N}(H)\|_{L^{\p}(\pom)}^{\p}\le C\norm{H}_{L^{\p}(\pom)}^{\p}+ \varepsilon\br{\norm{\tilde N(\nabla u)}_{L^{\p}(\pom)}^{\p}+\norm{S_2(\nabla u)}_{L^{\p}(\pom)}^{\p}}.
\end{equation*}
Since $\wt N(\partial_n u)$ is related to ${\wt N}(\nabla_Tu)$ and ${\wt N}(H)$ by using the definition of $H$ and the bounds for the matrix $A$:
$$|\wt N(\partial_n u)|\le C|\wt N(H)|+ C|\wt N(\nabla_Tu)|,$$ 
we have that
\[
  \|{\tilde N}(\nabla u)\|_{L^{\p}(\pom)}^{\p}\le C\norm{H}_{L^{\p}(\pom)}^{\p}+ \varepsilon\br{\norm{\tilde N(\nabla u)}_{L^{\p}(\pom)}^{\p}+\norm{S_2(\nabla u)}_{L^{\p}(\pom)}^{\p}}.
\]
This is almost what we want, except that we still have $\norm{S_2(\nabla u)}_{L^{\p}(\pom)}$ on the right-hand side. This is not an issue, as by Theorem~\ref{thm.S<Np}, $\norm{S(\nabla u)}_{L^{\p}}\le C\|N(\nabla u)\|_{L^{\p}}\le C\|\wt N(\nabla u)\|_{L^{\p}}$. Now we choose  
 $\varepsilon\in(0,1)$ sufficiently small, so that we get that
 \begin{equation}\label{NP-est}
  \|{\tilde N}(\nabla u)\|_{L^{\p}(\pom)}\le C\norm{H}_{L^{\p}(\pom)}=C\norm{g}_{L^{\p}(\pom)},
 \end{equation}
which proves \eqref{pf0} under the a priori assumption $\|{\tilde N}(\nabla u)\|_{L^{\p}(\pom)}<\infty$.

\bigskip
It remains to justify the finiteness of $\|{\tilde N}(\nabla u)\|_{L^{\p}(\pom)}$. We split the argument into multiple steps, some of which are similar to those of \cite{DLP1}, while others are new. 
The steps below involve the following key components: (1) reducing to operators with smooth coefficients whose gradient is finitely supported, and to smooth data; (2) using the inhomogeneous PDE satisfied by the tangential derivatives of the solution ($\partial_i u, \, i<n$), but complicated by the fact that the right-hand side involves all derivatives; (3) using the inhomogeneous PDE satisfied by the conormal derivative ($\sum_j a_{n,j}\partial_j$) to get the missing estimates on $\partial_n u$; (4) deriving a Moser iteration argument to improve regularity of $|\nabla u|$ that requires a careful intertwining of the estimates obtained on the tangential derivatives and the conormal derivative. The bootstrap estimate is provided in Lemma \ref{Lemma-moser}.
\medskip

{\bf Step 1. Approximation.} We approximate a matrix $B$ as in \eqref{D2Bbdd} by a sequence of matrices $B^i$, $i\ge2$, as follows.

Put $\rho(X,t)=(|X|^4+t^2)^{1/4}$ and choose
$\chi\in C^\infty(\R)$ with $0\le\chi\le1$, $\chi=1$ on
$(-\infty,0]$ and $\chi=0$ on $[1,\infty)$. Set
\[
 a_i=2^{1/4}i,\qquad b_i=2i^2,\qquad L_i=\log(b_i/a_i),\qquad
 \eta^i(X,t)=\chi\left(\frac{\log(\rho(X,t)/a_i)}{L_i}\right),
\]
with $\eta^i(0,0)=1$. This smooth cutoff equals one on $B(0,i)$
and vanishes outside $B(0,2i^2)$. Its derivatives satisfy
\[
 |\nabla\eta^i|\lesssim\frac{1}{L_i\rho},\qquad
 |\partial_t\eta^i|\lesssim\frac{1}{L_i\rho^2}.
\]
Consider
\begin{equation}
B^i(x,x_n,t)=\eta^i(x,x_n,t)B(x,x_n+1/i,t)+(1-\eta^{i})I,
\end{equation}
where $I$ is the $n\times n$ identity matrix. All matrices $B^i$ on the domain $\Omega=\R^n_+\times\R$ are uniformly elliptic, with constants independent of $i$, and each $B^i$ is the identity matrix outside a large ball centered at zero. Moreover,
 $\|\nabla^j \partial^k_t B^i\|_{L^\infty(\Omega)}<\infty$ for all nonnegative integers $j,k$ such that $j+k\ge 1$.
This is a consequence of the fact that $|\nabla B|\le Cx_n^{-1}$ and thus $|\nabla B^i|\le C(x_n+1/i)^{-1}\le Ci$, with similar arguments for  $\partial_t B^i$ and all higher derivatives.
Let $\mu_{B^i}$ and $\mu_B$ denote the derivative measures in
\eqref{E:1:carl} with $A$ replaced by $B^i$ and $B$, respectively.
Then
\begin{equation}\label{eq:approxCarleson}
 \|\mu_{B^i}\|_C
 \le C\|\mu_B\|_C+
       \frac{C\|B-I\|_{L^\infty(\Omega)}^2}{L_i^2},
 \qquad L_i\simeq\log i,
\end{equation}
with $C$ independent of $i$. Indeed, $x_n\le\rho$ gives
\[
 x_n|\nabla\eta^i|^2+x_n^3|\partial_t\eta^i|^2
 \lesssim L_i^{-2}\rho^{-1}.
\]
The measure $\rho^{-1}\,dX\,dt$ is Carleson: for a boundary cube
of radius $r$ far from the origin, $\rho\gtrsim r$ on its Carleson
box; otherwise the box lies in a parabolic ball of radius $Cr$ about
the origin, where the integral is at most $Cr^{n+1}$. Moreover,
$\rho$ is comparable throughout each Whitney ball, so this bound
also holds after taking the supremum in \eqref{E:1:carl}.

The shifted matrix $B(x,x_n+1/i,t)$ has derivative Carleson norm
at most $C\|\mu_B\|_C$. For boxes of radius $r\ge1/i$, this follows
by translation into a box of radius $2r$. For $r<1/i$, the scaled
derivative bounds give a Carleson quotient at most
$C\|\mu_B\|_C((ir)^2+(ir)^4)$.
The product rule now proves \eqref{eq:approxCarleson}.
By choosing the original Carleson norm sufficiently small and
discarding finitely many indices, the matrices $B^i$ satisfy the
smallness required for \eqref{NP-est}, uniformly in the remaining $i$.
Finally, $B^i\to B$ uniformly on compact subsets of $\Omega$ and
in $L^p(\Omega\cap B_R(0,0))$ for every $p<\infty$ and $R>0$.

Consider the approximate energy solutions $u^i$ to the PDE problems
$$\mathcal L^iu^i=\divg(B^i\nabla u^i)-\partial_tu^i=0\quad\mbox{in }\Omega\quad\mbox{with}\quad \partial^{B^i}_\nu u^i\Big|_{\partial\Omega}=g,$$
where $g$ is in the trace space 
 $\Hdot^{-1/4}_{\pd_{t} - \Delta_x}(\partial\Omega)$. Since, for all $i$, the ellipticity constants have the same bounds, the energy norms $\dot\E(\Omega)$ of the $u^i$ are uniformly bounded by a constant that only depends on the $\Hdot^{-1/4}_{\pd_{t} - \Delta_x}(\partial\Omega)$ norm of $g$.
 
The solutions $u^i$ are actually determined up to a constant, and enjoy interior H\"older continuity, so it may be assumed without loss of generality that 
$u^i(0,1,0)=0$ for all $i$. With this in hand, and the fact that $\sup_i \|u^i\|_{\dot\E(\Omega)}<\infty$
we conclude that there exists a subsequence $(u^{i_n})$ of the sequence $(u^i)$
 converging weakly in $\dot\E(\Omega)$ to a function $u\in\dot\E(\Omega)$ with
$u(0,1,0)=0$. From this point, the argument is analogous to that of Section 4 (Step 5) of \cite{DLP1} and hence we omit the details. In summary, there exists a subsequence $u^{i_n}$ converging strongly on compact subsets of $\Omega$. 

In step 5, we will prove that the approximate solutions $u^i$ have the a priori bound $\|\tilde{N}(\nabla u^i)\|_{L^\p(\pom)}<\infty$ (as required above for $g$ with finite ${L}^\p(\partial\Omega)$ norm). 
Assuming this bound for the moment, by \eqref{NP-est} we have that for each $i$, 
$$\|\tilde{N}(\nabla u^i)\|_{L^\p(\pom)}\le C\|g\|_{{L}^\p(\partial\Omega)},$$
with a constant $C$ which is independent of $i$, depending only on ellipticity of the original matrix $B$ and the Carleson norm of its coefficients. As in \cite{DLP1} we then conclude that
\begin{equation}\label{R_2solest}
\|\tilde{N}(\nabla u)\|_{L^\p(\pom)}\le C\|g\|_{{L}^\p(\partial\Omega)}.
\end{equation}
To see that $u$ attains the boundary value $g$, observe that for each $i$ and $v\in \dot\E(\Omega)$ by 
\eqref{eq.NeumannBdy}
$$
 \iint_{\mathcal O\times\R} \left[B^i \nabla u^i \cdot{\nabla v} + \HT \dhalf u^i \cdot {\dhalf v}\right] \d X \d t -\int_{\partial\mathcal O
 \times\R}gv \d x\d t= 0.
$$
Here it suffices to consider Lipschitz test functions $v$ with bounded support as they are dense in $\dot\E(\Omega)$. Recall that we have the weak convergence of $u^{i_n}\to u$ in $\dot\E(\Omega)$. Furthermore, clearly $(B^i)^T\nabla v\to B^T\nabla v$ in $L^2(\Omega)$ since $B^i\to B$ in $L^p(\Omega\cap B_R(0,0))$ for any $p<\infty$ and $R>0$ and $\nabla v\in L^\infty$ and has bounded support. Thus we
conclude that
$$
 \iint_{\mathcal O\times\R} \left[\nabla u \cdot{B^T\nabla v} + \HT \dhalf u \cdot {\dhalf v}\right] \d X \d t -\int_{\partial\mathcal O
 \times\R}gv \d x\d t= 0.
$$
Hence $u$ is the energy solution of the parabolic PDE with matrix $B$ and Neumann boundary datum $g$ as desired. 
\medskip

{\bf Step 2. Reduction to an inhomogeneous problem.} It follows that we only need to prove that  $\|\tilde{N}(\nabla u^i)\|_{L^\p(\pom)}<\infty$ 
for solutions $u^i$ as defined above, associated to matrices $B^i$
having the properties listed in step 1. We fix $i\in\N$ and consider a matrix $B^i$ as above. Dropping the index $i$ we may assume that $B$ is uniformly elliptic, satisfies the Carleson measure condition \eqref{B+CCM}, equals the identity matrix outside a large ball centered at zero, and also has the property
that $\|\nabla^\alpha \partial^\beta_t B\|_{L^\infty(\Omega)}<\infty$ for all nonnegative integers $\alpha,\beta$ such that $\alpha+\beta\ge 1$.
Let $u$ be an energy solution to $\LL u=0$ where $\LL=\divg(B\nabla\cdot)-\partial_t$. Without loss of generality assume also that $\partial_\nu^Bu\Big|_{\pom}=g$ for some smooth function $g$ (in all variables). 
It is sufficient to establish solvability for a class of functions that is dense in our underlying space since it then extends continuously to the whole space, thanks to estimate \eqref{R_2solest}.\medskip

We aim to find a differentiable function $v$ supported in a bounded set $\Omega\cap B_{2R}(0,0)$, for some large $R>0$, satisfying $\partial_\nu^B v=g$ on $\partial\Omega$. We start by choosing $R>0$ large enough such that outside the ball $B_{R}(0,0)$ we have $g=0$ on $\partial\Omega$
and $B=I$. We now find a smooth domain $\tilde{\Omega}$ such that
$$\Omega\cap B_{R}(0,0) \subset \tilde{\Omega} \subset \Omega\cap B_{2R}(0,0).$$
Writing $x_0=t$ and $\partial_0=\partial_t$, we regard 
$\tilde{\Omega}$ as a smooth bounded subset of $\R^{n+1}$ with variables $x_0,x_1,\dots,x_n$.
We introduce a new $(n+1)\times(n+1)$ matrix $\tilde B$ defined by
$$
\tilde B=\left[ \begin{array}{c|c}
   1 & 0 \\
   \midrule
   0 & B \\
\end{array}\right]. $$
Let $\tilde{g}$ be a smooth function on $\partial\tilde\Omega$ such that $\tilde g=g$ on $\partial\tilde\Omega\cap \partial\Omega$, but also satisfying the condition $\int_{\partial\tilde\Omega} \tilde g=0$.
For this datum $\tilde{g}$, one can solve the elliptic Neumann problem:
$$\divg_{x_0,x}(\tilde B\nabla_{x_0,x}\tilde v)=0\quad\mbox{in }\tilde\Omega,\qquad \partial_\nu^{\tilde B}\tilde v=\tilde g\quad \mbox{on }\partial\tilde\Omega,$$
by the method of layer potentials.
(For example, see \cite{DMem} Theorem 4.4 for solvability via the method of layer potentials.)
In short, we have \lq\lq ellipticized"  the problem.
The new matrix $\tilde B$ has the required smoothness - it is twice differentiable in $x_0$ and $x$. It follows that
$\nabla_{x,x_0}\tilde v\in C^\alpha(\tilde\Omega)$ for some $\alpha>0$; and in fact, as the matrix $\tilde B$
is smooth, $\tilde{v}$ has as high differentiability as one might require.  Observe that on
$\partial\tilde\Omega\cap \partial\Omega$, it is the case that $\partial_\nu^{\tilde B}\tilde v=\partial_\nu^{B}\tilde v$ and hence
$\tilde v$ satisfies our original Neumann data. The final step is to multiply $\tilde v$ by a smooth cutoff function $\varphi$
supported on $B_{2R}(0,0)$, equal to $1$ near the set $\partial\Omega\cap B_R(0,0)$ which is constant in the $x_n$ variable near the set $\partial\Omega$. All of this is achievable and the resulting function $v=\tilde v\varphi$
is supported in a bounded set $\Omega\cap B_{2R}(0,0)$ for some large $R>0$ and $\partial_\nu^B v=g$ on $\partial\Omega$. It now follows that $u=v+w$ where $w\in\dot\E(\Omega)$ solves the parabolic PDE:
$$-\partial_t w+\divg(B\nabla w):=h=\partial_t v-\divg(B\nabla v)\quad\mbox{in }\Omega,\qquad \partial_\nu^B w=0\quad\mbox{on }\partial\Omega.$$
Here $h$ on the right-hand side is an $L^\infty(\Omega)$ function with bounded support.
If we prove that $\|\tilde{N}(\nabla w)\|_{L^\p(\pom)}<\infty$ we are done, since $\nabla v\in L^\infty(\Omega)$
has bounded support, and hence $\|\tilde{N}(\nabla v)\|_{L^p(\pom)}<\infty$ for any $1\le p\le\infty$. 

Let us now use an even reflection and extend the function $w$ across the boundary $x_n=0$. The extended solution $\tilde w$ is defined as
$$\tilde w(X,t)=\begin{cases}w(x,x_n,t),&\quad\mbox{when }x_n>0,\\
w(x,-x_n,t),&\quad\mbox{when }x_n<0,\end{cases}$$
and solves the parabolic PDE: 
\begin{equation}-\partial_t \tilde w+\divg(\tilde B\nabla \tilde w):=\tilde h
\end{equation} in the whole domain 
$\R^n\times\R$. Here $\tilde h$ is the even reflection of $h$, and the coefficients of the matrix $\tilde B$
are bounded functions of all variables and in fact smooth in $x$ and $x_n$, except across the hyperplane $x_n=0$. Obviously, $\tilde B$ is uniformly elliptic. By interior H\"older regularity of solutions it follows that $\tilde w\in L^\infty(\R^n\times\R)$.
\medskip

{\bf Step 3. Regularity of $\nabla \tilde w$.} Let us first consider $\partial_i\tilde w$, for $i=1,2,\dots, n-1$.
For simplicity of notation set $w_i=\partial_i\tilde w$. Clearly, $\tilde B$ is differentiable with respect to $x_i$
since its possible jump is across $x_n=0$, whereas $x_i$ is a tangential variable.  Hence $w_i$ satisfies the PDE
\begin{equation}\label{eq-tildewi}-\partial_t w_i+\divg(\tilde B\nabla w_i):=\partial_i \tilde h-\divg((\partial_i\tilde B)\nabla\tilde w)
\end{equation} in the whole domain $\R^n\times\R$. A priori we know that $\partial_i\tilde B\in L^\infty(\R^n\times\R)$ and $\nabla\tilde w\in L^2(\R^n\times\R)$ because $w\in\dot \E(\Omega)$. 
Since it is also the case that $\tilde h\in L^2(\R^n\times\R)$, this implies that $w_i\in\dot \E(\R^n\times\R)$, $i=1,2,\dots,n-1$.

The following lemma gives $w_i\in L^\infty(\R,L^2(\R^n))$ together
with a gain in spatial integrability. In dimension two that gain has
a different time exponent.

\begin{lemma}\label{Lemma-moser} Let $n\ge2$ and $2\le p<\infty$. Suppose $u\in\dot\E(\R^n\times\R)$ is an energy solution of the PDE
$$-\partial_t u+\divg(A\nabla u)=-h+\divg F,\qquad\quad\mbox{in } \R^n\times\R,$$
where $A$ is elliptic, measurable and bounded and 
\begin{equation}
S:=\int_\R\left(\int_{\R^n}|h|^pdx\right)^{1/p}dt+\left(\int_\R\left(\int_{\R^n}|F|^pdx\right)^{2/p}dt\right)^{1/2}<\infty.
\end{equation}
Assume zero past data, understood by taking limits of solutions with
zero initial data at times tending to $-\infty$.
Define
\[
 \mathcal X_p^{(n)}=
 \begin{cases}
 L^\infty(\R;L^p(\R^n))\cap L^p(\R;L^{np/(n-2)}(\R^n)),& n>2,\\
 L^\infty(\R;L^p(\R^2))\cap L^{2p}(\R;L^{2p}(\R^2)),& n=2,
 \end{cases}
\]
with the sum of the two norms. Then
\begin{equation}\label{eq:bootstrapMixedNorm}
 \|u\|_{\mathcal X_p^{(n)}}\le C(n,p,\lambda)S.
\end{equation}
\end{lemma}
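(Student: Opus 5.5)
We would prove the estimate by the standard $L^p$ energy (Moser-type) argument: test the equation against $|u|^{p-2}u$, obtain an energy inequality in time, and then pass to the mixed norm via Sobolev embedding. Throughout, we argue first for solutions with zero data at an initial time $t_0$ and smooth truncations, and let $t_0\to-\infty$ at the end. This is allowed by the ``zero past data'' convention. Because the constants never depend on $t_0$, the limit passes through by lower semicontinuity of the norms.

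\emph{Energy identity.} Fix $T\in\R$ and multiply the equation by $\varphi=|u|^{p-2}u$, justified via a truncation $\min(|u|,M)$ and then $M\to\infty$. Integrating over $\R^n\times(t_0,T)$ gives
\[
\frac1p\int_{\R^n}|u(T)|^p\,dx+\lambda(p-1)\iint|u|^{p-2}|\nabla u|^2
\le \iint|h||u|^{p-1}+(p-1)\iint|F||u|^{p-2}|\nabla u|.
\]
Set $v=|u|^{p/2}$, so that $|u|^{p-2}|\nabla u|^2=\tfrac{4}{p^2}|\nabla v|^2$.

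\emph{Forcing terms.} For the $F$-term, Cauchy--Schwarz and absorption give a bound by $C\iint|F|^2|u|^{p-2}$. Hölder in $x$ with exponents $p/2$ and $p/(p-2)$ bounds this by $C\int\|F(t)\|_p^2\|u(t)\|_p^{p-2}\,dt$. Using $\sup_t\|u(t)\|_p^{p-2}$ and the definition of $S$, this is at most $CS^2\,Y^{p-2}$, where $Y:=\sup_t\|u(t)\|_p$. For the $h$-term, Hölder in $x$ gives a bound by $\int\|h\|_p\|u\|_p^{p-1}\,dt\le S\,Y^{p-1}$. Taking the supremum over $T$, we get
\[
Y^p+\|\nabla v\|_{L^2}^2\lesssim SY^{p-1}+S^2Y^{p-2}.
\]
Young's inequality then yields $Y\lesssim S$ and $\|\nabla v\|_{L^2(\R^{n+1})}^2\lesssim S^p$. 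A priori finiteness of $Y$ must be secured first. We would obtain it from the truncation, or by working on bounded time intervals where the energy class and the local boundedness of $u$ make the quantities finite, before absorbing.

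\emph{Embedding.} For $n>2$, the Sobolev inequality in space gives $\|v(t)\|_{2n/(n-2)}\lesssim\|\nabla v(t)\|_2$. Hence
\[
\int\|u(t)\|_{np/(n-2)}^p\,dt=\int\|v(t)\|_{2n/(n-2)}^2\,dt\lesssim S^p,
\]
which is the second component of $\mathcal X_p^{(n)}$. For $n=2$ we use the Ladyzhenskaya/Gagliardo--Nirenberg inequality $\|v\|_{4}^2\lesssim\|v\|_2\|\nabla v\|_2$. This gives
\[
\int\|u\|_{2p}^{2p}\,dt=\int\|v\|_4^4\,dt\lesssim\sup_t\|v\|_2^2\int\|\nabla v\|_2^2\,dt\lesssim Y^p S^p\lesssim S^{2p},
\]
so $\|u\|_{L^{2p}L^{2p}}\lesssim S$, which matches the stated exponent in dimension two.

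\emph{Main obstacle.} The delicate step is the rigorous justification of the test function $|u|^{p-2}u$ for a mere energy solution on all of $\R$, together with the a priori finiteness of $Y$ needed for absorption. The time derivative is only in $L^2H^{-1}$ (or $H^{1/2}$), so the chain rule $\langle\partial_t u,|u|^{p-2}u\rangle=\frac1p\frac{d}{dt}\|u\|_p^p$ requires Steklov averaging in time combined with truncation at level $M$. One then shows the resulting estimates are uniform in $M$ and $t_0$. We would do this by Steklov averages $u_\eta$, test with $\min(|u_\eta|,M)^{p-2}u_\eta$ cut off spatially, and pass $\eta\to0$, spatial cutoff $\to\infty$, $M\to\infty$, and finally $t_0\to-\infty$.
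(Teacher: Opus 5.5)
Your proposal is correct and is essentially the paper's proof. In both, one tests with $|u|^{p-2}u$, bounds $\sup_t\|u(t)\|_{L^p}$ and $\|\nabla(|u|^{p/2})\|_{L^2}$ by $S$, and then applies spatial Sobolev for $n>2$ and the Ladyzhenskaya inequality for $n=2$.

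The differences are minor. The paper splits $u=u_1+u_2$ according to the two forcings. It then integrates the differential inequality $\frac{d}{dt}\|u_1(t)\|_{L^p}^2\le (p-1)\lambda^{-1}\|F(t)\|_{L^p}^2$ from $-\infty$ (and its analogue for $h$), instead of absorbing with Young's inequality. This replaces the a priori bound on $\sup_t\|u(t)\|_{L^p}$ that your absorption step needs, which you correctly flag. The justification by truncated powers and finite initial times is left at the same level of detail in both arguments.
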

\begin{proof} We split the solution into two parts, $u = u_1+u_2$,
where $u_1$ has right-hand side $\divg F$ and $u_2$ has right-hand side $-h$.
First, we consider $u_1$. It follows that after we multiply the PDE $u_1$ satisfies by $|u_1|^{p-2}u_1$ and integrate over the spatial variables we get that
\begin{multline}\label{eq-316}
p^{-1}\frac{d}{dt}\int_{\R^n}|u_1|^p dx =-(p-1)\int_{\R^n} A\nabla u_1\cdot\nabla u_1|u_1|^{p-2}\,dx\\
+(p-1)\int_{\R^n}F\cdot \nabla u_1|u_1|^{p-2}dx.
\end{multline}
For the first term on the right-hand side we have (by ellipticity)
$$-(p-1)\int_{\R^n} A\nabla u_1\cdot\nabla u_1|u_1|^{p-2}\,dx\le -\frac{4(p-1)}{p^2}\lambda \int_{\R^n} |\nabla(u_1|u_1|^{p/2-1})|^2dx,$$
which is strictly negative. Using Cauchy-Schwarz for the last term of \eqref{eq-316}, and then H\"older's inequality, we get
\begin{multline}
\left|(p-1)\int_{\R^n}F\cdot \nabla u_1|u_1|^{p-2}dx\right|\le \frac{2(p-1)}{p^2}\lambda \int_{\R^n} |\nabla(u_1|u_1|^{p/2-1})|^2dx \\+
\frac{p-1}2\lambda^{-1}\int_{\R^n}|F|^2|u_1|^{p-2}dx
\le \frac{2(p-1)}{p^2}\lambda \int_{\R^n} |\nabla(u_1|u_1|^{p/2-1})|^2dx \\+\frac{p-1}2\lambda^{-1}\left(\int_{\R^n}|u_1|^{p}dx\right)^{(p-2)/p} \left(\int_{\R^n}|F|^pdx\right)^{2/p}.
\end{multline}
From \eqref{eq-316}, moving the negative term to the left-hand side, we see
that
\begin{multline}\label{eq-317a}
\frac{d}{dt}\int_{\R^n}|u_1|^p dx +\frac{2(p-1)}{p}\lambda \int_{\R^n} |\nabla(u_1|u_1|^{p/2-1})|^2dx\\\le \frac{(p-1)p}2\lambda^{-1}\left(\int_{\R^n}|u_1|^{p}dx\right)^{(p-2)/p} \left(\int_{\R^n}|F|^pdx\right)^{2/p}.
\end{multline}
Dropping the (positive) second term in the inequality and rearranging the remaining terms gives
$$\frac{d}{dt}\left(\int_{\R^n}|u_1|^p dx\right)^{2/p}\le (p-1)\lambda^{-1}\left(\int_{\R^n}|F|^pdx\right)^{2/p}.
$$
Integrating over the interval $(-\infty,t]$, and recalling that
$u_1\to 0$ as $t \to-\infty$, 
yields
$$
\left(\int_{\R^n}|u_1(t,x)|^p dx\right)^{2/p}\le (p-1)\lambda^{-1}\int_{-\infty}^t\left(\int_{\R^n}|F(\tau,x)|^pdx\right)^{2/p}d\tau.$$
It follows that 
\begin{equation}
\sup_{t\in\R}\|u_1(\cdot,t)\|_{L^p(\R^n)}\le (p-1)^{1/2}\lambda^{-1/2}
\left(\int_{\R}\left(\int_{\R^n}|F|^pdx\right)^{2/p}dt\right)^{1/2}.
\end{equation}
Inserting this estimate for the ``$u_1$" integrand on the right-hand side of \eqref{eq-317a} gives
\begin{multline}\label{eq-317aa}
\frac{d}{dt}\int_{\R^n}|u_1|^p dx +\frac{2(p-1)}{p}\lambda \int_{\R^n} |\nabla(u_1|u_1|^{p/2-1})|^2dx\\\le \frac{[(p-1)\lambda^{-1}]^{p/2}p}2\left(\int_{\R}\left(\int_{\R^n}|F(\tau,x)|^pdx\right)^{2/p}d\tau\right)^{(p-2)/2} \left(\int_{\R^n}|F|^pdx\right)^{2/p}.
\end{multline}
After integrating \eqref{eq-317aa} over $\R$, we conclude that
\begin{multline}\label{eq-317aas}
\int_\R \int_{\R^n} |\nabla(u_1|u_1|^{p/2-1})|^2dxd\tau\\\le \frac{(p-1)^{p/2-1}p^2\lambda^{-p/2-1}}4\left(\int_{\R}\left(\int_{\R^n}|F(\tau,x)|^pdx\right)^{2/p}d\tau\right)^{(p-2)/2} \int_\R\left(\int_{\R^n}|F|^pdx\right)^{2/p}d\tau\\
=\frac{(p-1)^{p/2-1}p^2\lambda^{-p/2-1}}4\left(\int_{\R}\left(\int_{\R^n}|F(\tau,x)|^pdx\right)^{2/p}d\tau\right)^{p/2}.
\end{multline}
When $n>2$, we see that
$u_1(t,\cdot)|u_1(t,\cdot)|^{p/2-1}\in W^{1,2}(\R^n)$ and therefore $u_1(t,\cdot)|u_1(t,\cdot)|^{p/2-1}\in L^q(\R^n)$ for $q=2n/(n-2)$.
By Sobolev embedding in the spatial variables and \eqref{eq-317aas} we get that
\begin{equation*}
\left(\int_\R\left(\int_{\R^n}|u_1|^{np/(n-2)}dx\right)^{(n-2)/n}dt\right)^{1/p}\le C(p,\lambda) \left(\int_{\R}\left(\int_{\R^n}|F|^pdx\right)^{2/p}dt\right)^{1/2},
\end{equation*}
or in other words, 
$$\|u_1\|_{L^p(\R,L^{np/(n-2)}(\R^n))}\le C(p,\lambda)\|F\|_{L^2(\R,L^{p}(\R^n))}.
$$
\medskip

The calculation for $u_2$ is analogous. We start by observing that for the term containing $h$, by H\"older's  inequality,
$$\left|\int_{\R^n}h u_2|u_2|^{p-2}dx\right|\le \left(\int_{\R^n}|u_2|^pdx\right)^{(p-1)/p}\left(\int_{\R^n}|h|^pdx\right)^{1/p},
$$
which, as above, then yields the estimate
\begin{equation}
\sup_{t\in\R}\|u_2(\cdot,t)\|_{L^p(\R^n)}\le \int_{\R}\left(\int_{\R^n}|h|^pdx\right)^{1/p}dt.
\end{equation}
Integrating the energy inequality for $u_2$ and using this supremum
bound also gives
\[
 \iint_{\R^n\times\R}
 |\nabla(u_2|u_2|^{p/2-1})|^2\,dx\,dt
 \le C(p,\lambda)\|h\|_{L^1(\R;L^p(\R^n))}^p.
\]
For $n>2$, Sobolev embedding therefore gives
\begin{equation*}
\left(\int_\R\left(\int_{\R^n}|u_2|^{np/(n-2)}dx\right)^{(n-2)/n}dt\right)^{1/p}\le C(p,\lambda) \int_{\R}\left(\int_{\R^n}|h|^pdx\right)^{1/p}dt,
\end{equation*}
or in other words, 
$$\|u_2\|_{L^p(\R,L^{np/(n-2)}(\R^n))}\le C(p,\lambda)\|h\|_{L^1(\R,L^{p}(\R^n))}.
$$
For $n=2$, put $z_j=u_j|u_j|^{p/2-1}$ for $j=1,2$.
The two-dimensional Gagliardo--Nirenberg inequality is
\[
 \|z_j(t)\|_{L^4(\R^2)}^4
 \le C\|z_j(t)\|_{L^2(\R^2)}^2
          \|\nabla z_j(t)\|_{L^2(\R^2)}^2.
\]
Integrating in time, using the supremum bounds and the preceding
energy estimates, yields
\[
 \int_\R\|u_j(t)\|_{L^{2p}(\R^2)}^{2p}\,dt
 \le C\sup_t\|u_j(t)\|_{L^p(\R^2)}^p
              \iint|\nabla z_j|^2
 \le C(p,\lambda)S^{2p}.
\]
Thus $u_j\in L^{2p}(\R;L^{2p}(\R^2))$. The triangle inequality
for $u=u_1+u_2$ proves \eqref{eq:bootstrapMixedNorm} in both cases.
The energy tests can first be made with truncated powers and finite
initial time; their uniform bounds justify the limits defining the
zero-past solution.
\end{proof}

The forcing terms in \eqref{eq-tildewi} have bounded time support.
We use their causal solutions; uniqueness in the energy class identifies
them with the derivatives already constructed. Applying the lemma with
$p=2$ gives $w_i\in\mathcal X_2^{(n)}$. In particular, when $n=2$,
\[
 w_i\in L^\infty(\R;L^2(\R^2))\cap L^4(\R;L^4(\R^2)).
\]
\medskip

{\bf Step 4. Regularity of $H$.}  In step 3 we obtained an initial estimate for $\nabla_{x}\tilde{w}$, and hence for $\nabla_x u$. It remains to obtain a good estimate for $\partial_{n}u$. To this end, we use the 
results of subsection \ref{ss1}, namely that
$$\partial_{n}u=a_{nn}^{-1}(H-\sum_{j<n}a_{nj}\partial_j u).$$
The tangential derivatives are controlled by step 3. We now obtain
the corresponding mixed-norm bounds for the inhomogeneous part of $H$;
its bounded homogeneous part will be handled on the support of the forcing.
Recall that $H$ solves the PDE given by \eqref{eq.Hpde1} in $\Omega$ with right-hand side bounded by \eqref{boundsF}.
At the boundary, $H$ is equal to $g$, and so $H$ solves a Dirichlet problem. We decompose $H$ as $H_0+H_1$, where $-\partial_tH_0+\tilde LH_0=0$ with Dirichlet datum $g$, while
$H_1$ solves $-\partial_tH_1+\tilde LH_1=F$ with vanishing Dirichlet boundary condition.
The maximum principle gives $\|H_0\|_{L^\infty(\Omega)}\le\|g\|_{L^\infty}$.
For the fixed exponent $\p$, we may choose the coefficient smallness
threshold also to ensure $\|\tilde N(H_0)\|_{L^\p(\partial\Omega)}<\infty$
by \cite[Theorem 1.1]{DDH}. It remains to consider $H_1$. We use an odd reflection to extend this function
to the whole space $\R^n\times\R$. The right-hand side $F$ will be also extended by an odd reflection to a function $\tilde F$. Since $H_1\in\dot \E(\Omega)$, the extended function $\tilde H_1$ satisfies 
\begin{equation}-\partial_t \tilde H_1+\divg((\tilde B)^T\nabla \tilde H_1):=\tilde F
\end{equation} in the whole domain 
$\R^n\times\R$. We now examine $F$, as defined by \eqref{eq.Hpde1}. Replacing $u_t$ by $Lu$ (justified by using the PDE for $u$), we can write $F$ in the form
$$F=\divg(\varphi(B,\nabla B)\nabla u)+\psi(\partial_tB, \nabla^2 B,\nabla B)\nabla u.$$
where $\varphi,\psi$ are smooth functions enjoying the bounds
\begin{equation}\label{eq-322}
|\varphi(B,\nabla B)|\le C|\nabla B|\,\,\,\text{and}\,\,\,\, |\psi(\partial_tB, \nabla^2 B,\nabla B)|\le C(|\partial_t B|+|\nabla^2 B|+|\nabla B|^2).
\end{equation}
In particular, after reflection, $\tilde F$ has the same form as in the right-hand side of Lemma \ref{Lemma-moser},
with \eqref{eq-322} guaranteeing that $\tilde{F}$ is compactly supported ($\tilde B=I$ outside the ball $B_R(0,0)$). Because $\nabla u\in L^2(\Omega)$ and the time support is bounded, the divergence forcing is in $L^2_tL^2_X$ and the scalar forcing is in $L^1_tL^2_X$.
Lemma \ref{Lemma-moser}, applied to the causal solution, gives
$\tilde H_1\in\mathcal X_2^{(n)}$.
\medskip

{\bf Step 5. The bootstrap argument.} Fix a bounded spatial ball $D$
and a bounded time interval $I$ containing the supports of $\tilde h$,
the derivatives of $\tilde B$, and the extension $v$ from step 2.
The forcing terms in the equations for $w_i$ and $\tilde H_1$ are
supported in $D\times I$. Set
\[
 Z=(w_1,\ldots,w_{n-1},\tilde H_1).
\]
Steps 3 and 4 give $Z\in\mathcal X_2^{(n)}$, componentwise.
On $D\times I$, the identity for $\partial_nu$ in step 4, together
with $u=v+w$ and the boundedness of $H_0$ and $\nabla v$, gives
\[
 |\nabla u|+|\nabla\tilde w|\le C(1+|Z|),
\]
with the original functions reflected when necessary. Thus the bounded
homogeneous part $H_0$ only enters norms on this bounded set.

Suppose inductively that $Z\in\mathcal X_p^{(n)}$, with $p\ge2$.
Put
\[
 (p_+,a)=
 \begin{cases}
 (np/(n-2),p),&n>2,\\
 (2p,2p),&n=2.
 \end{cases}
\]
Then the preceding bound implies that both $\nabla u$ and
$\nabla\tilde w$ belong to $L^a(I;L^{p_+}(D))$ on the relevant
half-spaces. All coefficient factors in \eqref{eq-tildewi} and
\eqref{eq-322} are bounded and supported in $D\times I$.
Moreover, $\partial_i\tilde h=\divg(e_i\tilde h)$, where $e_i$ is
the $i$-th coordinate vector. Since $a\ge2$, H\"older's inequality
in time gives
\[
 \|f\|_{L^2(I;L^{p_+}(D))}
 \le |I|^{1/2-1/a}\|f\|_{L^a(I;L^{p_+}(D))},\qquad
 \|f\|_{L^1(I;L^{p_+}(D))}
 \le |I|^{1-1/a}\|f\|_{L^a(I;L^{p_+}(D))}.
\]
Consequently the divergence and scalar forcing terms satisfy the
hypotheses of Lemma \ref{Lemma-moser} at exponent $p_+$.
Applying it to all components of $Z$ gives
$Z\in\mathcal X_{p_+}^{(n)}$.

For $n>2$, the exponents are $p_j=2(n/(n-2))^j$, so after finitely
many steps $p_j>n$. For $n=2$, the first two applications give
\[
 Z\in L^\infty_tL^2_X\cap L^4_tL^4_X
 \quad\Longrightarrow\quad
 Z\in L^\infty_tL^4_X\cap L^8_tL^8_X.
\]
In particular, in dimension two we can take $q=4>n$.
In either case we have obtained $Z\in L^\infty(\R;L^q(\R^n))$
for some finite $q>n$.
The support and coefficient bounds now put the divergence forcing
of every component in $L^\infty_tL^q_X$ and the scalar forcing in
$L^\infty_tL^{q/2}_X$; the latter follows also from the bounded
spatial support.

The local boundedness estimate for scalar parabolic equations with
these inhomogeneous terms applies; see \cite[Theorem B, p.~616]{Aro68}.
Its strict integrability conditions are
$n/(2q)<1/2$ for the divergence term and $n/q<1$ for the scalar
term, both consequences of $q>n$.
On cylinders of a fixed radius, its right-hand side is bounded
uniformly in the centre by the $L^\infty_tL^q_X$ norm of each
component and the forcing norms just established.
It follows that $w_i,\tilde H_1\in L^\infty(\R^n\times\R)$.
The identity for $\partial_nu$, the boundedness of $H_0$, and
$\nabla v\in L^\infty$ therefore give $\nabla u\in L^\infty(\Omega)$.
In particular, $\tilde N(\nabla u)$ belongs to $L^p$ on bounded
subsets of $\partial\Omega$ for every $1\le p\le\infty$.

For the estimate outside a bounded set, recall that there is an
$R$ large enough such that the matrix $B$ equals $I$ in the complement of the ball $B(0,R)$. It follows that, in $\Omega\setminus B(0,R)$, the PDE for $w_i$ is exactly the heat equation
$\Delta w_i-\partial_tw_i=0$.
Hence on this set $w_i$ is comparable with the fundamental solution for the heat equation.  Choosing a pole  $(Y,s)$ with time $s$ 
in the past of the ball $B(0,R)$, there exists a large constant $M>0$ for which $0\le |w_i|\le MG$
on $\partial B(0,R)$ (since $w_i$ is bounded on 
$\partial B(0,R)$).  Hence by the comparison principle, applied to both the positive and negative parts of $w_i$,  

$$0<|w_i|(x,x_n,t)\le M G((x,x_n,t),(Y,s)),\qquad\mbox{for all } (x,x_n,t)\notin B(0,R),$$
where $G$ is the Green's function of the whole space satisfying the bounds
 \begin{equation}
     G(X,t,Y,s)\le C(t-s)^{-n/2}\exp\br{-\frac{\abs{X-Y}^2}{C(t-s)}} \quad\text{ for all } t>s,
 \end{equation}
as follows from \cite{Aro68}.
The Gaussian bound and the boundedness on the remaining compact region
give, outside a fixed boundary ball,
\[
 \tilde N(\partial_i u)(x,t)
 \lesssim (1+|x|+|t|^{1/2})^{-n},\qquad i<n.
\]
This majorant belongs to $L^p$ at infinity when $np>n+1$, which
suffices for the exponent $\p\ge8$ used here. The same argument
applies to $H_1$ and hence to the remaining component $\partial_nu$.
This establishes the required a priori bound $\|\tilde N(\nabla u)\|_{L^\p}<\infty$. 

\section{Caccioppoli-type inequalities }\label{S.Caccio}
\subsection{Caccioppoli with cut-off for $\nabla \partial_tu$}
\begin{lemma}[Caccioppoli with cut-off]\label{lem.CacciCO}
    Let $u$ be a weak solution to $-\dr_t u+\divg (A\nabla u)=0$ in $\om$. Let $\Psi$ be a cutoff function that satisfies $0\le\Psi\le 1$, $|\nabla\Psi(X,t)|+x_n|\dr_t\Psi(X,t)|\lesssim x_n^{-1}$ for  $(X,t)\in\om$. Let $Q$ be a parabolic Whitney ball in $\om$ with size $r$, $2Q\subset\om$. Then for any $m\ge 2$,
    \[
    \iint_Q|\nabla \dr_tu|^2\Psi^m dX\,dt\le \frac{C}{r^2}\iint_{2Q}|\dr_t u|^2\Psi^{m-2}dX\,dt+ C\iint_{2Q}|\dr_t A|^2|\nabla u|^2\Psi^mdX\,dt.
    \]
\end{lemma}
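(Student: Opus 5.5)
The plan is to differentiate the equation in time and run a standard Caccioppoli argument for $v:=\partial_t u$, with the cutoff $\Psi^m$ times a Whitney cutoff. Differentiating $-\partial_t u+\divg(A\nabla u)=0$ in $t$ gives
\[
-\partial_t v+\divg(A\nabla v)=-\divg\big((\partial_t A)\nabla u\big),
\]
which holds in the weak sense. In the setting where the lemma is used, $A$ is smooth with bounded derivatives, by the approximation of Section~\ref{Sprf}. Alternatively, one can justify it via difference quotients in $t$, and the final estimate then passes to the limit.

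Fix a smooth cutoff $\zeta$ adapted to $Q\subset 2Q$, with $\zeta=1$ on $Q$, $\supp\zeta\subset 2Q$, $|\nabla\zeta|\lesssim r^{-1}$ and $|\partial_t\zeta|\lesssim r^{-2}$. Since $Q$ is Whitney, $x_n\simeq r$ on $2Q$. The hypothesis on $\Psi$ then gives $|\nabla\Psi|\lesssim r^{-1}$ and $|\partial_t\Psi|\lesssim r^{-2}$ there, so $\eta:=\zeta\Psi$ behaves like a standard parabolic cutoff at scale $r$.

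Test the equation for $v$ with $v\,\zeta^2\Psi^m$. The elliptic term gives
\[
\lambda\iint|\nabla v|^2\zeta^2\Psi^m
\]
plus cross terms bounded by $\iint|\nabla v||v|\big(|\nabla\zeta|\zeta\Psi^m+m\zeta^2\Psi^{m-1}|\nabla\Psi|\big)$. Young's inequality absorbs half of the good term, and the remainder is at most $Cr^{-2}\iint_{2Q}|v|^2\Psi^{m-2}$ because $\Psi\le1$. The time term $\iint v\,\partial_t v\,\zeta^2\Psi^m=-\tfrac12\iint v^2\partial_t(\zeta^2\Psi^m)$ is also bounded by $Cr^{-2}\iint_{2Q}v^2\Psi^{m-1}\le Cr^{-2}\iint_{2Q}v^2\Psi^{m-2}$. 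For the forcing term, integrating by parts gives $\iint(\partial_tA)\nabla u\cdot\nabla(v\zeta^2\Psi^m)$. The piece with $\nabla v$ is handled by Young's inequality, leaving $C\iint|\partial_tA|^2|\nabla u|^2\zeta^2\Psi^m$. The piece where the gradient falls on the cutoff is bounded by
\[
\iint|\partial_tA||\nabla u||v|\,\zeta\Psi^{m-1}r^{-1}\le \iint|\partial_tA|^2|\nabla u|^2\Psi^m+r^{-2}\iint v^2\Psi^{m-2}.
\]
Combining these estimates gives the stated inequality. The requirement $m\ge2$ is exactly what makes the powers $\Psi^{m-2}$ appear.

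The main obstacle is rigor rather than algebra. We need $v\in L^2_{\loc}$ with $\nabla v\in L^2_{\loc}$, and the identity $\iint v\,\partial_t v\,\phi=-\tfrac12\iint v^2\partial_t\phi$ must be justified. I would first prove the estimate for the difference quotients $v_h=(u(\cdot,t+h)-u)/h$. These satisfy the analogous equation with forcing $-\divg\big((\delta_hA)\nabla u(\cdot,t+h)\big)$, and for them the energy identity is justified by Steklov averaging. The bounds are uniform in $h$ once $\partial_t u\in L^2(2Q)$, which follows from interior regularity (or from the smooth-coefficient setting), so we may let $h\to0$.
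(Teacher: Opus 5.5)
Your proposal is correct and is essentially the paper's proof. Like the paper, you test the time-differentiated equation for $\partial_t u$ with $\partial_t u\,\Psi^m\eta^2$, integrate by parts in $t$ in the time term and in $X$ in the forcing term, and close with Cauchy--Schwarz, using $x_n\simeq r$ on $2Q$ and $\Psi\le 1$. Your difference-quotient justification of the differentiated equation is a rigor detail the paper leaves implicit.
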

\begin{proof}
    The proof is a simple modification of the argument in the beginning of \cite[Section 6]{DLP1}. Let $\eta$ be a smooth cutoff function with $\supp\eta\subset 2Q$, $\eta=1$ on $Q$,  $|\nabla\eta|\lesssim r^{-1}$, and $|\dr_t\eta|\lesssim r^{-2}$. Since  $w=\dr_tu$ satisfies 
\[
-\dr_t w + \divg (A\nabla w)=-\divg((\dr_t A) \nabla u),
\]
we have 
\begin{multline*}
    \iint A\nabla w\cdot \nabla w\Psi^m\eta^2dX\,dt
    =\underbrace{-\iint(\dr_t w)w\Psi^m\eta^2dX\,dt}_{=:I_1}+\underbrace{\iint \divg ((\dr_tA)\nabla u)w\Psi^m\eta^2dX\,dt}_{:= I_2}\\
    -m\iint A\nabla w\nabla\Psi\br{w\Psi^{m-1}\eta^2}dX\,dt
    -2\iint A\nabla w\nabla\eta \br{w\Psi^m\eta}dX\,dt.
\end{multline*}
For the two unlabeled terms on the right-hand side, we use the bounds $|\nabla\eta|\lesssim r^{-1}$ and $|\nabla\Psi|\lesssim x_n^{-1}\approx r^{-1}$ on $2Q$ and the Cauchy-Schwarz inequality to get that they are bounded by 
\[
\varepsilon \iint |\nabla w|^2\Psi^m\eta^2dX\,dt+\frac{C_\varepsilon}{r^2}\iint_{2Q}w^2\Psi^{m-2}dX\,dt.
\]
We integrate by parts in $t$ in $I_1$ to get that 
\[
|I_1|=\abs{m\iint w^2\Psi^{m-1}\eta^2\dr_t\Psi dX\,dt+2\iint w^2\Psi^m\eta\dr_t\eta dX\,dt}\le \frac{C}{r^2}\iint_{2Q}w^2\Psi^{m-1}dX\,dt
\]
by the bounds on $|\dr_t\Psi|$ and $|\dr_t\eta|$. For $I_2$, the divergence theorem gives that 
\begin{multline*}
    I_2=-\iint (\dr_t A)\nabla u\nabla w\br{\Psi^m\eta^2}dX\,dt
    -m\iint (\dr_t A)\nabla u\nabla\Psi \br{u\Psi^{m-1}\eta^2}dX\,dt\\
    -2\iint (\dr_t A)\nabla u\nabla\eta \br{w\Psi^m\eta}dX\,dt.
\end{multline*}
Using the bounds on $|\nabla\Psi|$ and $|\nabla\eta|$ in the last two terms and Cauchy-Schwarz, one sees that 
\[
|I_2|\le \varepsilon\iint|\nabla w|^2\Psi^m\eta^2+(C+C_\varepsilon)\iint_{2Q}|\dr_t A|^2|\nabla u|^2\Psi^m+\frac{C}{r^2}\iint_{2Q}w^2\Psi^{m-2}.
\]
The lemma follows from taking $\varepsilon$ small in the above estimates. 
\end{proof}

\subsection{The PDEs for $H$ and  the Caccioppoli inequalities for $\partial_t H$ and $\nabla^2 H$}\label{ss1}

Let $u$ be a weak solution to $-\dr_t u+\divg(A\nabla u)=0$ in $\om$. Recall that $H=a_{nj}\partial_ju$. We start by deriving PDEs that $H$ satisfies.
\medskip

For an arbitrary $b_{ij}$ satisfying the ellipticity condition and sufficient smoothness let us denote by $\tilde{L}$ the operator $\partial_i(b_{ij}\partial_j\cdot)$. Then clearly

$$\tilde{L}H=\partial_i(b_{ij}\partial_j (a_{nk}\partial_k u))=\sum_{j<n}\partial_i(b_{ij}\partial_j (a_{nk}\partial_k u))+\partial_i(b_{in}\partial_n (a_{nk}\partial_k u)).$$
Using the PDE that $u$ satisfies, we replace $\partial_n (a_{nk}\partial_k u)$ by the remaining terms of the PDE for $u$
to obtain
$$
\tilde{L}H=\sum_{j<n}\left[\partial_i(b_{ij}\partial_j (a_{nk}\partial_k u))-\partial_i(b_{in}\partial_j (a_{jk}\partial_k u))\right]+\partial_i(b_{in}\partial_t u).
$$
Observe that the choice of coefficients $b_{ij}=a_{ji}$ implies cancellation of all terms containing three spatial derivatives $\partial_i\partial_j\partial_k u$. With these coefficients fixed we further observe that
$$
\partial_i(b_{in}\partial_t u)=\partial_t(a_{ni}\partial_i u)+(\partial_i a_{ni})\partial_t u-(\partial_ta_{ni})\partial_i u=\partial_tH+(\partial_i a_{ni})\partial_t u-(\partial_ta_{ni})\partial_i u.
$$
Hence it follows that
\begin{equation}\label{eq.Hpde1}
\tilde{\mathcal L}H=-\partial_tH+\tilde{L}H
=F:=(\partial_i a_{ni})\partial_t u-(\partial_ta_{ni})\partial_i u+
\sum_{j<n}\left[\partial_i(b_{ij}\partial_j (a_{nk}\partial_k u))-\partial_i(b_{in}\partial_j (a_{jk}\partial_k u))\right].
\end{equation}
Exploring the nature of the function on the right-hand side of \eqref{eq.Hpde1} we see that this function enjoys a bound by
\begin{equation}\label{boundsF}
|F|\lesssim |\nabla A||\partial_t u|+|\partial_tA||\nabla u|+|\nabla A|^2|\nabla u|+|\nabla A||\nabla ^2 u|+|\nabla ^2 A||\nabla u|,
\end{equation}
since the terms containing $\nabla^3 u$ cancel out.
\medskip

Alternatively, for certain arguments we will need $b_{nn}=1$. Hence we choose $b_{ij}=a_{ji}/a_{nn}$. This again guarantees that terms where three derivatives
fall on $u$ vanish as these are the terms:
\begin{eqnarray} &&\sum_{j<n} [b_{ij}a_{nk} (\partial_i\partial_j\partial_k u)-
b_{kn}a_{ji} (\partial_i\partial_j\partial_k
u)]=\sum_{j<n}a_{nn}^{-1}(a_{ji}a_{nk}-a_{nk}a_{ji})\partial_i\partial_j\partial_k
u=0.
\end{eqnarray}

We also rearrange the term containing $\partial_t$ derivative and write it as
$$\partial_i(b_{in}\partial_tu)=\partial_t\left(\frac{a_{ni}}{a_{nn}}\partial_iu\right)+(\partial_i b_{in})\partial_tu-(\partial_tb_{in})\partial_iu=\partial_t\left(\frac{H}{a_{nn}}\right)+(\partial_i b_{in})\partial_tu-(\partial_tb_{in})\partial_iu.$$
It follows that
\begin{multline} \label{eq.Hpde2}-a_{nn}^{-1}(\partial_t H)+\widetilde{L}H=T:=(\partial_i b_{in})\partial_tu+(\partial_ta_{nn}^{-1})H-(\partial_tb_{in})\partial_iu\\+\sum_{j<n} [\partial_i(b_{ij}\partial_j(a_{nk} \partial_k u))-
\partial_k(b_{kn} \partial_j(a_{ji} \partial_i u))].
\end{multline}

We will use this PDE for ${H}$ in several places.
When our method relies on the assumption that $k<n-1$ in the proofs of Subsection~\ref{S.NleSp2}, we need to use the equation \eqref{eq.Hpde2} to get the results for $H$. Otherwise, it is slightly easier to use the equation \eqref{eq.Hpde1} for $H$. 
\medskip

Our next aim is to establish a Caccioppoli inequality for $\nabla^2 H$ and we use \eqref{eq.Hpde1} to achieve it. In order to achieve this, let $H_k=\partial_k H$ for $k=1,2,\dots, n$. Differentiating through \eqref{eq.Hpde1} we obtain
\begin{equation}
-\partial_tH_k+\divg(A^T (\nabla H_k))=\divg_x((\partial_kA^T)\nabla H)+\partial_k F.
\end{equation}
Given an interior parabolic Whitney cube $Q$ of side-length $r$ with $3Q\subset\Omega$
we multiply both sides by $H_k\zeta^2$
where $\zeta$ is a cutoff function equal to one on $Q$ for an interior Whitney cube and zero outside $2Q$. After integration by parts we obtain:

\[\begin{split}
&\frac{1}{2}\iint_{2Q} \left[(H_k\zeta)^2\right]_{t} \,dX\,dt
+ \iint_{2Q} A^T \nabla(H_k\zeta)\cdot \nabla(H_k\zeta)\,dX\,dt 
\\
&=  \iint_{2Q} H_k^{2}\zeta\zeta_{t} \,dX\,dt  + \iint_{2Q} H_k^2 A^T\nabla\zeta\cdot\nabla \zeta \,dX\,dt +\iint_{2Q} A \nabla (H_k\zeta)\cdot H_k\nabla\zeta\,dX\,dt\\
&\quad- \iint_{2Q} A^T \nabla (H_k\zeta)\cdot H_k\nabla\zeta\,dX\,dt
 - \iint_{2Q}  (\partial_{k} A^T)\nabla H\zeta\cdot (\nabla H_k\zeta)\,dX\,dt \\ &\quad- \iint_{2Q} (\partial_{k} A^T)\nabla H( H_k\cdot( \zeta\nabla\zeta)) \,dX\,dt- \iint_{2Q} F\partial_{k}(H_k\zeta)\zeta \,dX\,dt-
  \iint_{2Q} FH_k\zeta(\partial_k\zeta) \,dX\,dt
\end{split}\]

Using the ellipticity and boundedness of the coefficients and the Cauchy-Schwarz inequality and the fact that $r|\nabla A|\le C$, it follows that
\[\begin{split}
& \lambda \iint_{Q} \left|\nabla H_k \right|^{2}\,dX\,dt \leq \frac{C}{r^2}  \iint_{2Q} |\nabla H|^{2} \,dX\,dt
 + C \iint_{2Q} |F|^{2} \,dX\,dt.
\end{split}\]
for some constant $C= C(\Lambda)$. As this holds for every $k=1,2,\dots,n$ it follows that
\[\begin{split}
&  \iint_{Q} \left|\nabla^2 H \right|^{2}\,dX\,dt \leq \frac{C}{r^2}  \iint_{2Q} |\nabla H|^{2} \,dX\,dt
 + C \iint_{2Q} |F|^{2} \,dX\,dt.
\end{split}\]
Observe that given the PDE $\partial_tH$ satisfies we have that $|\partial_tH|\lesssim |\nabla ^2H|+|\nabla A||\nabla H|+|F|$ and hence also
\begin{equation}
\iint_{Q} \left|\partial_t H \right|^{2}\,dX\,dt+\iint_{Q} \left|\nabla^2 H \right|^{2}\,dX\,dt \lesssim \frac{1}{r^2}  \iint_{2Q} |\nabla H|^{2} \,dX\,dt
 + \iint_{2Q} |F|^{2} \,dX\,dt.
\end{equation}
This is the final version of the Caccioppoli inequality for $H$. We use it to obtain bounds for the Area function of $H$ defined earlier. For a fixed $P\in\pom$
we cover $\Gamma_a(P)$ by a collection of Whitney cubes $(Q_i)_{i}$ of side length $r_i$ such that
the collection $(2Q_i)_{i\in\mathbb Z}$ has finite overlap. We then 
multiply the above inequality by $r_i^{-n+2}$, and sum over all indices $i$ for $Q=Q_i$ to obtain:

\begin{equation}\label{sqrA}
{A}^2(H)(P)=\iint_{\Gamma_{a}(P)}|\partial_t H|^2x_n^{-n+2}\,dX\,dt\lesssim \iint_{\Gamma_{a'}(P)}|\nabla H|^2x_n^{-n}\,dX\,dt
\end{equation}
$$+\iint_{\Gamma_{a'}(P)} |F|^{2} x_n^{-n+2}\,dX\,dt.
$$
Here $\Gamma_{a'}(P)$ is a cone of wider aperture (smaller slope) than the original cone so that $\bigcup_i 2Q_i\subset\Gamma_{a'}(P)$. We note that \eqref{sqrA} also holds if the cones are truncated.
Clearly, the last term of \eqref{sqrA} requires further work using \eqref{boundsF}. Given that $|u_t|\lesssim |\nabla ^2 u|+|\nabla A||\nabla u|$ we obtain using \eqref{boundsF} that
\begin{equation}
{A}^2(H)(P)\lesssim S^2(H)(P)
+\iint_{\Gamma_{a'}(P)} (|\partial_t A|^{2}+|\nabla^2 A|^{2}+|\nabla A|^{4}) |\nabla u|^{2}x_n^{-n+2}\,dX\,dt+
\end{equation}
$$+\iint_{\Gamma_{a'}(P)} |\nabla A|^2|\nabla^2 u|^2x_n^{-n+2}\,dX\,dt.$$
Again, a truncated version of this inequality also holds. Using truncated cones at the height $r$ and integrating over a boundary ball $\Delta_r$ we obtain that (after using $|\nabla A|x_n\le C$ in one of the terms):
$$\|A(H)\|^2_{L^2(\Delta_r)}\lesssim \|S(H)\|^2_{L^2(2\Delta_r)}
+\iint_{T(2\Delta_r)} (|\partial_t A|^{2}x_n^2+|\nabla^2 A|^{2}x_n^2+|\nabla  A|^{2}) |\nabla u|^{2}x_n\,dX\,dt
$$
$$+\iint_{T(2\Delta_r)} |\nabla A|^{2}|x_n\nabla^2 u|^{2}x_n\,dX\,dt  \le \|S(H)\|^2_{L^2(2\Delta_r)}
+\|\mu\|_C\|N(\nabla u)\|^2_{L^2(2\Delta_r)},$$
using the Carleson condition for $\mu$ for the operator $A$. We are also using the fact that
$N(x_n\nabla^2 u)\lesssim N(\nabla u)$ using the Caccioppoli inequality for the second gradient of $u$.
In particular after taking $r\to\infty$ we obtain
\begin{equation}
    \|A(H)\|_{L^2(\R^{n-1}\times\R)}\le C \|S(H)\|_{L^2(\R^{n-1}\times\R)}+C\|\mu\|_C^{1/2}\|N(\nabla u)\|_{L^2(\R^{n-1}\times\R)}.
\end{equation}

We remark that the $L^p$ version of this result also holds; we refer the reader to \cite[Section 10]{DLP1} and in particular the equation (6.5) where an analogous $L^p$ result is proven from the $L^2$
version using the good-$\lambda$ technique. We record it here:
\begin{equation}\label{eq.sqrAAL2}
    \|A(H)\|_{L^p(\R^{n-1}\times\R)}\le C \|S(H)\|_{L^p(\R^{n-1}\times\R)}+C\|\mu\|_C^{1/2}\|N(\nabla u)\|_{L^p(\R^{n-1}\times\R)}.
\end{equation}

\section{$N<S_p$ estimates}\label{SS:43}

We follow the approach of \cite{DLP1} to prove nontangential maximal function estimates, with some important modifications. The approach originated in \cite{KKPT}, with critical new  developments in \cite{DHM} and \cite{DFM}. In
\cite{DLP1}, we obtained bounds for the nontangential maximal function by the area integral and square function ( `$N<S+A$' estimates) but only for parabolic operators in block form.
Key innovations here include: (1) removing the block-form assumption on the parabolic operators, (2) using the $p$-adapted square function $S_p$ (rather than the square function) to derive `$N<S_p$' estimates for both the tangential gradient and the `conormal derivative' of solutions.    

\medskip
Let us begin by introducing some notions that will be used throughout the three subsections.
As we now work on the upper half-space, we define, as in \cite[Section 10]{DLP1}, for a fixed background parameter
$a>0$, the nontangential cone $\gamma_a(q,\tau)$ for a boundary point $(q,\tau)\in\partial(\R^n_+\times \R)$ as follows:
\begin{equation}\label{gamma2.11}
\gamma_a(q,\tau)=\{(X,t)=(x,x_n,t):\, x_n>a^{-1}\|(x-q,t-\tau)\|\}.
\end{equation}
The cones \eqref{gamma2.11} are better for our purposes here, as the special direction $x_n$ is separated from all other variables. We will refer to the parameter $a$, as before, as the aperture, and
$a^{-1}$ we shall call the slope of the nontangential cone $\gamma_a$. For the remainder of this section we assume that $N$ and $S$
are defined with respect to the cones $\gamma_a$.

These cones, as defined currently, have vertices at the boundary, but in order to define the stopping time construction we also need to define interior cones (with vertices inside $\Omega$). In the elliptic setting these interior cones
are defined by simply shifting the boundary cone, that is for $(Q,\tau)=(q,q_n,\tau)\in\Omega$
\begin{equation}\label{oldG}
\gamma_a(q,q_n,\tau)=(0,q_n,0)+\{(X,t)=(x,x_n,t):\, x_n>a^{-1}\|(x-q,t-\tau)\|\}.
\end{equation}
This is the approach also taken in \cite{RN}. However, we prefer to modify this construction and consider \lq\lq enlarged" cones that have an additional feature which we explain below. These novel cones were introduced in \cite{DLP1}.

Firstly, let us consider the set $S(X,t)=\{(q,\tau)\in \partial(\R^n_+\times \R):\, (X,t)\in\gamma_a(q,\tau)\}$. Given the way the cones are defined, it follows that
\begin{equation}\label{def.SXt}
    S(X,t)=\{(q,\tau):\, \|(x-q,t-\tau)\|<a\,x_n\},
\end{equation}
which is an open parabolic ball centered at $(x,t)$ of \lq\lq radius" $a\,x_n$. We define the cone $\gamma_a$ at $(X,t)$ as the interior of the intersection of all boundary cones that originate in $S(X,t)$. Hence
\begin{equation}\label{newG}
\gamma_a(X,t)=\gamma_a(x,x_n,t)=\mbox{int}\bigcap \{\gamma_a(q,\tau):\, \|(x-q,t-\tau)\|<a\,x_n\}.
\end{equation}

To explain this definition we compare \eqref{oldG} and \eqref{newG}. When the time variable is not present there is no difference between these definitions, and thus there is no reason to consider \eqref{newG}. This is due to the fact that in this case the boundary of the cone \eqref{oldG} is a union of straight lines of slope $a^{-1}$ originating from its vertex and intersections of such cones result in a cone of the same type. 

This is not the case with the time variable. In this variable the cone defined by \eqref{oldG} has a cusp of shape
given by the curve $a^{-1}\sqrt{|t|}$ at zero.
The definition  \eqref{newG} removes this cusp for interior cones at $x_n>0$; it is easy to see that the boundary cones 
$\gamma_a(q,\tau)$ that define $\gamma_a(X,t)$ have bounded slope of their boundary in $t$-variable at height $x_n>0$. This slope is bounded by $\frac{a^{-2}}{2x_n}$ (this is derived from the slope of the function $a^{-1}\sqrt{|t|}$). 

Thus it follows that the boundary of the cone $\gamma_a(X,t)$ is given by the graph of a function that is Lipschitz 
with Lipschitz constant $a^{-1}$ in spatial variables and Lipschitz constant $\frac{a^{-2}}{2x_n}$ in the time variable.

Moreover, our new cones \eqref{newG} maintain a key property of the cones from \eqref{oldG}, namely, that
if $(Q,\tau)$ is any point (including a boundary point) and $(X,t)\in \gamma_a(Q,\tau)$ then
$$\overline{\gamma_a(X,t)}\subset \gamma_a(Q,\tau).$$

As in \cite{DHM} we consider the map $\hbar:\partial\Omega\to\R$ given at each $(x,t)\in\partial\Omega$ defined by 
\begin{equation}\label{h}
\hbar_{\nu,a}(w)(x,t):=\inf\left\{x_n>0:\,\sup_{(Z,s)\in\gamma_{a}(x,x_n,t)}w(Z,s)<\nu\right\}.
\end{equation}

The function $w$ is defined below in \eqref{def.w} and varies
depending on the exact form of the `$N<S_p$' estimate. The properties of the cones are mainly used to derive some key properties of the function $\hbar$ in Lemma~\ref{S3:L5}. 

\medskip
\subsection{$N<S_p$ for $u$}\label{S.NleSp1}
The goal of this subsection is to prove Theorem \ref{thm.NlessS}. We start by only considering $\mathcal Lu=0$. In the next two subsections, we will adjust the proof to more general settings so that this can be applied to $\nabla_T u$ and $H$. In the theorem below it might look strange that we consider energy solutions with given boundary data $f$ (as the objective of this paper is to solve the Neumann problem), but for the purposes of this section our objective is to prove $N<S_p$ for all solutions $u\in\dot\E(\Omega)$ and all such solutions are generated by solving the Dirichlet problem with $f\in \Hdot^{1/4}_{\partial_t-\Delta_x}(\partial\Omega)$.

\begin{theorem}\label{thm.NlessS}
   Let $A$ be a matrix with bounded measurable coefficients that satisfies the ellipticity condition, the Carleson condition for coefficients $(a_{nj})_{j=1,2,\dots,n}$.   Let $\LL=-\dr_t+\divg(A\nabla\cdot)$.
   
   Then there exists $\delta>0$ such that if $$\||\nabla \mathbf{a_{n}}|^2x_n\|_{C}^{1/2}+\||\partial_{t}a_{nn}|^2x_n^3\|_{C}^{1/2}+\||\partial_ta_{nn}|x_n^2\|_{L^\infty}<\delta,$$
   where $ \mathbf{a_{n}}$ is the $n$-th row of the matrix $A$, then for any $p\ge8$ such that $p/2$ is an even integer, for any $a>0$, $f\in  L^{q}(\pom)\cap  \Hdot^{1/4}_{\partial_t-\Delta_x}(\partial\Omega)$,   there exists a 
constant $C=C(n,p,a,\delta, \lambda,\Lambda)>0$, such that for the energy solution $u$ to $\LL u=0$, $u\big|_{\pom}=f$, there holds for all $q>p/2$
\begin{equation}
    \|{\tilde N}(u)\|_{L^q(\pom)}\le C\norm{S_p(u)}_{L^q(\pom)} ,
 \end{equation}   
provided we know a priori that $\|\tilde N( u)\|_{L^q(\partial\Omega)}<\infty$. The same claim also holds for solutions $u_k$ on the domains $\Omega^k=\R^{n-1}\times(0,k)\times\R$ with boundary data $u_k(\cdot,0,\cdot)=f$ and  $u_k(\cdot,k,\cdot)=0$.
\end{theorem}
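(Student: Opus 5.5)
We follow the stopping-time approach of \cite{DHM}, \cite{DLP1}, adapted to the $p$-adapted square function. Fix $\nu>0$ and set $w:=\tilde N$-type averages of $|u|$ over Whitney balls, as in \eqref{def.w}. Let $\hbar=\hbar_{\nu,a}(w)$ be the stopping height from \eqref{h}. The preliminary step is to record the properties of $\hbar$ proved in Lemma~\ref{S3:L5}. The graph $x_n=\hbar(x,t)$ is Lipschitz in $x$ with constant $\lesssim a^{-1}$, and in $t$ it has the half-order regularity built into the enlarged cones \eqref{newG}. Moreover, on the set $E_\nu=\{\tilde N(u)>\nu\}$ (up to a good-$\lambda$ subset) the average of $|u|$ near $(x,\hbar(x,t),t)$ is comparable to $\nu$, while $|u|\lesssim\nu$ above the graph. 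The theorem then follows from a good-$\lambda$ or distributional inequality: it suffices to show
\[
\nu^p\,|\{\tilde N(u)>\nu,\ S_p(u)\le\gamma\nu\}|\lesssim \iint_{\Omega_\hbar}|\nabla u|^2|u|^{p-2}x_n\,\Psi\,dX\,dt+\text{(small)}\,\nu^p|\{\tilde N(u)>\nu/C\}|.
\]
Here $\Omega_\hbar$ is the region above the graph and $\Psi$ is a smooth cutoff adapted to it, with $|\nabla\Psi|+x_n|\partial_t\Psi|\lesssim x_n^{-1}$ as in Lemma~\ref{lem.CacciCO}. Integrating in $\nu$ against $\nu^{q-1}$ and using $q>p/2$ then converts the right-hand side into $\|S_p(u)\|_{L^q}^q$, since $S_p^p$ controls the truncated cone integrals of $|\nabla u|^2|u|^{p-2}$. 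The a priori finiteness of $\|\tilde N(u)\|_{L^q}$ is used to absorb the small term.

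The core is an integration-by-parts identity on the graph domain. We take $p/2$ to be an even integer so that $|u|^{p}=u^{p}$ is smooth and no absolute values need differentiating. We consider
\[
\int_{\partial\Omega} u^{p}(x,\hbar(x,t),t)\,\Psi\,dx\,dt=-\iint_{\Omega}\partial_n\big(u^{p}\Psi\big)\,dX\,dt,
\]
or, more precisely, we integrate $\partial_n$ along the vertical direction between the stopping graph and a large height. We then write $\partial_n(u^p)=p\,u^{p-1}\partial_n u$ and insert $\partial_n(x_n)=1$ to integrate by parts once more in $x_n$. This produces the main term
\[
\iint u^{p-1}\,\partial_n u\,\cdots = \iint \big(a_{nn}^{-1}a_{nj}\partial_j u\big)\,\partial_n(x_n u^{p-1})\cdots.
\]
Using the PDE $\partial_n(a_{nj}\partial_ju)=\partial_tu-\sum_{i<n}\partial_i(a_{ij}\partial_ju)$, the $x_n$-weighted second derivatives are traded for tangential divergences and $\partial_t u$. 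The tangential divergences are integrated by parts in $x$ onto $u^{p-1}\Psi$, giving $|\nabla u|^2u^{p-2}x_n$ plus cutoff terms. The time-derivative term $\iint x_n u^{p-1}\partial_t u\,\Psi=\frac1p\iint x_n\partial_t(u^p)\Psi$ is integrated by parts in $t$ onto $\Psi$, where $x_n|\partial_t\Psi|\lesssim 1$ is localized to the boundary layer of $\Omega_\hbar$. Derivatives falling on the coefficients $a_{nj}$ produce errors of the form $\iint|\nabla\mathbf a_n|\,|u|^{p-1}|\nabla u|x_n$. By Cauchy--Schwarz and the Carleson condition, these are bounded by $\delta\,\|\tilde N(u)\|^{p}$ plus $\delta S_p^p$ contributions. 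This is where the smallness of $\||\nabla\mathbf a_n|^2x_n\|_C$ and of the $\partial_ta_{nn}$ quantities enters, the latter coming from normalizing by $a_{nn}$.

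The main obstacle is the set of boundary terms on the stopping graph $x_n=\hbar$ and in the transition region where $\nabla\Psi\neq0$. There $u$ is only known to be comparable to $\nu$ in average, and the graph is merely half-Lipschitz in $t$. We plan to handle these terms as in \cite[Section 10]{DLP1}. First, the terms $\iint|u|^{p-1}|\nabla u|\,|\nabla\Psi|x_n$ are bounded by $\nu^{p-1}$ times a local $L^1$ average of $|\nabla u|$ on Whitney regions. We then use Caccioppoli to bound this by $\nu^{p}|\{\hbar>0\}|$ times a factor, and the key is to make that factor small by choosing the aperture of the cones defining $w$ much wider than that of $S_p$, so that $x_n^{-1}$ times oscillation is small where $S_p\le\gamma\nu$. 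The $\partial_t\Psi$ boundary term uses the enlarged-cone Lipschitz bound $a^{-2}/(2x_n)$ in time to keep the cutoff regular. The top boundary at large height (or at $x_n=k$ for $\Omega^k$, where $u_k=0$) vanishes by decay of energy solutions, respectively by the zero Dirichlet data, which gives the last claim of the theorem.
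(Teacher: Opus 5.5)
The gap is in the step you call the main obstacle. To get an inequality you can integrate in $\nu$, the computation must be localized to the level set $\{\tilde N>\nu\}$. This is done either by a sawtooth cutoff, or, as in the paper, on the Vitali balls $\Delta_i$ of $\{M\tilde N_{2,a}(u^{p/2})>\kappa\beta\}$, truncated at height comparable to their radius $r$.

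This localization produces cutoff terms with no gradient at all:
\begin{itemize}
\item $r^{-1}\iint u^{p}$ over the top layer $x_n\sim r$ (the paper's term $V$);
\item the term where $\partial_t$ falls on the lateral cutoff. This is $r^{-2}\iint u^p x_n$ (the paper's $II_9$), or $\iint u^p x_n^{-1}$ over the transition layer for your sawtooth-adapted $\Psi$.
\end{itemize}
Both are of size $\nu^p|\Delta|$, the same order as your left-hand side. Smallness of $S_p$ on the good set does not reach them, because $S_p$ controls $|\nabla(u^{p/2})|$, not $u$. Your proposed remedy does not work either. Caccioppoli bounds $\nabla u$ by the oscillation of $u$, not conversely. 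No choice of apertures makes these particular terms small. In fact the paper needs the opposite ordering, $b>a$, so that the sawtooth regions lie inside the $S_{p,b}$-cones of good points (cf.~\eqref{SiProp}).

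The missing idea is to run the whole computation for $(u^{p/2}-c)^2$ instead of $u^p$, as in Lemma~\ref{S3:L8-alt1}. Here $c\ge0$ is chosen in Lemma~\ref{LGL} as a time average of $\int u^{p/2}\eta^2$ over an interior box at height $\sim r_i$. The conditions $c\ge0$ and $p/2$ even are what allow the term $\mathcal D$ to be bounded from above.

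The non-gradient terms then split into two parts.
\begin{itemize}
\item The layer $x_n<\varepsilon r$ is bounded by $\varepsilon\|\tilde N((u^{p/2}-c)\1_{\cS})\|_{L^2}^2$.
\item The interior region $\mathcal K_\varepsilon$ is controlled by spatial Poincar\'e, together with a bound on the time oscillation of $c(t)=\int u^{p/2}\eta^2$ derived from the equation in \eqref{eq:halfMomentEvolution}. This replaces a parabolic Poincar\'e inequality, which is not available.
\end{itemize}
Since $c'(t)$ contains $|u|^{p/2-2}|\nabla u|^2$, one must interpolate between $\mathcal E_p$ and $\mathcal E_2$, and this is exactly where $p\ge8$ is used.

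As a result, $\varepsilon S_2$ and $\varepsilon\tilde N_2$ terms enter the good set. They also arise from the term $\partial_{x_n}(a_{nn}^{-1})\partial_t[(u^{p/2}-c)^2]x_n^2$, which requires the area function. These terms are absorbed at the end via $S_2\lesssim\tilde N_2\le\tilde N_p$.

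Two further steps are asserted without support.
\begin{itemize}
\item The lower bound $w\gtrsim\nu$ on the stopping graph over a ball of comparable size is not contained in Lemma~\ref{S3:L5}. It needs the PDE-based time-continuity argument of Lemma~\ref{l6}.
\item The restriction $q>p/2$ comes from applying the good-$\lambda$ inequality to $\tilde N_{2,a}(u^{p/2})$ in $L^{2q/p}$, where $M$ must be bounded. It does not come from the $\nu$-integration you describe.
\end{itemize}
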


We introduce solutions $u_k$, 
in order to gain a necessary decay as $x_n\to\infty$. Let 
\begin{equation}
\LL u_k=0\mbox{ on }\R^{n-1}\times(0,k)\times\R,\quad u_k(x,0,t)=f(x,t),\quad u_k(x,k,t)=0,\quad\mbox{for all }(x,t)\in\pom.
\end{equation}
For simplicity we extend $u_k$ to $\Omega$ by defining  $u_k(x,x_n,t)=0$ when $x_n>k$, which extends each $u_k$ continuously.
We now consider the limit of $u_k$, as $k \to \infty$. For each $k\ge 1$, 
the Lax-Milgram lemma used on the domain $\Omega^k=\R^{n-1}\times(0,k)\times\R$  gives us
\begin{equation}
 \|u_k\|_{\dot \E} = \br{\|\nabla u_k\|^2_{\L^2(\Omega)} + \|\HT \dhalf u_k\|^2_{\L^2(\Omega)}}^{1/2} \le C \|\mbox{Tr }u_k\|_{\Hdot^{1/4}_{\pd_{t} - \Delta_x}(\partial\Omega^k)}= C\|f\|_{\Hdot^{1/4}_{\pd_{t} - \Delta_x}(\partial\Omega)},
\end{equation}
where the constant $C\in(0,\infty)$ depends on the coercivity constant of a certain 
sesquilinear form (see \cite{DLP1}, Section 2)
 in $\Omega^k$ (which is uniform in $k$), and $\|A\|_{L^\infty}=\Lambda$. Therefore, this 
bound is uniform in $k$. This uniformity and the fact that $\text{Tr}(u_k)=f$ imply that a weak convergence argument 
yields a sub-sequence convergent to some $v$ with $ \|v\|_{\dot \E}\leq C\|f\|_{\Hdot^{1/4}_{\pd_{t} - \Delta_x}(\partial\Omega)}$ 
and $\text{Tr}(v)=f$. This sub-sequence (which we somewhat imprecisely also call $(u_k)$) is therefore strongly convergent to $v$ in $L^2_{\rm loc}({\R}^n_+\times\R)$
by a standard functional analysis argument.
It follows that the $L^2$ averages of $u_k$ converge locally and uniformly to 
the $L^2$ averages of $v$ in $C_{\rm loc}({\R}^n_{+}\times\R)$. Additionally, because each $u_k$ has uniform interior H\"older continuity estimates, the convergence of $u_k\to v$ also holds in $C^\alpha_{\rm loc}({\R}^n_{+}\times\R)$ for some $\alpha>0$. 

Taking the limit in $k$, it now follows that $v$ must be a weak solution to $\mathcal Lv=0$ in $\Omega$. Since $\text{Tr}(v)=f$ and $ \|v\|_{\dot \E}<\infty$, it must hold that $u=v$ since energy solutions are unique. Consider now $u_k-u$, a solution in $\Omega^k$ enjoying the bound
\begin{equation}
 \|u_k-u\|_{\dot \E}  \le C \|\mbox{Tr }(u_k-u)\|_{\Hdot^{1/4}_{\pd_{t} - \Delta_x}(\partial\Omega^k)}= C\|\mbox{Tr }u\|_{\Hdot^{1/4}_{\pd_{t} - \Delta_x}(\R^{n-1}\times\{k\}\times\R)}.
\end{equation}
The trace $\|\mbox{Tr }u\|_{\Hdot^{1/4}_{\pd_{t} - \Delta_x}(\R^{n-1}\times\{k\}\times\R)}\to 0$ as $k\to\infty$, otherwise $ \|u\|_{\dot \E}=\infty$, which is false. Thus the weak convergence $\nabla u_k\to\nabla u$ in $L^2$ upgrades to strong convergence. \vglue1mm

We define $w$ as the $L^p$ average of $|u_k|$ for some fixed $k\in\mathbb N$, that is, for $p>1$,
\begin{equation}\label{def.w}
w(X,t)=w(x,x_n,t)=\left(\fiint_{Q_{x_n/2}(X,t)}|u_k(Y,s)|^p\,dY\,ds\right)^{1/p}.
\end{equation}
At this point we observe that $\hbar_{\nu,a}w(x,t)<\infty$ for all points $(x,t)\in\partial\Omega$. 
This is due to the fact that $u_k=0$ for $x_n>k$ and therefore $w=0$ for $x_n>2k$.

For a constant $\nu>0$, define the set
\begin{equation}\label{E}
E_{\nu,a}:=\big\{(x,t)\in\partial\Omega:\,N_{a}(w)(x,t)>\nu\big\}.
\end{equation}

The following lemma is proved in \cite[Lemma 10.10]{DLP1}.
\begin{lemma}\label{S3:L5}
Let $w$ be as in \eqref{def.w}. Then the following properties hold.
\vglue2mm

\noindent (i)
The function $\hbar_{\nu, a}(w)$ is Lipschitz in spatial variables with constant $a^{-1}$. It is also 
Lipschitz in the time variable above a positive height and the following holds:
\begin{equation}\label{Eqqq-5}
\left|\hbar_{\nu,a}(w)(x,t)-\hbar_{\nu,a}(w)(y,s)\right|\leq a^{-1}|x-y|+\frac{a^{-2}}{2\min\{\hbar_{\nu,a}(w)(x,t),\hbar_{\nu,a}(w)(y,s)\}}|t-s|.
\end{equation}
for all $(x,t),(y,s)\in\partial\Omega$. In particular,
\begin{equation}\label{derh}
|\nabla \hbar_{\nu,a} (x,t)|\le a^{-1},\qquad |\partial_t \hbar_{\nu,a} (x,t)|\le \frac{a^{-2}}{2\hbar_{\nu,a} (x,t)}.
\end{equation}
\vglue2mm

\noindent (ii)
The function $\hbar_{\nu,a}(w)$ satisfies the following: there exists $C_n>0$ that depends only on $n$ such that
\begin{equation}\label{derh'}
    \abs{\hbar_{\nu,a}(w)(x,t)-\hbar_{\nu,a}(w)(y,s)}\le C_na^{-1}\br{\abs{x-y}+\abs{t-s}^{1/2}} 
\end{equation}
for all $(x,t),(y,s)\in\partial\Omega$.
\vglue2mm

\noindent (iii)
Given an arbitrary $(x,t)\in E_{\nu,a}$, where $E_{\nu,a}$ is defined in \eqref{E}, let $x_n:=\hbar_{\nu,a}(w)(x,t)$. Then there exists a 
point $(y,y_n,s)\in\partial\gamma_{a}(x,x_n,t)$ such that $w(y,y_n,s)=\nu$ and $\hbar_{\nu,a}(w)(y,s)=y_n$. 		
\end{lemma}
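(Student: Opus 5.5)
The proof of Lemma~\ref{S3:L5} will rest on the geometry of the enlarged interior cones \eqref{newG} and on the continuity of $w$, which holds because $w$ is an $L^p$ average of the H\"older continuous function $u_k$ over boxes whose size varies continuously with $x_n$. A preliminary observation is that $w$ vanishes for $x_n>2k$, so the set in \eqref{h} is nonempty and $\hbar_{\nu,a}(w)(x,t)<\infty$. We also record a monotonicity fact: if $x_n'>x_n$ then $\gamma_a(x,x_n',t)\subset\gamma_a(x,x_n,t)$, so the set in \eqref{h} is an upward ray.

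For (i), fix $(x,t)$ and $(y,s)$ and set $h=\hbar_{\nu,a}(w)(x,t)$. Take any height $x_n>h$; then the supremum of $w$ over $\gamma_a(x,x_n,t)$ is less than $\nu$. The main step is to show that $\gamma_a(y,y_n,s)\subset\gamma_a(x,x_n,t)$ whenever
\[
y_n\ge x_n+a^{-1}|x-y|+\frac{a^{-2}}{2x_n}|t-s|.
\]
This follows from the description of $\partial\gamma_a(X,t)$ as a graph over $\partial\Omega$. That graph is Lipschitz with constant $a^{-1}$ in the spatial variables and $a^{-2}/(2x_n)$ in time, and the slope bound comes from differentiating $a^{-1}\sqrt{|t|}$ at distances at least comparable to $x_n$ from the cusp. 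The containment gives $\hbar(y,s)\le y_n$. Letting $x_n\downarrow h$ and then symmetrizing in the two points yields \eqref{Eqqq-5} with the minimum of the two heights in the denominator, and \eqref{derh} follows by letting $(y,s)\to(x,t)$.

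For (ii), if $|t-s|\lesssim h^2$, the bound \eqref{Eqqq-5} already gives $|t-s|/h\lesssim|t-s|^{1/2}$. If $|t-s|\gg h^2$, we argue directly with boundary cones instead. The cone $\gamma_a(y,y_n,s)$ with $y_n\simeq C_na^{-1}(|x-y|+|t-s|^{1/2})$ lies inside $\gamma_a(x,h',t)$ for every $h'>h$: every point of $S(y,y_n,s)$ lies in $S(x,h',t)$ once $y_n$ dominates the parabolic distance between the two points (up to $a^{-1}$). The quasi-triangle inequality for $\|\cdot\|$ supplies the constant $C_n$.

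For (iii), if $(x,t)\in E_{\nu,a}$ then $x_n=\hbar>0$, and by the infimum definition together with continuity there are points of $\overline{\gamma_a(x,x_n',t)}$ with $w\ge\nu$ for $x_n'<x_n$ arbitrarily close to $x_n$. A compactness argument, using that $w=0$ above height $2k$ and that the relevant portion of the cone is bounded, produces a point $(y,y_n,s)\in\partial\gamma_a(x,x_n,t)$ with $w(y,y_n,s)=\nu$. Equality rather than just $\ge\nu$ holds because the supremum over the open cone $\gamma_a(x,x_n,t)$ is at most $\nu$, while the boundary point is a limit of points where $w\ge\nu$.

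Finally we must show $\hbar(y,s)=y_n$. Since $(y,y_n,s)\in\overline{\gamma_a(x,x_n,t)}$, the cone containment gives $\gamma_a(y,y_n',s)\subset\gamma_a(x,x_n,t)$ for $y_n'>y_n$, so $\hbar(y,s)\le y_n$. Conversely, $w(y,y_n,s)=\nu$ means that every cone $\gamma_a(y,y_n',s)$ with $y_n'<y_n$ contains $(y,y_n,s)$ in its interior, so its supremum is at least $\nu$, giving $\hbar(y,s)\ge y_n$.

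I expect the main obstacle to be the rigorous verification of the time-Lipschitz description of $\partial\gamma_a(X,t)$ as an intersection of cusped cones. This is where the enlarged cones differ from the shifted ones, and the estimate must be checked carefully near $x_n$ small.
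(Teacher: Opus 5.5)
Your route is the natural one. The paper gives no argument here and defers to \cite[Lemma 10.10]{DLP1}, which rests on the same properties of the enlarged cones described before \eqref{h}. Your existence argument in (iii) and the bound $\hbar(y,s)\ge y_n$ are fine, but two steps fail as written.

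\textbf{The inclusion in (ii) is backwards.} A point $(Y,s)$ lies in $\gamma_a(q,\tau)$ exactly when $(q,\tau)\in S(Y,s)$. Hence $(Y,s)\in\bigcap_{S(X,t)}\gamma_a(q,\tau)$ if and only if $S(X,t)\subset S(Y,s)$, and then $\gamma_a(Y,s)\subset\gamma_a(X,t)$.
\begin{itemize}
\item To put $\gamma_a(y,y_n,s)$ inside $\gamma_a(x,h',t)$ you need $S(x,h',t)\subset S(y,y_n,s)$. The inclusion you wrote, $S(y,y_n,s)\subset S(x,h',t)$, is false because the first ball is larger, and it would give the opposite containment.
\item The correct inclusion holds only for $h'$ close to $h$, not for every $h'>h$.
\item Once corrected, no case split is needed. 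The parabolic norm $\|\cdot\|$ satisfies the triangle inequality, so $ay_n\ge ah'+\|(x-y,t-s)\|$ suffices. Letting $h'\downarrow h$ gives \eqref{derh'} with $C_n=1$.
\item As written, your first case yields $a^{-2}$ rather than $a^{-1}$ unless the threshold is $|t-s|\le(ah)^2$.
\end{itemize}

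\textbf{The main step of (i) needs the same ball inclusion, stated exactly.} A Lipschitz bound for $\partial\gamma_a(X,t)$ alone only places the vertex $(y,y_n,s)$ in $\overline{\gamma_a(x,x_n,t)}$; containment of the whole cone is the inclusion $S(x,x_n,t)\subset S(y,y_n,s)$. The sharp constant $a^{-2}/(2x_n)$ comes from $\|(\xi,\tau+d)\|^2\le\|(\xi,\tau)\|^2+|d|$. This holds because, with $\rho=\|(\xi,\tau)\|$, one has $|\partial_\tau\rho^2|=2|\tau|\rho^2/(\rho^4+\tau^2)\le1$. It gives the inclusion as soon as $a^2y_n^2\ge(ax_n+|x-y|)^2+|t-s|$, which your condition on $y_n$ implies. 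Working only at distances ``comparable to $x_n$'' would lose the exact constant.

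\textbf{In (iii), $\hbar(y,s)\le y_n$ does not follow from $\gamma_a(y,y'_n,s)\subset\gamma_a(x,x_n,t)$.}
\begin{itemize}
\item On the open cone $\gamma_a(x,x_n,t)$ you do have $w<\nu$ pointwise, since each of its points lies in some $\gamma_a(x,x''_n,t)$ with $x''_n>x_n$. This pointwise bound is also what places your limit point on the boundary.
\item However, the supremum of $w$ over that cone equals $\nu$, because your boundary point is approached from inside. Definition \eqref{h} requires the supremum to be strictly less than $\nu$.
\end{itemize}
Here is a repair.
\begin{itemize}
\item Since $(y,y_n,s)\in\overline{\gamma_a(x,x_n,t)}$, you get $S(x,x_n,t)\subset\overline{S(y,y_n,s)}$, the closed parabolic ball.
\item For $y'_n>y_n$, this closed ball is compactly contained in $S(y,y'_n,s)$.
\item Hence $S(x,x''_n,t)\subset S(y,y'_n,s)$ for some $x''_n>x_n$, so $\gamma_a(y,y'_n,s)\subset\gamma_a(x,x''_n,t)$.
\item On $\gamma_a(x,x''_n,t)$ one has $\sup w<\nu$, because $x''_n>\hbar(x,t)$ and the admissible heights form an upward ray.
\end{itemize}
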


The following statement (for the square function $S_{p,b}$ with $p=2$ only) appears in \cite{DLP1} in Appendix A, where the matrices were also assumed to be in block form.

\begin{lemma}\label{l6} 
Assume that $w$ is defined as in \eqref{def.w} and $\Omega=\R^n_+\times\R$. 
Then for any $a>0$ there exists $0<b_0=b_0(a)$ and $\gamma_0=\gamma_0(a)>0$ such that the following holds. 
Having fixed an arbitrary $\nu>0$, $b\ge b_0$, and $\gamma\le\gamma_0$ for each point $(x,t)\in\partial\Omega$ from the set 
\begin{equation}\label{Eqqq-17}
\big\{(x,t):\, N_{a}(w)(x,t)>\nu\mbox{ and }S_{p,b}(u_k)(x,t)\leq\gamma\nu\big\}
\end{equation}
there exists a boundary ball $R$ with $(x,t)\in K_0 R$,
where $K_0 R$ is an enlargement of $R$ by a factor $K_0 >1$ that depends only on $p$, dimension, and the aperture $a$, and such that
\begin{equation}\label{Eqqq-18}
\big|w\big(z,\hbar_{\nu,a}(w)(z,\tau),\tau\big)\big|>\nu/{8}\,\,\text{ for all }\,\,(z,\tau)\in R.
\end{equation}
\end{lemma}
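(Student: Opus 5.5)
Fix $(x,t)$ in the set \eqref{Eqqq-17}. Since $N_a(w)(x,t)>\nu$, the point lies in $E_{\nu,a}$, so Lemma~\ref{S3:L5}(iii) applies. Put $x_n=\hbar_{\nu,a}(w)(x,t)$. It gives a point $(y,y_n,s)\in\partial\gamma_a(x,x_n,t)$ with $w(y,y_n,s)=\nu$ and $\hbar_{\nu,a}(w)(y,s)=y_n$. The boundary of the enlarged cone lies at comparable height, so $y_n\simeq_a x_n$ and $\|(x-y,t-s)\|\lesssim_a x_n$.

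Let $R$ be the boundary parabolic ball centred at $(y,s)$ with radius $\epsilon y_n$, where $\epsilon=\epsilon(a)$ is small. Then $(x,t)\in K_0R$ for $K_0\simeq_a\epsilon^{-1}$. By Lemma~\ref{S3:L5}(ii), for $(z,\tau)\in R$ we have $|\hbar(z,\tau)-y_n|\le C_na^{-1}\epsilon y_n$. Hence the points $Z_{z,\tau}=(z,\hbar(z,\tau),\tau)$ lie in a Whitney-type region $W$ around $(y,y_n,s)$ of size $\simeq\epsilon y_n$. The region $W$ sits well inside the ball of radius $\simeq y_n$ about $(y,y_n,s)$ at heights comparable to $y_n$. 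It remains to show $w(Z_{z,\tau})>\nu/8$ for every such point.

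\textbf{Step 1 (an average of $|u_k|^p$ is large).} Since $w(y,y_n,s)=\nu$, the $L^p$ average of $|u_k|$ over $Q_{y_n/2}(y,y_n,s)$ equals $\nu$. The averaging cubes $Q_{\hbar/2}(Z_{z,\tau})$ are comparable to it and overlap it in a fixed proportion. We compare $\fiint_{Q}|u_k|^p$ over the two cubes by a Poincaré-type argument applied to $v=|u_k|^{p/2}$, noting $|\nabla v|^2\simeq|u_k|^{p-2}|\nabla u_k|^2$.

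\textbf{Step 2 (control via $S_{p,b}$).} Choose $b\ge b_0(a)$ large so that the union $\widetilde W$ of all these cubes lies in $\gamma_b(x,t)$ at heights $\simeq y_n$. Then
\[
\iint_{\widetilde W}|\nabla v|^2\,y_n^{-n}\,dX\,dt\lesssim S_{p,b}(u_k)(x,t)^p\le\gamma^p\nu^p .
\]
The spatial part of the oscillation of $v$ is controlled by this integral through Poincaré. The time variable needs separate treatment, because $S_p$ contains no $\partial_t$.

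\textbf{Step 3 (the time direction, the main obstacle).} I would use the equation $\partial_t u_k=\divg(A\nabla u_k)$. Test it against $p|u_k|^{p-2}u_k\,\eta$, where $\eta$ is a spatial cutoff at scale $y_n$, and estimate the time variation of $\int v^2\eta\,dX$. After integration by parts this gives
\[
\Big|\frac{d}{dt}\int|u_k|^p\eta\Big|\lesssim\int|u_k|^{p-2}|\nabla u_k|^2\eta+y_n^{-1}\int|u_k|^{p-1}|\nabla u_k||\nabla\eta| .
\]
The time interval has length $\simeq y_n^2$. So the first term contributes $\lesssim y_n^{n+2}\cdot y_n^{-2}\gamma^p\nu^p$ after normalisation. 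For the second term, Cauchy--Schwarz splits it into the $S_p$ piece times $(\fiint|u_k|^p)^{1/2}$. The latter is $\lesssim\nu$, using $w\le\nu$ on the relevant region: points at heights above $\hbar$ inside the cone have $w<\nu$ by definition of $\hbar$. This bounds the cross term by $\gamma^{p/2}\nu^p$.

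Combining the three steps gives
\[
\Big|\fiint_{Q(Z_{z,\tau})}|u_k|^p-\nu^p\Big|\le C(a)\big(\gamma^{p/2}+\gamma^p\big)\nu^p .
\]
Taking $\gamma\le\gamma_0(a)$ small yields $w(Z_{z,\tau})>\nu/8$. The delicate points are two. First, the needed upper bound $w\le\nu$ must hold on a slightly enlarged region, which requires shrinking $\epsilon$ and using the nesting property $\overline{\gamma_a(X,t)}\subset\gamma_a(Q,\tau)$. Second, the constants depend only on $a$ and $p$. No smallness of $\nabla A$ enters, since only $\partial_t u_k$ is replaced via the divergence-form equation.
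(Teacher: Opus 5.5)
Your architecture matches the paper's. Lemma~\ref{S3:L5}(iii) gives the stopping point $(y,y_n,s)$. The Lipschitz bound \eqref{derh'} reduces \eqref{Eqqq-18} to a lower bound for $w$ near $(y,y_n,s)$. That lower bound comes from spatial Poincar\'e plus the time evolution of a spatially cut-off $L^p$ mass of $u_k$, with errors controlled by $S_{p,b}(u_k)(x,t)$ on a Whitney region inside $\gamma_b(x,t)$.

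\textbf{The gap is in Step~3.} To bound the cross term you need $\iint_{\supp\eta\times I}|u_k|^p\lesssim\nu^p y_n^{n+2}$. You justify this by ``$w\le\nu$ on the relevant region, since points above $\hbar$ inside the cone have $w<\nu$''. That does not work, for two reasons.
\begin{itemize}
\item The testing region has spatial size $\simeq y_n$ around $(y,y_n)$, so it contains points at height about $y_n/2$. These lie well below the graph of $\hbar$, which sits at height $\approx y_n$ near $(y,s)$. The stopping-time definition of $\hbar$ gives no information on $w$ there.
\item A pointwise bound on $w$ at the points of a set does not bound the average of $|u_k|^p$ over that set. You would need the set to be covered by Whitney cubes centred at points where $w\le\nu$.
\end{itemize}
Shrinking $\epsilon$ and invoking nesting of cones do not help, because the testing region lives at scale $y_n$, not $\epsilon y_n$. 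In a configuration like the paper's ($\eta$ supported in the projection of $Q_{cy_n/2}(y,y_n,s)$ with $c>1$, time interval of length $y_n^2$), no such upper bound is available. Your two-sided final estimate rests on this bound as well.

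\textbf{The missing idea is that no upper bound is needed.} The paper tests with $|u_k|^{p-2}u_k\eta^2$, where $\eta$ is normalized by $\int\eta^2=1$. It pairs the leftover factor $\eta$ with $|u_k|^{p/2}$ in Cauchy--Schwarz. Writing $v_{av}(\tau)=\int|u_k|^p\eta^2\,dX$, the cross term is then at most $C\,S_{p,b}(u_k)^{p/2}\big(\sup_{I_{y_n}(s)}v_{av}\big)^{1/2}$. Young's inequality absorbs $\sup v_{av}$ and gives
$|v_{av}(t_0')-v_{av}(t_0)|\le\frac12\sup v_{av}+C\,S_{p,b}(u_k)^p$.
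Since $w(y,y_n,s)=\nu$ forces $\sup v_{av}>(\nu/2)^p$, this yields $\inf v_{av}\ge(\nu/4)^p$ for small $\gamma$. The spatial comparison then gives $w\ge\nu/8$ on $Q_{\delta y_n}(y,y_n,s)$. This is a one-sided bound, which is all \eqref{Eqqq-18} asks.

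Alternatively, keep all cutoffs and time intervals inside $Q_{y_n/2}(y,y_n,s)$, where $w(y,y_n,s)=\nu$ itself supplies the upper bound. Then discard the part of $Q_{\hbar/2}(Z_{z,\tau})$ lying outside this cube, since only a lower bound is needed.

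\textbf{A smaller point.} Lemma~\ref{S3:L5}(iii) does not give $y_n\simeq x_n$, but you do not need it. The inclusion $\overline{\gamma_a(x,x_n,t)}\subset\gamma_a(x,t)$ gives $\|(x-y,t-s)\|<a y_n$. That is exactly what $(x,t)\in K_0R$ with $K_0\simeq a/\epsilon$ requires.
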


\begin{proof}[Proof of Lemma~\ref{l6}] The statement follows by a modified argument from Appendix A of \cite{DLP1} adapted to our case.
For a point $(x,t)$ in \eqref{Eqqq-17}, part (iii) of Lemma \ref{S3:L5} gives the existence of a point $(y,y_n,s)\in\partial\gamma_a(x,x_n,t)$ on the graph of $\hbar$ with $w(y,y_n,s)=\nu$.
We consider an interior parabolic cylinder $Q_{y_n/2}(y,y_n,s)$. There exists $b=b(a)>0$ such that 
$Q_{4y_n/5}(y,y_n,s)\subset\gamma_b(x,t)$ and hence \eqref{Eqqq-18} will follow if we prove that for all points inside  $Q_{\delta y_n}(y,y_n,s)$ the function $w$ is at least $\nu/8$, for some $\delta >0$ depending only on $p$.

Firstly,  $w(y,y_n,s)=\nu$ implies that the $L^p$ average of $|u_k|$ over the cube $Q_{y_n/2}(y,y_n,s)$ is $\nu$. Denote by $\tilde B$ the orthogonal projection of $Q_{cy_n/2}(y,y_n,s)$, onto the spatial domain $\Rn_+$, where $c=c(p)$ is a constant (larger than $1$ and less than $8/5$) to be determined later.  Denote by $B$ the orthogonal projection of $Q_{y_n/2}(y,y_n,s)$ onto $\Rn_+$. Let $\eta$ be a smooth nonnegative cutoff
function in spatial variables, obtained by taking a smooth function with
values in $[0,1]$, equal to one on $B$ and supported in $\tilde B$, and
normalizing its $L^2$ norm to one.
Thus
\[
 \int_{\tilde B}\eta^2\,dX=1,\qquad
 \|\eta\|_{L^\infty}\lesssim y_n^{-n/2},\qquad
 \|\nabla\eta\|_{L^\infty}\lesssim y_n^{-n/2-1},\qquad
 \inf_B\eta^2\ge |\tilde B|^{-1}.
\]
The implicit constants may depend on the fixed ratio $c(p)$.
Consider the function
\[
v_{av}(\tau):=\int_{\Rn}|u_k|^{p}(X,\tau)\eta^2(X)\,dX.
\] With the properties of $\eta$ and the fact that 
\(
\fiint_{Q_{y_n/2}(y,y_n,s)}|u_k|^pdX\,dt = \nu^p,
\) 
we must have 
\begin{equation}\label{claim.v_av1}
    v_{av}(\tau)>(\nu/2)^p \text{ for some }\tau\in  I_{y_n}(s):=[s-(y_n)^2/2,s+(y_n)^2/2],
\end{equation}
because otherwise one would have
\[
    y_n^2\br{\nu/2}^p\ge\int_{\tau\in I_{y_n}(s)}v_{av}(\tau)d\tau\ge \br{\inf_B \eta^2}\iint_{Q_{y_n/2}(Y,s)}\abs{u_k}^pdXd\tau\ge \frac{y_n^2|B|}{2|\tilde B|}\nu^p = c^{-n}\frac{y_n^2}{2} \nu^p,
\]
which is a contradiction if $p>1$ and $c = c(p)$ is chosen sufficiently close to $1$.
We claim that \eqref{claim.v_av1} implies that 
\begin{equation}\label{claim.v_av2}
    v_{av}(\tau')>(\nu/4)^p \text{ for all }\tau'\in I_{y_n}(s).
\end{equation}
Indeed,  calculating as in \cite{Din23}, we multiply the equation $\LL u=0$ by $\eta^2$ and integrate it over the region
$\mathbb R^n\times [t_0,t_0']$ for any $t_0,t_0'\in I_{y_n}(s)$, extending the function $u_k\eta$ by zero outside the support of $\eta$.
Because $\eta$ is a  
function of the spatial variables only, we have
\begin{multline}\label{eq.v'-v}
    v_{av}(t_0')-v_{av}(t_0)=\iint_{\mathbb R^n\times\{t_0'\}}|u_k|^{p}\eta^2\,dX-\iint_{\mathbb R^n\times\{t_0\}}|u_k|^{p}\eta^2\,dX\\
    =
p\iint_{\mathbb R^n\times[t_0,t_0']}(\partial_t u_k)|u_k|^{p-2}u_k\,\eta^2\,dX\,dt
=p\iint_{\mathbb R^n\times[t_0,t_0']}\mbox{\rm div}(A\nabla u_k)|u_k|^{p-2}u_k\,\eta^2\,dX\,dt\\
=\iint_{\mathbb R^n\times[t_0,t_0']}\mbox{\rm div}(A\nabla (|u_k|^{p}))\eta^2\,dX\,dt
-p(p-1)\iint_{\mathbb R^n\times[t_0,t_0']}(A\nabla u_k\cdot\nabla u_k)|u_k|^{p-2}\eta^2\,dX\,dt\\
=-2p\iint_{\mathbb R^n\times[t_0,t_0']}(A\nabla u_k\cdot\nabla\eta)|u_k|^{p-2}u_k\eta\,dX\,dt\\
-p(p-1)\iint_{\mathbb R^n\times[t_0,t_0']}(A\nabla u_k\cdot\nabla u_k)|u_k|^{p-2}\eta^2\,dX\,dt.
\end{multline}
The interval $[t_0,t_0']$ has length at most $y_n^2$.
We use $|\nabla\eta|\lesssim y_n^{-n/2-1}$ and place the remaining
factor $\eta$ entirely in the $|u_k|^p$ factor of Cauchy--Schwarz.
The energy factor is unweighted, so
\begin{multline}\label{eq.v'-v2}
    \abs{\iint_{\mathbb R^n\times[t_0,t_0']}(A\nabla u_k\cdot\nabla\eta)|u_k|^{p-2}u_k\eta\,dX\,dt}\\
    \lesssim \br{y_n^{-n}\iint_{\tilde B\times I_{y_n}(s)}\abs{\nabla u_k}^2\abs{u_k}^{p-2}dX\,dt}^{1/2}\br{y_n^{-2}\iint_{\Rn\times I_{y_n}(s)}\abs{u_k}^p\eta^2 dX\,dt}^{1/2}\\
 \lesssim \br{\iint_{Q_{\frac{4y_n}{5}}(y,y_n,s)}\abs{\nabla u_k}^2\abs{u_k}^{p-2}y_n^{-n}dX\,dt}^{1/2}\left(\sup_{\tau\in I_{y_n}(s)}\int_{\mathbb R^n\times\{\tau\}}|u_k|^p\eta^2\,dX\right)^{1/2}\\
 \le S_{p,b}(u_k)^{p/2}(x,t)\sup_{\tau\in I_{y_n}(s)}v_{av}(\tau)^{1/2}
 \le \frac1{4p} \sup_{\tau\in I_{y_n}(s)}v_{av}(\tau) + C S_{p,b}(u_k)^{p}(x,t).
\end{multline}
The energy term in \eqref{eq.v'-v} is bounded by $CS_{p,b}(u_k)^p(x,t)$,
using $\eta^2\lesssim y_n^{-n}$ and the same cylinder inclusion.
Therefore, we have that for any $t_0',t_0\in I_{y_n}(s)$,
$$\left| v_{av}(t_0')-v_{av}(t_0)\right|\le \frac12\sup_{\tau\in I_{y_n}(s)}v_{av}(\tau)+ CS_{p,b}( u_k)^p(x,t).$$
Since $S_{p,b}(u_k)(x,t)\le \gamma\nu$ and $\sup_{\tau\in I_{y_n}(s)}v_{av}(\tau)\ge (\nu/2)^p$, the inequality above implies that 
$$
\inf_{\tau\in I_{y_n}(s)}v_{av}(\tau)\ge \br{2^{-p-1}-C\gamma^{p}}\nu^p\ge \left(\frac{\nu}4\right)^p
$$
must hold for sufficiently small  $\gamma>0$. Hence we have shown the claim \eqref{claim.v_av2}.

Pick now any point $(X_0,t_0)\in Q_{\delta y_n}(y,y_n,s)$ where
$\delta = \frac{c(p) - 1}{3} > 0$. Denote by $B_{X_0}$ the orthogonal projection of $Q_{x_n/2}(X_0,t_0)$ onto the spatial domain $\Rn_+$. We write  $w^p(X_0,t_0)$  as
\begin{equation}
w^p(X_0,t_0)=\fint_{s'=t-(x_n/2)^2}^{t+(x_n/2)^2}\fint_{B_{X_0}}|u_k(X,s')|^p\,dX\,ds'=:\fint_{t-(x_n/2)^2}^{t+(x_n/2)^2}w_{av}(s')ds'.
\end{equation}
We aim to obtain a lower bound for $w(X_0,t_0)^p$ by comparing $w_{av}(s')$ with $v_{av}(s')$. In fact, it is easier to compare the averages without the cutoff function $\eta$ and we will control
\begin{equation*}
  \fint_{\tilde B} |u_k(X,s')|^pdX - w_{av}(s')= \fint_{\tilde B} |u_k(X,s')|^pdX - \fint_{B_{X_0}}|u_k(X,s')|^p\,dX.
\end{equation*}
By the fundamental theorem of calculus in the spatial variables (c.f. \cite[Lemma 5.2]{DHM}), we obtain that 
\[
     \fint_{\tilde B} |u_k(X,s')|^pdX - w_{av}(s')
   \le C\int_{X\in H}|u_k(X,s')|^{p-1}\abs{\nabla u_k(X,s')}y_n^{1-n}dX, 
\]
where $H$ is the convex hull of the set $\tilde B\cup B_{X_0}$ in the spatial domain. Note that our choice of $X_0$, $\delta$ and $\tilde B$ guarantees that $B_{X_0}\subset \tilde B$, and thus $H=\tilde B$. It follows from Cauchy-Schwarz and Young's inequality that
\begin{multline*}
     \fint_{\tilde B\times\set{s'}} |u_k|^pdX - w_{av}(s')
     \le C_\varepsilon \,y_n^2\int_{\tilde B \times\{s'\}}\abs{\nabla u_k}^2\abs{u_k}^{p-2}y_{n}^{-n}dX+\varepsilon \int_{\tilde B\times\set{s'}}|u_k|^py_n^{-n}dX\\
     \le  C\,y_n^2\int_{\tilde B \times\{s'\}}\abs{\nabla u_k}^2\abs{u_k}^{p-2}y_{n}^{-n}dX+ \frac12\fint_{\tilde B\times\set{s'}}|u_k|^pdX
\end{multline*}
by choosing $\varepsilon$ sufficiently small. Reordering the terms gives that
\[
w_{av}(s')\ge \frac{1}{2}\fint_{\tilde B\times\set{s'}} |u_k|^pdX - C\,y_n^2\int_{\tilde B \times\{s'\}}\abs{\nabla u_k}^2\abs{u_k}^{p-2}y_{n}^{-n}dX.
\]
Integrating both sides in $s'\in[t_0-(x_n/2)^2,t_0+(x_n/2)^2]$ and taking averages, one gets that     
\begin{equation}\label{eq.wav}
    \fint_{t-(x_n/2)^2}^{t+(x_n/2)^2}w_{av}(s')ds'\ge \frac{1}{2}\fint_{t-(x_n/2)^2}^{t+(x_n/2)^2}\fint_{\tilde B\times\set{s'}} |u_k|^pdXds'-CS_{p,b}(u)(x,t)^p,
\end{equation}
where we have used $x_n\approx y_n$. The property of $\eta$ ensures that $v_{av}(s')\le \fint_{\tilde B\times\set{s'}} |u_k|^pdX$ for all $s'$. Thus, \eqref{eq.wav}, the claim \eqref{claim.v_av2} (notice that the interval $[t_0-(x_n/2)^2,t_0+(x_n/2)^2]$ is contained in $I_{y_n}(s)$), and the bound on $S_{p,b}(u)(x,t)$ imply that 
\[
w^p(X_0,t_0)\ge \frac12 (\nu/4)^p- C\gamma^p\nu^p\ge (\nu/8)^p
\]
for $\gamma>0$ sufficiently small. 
Hence we have proved that on $Q_{\delta y_n}(y,y_n,s)$ the function $w$ is at least $\nu/8$.
\end{proof}

\begin{lemma}[maximal function on a stopping graph]\label{S3:L6}
Let $w\ge0$, let $h=\hbar_{\nu,a}(w)$ be finite and satisfy
\eqref{derh'}, and let $P\in\{N_a(w)>\nu\}$.
Suppose a boundary ball $R$ satisfies $P\in K_0R$ and
$w(z,h(z,\tau),\tau)\ge\nu/8$ on $R$. Then
\[
 (M_hw)(P,h(P))\ge c\nu,
\]
where $c>0$ depends only on $n$, $a$ and $K_0$. The same conclusion holds
for the maximal function restricted to radii at most $Lr$ if the fixed
enlargement of $R$ containing $P$ has radius at most $Lr$.
\end{lemma}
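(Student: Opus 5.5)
The plan is to reduce everything to a comparison of measures on the boundary $\partial\Omega=\R^{n-1}\times\R$, using the parametrization of the stopping graph by its projection. I will interpret $M_hw$ at a point $(P,h(P))$ of the graph $\{(z,h(z,\tau),\tau)\}$ as the supremum, over radii $\rho>0$, of averages of $w(z,h(z,\tau),\tau)$ over the graph pieces lying above boundary parabolic balls $\Delta_\rho(P)$. These averages are taken with respect to the projected measure $dz\,d\tau$. This is how the maximal function arises when it is later applied to $w$ composed with $h$. If instead one works with the intrinsic surface balls on the graph, \eqref{derh'} shows that the map $(z,\tau)\mapsto(z,h(z,\tau),\tau)$ is bi-Lipschitz with respect to the parabolic metric, with constant $1+C_na^{-1}$. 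Consequently graph balls and the lifts of boundary balls are nested within one another up to a fixed dilation factor depending only on $n$ and $a$, and the two definitions of $M_h$ are comparable. So I would first record this comparability and then work entirely on $\partial\Omega$.

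Let $R=\Delta_r(z_0,\tau_0)$. Since $P\in K_0R$, we have $d_p(P,(z_0,\tau_0))<K_0r$, hence $R\subset\Delta_{(K_0+1)r}(P)$. Take $\rho=(K_0+1)r$. Because $w\ge0$, I would discard the part of $\Delta_\rho(P)$ outside $R$ and use the hypothesis $w\ge\nu/8$ on the lift of $R$. This gives
\[
\frac{1}{|\Delta_\rho(P)|}\int_{\Delta_\rho(P)}w(z,h(z,\tau),\tau)\,dz\,d\tau\ \ge\ \frac{\nu}{8}\,\frac{|R|}{|\Delta_\rho(P)|}=\frac{\nu}{8}(K_0+1)^{-(n+1)} .
\]
The last equality uses the homogeneity $|\Delta_\rho|\simeq\rho^{n+1}$ of parabolic balls. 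With the comparability constants from the first step, this yields $c=c(n,a,K_0)$. For the restricted maximal function, the radius used is exactly the radius of the fixed enlargement $(K_0+1)R$ containing $P$, which is at most $Lr$ by assumption. So the same choice of $\rho$ is admissible and the same bound holds.

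The main (mild) obstacle is the first step. The lemma itself is essentially a one-line doubling estimate once one knows that averaging on the graph is equivalent to averaging on the projection. The graph is not smooth in time, and only the parabolic Lipschitz bound \eqref{derh'} is available rather than \eqref{derh}. Hence one must use parabolic distances throughout, and must not use any Euclidean surface measure, which could degenerate near $h\to0$. Finiteness of $h$ is used only to make the graph point $(P,h(P))$ well defined.
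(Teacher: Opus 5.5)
Your argument is correct and is essentially the paper's proof: use \eqref{derh'} to identify measure on the stopping graph with boundary measure up to a fixed factor, then observe that a fixed enlargement of $R$ containing $P$ has graph average at least $c\nu$ and is admissible for the restricted maximal function. Your doubling computation with the centred ball $\Delta_{(K_0+1)r}(P)$ only makes that step explicit. If $M_h$ is taken uncentred, you could use $K_0R$ itself, which matches the radius condition exactly as the lemma states it.
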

\begin{proof}
The parabolic Lipschitz bound \eqref{derh'} identifies measure on the graph
with boundary measure, up to a fixed factor. A fixed enlargement of $R$
contains $P$, so its graph average is at least $c\nu$. That ball is also
admissible for the restricted maximal function under the stated radius
condition. In particular, Lemma \ref{l6} gives this conclusion whenever
$S_{p,b}(u_k)(P)\le\gamma\nu$.
\end{proof}

Finally, we are ready to state the key lemma that allows us to formulate a good-$\lambda$ inequality later.
The calculation here is based on \cite{DLP1} with the necessary modifications taking into account the use of the $p$-adapted square function instead of the more usual $L^2$ based square function.

\begin{lemma}\label{S3:L8-alt1}  Fix $p\ge8$ such that $p/2$ is an even integer.
Let $\Omega={\mathbb R}^n_+\times\R$ and let ${\mathcal L}=-\partial_t+\divg(A\nabla\cdot)$ be a parabolic operator.
Suppose $u_k$ is a weak solution of $\mathcal Lu_k=0$ in $\Omega^k$ satisfying $u_k(x,k,t)=0$, extended to $\Omega$ by setting $u_k=0$ for $x_n>k$.  For a fixed (sufficiently large) $a>0$ determined below, consider an arbitrary function $\hbar:{\mathbb R}^{n-1}\times\R\to \mathbb R$ such that it satisfies
the estimates \eqref{Eqqq-5}, \eqref{derh} and $\hbar\ge 0$. 
Then 
we have the following:\vglue1mm

For any $\varepsilon>0$ there exists $\delta>0$
such that if
$$\||\nabla \mathbf{a_{n}}|^2x_n\|_{C}+\||\partial_{t}a_{nn}|^2x_n^3\|_{C}+\||\partial_ta_{nn}|x_n^2\|_{L^\infty}<\delta,$$
where $ \mathbf{a_{n}}$ is the $n$-th row of the matrix $A$,
and $|\nabla A|x_n$ is bounded, then
for all parabolic surface balls $\Delta_r\subset{\mathbb R}^{n-1}\times\R$ of radius $r$ such that at least one point of $\Delta_r$
satisfies $\hbar(x,t)\le 2r$, we have the following estimate for an arbitrary $c\ge 0$:

\begin{multline}\label{TTBBMM}
\int_{1/6}^6\int_{\Delta_r}\big|u^{p/2}_k \big(x,\theta\hbar(x,t),t\big)-c\big|^2\,dx\,dt\,d\theta
\leq C(1+\varepsilon^{-1})\iint_{\cS(\Delta_r,\hbar)}\abs{\nabla u_k}^2|u_k|^{p-2}x_n\, dx_ndtdx\\
+\varepsilon\br{\|\tilde{N}_{2,a}\br{(u_k^{p/2}-c)\1_{\cS(\Delta_r,\hbar)}}\|_{L^2(\Delta_{2r})}^2
+\|{N}_{a}(u_k\,\1_{\cS(\Delta_r,\hbar)})\|_{L^p(\Delta_{2r})}^p}\\+
\varepsilon  \int_{2\Delta_r}\br{\iint_{\gamma_a(y,s)\cap\cS(\Delta_r,\hbar)}\abs{\nabla u_k}^2x_n^{-n}}^{p/2}\,dx_ndx\,dt
+\frac{C}{r}\iint_{\mathcal{K}_\varepsilon}|u_k^{p/2}-c|^{2}\,dX\,dt,
\end{multline}
for some $C\in(0,\infty)$ that only depends on $a,\Lambda,n$ but not on $k$, $u_k$, $c$, $\varepsilon$ or $\Delta_r$. 
Here, \[\cS(\Delta_r,\hbar):=\set{(x,x_n,t):\, (x,t)\in\Delta_{2r} \text{ and } \frac{\hbar(x,t)}{12}<x_n<18r},\]
where $\mathcal K_{\varepsilon}:=\cS(\Delta_r,\hbar)\cap\set{(x,x_n,t): x_n>\varepsilon r}$.
The cones used to define the nontangential 
maximal functions in this lemma have vertices on $\partial\Omega$.
\end{lemma}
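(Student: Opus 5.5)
The plan is to follow the stopping-surface argument of \cite[Section 10]{DLP1} (going back to \cite{KKPT,DHM}), with $v:=u_k^{p/2}$ in place of $u$. Since $p/2$ is an even integer, $v=|u_k|^{p/2}$ is smooth in $u_k$. Its gradient $\nabla v=\tfrac p2 u_k^{p/2-1}\nabla u_k$ satisfies $|\nabla v|^2\simeq|\nabla u_k|^2|u_k|^{p-2}$, which is exactly the integrand of the first term on the right of \eqref{TTBBMM}.

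Fix a cutoff $\zeta(x,t)$ adapted to $\Delta_r$ (equal to $1$ on $\Delta_r$, supported in $\Delta_{2r}$), and a cutoff $\Psi(x,x_n,t)$ equal to $1$ between the graphs $\theta\hbar$ (with $1/6\le\theta\le6$) and height about $16r$, and vanishing near $\hbar/12$ and above $18r$. The estimates \eqref{Eqqq-5}, \eqref{derh} give $|\nabla\Psi|\lesssim x_n^{-1}$ and $|\partial_t\Psi|\lesssim x_n^{-2}$. For each $\theta$ write
\[
(v-c)^2(x,\theta\hbar,t)=-\int_{\theta\hbar}^{\infty}\partial_n\big[(v-c)^2\Psi^2\big]\,dx_n,
\]
integrate against $\zeta\,dx\,dt\,d\theta$, and insert $1=\partial_n x_n$. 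Integrating by parts in $x_n$ yields $\iint \partial_n^2[(v-c)^2]\,x_n\Psi^2\zeta$ plus terms where derivatives fall on $\Psi$. The latter live in regions where $x_n\simeq\hbar$ or $x_n\simeq r$. The region $x_n\simeq r$ produces the $\frac Cr\iint_{\mathcal K_\varepsilon}|v-c|^2$ term. The region near the graph is handled by Cauchy--Schwarz into $\varepsilon\|\tilde N_{2,a}((v-c)\1_{\cS})\|_{L^2}^2$ plus $C\varepsilon^{-1}\iint|\nabla v|^2x_n$.

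The main term is $\iint \partial_n^2[(v-c)^2]\,x_n\Psi^2\zeta$. Here $\partial_n^2(v-c)^2=2|\partial_nv|^2+2(v-c)\partial_n^2 v$, and the first piece is fine. For the second, replace $\partial_n^2$ via the equation. Since $\mathcal L u_k=0$, we have $a_{nn}\partial_n^2u_k=\partial_tu_k-\sum_{(i,j)\ne(n,n)}a_{ij}\partial_{ij}u_k-(\partial_ia_{ij})\partial_ju_k$. Transfer this to $v$ by the chain rule: $\partial_n^2v=\tfrac p2u_k^{p/2-1}\partial_n^2u_k+\tfrac p2(\tfrac p2-1)u_k^{p/2-2}|\partial_nu_k|^2$, and the second term is bounded by $|\nabla v|^2|u_k|^{-p/2}$ times a constant. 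It is cleaner to write $\partial_n(a_{nn}\partial_n v)$ as $\partial_t v$ plus tangential divergence terms plus $\nabla A$-terms, with an error of size $|\nabla u_k|^2|u_k|^{p-2}$. The pieces are then handled as follows.
\begin{enumerate}
\item Tangential second derivatives $\partial_i(\cdot)$, $i<n$, are integrated by parts in $x_i$. The derivative either lands on $\zeta\Psi^2$ (giving boundary-layer terms of the type above) or on $(v-c)x_n$ (giving $|\nabla v|^2x_n$).
\item The $\partial_t v$ term is integrated by parts in $t$. It produces $\partial_t\Psi$ and $\partial_t\zeta$ terms; $|\partial_t\zeta|\lesssim r^{-2}$ gives $\frac1r\iint|v-c|^2$.
\item Terms with $\nabla a_{nj}$, $\partial_ta_{nn}$ (the latter from dividing by $a_{nn}$) are controlled by the smallness $\delta$ of the Carleson norms. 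By the Carleson embedding against nontangential maxima these give $\varepsilon\|N_a(u_k\1_\cS)\|_{L^p}^p$, and the mixed terms $|\nabla A||u_k|^{p-1}|\nabla u_k|x_n$ give, after Cauchy--Schwarz and a Carleson/duality estimate, the cone-integral term $\varepsilon\int(\iint_{\gamma_a\cap\cS}|\nabla u_k|^2x_n^{-n})^{p/2}$.
\end{enumerate}
Only the $n$-th row of $A$ needs smallness, because non-$n$ rows are handled by the tangential integration by parts (the boundedness of $|\nabla A|x_n$ suffices there).

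I expect the main obstacle to be the terms where the integration by parts in $t$ or $x$ hits $\Psi$ along the non-flat graph $\theta\hbar$. The time-Lipschitz bound is only $|\partial_t\hbar|\lesssim\hbar^{-1}$, so these terms are comparable to $x_n^{-2}|v-c|^2$ on a layer of thickness $\hbar$. They must be absorbed into the $\tilde N_{2,a}$ term with small constant. To get the small constant $\varepsilon$ I would first try to exploit the $\theta$-averaging over $[1/6,6]$: it converts the layer integral into a genuine solid integral, where Cauchy--Schwarz with weight $\varepsilon$ against $\tilde N_{2,a}$ and $\varepsilon^{-1}$ against $|\nabla v|^2x_n$ applies, exactly as in \cite{DLP1}.
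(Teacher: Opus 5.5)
The skeleton matches the paper: integrate from the graph, insert $\partial_n x_n$, substitute the equation for $v=u_k^{p/2}$, and move tangential derivatives off the coefficients. But the step you single out as the main obstacle is resolved incorrectly.

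Because you integrate from $x_n=\theta\hbar\ge\hbar/6$, the lower part of $\Psi$ is never seen. The integrations by parts in $x_n$, $x_i$ and $t$ therefore produce genuine boundary integrals on the graph $\{x_n=\theta\hbar\}$. The one coming from $\iint a_{nn}^{-1}\partial_t[(v-c)^2]x_n\zeta$ is $\int a_{nn}^{-1}(v-c)^2x_n\zeta\,\nu_t\,dS$. It has no gradient factor, so Cauchy--Schwarz against $|\nabla v|^2x_n$ is unavailable. After $\theta$-averaging, the best you can get is a fixed multiple of $\|\tilde N_{2,a}((v-c)\1_{\cS})\|^2_{L^2}$, not $\varepsilon$ times it.

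The smallness comes from the factor $a^{-2}$ in \eqref{derh}, which you dropped. From $|\nu_t|\,dS\lesssim a^{-2}x_n^{-1}\,dx\,dt$ and $x_n=\theta\hbar$ on the graph, the term is at most $Ca^{-2}\mathcal I$. Here $\mathcal I=\int_{\Delta_{2r}}(v-c)^2(x,\theta\hbar,t)\,\zeta\,dx\,dt\ge\int_{\Delta_r}|v-c|^2(x,\theta\hbar,t)\,dx\,dt$. The paper absorbs this into the left-hand side after fixing $a$ large once and for all; this is where the ``sufficiently large $a$'' of the statement enters. Sending the term into $\varepsilon\tilde N$ instead would force $a=a(\varepsilon)$ and hence $C=C(\varepsilon)$, contrary to the statement. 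The graph terms that do carry a gradient are likewise split as $\frac14\mathcal I+C\int|\nabla v|^2\hbar^2$ and absorbed; the second piece becomes $\iint|\nabla v|^2x_n$ after integrating in $\theta$.

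Two interior terms also fail as you describe them.

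First, the extra term from $\mathcal L(v-c)=\frac p2(\frac p2-1)u_k^{p/2-2}A\nabla u_k\cdot\nabla u_k$ appears multiplied by $(v-c)x_n$. Its size is therefore $|v-c|\,|u_k|^{p/2-2}|\nabla u_k|^2x_n$, not $|u_k|^{p-2}|\nabla u_k|^2x_n$. When $c>0$ and $|u_k|$ is small, it is not controlled in absolute value. The paper bounds it only from above: $u_k^{p/2-2}A\nabla u_k\cdot\nabla u_k\ge0$ since $p/2-2$ is even, and $v-c\le v$ since $c\ge0$. This is exactly where the hypothesis $c\ge0$ is needed, and your outline never uses it.

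Second, $\iint\partial_t(a_{nn}^{-1})(v-c)^2x_n\zeta$ cannot be handled by Carleson embedding. The hypotheses control $|\partial_ta_{nn}|^2x_n^3$ and $|\partial_ta_{nn}|x_n^2$, not the density $|\partial_ta_{nn}|x_n$. Cauchy--Schwarz leaves $\iint_{\cS}(v-c)^2x_n^{-1}$, which diverges like $\log(r/\hbar)$.

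The paper instead writes $x_n=\frac12\partial_n(x_n^2)$ and integrates by parts in $x_n$ and then in $t$. This reduces the term to $|\partial_na_{nn}|\,|v-c|\,|\partial_tv|\,x_n^2$. It is then handled by:
\begin{itemize}
\item Cauchy--Schwarz with the Carleson norm of $|\nabla a_{nn}|^2x_n$;
\item the bound of $\iint|\partial_tv|^2x_n^3$ by $\int N_a(u_k)^{p-2}$ times the squared area function of $u_k$;
\item Caccioppoli on Whitney cubes, where $|\nabla A|x_n\lesssim1$ is used.
\end{itemize}
This is what actually produces the terms $\varepsilon\|N_a(u_k\1_{\cS})\|^p_{L^p}$ and the cone integral of $|\nabla u_k|^2$ in the statement. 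The graph term generated in this step is absorbed into $\mathcal I$ using $|\partial_ta_{nn}|x_n^2<\delta$.

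Finally, the $\partial_t\zeta$ term lives at all heights. It must be split at $x_n=\varepsilon r$ to yield $\varepsilon\tilde N$ plus the $\mathcal K_\varepsilon$ term.
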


\begin{proof} Let $\Delta_r$ be as in the statement of our Lemma, and assume that $(q,\tau)$ is the center of $\Delta_r$. Let $\zeta$ be a smooth cutoff function of the form $\zeta(x,x_n,t)=\zeta_{0}(x_n)\zeta_{1}(x,t)$ where
\begin{equation}
\zeta_{0}= 
\begin{cases}
1 & \text{ in } (-\infty, r_0+r], 
\\
0 & \text{ in } [r_0+2r, \infty),
\end{cases}
\qquad
\zeta_{1}= 
\begin{cases}
1 & \text{ in } \Delta_{r}(q,\tau), 
\\
0 & \text{ in } \mathbb{R}^{n}\setminus \Delta_{2r}(q,\tau)
\end{cases}
\end{equation}
and
\begin{equation}
r|\partial_{x_n}\zeta_{0}|+r|\nabla_{x}\zeta_{1}|+r^2|\partial_t\zeta_{1}|\leq c
\end{equation}
for some constant $c\in(0,\infty)$ independent of $r$. Here 
$r_0=6\sup_{(x,t)\in \Delta_r}\hbar(x,t)$. Observe that $ r_0\le (12+C_na) r$ with $C_n$ as in \eqref{derh'}, and so for $a$ sufficiently large, one has $r_0\le 13 r$ and 
$$\theta\hbar(x,t)\le  r_0+r \qquad \mbox{for all }(x,t)\in \Delta_{2r}, \text{ for }\theta\in (1/6,6).$$

Our goal is to control the $L^2$ norm of $u_k^{p/2}\big(\cdot,\theta\hbar(\cdot)\big)-c$ on $\Delta_r$.  We proceed to estimate
\begin{align}
&\hskip -0.20in
\int_{\Delta_{r}(q,\tau)}(u_k^{p/2}(x,\theta\hbar(x,t),t)-c)^2\,dx\,dt \le \mathcal I:=\int_{\Delta_{2r}(q,\tau)}(u_k^{p/2}(x,\theta\hbar(x,t),t)-c)^2\zeta(x,\theta\hbar(x,t),t)\,dx\,dt
\nonumber\\[4pt]
&\hskip 0.70in
=-\iint_{\mathcal S(q,\tau,r,r_0,\theta\hbar)}\partial_{x_n}\left[(u_k^{p/2}(x,x_n,t)-c)^2\zeta(x,x_n,t)\right]\,dx_n\,dx\,dt,
\nonumber
\end{align}
where $\mathcal S(q,\tau,r,r_0,\theta\hbar)=\{(x,x_n,t):(x,t)\in \Delta_{2r}(q,\tau)\mbox{ and }\theta\hbar(x,t)<x_n<r_0+2r\}$. Hence:

\begin{align}\nonumber
&\hskip 0.10in
\mathcal I \le-2\iint_{\mathcal S(q,\tau,r,r_0,\theta\hbar)}(u_k^{p/2}-c)\partial_{x_n}(u_k^{p/2}-c)\zeta\, dx_n\,\,dt\,dx  
\\[4pt]
&\hskip 0.70in
\quad-\iint_{\mathcal S(q,\tau,r,r_0,\theta\hbar)}(u_k^{p/2}-c)^2(x,x_n,t)\partial_{x_n}\zeta\,dx_n\,dx\,dt
=:\mathcal{A}+V.\label{u6tg}
\end{align}
We further expand the term $\mathcal A$ as a sum of three terms obtained 
via integration by parts with respect to $x_n$ as follows:
\begin{align}\label{utAA}
\mathcal A &=-2\iint_{\mathcal S(q,\tau,r,r_0,\theta\hbar)}(u_k^{p/2}-c)\partial_{x_n} 
(u_k^{p/2}-c)(\partial_{x_n}x_n)\zeta\,dx_n\,dt\,dx
\nonumber\\[4pt]
&=2\iint_{\mathcal S(q,\tau,r,r_0,\theta\hbar)}\left|\partial_{x_n}(u_k^{p/2})\right|^{2}x_n\zeta\,dx_n\,dt\,dx 
\nonumber\\[4pt]
&\quad +2\iint_{\mathcal S(q,\tau,r,r_0,\theta\hbar)}(u_k^{p/2}-c)\partial^2_{x_nx_n}(u_k^{p/2}-c)x_n\zeta\,dx_n\,dt\,dx 
\nonumber\\[4pt]
&\quad +2\iint_{\mathcal S(q,\tau,r,r_0,\theta\hbar)}(u_k^{p/2}-c)(\partial_{x_n}u_k^{p/2})\,x_n\partial_{x_n}\zeta\,dx_n\,dt\,dx
\nonumber\\[4pt]
&\quad -2\int_{\mathcal \partial S(q,\tau,r,r_0,\theta\hbar)}(u_k^{p/2}-c)\partial_{x_n}(u_k^{p/2})\,x_n\zeta\,\nu_n dS
\nonumber\\[4pt]
&=:I+II+III+IV.
\end{align}

Here the boundary term $IV$ is nonvanishing only on the graph of the function $\theta\hbar$.

We start by analyzing the term $II$. As each $u_k$ solves the PDE in $\Omega^k$ and equals zero at $x_n=k$,
we can if necessary reflect $u_k$ across the $x_n=k$ boundary so that it solves the parabolic PDE even above $x_n=k$. Then we have for $u_k^{p/2}-c$:
$${\mathcal L}(u_k^{p/2}-c)=\frac{p}2u_k^{p/2-1}\left[-\partial_t u_k+\divg(A\nabla u_k)\right]+\frac{p}2\left(\frac{p}2-1\right)u_k^{p/2-2}A\nabla u_k\cdot\nabla u_k.$$
As the first term equals zero (since $u_k$ is a solution to $\mathcal L$) we are left with
$${\mathcal L}(u_k^{p/2}-c)=\frac{p}2\left(\frac{p}2-1\right)u_k^{p/2-2}A\nabla u_k\cdot\nabla u_k>0.$$
It follows that we can write
\begin{multline}\label{S3:T8:E01-x}
\partial^2_{x_nx_n}(u_k^{p/2}-c)=\frac1{a_{nn}}\partial_{x_n}(a_{nn}\partial_{x_n}(u_k^{p/2}))-\frac{\partial_{x_n}a_{nn}}{a_{nn}}\partial_{x_n}(u_k^{p/2})=\\
\frac1{a_{nn}}\left[\partial_t(u_k^{p/2})-\sum_{(i,j)\ne(n,n)}\partial_{x_i}(a_{ij}\partial_{x_j}(u_k^{p/2}))+\frac{p}2\left(\frac{p}2-1\right)u_k^{p/2-2}A\nabla u_k\cdot\nabla u_k-\partial_{x_n}a_{nn}\partial_{x_n}(u_k^{p/2})\right].
\end{multline}
In turn, this permits us to write the term $II$ as
\begin{align}\label{eq.NleS1-II}
II &=-2\sum_{(i,j)\ne(n,n)}\iint_{\mathcal S(q,\tau,r,r_0,\theta\hbar)}\frac1{a_{nn}}(u_k^{p/2}-c)\partial_{{x_i}}\left({a}_{ij}\partial_{{x_j}}(u_k^{p/2})\right)x_n\zeta\,dx_n\,dt\,dx
\nonumber\\[4pt]
&\quad-2\iint_{\mathcal S(q,\tau,r,r_0,\theta\hbar)}(u_k^{p/2}-c)\frac{\partial_{x_n}a_{nn}}{a_{nn}}\partial_{x_n}(u_k^{p/2})
\zeta\,dx_n\,dt\,dx \nonumber\\
&\quad+p\left(\frac{p}2-1\right)\iint_{\mathcal S(q,\tau,r,r_0,\theta\hbar)}(u_k^{p/2}-c)u_k^{p/2-2}A\nabla u_k\cdot\nabla u_k\,x_n\zeta\,dx_n\,dt\,dx \nonumber\\
&\quad+2\iint_{\mathcal S(q,\tau,r,r_0,\theta\hbar)}\frac1{a_{nn}}(u_k^{p/2}-c)\partial_{t}(u_k^{p/2}-c)x_n\zeta\,dx_n\,dt\,dx=\mathcal B+\mathcal C+\mathcal D+II_7.
\end{align}
We leave the analysis of the last term $II_7$ for later. Observe that, under the assumption of the lemma $c\ge 0$, and using non-negativity of $u_k^{p/2-2}$ we get for the term $\mathcal D$:
\begin{align}
\mathcal D&\le p\left(\frac{p}2-1\right)\iint_{\mathcal S(q,\tau,r,r_0,\theta\hbar)}u_k^{p-2}A\nabla u_k\cdot\nabla u_k\,x_n\zeta\,dx_n\,dt\,dx\\
&\le C\iint_{\cSS}|u_k|^{p-2}\abs{\nabla u_k}^2x_n\, dx_ndtdx.\nonumber
\end{align}
For the term $\mathcal B$ we integrate by parts w.r.t. $\partial_{x_i}$, provided $i<n$ calling the resulting terms $\mathcal B_i$. When $i=n$ we must have $j<n$ and we then swap the two derivatives since 
$$\partial_{x_n}\left({a}_{nj}\partial_{x_j}(u_k^{p/2})\right)=\partial_{{x_j}}\left({a}_{nj}\partial_{{x_n}}(u_k^{p/2})\right)
+\partial_{x_n}(a_{nj})\partial_{x_j}(u_k^{p/2})-\partial_{x_j}(a_{nj})\partial_{{x_n}}(u_k^{p/2}),$$
and subsequently we can integrate by parts w.r.t. $\partial_{x_j}$ instead. (We denote the resulting terms $\mathcal B_j$). Notice that the last two terms in the formula above are of a type similar to $\mathcal C$, and we denote those by $\mathcal B_{res1}$. 

This then yields
\begin{align}
\mathcal B_i+\mathcal B_j&=2\sum_{i<n}\iint_{\mathcal S(q,\tau,r,r_0,\theta\hbar)}\frac{a_{ij}}{a_{nn}}\partial_{x_i}(u_k^{p/2})\partial_{x_j}(u_k^{p/2})\,x_n\zeta\,dx_n\,dt\,dx
\nonumber\\[4pt]
&+2\sum_{j<n}\iint_{\mathcal S(q,\tau,r,r_0,\theta\hbar)}\frac{a_{nj}}{a_{nn}}\partial_{x_n}(u_k^{p/2})\partial_{x_j}(u_k^{p/2})\,x_n\zeta\,dx_n\,dt\,dx
\nonumber\\[4pt]
&-2\sum_{i<n}\iint_{\mathcal S(q,\tau,r,r_0,\theta\hbar)}\frac{a_{ij}\partial_{x_i}a_{nn}}{a_{nn}^2}(u_k^{p/2}-c)\partial_{x_j}(u_k^{p/2})\,x_n\zeta\,dx_n\,dt\,dx
\nonumber\\[4pt]
&-2\sum_{j<n}\iint_{\mathcal S(q,\tau,r,r_0,\theta\hbar)}\frac{a_{nj}\partial_{x_j}a_{nn}}{a_{nn}^2}(u_k^{p/2}-c)\partial_{x_n}(u_k^{p/2})\,x_n\zeta\,dx_n\,dt\,dx
\end{align}
\begin{align}
\nonumber\\[4pt]
\hskip1cm&+2\sum_{i<n}\iint_{\mathcal S(q,\tau,r,r_0,\theta\hbar)}\frac{a_{ij}}{a_{nn}}(u_k^{p/2}-c)\partial_{x_j}(u_k^{p/2})\,x_n\partial_{x_i}\zeta\,dx_n\,dt\,dx
\nonumber\\[4pt]
&+2\sum_{j<n}\iint_{\mathcal S(q,\tau,r,r_0,\theta\hbar)}\frac{a_{nj}}{a_{nn}}(u_k^{p/2}-c)\partial_{x_n}(u_k^{p/2})\,x_n\partial_{x_j}\zeta\,dx_n\,dt\,dx
\nonumber\\[4pt]
&\quad-2\sum_{i<n}\int_{\partial\mathcal S(q,\tau,r,r_0,\theta\hbar)}(\mbox{boundary terms})x_n\zeta\nu_i\,dS
\nonumber\\[4pt]
&=:II_{1}+II_{2}+\mathcal B_{res2}+\mathcal B_{res3}+II_{3}+II_{4}+II_{5}.\nonumber
\end{align}
The boundary integral (term $II_5$) vanishes everywhere except on the graph of the function $\theta\hbar$. Note that on the graph of $\theta\hbar$,
\begin{equation}\label{eq.nui}
    \nu_i=\frac{-\theta \dr_i \hbar}{J_h}dx\,dt, \quad dS=J_h\, dx\,dt \quad\text{for }i=1,\dots,n-1,
\end{equation}
where $J_h:=\br{1+(\theta\nabla_x\hbar)^2+(\theta\dr_t\hbar)^2}^{1/2}$, and so
$\abs{\nu_i}dS\le\theta\abs{\dr_i\hbar}dx\,dt\le \theta\, a^{-1}dx\,dt$. Hence,
\begin{align*}
&|II_5|\nonumber\\
&\le C\sum_{i<n}\int_{\Delta_{2r}(q,\tau)}|(u_k^{p/2}-c)(x,\theta\hbar(x,t),t)\nabla_x(u_k^{p/2})(x,\theta\hbar(x,t),t)\hbar(x,t)\zeta(x,\theta\hbar(x,t),t) |dt\,dx\nonumber\\
&\le \frac14\int_{\Delta_{2r}(q,\tau)}(u_k^{p/2}(x,\theta\hbar(x,t),t)-c)^2\zeta(x,\theta\hbar(x,t),t)\,dt\,dx\nonumber\\&\quad+
C\int_{\Delta_{2r}(q,\tau)}|\nabla_x (u_k^{p/2})(x,\theta\hbar(x,t),t)|^2|\hbar(x,t)|^2\,dt\,dx =\frac14\mathcal I+II_6.
\end{align*}

In the last step we used Cauchy-Schwarz. We note that an analogous estimate also applies to the term $IV$.
 We will hide the term $\frac14\mathcal I$ on the left-hand side of \eqref{u6tg}, while the second term becomes, after integrating $II_6$ in $\theta$:
\begin{align}
\int_{1/6}^6|II_6|\,d\theta&\le C\int_{1/6}^6\int_{\Delta_{2r}(q,\tau)}|\nabla (u_k^{p/2})(x,\theta\hbar(x,t),t)|^2 |\hbar(x,t)|^2dt\,dxd\theta.\nonumber\\
&\le C\iint_{\cS(q,\tau,r,r_0,\frac16\hbar)}|\nabla u_k|^2|u_k|^{p-2} x_n\,dt\,dxdx_n,
\end{align}
which is an estimate similar to that obtained for $\mathcal D$. Observe also that the terms $I$, $II_1$ and $II_2$ also enjoy a similar estimate which we record as:
\begin{equation}
|I+II_1+II_2|\le C\iint_{\cSS}|\nabla u_k|^2|u_k|^{p-2}x_n\, dx_ndtdx.
\end{equation}
Next, we handle the terms $\mathcal B_{res1}$-$\mathcal B_{res3}$ and $\mathcal C$. The Carleson condition for $|\nabla \mathbf{a_n}|^2x_n$ and the Cauchy-Schwarz inequality imply
\begin{align}
&|\mathcal B_{res1}+\mathcal B_{res2}+\mathcal B_{res3}+\mathcal C|\lesssim
\iint_{\mathcal S(q,\tau,r,r_0,\theta\hbar)}|\nabla \mathbf{a_n}||u_k^{p/2}-c||\nabla_{x}(u_k^{p/2})|\,x_n\zeta\,dx_n\,dt\,dx
\nonumber\\
&\le \br{\iint_{\cSS}\abs{\nabla \mathbf{a_n}}^2\abs{u_k^{p/2}-c}^2x_n\,dx_ndx\,dt}^{1/2}\br{\iint_{\cSS}\abs{\nabla(u_k^{p/2})}^2x_n\,dx_ndx\,dt}^{1/2}\nonumber\\
   &\le C \|\abs{\nabla \mathbf{a_n}}^2x_n\|_C^{1/2}\|\tilde {N}_a((u_k^{p/2}-c)\,\1_{\cSS})\|_{L^2(\Delta_{2r})}
   \cdot\br{\iint_{\cSS}\abs{\nabla(u_k^{p/2})}^2x_n\,dx_ndx\,dt}^{1/2}\nonumber\\
&\le \varepsilon\|\tilde {N}_a((u_k^{p/2}-c)\,\1_{\cSS})\|_{L^2(\Delta_{2r})}^2 + C\iint_{\cSS}|\nabla u_k|^2|u_k|^{p-2}x_n\, dx_ndtdx.
\end{align}

Next, since $r|\nabla\zeta|\le c$, if the derivative falls on the cutoff function $\zeta$ we have
\begin{align*}
&|II_3+II_4+III| \lesssim \iint_{\cSS}\left|\nabla _x(u_k^{p/2})\right||u_k^{p/2}-c|\frac{x_n}{r}\,dx_n\,dt\,dx
\nonumber\\[4pt]
&\lesssim \left(\iint_{\cSS}|u_k^{p/2}-c|^{2}\frac{x_n}{r^{2}}\,dx_n\,dt\,dx\right)^{1/2} 
\br{\iint_{\cSS}\abs{\nabla u_k}^2|u_k^{p-2}|x_n\,dX\,dt}^{1/2}.
\end{align*}
By Fubini's theorem, and the fact that 
$\iint_{\gamma_a(x,t)\cap \cSS}y_n^{-n-1}dydsdy_n\le Cr$, the first integral on the right-hand side can be controlled by the nontangential maximal function:
\begin{multline}\label{eq.Neta-c}
    \iint_{\cSS}|u_k^{p/2}-c|^{2}\frac{x_n}{r^{2}}\,dx_n\,dt\,dx
\le \frac Cr\int_{\Delta_{2r}}\iint_{\gamma_a(x,t)\cap\cSS}\abs{u_k^{p/2}-c}^2y_n^{-n-1}dydsdy_n\\
\le C\int_{\Delta_{2r}}\tilde N_a\br{(u_k^{p/2}-c)\1_{\cSS}}^2dx\,dt.
\end{multline}
Hence, once again
\begin{multline}\nonumber
\abs{II_3+II_4+III}\le\\ \varepsilon\|\tilde {N}_a((u_k^{p/2}-c)\,\1_{\cSS})\|_{L^2(\Delta_{2r})}^2 + C\iint_{\cSS}|\nabla u_k|^2|u_k|^{p-2}x_n\, dx_ndtdx.
\end{multline}

In a similar spirit we analyze the term $II_7$. We write it as
\begin{eqnarray}\label{eq.DtNeta-c}
II_7&=&
\iint_{\mathcal S(q,\tau,r,r_0,\theta\hbar)}\partial_{t}[a_{nn}^{-1}(u_k^{p/2}-c)^2]x_n\zeta\,dx_n\,dt\,dx\nonumber
\\&-&\iint_{\mathcal S(q,\tau,r,r_0,\theta\hbar)}\partial_{t}(a_{nn}^{-1})(u_k^{p/2}-c)^2x_n\zeta\,dx_n\,dt\,dx\\\nonumber
&=&-\iint_{\mathcal S(q,\tau,r,r_0,\theta\hbar)}\partial_{t}(a_{nn}^{-1})(u_k^{p/2}-c)^2x_n\zeta\,dx_n\,dt\,dx\\\nonumber
&-&\iint_{\mathcal S(q,\tau,r,r_0,\theta\hbar)}a_{nn}^{-1}(u_k^{p/2}-c)^2x_n(\partial_t\zeta)\,dx_n\,dt\,dx\\\nonumber
&+&\int_{\partial\mathcal S(q,\tau,r,r_0,\theta\hbar)}(\mbox{boundary terms})x_n\zeta\nu_t\,dS=:II_8+II_9+II_{10},
\end{eqnarray}
where the boundary terms only arise on the portion of $\partial\mathcal S(q,\tau,r,r_0,\theta\hbar)$ where
$\zeta$ is supported (which means on the graph of the function $\theta\hbar$). $\nu_t$ denotes the co-normal 
in the $t$-direction. 
Note that on the graph of $\theta\hbar$, $\nu_t=-\frac{\dr_t\hbar}{J_h}$ and $dS=J_hdx\,dt$, where $J_h$ is defined above the computation for $II_{5}$. By \eqref{derh}, we have that $\abs{\nu_t}dS\le \frac{a^{-2}}{2x_n}dx\,dt$.
It follows that 
$$|II_{10}|\lesssim \frac{1}{2a^2}\int_{\Delta(q,\tau)}|(u_k^{p/2}-c)^2(x,\theta\hbar(x,t),t)|\zeta\,dt\,dx\le \frac14\mathcal I,
$$
where the last estimate holds if $a$  is chosen sufficiently large. We do so now. The solid integral term $II_9$ can be bounded as follows. Since $|\partial_t\zeta|\lesssim r^{-2}$ we split the area of integration into  two parts, the first one for $x_n/r\le\varepsilon$ and its complement.
On the set where $x_n/r\le\varepsilon$  we get that the integral is bounded by
\begin{equation}\label{smallf}
 \varepsilon\iint_{\cSS}|u_k^{p/2}-c|^{2}\frac{1}{r}\,dx_n\,dt\,dx\le\varepsilon \|\tilde{N}_a\br{(u_k^{p/2}-c)\1_{\cSS}}\|_{L^2(\Delta_{2r})}^2
\end{equation}
using \eqref{eq.Neta-c}.
The remaining part of the integral is bounded by $\frac{C}{r}\displaystyle\iint_{\mathcal{K}_\varepsilon}|u_k^{p/2}-c|^2\,dX\,dt$.

Another term with a similar estimate is $V$; the fact that $\partial_{x_n}\zeta$ vanishes on the set
$(-\infty,r_0+r)\cup(r_0+2r,\infty)$ means that it is supported away from the boundary of $\partial\Omega$ and therefore,
\begin{equation}
|V|\le \frac{C}{r}\iint_{\cSS\cap\set{x_n\ge r_0+r}}|u_k^{p/2}-c|^{2}\,dx_n\,dt\,dx
\le \frac{C}{r}\iint_{\mathcal K_{\varepsilon}}|u_k^{p/2}-c|^{2}\,dx_n\,dt\,dx.
\end{equation}

This leaves the term $II_8$ where we introduce $x_n=\frac12\partial^2_{x_nx_n}(x_n^2)$ in order to integrate by parts.
\begin{eqnarray}
2II_8&=&\iint_{\mathcal S(q,\tau,r,r_0,\theta\hbar)}\partial^2_{tx_n}(a_{nn}^{-1})(u_k^{p/2}-c)^2x_n^2\zeta\,dx_n\,dt\,dx\\\nonumber
&+&\iint_{\mathcal S(q,\tau,r,r_0,\theta\hbar)}\partial_{t}(a_{nn}^{-1})(u_k^{p/2}-c)\partial_{x_n}(u_k^{p/2})x_n^2\zeta\,dx_n\,dt\,dx\\\nonumber
&+&\iint_{\mathcal S(q,\tau,r,r_0,\theta\hbar)}\partial_t(a_{nn}^{-1})(u_k^{p/2}-c)^2x_n^2(\partial_{x_n}\zeta)\,dx_n\,dt\,dx\\\nonumber
&-&\int_{\partial\mathcal S(q,\tau,r,r_0,\theta\hbar)}\partial_t(a_{nn}^{-1})(u_k^{p/2}-c)^2x_n^2\zeta\,dS=:II_{11}+II_{12}+II_{13}+II_{14}.
\end{eqnarray}
Here, using the Carleson condition for $|\partial_t a_{nn}|^2x_n^3$ and the Cauchy-Schwarz inequality, term $II_{12}$  can be bounded by
\begin{align}
&| II_{12}|\lesssim
\iint_{\mathcal S(q,\tau,r,r_0,\theta\hbar)}|\partial_t a_{nn}||u_k^{p/2}-c||\nabla_{x}(u_k^{p/2})|\,x_n^2\zeta\,dx_n\,dt\,dx
\nonumber\\
&\le \br{\iint_{\cSS}\abs{\partial_t a_{nn}}^2\abs{u_k^{p/2}-c}^2x_n^3\,dx_ndx\,dt}^{1/2}\br{\iint_{\cSS}\abs{\nabla(u_k^{p/2})}^2x_n\,dX\,dt}^{1/2}\nonumber\\
   &\le C \|\abs{\partial_t a_{nn}}^2x_n^3\|_C^{1/2}\|\tilde {N}_a((u_k^{p/2}-c)\,\1_{\cSS})\|_{L^2(\Delta_{2r})}
   \cdot\br{\iint_{\cSS}\abs{\nabla(u_k^{p/2})}^2x_n\,dx_ndx\,dt}^{1/2}\nonumber\\
&\le \varepsilon\|\tilde {N}_a((u_k^{p/2}-c)\,\1_{\cSS})\|_{L^2(\Delta_{2r})}^2 + C\iint_{\cSS}|\nabla u_k|^2|u_k|^{p-2}x_n\, dx_ndtdx.
\end{align}
Since we assume $|\partial_t a_{nn}|x_n^2\le \delta$ for sufficiently small $\delta>0$, term $II_{14}$ is
bounded by $\frac14\mathcal I$. For term $II_{13}$, by the same estimate and the fact that $|\nabla_x\zeta|\lesssim r^{-1}$, we have
$$|II_{13}|\lesssim \delta \iint_{\mathcal S(q,\tau,r,r_0,\theta\hbar)} |u_k^{p/2}-c|^2\frac1r \,dx_n\,dt\,dx,$$
which therefore has the same bound as \eqref{smallf}, again for small $\delta>0$.
This leaves the term $II_{11}$, where we integrate by parts in $t$. When $\partial_t$ falls on $\zeta$  we obtain a term similar to $II_9$ since $|\partial_t a_{nn}|x_n^2$ is bounded. The same is true for the resulting boundary term which has bounds identical to the term $II_{10}$. It remains to bound
\begin{align}
&\left|-\iint_{\mathcal S(q,\tau,r,r_0,\theta\hbar)}\partial_{x_n}(a_{nn}^{-1})\partial_t[(u_k^{p/2}-c)^2]x_n^2\zeta\,dx_n\,dt\,dx\right|\nonumber\\&\lesssim
\iint_{\mathcal S(q,\tau,r,r_0,\theta\hbar)}|\partial_{x_n} a_{nn}||u_k^{p/2}-c||\partial_{t}(u_k^{p/2})|\,x_n^2\zeta\,dx_n\,dt\,dx
\nonumber\\
&\le \br{\iint_{\cSS}\abs{\partial_{x_n} a_{nn}}\abs{u_k^{p/2}-c}^2x_n\,dX\,dt}^{1/2}\br{\iint_{\cSS}\abs{\partial_t(u_k^{p/2})}^2x_n^3\,dx_ndx\,dt}^{1/2}\nonumber\\
   &\le C \||\partial_{x_n}a_{nn}|^2x_n\|_C^{1/2}\|\tilde {N}_a((u_k^{p/2}-c)\,\1_{\cSS})\|_{L^2(\Delta_{2r})}
   \cdot\br{\iint_{\cSS}\abs{\partial_t(u_k^{p/2})}^2x_n^3\,dx_ndx\,dt}^{1/2}\nonumber.
\end{align}
The constant in front of this term is small because the Carleson norm of the coefficient $a_{nn}$ is small. The last term in this product can be further estimated as a product of the Area function and the nontangential maximal functions: 

\begin{align}\label{eq.dtukp/2}
&\hskip-0.3cm\iint_{\cSS}\abs{\partial_t(u_k^{p/2})}^2x_n^3\,dx_ndx\,dt\\
&\lesssim \int_{2\Delta_r}{N}^{p-2}_a(u_k\,\1_{\cSS})\left(\iint_{\gamma_a(x,t)\cap \cSS}|\partial_t u_k|^2y_n^{-n+2}dYds\right) dx\,dt\nonumber\\
&\lesssim \int_{2\Delta_r}{N}^{p-2}_a(u_k\,\1_{\cSS})\left(\iint_{\gamma_a(x,t)\cap \cSS}[|\nabla^2 u_k|^2+y_n^{-2}|\nabla u_k|^2]y_n^{-n+2}\right) dx\,dt,\nonumber
\end{align}
since $|\partial_tu_k|=|\mathcal Lu_k|\sim|\nabla_x^2u_k|+|\nabla A||\nabla u_k|$.
Caccioppoli inequality for the second gradient implies that for any $j\in\set{1,\dots,n}$, and for any parabolic cube $Q_\rho$ with $Q_{2\rho}\subset\om$, there holds
\[
\iint_{Q_\rho}|\nabla \dr_j u|^2dX\,dt\lesssim\frac{1}{\rho^2}\iint_{Q_{2\rho}}|\dr_ju|^2dX\,dt+\iint_{Q_{2\rho}}|\dr_jA|^2|\nabla u|^2dX\,dt.
\]
For a parabolic Whitney cube $Q_{\rho}$, we further have $|\nabla A|\lesssim \rho^{-1}$ in $Q_{2\rho}$, and so the above gives that $\iint_{Q_\rho}|\nabla^2 u|^2dX\,dt\lesssim\frac{1}{\rho^2}\iint_{Q_{2\rho}}|\nabla u|^2dX\,dt$. 
Since the set $\gamma_a(y,s)\cap \cSS$ can be decomposed as a union of Whitney cubes whose enlargements have finite overlap, we apply the Caccioppoli inequality on each Whitney cube and H\"older's inequality with exponent $p/(p-2)$ to get that
\begin{multline*}
    \iint_{\cSS}\abs{\partial_t(u_k^{p/2})}^2x_n^3\,dX\,dt\\
\lesssim \| {N}_a(u_k\,\1_{\cSS})\|_{L^{p}(\Delta_{2r})}^{p}+
 \int_{2\Delta_r}\br{\iint_{\gamma_a(x,t)\cap\cSS}\abs{\nabla u_k}^2y_n^{-n}dYds}^{p/2}\,dX\,dt
\end{multline*}
Hence,  after combining all estimates we have that
\begin{align}
&\left|-\iint_{\mathcal S(q,\tau,r,r_0,\theta\hbar)}\partial_{x_n}(a_{nn}^{-1})\partial_t[(u_k^{p/2}-c)^2]x_n^2\zeta\,dx_n\,dt\,dx\right|\nonumber\lesssim\varepsilon \|\tilde {N}_a((u_k^{p/2}-c)\,\1_{\cSS})\|^2_{L^2(\Delta_{2r})}\\\nonumber&+\varepsilon
\| {N}_a(u_k\,\1_{\cSS})\|_{L^p(\Delta_{2r})}^{p}+\varepsilon \int_{2\Delta_r}\br{\iint_{\gamma_a(y,s)\cap\cSS}\abs{\nabla u_k}^2x_n^{-n}}^{p/2}\,dX\,dt.
\end{align}

Finally, we put together all terms and integrate in $\theta$. Observing that $\cSSm\subset \cS(\Delta_r,\hbar)$, the above analysis ultimately yields \eqref{TTBBMM}.
Thus the claim follows.
\end{proof}

The following two deductions will also be used for tangential derivatives
and $H$. Their hypotheses separate the graph geometry from the PDE estimates.

\begin{lemma}[local graph reduction]\label{lem:graphReduction}
Let $F\subset\Delta$, where $\Delta$ is a boundary ball of radius $r$,
and let $h\ge0$ satisfy \eqref{Eqqq-5}, \eqref{derh} and \eqref{derh'}.
For a function $z$ write
$z_{L^2}(X,t)=(\fint_{Q_{x_n/2}(X,t)}|z|^2)^{1/2}$.
Fix $\beta,c_*,L>0$ and $c\in\R$. Assume
\[
 (M_{h,Lr}z_{L^2})(P,h(P))\ge c_*\beta\quad(P\in F),
 \qquad |c|\le c_*\beta/2.
\]
Suppose also that the localized graph maximal operator is uniformly
bounded on $L^2$ and that the Whitney averages obey
\[
 \int_{C_1\Delta}(z-c)_{L^2}(x,h(x,t),t)^2\,dx\,dt
 \lesssim\int_{1/6}^6\int_{C_2\Delta}
 |z(x,\theta h(x,t),t)-c|^2\,dx\,dt\,d\theta,
\]
where $C_1\Delta$ contains the graph balls used by $M_{h,Lr}$ above $F$.
Then
\begin{equation}\label{eq:graphReduction}
 |F|\lesssim\beta^{-2}\int_{1/6}^6\int_{C_2\Delta}
 |z(x,\theta h(x,t),t)-c|^2\,dx\,dt\,d\theta.
\end{equation}
\end{lemma}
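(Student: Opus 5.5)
The plan is to subtract the constant $c$ inside the maximal function, then use Chebyshev, the $L^2$ boundedness of the graph maximal operator, and finally the assumed comparison of Whitney averages with slices of the graph. The key observation is that the hypothesis $|c|\le c_*\beta/2$ lets us pass from the lower bound on $M_{h,Lr}z_{L^2}$ to a lower bound on $M_{h,Lr}(z-c)_{L^2}$. Concretely, Minkowski's inequality on each Whitney cube gives $z_{L^2}\le (z-c)_{L^2}+|c|$ pointwise. Since $M_{h,Lr}$ is an averaging maximal operator over graph balls, it is sublinear and maps constants to constants. Hence for $P\in F$,
\[
 c_*\beta\le (M_{h,Lr}z_{L^2})(P,h(P))\le (M_{h,Lr}(z-c)_{L^2})(P,h(P))+|c|,
\]
and therefore $(M_{h,Lr}(z-c)_{L^2})(P,h(P))\ge c_*\beta/2$ on $F$.

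Next I apply Chebyshev's inequality. The graph $\{(x,h(x,t),t)\}$ is parametrized by $\Delta$, and by \eqref{derh'} surface measure on it (understood as the pushforward of $dx\,dt$) is comparable to boundary measure. This gives
\[
 |F|\le \Big(\frac{2}{c_*\beta}\Big)^2\int_{F}\big(M_{h,Lr}(z-c)_{L^2}\big)^2(P,h(P))\,dP .
\]
The maximal operator evaluated at points over $F$ only samples $(z-c)_{L^2}$ at graph points over $C_1\Delta$, because the radii are at most $Lr$ and $C_1\Delta$ contains these balls by assumption. So I may replace $(z-c)_{L^2}$ by its restriction to the graph over $C_1\Delta$ before applying the maximal operator. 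The assumed uniform $L^2$ bound for the localized graph maximal operator then bounds the right-hand side by
\[
 C\beta^{-2}\int_{C_1\Delta}(z-c)_{L^2}(x,h(x,t),t)^2\,dx\,dt .
\]
The constant $C$ depends only on $c_*$, $L$ and the Lipschitz bounds of $h$.

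The last step is to invoke the stated Whitney-average inequality, which converts this integral into $\int_{1/6}^6\int_{C_2\Delta}|z(x,\theta h,t)-c|^2$. This yields \eqref{eq:graphReduction}.

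The only delicate point is the localization: the restriction of $(z-c)_{L^2}$ to the graph over $C_1\Delta$ must leave the maximal function unchanged at points over $F$. This is exactly the reason for the hypothesis on $C_1\Delta$ and for truncating the radii at $Lr$. The rest of the argument is bookkeeping, since the analytic inputs (the $L^2$ bound and the averaging comparison) are assumed.
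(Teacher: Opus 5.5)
Your proposal is correct and follows the paper's own proof step for step. You use Minkowski's inequality to get $M_{h,Lr}(z-c)_{L^2}\ge c_*\beta/2$ on $F$. You then square and integrate, and apply the localized $L^2$ bound for the graph maximal operator over $C_1\Delta$. Finally you invoke the assumed Whitney-average comparison.
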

\begin{proof}
Minkowski's inequality gives $z_{L^2}\le(z-c)_{L^2}+|c|$.
Thus the graph maximal function of $(z-c)_{L^2}$ is at least
$c_*\beta/2$ on $F$. Square and integrate, use its $L^2$ bound, and then
apply the assumed Whitney-average comparison.
\end{proof}

\begin{lemma}[integration of a good-$\lambda$ inequality]\label{lem:goodLambdaIntegration}
Let $F,G\ge0$ on $\pom$ and suppose that, for every $\beta>0$ and
$0<\gamma<\gamma_0$,
\[
 |\{F>\beta,\ G\le\gamma\beta\}|
 \le C_0\gamma^2|\{MF>\kappa\beta\}|,
 \qquad 0<\kappa<1.
\]
If $1<q<\infty$ and $F\in L^q$, then
$\|F\|_{L^q}\le C\|G\|_{L^q}$, where $C$ depends only on
$n,q,C_0,\kappa$ and $\gamma_0$.
\end{lemma}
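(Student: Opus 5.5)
The plan is the standard layer-cake integration of a good-$\lambda$ inequality. Since $F\in L^q$, we have
\[
\|F\|_{L^q}^q=q\int_0^\infty\beta^{q-1}|\{F>\beta\}|\,d\beta<\infty,
\]
and every quantity below is finite. This finiteness is what allows absorption at the end.

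First we split the level set of $F$:
\[
|\{F>\beta\}|\le|\{F>\beta,\ G\le\gamma\beta\}|+|\{G>\gamma\beta\}|
\le C_0\gamma^2|\{MF>\kappa\beta\}|+|\{G>\gamma\beta\}|.
\]
Here $M$ is the (parabolic) Hardy--Littlewood maximal operator on $\pom$. Multiplying by $q\beta^{q-1}$ and integrating over $\beta\in(0,\infty)$ gives
\[
\|F\|_{L^q}^q\le C_0\gamma^2\kappa^{-q}\|MF\|_{L^q}^q+\gamma^{-q}\|G\|_{L^q}^q .
\]

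Next we use that $\pom$ with parabolic balls and surface measure is a space of homogeneous type. Hence $M$ is bounded on $L^q$ for $1<q<\infty$, so $\|MF\|_{L^q}\le C_M\|F\|_{L^q}$ with $C_M=C_M(n,q)$. This gives
\[
\|F\|_{L^q}^q\le C_0C_M^q\kappa^{-q}\gamma^2\|F\|_{L^q}^q+\gamma^{-q}\|G\|_{L^q}^q.
\]

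Finally we fix $\gamma<\gamma_0$ so small that $C_0C_M^q\kappa^{-q}\gamma^2\le 1/2$. Because $\|F\|_{L^q}<\infty$, the first term can be absorbed into the left-hand side. This yields $\|F\|_{L^q}^q\le 2\gamma^{-q}\|G\|_{L^q}^q$. The resulting constant depends only on $n,q,C_0,\kappa,\gamma_0$.

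There is no real obstacle here. The only points needing care are the a priori finiteness of $\|F\|_{L^q}$, which the statement assumes and which is required for the absorption, and the $L^q$ boundedness of $M$ on the parabolic boundary. The latter is classical via Vitali covering for doubling measures. The power $\gamma^2$ plays no special role; any positive power of $\gamma$ would suffice.
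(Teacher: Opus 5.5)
Your proposal is correct and follows the paper's proof: integrating the good-$\lambda$ inequality against $q\beta^{q-1}\,d\beta$ gives $\|F\|_q^q\le C_0\gamma^2\kappa^{-q}\|MF\|_q^q+\gamma^{-q}\|G\|_q^q$. Then the $L^q$ bound for $M$, a small choice of $\gamma<\gamma_0$, and absorption using the assumed finiteness of $\|F\|_{L^q}$ finish the argument.
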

\begin{proof}
Integration against $q\beta^{q-1}\,d\beta$ gives
\[
 \|F\|_q^q\le C_0\gamma^2\kappa^{-q}\|MF\|_q^q
 +\gamma^{-q}\|G\|_q^q.
\]
The $L^q$ bound for $M$ allows us to choose $\gamma$ so that the first
term is at most $\|F\|_q^q/2$, and absorb it.
\end{proof}

\begin{lemma}\label{LGL} Fix $p\ge8$ such that $p/2$ is an even integer. Let $\mathcal L$ be an operator as in Lemma \ref{S3:L8-alt1} and fix $a>0$ sufficiently large as in Lemma \ref{S3:L8-alt1}.  
There exist $\kappa\in(0,1)$, $b>a$, $\gamma_0>0$ and $C>0$, depending only on $n,p,a$ and the ellipticity constants, such that the following holds.

For any $\varepsilon\in(0,1)$, if $\||\nabla \mathbf{a_{n}}|^2x_n\|_{C}^{1/2}+\||\partial_{t}a_{nn}|^2x_n^3\|_{C}^{1/2}+\||\partial_ta_{nn}|x_n^2\|_{L^\infty}$ is sufficiently small (depending on $\varepsilon$), where $ \mathbf{a_{n}}$ is the $n$-th row of the matrix $A$, then for any $u_k$ as in Lemma \ref{S3:L8-alt1}, for any $\gamma\in(0,\gamma_0)$, $\beta>0$, there holds
\begin{equation}    
  \abs{E_{1,\beta}}\le C\gamma^2\abs{\set{(x,t)\in\R^{n-1}\times\R: M(\tilde{N}_{2,a}(u_k^{p/2}))(x,t)>\kappa\beta}}.\label{eq.gdLmd}
\end{equation}
where
\begin{multline}\label{def.E1}
    E_{1,\beta}:=
\Big\{(x,t)\in {\mathbb R}^{n-1}\times\R:\, \tilde{N}_{2,a}(u_k^{p/2})>\beta,\\ \varepsilon^{-1/p}S_{p,b}(u_k)+{\varepsilon^{1/p}S_{2,b}(u_k)}\le(\gamma\beta)^{2/p}, \varepsilon^{1/p}\tilde N_{2,b}(u_k)<(\gamma\beta)^{2/p}\Big\}.
\end{multline}
\end{lemma}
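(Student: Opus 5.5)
The plan is the standard Whitney/stopping-time good-$\lambda$ argument of \cite{DLP1}, with $w$ replaced by the $L^2$ averages of $z:=u_k^{p/2}$. Note $\tilde N_{2,a}(u_k^{p/2})=N_a(w)^{p/2}$ when $w$ is the $L^p$ average \eqref{def.w}. So the set $\{\tilde N_{2,a}(u_k^{p/2})>\beta\}$ is $E_{\nu,a}$ with $\nu=\beta^{2/p}$.

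\textbf{Localization.} This set is open and of finite measure: $w=0$ for $x_n>2k$, and $\tilde N(u_k)\in L^q$. Take a Whitney decomposition of $\{M(\tilde N_{2,a}(u_k^{p/2}))>\kappa\beta\}$ into balls $\Delta$. It then suffices to prove $|E_{1,\beta}\cap\Delta|\le C\gamma^2|\Delta|$ for each Whitney ball $\Delta$ (radius $r$) that meets $E_{1,\beta}$. By the Whitney property, some point of a fixed enlargement of $\Delta$ has $M(\tilde N_{2,a}(u_k^{p/2}))\le\kappa\beta$. Two consequences, for $\kappa$ small:
\begin{itemize}
\item $\tilde N$ restricted to cones truncated above height $\sim r$ is at most $C\kappa\beta$ on most of $\Delta$;
\item the stopping graph $\hbar=\hbar_{\nu',a}(w)$, taken at a level $\nu'\simeq\beta^{2/p}$ slightly below $\nu$, satisfies $\hbar\le 2r$ somewhere on $\Delta$ (otherwise $w\ge\nu'$ on large regions, which forces the maximal function above $\kappa\beta$).
\end{itemize}
This puts us in the setting of Lemma~\ref{S3:L8-alt1}. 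Lemma~\ref{S3:L5} supplies the properties \eqref{Eqqq-5}, \eqref{derh} and \eqref{derh'} of $\hbar$.

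\textbf{Lower bound on $F$.} Let $F=E_{1,\beta}\cap\Delta$. For $P\in F$ we have $S_{p,b}(u_k)(P)\le\varepsilon^{1/p}(\gamma\beta)^{2/p}\le\gamma'\nu$. Lemma~\ref{l6} and Lemma~\ref{S3:L6} (restricted version, radii $\le Lr$) then give $(M_{h,Lr}w)(P,\hbar(P))\ge c\nu$. Raising to the power $p/2$ converts this to a bound of the form $M_{h,Lr}z_{L^2}\ge c_*\beta$. This is legitimate because the lower bound in Lemma~\ref{l6} is pointwise on a ball and $z_{L^2}=w^{p/2}$.

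\textbf{Graph reduction and the main estimate.} Apply Lemma~\ref{lem:graphReduction} with $c=0$ to get
\[
|F|\lesssim\beta^{-2}\int_{1/6}^6\int_{C_2\Delta}|z(x,\theta\hbar,t)|^2 .
\]
For the Whitney comparison hypothesis of that lemma, use interior regularity of $u_k$: the Whitney averages along the graph are controlled by the $\theta$-averaged traces. Then bound the right side by \eqref{TTBBMM} with $c=0$. Each term is handled as follows.
\begin{itemize}
\item The square-function term $\iint|\nabla u_k|^2|u_k|^{p-2}x_n$ over $\cS(\Delta_r,\hbar)$ is, by Fubini, at most $\int_{2\Delta}S_{p,b}^p$. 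Since every point of $\cS$ lies in cones from a fixed enlargement meeting $F$, this is $\lesssim|\Delta|\varepsilon(\gamma\beta)^2$. Multiplied by $C(1+\varepsilon^{-1})$, it gives $C\gamma^2\beta^2|\Delta|$.
\item The $\varepsilon$-terms involve $\tilde N_{2,a}(z\1_\cS)$, $N_a(u_k\1_\cS)^p$ and $(\iint|\nabla u_k|^2x_n^{-n})^{p/2}$. On $\cS$ (above the graph) these are bounded by $C\beta^2$, since $w\le\nu'$ there by the definition of $\hbar$. The hypotheses $\varepsilon^{1/p}\tilde N_{2,b}(u_k)<(\gamma\beta)^{2/p}$ and the $S_{2,b}$ bound also provide $\varepsilon$-weighted control $\lesssim\gamma^2\beta^2$.
\item The $\mathcal K_\varepsilon$ term has height $\ge\varepsilon r$. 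It is $\lesssim\varepsilon^{-1}(\kappa\beta)^2|\Delta|$ by the Whitney point, and is made small by choosing $\kappa$ depending on $\varepsilon$.
\end{itemize}

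\textbf{Main obstacle.} The delicate step is bookkeeping the $\varepsilon$-terms: they must become $\lesssim\gamma^2\beta^2|\Delta|$ rather than merely $\varepsilon\beta^2|\Delta|$. I would first try choosing $\nu'$ so that $N_a(u_k\1_\cS)$ is controlled through the stopping time, and putting the residual into the hypothesis on $\tilde N_{2,b}$ by Fubini, as in \cite{DLP1}. Finally, sum over Whitney balls. Lemma~\ref{lem:goodLambdaIntegration} is not needed here; it is used afterwards.
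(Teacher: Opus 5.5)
\textbf{Gap: the unweighted $\mathcal K_\varepsilon$ term with $c=0$.} Taking $c=0$ breaks your estimate of $\frac{C}{r}\iint_{\mathcal K_\varepsilon}|u_k^{p/2}-c|^2$. In \eqref{TTBBMM} this term carries no factor $\varepsilon$. It must therefore be bounded by $C\gamma^2\beta^2|\Delta|$ with $C$ and $\kappa$ independent of $\varepsilon$ and $\gamma$, as the quantifiers of the lemma require.

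With $c=0$ the term is $\frac{C}{r}\iint_{\mathcal K_\varepsilon}|u_k|^p$, and you have only two bounds for it:
\begin{itemize}
\item the Whitney point gives order $\kappa^2\beta^2|\Delta|$, with an $\varepsilon$-dependent constant and no factor $\gamma^2$;
\item the condition on $\tilde N_{2,b}(u_k)$, transported through cones, gives $\varepsilon^{-1}\gamma^2\beta^2|\Delta|$.
\end{itemize}
Choosing $\kappa$ depending on $\varepsilon$ contradicts the statement. It also only yields an additive error $\delta\beta^2|\Delta|$, not a multiple of $\gamma^2$. That error cannot be absorbed in \eqref{eq:commonNpBound}, where $\gamma$ is chosen independently of $\varepsilon$.

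In fact an $\varepsilon$-dependent constant makes the lemma empty. On $E_{1,\beta}$, local boundedness gives $\beta<\tilde N_{2,a}(u_k^{p/2})\lesssim\tilde N_{2,b}(u_k)^{p/2}<\varepsilon^{-1/2}\gamma\beta$. Hence $|E_{1,\beta}\cap\Delta|\le C\varepsilon^{-1}\gamma^2|\Delta|$ holds trivially. So the $\varepsilon$-weighted terms you call the main obstacle are routine, since the good-set conditions carry exactly matching powers of $\varepsilon$. The real difficulty is this unweighted term.

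\textbf{Missing idea: the choice of $c$.} Take $c=\fint_{I^0}\int u_k^{p/2}\eta^2\,dX\,dt$, a normalized weighted average over a Whitney region $B^0\times I^0$ lying above $\Delta$ at height greater than $2r$. The Whitney point gives $0\le c\le C\kappa\beta$. So for one fixed small $\kappa$, Lemma \ref{lem:graphReduction} still applies with $|c|\le c_*\beta/2$.

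On $\mathcal K_\varepsilon$, split $u_k^{p/2}-c$ into two parts.
\begin{itemize}
\item A spatial Poincar\'e part, bounded by $\varepsilon^{-1}\iint_{\mathcal S}|\nabla u_k|^2|u_k|^{p-2}x_n\lesssim\gamma^2\beta^2|\Delta|$. This works because of the factor $\varepsilon^{-1/p}$ on $S_{p,b}$.
\item The time oscillation of $c(t)=\int u_k^{p/2}\eta^2$. Testing the equation with $\tfrac p2u_k^{p/2-1}\eta^2$ gives $\operatorname{osc}c\lesssim\mathcal E_p^{1/2}+\mathcal E_{p/2}$. Then interpolate $\mathcal E_{p/2}\le\mathcal E_p^{\frac{p-4}{2(p-2)}}\mathcal E_2^{\frac{p}{2(p-2)}}$ and apply Young's inequality to reach $\varepsilon^{-1}\mathcal E_p+\varepsilon\mathcal E_2^{p/2}\lesssim\gamma^2\beta^2$. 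This is where the lower bound on $p$ is used.
\end{itemize}

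\textbf{Second, more technical gap: the stopping graph.} You claim every point of $\mathcal S(\Delta_r,\hbar)$ lies in a cone $\gamma_b$ with vertex in $F$. This fails for the raw stopping graph, which can drop to height zero away from $F$. The paper replaces $\hbar$ by $\max\{\hbar,c_{a,p}\dist_p(\cdot,F)\}$ to obtain \eqref{SiProp}, and then re-verifies the graph lower bound on the modified graph. Also, $\mathcal S$ extends down to $\hbar/12$, below the graph, so the bound $w\le\nu'$ you invoke is not available there.
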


\begin{proof}
Let $b_0$ and $\gamma_0$ be as in Lemma \ref{l6}.
Let $\varepsilon>0$, $\gamma\in(0,\gamma_0)$, $\beta>0$ be fixed. 
Observe that $E_{2,\beta}:=\set{(x,t)\in\R^{n-1}\times\R: M(\tilde{N}_{2,a}(u_k^{p/2}))(x,t)>\kappa\beta}$ is an open subset of $\R^{n-1}\times\R$. We can assume that $E_{2,\beta}$ is a nonempty proper subset of $\R^{n-1}\times\R$, as otherwise the estimate \eqref{eq.gdLmd} is trivial. 
By Vitali's covering lemma, we can find a non-overlapping collection of parabolic balls $\set{\Delta(x_i,t_i)}_{i\in \N}$ 
such that $2\Delta(x_i,t_i)\subset E_{2,\beta}$, $10\Delta(x_i,t_i)\cap E_{2,\beta}^c\neq\emptyset$ for each $i$, and  that $E_{2,\beta}\subset\bigcup_{i\in \N}5\Delta(x_i,t_i)$. Denote $\Delta_i:=5\Delta(x_i,t_i)$, and denote by $r_i$ the radius of $\Delta_i$.
Set $F_i:=E_{1,\beta}\cap\Delta_i$. Note that $E_{1,\beta}\subset\bigcup_{i\in\N}F_i$ as $E_{1,\beta}\subset E_{2,\beta}$. 
The claim \eqref{eq.gdLmd} will follow if we prove that $|F_i|\le C\gamma^2|\Delta_i|$.
Let us denote 
\[
 \eta_{L^p}(x,x_n,t):=  \left(\fiint_{Q_{x_n/2}(x,x_n,t)}\abs{u_k(Y,s)}^pdYds\right)^{1/2},
\]
where $Q_{x_n/2}(x,x_n,t)$ is a parabolic Whitney cube centered at $(x,x_n,t)$ in $\Rn_+\times\R$. Take $b\ge b_0$ and let $\hbar:=\hbar_{\beta,a}(\eta_{L^p})$ be as in \eqref{h}. By Lemmas \ref{l6} and \ref{S3:L6}, $(M_{\hbar}(\eta_{L^p}))\big(x,\hbar(x,t),t\big)\geq\,C\beta$ for $(x,t)\in F_i$. We claim that this is also true when $M_{\hbar}(\eta_{L^p})$ is replaced  with the localized maximal function $M_{\hbar, 6ar_i}(\eta_{L^p})$ at scale $\sim r_i$. In fact, since $2\Delta_i\cap E_{2,\beta}^c\neq\emptyset$, we have
\begin{equation}\label{eq.MNsmall}
    M(\tilde N_{2,a}(u_k^{p/2}))(y,s)\le\kappa\beta \quad\text{for some }(y,s)\in 2\Delta_i\cap E_{2,\beta}^c.
\end{equation}
From this one deduces that 
\begin{equation}\label{eq.etasmall}
  \eta_{L^p}(x,x_n,t)
  \le 2^{n+1}\kappa\beta \quad\text{for  }(x,t)\in\Delta_i \text{ and }x_n>2r_i.
\end{equation}
In fact, suppose that \eqref{eq.etasmall} is false for some $(Z,\tau)=(z,z_n,\tau)$ with $(z,\tau)\in\Delta_i$ and $z_n>2r_i$. Then $\tilde{N}_{2,a}(u_k^{p/2})(x',t')>2^{n+1}\kappa\beta$ for all $(x',t')\in S(Z,\tau)$, where $S(Z,\tau)$ is defined as in \eqref{def.SXt}, which is a parabolic ball centered at $(z,\tau)$ of radius at least $2ar_i$. Therefore, since $a>1$,
\[
\kappa\beta<\br{\frac{2a}{3+2a}}^{n+1}2^{n+1}\kappa\beta<\fint_{\Delta_{(3+2a)r_i}(y,s)}\tilde{N}_{2,a}(u_k^{p/2})\,dx'dt'\le M(\tilde N_{2,a}(u_k^{p/2}))(y,s),
\]
a contradiction to \eqref{eq.MNsmall}.

We choose $\kappa\le 2^{-n-1}$. In view of \eqref{eq.etasmall}, for $(x,t)\in F_i$, $\tilde N_{2,a}(u_k^{p/2})(x,t)=\tilde N_{2,a}^{2r_i}(u_k^{p/2})(x,t)$. It also guarantees that $\hbar\le 2r_i$ on $\Delta_i$. Moreover, it implies that the ``radius'' of the boundary ball $R$ constructed in Lemma \ref{l6} is bounded by $6a r_i$. Therefore, we have justified that 
\begin{equation}\label{eq.Mhrbig}
    \text{ for } (x,t)\in F_i,\ (M_{\hbar,6ar_i}(\eta_{L^p}))\big(x,\hbar(x,t),t\big)\geq\,C\beta.
\end{equation}
We define 
\begin{equation}
    \hbar_i(x,t):=\max\set{\hbar(x,t),c_{a,p}\dist_p\br{(x,t),F_i}},
\end{equation}
where $c_{a,p}>0$ is fixed sufficiently small, and set
$\cS_i:=\cS(\Delta_{20ar_i},\hbar_i)$ as in Lemma \ref{S3:L8-alt1}.
The squared parabolic distance to $F_i$ is $1$-Lipschitz in time,
so $c_{a,p}\le a^{-1}$ ensures that $\hbar_i$ satisfies
\eqref{Eqqq-5}, \eqref{derh} and \eqref{derh'}.
For the lower-bound ball $R$ produced from a stopping point of height
$y_n$ in Lemma \ref{l6}, its proof gives
$\inf_R\hbar\gtrsim y_n$ and $\sup_{Q\in R}d_p(Q,P)\lesssim_{a,p}y_n$,
where $P\in F_i$ is the original boundary point. Choosing $c_{a,p}$
still smaller gives $\hbar_i=\hbar$ on that ball, as well as at $P$.
Thus Lemma \ref{S3:L6} gives \eqref{eq.Mhrbig} on the modified graph.
Increasing $b$ to account for $c_{a,p}^{-1}$ also gives 
\begin{equation}\label{SiProp}
    \text{for any }(x,t)\in40a\,\Delta_i\setminus F_i,\ \exists\, (y,s)\in F_i\text{ such that }
    \quad\cS_i\cap\gamma_{2a}(x,t)\subset \gamma_b(y,s)
\end{equation}
for $b\ge b_0$ sufficiently large.
Property \eqref{SiProp} holds because we ensure that the graph of $\hbar_i$ is sufficiently far away from the boundary even when it is not above $F_i$. 
 We choose the constant  
\[c:=\fint_{t\in I_{i}^0}\int_{X\in\Rn} u_k^{p/2}(X,t)\eta^2(X)\,dX\,dt\ge 0.\]
Here $\eta$ is a smooth nonnegative cutoff
function in spatial variables, supported in $B_i^0$ and satisfying
\[
 \int_{B_i^0}\eta^2\,dX=1,\qquad
 \|\eta\|_{L^\infty}\lesssim r_i^{-n/2},\qquad
 \|\nabla\eta\|_{L^\infty}\lesssim r_i^{-n/2-1}.
\]
Such a function is obtained by rescaling a fixed smooth bump and
normalizing its $L^2$ norm. $I_{i}^0$ is an interval in $\R$ centered at $t_i$ with length $\sim r_i^2$ and $B_i^0\subset\Rn$ is a ball with radius $\sim r_i$ located so that $B_i^0\times I_i^0 \subset \cS_i\cap\set{(x,x_n,t): (x,t)\in\Delta_i, x_n>2r_i}$.
By \eqref{eq.etasmall} and $\eta^2\lesssim r_i^{-n}$, $0\le c\le C\kappa\beta$.
Choose $\kappa$ small relative to the constant in \eqref{eq.Mhrbig}.
Apply Lemma \ref{lem:graphReduction} with $z=u_k^{p/2}$ and $h=\hbar_i$:
the lower bound on the modified graph was established above.
The graph maximal operator is $L^2$ bounded, and the required comparison
with $C_1\Delta_i=10a\Delta_i$ and $C_2\Delta_i=20a\Delta_i$ is the
Whitney-averaging argument of \cite{DLP1}, adapted from
\cite[Lemma 5.6]{DHM}. Thus \eqref{eq:graphReduction} and
Lemma \ref{S3:L8-alt1} give
\begin{multline}\label{eq.Fiest}
    C'\abs{F_i}\le \frac{C}{\beta^2}(1+\varepsilon^{-1})\iint_{\cS_i}\abs{\nabla u_k}^2|u_k|^{p-2}x_n\, dx_ndtdx\\
+\frac{\varepsilon}{\beta^2}\br{\|\tilde{N}_{2,a}\br{(u_k^{p/2}-c)\1_{\cS_i}}\|_{L^2(40a\Delta_{i})}^2
+\| N_{a}(u_k\,\1_{\cS_i})\|_{L^p(40a\Delta_{i})}^p}\\+\frac{\varepsilon}{\beta^2}\left(\iint_{\cS_i}\abs{\nabla u_k}^2x_n\, dx_ndtdx\right)^{p/2}    +\frac{C}{\beta^2r_i}\iint_{\mathcal{K}_\varepsilon}|u_k^{p/2}-c|^{2}\,dX\,dt,
\end{multline}
where $\mathcal{K}_\varepsilon=\mathcal S_i\cap \set{x_n>\varepsilon r_i}$.
Our goal is to bound the right-hand side of \eqref{eq.Fiest} by $C\gamma^2\abs{\Delta_i}$, which will imply the desired estimate \eqref{eq.gdLmd} by summing over $i$.
To this end, we write 
\[
\iint_{\cS_i}\abs{\nabla u_k}^2|u_k|^{p-2}x_n\, dx_ndtdx
\le C\int_{40a\Delta_i}\iint_{\gamma_a(x,t)}\abs{\nabla u_k}^2|u_k|^{p-2}\1_{\cS_i}(y,y_n,s)\frac{dydy_nds}{y_n^n}dx\,dt.
\]    
When $(x,t)\in F_i$, it holds trivially that $\iint_{\gamma_a(x,t)}\abs{\nabla u_k}^2|u_k|^{p-2}\1_{\cS_i}(y,y_n,s)\frac{dydy_nds}{y_n^n}\le S_{p,b}(u_k)(x,t)^{p}$. 
When $(x,t)\in 40a\Delta_i\setminus F_i$, by \eqref{SiProp}, we can find some $(x_0,t_0)\in F_i$ so that \newline
$\iint_{\gamma_a(x,t)}\abs{\nabla u_k}^2|u_k|^{p-2}\1_{\cS_i}(y,y_n,s)\frac{dydy_nds}{y_n^n}\le S_{p,b}(u_k)(x_0,t_0)^{p}$. Hence, by the definition of $E_{1,\beta}$, we conclude that 
\begin{equation}
\iint_{\gamma_a(x,t)}\abs{\nabla u_k}^2|u_k|^{p-2}\1_{\cS_i}(y,y_n,s)\frac{dydy_nds}{y_n^n}\le \varepsilon\gamma^2\beta^2 \quad\text{for all }(x,t)\in 40a\Delta_i,
\end{equation}
and so 
\begin{equation}\label{eq.sqrSi}
    \iint_{\cS_i}\abs{\nabla u_k}^2|u_k|^{p-2}x_n\,dx_ndtdx\le C\varepsilon\gamma^2\beta^2\abs{\Delta_i}.
\end{equation}
A similar argument applies for the term
$\left(\iint_{\gamma_a(x,t)}\abs{\nabla u_k}^2\1_{\cS_i}(y,y_n,s)\frac{dydy_nds}{y_n^n}\right)^{p/2}$
which is $\le  S_{2,b}(u_k)(x,t)^p\le \varepsilon^{-1}\gamma^2\beta^2$ when  $(x,t)\in F_i$, with an analogous estimate outside $F_i$.

The terms  $\|\tilde{N}_{2,a}((u_k^{p/2}-c)\1_{\cS_i})\|_{L^2(40a\Delta_{i})}^2$ and $\| N_a(u_k\1_{\cS_i})\|_{L^p(40a\Delta_{i})}^p$ are controlled similarly. Recall that  since $\abs{c}\le|\eta_{L^p}|$ has the bound given in \eqref{eq.etasmall} the contribution of this term is at most $C\gamma^2\beta^2|\Delta_i|$.
We note that for any $(x,t)\in 40a\Delta_i$, there exists $(x_0,t_0)\in F_i$ such that $N_a(u_k\1_{\cS_i})^{p/2}(x,t)\lesssim \tilde N_{2,2a}(u_k^{p/2}\1_{\cS_i})(x,t)\le \tilde N_{2,b}(u_k)(x_0,t_0)^{p/2}$ due to \eqref{SiProp} and the fact that $L^\infty$ version of the nontangential maximal function for solutions $u_k$ is controlled by $L^p$ version of this function for cones of slightly enlarged aperture. Therefore, by the definition of $F_i$ we get that
\begin{equation}
\varepsilon\|\tilde N_a(u_k^{p/2}\,\1_{\cS_i})\|_{L^2(40a\Delta_{i})}^2+\varepsilon\| N_a(u_k\1_{\cS_i})\|_{L^p(40a\Delta_{i})}^p\le C\gamma^2\beta^2\abs{\Delta_i}.
\end{equation}
 For the term $\frac{1}{r_i}\iint_{\mathcal{K}_\varepsilon}|u_k^{p/2}-c|^{2}\,dX\,dt$, we first use the triangle inequality to write 
\begin{multline*}
   \abs{u_k^{p/2}(X,t)-c}^2 \le 3\abs{u_k^{p/2}(X,t)-\fint_{B_i^0}u_k^{p/2}(Y,t)dY}^{2}\\
   + 3\abs{\fint_{B_i^0}u_k^{p/2}(Y,t)dY-\int_{\Rn}u_k^{p/2}(Y,t)\eta^2(Y)dY}^2
   + 3\abs{\int_{\Rn}u_k^{p/2}(Y,t)\eta^2(Y)dY-c}^2,
\end{multline*}
and we control the term $\frac{1}{r_i}\iint_{\mathcal{K}_\varepsilon}|u_k^{p/2}-c|^{2}\,dX\,dt$ by the sum of 3 integrals corresponding to the 3 terms on the right-hand side. We call the 3 integrals $J_i$, $i=1,2,3$. For $J_1$, we use the Poincar\'e inequality in the spatial variables to get that 
\[
J_1\le Cr_i\iint_{\mathcal{K}_\varepsilon}\abs{\nabla(u_k^{p/2})}^2dX\,dt\le C\varepsilon^{-1}\iint_{\cS_i}\abs{\nabla u_k}^2|u_k|^{p-2}x_n\,dx_ndtdx,
\]
where in the last step we recalled that $x_n>\varepsilon r_i$ in $\mathcal{K}_{\varepsilon}$. So this term is again bounded by an estimate similar to \eqref{eq.sqrSi}. 
To estimate $J_2$ and $J_3$, we denote by $c(t):=\int_{\Rn}u_k^{p/2}(Y,t)\eta^2(Y)dY$, and denote by $I_i$ the time interval of $\mathcal{K}_\varepsilon$. We write 
\begin{multline*}
  J_2\le Cr_i^{n-1}\int_{I_i} \abs{\fint_{B_i^0}u_k^{p/2}(Y,t)dY-c(t)}^2dt\\
  =Cr_i^{n-1}\int_{I_i} \abs{\int_{Z\in\Rn}\br{u^{p/2}(Z,t)-\fint_{B_i^0}u_k^{p/2}(Y,t)dY}\eta^2(Z)dZ}^2dt\\
  \le \frac{C}{r_i}\int_{I_i} \int_{B_i^0}\abs{u^{p/2}(Z,t)-\fint_{B_i^0}u_k^{p/2}(Y,t)dY}^2dZ\,dt.
\end{multline*}
By the Poincar\'e inequality in the spatial variables again and the observation that $I_i\times B_i^0\subset \mathcal{S}_i\cap\set{x_n>2r_i}$,
$J_2$ can be bounded as $J_1$. To estimate $J_3$, set
$m=p/2$, $v=u_k$, $r=r_i$, and $Q=B_i^0\times I_i$, and write
\[
 \mathcal E_q(v;Q):=r^{-n}\iint_Q |v|^{q-2}|\nabla v|^2\,dX\,dt
 \qquad(q\ge2).
\]
Choose $I_i^0\subset I_i$, as we may in the construction above, so
$|c(t)-c|\le\operatorname*{osc}_{I_i}c$.
We calculate the evolution of $c(t)=\int v^m\eta^2$ directly.
Testing the equation with $mv^{m-1}\eta^2$ gives, for $t<t'$ in $I_i$,
\begin{multline}\label{eq:halfMomentEvolution}
 c(t')-c(t)
 =-2m\iint_{B_i^0\times[t,t']}v^{m-1}\eta\,A\nabla v\cdot\nabla\eta\,dX\,ds\\
 -m(m-1)\iint_{B_i^0\times[t,t']}v^{m-2}\eta^2
                         A\nabla v\cdot\nabla v\,dX\,ds.
\end{multline}
The identity is justified by Steklov time averages and truncations of
the polynomial test function, using local boundedness and local energy
integrability.
For the slab solutions, the test vanishes on $x_n=k$, so their zero
extension contributes no additional boundary term.
The estimates hold for almost every pair of endpoints and hence for
the absolutely continuous representative of $c$.
Since $2m-2=p-2$, Cauchy--Schwarz bounds the first integral by
\[
 C\left(\iint_Q |v|^{p-2}|\nabla v|^2\right)^{1/2}
   \left(\iint_Q\eta^2|\nabla\eta|^2\right)^{1/2}
 \le C\mathcal E_p(v;Q)^{1/2}.
\]
Here $\iint_Q\eta^2|\nabla\eta|^2\lesssim r^{-n}$, using
$\int\eta^2=1$ and $|I_i|\lesssim r^2$. The second integral is
bounded by $C\mathcal E_m(v;Q)$. Thus, suppressing $(v;Q)$,
\[
 \operatorname*{osc}_{I_i}c\le C_p(\mathcal E_p^{1/2}+\mathcal E_{p/2}),
 \qquad
 \mathcal E_{p/2}\le
 \mathcal E_p^{\frac{p-4}{2(p-2)}}
 \mathcal E_2^{\frac{p}{2(p-2)}},
\]
where the second inequality is H\"older's inequality. Squaring and
applying Young's inequality gives
\begin{equation}\label{eq.ct-c}
 \begin{split}
 |c(t)-c|^2
 &\le C_p\big(1+\varepsilon^{-2/(p-4)}\big)\mathcal E_p
                       +C_p\varepsilon\mathcal E_2^{p/2}\\
 &\le C_p\varepsilon^{-1}\mathcal E_p
                       +C_p\varepsilon\mathcal E_2^{p/2}.
 \end{split}
\end{equation}
The last step uses $p\ge8$ and $0<\varepsilon<1$.
Since $Q\subset\mathcal S_i\cap\{x_n>2r_i\}$, the square-function
bounds already established using \eqref{SiProp} imply
\[
 \mathcal E_p(v;Q)\le C\varepsilon\gamma^2\beta^2,
 \qquad \mathcal E_2(v;Q)^{p/2}\le C\varepsilon^{-1}\gamma^2\beta^2.
\]
Consequently,
\(
J_3\le Cr_i^{n+1}\sup_{t\in I_i}|c(t)-c|^2\le C\gamma^2\beta^2 r_i^{n+1}
\), and after adding up all estimates we obtain the desired bound: $|F_i|\le C\gamma^2|\Delta_i|$.

\end{proof}

To prove Theorem \ref{thm.NlessS}, put $m=p/2$, $q>1$ and
$r=mq>p/2$. Apply Lemma \ref{lem:goodLambdaIntegration} with
\[
 F=\tilde N_{2,a}(u_k^m),\qquad
 G=C_p\big[\varepsilon^{-1/2}S_{p,b}(u_k)^m
 +\varepsilon^{1/2}\{S_{2,b}(u_k)^m+\tilde N_{2,b}(u_k)^m\}\big].
\]
For $C_p$ sufficiently large, $\{F>\beta,G\le\gamma\beta\}$ is contained
in \eqref{def.E1}, so Lemma \ref{LGL} supplies the required good-$\lambda$
inequality. Under the a priori finiteness assumption, this gives
\begin{equation}\label{eq:commonNpBound}
 \|\tilde N_{p,a}(u_k)\|_{L^r}
 \le C\varepsilon^{-1/p}\|S_{p,b}(u_k)\|_{L^r}
 +C\varepsilon^{1/p}\big(\|S_{2,b}(u_k)\|_{L^r}
 +\|\tilde N_{2,b}(u_k)\|_{L^r}\big).
\end{equation}
Here $F=\tilde N_{p,a}(u_k)^m$, and $\gamma$ was chosen independently
of $\varepsilon$. Since $S_2\lesssim\tilde N_2$ for solutions,
$\tilde N_2\le\tilde N_p$, and changes of aperture preserve the $L^r$
norms up to a constant, the last two terms are absorbed by choosing
$\varepsilon$ small. We then choose the Carleson norm small enough for
Lemma \ref{LGL}. This gives the required $\tilde N_2$ bound for $u_k$;
letting $k\to\infty$ proves Theorem \ref{thm.NlessS}.\qed

\subsection{$N<S_p$ for $\nabla_T u$}\label{S.NleSp2}

\begin{theorem}\label{thm.NlessSdT}
   Let $A$ be a matrix with bounded measurable coefficients that satisfies the ellipticity condition, and that
    $\norm{|\nabla A|x_n}_{L^\infty}+\||\dr_t A|^2x_n^3\|_{\mathcal{C}}<\infty$.
   Let $\LL=-\dr_t+\divg(A\nabla\cdot)$.
   For any $\varepsilon\in(0,1)$, there exists $\delta>0$ such that if $$\||\nabla A|^2x_n\|_{C}+\norm{|\nabla_T A|x_n}_{L^\infty}+\||\partial_{t}a_{nn}|^2x_n^3\|_{C}+\||\partial_ta_{nn}|x_n^2\|_{L^\infty}<\delta,$$  then for any $p\ge8$ such that $p/2$ is an even integer, for any $a>0$, 
   $f\in  L^{q}(\pom)\cap  \Hdot^{-1/4}_{\partial_t-\Delta_x}(\partial\Omega)$,
   there exists a 
constant $C=C(n,p,a,\varepsilon, \lambda,\Lambda)>0$, such that for the energy solution\footnote{As can be seen from the proof, the boundary condition does not matter. This remark applies also to Theorem~\ref{thm.NleSpH}, Lemmas~\ref{NPl1} and \ref{NPl2}.} $u$ to $\LL u=0$, 
$\partial^A_\nu u\Big|_{\partial\Omega}=f$,
there holds for all $q>p/2$
\begin{equation}\label{eq.N<SLp2}
    \|{\tilde N}(\nabla_Tu)\|_{L^q(\pom)}\le C\norm{S_p(\nabla_T u)}_{L^q(\pom)}+ \varepsilon\br{\norm{\tilde N(\nabla u)}_{L^q(\pom)}+\norm{S_2(\nabla u)}_{L^q(\pom)}},
 \end{equation}   
provided we know a priori that $\|\tilde N(\nabla_T u)\|_{L^q(\partial\Omega)}<\infty$. 
\end{theorem}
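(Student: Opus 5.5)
We would run the argument of Subsection~\ref{S.NleSp1} again, with $u_k$ replaced by a tangential derivative $v=\partial_i u$, $i<n$. The new feature is that $v$ is not a solution. It solves
\[
-\partial_t v+\divg(A\nabla v)=-\divg\big((\partial_i A)\nabla u\big).
\]
Consequently every step where the PDE was used picks up an error term involving $\partial_iA$ and $\nabla u$. These errors must be shown to be small, with a factor $\delta$ coming from $\||\nabla A|^2x_n\|_C$ and $\||\nabla_TA|x_n\|_{L^\infty}$. They should then be bounded by $\tilde N(\nabla u)$ and $S_2(\nabla u)$, which is exactly the $\varepsilon$-term on the right of \eqref{eq.N<SLp2}. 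As in Theorem~\ref{thm.NlessS}, we would work with the truncated solutions $u_k$ on $\Omega^k$ (Neumann data at $x_n=0$, zero at $x_n=k$). We set $w=(\fiint_{Q_{x_n/2}}|\nabla_T u_k|^p)^{1/p}$, take the stopping graph $\hbar=\hbar_{\beta,a}(w)$, and use Lemma~\ref{S3:L5} unchanged. At the end we let $k\to\infty$ by the strong $\dot\E$ convergence.

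\textbf{Step 1 (analogue of Lemma~\ref{l6}).} We redo the computation \eqref{eq.v'-v} for $v_{av}(\tau)=\int|v|^p\eta^2$. The forcing term produces
\[
p\iint (\partial_iA)\nabla u\cdot\nabla(|v|^{p-2}v\eta^2).
\]
We bound it using $|\partial_iA|\lesssim\delta x_n^{-1}$ together with Hölder's inequality. On the Whitney region this gives at most $C\delta\,\tilde N_b(\nabla u)^2 S_{p,b}(v)^{p-2}$ plus similar terms, which after Young's inequality is at most $\frac14\sup v_{av}+C\gamma^p\nu^p$. This works provided the good set \eqref{Eqqq-17} also imposes $\varepsilon^{1/p}\tilde N_b(\nabla u)\le(\gamma\nu)^{2/p}$. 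The spatial Poincaré part needs no PDE, so it is unchanged. Lemmas~\ref{S3:L6} and \ref{lem:graphReduction} then apply verbatim.

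\textbf{Step 2 (analogue of Lemma~\ref{S3:L8-alt1}).} We expand $\int(v^{p/2}-c)^2$ on $\theta\hbar$ exactly as in \eqref{u6tg}--\eqref{utAA}. The only new contribution comes from $\partial_{x_n}^2(v^{p/2})$, where $\mathcal L(v^{p/2})$ now contains the extra term
\[
\tfrac p2 v^{p/2-1}\divg\big((\partial_iA)\nabla u\big).
\]
We first integrate this by parts in the divergence. When derivatives land on $(v^{p/2}-c)v^{p/2-1}x_n\zeta$, we get terms of the form $|\partial_iA||\nabla u||\nabla v||v|^{p-2}x_n$ and $|\partial_iA||\nabla u||v|^{p-1}$, together with $|v^{p/2}-c|$ analogues. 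Cauchy--Schwarz and the Carleson condition $\||\nabla A|^2x_n\|_C<\delta$ bound these by $\varepsilon\|\tilde N(v^{p/2}-c)\mathbb 1\|^2$ plus $C\iint|\nabla v|^2|v|^{p-2}x_n$ plus $\varepsilon\|\tilde N(\nabla u)\mathbb 1\|_{L^p}^p$. Boundary terms on the graph are absorbed into $\frac14\mathcal I$ using the $L^\infty$ smallness of $|\nabla_TA|x_n$.

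The $II_{11}$-type term requires the area function of $v$, namely $\partial_t v$, weighted by $x_n^3$. For this we use Lemma~\ref{lem.CacciCO} or the Caccioppoli estimate for $\nabla^2u$ with forcing, which produces $N(\nabla u)$ and $S_2(\nabla u)$ errors. We expect this step to be the main obstacle. The term $v^{p/2-1}$ times the divergence forcing is only borderline integrable, so we must check that the exponent bookkeeping (Hölder with $p/(p-2)$) closes with the error measured by $\tilde N(\nabla u)^p$ rather than by something involving $\nabla^2u$ at the graph. If a direct bound fails, we would try substituting $(\partial_iA)\nabla u$ via its Carleson smallness first and only then apply Cauchy--Schwarz.

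\textbf{Step 3 (good-$\lambda$ and conclusion).} We repeat Lemma~\ref{LGL}, including the extra quantity $\varepsilon^{1/p}\tilde N_b(\nabla u)+\varepsilon^{1/p}S_{2,b}(\nabla u)$ in the definition of $E_{1,\beta}$. The constant $c$ is handled through \eqref{eq:halfMomentEvolution}, whose new forcing term is controlled as in Step 1. We then apply Lemma~\ref{lem:goodLambdaIntegration} to $F=\tilde N_{2,a}(v^{p/2})$. The resulting terms in $\tilde N(\nabla_Tu)$ and $S_2(\nabla_Tu)$ are absorbed via the a priori finiteness, while the $\nabla u$ terms are left with the small factor $\varepsilon$. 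Finally we pass from $L^{q/m}$ of powers to \eqref{eq.N<SLp2}, exactly as in the proof of Theorem~\ref{thm.NlessS}.
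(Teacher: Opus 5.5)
You have the paper's architecture: stopping graph for $L^p$ averages of a tangential derivative, an analogue of Lemma~\ref{l6} using smallness of $\||\partial_iA|x_n\|_{L^\infty}$, the graph estimate, and a good-$\lambda$ inequality with $\tilde N(\nabla u)$ and $S_2(\nabla u)$ in the good set. Your Step~1 is essentially Lemma~\ref{lem.NleSp2-1}.

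The genuine gap is in Step~2. You integrate the whole divergence $\divg((\partial_iA)\nabla u)$ by parts against $(v^{p/2}-c)v^{p/2-1}x_n\zeta$. Its normal component then differentiates the weight $x_n$ and leaves the unweighted term
\[
 p\iint_{\mathcal S}(v^{p/2}-c)\,v^{p/2-1}\,\big[(\partial_iA)\nabla u\big]_n\,\zeta\,dX\,dt ,
\]
which is your ``$|\partial_iA||\nabla u||v|^{p-1}$'' term. Cauchy--Schwarz with the Carleson measure $|\nabla A|^2x_n\,dX\,dt$ does not control it. The complementary factor is $\iint_{\mathcal S}|v^{p/2}-c|^2x_n^{-1}$ or $\iint_{\mathcal S}|\nabla u|^px_n^{-1}$. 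By Fubini over cones, either is of size $\int_{\Delta_{2r}}N(\cdot)^2\log(r/\hbar)\,dx\,dt$. No term in the estimate controls this, because $\hbar$ can be arbitrarily small on the set $F_i$ where the lemma is applied. Using $|\partial_iA|\le\delta x_n^{-1}$ instead produces the same logarithm. The underlying point is that $|\nabla A|\,dX\,dt$ is not a Carleson measure, so every error term must keep its factor $x_n$.

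The missing idea is never to let a derivative fall on $x_n$. The paper splits $\divg((\partial_iA)\nabla u)$ by the product rule into two parts.
\begin{itemize}
\item The part $(\partial_iA)\nabla^2u$ is bounded directly by Cauchy--Schwarz, H\"older with exponent $p/(p-2)$ and Young. This yields $\varepsilon\|N(\nabla u\1_{\mathcal S})\|_{L^p}^p$ and $\varepsilon\int(\iint_{\gamma_a}|\nabla^2u|^2y_n^{-n})^{p/2}$, i.e.\ an $S_2(\nabla u)$ error. Your list of error terms omits this, and your term with $|v|^{p/2-2}|\nabla v||\nabla u|$ needs it too.
\item The part with second derivatives $\nabla\partial_iA$ is integrated by parts only in $x_i$, which is tangential precisely because $i<n$. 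This gives $J_{21}$--$J_{24}$, all still weighted by $x_n$. It also gives a graph term with $|\nu_i|\,dS\le\theta a^{-1}\,dx\,dt$; after integrating in $\theta$ this is bounded by $\iint|\nabla A|^2|\nabla u|^px_n$.
\end{itemize}
You could instead reinsert $1=\partial_nx_n$ into your unweighted term and integrate by parts in $x_n$ again. That leads to the same second-derivative term, which you would then still have to treat by tangential integration by parts.

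The ``borderline integrability'' you flag as the main obstacle is not the real difficulty; the $p/(p-2)$ bookkeeping closes as in that $\nabla^2u$ estimate. Separately, the truncation to slabs with mixed boundary data is unnecessary. The paper works with $u$ directly under the a priori finiteness assumption.
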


We shall only highlight the modifications needed from Subsection~\ref{S.NleSp1}. First, we change the definition of $w$: it is now the $L^p$ average of $|\nabla_Tu|$, that is, for $(X,t)=(x,x_n,t)\in\om$,
\begin{equation}\label{def.w2}
w(X,t)=\left(\fiint_{Q_{x_n/2}(X,t)}|\nabla_T u(Y,s)|^p\,dY\,ds\right)^{1/p}
=\left(\fiint_{Q_{x_n/2}(X,t)}\br{\sum_{i=1}^{n-1}|\dr_{y_i} u(Y,s)|^2}^{p/2}\,dY\,ds\right)^{1/p}.
\end{equation}

We also introduce for $k=1,\dots,n-1$,
\begin{equation}\label{def.wk}
w_k(X,t):=\left(\fiint_{Q_{x_n/2}(X,t)}|\dr_{y_k} u(Y,s)|^p\,dY\,ds\right)^{1/p}, \quad \text{and }u_k:=\dr_k u.
\end{equation}

\begin{lemma}\label{lem.NleSp2-1} 
Let $k\in\set{1,2,\dots,n-1}$. Let $w_k$ be defined as in \eqref{def.wk} and let $\hbar_{\nu,a}$ be defined as in \eqref{h}.
Then for any $a>0$ there exists $b_0=b_0(a)>a$ and $\gamma_0=\gamma_0(a)>0$ such that the following holds. 
Having fixed an arbitrary $\nu>0$, $b\ge b_0$, and $\gamma\le\gamma_0$ for each point $(x,t)\in\partial\Omega$ from the set 
\begin{equation}
\big\{(x,t):\, N_{a}(w_k)(x,t)>\nu\mbox{ and }S_{p,b}(u_k)(x,t)+\norm{|\dr_{k}A|x_n}_{L^\infty}^{1/p}N_b(\nabla u)\leq\gamma\nu\big\}
\end{equation}
there exists a boundary ball $R$ with $(x,t)\in K_0 R$ for some $K_0=K_0(n,p,a)>1$ and such that
\begin{equation}
\big|w_k\big(z,\hbar_{\nu,a}(w_k)(z,\tau),\tau\big)\big|>\nu/{8}\,\,\text{ for all }\,\,(z,\tau)\in R.
\end{equation}
\end{lemma}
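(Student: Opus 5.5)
We follow the proof of Lemma~\ref{l6} almost verbatim, with $u_k$ (the slab solution there) replaced by the tangential derivative $u_k=\partial_k u$. The new feature is that $\partial_k u$ is not a solution of $\LL$. It solves
\[
-\partial_t(\partial_k u)+\divg(A\nabla \partial_k u)=-\divg\big((\partial_k A)\nabla u\big),
\]
and the resulting error terms are what the extra hypothesis $\norm{|\partial_kA|x_n}_{L^\infty}^{1/p}N_b(\nabla u)\le\gamma\nu$ is meant to absorb.

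First, Lemma~\ref{S3:L5}(iii) applied to $w_k$ produces a point $(y,y_n,s)\in\partial\gamma_a(x,x_n,t)$ with $w_k(y,y_n,s)=\nu$ and $\hbar(y,s)=y_n$. As before, we choose $b_0$ so that $Q_{4y_n/5}(y,y_n,s)\subset\gamma_b(x,t)$. We use the same normalized spatial cutoff $\eta$, supported in $\tilde B$ and equal to $|\tilde B|^{-1/2}$-size on $B$, and set
\[
v_{av}(\tau)=\int|\partial_ku|^p\eta^2\,dX .
\]
The averaging argument giving \eqref{claim.v_av1} uses only the definition of $w_k$, so it goes through unchanged.

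Second, we redo the evolution identity \eqref{eq.v'-v} with $\partial_ku$ in place of the solution. Testing against $p|\partial_ku|^{p-2}\partial_ku\,\eta^2$ on $[t_0,t_0']$ yields the same two terms as before, which are bounded by $\frac1{4p}\sup v_{av}+CS_{p,b}(\partial_ku)^p$ exactly as in \eqref{eq.v'-v2}. It also yields new terms
\[
p(p-1)\iint(\partial_kA)\nabla u\cdot\nabla\partial_k u\,|\partial_ku|^{p-2}\eta^2
\quad\text{and}\quad
2p\iint(\partial_kA)\nabla u\cdot\nabla\eta\,|\partial_ku|^{p-2}\partial_ku\,\eta .
\]
To handle the first, we use $|\partial_kA|\le\norm{|\partial_kA|x_n}_\infty y_n^{-1}$ on the box, $|\nabla u|\le N_b(\nabla u)$ and $|\partial_ku|\le N_b(\nabla u)$ there, together with $\eta^2\lesssim y_n^{-n}$ and $|I_{y_n}|\sim y_n^2$. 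Cauchy--Schwarz then bounds it by
\[
C\,S_{p,b}(\partial_ku)^{p/2}\,\norm{|\partial_kA|x_n}_\infty\,N_b(\nabla u)^{p/2}.
\]
The same bounds show the second term is $\lesssim\norm{|\partial_kA|x_n}_\infty N_b(\nabla u)^{p}$.

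The delicate point is the homogeneity. We need every error term to be $\lesssim(\gamma\nu)^p$. Since $\norm{|\partial_kA|x_n}_\infty\lesssim1$, the factor $\norm{|\partial_kA|x_n}_\infty^{1/p}$ dominates $\norm{|\partial_kA|x_n}_\infty$. The hypothesis therefore gives $\norm{|\partial_kA|x_n}_\infty N_b(\nabla u)^p\le(\gamma\nu)^p$, and with Young's inequality all errors are $\le C(\gamma\nu)^p$. This yields
\[
|v_{av}(t_0')-v_{av}(t_0)|\le\tfrac12\sup v_{av}+C(\gamma\nu)^p,
\]
and hence \eqref{claim.v_av2} for small $\gamma$. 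We expect this bookkeeping, making sure the $N_b(\nabla u)$ factors appear only to total power $p$ and always accompanied by at least a first power of the coefficient bound, to be the main obstacle. It is the reason for the specific form $\norm{\cdot}^{1/p}N_b$ in the hypothesis.

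Third, the spatial comparison step, from $\tilde B$-averages to $B_{X_0}$-averages via the fundamental theorem of calculus and Cauchy--Schwarz, uses no equation at all. It transfers verbatim and gives
\[
w_{av}(s')\ge\tfrac12\fint_{\tilde B}|\partial_ku|^p-Cy_n^2\int_{\tilde B}|\nabla\partial_ku|^2|\partial_ku|^{p-2}y_n^{-n}.
\]
After time integration this produces $w_k^p(X_0,t_0)\ge\frac12(\nu/4)^p-C\gamma^p\nu^p\ge(\nu/8)^p$ on $Q_{\delta y_n}(y,y_n,s)$. Projecting this cube to the boundary along the stopping graph gives the ball $R$, and $(x,t)\in K_0R$ holds because $d_p((y,s),(x,t))\lesssim_a y_n$, as in Lemma~\ref{l6}. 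All justifications (Steklov averages, truncation of $|\cdot|^{p-2}$) are as indicated in the proof of Lemma~\ref{LGL}.
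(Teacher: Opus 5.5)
Your proposal is correct and follows essentially the same route as the paper. The only change from Lemma~\ref{l6} is the extra term coming from $-\divg((\partial_kA)\nabla u)$ in the evolution identity for $v_{av}$. After integration by parts it splits into exactly your two terms, bounded by $\norm{|\partial_kA|x_n}_{L^\infty}S_{p,b}(u_k)^{p/2}N_b(\nabla u)^{p/2}$ and $\norm{|\partial_kA|x_n}_{L^\infty}N_b(\nabla u)^p$, and the rest of the argument, including the equation-free spatial comparison, is unchanged.

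One small point: the bound $\norm{|\partial_kA|x_n}_{L^\infty}N_b(\nabla u)^p\le(\gamma\nu)^p$ follows from the hypothesis directly, by raising it to the $p$-th power. Boundedness of $|\partial_kA|x_n$ is needed only to pass from the squared coefficient produced by Young's inequality back to the first power.
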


\begin{proof}
    We proceed as in the proof of Lemma \ref{l6}. The first modification is needed in \eqref{eq.v'-v}, where the PDE of $u_k$ is used. Since $u_k$ satisfies 
    \begin{equation}\label{eq.ukPDE}
        -\dr_t u_k + \divg(A\nabla u_k)=-\divg((\dr_{x_k}A)\nabla u), 
    \end{equation}
    there is one more term in \eqref{eq.v'-v}. That is, we have 
    \begin{multline}\label{eq.v'-v2a}
    v_{av}(t_0')-v_{av}(t_0)=\iint_{\mathbb R^n\times\{t_0'\}}|u_k|^{p}\eta^2\,dX-\iint_{\mathbb R^n\times\{t_0\}}|u_k|^{p}\eta^2\,dX\\
   =-2p\iint_{\mathbb R^n\times[t_0,t_0']}(A\nabla u_k\cdot\nabla\eta)|u_k|^{p-2}u_k\eta\,dX\,dt\\
-p(p-1)\iint_{\mathbb R^n\times[t_0,t_0']}(A\nabla u_k\cdot\nabla u_k)|u_k|^{p-2}\eta^2\,dX\,dt
+p\,I,
\end{multline}
where 
\[
I :=  \iint_{\Rn\times [t_0,t_0']}\divg ((\dr_{k}A)\nabla u)|u_k|^{p-2}u_k\eta^2dX\,dt
\]
is the new term coming from the right-hand side of \eqref{eq.ukPDE}. Integrating by parts, we get 
\begin{multline*}
    I = - \iint_{\Rn\times [t_0,t_0']}(\dr_{k}A)\nabla u \cdot\nabla\br{|u_k|^{p-2}u_k\eta^2}dX\,dt\\
    =-(p-1) \iint_{\Rn\times [t_0,t_0']}(\dr_{k}A)\nabla u \cdot\nabla u_k |u_k|^{p-3}u_k\eta^2 dX\,dt\\
    -2 \iint_{\Rn\times [t_0,t_0']}(\dr_{k}A)\nabla u \cdot\nabla \eta |u_k|^{p-2}u_k\eta dX\,dt=: I_1+I_2.
\end{multline*}
By Cauchy-Schwarz, 
\begin{multline*}
    |I_1|\lesssim \br{\iint_{\Rn\times [t_0,t_0']}\abs{\nabla u_k}^2|u_k|^{p-2}\eta^2dX\,dt}^{1/2} \br{\iint_{\Rn\times [t_0,t_0']}|\dr_{k}A|^2|\nabla u|^2 |u_k|^{p-2}\eta^2 dX\,dt}^{1/2}\\
    \lesssim \norm{|\dr_k A|x_n}_{L^\infty} S_{p,b}(u_k)^{p/2}(x,t)N_b(\nabla u)^{p/2}(x,t)\\
    \le C S_{p,b}(u_k)^{p}(x,t)+\norm{|\dr_k A|x_n}_{L^\infty} N_b(\nabla u)^p(x,t)
\end{multline*}
using the properties of $\eta$ and the fact that $I_{y_n}(s)\times \wt B\subset Q_{4y_n/5}\subset \gamma_b(x,t)$. For $I_2$, we use
$|\eta\nabla\eta|\lesssim y_n^{-n-1}$, the bound $x_n\approx y_n$
on the supporting cylinder, and its volume $\lesssim y_n^{n+2}$ to obtain
\[
    |I_2|\lesssim y_n^{-n-1}\iint_{\tilde B\times [t_0,t_0']}|\dr_{k}A||\nabla u|^p\,dX\,dt\lesssim \norm{|\dr_k A|x_n}_{L^\infty} N_b(\nabla u)^p(x,t).
\]
Altogether, we have that 
for any $t_0',t_0\in I_{y_n}(s)$, 
$$
\left| v_{av}(t_0')-v_{av}(t_0)\right|\le \frac12\sup_{\tau\in I_{y_n}(s)}v_{av}(\tau)+ CS_{p,b}( u_k)^p(x,t) + C\norm{|\dr_k A|x_n}_{L^\infty} N_b(\nabla u)^p(x,t).
$$
The condition $S_{p,b}(u_k)(x,t)+\norm{|\dr_k A|x_n}_{L^\infty}^{1/p} N_b(\nabla u)(x,t) \le \gamma\nu$ ensures that the rest of the argument goes exactly as in the proof of Lemma \ref{l6}.
\end{proof}

For $w$ in \eqref{def.w2}, Lemma \ref{S3:L6} gives
$(M_{\hbar_{\nu,a}}w)(x,\hbar_{\nu,a}(w)(x,t),t)\gtrsim\nu$ whenever
$N_a(w)(x,t)>\nu$ and
\[
 S_{p,b}(\nabla_Tu)(x,t)
 +\norm{|\nabla_TA|x_n}_{L^\infty}^{1/p}N_b(\nabla u)(x,t)
 \le\gamma\nu.
\]
At the stopping point for $w$, some component $w_k$ is at least
$\nu/(n-1)$. The oscillation calculation in the proof of
Lemma \ref{lem.NleSp2-1}, applied at that point, gives the lower-bound
ball for $w$ on its own stopping graph, with adjusted dimensional constants.

The following lemma is the analog of Lemma~\ref{S3:L8-alt1}. 
\begin{lemma}\label{lem.NleSp2-2} 
Fix $p\ge8$ such that $p/2$ is an even integer.
Let $\Omega={\mathbb R}^n_+\times\R$ and let ${\mathcal L}=-\partial_t+\divg(A\nabla\cdot)$ be a parabolic operator. 
Let $u$ be an energy solution to $\mathcal Lu=0$ in $\Omega$ and let $u_k=\dr_{x_k}u$ with $k\in\set{1,\dots,n-1}$.  For a fixed (sufficiently large) $a>0$ determined below, consider an arbitrary function $\hbar:{\mathbb R}^{n-1}\times\R\to \mathbb R$ such that it satisfies
the estimates \eqref{Eqqq-5}, \eqref{derh} and $\hbar\ge 0$. 
Then 
we have the following:\vglue1mm

For any $\varepsilon>0$ there exists $\delta>0$
such that if
$\norm{|\nabla A|x_n}_{L^\infty}+\||\partial_{t}A|^2x_n^3\|_{C}<\infty$ and $$
\||\nabla A|^2x_n\|_{C}+\||\partial_{t}a_{nn}|^2x_n^3\|_{C}+\||\partial_ta_{nn}|x_n^2\|_{L^\infty}<\delta,
$$ then
for all parabolic surface balls $\Delta_r\subset{\mathbb R}^{n-1}\times\R$ of radius $r$ such that at least one point of $\Delta_r$
satisfies $\hbar(x,t)\le 2r$, we have the following estimate for an arbitrary $c\ge 0$:
\begin{multline}\label{eq.NleSp2-2}
\int_{1/6}^6\int_{\Delta_r}\big|u^{p/2}_k \big(x,\theta\hbar(x,t),t\big)-c\big|^2\,dx\,dt\,d\theta
\leq C(1+\varepsilon^{-1})\iint_{\cS(\Delta_r,\hbar)}\abs{\nabla u_k}^2|u_k|^{p-2}x_n\, dx_ndtdx\\
+\varepsilon\br{\|\tilde{N}_{2,a}\br{(u_k^{p/2}-c)\1_{\cS(\Delta_r,\hbar)}}\|_{L^2}^2
+\|{N}_{a}(\nabla u\,\1_{\cS(\Delta_r,\hbar)})\|_{L^p}^p}\\+
\varepsilon  \int_{\Delta_{2r}}\br{\iint_{\gamma_a(x,t)\cap\cS(\Delta_r,\hbar)}\abs{\nabla^2 u}^2y_n^{-n}dYds}^{p/2}dX\,dt
+\frac{C}{r}\iint_{\mathcal{K}_\varepsilon}|u_k^{p/2}-c|^{2}\,dX\,dt,
\end{multline}
for some $C\in(0,\infty)$ that only depends on $a,\Lambda,n$ but not on $k$, $u_k$, $c$, $\varepsilon$ or $\Delta_r$. 
Here, \[\cS(\Delta_r,\hbar):=\set{(x,x_n,t):\, (x,t)\in\Delta_{2r} \text{ and } \frac{\hbar(x,t)}{12}<x_n<18r},\]
where 
$\mathcal K_{\varepsilon}:=\cS(\Delta_r,\hbar)\cap\set{(x,x_n,t): x_n>\varepsilon r}$.
The cones used to define the nontangential 
maximal functions in this lemma have vertices on $\partial\Omega$.
\end{lemma}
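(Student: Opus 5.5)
We follow the proof of Lemma~\ref{S3:L8-alt1} line by line, with $u_k=\dr_{x_k}u$ replacing the solution. The only structural change is that $u_k$ now solves the inhomogeneous equation \eqref{eq.ukPDE},
\[
-\dr_t u_k+\divg(A\nabla u_k)=-\divg((\dr_kA)\nabla u).
\]
As before, we fix the cutoff $\zeta=\zeta_0(x_n)\zeta_1(x,t)$ and $r_0=6\sup_{\Delta_r}\hbar$. We then write $\mathcal I$ as $-\iint\dr_{x_n}[(u_k^{p/2}-c)^2\zeta]$ and split it into $\mathcal A+V$, and $\mathcal A$ further into $I+II+III+IV$ by inserting $\dr_{x_n}x_n$. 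The terms $I$, $III$, $IV$, $V$, $II_1$–$II_5$, the residual terms $\mathcal B_{res1}$–$\mathcal B_{res3}$, $\mathcal C$, and the time-derivative terms $II_7$–$II_{14}$ are estimated exactly as before, and none of these steps uses the equation. The single exception is the bound on $\iint|\dr_t(u_k^{p/2})|^2x_n^3$ in \eqref{eq.dtukp/2}. There $|\dr_tu_k|\lesssim|\nabla^2u_k|+|\nabla A||\nabla u_k|+|\nabla\dr_kA||\nabla u|+|\nabla A||\nabla^2u|$. We would apply the Whitney-cube Caccioppoli inequality to $u_k$, together with $|\nabla A|x_n\lesssim1$ and Lemma~\ref{lem.CacciCO}. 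This bounds that quantity by $\|N_a(\nabla u\,\1_{\cS})\|_{L^p}^p$ plus the $p/2$-power of the square function of $\nabla^2 u$, which is precisely why $\nabla u$ and $\nabla^2u$ replace $u_k$ and $\nabla u_k$ in the error terms of \eqref{eq.NleSp2-2}. Since this term carries the small factor $\||\dr_{x_n}a_{nn}|^2x_n\|_C^{1/2}$, it ends up multiplied by $\varepsilon$. Because $|\nabla^2\dr_kA|$ is not controlled pointwise, we would not expand $\dr_t u_k$ directly. Instead we would bound its Whitney $L^2$ averages through the time-derivative Caccioppoli inequality for $\dr_t u$ (Lemma~\ref{lem.CacciCO}) combined with the equation.

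The new contribution arises in the term $II$, where $\dr^2_{x_nx_n}(u_k^{p/2})$ is replaced using the equation. Now
\[
\LL(u_k^{p/2}-c)=\tfrac p2\big(\tfrac p2-1\big)u_k^{p/2-2}A\nabla u_k\cdot\nabla u_k-\tfrac p2u_k^{p/2-1}\divg((\dr_kA)\nabla u).
\]
The first term is nonnegative; because $p/2$ is even, $u_k^{p/2-2}\ge0$, so it is handled like $\mathcal D$ using $c\ge0$. This produces one extra term
\[
II_{new}=-p\iint_{\cS}a_{nn}^{-1}(u_k^{p/2}-c)\,u_k^{p/2-1}\divg((\dr_kA)\nabla u)\,x_n\zeta.
\]
We would integrate it by parts in $x$. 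When the derivative falls on $u_k^{p/2}-c$, we get $u_k^{p-2}|\nabla u_k||\dr_kA||\nabla u|x_n$. By Cauchy--Schwarz this is bounded by $\varepsilon^{-1}$ times the $p$-adapted energy $\iint|\nabla u_k|^2|u_k|^{p-2}x_n$ plus $\varepsilon\iint|\dr_kA|^2|\nabla u|^2|u_k|^{p-2}x_n$. The latter is at most $\varepsilon\,\||\nabla A|^2x_n\|_C\|N_a(\nabla u\1_{\cS})\|_{L^p}^p$, because $|u_k|\le|\nabla u|$. When the derivative falls on $u_k^{p/2-1}$, we obtain $|u_k^{p/2}-c||u_k|^{p/2-2}|\nabla u_k||\dr_kA||\nabla u|x_n$. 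We would treat this with the same Cauchy--Schwarz split, writing $|u_k^{p/2}-c||u_k|^{p/2-2}|\nabla u|\le |u_k^{p/2}-c|\,|u_k|^{(p-2)/2}\cdot|u_k|^{-1}|\nabla u|$. Since that can be singular, we would instead split $|u_k^{p/2}-c|\le |u_k|^{p/2}+c$, and bound $c$ by $\tilde N_{2,a}$-type averages (it appears only through $\tilde N((u_k^{p/2}-c)\1)$ and $N(\nabla u)^{p/2}$ after Young's inequality). Derivatives falling on $a_{nn}^{-1}$, $x_n$ or $\zeta$ give terms of type $\mathcal C$ and $III$, handled with the Carleson norm of $|\nabla A|^2x_n$ and \eqref{eq.Neta-c}. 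The boundary terms on the graph of $\theta\hbar$ carry $|\nu_i|dS\lesssim a^{-1}$ together with the factor $x_n|\dr_kA|\lesssim\delta$. They are absorbed into $\tfrac14\mathcal I$ or bounded by $\delta\int N(\nabla u)^p$.

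We expect the main obstacle to be the $c$-dependent cross term in $II_{new}$: every piece must be placed in one of the admissible error slots of \eqref{eq.NleSp2-2} with a small constant. Our plan for it is Young's inequality with exponents $(2,2)$ for the pairing $(u_k^{p/2}-c)\cdot|\dr_kA||\nabla u|^{p/2}$, using the Carleson norm of $|\nabla A|^2x_n$ to produce $\delta^{1/2}\|\tilde N_a((u_k^{p/2}-c)\1)\|_{L^2}\|N_a(\nabla u\1)\|_{L^p}^{p/2}$, which is then bounded by $\varepsilon$ times the two maximal terms. Finally, we integrate in $\theta\in(1/6,6)$ and use $\cSSm\subset\cS(\Delta_r,\hbar)$, exactly as at the end of the proof of Lemma~\ref{S3:L8-alt1}.
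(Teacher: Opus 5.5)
There is a genuine gap in your treatment of $II_{new}$. You integrate $\divg((\dr_kA)\nabla u)$ by parts in all spatial variables, and you count the terms where the derivative lands on $x_n$ among those ``of type $\mathcal C$ and $III$''. That claim is false. For the component $i=n$, the derivative falling on the weight gives
\[
p\iint_{\mathcal S(q,\tau,r,r_0,\theta\hbar)}a_{nn}^{-1}\big(u_k^{p/2}-c\big)\,u_k^{p/2-1}\,(\dr_k a_{nj})\,\dr_ju\,\zeta\,dX\,dt,
\]
which carries no power of $x_n$ at all. By contrast, $\mathcal C$ has the weight $x_n$ together with a gradient of $u^{p/2}$ that feeds the square function, and $III$ has $x_n/r$. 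Pairing the displayed term with the Carleson measure $|\nabla A|^2x_n\,dX\,dt$ leaves $\iint_{\mathcal S}|\nabla u|^px_n^{-1}$. That integral is only bounded by $\int N(\nabla u)^p\log(r/\hbar)$, which no admissible term controls, since $\hbar\ge0$ is arbitrary. The measure $|\nabla A|\,dX\,dt$ need not be Carleson. A pointwise bound $x_n|\dr_kA|\lesssim\delta$ would not help either: it is not a hypothesis of this lemma (only $\norm{|\nabla A|x_n}_{L^\infty}<\infty$ is), and it leaves the same weight $x_n^{-1}$. Also, the boundary piece on the graph of $\theta\hbar$ coming from $\nu_n$ carries no factor $a^{-1}$. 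This is the same obstruction that makes the proof of Lemma~\ref{S3:L8-alt1} swap derivatives in $\mathcal B$ rather than integrate by parts in $x_n$.

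The missing idea is to exploit $k<n$. The paper splits $\divg((\dr_kA)\nabla u)$ by the product rule into two pieces. The first, $(\dr_kA)\nabla^2u$, is not integrated by parts. In the second, $(\nabla\dr_kA)\nabla u$, it integrates by parts only in the tangential variable $x_k$, to take $\dr_k$ off the coefficient. Every interior term then keeps the weight $x_n$. The only boundary term involves $\nu_k$, with $|\nu_k|\,dS\le\theta a^{-1}dx\,dt$. Cauchy--Schwarz splits it into $\frac18\mathcal I$ plus $C\int|\nabla A|^2\hbar^2|\nabla u|^p\zeta\,dx\,dt$. Integration in $\theta$ turns the latter into $C\iint_{\cS}|\nabla A|^2|\nabla u|^px_n$, which the Carleson norm controls without any pointwise smallness. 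For $(\dr_kA)\nabla^2u$, and for the term where the derivative lands on $u_k^{p/2-1}$, pair $|u_k^{p/2}-c|\,|\nabla A|\,x_n^{1/2}$ against $|\nabla u|^{p/2-1}|\nabla^2u|\,x_n^{1/2}$. The first factor gives $\delta^{1/2}\|\tilde N_a((u_k^{p/2}-c)\1_{\cS})\|_{L^2}$ by the Carleson condition. The second is bounded by $\int N_a(\nabla u\1_{\cS})^{p-2}\iint_{\gamma_a\cap\cS}|\nabla^2u|^2y_n^{-n}$; then apply H\"older with exponent $p/(p-2)$ and Young. Since $p\ge8$, no negative power of $u_k$ arises, so your splitting $|u_k^{p/2}-c|\le|u_k|^{p/2}+c$ is unnecessary. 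This pairing is where the $\nabla^2u$ error term in \eqref{eq.NleSp2-2} comes from. Finally, consider your treatment of $\iint|\dr_t(u_k^{p/2})|^2x_n^3$. Lemma~\ref{lem.CacciCO} and the equation produce contributions $|\nabla A||\nabla u|$ and $|\dr_tA||\nabla u|$, and these must go through the Carleson measures, as in \eqref{eq.A_S(uk)}. Using only $|\nabla A|x_n\lesssim1$ leaves a cone integral of $|\nabla u|^2y_n^{-n-2}$, which diverges logarithmically.
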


\begin{proof}
    The first modification in the proof of Lemma~\ref{S3:L8-alt1} happens when dealing with the term $II$ from \eqref{utAA}, where the PDE of $u_k^{p/2}-c$ is used. In the current setting, there is an extra term from $\mathcal L (u_k^{p/2}-c)$, namely, we have 
\[
{\mathcal L}(u_k^{p/2}-c)=\frac{p}2\left(\frac{p}2-1\right)u_k^{p/2-2}A\nabla u_k\cdot\nabla u_k
+ \frac{p}2u_k^{p/2-1} \divg((\dr_k A)\nabla u).
\]
As a result, we get an extra term $J$ in \eqref{eq.NleS1-II} and we write
\begin{equation}
    II =\mathcal B+\mathcal C+\mathcal D+II_7 +J,
\end{equation}
where 
\[
J:= p \iint_{\mathcal S(q,\tau,r,r_0,\theta\hbar)}(u_k^{p/2}-c)u_k^{p/2-1}\divg((\dr_kA)\nabla u)x_n\zeta dx_n\,dt\,dx,
\]
and the terms $\mathcal B,\mathcal C,\mathcal D$, and $II_7$ are the same as in \eqref{eq.NleS1-II}. By the product rule, $J$ is further split into two terms: $J=J_1+J_2$, where
\[
J_1:= p \iint_{\mathcal S(q,\tau,r,r_0,\theta\hbar)}(u_k^{p/2}-c)u_k^{p/2-1}(\dr_kA)\Delta u\,x_n\zeta dx_n\,dt\,dx,
\]
\[
J_2:=p \iint_{\mathcal S(q,\tau,r,r_0,\theta\hbar)}(u_k^{p/2}-c)u_k^{p/2-1}\nabla(\dr_kA)\nabla u\,x_n\zeta dx_n\,dt\,dx.
\]
By Cauchy-Schwarz and the Carleson condition on $\dr_k A$, we get that 
\begin{multline}
    |J_1|\lesssim \br{\iint_{\cS(q,\tau,r,r_0,\theta\hbar)}\abs{\nabla^2 u}^2|u_k|^{p-2}x_n}^{1/2}\br{\iint_{\cS(q,\tau,r,r_0,\theta\hbar)}\br{u_k^{p/2}-c}^2\abs{\dr_k A}^2x_n}^{1/2}\\
    \lesssim \br{\int_{2\Delta_r}N_a^{p-2}(u_k\1_{\cSS})(x,t)\iint_{\gamma_{a}(x,t)\cap\cSS}|\nabla^2u(Y,s)|^2y_n^{-n}dYds\,dx\,dt}^{1/2}\\
    \cdot \norm{|\dr_k A|^2x_n}_C^{1/2}\br{\int_{\Delta_{2r}}N_a\br{(u_k^{p/2}-c)\1_{\cSS}}^2dx\,dt}^{1/2}.
\end{multline}
Applying H\"older's inequality (with exponent $p/(p-2)$) to the first term and then using Young's inequality, one gets that for any $\varepsilon>0$,
\begin{multline*}
    |J_1|\le C_\varepsilon\norm{|\dr_k A|^2x_n}_C \norm{N_a\br{(u_k^{p/2}-c)\1_{\cSS}}}_{L^2(2\Delta_r)}^2\\
    +\varepsilon \norm{N_a(u_k\1_{\cSS})}_{L^p(2\Delta_{r})}^p +\varepsilon  \int_{2\Delta_r}\br{\iint_{\gamma_a(x,t)\cap\cS(\Delta_r,\hbar)}\abs{\nabla^2 u}^2y_n^{-n}dYds}^{p/2}\,dX\,dt.
\end{multline*}
For $J_2$, we integrate by parts again by pulling off $\dr_k A$. Since $k<n$, it gives 5 terms as follows. $J_2=\sum_{j=1}^4 J_{2j}+J_{2b}$, where
\[
J_{21}=-p(p/2-1)\iint_{\mathcal S(q,\tau,r,r_0,\theta\hbar)}(\dr_ku_k)u_k^{p/2-2}(u_k^{p/2}-c)\nabla A\cdot \nabla u\,x_n\zeta dx_n\,dt\,dx,
\]
\[
J_{22}=-p^2/2\iint_{\mathcal S(q,\tau,r,r_0,\theta\hbar)}(\dr_ku_k)u_k^{p-2}\nabla A\cdot \nabla u\,x_n\zeta dx_n\,dt\,dx,
\]
\[
J_{23}=-p\iint_{\mathcal S(q,\tau,r,r_0,\theta\hbar)}u_k^{p/2-1}(u_k^{p/2}-c)\nabla A\cdot \nabla \dr_ku\,x_n\zeta dx_n\,dt\,dx,
\]
\[
J_{24}=-p\iint_{\mathcal S(q,\tau,r,r_0,\theta\hbar)}u_k^{p/2-1}(u_k^{p/2}-c)\nabla A\cdot \nabla u\,x_n\,\dr_k\zeta dx_n\,dt\,dx,
\]
and the boundary term 
\begin{equation}\label{eq.J2b}
    J_{2b}=p\int_{\partial\mathcal S(q,\tau,r,r_0,\theta\hbar)}(u_k^{p/2}-c)u_k^{p/2-1}\nabla A\cdot \nabla u\,x_n\zeta\nu_k\,dS.
\end{equation}
The terms $J_{21}$ and $J_{23}$ are similar to $J_1$ and satisfy
\begin{multline*}
    |J_{21}|+|J_{23}|\le C_\varepsilon\norm{|\nabla A|^2x_n}_C \norm{N_a\br{(u_k^{p/2}-c)\1_{\cSS}}}_{L^2(2\Delta_r)}^2 \\+\varepsilon \norm{N_a(\nabla u\1_{\cSS})}_{L^p(2\Delta_{r})}^p+\varepsilon  \int_{2\Delta_r}\br{\iint_{\gamma_a(x,t)\cap\cS(\Delta_r,\hbar)}\abs{\nabla u_k}^2y_n^{-n}dYds}^{p/2}\,dX\,dt.
\end{multline*}
For the term $J_{22}$, we have 
\begin{multline*}
    |J_{22}|\lesssim \br{\iint_{\cSS}\abs{\dr_k u_k}^2 u_k^{p-2}x_n\,dX\,dt}^{1/2}\br{\iint_{\cSS}\abs{\nabla A}^2\abs{\nabla u}^px_n\,dX\,dt}^{1/2}\\
    \le C\iint_{\cSS}|\dr_ku_k|^2|u_k|^{p-2}x_n\,dX\,dt
    + \norm{|\nabla A|^2x_n}_C\norm{N_a(\nabla u \1_{\cSS})}_{L^p(\Delta_{2r})}^p.
\end{multline*}
For $J_{24}$, we use Cauchy-Schwarz, the bound $|\dr_k\zeta|\le C/r$, and the estimate \eqref{eq.Neta-c} to obtain that 
\begin{multline*}
    |J_{24}|\le C\norm{|\nabla A|^2x_n}_C^{1/2}\norm{N_a(u_k^{p/2}-c)}_{L^2(\Delta_{2r})}\norm{N(\nabla u\1_{\cSS})}_{L^p(\Delta_{2r})}^{p/2}\\
    \le \varepsilon\norm{N_a(u_k^{p/2}-c)}_{L^2(\Delta_{2r})}^2+C_{\varepsilon}\norm{|\nabla A|^2x_n}_C\norm{N(\nabla u\1_{\cSS})}_{L^p(\Delta_{2r})}^{p}.
\end{multline*}
We are left with the boundary term $J_{2b}$. By the computation of $\nu_k$ on the graph of $\theta\hbar$ (see \eqref{eq.nui}) and the Cauchy-Schwarz inequality, one has
\begin{multline*}
    |J_{2b}|\le \frac18\int_{\Delta_{2r}}\abs{u_k^{p/2}(x,\theta\hbar(x,t),t)-c}^2\zeta\, dx\,dt\\
    +C\int_{\Delta_{2r}}\abs{\nabla A}^2(x,\theta\hbar(x,t),t)\hbar(x,t)^2|u_k|^{p-2}\abs{\nabla u}^2(x,\theta\hbar(x,t),t)\zeta\,dx\,dt=\frac18\mathcal I+J_{2b'}.
\end{multline*}
The term $\frac18\mathcal I$ can be absorbed into the left-hand side of \eqref{eq.NleSp2-2}. For the term $J_{2b'}$, we integrate in $\theta$, noting the change of variable $dx_n=\hbar(x,t)d\theta$, and get that
\begin{multline*}
    \int_{1/6}^6J_{2b'}\,d\theta\le C\iint_{\cSS}\abs{\nabla A}^2|\nabla u|^p x_n\,dX\,dt\\
    \le C\norm{|\nabla A|^2x_n}_{C}\norm{N(\nabla u \1_{\cSS})}_{L^p(\Delta_{2r})}^p.
\end{multline*}

The last modification is required for estimating $\iint_{\cSS}\abs{\partial_t(u_k^{p/2})}^2x_n^3\,dX\,dt$ as in \eqref{eq.dtukp/2}, where the PDE for $u_k$ is used. We start with Fubini's theorem as before, and write 
\begin{multline*}
    \iint_{\cSS}\abs{\partial_t(u_k^{p/2})}^2x_n^3\,dX\,dt =\iint_{\cSS}\abs{\partial_t u_k}^2|u_k|^{p-2}x_n^3\,dX\,dt\\
\lesssim \int_{2\Delta_r}{N}^{p-2}_a(u_k\,\1_{\cSS})\left(\iint_{\gamma_{a/2}(x,t)\cap \cSS}|\partial_t u_k|^2y_n^{-n+2}dYds\right) dx\,dt.
\end{multline*}
 Following the notation \eqref{DefArea}, we write for any set $E\subset\Rn$,
\begin{equation}\label{defeq.A}
    A_{a/2}\br{u_k\,\1_{E}}=\left(\iint_{\gamma_{a/2}(x,t)\cap E}|\partial_t u_k|^2y_n^{-n+2}dYds\right)^{1/2}.
\end{equation}
Using the arguments in \cite[Section 6]{DLP1} (or \eqref{A<S+cN.Lp}), we can get that for any $q>1$, there holds
\begin{multline}\label{eq.A_S(uk)}
    \norm{A_{a/2}(u_k\,\1_{\cSS})}_{L^q(\Delta_{2r})}
    \lesssim \br{\int_{\Delta_{2r}}\br{\iint_{\gamma_a(x,t)\cap\cS(\Delta_r,\hbar)}\abs{\nabla^2 u}^2y_n^{-n}dYds}^{q/2}\,dX\,dt}^{1/q}\\
    +\norm{\mu}_{\mathcal{C}}^{1/2}\norm{N_a(\nabla u \1_{\cS(\Delta_r,\hbar)})}_{L^q}.
\end{multline}
 Then using \eqref{eq.A_S(uk)} with $q=p$, and Young's inequality with exponent $p/(p-2)$, one sees that 
\begin{multline*}
    \iint_{\cSS}\abs{\partial_t(u_k^{p/2})}^2x_n^3\,dX\,dt\le C\norm{N_a(\nabla u\1_{\cSS})}_{L^p}^p\\ + C\int_{\Delta_{20ar}}\br{\iint_{\gamma_a(x,t)\cap\cS(\Delta_r,\hbar)}\abs{\nabla^2 u}^2y_n^{-n}dYds}^{p/2}\,dX\,dt.
\end{multline*}
Here, we don't need the constants to be small as the term $ \iint_{\cSS}\abs{\partial_t(u_k^{p/2})}^2x_n^3\,dX\,dt$ is already multiplied by $\varepsilon$. 
This completes the modifications.
\end{proof}

\begin{lemma}\label{lem.NleS2-GL} Fix $p\ge8$ such that $p/2$ is an even integer. Let $k\in\set{1,\dots,n-1}$, and let $\mathcal L$ be an operator as in Lemma \ref{lem.NleSp2-2} that satisfies $\norm{|\nabla A|x_n}_{L^\infty}+\||\dr_t A|^2x_n^3\|_{\mathcal{C}}<\infty$. Fix $a>0$ sufficiently large as in Lemma \ref{lem.NleSp2-2}.  
For any $\varepsilon\in(0,1)$, if $\||\nabla A|^2x_n\|_{C}+\norm{|\dr_k A|x_n}_{L^\infty}+\||\partial_{t}a_{nn}|^2x_n^3\|_{C}+\||\partial_ta_{nn}|x_n^2\|_{L^\infty}$ is sufficiently small (depending on $\varepsilon$), then \eqref{eq.gdLmd} holds with the constants of Lemma \ref{LGL},
$u_k=\partial_k u$, and the following good set:
\begin{multline*}
    E_{1,\beta}:=
\Big\{(x,t)\in {\mathbb R}^{n-1}\times\R:\, \tilde{N}_{2,a}(u_k^{p/2})>\beta,\\ \varepsilon^{-1/p}S_{p,b}(u_k)+{\varepsilon^{1/p}S_{2,b}(\nabla u)}\le(\gamma\beta)^{2/p}, \varepsilon^{1/p}\tilde N_{2,b}(\nabla u)<(\gamma\beta)^{2/p}\Big\}.
\end{multline*}
\end{lemma}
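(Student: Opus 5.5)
The plan is to repeat the proof of Lemma~\ref{LGL} almost verbatim, with $u_k=\dr_k u$ in place of the solution. The two analytic inputs are replaced as follows: Lemma~\ref{l6} by Lemma~\ref{lem.NleSp2-1}, and Lemma~\ref{S3:L8-alt1} by Lemma~\ref{lem.NleSp2-2}. We again cover $E_{2,\beta}=\{M(\tilde N_{2,a}(u_k^{p/2}))>\kappa\beta\}$ by Vitali balls $\Delta_i$ with $10\Delta_i$ meeting the complement. We set $F_i=E_{1,\beta}\cap\Delta_i$ and take the stopping graph $\hbar=\hbar_{\beta,a}(\eta_{L^p})$ built from the $L^p$ Whitney averages of $u_k$. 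The goal is $|F_i|\le C\gamma^2|\Delta_i|$.

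First we need the lower bound \eqref{eq.Mhrbig} on $F_i$. Lemma~\ref{lem.NleSp2-1} requires $S_{p,b}(u_k)+\norm{|\dr_kA|x_n}_{L^\infty}^{1/p}N_b(\nabla u)\le\gamma'\nu$ at the stopping level $\nu\sim\beta^{2/p}$. The first summand is controlled by the defining condition of $E_{1,\beta}$. For the second, we use that $N_b(\nabla u)\lesssim\tilde N_{2,b'}(\nabla u)$ for solutions by interior estimates, after enlarging the aperture, and that $\tilde N_{2,b}(\nabla u)<\varepsilon^{-1/p}(\gamma\beta)^{2/p}$ on $E_{1,\beta}$. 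We then impose $\norm{|\dr_kA|x_n}_{L^\infty}\le\varepsilon^{2}$, which is allowed since the smallness may depend on $\varepsilon$. This makes the extra term at most $\varepsilon^{1/p}(\gamma\beta)^{2/p}$, which is admissible. Given this, Lemma~\ref{S3:L6} applies, and so do the localization \eqref{eq.etasmall} (which needs $\kappa\le 2^{-n-1}$), the modified graph $\hbar_i$, and property \eqref{SiProp}; none of these use the PDE.

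Next we apply Lemma~\ref{lem:graphReduction} with $z=u_k^{p/2}$ and $c$ defined as before, followed by Lemma~\ref{lem.NleSp2-2}. This gives the analog of \eqref{eq.Fiest}, with two new kinds of terms on the right: $\varepsilon\|N_a(\nabla u\1_{\cS_i})\|^p_{L^p}$ and $\varepsilon\int(\iint_{\gamma_a\cap\cS_i}|\nabla^2u|^2y_n^{-n})^{p/2}$. Using \eqref{SiProp}, each point of $40a\Delta_i$ is dominated by some point of $F_i$. On $F_i$ the first term is bounded by $\varepsilon\tilde N_{2,b}(\nabla u)^p\le\gamma^2\beta^2$. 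The second is bounded by $\varepsilon S_{2,b}(\nabla u)^p\le\gamma^2\beta^2$, since $|\nabla^2u|=|\nabla(\nabla u)|$ is exactly what the square function of $\nabla u$ controls. The old terms are handled as in Lemma~\ref{LGL}: the $S_p$ term gives $\varepsilon\gamma^2\beta^2|\Delta_i|$, and the $\tilde N_{2,a}((u_k^{p/2}-c)\1)$ term uses $\tilde N_{2,2a}(u_k^{p/2}\1)\le\tilde N_{2,b}(\nabla u)^{p/2}$ together with $0\le c\lesssim\kappa\beta$.

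The main obstacle is the term $J_3$, that is, the oscillation of $c(t)=\int u_k^{p/2}\eta^2$. When we redo \eqref{eq:halfMomentEvolution} with the equation \eqref{eq.ukPDE}, an extra forcing term appears:
\[
-\iint(\dr_kA)\nabla u\cdot\nabla(m u_k^{m-1}\eta^2).
\]
The plan is to estimate it exactly as $I_1,I_2$ in the proof of Lemma~\ref{lem.NleSp2-1}. The part where the gradient falls on $u_k^{m-1}$ is bounded by $\norm{|\dr_kA|x_n}_\infty\,\mathcal E_{m}^{1/2}\,(\text{average of }|\nabla u|^{p})^{\cdots}$, and we use Hölder and Young to split it into $\mathcal E_p$, $\mathcal E_2^{p/2}$ and $\tilde N(\nabla u)^p$ pieces. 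The part where the gradient falls on $\eta$ is bounded by $\norm{|\dr_kA|x_n}_\infty N_b(\nabla u)^{m}$ on the box. After squaring, the contribution is $\lesssim\norm{|\dr_kA|x_n}_\infty^2\varepsilon^{-1}\gamma^2\beta^2$, which is acceptable by the smallness imposed above. The remaining terms $J_1,J_2$ use the spatial Poincaré inequality unchanged.

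Summing over $i$ then yields \eqref{eq.gdLmd}, with constants depending only on $n,p,a$ and ellipticity.
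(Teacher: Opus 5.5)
Your proposal is correct and follows the paper's proof essentially step for step: the stopping graph and Vitali covering of Lemma~\ref{LGL}, Lemma~\ref{lem.NleSp2-1} for the graph lower bound, Lemma~\ref{lem.NleSp2-2} for the graph integral, and the extra forcing term $-m\iint(\partial_kA)\nabla u\cdot\nabla(u_k^{m-1}\eta^2)$ in the moment identity. The paper leaves implicit your check that $\|x_n\partial_kA\|_{L^\infty}^{1/p}N_b(\nabla u)$ is admissible on the good set. It bounds the forcing term by $C_pK(U^{m-1}\mathcal T^{1/2}+U^m)$, with $K=\|x_n\partial_kA\|_{L^\infty}$, $U=\|\nabla u\|_{L^\infty(Q)}$ and $\mathcal T=r^{-n}\iint_Q|\nabla^2u|^2$, whereas you route the first piece through $\mathcal E_{p/2}$ and H\"older--Young; the difference is immaterial.

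One small correction, which is also loosely worded in the paper. In the term $\varepsilon\|\tilde N_{2,a}((u_k^{p/2}-c)\1_{\cS_i})\|_{L^2}^2$, the bound $0\le c\lesssim\kappa\beta$ only yields $\varepsilon\kappa^2\beta^2|\Delta_i|$, not $C\gamma^2\beta^2|\Delta_i|$. Instead, use $c\lesssim\fint_{B_i^0\times I_i^0}|u_k|^{p/2}$; the box lies in the cone of any point of $F_i$, so interior estimates and the good-set condition give $\varepsilon c^2\lesssim\varepsilon\tilde N_{2,b}(\nabla u)^p<\gamma^2\beta^2$. The bound $c\lesssim\kappa\beta$ is still needed, but only for Lemma~\ref{lem:graphReduction}.
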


\begin{proof}
Use the stopping graphs and covering from Lemma \ref{LGL}.
Lemma \ref{lem.NleSp2-1} supplies the graph lower bound, and
Lemma \ref{lem:graphReduction} now applies with the graph integral
bounded by Lemma \ref{lem.NleSp2-2}. Property \eqref{SiProp} controls
the resulting square and maximal terms by the defining good-set bounds.
For the interior oscillation term, the spatial Poincar\'e estimates
are unchanged. For its time part, set $v=\partial_k u$, $m=p/2$,
$Q=B_i^0\times I_i$, and $r=r_i$, and use the notation
$\mathcal E_q(v;Q)$ from the proof of Lemma \ref{LGL}. Write
\[
 U=\|\nabla u\|_{L^\infty(Q)},\qquad
 \mathcal T=r^{-n}\iint_Q|\nabla^2u|^2,\qquad
 K=\|x_n\partial_kA\|_{L^\infty}.
\]
The equation for $v$ is $v_t=\divg(A\nabla v)+\divg G$, where
$G=(\partial_kA)\nabla u$. Thus the identity
\eqref{eq:halfMomentEvolution} for $c(t)=\int v^m\eta^2$ has the
additional term
\[
 \mathcal R_k=-m\iint_{B_i^0\times[t,t']}
                    G\cdot\nabla(v^{m-1}\eta^2)\,dX\,ds.
\]
Since $x_n>2r$ on $Q$, we have $|G|\le CKU/r$.
The two terms obtained by differentiating the product satisfy
\begin{align*}
 |\mathcal R_k|
 &\le C_p\iint_Q |G|\big(|v|^{m-2}|\nabla v|\eta^2
                       +|v|^{m-1}|\eta\nabla\eta|\big)\\
 &\le C_pK\big(U^{m-1}\mathcal T^{1/2}+U^m\big).
\end{align*}
Here $|v|\le U$, $|\nabla v|\le|\nabla^2u|$, and the normalized
cutoff bounds give $\iint_Q\eta^2\lesssim r^2$ and
$\iint_Q|\eta\nabla\eta|\lesssim r$. Hence Young's inequality gives
\[
 |\mathcal R_k|^2\le C_pK^2\big(\mathcal T^{p/2}+U^p\big).
\]
The homogeneous part is estimated as in \eqref{eq.ct-c}, with
$\mathcal E_2(v;Q)\le\mathcal T$. Choosing $K^2\le\varepsilon$,
we obtain
\begin{equation}\label{eq:tangentialHalfOsc}
 \big(\operatorname*{osc}_{I_i}c\big)^2
 \le C_p\varepsilon^{-1}\mathcal E_p(v;Q)
             +C_p\varepsilon\big(\mathcal T^{p/2}+U^p\big).
\end{equation}
Property \eqref{SiProp} and the local interior estimates used above
bound these quantities on the good set by
\[
 \mathcal E_p(v;Q)\le C\varepsilon\gamma^2\beta^2,
 \qquad \mathcal T^{p/2}+U^p\le C\varepsilon^{-1}\gamma^2\beta^2.
\]
This proves the required $C\gamma^2\beta^2|\Delta_i|$ bound for
the interior term. Summing over $i$ proves \eqref{eq.gdLmd}.
\end{proof}

Apply Lemma \ref{lem:goodLambdaIntegration} as in
\eqref{eq:commonNpBound}, with $u_k=\partial_k u$ in the leading term
and $\nabla u$ in the two error terms. Lemma \ref{lem.NleS2-GL} gives,
after reparametrizing $\varepsilon$, the component estimate 
\begin{equation}
    \|{\tilde N}(\dr_ku)\|_{L^q(\pom)}\le C_\varepsilon\norm{S_p(\dr_k u)}_{L^q(\pom)}+ \varepsilon\br{\norm{\tilde N(\nabla u)}_{L^q(\pom)}+\norm{S_2(\nabla u)}_{L^q(\pom)}},
 \end{equation}   
provided we know a priori that $\|\tilde N(\dr_k u)\|_{L^q(\partial\Omega)}<\infty$.

\subsection{$N<S_p$ for $H$}
In this subsection we derive the following theorem for $H=\sum_{j=1}^na_{nj}\dr_ju$. 

\begin{theorem}\label{thm.NleSpH}
   Let $A$ be a matrix with bounded measurable coefficients that satisfies the ellipticity condition.
   Let $\LL=-\dr_t+\divg(A\nabla\cdot)$.
   For any $\varepsilon\in(0,1)$, there exists $\delta>0$ such that if $$\||\nabla A|^2x_n\|_{C}+\norm{|\nabla A|x_n}_{L^\infty}
   +\norm{|\nabla^2 A|x_n^2}_{L^\infty}+\||\partial_{t}A|^2x_n^3\|_{C}+\||\partial_t A|x_n^2\|_{L^\infty}<\delta,$$  then for any $p\ge8$ such that $p/2$ is an even integer, for any $a>0$, 
   $f\in  L^{q}(\pom)\cap  \Hdot^{-1/4}_{\partial_t-\Delta_x}(\partial\Omega)$,
   there exists a 
constant $C=C(n,p,a,\varepsilon, \lambda,\Lambda)>0$, such that for the energy solution $u$ to $\LL u=0$, 
$\partial^A_\nu u\Big|_{\partial\Omega}=f$,
there holds for all $q>p/2$
\begin{equation}
    \|{\tilde N}(H)\|_{L^q(\pom)}\le C\norm{S_p(H)}_{L^q(\pom)}+ \varepsilon\br{\norm{\tilde N(\nabla u)}_{L^q(\pom)}+\norm{S_2(\nabla u)}_{L^q(\pom)}},
 \end{equation}   
provided we know a priori that $\|\tilde N(H)\|_{L^q(\partial\Omega)}<\infty$. 
\end{theorem}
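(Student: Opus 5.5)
We will follow the same three-step template used for Theorem~\ref{thm.NlessS} and Theorem~\ref{thm.NlessSdT}: a stopping-graph lower bound, a graph integral estimate, and a good-$\lambda$ inequality. It is integrated via Lemma~\ref{lem:goodLambdaIntegration} and closed by absorption. Throughout, $w$ is replaced by the $L^p$ Whitney average of $|H|$. The essential new input is the PDE for $H$. For the lower-bound lemma (the analogue of Lemma~\ref{l6} and Lemma~\ref{lem.NleSp2-1}) we use \eqref{eq.Hpde1}, which has the form $-\partial_t H+\divg(A^T\nabla H)=F$. Substituting $\partial_t u=\divg(A\nabla u)$, we write $F=\divg(\varphi\nabla u)+\psi\nabla u$ as in Step 4 of Section~\ref{Sprf}, with $|\varphi|\lesssim|\nabla A|$ and $|\psi|\lesssim|\partial_tA|+|\nabla^2A|+|\nabla A|^2$.

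In the oscillation computation \eqref{eq.v'-v} for $v_{av}(\tau)=\int|H|^p\eta^2$, the divergence part of $F$ is handled exactly like the term $I$ in Lemma~\ref{lem.NleSp2-1}. It gives errors $\lesssim S_{p,b}(H)^p+\|x_n\nabla A\|_\infty N_b(\nabla u)^p$. The scalar part $\psi\nabla u\,|H|^{p-2}H\eta^2$ is bounded pointwise using $x_n^2|\psi|\lesssim \|x_n^2\partial_tA\|_\infty+\|x_n^2\nabla^2A\|_\infty+\|x_n\nabla A\|_\infty^2\lesssim\delta$. Since the time interval has length $\sim y_n^2$, this term is $\lesssim\delta N_b(\nabla u)^p$. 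Hence the good set must also contain the condition $\delta^{1/p}N_b(\nabla u)\le\gamma\nu$, and the lemma then goes through as before.

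For the graph estimate (the analogue of Lemma~\ref{S3:L8-alt1} and Lemma~\ref{lem.NleSp2-2}), we use \eqref{eq.Hpde2} instead, because it has $b_{nn}=1$. This lets us solve for $\partial^2_{x_nx_n}(H^{p/2})$ as in \eqref{S3:T8:E01-x}, with the time term now carrying the coefficient $a_{nn}^{-1}$; this is exactly the structure already handled in $II_7$--$II_{14}$. The right-hand side $T$ of \eqref{eq.Hpde2} splits into three kinds of terms. First, there are divergence terms $\partial_i(\cdots\nabla u)$ with coefficients $\lesssim|\nabla A|$. We treat these as $J_1,J_{21},\dots,J_{2b}$ in Lemma~\ref{lem.NleSp2-2}, integrating by parts off the coefficient, and obtain small multiples of $\|\tilde N_a((H^{p/2}-c)\1_{\cS})\|_2^2$, $\|N(\nabla u\1_\cS)\|_p^p$ and $\int(\iint|\nabla^2u|^2y_n^{-n})^{p/2}$. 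Second, there is the term $(\partial_ib_{in})\partial_tu$; we replace $\partial_t u$ by $\divg(A\nabla u)$, which produces another term of the first kind. Third, there are zero-order terms $(\partial_ta_{nn}^{-1})H$ and $(\partial_tb_{in})\partial_iu$; these are bounded by Cauchy--Schwarz using the Carleson norm of $|\partial_tA|^2x_n^3$. The one structural point to watch is an $i=n$ divergence with $b_{nn}=1$: we swap derivatives as in the treatment of $\mathcal B$ so that every integration by parts is tangential, and this is why \eqref{eq.Hpde2} is used here. The area term $\iint|\partial_t(H^{p/2})|^2x_n^3$ is controlled through \eqref{eq.sqrAAL2}/\eqref{sqrA} together with $N(x_n\nabla^2u)\lesssim N(\nabla u)$.

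The good-$\lambda$ step repeats Lemma~\ref{LGL}, including the interior oscillation of $c(t)=\int H^{p/2}\eta^2$. The extra term coming from $F$ is estimated as $\mathcal R_k$ in Lemma~\ref{lem.NleS2-GL}, and the scalar part contributes $\lesssim\delta U^{m}$. All error terms are controlled on the good set
\[
\{\varepsilon^{-1/p}S_{p,b}(H)+\varepsilon^{1/p}(S_{2,b}(\nabla u)+\tilde N_{2,b}(\nabla u))\le(\gamma\beta)^{2/p}\}.
\]
Lemma~\ref{lem:goodLambdaIntegration} then yields the claim, with the $\nabla^2u$ square function bounded by $S_2(\nabla u)$. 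We expect the main obstacle to be the graph estimate, specifically showing that every contribution of $T$ carries a small factor (a Carleson norm or an $L^\infty$ bound of size $\delta$). The terms with three derivatives on $u$ have already cancelled, but terms with two derivatives on $u$ multiplied by $\nabla A$ remain, and these must be paired with $|\nabla A|^2x_n$ rather than merely bounded.
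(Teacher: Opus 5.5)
Your three-step architecture matches the paper's. The lower-bound and good-$\lambda$ steps are fine, and integrating the divergence part of $F$ by parts, instead of bounding $|F|$ pointwise as in Lemma~\ref{lem.NleSp3-1}, also works. The gap is your ``third kind'' of terms in the graph estimate. After substituting $T$ from \eqref{eq.Hpde2}, you must control, over the sawtooth region $\cS$,
\[
\iint (H^{p/2}-c)H^{p/2-1}(\partial_t b_{in})\,\partial_i u\,x_n\zeta
\quad\text{and}\quad
\iint (H^{p/2}-c)H^{p/2}\,\partial_t(a_{nn}^{-1})\,x_n\zeta .
\]
These integrands contain no derivative of $H$ or $u$. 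Cauchy--Schwarz against $|\partial_tA|^2x_n^3\,dX\,dt$ leaves a factor like $\iint_{\cS}|H|^{p-2}|\nabla u|^2x_n^{-1}$. Using $|\partial_tA|x_n^2<\delta$ instead leaves $\delta\iint_{\cS}|H^{p/2}-c|\,|\nabla u|^{p/2}x_n^{-1}$. In both cases the weight $x_n^{-1}\,dx_n$ produces a factor $\log(r/\hbar)$, which is not uniform in the graph. Put differently, $|\partial_tA|^2x_n^3$ being Carleson, even with small norm, does not make $|\partial_tA|x_n\,dX\,dt$ Carleson. So no absolute-value estimate of these integrands can close. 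Your pointwise treatment of such terms is legitimate in the lower-bound lemma and in the oscillation of $c(t)$ only because there the integration is over a single Whitney region.

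The missing step is the one the paper uses for $J_2,J_3$, and already for $II_8$ in Lemma~\ref{S3:L8-alt1}: write $x_n=\frac12\partial_n(x_n^2)$ and integrate by parts in $x_n$. If $\partial_n$ lands on $H^{p/2}-c$, $H^{p/2-1}$ or $\nabla_Tu$, you now have a gradient of the solution to pair with the Carleson measure $|\partial_tA|^2x_n^3$. The $\partial_n\zeta$ and graph-boundary terms are handled as before. If $\partial_n$ lands on the coefficient, you get $\partial_t\partial_nb_{in}$ or $\partial_n\partial_t(a_{nn}^{-1})$. Neither has a tangential derivative to move, so you must integrate by parts in $t$. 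This produces $\partial_tH$ and $\partial_t\nabla_Tu$ with weight $x_n^2$, which are controlled by the area-type bound \eqref{eq.SpHt'} and by $A(\nabla_Tu)$, with the small factor $\||\partial_nA|^2x_n\|_C$. It also produces a $\partial_t\zeta$ term and a boundary term on the graph of $\theta\hbar$, which is absorbed using $|\nu_t|\,dS\le a^{-2}(2x_n)^{-1}dx\,dt$. So the real obstacle is these time-derivative-of-coefficient terms, not the $\nabla A\cdot\nabla^2u$ terms you single out. Note also that replacing $\partial_tu$ by $\divg(A\nabla u)$ in your second kind produces a commutator $\nabla(\partial_ib_{in})\cdot A\nabla u$ that is not of your first kind. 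It is harmless, because $b_{nn}=1$ leaves a tangential $\partial_i$ to integrate by parts; alternatively, use $|\partial_tu|\lesssim|\nabla^2u|+|\nabla A||\nabla u|$ directly, as the paper does.
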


As before, we only highlight the modifications needed from Subsections~\ref{S.NleSp1} and \ref{S.NleSp2}. 
In this subsection, $w$ is  the $L^p$ average of $H$, that is, for $(X,t)=(x,x_n,t)\in\om$,
\begin{equation}\label{def.w3}
w(X,t)=\left(\fiint_{Q_{x_n/2}(X,t)}|H|^p\,dY\,ds\right)^{1/p}.
\end{equation}

\begin{lemma}\label{lem.NleSp3-1} 
For any $a>0$ there exists $b_0=b_0(a)>a$ and $\gamma_0=\gamma_0(a)>0$ such that the following holds. 
Having fixed an arbitrary $\nu>0$, $b\ge b_0$, $\gamma\le\gamma_0$, and $\varepsilon\in(0,1)$, for each point $(x,t)\in\partial\Omega$ from the set 
\begin{equation}
\big\{(x,t):\, N_{a}(H)(x,t)>\nu, \,S_{p,b}(H)+\varepsilon^{1/p} S_{2,b}(\nabla u)(x,t)+\delta(\varepsilon,A) N_b(\nabla u)\leq\gamma\nu\big\},
\end{equation}
where $\delta(\varepsilon,A):=\varepsilon^{-1/(p(p-1))}\norm{|\nabla A|x_n}_{L^\infty}^{1/(p-1)}+ \norm{|\nabla A|x_n}_{L^\infty}^{2/p}+\norm{|\dr_t A|x_n^2}_{L^\infty}^{1/p}+\norm{|\nabla^2A|x_n^2}_{L^\infty}^{1/p}$,
there exists a boundary ball $R$ with $(x,t)\in K_0 R$ with $K_0=K_0(n,p,a)>1$ and such that
\begin{equation}
\big|w\big(z,\hbar_{\nu,a}(w)(z,\tau),\tau\big)\big|>\nu/{8}\,\,\text{ for all }\,\,(z,\tau)\in R.
\end{equation}
\end{lemma}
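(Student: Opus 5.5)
The plan is to follow the proof of Lemma~\ref{l6}, as already adapted for tangential derivatives in Lemma~\ref{lem.NleSp2-1}. Only the oscillation estimate for the localized $L^p$ moment needs new work. First, Lemma~\ref{S3:L5}(iii) gives a stopping point $(y,y_n,s)\in\partial\gamma_a(x,x_n,t)$ with $w(y,y_n,s)=\nu$. With the same normalized spatial cutoff $\eta$ supported in $\tilde B$, set $v_{av}(\tau)=\int|H|^p\eta^2\,dX$. The pigeonhole argument is purely measure theoretic, so it is unchanged and gives $v_{av}(\tau)>(\nu/2)^p$ for some $\tau\in I_{y_n}(s)$. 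The final spatial comparison step, passing from $v_{av}$ to $w$ at points of $Q_{\delta y_n}(y,y_n,s)$, uses only the fundamental theorem of calculus in space and the term $S_{p,b}(H)$. It also carries over verbatim. So everything reduces to the analogue of \eqref{claim.v_av2}: the bound $|v_{av}(t_0')-v_{av}(t_0)|\le\frac12\sup v_{av}+C(\gamma\nu)^p$ on $I_{y_n}(s)$.

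For this step I will use the equation \eqref{eq.Hpde1}, namely $-\partial_tH+\divg(A^T\nabla H)=F$. Testing with $p|H|^{p-2}H\eta^2$ reproduces the two terms of \eqref{eq.v'-v}, with $A$ replaced by $A^T$. These are bounded by $\frac14\sup v_{av}+CS_{p,b}(H)^p$ exactly as before. The new contribution is $p\iint F|H|^{p-2}H\eta^2$. I split $F$ according to \eqref{eq.Hpde1}, writing the three-derivative-free combination as a divergence of $\nabla A\,\nabla u$-type terms plus lower-order pieces. In the divergence part $\sum_{j<n}\partial_i(\dots)$, one spatial derivative is integrated by parts onto $|H|^{p-2}H\eta^2$. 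The pieces $(\partial_ia_{ni})\partial_tu$ and $(\partial_ta_{ni})\partial_iu$ are treated directly. In the first of these I replace $\partial_tu$ by $\divg(A\nabla u)$ and integrate by parts once more.

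This produces three types of terms.

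(a) Terms $\iint|\nabla A||\nabla^2u|\,|H|^{p-1}\eta^2$, or $\iint|\nabla A||\nabla u|\,|\nabla H|\,|H|^{p-2}\eta^2$. For the second kind I apply Cauchy–Schwarz against $S_{p,b}(H)^{p/2}$ and obtain $\|x_n\nabla A\|_\infty S_{p,b}(H)^{p/2}N_b(\nabla u)^{p/2}$. For the first kind I use Cauchy–Schwarz with $|H|^{p-2}\le N_b(\nabla u)^{p-2}$ and the Caccioppoli control of $\nabla^2u$ by $S_{2,b}(\nabla u)$. This gives $\|x_n\nabla A\|_\infty N_b(\nabla u)^{p-1}S_{2,b}(\nabla u)$. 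Young's inequality with weights $\varepsilon$ then bounds it by $\varepsilon S_{2,b}(\nabla u)^p+\varepsilon^{-1/(p-1)}\|x_n\nabla A\|_\infty^{p/(p-1)}N_b(\nabla u)^p$. This matches the first summand of $\delta(\varepsilon,A)$ raised to the power $p$.

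(b) Terms with $|\nabla A|^2$, $|\nabla^2A|$ or $|\partial_tA|$ times $|\nabla u|\,|H|^{p-1}$. Using $|H|\lesssim|\nabla u|$ and the pointwise bounds $x_n^2(|\nabla A|^2+|\nabla^2A|+|\partial_tA|)$ on a cylinder of volume $\sim y_n^{n+2}$, these give $(\|x_n\nabla A\|_\infty^2+\|x_n^2\nabla^2A\|_\infty+\|x_n^2\partial_tA\|_\infty)N_b(\nabla u)^p$. This is controlled by the remaining summands of $\delta(\varepsilon,A)^p$.

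(c) Terms where a derivative falls on $\eta$. As in $I_2$ of Lemma~\ref{lem.NleSp2-1}, these are bounded by $\|x_n\nabla A\|_\infty N_b(\nabla u)^p$.

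Collecting (a)–(c), the hypothesis $S_{p,b}(H)+\varepsilon^{1/p}S_{2,b}(\nabla u)+\delta(\varepsilon,A)N_b(\nabla u)\le\gamma\nu$ makes all error terms at most $C\gamma^p\nu^p$. The rest of the argument then closes as in Lemma~\ref{l6}, with $\gamma$ small.

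The main obstacle is the bookkeeping in $F$. I must check that every piece can be arranged with at most one factor of $\nabla^2u$ or $\nabla H$, so that no $\nabla^3u$ or $\partial_t\nabla u$ survives after the integrations by parts; the cancellation noted after \eqref{boundsF} is what makes this possible. I also need $N_b(\nabla u)$ to dominate $|\nabla u|$ and $|H|$ pointwise on the cylinder. The latter holds by interior regularity on enlarged Whitney cylinders inside $\gamma_b(x,t)$ for $b\ge b_0(a)$.
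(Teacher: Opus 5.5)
Your skeleton is the paper's: reduce to the proof of Lemma~\ref{l6}, test \eqref{eq.Hpde1} with $p|H|^{p-2}H\eta^2$, and estimate only the new forcing term. The difference is in how you treat $F$. The paper never integrates by parts in $F$. It uses the pointwise bound \eqref{boundsF} together with $|\dr_t u|\lesssim|\nabla^2u|+|\nabla A||\nabla u|$. Only two error terms then appear: $I_1=\iint|\nabla A||\nabla^2u||H|^{p-1}\eta^2$ and $I_2=\iint(|\nabla A|^2+|\dr_tA|+|\nabla^2A|)|\nabla u||H|^{p-1}\eta^2$. These are exactly your (a, first kind) and (b), and the four summands of $\delta(\varepsilon,A)$ are tailored to them. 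Because $\|x_n^2\nabla^2A\|_{L^\infty}$ and $\|x_n^2\dr_tA\|_{L^\infty}$ are already in the hypothesis, nothing forces you to move derivatives off $F$, so the bookkeeping obstacle you name does not arise.

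Your integrations by parts do, however, create the cutoff terms (c), and as you justify them the argument does not close. You bound (c) linearly by $\|x_n\nabla A\|_{L^\infty}N_b(\nabla u)^p$, by analogy with $I_2$ in Lemma~\ref{lem.NleSp2-1}. That analogy fails because there the hypothesis contains $\|x_n\dr_kA\|_{L^\infty}^{1/p}N_b(\nabla u)$. Here $\|x_n\nabla A\|_{L^\infty}$ enters $\delta(\varepsilon,A)^p$ only as $\varepsilon^{-1/(p-1)}\|x_n\nabla A\|_{L^\infty}^{p/(p-1)}$ and as $\|x_n\nabla A\|_{L^\infty}^2$. Both are smaller than $\|x_n\nabla A\|_{L^\infty}$, and negligible relative to it, once $\|x_n\nabla A\|_{L^\infty}\ll\min\{\varepsilon,1\}$. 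That is precisely the regime in which the lemma is used in Lemma~\ref{lem.NleS3-GL}. So ``collecting (a)--(c)'' is not justified by term-by-term matching.

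There are easy repairs.
\begin{enumerate}
\item Keep the factor $\eta$ with $|H|^{p/2}$ in Cauchy--Schwarz, as in \eqref{eq.v'-v2}. This gives $\|x_n\nabla A\|_{L^\infty}N_b(\nabla u)^{p/2}(\sup v_{av})^{1/2}\le\frac18\sup v_{av}+C\|x_n\nabla A\|_{L^\infty}^2N_b(\nabla u)^p$.
\item Observe that on the half-space $\nabla A\to0$ as $x_n\to\infty$. Hence $|\nabla A(x,x_n,t)|\le\int_{x_n}^\infty|\dr_n\nabla A|\,ds\le\|x_n^2\nabla^2A\|_{L^\infty}/x_n$, so $\|x_n\nabla A\|_{L^\infty}\le\delta(\varepsilon,A)^p$.
\item Bound $F$ pointwise, as the paper does.
\end{enumerate}

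If you keep the integration by parts in the divergence part of $F$, you must first commute derivatives so that the $A\,A\,\nabla^2u$ terms cancel, as in the decomposition $T_{\mathrm{sp}}=\sum_{j<n}\dr_jF_j+\dr_iG_i$ in the proof of Lemma~\ref{lem.SH}. Integrating the outer $\dr_i$ by parts directly would leave $|\nabla^2u||\nabla H||H|^{p-2}$ with no small factor. Finally, $S_{2,b}(\nabla u)$ is by definition the conical integral of $|\nabla^2u|^2$, so (a) needs no Caccioppoli inequality.
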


\begin{proof}
    We adopt the notation in Lemma~\ref{l6} (and Lemma~\ref{lem.NleSp2-1}), keeping in mind that $u_k$ is replaced by $H$. We use the PDE \eqref{eq.Hpde1} for $H$, and so we need to modify our estimates on the function \(
v_{av}(\tau):=\int_{\Rn}|H|^{p}(X,\tau)\eta^2(X)\,dX\).
We have 
\begin{multline*}
    v_{av}(t_0')-v_{av}(t_0)
   =-2p\iint_{\mathbb R^n\times[t_0,t_0']}(A^T\nabla H\cdot\nabla\eta)|H|^{p-2}H\eta\,dX\,dt\\
-p(p-1)\iint_{\mathbb R^n\times[t_0,t_0']}(A^T\nabla H\cdot\nabla H)|H|^{p-2}\eta^2\,dX\,dt
+p\,I,
\end{multline*}
where $I:=-\iint_{\mathbb R^n\times[t_0,t_0']}F|H|^{p-2}H\eta^2dX\,dt$ and $F$ is as defined in \eqref{eq.Hpde1}. We use the bound \eqref{boundsF} on $F$ and get that 
\[
    |I|\lesssim \iint_{\mathbb R^n\times[t_0,t_0']}\br{ |\nabla A||\partial_t u|+|\partial_tA||\nabla u|+|\nabla A|^2|\nabla u|+|\nabla A||\nabla ^2 u|+|\nabla ^2 A||\nabla u|}|H|^{p-1}\eta^2.
\]
Since $|\dr_t u| \sim |\nabla^2 u|+\abs{\nabla A}\abs{\nabla u}$, we can simplify the above as 
\begin{multline*}
    |I|\lesssim  \iint_{\mathbb R^n\times[t_0,t_0']}|\nabla A||\nabla^2u||H|^{p-1}\eta^2+ \iint_{\mathbb R^n\times[t_0,t_0']}\br{ |\nabla A|^2+|\partial_tA|+|\nabla^2A|}|\nabla u||H|^{p-1}\eta^2\\
    =:I_1+I_2.
\end{multline*}
By Cauchy-Schwarz, the property of $\eta$, and the fact that $Q_{4y_n/5}\subset\gamma_b(x,t)$,
\begin{multline*}
    I_1\lesssim \br{ \iint_{\mathbb R^n\times[t_0,t_0']}|\nabla^2u|^2|H|^{p-2}\eta^2}^{1/2} \br{\iint_{\mathbb R^n\times[t_0,t_0']}|\nabla A|^2|H|^{p}\eta^2}^{1/2}\\
    \le C\norm{|\nabla A|x_n}_{L^\infty}S_2(\nabla u)(x,t)N(H)^{p-1}(x,t)\\
    \le \varepsilon S_2(\nabla u)^p(x,t)+ C\varepsilon^{-1/(p-1)}\norm{|\nabla A|x_n}_{L^\infty}^{p/(p-1)}N_b(\nabla u)^{p}(x,t),
\end{multline*}
using the observation that $|H|\lesssim|\nabla u|$ in the last inequality. 
For $I_2$, we use again that $|H|\lesssim|\nabla u|$ to get 
\[
I_2\le C\br{\norm{|\nabla A|x_n}_{L^\infty}^2+\norm{|\dr_t A|x_n^2}_{L^\infty}+\norm{|\nabla^2A|x_n^2}_{L^\infty}}N_b(\nabla u)^p(x,t).
\]
Since $\varepsilon^{1/p} S_{2,b}(\nabla u)(x,t)+\delta(\varepsilon,A) N_b(\nabla u)\leq\gamma\nu$, we can proceed as in the proof of Lemma~\ref{l6} and get \[
\left| v_{av}(t_0')-v_{av}(t_0)\right|\le \frac12\sup_{\tau\in I_{y_n}(s)}v_{av}(\tau)+ C(\gamma\nu)^p.
\]
The rest of the proof proceeds as in the proof of Lemma~\ref{l6}. 
\end{proof}

For $w$ in \eqref{def.w3}, the same graph-maximal conclusion follows
from Lemmas \ref{lem.NleSp3-1} and \ref{S3:L6} under the condition
\[
 N_a(w)>\nu,\qquad
 S_{p,b}(H)+\varepsilon^{1/p}S_{2,b}(\nabla u)
 +\delta(\varepsilon,A)N_b(\nabla u)\le\gamma\nu,
\]
with $\delta(\varepsilon,A)$ as in Lemma \ref{lem.NleSp3-1}.

The following lemma is the analog of Lemma~\ref{S3:L8-alt1} and Lemma~\ref{lem.NleSp2-2}. 
\begin{lemma}\label{lem.NleSp3-2} 
Fix $p\ge8$ such that $p/2$ is an even integer.
Let $\Omega={\mathbb R}^n_+\times\R$ and let ${\mathcal L}=-\partial_t+\divg(A\nabla\cdot)$ be a parabolic operator. 
Let $u$ be an energy solution to $\mathcal Lu=0$ in $\Omega$ and let $H=\sum_{j=1}^na_{nj}\dr_{j}u$. For a fixed (sufficiently large) $a>0$ determined below, consider an arbitrary function $\hbar:{\mathbb R}^{n-1}\times\R\to \mathbb R$ such that it satisfies
the estimates \eqref{Eqqq-5}, \eqref{derh} and $\hbar\ge 0$. 
Then 
we have the following:\vglue1mm

For any $\varepsilon>0$ there exists $\delta>0$
such that if
$\norm{|\nabla A|x_n}_{L^\infty}<\infty$ and $$
\||\nabla A|^2x_n\|_{C}+\||\partial_{t}A|^2x_n^3\|_{C}+\||\partial_ta_{nn}|x_n^2\|_{L^\infty}<\delta,
$$ then
for all parabolic surface balls $\Delta_r\subset{\mathbb R}^{n-1}\times\R$ of radius $r$ such that at least one point of $\Delta_r$
satisfies $\hbar(x,t)\le 2r$, we have the following estimate for an arbitrary $c\ge 0$:
\begin{multline}\label{eq.NleSp3-2}
\int_{1/6}^6\int_{\Delta_r}\big|H^{p/2} \big(x,\theta\hbar(x,t),t\big)-c\big|^2\,dx\,dt\,d\theta
\leq C(1+\varepsilon^{-1})\iint_{\cS(\Delta_r,\hbar)}\abs{\nabla H}^2|H|^{p-2}x_n\, dx_ndtdx\\
+\varepsilon\br{\|\tilde{N}_{2,a}\br{(H^{p/2}-c)\1_{\cS(\Delta_r,\hbar)}}\|_{L^2}^2
+\|{N}_{a}(\nabla u\,\1_{\cS(\Delta_r,\hbar)})\|_{L^p}^p}\\+
\varepsilon  \int_{\Delta_{2r}}\br{\iint_{\gamma_a(x,t)\cap\cS(\Delta_r,\hbar)}\abs{\nabla^2 u}^2y_n^{-n}dYds}^{p/2}dX\,dt
+\frac{C}{r}\iint_{\mathcal{K}_\varepsilon}|H^{p/2}-c|^{2}\,dX\,dt,
\end{multline}
for some $C\in(0,\infty)$ that only depends on $a,\Lambda,n$ but not on $u$, $H$, $c$, $\varepsilon$ or $\Delta_r$. 
Here, \[\cS(\Delta_r,\hbar):=\set{(x,x_n,t):\, (x,t)\in\Delta_{2r} \text{ and } \frac{\hbar(x,t)}{12}<x_n<18r},\]
where 
$\mathcal K_{\varepsilon}:=\cS(\Delta_r,\hbar)\cap\set{(x,x_n,t): x_n>\varepsilon r}$.
The cones used to define the nontangential 
maximal functions in this lemma have vertices on $\partial\Omega$.
\end{lemma}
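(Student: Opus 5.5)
We follow the proof of Lemma~\ref{S3:L8-alt1}, as modified in Lemma~\ref{lem.NleSp2-2}, with $u_k$ replaced by $H$. We use the PDE \eqref{eq.Hpde2} for $H$, normalized so that $b_{nn}=1$. Since the operator is $-a_{nn}^{-1}\partial_t+\tilde L$ with $\tilde L=\partial_i(b_{ij}\partial_j\cdot)$ and $b_{ij}=a_{ji}/a_{nn}$, we have
\[
\partial^2_{x_nx_n}(H^{p/2})=a_{nn}^{-1}\partial_t(H^{p/2})-\sum_{(i,j)\ne(n,n)}\partial_i(b_{ij}\partial_j H^{p/2})-\partial_nb_{nn}\partial_n H^{p/2}+\tfrac p2(\tfrac p2-1)H^{p/2-2}b\nabla H\cdot\nabla H+\tfrac p2H^{p/2-1}T .
\]
We start from $\mathcal I$ and perform the same integration by parts in $x_n$ to obtain $I,\dots,V$. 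Every term except the one involving $T$ is handled exactly as before. The terms $\mathcal B,\mathcal C,\mathcal D$ and $II_7$ now carry the coefficient $b_{ij}$ instead of $a_{ij}/a_{nn}$. The derivatives of $b$ are bounded by those of $A$, so the Carleson smallness $\||\nabla A|^2x_n\|_C<\delta$ and the smallness of $\||\partial_ta_{nn}|x_n^2\|_\infty$ and $\||\partial_tA|^2x_n^3\|_C$ still produce the first four groups of terms in \eqref{eq.NleSp3-2}.

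The time-derivative term inside $II_{11}$ requires an estimate of $\iint|\partial_t H|^2|H|^{p-2}x_n^3$. We bound it by $\int N^{p-2}(H\1)\,A^2(H\1)$. The area function estimate \eqref{sqrA}, localized to $\mathcal S$, controls $A(H)$ by $S(H)$ (hence by $\nabla^2u$, since $|\nabla H|\lesssim|\nabla^2u|+|\nabla A||\nabla u|$) plus Carleson terms times $N(\nabla u)$. After Young's inequality with exponents $p/(p-2)$ and $p/2$, this gives the terms $\varepsilon\|N(\nabla u\1)\|_p^p$ and $\varepsilon\int(\iint|\nabla^2u|^2y_n^{-n})^{p/2}$.

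The new contribution is $J:=p\iint(H^{p/2}-c)H^{p/2-1}T\,x_n\zeta$, where $T$ is given in \eqref{eq.Hpde2}. We split $T$ into three parts.

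\emph{Zeroth-order part:} $(\partial_ib_{in})\partial_tu+(\partial_ta_{nn}^{-1})H-(\partial_tb_{in})\partial_iu$. Replace $\partial_tu$ by $\divg(A\nabla u)$, which is bounded by $|\nabla^2u|+|\nabla A||\nabla u|$. Cauchy--Schwarz then gives factors such as $\big(\iint|\nabla A|^2(H^{p/2}-c)^2x_n\big)^{1/2}\big(\iint|\nabla^2u|^2|H|^{p-2}x_n\big)^{1/2}$. The first factor is at most $\|\mu\|_C^{1/2}\|\tilde N((H^{p/2}-c)\1)\|_2$. For the second, use Fubini together with H\"older's inequality with exponent $p/(p-2)$, which bounds it by $N(\nabla u)^{p-2}$ times the square integral of $\nabla^2u$, exactly as for $J_1$ in Lemma~\ref{lem.NleSp2-2}. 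The terms involving $\partial_tA$ are treated like $II_{12}$, using $\||\partial_tA|^2x_n^3\|_C$.

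\emph{Divergence part:} the sum over $j<n$ of the terms $\partial_i(b_{ij}\partial_j(a_{nk}\partial_ku))-\partial_k(b_{kn}\partial_j(a_{ji}\partial_iu))$. These terms have no third derivatives of $u$, because of the cancellation. We write them as divergences $\partial_i G_i$ with $G_i=\sum_j b_{ij}(\partial_ja_{nk})\partial_ku-b_{in}(\partial_ja_{ji})\partial_ju$ up to a term of the form $(\nabla b)(\nabla A)\nabla u$, so that $|G|\lesssim|\nabla A||\nabla u|$. Integrate by parts in $x_i$, mimicking $J_2=\sum J_{2j}+J_{2b}$. The bulk terms contain $|\nabla A||\nabla u|$ times $\nabla(H^{p-1})x_n$, $(H^{p/2}-c)H^{p/2-2}\nabla H\,x_n$, $\nabla\zeta\,x_n$, or $\partial_n(x_n)=1$. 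They are bounded by $C\iint|\nabla H|^2|H|^{p-2}x_n$, plus $\|\mu\|_C\|N(\nabla u\1)\|_p^p$, plus $\varepsilon\|\tilde N(H^{p/2}-c)\|_2^2$. The boundary term on the graph of $\theta\hbar$ has $|\nu_i|\,dS\lesssim\hbar\,dx\,dt$. As in $J_{2b}$, it splits into $\tfrac18\mathcal I$, which is absorbed, and a term that becomes $C\|\mu\|_C\|N(\nabla u\1)\|_p^p$ after integration in $\theta$.

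\textbf{Main obstacle: the $i=n$ divergence terms.} When $i=n$, integrating by parts in $x_n$ produces a term with $\partial_n(x_n)=1$, which loses a power of $x_n$. It also produces a boundary term on the graph with $\nu_n\approx1$, so the boundary weight is $x_n$ rather than $\hbar\,|\nabla\hbar|$. This is exactly why the $b_{nn}=1$ normalization and the restriction $j<n$ matter. Since $j<n$ is a tangential direction, we first commute $\partial_n$ with $\partial_j$, as in the treatment of $\mathcal B_j$, and then integrate by parts in $x_j$ instead. The commutators produce only $(\nabla A)^2\nabla u$ and $\nabla^2A\,\nabla u$ terms, which are controlled by the smallness of $\|x_n^2\nabla^2A\|_\infty$ and $\|x_n\nabla A\|_\infty$ (available from \eqref{D2Bbdd}). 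Collecting all terms, absorbing the $\tfrac14\mathcal I$ pieces, and integrating in $\theta$ yields \eqref{eq.NleSp3-2}.
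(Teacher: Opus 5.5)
Your treatment of the $b_{nn}=1$ normalization, the terms $\mathcal B,\mathcal C,\mathcal D,II_7$, the $(\dr_ib_{in})\dr_tu$ term and the tangential divergence terms follows Lemma~\ref{lem.NleSp2-2} and is fine. The genuine gap is in the two pieces of $T$ that carry a time derivative of a coefficient, $(\dr_ta_{nn}^{-1})H$ and $-(\dr_tb_{in})\dr_iu$. Inside $J=p\iint(H^{p/2}-c)H^{p/2-1}T\,x_n\zeta$ they come with only one power of $x_n$ and with no gradient.

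You propose to handle them ``like $II_{12}$'', that is, by Cauchy--Schwarz against the Carleson measure $|\dr_tA|^2x_n^3\,dX\,dt$. This leaves the factor $\iint_{\cS}|H|^{p-2}|\nabla u|^2x_n^{-1}$ (or, splitting differently, $\iint_{\cS}|H^{p/2}-c|^2x_n^{-1}$). By Fubini, this is only bounded by a quantity like $\int_{\Delta_{2r}}N(\nabla u\1_{\cS})^p\log(r/\hbar)$. But $\hbar$ may be arbitrarily small compared with $r$, and the constant in \eqref{eq.NleSp3-2} must not depend on $\hbar$. Only the part $x_n>\varepsilon r$ can be moved into the $\mathcal K_\varepsilon$ term. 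Pointwise smallness does not rescue this either: $|\dr_ta_{nn}|x_n^2<\delta$, which is assumed only for $a_{nn}$, still leaves the weight $x_n^{-1}$. The term $II_{12}$ works only because it comes out of the integration by parts $x_n=\frac12\dr_n(x_n^2)$ applied to $II_8$, which supplies the extra $x_n$ and a gradient. Your $J$-terms are at the $II_8$ stage, not the $II_{12}$ stage.

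What is missing is that same device applied to these terms (the paper's $J_2,J_3$). Write $x_n=\frac12\dr_n(x_n^2)$ and integrate by parts in $x_n$. When $\dr_n$ lands on $H^{p/2}-c$, $H^{p/2-1}$, $\dr_iu$ or $\zeta$, or produces the graph boundary term, you get terms like those of the divergence part, with $|\nabla A|$ replaced by $|\dr_tA|x_n$. When it lands on the coefficient, however, you get $\dr_n\dr_tb_{in}$ (resp.\ $\dr_n\dr_ta_{nn}^{-1}$), which has no Carleson control. You must then integrate by parts once more in $t$. This puts $\dr_t$ on $H$, $H^{p/2-1}$, $\nabla_Tu$ and $\zeta$, and creates a $\nu_t$-boundary term on the graph, which is absorbed into $\mathcal I$ using $|\nu_t|\,dS\le\frac{a^{-2}}{2x_n}\,dx\,dt$ with $a$ large. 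The resulting bulk terms require bounds on $\iint|\dr_tH|^2|H|^{p-2}x_n^3$ and $\iint|H|^{p-2}|\nabla_T\dr_tu|^2x_n^3$, which the paper obtains from the area function of $\nabla u$ as in \eqref{eq.SpHt'}. None of this appears in your outline.

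By contrast, the obstacle you single out is milder than you suggest.
\begin{itemize}
\item The $\nu_n$ boundary term is harmless: on the graph $|x_n\nu_n|\,dS=\theta\hbar\,dx\,dt$, exactly as for the term $IV$.
\item The $i=n$ bulk term can be avoided altogether by expanding the divergence part with the product rule and integrating by parts only in a tangential $x_j$ when two derivatives fall on coefficients, as the paper does.
\item Your commutator fix can also work, but its justification is wrong. The commutators are $(\nabla A)^2\nabla u$ and $\nabla A\,\nabla^2u$, since the mixed derivatives $\dr_n\dr_ja_{nk}$ cancel. They must be controlled through $\||\nabla A|^2x_n\|_C$ and the $\nabla^2u$ square function. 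Smallness of $\|x_n\nabla A\|_\infty$ and $\|x_n^2\nabla^2A\|_\infty$ is not a hypothesis of the lemma, and pointwise bounds would in any case again leave the weight $x_n^{-1}$.
\item Estimating $A(H)$ via \eqref{sqrA} brings in $|\nabla^2A|^2x_n^3$, which is not assumed. Use $|\dr_tH|\lesssim|\dr_tA||\nabla u|+|\dr_t\nabla u|$ instead.
\end{itemize}
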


\begin{proof}
    We need to control the term 
    \[
    II=2 \iint_{\mathcal S(q,\tau,r,r_0,\theta\hbar)}(H^{p/2}-c)\dr_{nn}^2(H^{p/2}-c)x_n\zeta\, dX\,dt.
    \]
    We write 
    \begin{align*}
        \dr_{nn}^2(H^{p/2})&= \divg\br{B\nabla (H^{p/2})} - \sum_{(i,j)\neq (n,n)}\dr_i(b_{ij}\dr_j(H^{p/2}))\\
        &=\tilde{\mathcal{L}}(H^{p/2})+\frac{1}{a_{nn}}\dr_t(H^{p/2})- \sum_{(i,j)\neq (n,n)}\dr_i(b_{ij}\dr_j(H^{p/2})),
    \end{align*}
    where $\tilde{\mathcal{L}}= -a_{nn}^{-1}\dr_t+\divg(B\nabla\cdot)$. We have 
    \begin{align*}
         \tilde{\mathcal{L}}(H^{p/2})
         &=\frac{p}2H^{p/2-1}\left[-a_{nn}^{-1}\partial_t H+\divg(B\nabla H)\right]+\frac{p}2\left(\frac{p}2-1\right)H^{p/2-2}B\nabla H\cdot\nabla H\\
         &=\frac{p}2H^{p/2-1}T+\frac{p}2\left(\frac{p}2-1\right)H^{p/2-2}B\nabla H\cdot\nabla H,
    \end{align*}
    by \eqref{eq.Hpde2}. So the new term that we need to consider is 
    \[
    II_{new}=p\iint_{\mathcal S(q,\tau,r,r_0,\theta\hbar)}(H^{p/2}-c)H^{p/2-1}Tx_n\zeta\,dX\,dt,
    \]
    which is the sum of the following 4 terms:
    \[
    J_1:= p\iint_{\mathcal S(q,\tau,r,r_0,\theta\hbar)}(H^{p/2}-c)H^{p/2-1}(\dr_i b_{in})\dr_tu\,x_n\zeta\,dX\,dt,
    \]
     \[
    J_2:= -p\iint_{\mathcal S(q,\tau,r,r_0,\theta\hbar)}(H^{p/2}-c)H^{p/2-1}(\dr_t b_{in})\dr_iu\,x_n\zeta\,dX\,dt,
    \]
     \[
    J_3:= -p\iint_{\mathcal S(q,\tau,r,r_0,\theta\hbar)}(H^{p/2}-c)H^{p/2-1}\dr_t(a_{nn}^{-1})\,H\,x_n\zeta\,dX\,dt,
    \]
    \[
    J_4:= -p\iint_{\mathcal S(q,\tau,r,r_0,\theta\hbar)}(H^{p/2}-c)H^{p/2-1}\sum_{j<n} [\partial_i(b_{ij}\partial_j(a_{nk} \partial_k u))-
\partial_k(b_{kn} \partial_j(a_{ji} \partial_i u))]\,x_n\zeta\,dX\,dt.
    \]
    
 For the term $J_1$, we use the PDE for $u$ to get that $|\dr_t u|\sim |\nabla A||\nabla u| + |\nabla^2 u|$. So by Cauchy-Schwarz,
 \begin{multline*}
     |J_1|\lesssim
     \br{\iint_{\cSS}\abs{\nabla B}^2|H^{p/2}-c|^2x_n}^{1/2}\\
     \cdot\left[\br{\iint_{\cSS}|\nabla A|^2|H|^{p-2}|\nabla u|^2x_n}^{1/2}+\br{\iint_{\cSS}|H|^{p-2}|\nabla^2 u|^2x_n}^{1/2}\right]\\
     \le C\norm{|\nabla A|^2x_n}_{\mathcal{C}}\norm{N((H^{p/2}-c)\1_{\cSS})}_{L^2(\Delta_{2r})}
     \norm{N(\nabla u \1_{\cSS})}_{L^p(\Delta_{2r})}^{p/2}\\
     + C\norm{|\nabla A|^2x_n}_{\mathcal{C}}^{1/2}\norm{N((H^{p/2}-c)\1_{\cSS})}_{L^2(\Delta_{2r})}\norm{N(H\1_{\cSS})}_{L^p(\Delta_{2r})}^{(p-2)/2}\\
\cdot\br{\int_{2\Delta_r}\br{\iint_{\gamma_a(x,t)\cap\cS(\Delta_r,\hbar)}\abs{\nabla^2 u}^2y_n^{-n}dYds}^{p/2}dx_ndx\,dt}^{1/p}.
 \end{multline*}
 By Young's inequality and the observation that $|H|\lesssim|\nabla u|$, we get that 
 \begin{multline*}
    |J_1|\le  C_\varepsilon\norm{|\nabla A|^2x_n}_{\mathcal{C}}\|N\br{(H^{p/2}-c)\1_{\cS(\Delta_r,\hbar)}}\|_{L^2(\Delta_{2r})}^2
+\varepsilon\|{N}_{a}(\nabla u\,\1_{\cS(\Delta_r,\hbar)})\|_{L^p(\Delta_{2r})}^p\\+
\varepsilon  \int_{2\Delta_r}\br{\iint_{\gamma_a(x,t)\cap\cS(\Delta_r,\hbar)}\abs{\nabla^2 u}^2y_n^{-n}dYds}^{p/2}dX\,dt.
 \end{multline*}
 Next we estimate the term $J_4$. Using the product rule, we get terms from the bracket that can be bounded by $|\nabla A|^2|\nabla u|$ and $|\nabla A||\nabla^2 u|$, and thus can be treated as $J_1$. When 2 derivatives fall on the coefficients $A$, one of them is always in $x_j$ for $j<n$. For these terms, we integrate by parts in $x_j$ to move $\dr_j$ away from the coefficients $a_{ji}$ and $a_{nk}$. This process generates terms that can be bounded by (up to a multiplicative constant) the sum of the following terms:
\begin{align*}
      J_{41}&:= \iint_{\cSS}|\dr_j(H^{p/2})||H|^{p/2-1}|\nabla A||\nabla u|x_n\zeta\,dX\,dt\\
      &\le\frac{p}2\iint_{\cSS}|\dr_j H||H|^{p-2}|\nabla A||\nabla u|x_n\zeta\,dX\,dt,
\end{align*}  
\begin{align*}
    J_{42}&:= \iint_{\cSS}|H^{p/2}-c||\dr_j(H^{p/2-1})||\nabla A||\nabla u|x_n\zeta\,dX\,dt\\
    &\le \br{\frac{p}2-1}\iint_{\cSS}|H^{p/2}-c||\dr_j H||H|^{p/2-2}|\nabla A||\nabla u|x_n\zeta\,dX\,dt,
\end{align*}
\[
J_{43}:= \iint_{\cSS}|H^{p/2}-c||H|^{p/2-1}|\nabla A|^2|\nabla u|x_n\zeta\,dX\,dt,
\]
\[J_{44}:= \iint_{\cSS}|H^{p/2}-c||H|^{p/2-1}|\nabla A||\nabla \dr_ju|x_n\zeta\,dX\,dt,
\]
\[J_{45}:= \iint_{\cSS}|H^{p/2}-c||H|^{p/2-1}|\nabla A||\nabla u|x_n\,\dr_j\zeta\,dX\,dt,
\]
and the boundary term 
\[
J_{4b}:=\int_{\partial\mathcal S(q,\tau,r,r_0,\theta\hbar)}|H^{p/2}-c||H|^{p/2-1}|\nabla A||\nabla u|\,x_n\zeta\,|\nu_j|\,dS.
\]
As before, we use Cauchy-Schwarz for all these terms. For $J_{41}$, we let  $|\dr_j H|^2|H|^{p-2}x_n\zeta$ be one integrand and $|\nabla A|^2|H|^{p-2}|\nabla u|^2x_n\zeta$ be the other. The latter is further bounded by  $|\nabla A|^2|\nabla u|^px_n\zeta$. So 
\[
J_{41}\le \iint_{\cSS}\abs{\nabla H}^2|H|^{p-2}x_n\, dx_ndtdx + C\norm{|\nabla A|^2x_n}_{\mathcal{C}}\norm{N(\nabla u \1_{\cSS})}_{L^p(\Delta_{2r})}^p.
\]
For $J_{42}$, we get $|\dr_j H|^2|H|^{p-2}x_n\zeta$ as one integrand and $|\nabla A|^2|H^{p/2}-c|^2|H|^2|\nabla u|^2x_n\zeta$ as the other. The latter  is further bounded by $|\nabla A|^2|H^{p/2}-c|^2x_n\zeta$. This gives 
\[
J_{42}\le \iint_{\cSS}\abs{\nabla H}^2|H|^{p-2}x_n\, dx_ndtdx + C\norm{|\nabla A|^2x_n}_{\mathcal{C}}\norm{N((H^{p/2}-c)\1_{\cSS})}_{L^2(\Delta_{2r})}^2.
\]
For $J_{43}$, we put $|\nabla A|^2|H^{p/2}-c|^2x_n\zeta$ as one integrand and $|\nabla A|^2|H|^{p-2}|\nabla u|^2x_n\zeta$ as the other. The latter is  bounded by  $|\nabla A|^2|\nabla u|^px_n\zeta$. So 
\[
J_{43}\le C\norm{|\nabla A|^2x_n}_{\mathcal{C}}\br{\norm{N((H^{p/2}-c)\1_{\cSS})}_{L^2(\Delta_{2r})}^2 + \norm{N(\nabla u \1_{\cSS})}_{L^p(\Delta_{2r})}^p}.
\]
$J_{44}$ can be estimated in the same way as the term involving $\nabla^2$ from  $J_1$.  For $J_{45}$, we put $|H^{p/2}-c|^2x_n|\dr_j\zeta|^2$ in the first integral and  $|\nabla A|^2|H|^{p-2}|\nabla u|^2x_n\zeta$ in the second. The second one is  further bounded by  $|\nabla A|^2|\nabla u|^px_n\zeta$, while the first one can be estimated as in \eqref{eq.Neta-c}, using the bound $r|\dr_j\zeta|\le C$. By Young's inequality, we get 
\[
J_{45}\le \varepsilon \norm{N((H^{p/2}-c)\1_{\cSS})}_{L^2(\Delta_{2r})}^2 +C_\varepsilon \norm{|\nabla A|^2x_n}_{\mathcal{C}}\norm{N(\nabla u \1_{\cSS})}_{L^p(\Delta_{2r})}^p.
\]
The term $J_{4b}$ is the same as the boundary term $J_{2b}$ in \eqref{eq.J2b}, with $u_k$ replaced by $H$. Note that we have taken care of all the terms coming from $J_4$, as there are no terms involving 3 derivatives on $u$ from the bracket in $J_4$.

The terms $J_2$ and $J_3$ need more work, as we get a $t$ derivative in the coefficients, so we are lacking $x_n$. Therefore, we write  $x_n=\frac12\dr_n(x_n^2)$ in $J_2$ and $J_3$, and then integrate by parts in $x_n$. For the term $J_2$, this process
gives terms that can be controlled by the sum of 6 terms: $J_{21}$-$J_{25}$ and $J_{2b}$. The terms $J_{21}$, $J_{22}$, $J_{24}$, $J_{25}$ and $J_{2b}$ correspond to the terms $J_{41}$, $J_{42}$, $J_{44}$, $J_{45}$ and $J_{4b}$, with $\dr_j$ replaced by $\dr_n$, $|\nabla A|$ replaced by $|\dr_t A|x_n$, and $\nabla u$ replaced with $\nabla_{T} u$, and additionally for the boundary term, $\nu_j$ replaced by $\nu_n$. Since $|\nabla A|$ and $|\dr_t A|x_n$ satisfy the same Carleson condition, these terms can be bounded the same as the corresponding terms from $J_4$, replacing $\norm{|\nabla A|^2x_n}_{C}$ with  $\norm{|\dr_t A|^2x_n^3}_{C}$. We are left with 
\[
J_{23}:= \abs{\iint_{\mathcal S(q,\tau,r,r_0,\theta\hbar)}(H^{p/2}-c)H^{p/2-1}(\dr_t \dr_nb_{in})\dr_iu\,x_n^2\zeta\,dX\,dt}.
\]
To estimate $J_{23}$, we integrate by parts in $t$ to move $\dr_t$ away from $\dr_n b_{in}$. This gives $J_{23}\lesssim\sum_{k=1}^4J_{23k}+J_{23b}$, where 
\[
J_{231}:= \iint_{\cSS}|\dr_t H||H|^{p-2}|\dr_nA||\nabla_T u|x_n^2\zeta\,dX\,dt,
\]
\[
J_{232}:= \iint_{\cSS}|\dr_t H||H|^{p/2-2}|H^{p/2}-c||\dr_nA||\nabla_T u|x_n^2\zeta\,dX\,dt,
\]
\[
J_{233}:= \iint_{\cSS}|H^{p/2}-c||H|^{p/2-1}|\dr_nA||\dr_t\nabla_Tu|x_n^2\zeta\,dX\,dt,
\]
\[
J_{234}:= \iint_{\cSS}|H^{p/2}-c||H|^{p/2-1}|\dr_nA||\nabla_Tu|x_n^2\,|\dr_t\zeta|\,dX\,dt,
\]
and 
\[
J_{23b}:=\int_{\partial\mathcal S(q,\tau,r,r_0,\theta\hbar)}|H^{p/2}-c||H|^{p/2-1}|\dr_nA||\nabla_T u|\,x_n^2\zeta\,|\nu_t|\,dS.
\]
For $J_{231}$, we get by Cauchy-Schwarz and $|H|\lesssim|\nabla u|$ that 
\[
J_{231}\lesssim \br{ \iint_{\cSS}|\dr_t H|^2|H|^{p-2}x_n^3 dX\,dt}^{1/2}\br{\iint_{\cSS}|\dr_nA|^2|\nabla u|^px_n\,dX\,dt}^{1/2}.
\] We need to control
    \(
    \iint_{\cSS}|\dr_t H|^2|H|^{p-2}x_n^3 dX\,dt.
    \)
    Observe that by the product rule,  
\[
|\dr_t H|^2\lesssim |\dr_t A|^2|\nabla u|^2+|\dr_t(\nabla u)|^2,
\]
which gives that 
\begin{multline}\label{eq.SpHt}
    \iint_{\cSS}|\dr_t H|^2|H|^{p-2}x_n^3 dX\,dt \lesssim\int_{\Delta_{2r}}N_a(H)^{p-2}(x,t)\\
  \cdot \left[\iint_{\gamma_{a/2}(x,t)\cap\cSS}|\dr_tA|^2|\nabla u|^2y_n^{-n+2}dYds+A_{a/2}\br{\nabla u\,\1_{\cSS}}^2(x,t)\right]dx\,dt,
\end{multline}
where $A_{a/2}\br{\nabla u\,\1_E}$ is given as in \eqref{defeq.A}.
By the Caccioppoli inequality, or the argument in \cite[Section 6]{DLP1}, the term in the bracket is bounded by
\[
S^2_{2,a}(\nabla u\,\1_{\cS(\Delta_r,\hbar)})(x,t)+S^2_A(u\,\1_{\cS(\Delta_r,\hbar)})(x,t),
\]
where $S_{2,a}(u\,\1_{E})$ is the square function with aperture $a$ of $u$ restricting to the set $E$, and 
\[
S_A(u\,\1_E)(x,t):=\br{\iint_{\gamma_{a}(x,t)\cap E}\br{|\dr_tA|^2y_n^2+|\nabla A|^2}|\nabla u|^2y_n^{-n}dYds}^{1/2}.
\]
By a good-$\lambda$ argument as in \cite[Section 6]{DLP1}, one has that 
\[
\norm{S_A(u\,\1_{\cS(\Delta_r,\hbar)})}_{L^q}\le C\norm{\mu}_{C}^{1/2}\norm{N(\nabla u\1_{\cS(\Delta_r,\hbar)})}_{L^q} \quad \text{for all }q>1.
\]
Therefore, from applying H\"older's inequality to \eqref{eq.SpHt}, it follows  that
\begin{multline}\label{eq.SpHt'}
    \iint_{\cSS}|\dr_t H|^2|H|^{p-2}x_n^3 \,dX\,dt\\
    \le C\norm{N(H\1_{\cSS})}_{L^p}^{p-2}\left(\norm{S_{2,a}(\nabla u\,\1_{\cS(\Delta_r,\hbar)})}_{L^p(\Delta_{2r})}^{2}
   +\norm{\mu}_{C}\norm{N(\nabla u)\1_{\cS(\Delta_r,\hbar)}}_{L^p}^2\right)\\
   \le  C\norm{N((\nabla u)\1_{\cS(\Delta_r,\hbar)})}_{L^p}^{p}+C\norm{S_{2,a}(\nabla u\,\1_{\cS(\Delta_r,\hbar)})}_{L^p(\Delta_{2r})}^{p}.
\end{multline}

This implies that 
\begin{multline*}
    J_{231}\le \varepsilon  \int_{\Delta_{2r}}\br{\iint_{\gamma_a(x,t)\cap\cS(\Delta_r,\hbar)}\abs{\nabla^2 u}^2y_n^{-n}dYds}^{p/2}dx_n\,dx\,dt\\
    +C_\varepsilon\norm{|\dr_nA|^2x_n}_{C}\norm{N((\nabla u)\1_{\cS(\Delta_r,\hbar)})}_{L^p}^{p}.
\end{multline*}
The term $J_{232}$ is similar. Using \eqref{eq.SpHt'} and Cauchy-Schwarz, one has 
\begin{multline*}
    J_{232}\le \varepsilon  \int_{\Delta_{2r}}\br{\iint_{\gamma_a(x,t)\cap\cS(\Delta_r,\hbar)}\abs{\nabla^2 u}^2y_n^{-n}dYds}^{p/2}dx_n\,dx\,dt\\
    +\varepsilon\norm{N((\nabla u)\1_{\cS(\Delta_r,\hbar)})}_{L^p}^{p}
    +C_\varepsilon\norm{|\dr_nA|^2x_n}_{C}\norm{N((H^{p/2}-c)\1_{\cSS})}_{L^2}^{2}.
\end{multline*}
For $J_{233}$, we write
\[
J_{233}\lesssim
\br{\iint_{\cSS}|\dr_nA|^2|H^{p/2}-c|^2x_n\zeta\,dX\,dt}^{1/2}
\br{\iint_{\cSS}|H|^{p-2}|\nabla_T\dr_t u|^2x_n^3\zeta\,dX\,dt}^{1/2}.
\]
{Note that the second integral on the right-hand side can be estimated as \eqref{eq.SpHt}, using $A_{a/2}\br{\nabla_T u\,\1_{\cSS}}$.} We thus get $J_{233}$ can be bounded by the same quantity as $J_{232}$. 
For $J_{234}$, the Cauchy-Schwarz inequality gives
\[
J_{234}\lesssim \br{\iint_{\cSS}|H^{p/2}-c|^2|\dr_t\zeta|^2x_n^3}^{1/2}\br{\iint_{\cSS}|\dr_nA|^2|H|^{p-2}|\nabla u|^2x_n}^{1/2}.
\]
The second term on the right-hand side is bounded by $\norm{|\dr_n A|^2x_n}_{\mathcal{C}}^{1/2}\norm{N((\nabla u)\1_{\cSS})}_{L^p}^{p/2}$. 
The first integral could be estimated in the same way as the term $II_9$ in \eqref{eq.DtNeta-c}, but since it can be multiplied by $\varepsilon$ after applying the Cauchy-Schwarz inequality, we can simply use the bound $|\dr_t\zeta|^2x_n^{3}\lesssim r^{-1}$ and control $|H^{p/2}-c|$ pointwise by its nontangential maximal function. Therefore, we have 
\[
J_{234}\le \varepsilon\norm{N((H^{p/2}-c)\1_{\cSS})}_{L^2}^{2} + C_\varepsilon\norm{|\dr_nA|^2x_n}_{C}\norm{N((\nabla u)\1_{\cSS})}_{L^p}^{p}.
\]

For the boundary term $J_{23b}$, we use Cauchy-Schwarz to get 
\[
    J_{23b}\le \frac1{10}\int_{\partial\mathcal S(q,\tau,r,r_0,\theta\hbar)}|H^{p/2}-c|^2x_n|\nu_t|dS +C\int_{\dr\mathcal S(q,\tau,r,r_0,\theta\hbar)}|\dr_nA|^2|\nabla u|^px_n^3|\nu_t|dS.
\]
Using the bound $|\nu_t|dS\le \frac{a^{-2}}{2x_n}\,dx\,dt$, we see that the first term on the right-hand side is bounded by $\frac1{10}\mathcal I$, and thus can be hidden on the left-hand side of \eqref{eq.NleSp3-2}. The second term can be estimated by integrating in $\theta$ over $[1/6,6]$, and the resulting integral is bounded by 
\[
C\norm{|\dr_nA|^2x_n}_{C}\norm{N((\nabla u)\1_{\cSS})}_{L^p}^p.
\]
We have completed the estimate for $II_{new}$.
Notice that while in the proof of Lemma \ref{lem.NleSp2-2} we have to modify the way we estimate $\iint_{\cSS}\abs{\partial_t(u_k^{p/2})}^2x_n^3\,dx_n\,dx\,dt$ from  \eqref{eq.dtukp/2}, in the current setting this term (replacing $u_k$ by $H$) is exactly \eqref{eq.SpHt'}. This completes the modifications. 
\end{proof}

\begin{lemma}\label{lem.NleS3-GL} 
Fix $p\ge8$ such that $p/2$ is an even integer. Let $\mathcal L$ be an operator as in Lemma \ref{lem.NleSp3-2}. Fix $a>0$ sufficiently large as in Lemma \ref{lem.NleSp3-2}.  
For any $\varepsilon\in(0,1)$, if \(\||\nabla A|^2x_n\|_{C}+\norm{|\nabla A|x_n}_{L^\infty}
   +\norm{|\nabla^2 A|x_n^2}_{L^\infty}+\||\partial_{t}A|^2x_n^3\|_{C}+\||\partial_t A|x_n^2\|_{L^\infty}\) is sufficiently small (depending on $\varepsilon$), then the conclusion of Lemma \ref{LGL} holds for energy solutions
$u$ of $\mathcal Lu=0$, with the following substitutions in
\eqref{eq.gdLmd} and \eqref{def.E1}:
\[
 u_k^{p/2}\mapsto H^{p/2},\quad
 S_{p,b}(u_k)\mapsto S_{p,b}(H),\quad
 S_{2,b}(u_k)\mapsto S_{2,b}(\nabla u),\quad
 \tilde N_{2,b}(u_k)\mapsto\tilde N_{2,b}(\nabla u).
\]
\end{lemma}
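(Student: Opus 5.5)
The proof will repeat the proof of Lemma~\ref{LGL} with $u_k$ replaced by $H$, in the same way that Lemma~\ref{lem.NleS2-GL} adapted it to $\partial_k u$. We use the same open set $E_{2,\beta}=\{M(\tilde N_{2,a}(H^{p/2}))>\kappa\beta\}$. A Vitali covering gives balls $\Delta_i$ with $2\Delta_i\cap E_{2,\beta}^c\neq\emptyset$, and we set $F_i=E_{1,\beta}\cap\Delta_i$. The goal is $|F_i|\le C\gamma^2|\Delta_i|$. With $w$ as in \eqref{def.w3}, the argument leading to \eqref{eq.etasmall} shows that $w\le 2^{n+1}\kappa\beta$ above height $2r_i$ over $\Delta_i$. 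This gives $\hbar\le 2r_i$ there, and it also bounds the radius of the ball $R$.

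The graph lower bound comes from Lemma~\ref{lem.NleSp3-1} together with Lemma~\ref{S3:L6}. For this we need the good-set conditions to imply $S_{p,b}(H)+\varepsilon^{1/p}S_{2,b}(\nabla u)+\delta(\varepsilon,A)N_b(\nabla u)\le\gamma\beta^{2/p}$ (after the usual rescaling $\nu=\beta^{2/p}$). This holds once the coefficient norms are so small that $\delta(\varepsilon,A)\le\varepsilon^{1/p}$, since $N_b(\nabla u)\lesssim\tilde N_{2,b'}(\nabla u)$ for solutions after enlarging the aperture. This is exactly where the hypothesis involving $\|\,|\nabla A|x_n\|_{L^\infty}$, $\|\,|\nabla^2A|x_n^2\|_{L^\infty}$ and $\|\,|\partial_tA|x_n^2\|_{L^\infty}$ is used. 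We then modify the graph to $\hbar_i=\max\{\hbar,c_{a,p}\dist_p(\cdot,F_i)\}$, so that \eqref{SiProp} holds. Lemma~\ref{lem:graphReduction} combined with Lemma~\ref{lem.NleSp3-2} yields the analogue of \eqref{eq.Fiest}. On the right-hand side, the terms $\iint|\nabla H|^2|H|^{p-2}x_n$, $\tilde N(H^{p/2}-c)$, $N(\nabla u)$ and the $\nabla^2u$ square term are each bounded by $C\gamma^2\beta^2|\Delta_i|$ through \eqref{SiProp} and the definition of the good set. For the $\nabla^2 u$ term we use the Caccioppoli bound $S(\nabla u)$ via Whitney cubes, and we note that $c\le C\kappa\beta$.

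The main obstacle is the interior oscillation term $\frac{C}{r_i}\iint_{\mathcal K_\varepsilon}|H^{p/2}-c|^2$. The spatial Poincaré parts $J_1,J_2$ are handled as before. For the time part $J_3$, we take $c(t)=\int H^{p/2}\eta^2$, set $v=H$ and $m=p/2$, and use the identity \eqref{eq:halfMomentEvolution} for the operator $-\partial_t+\divg(A^T\nabla\cdot)$. By \eqref{eq.Hpde1}, there is an additional term $\mathcal R=\iint F\,m v^{m-1}\eta^2$. Using \eqref{boundsF}, $|\partial_tu|\lesssim|\nabla^2u|+|\nabla A||\nabla u|$, and the fact that $x_n>2r$ on $Q$, we get
\[
|\mathcal R|\le C_p K\big(U^{m-1}\mathcal T^{1/2}+U^m\big),
\]
where $U=\|\nabla u\|_{L^\infty(Q)}$, $\mathcal T=r^{-n}\iint_Q|\nabla^2u|^2$, and $K$ is the sum of the three $L^\infty$ coefficient norms. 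If the divergence-type parts of $F$ cause trouble, we integrate them by parts onto $v^{m-1}\eta^2$ exactly as in Lemma~\ref{lem.NleS2-GL}. Young's inequality then gives
\[
(\operatorname{osc}c)^2\le C\varepsilon^{-1}\mathcal E_p(H;Q)+C\varepsilon(\mathcal T^{p/2}+U^p),
\]
provided $K^2\le\varepsilon$. The good-set bounds control this by $C\gamma^2\beta^2$. Summing over $i$ gives \eqref{eq.gdLmd}.
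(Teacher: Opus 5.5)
Your proposal is correct and takes essentially the paper's route: the stopping-graph and covering construction, Lemmas \ref{lem.NleSp3-1}, \ref{S3:L6}, \ref{lem:graphReduction} and \ref{lem.NleSp3-2}, and, for the time oscillation, the moment identity with the extra term $-m\iint FH^{m-1}\eta^2$, bounded pointwise via \eqref{boundsF} at heights $x_n>2r$ (no integration by parts of the divergence parts of $F$ is needed). The one step you leave implicit, and which the paper states, is that applying \eqref{eq.ct-c} to $v=H$ requires $\mathcal E_2(H;Q)\lesssim\mathcal T+\|x_n\nabla A\|_{L^\infty}^2U^2$, which follows from $\nabla H=A_n\nabla^2u+(\nabla A_n)\nabla u$; this is needed because the good set controls $\nabla u$, not $H$.
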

\begin{proof}
Apply the same stopping construction and Lemma \ref{lem:graphReduction}
with $z=H^{p/2}$, using Lemmas \ref{lem.NleSp3-1} and
\ref{lem.NleSp3-2} for the graph lower bound and graph integral.
The square and maximal terms are controlled by \eqref{SiProp}.
For the interior oscillation term, the spatial Poincar\'e argument
from Lemma \ref{LGL} is unchanged. Set $m=p/2$, $Q=B_i^0\times I_i$,
$r=r_i$, and $c(t)=\int H^m\eta^2$. Use $U$ and $\mathcal T$ from
the proof of Lemma \ref{lem.NleS2-GL}, and put
\[
 \alpha_1=\|x_n\nabla A\|_{L^\infty},\qquad
 \alpha_0=\alpha_1^2+\|x_n^2\partial_tA\|_{L^\infty}
                         +\|x_n^2\nabla^2A\|_{L^\infty}.
\]
By \eqref{eq.Hpde1}, $H_t=\divg(A^T\nabla H)-F$. Thus
\eqref{eq:halfMomentEvolution}, with $v=H$ and $A$ replaced by $A^T$,
has the additional term
\[
 \mathcal R_H=-m\iint_{B_i^0\times[t,t']}FH^{m-1}\eta^2\,dX\,ds.
\]
The original equation gives
$|u_t|\le C(|\nabla^2u|+|\nabla A||\nabla u|)$.
Consequently \eqref{boundsF} and $x_n>2r$ on $Q$ imply
\[
 |F|\le C\big(\alpha_1r^{-1}|\nabla^2u|
                              +\alpha_0r^{-2}|\nabla u|\big).
\]
Since $|H|\le CU$, Cauchy--Schwarz with the normalized cutoff yields
\[
 |\mathcal R_H|\le C_p\big(\alpha_1U^{m-1}\mathcal T^{1/2}
                                      +\alpha_0U^m\big),
 \qquad
 |\mathcal R_H|^2\le C_p(\alpha_1^2+\alpha_0^2)
                              (\mathcal T^{p/2}+U^p).
\]
Also $\nabla H=A_n\nabla^2u+(\nabla A_n)\nabla u$, so
\[
 \mathcal E_2(H;Q)\le C(\mathcal T+\alpha_1^2U^2).
\]
Apply the homogeneous estimate \eqref{eq.ct-c} to the first two terms
of the moment identity, and add the estimate for $\mathcal R_H$.
Taking $\alpha_1^2+\alpha_0^2\le\varepsilon$ gives
\begin{equation}\label{eq:conormalHalfOsc}
 \big(\operatorname*{osc}_{I_i}c\big)^2
 \le C_p\varepsilon^{-1}\mathcal E_p(H;Q)
             +C_p\varepsilon(\mathcal T^{p/2}+U^p).
\end{equation}
The good-set bounds control $\mathcal E_p(H;Q)$ by
$C\varepsilon\gamma^2\beta^2$ and $\mathcal T^{p/2}+U^p$ by
$C\varepsilon^{-1}\gamma^2\beta^2$, exactly as in the tangential case.
Thus the interior term is at most $C\gamma^2\beta^2|\Delta_i|$,
as required.
\end{proof}

Lemma \ref{lem:goodLambdaIntegration}, applied with the same substitutions
as in Lemma \ref{lem.NleS3-GL}, gives Theorem \ref{thm.NleSpH} after
reparametrizing $\varepsilon$.

\section{Bounds of the $p$-adapted square functions}\label{S.Sp_bd}
In this section, we establish key estimates for several variants of the 
$p$-adapted square functions, which in turn provide bounds for the nontangential maximal function of the gradient of the solution. It provides a parabolic analog of the results in \cite[Section 6]{DPR}, but the arguments differ significantly from the elliptic setting in many places.
\medskip

We start with the following lemma, which is a parabolic analog of \cite[Lemma 6.1]{DPR}. In the elliptic setting, one can start directly with bounding $\norm{N(\nabla u)}_{L^p(\pom)}$ by $\|\nabla_Tu\|_{L^p(\pom)}$ using the solvability of the $L^p$ Regularity problem. In the parabolic setting, however, this strategy would produce an extra term $\|D_t^{1/2}u\|_{L^p(\pom)}$, which cannot be controlled (at least easily) by a $p$-adapted square function or its variants. Therefore, we start instead from the `$N<S_p$' estimate (Theorem~\ref{thm.NlessSdT}), then derive an appropriate estimate on the $p$-adapted function.

\begin{lemma}\label{NPl1} Let $p=4m$, for some integer $m\ge2$ and $\Omega=\mathbb R^n_+\times\R$.
Let $A$ be as in Theorem~\ref{thm.NlessSdT}, and 
let $\LL=-\dr_t+\divg(A\nabla\cdot)$. Then there exists $\delta>0$
such that if $$\||\nabla A|^2x_n\|_{C}+\norm{|\nabla_T A|x_n}_{L^\infty}+\||\partial_{t}a_{nn}|^2x_n^3\|_{C}+\||\partial_ta_{nn}|x_n^2\|_{L^\infty}<\delta,$$   then for some
$K=K(\lambda,\Lambda,n,p)>0$ we have for an energy solution $\mathcal Lu=0$ in $\Omega$ the estimate:
\begin{eqnarray} \label{NPe1}
&&\int_{\partial\Omega} N^p(\nabla_T u)\,dx\,dt\le K
\iint_{\Omega}|\nabla_T u|^{p-2}|\nabla
H|^2x_n\,dX\,dt+K\delta\int_{\partial\Omega}\left[N^p(\nabla u)+S^p(\nabla u)\right]\,dx\,dt.
\end{eqnarray}
Here $H=\sum_ja_{nj}\partial_ju$, and in particular $H\big|_{\partial\Omega}=A\nabla u\cdot\nu\big|_{\partial\Omega}$ is the Neumann data on $\partial\Omega$.
\end{lemma}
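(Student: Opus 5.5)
The plan follows the outline given in Section~\ref{Sprf}: first pass from $N$ to the $p$-adapted square function, then from $S_p$ to boundary values, and finally from boundary values to the solid integral involving $\nabla H$.

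\emph{From $N$ to $S_p$.} Theorem~\ref{thm.NlessSdT} with $q=p$ (admissible since $p>p/2$ and $p/2=2m$ is even) gives
\[
\|\tilde N(\nabla_T u)\|_{L^p}^p\lesssim \|S_p(\nabla_T u)\|_{L^p}^p+\varepsilon\big(\|\tilde N(\nabla u)\|_{L^p}^p+\|S_2(\nabla u)\|_{L^p}^p\big).
\]
Since $N$ and $\tilde N$ for $\nabla u$ are comparable up to aperture, we take $\varepsilon\sim\delta$. The remaining work is to bound $\|S_p(\nabla_T u)\|_{L^p}^p$.

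\emph{From $S_p$ to the boundary.} By Fubini,
\[
\|S_p(\nabla_T u)\|_p^p\simeq\iint_\Omega|\nabla\nabla_T u|^2|\nabla_T u|^{p-2}x_n\,dX\,dt.
\]
For each $k<n$ we write $x_n=\partial_n x_n$ and integrate by parts in $x_n$, using the PDE for $u_k=\partial_k u$ in the form
\[
\mathcal L(u_k^{p/2})=\tfrac p2(\tfrac p2-1)u_k^{p/2-2}A\nabla u_k\cdot\nabla u_k+\tfrac p2u_k^{p/2-1}\divg((\partial_kA)\nabla u).
\]
This is the same computation as in Lemma~\ref{S3:L8-alt1}, now on all of $\Omega$ with a cutoff $\zeta$ that is later removed. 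Concretely, we test $\partial_{nn}^2(|u_k|^p)$ against $x_n$. The coercive term reproduces the left-hand side. The boundary term at $x_n=0$ gives $\int_{\partial\Omega}|\nabla_Tu|^p$. The error terms are handled as in Lemmas~\ref{S3:L8-alt1} and \ref{lem.NleSp2-2}: the $\partial_kA$ and $\partial_t a_{nn}$ terms through the Carleson condition, Cauchy--Schwarz and \eqref{A<S+cN.Lp}; the $\partial_t$ term by writing $x_n=\tfrac12\partial_n(x_n^2)$. All of these are bounded by $\delta(\|N(\nabla u)\|_p^p+\|S(\nabla u)\|_p^p)$. The outcome is
\[
\|S_p(\nabla_Tu)\|_p^p\lesssim\|\nabla_T u\|_{L^p(\partial\Omega)}^p+\delta(\cdots).
\]

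\emph{From the boundary to $\nabla H$ (main step).} We must show
\[
\int_{\partial\Omega}|\nabla_Tu|^p\lesssim\iint|\nabla_Tu|^{p-2}|\nabla H|^2x_n+\delta(\cdots).
\]
Because the time half-derivative is not available, this cannot be read off from Regularity. The idea, following \cite[Lemma 6.1]{DPR}, is to write
\[
\int_{\partial\Omega}|\nabla_Tu|^p=-\iint\partial_n(|\nabla_T u|^p)\,dX\,dt
\]
(decay at infinity being handled by the usual cutoff and limiting argument). In $\partial_n\partial_k u=\partial_k\partial_n u$ we replace $\partial_nu$ by $a_{nn}^{-1}(H-\sum_{j<n}a_{nj}\partial_ju)$. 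Expanding, we integrate by parts in the tangential variable $x_k$; this moves $\partial_k$ from $\partial_n u$ onto $|\nabla_Tu|^{p-2}u_k$. The term built from $\partial_k H$ is then rewritten through $x_n=\partial_n x_n$, and a second integration by parts in $x_n$ produces the pairing of $\nabla H$ with $\nabla(|\nabla_Tu|^{p-2}u_k)$, which is weighted by $x_n$. Cauchy--Schwarz with a small parameter $\epsilon$ gives
\[
\epsilon\|S_p(\nabla_Tu)\|_p^p+C_\epsilon\iint|\nabla_Tu|^{p-2}|\nabla H|^2x_n.
\]
The first piece is absorbed using the previous step together with a small multiple of $\int|\nabla_Tu|^p$. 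The remaining pieces contain $\sum_{j<n}a_{nj}\partial_j u$; there the tangential derivatives either fall into an exact tangential divergence, which integrates to zero, or land on $A$, giving Carleson error terms. I expect this step to be the main obstacle: the signs and the exact cancellation from the symmetric structure in the tangential indices have to be checked carefully, and the $|u_k|^{p-2}$ weights must be handled without generating uncontrolled $|\nabla u|^{p-2}|\nabla^2u|^2x_n$ terms. Combining the three steps gives \eqref{NPe1}.
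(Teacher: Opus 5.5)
Your three steps are the paper's proof. Theorem~\ref{thm.NlessSdT} gives \eqref{NS-T}. Testing $\partial^2_{nn}|w_k|^p$ ($w_k=\partial_ku$) against $x_n$ and using the equation is the computation the paper does by testing the equation for $w_k$ with $a_{nn}^{-1}w_k^{p-1}\zeta^2x_n$, which yields \eqref{NPe2xx}. The last step rests, as in the paper, on $\partial_nw_k=\partial_kw_n$, the substitution $w_n=H/a_{nn}-\sum_{j<n}(a_{nj}/a_{nn})w_j$, and insertions of $1=\partial_nx_n$.

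The one step that fails as you describe it is the disposal of the $\sum_{j<n}$ part. By $\partial_kw_j=\partial_jw_k$ that part equals
\[
p\sum_{j<n}\iint|w_k|^{p-2}w_k\,w_j\,\partial_k\Big(\frac{a_{nj}}{a_{nn}}\Big)\xi\,dX\,dt+\sum_{j<n}\iint\frac{a_{nj}}{a_{nn}}\,\partial_j\big(|w_k|^p\big)\,\xi\,dX\,dt .
\]
The second integral is not an exact divergence, because the coefficient varies. Integrating it by parts in $x_j$ gives $-\iint\partial_j(a_{nj}/a_{nn})|w_k|^p\xi$ plus a cutoff term. These integrals carry a derivative of $A$ but no weight $x_n$, so they are not Carleson error terms. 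Cauchy--Schwarz against $|\nabla A|^2x_n\,dX\,dt$ leaves $\iint|\nabla u|^px_n^{-1}$, which diverges as soon as $\nabla_Tu$ has a nonzero trace.

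The remedy is the device you already use for the $H$-term. Insert $1=\partial_nx_n$ into both integrals before moving any derivative, and integrate by parts in $x_n$. Then remove the resulting mixed derivatives $\partial_n\partial_k(a_{nj}/a_{nn})$ and $\partial_n\partial_j|w_k|^p$ by one more integration by parts in $x_k$, respectively $x_j$. Each remaining term either has at most one derivative on $A$ together with the weight $x_n$, or is supported at height comparable to the cutoff scale. All are bounded as in \eqref{NPe10}--\eqref{NPe12}; this is \eqref{NPe4}--\eqref{NPe6}. The only exact cancellation in this step is that of the terms containing $\partial_k\zeta_\ell^2$ after summing over the partition of unity.

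Your treatment of the cross term $\iint|w_k|^{p-2}|\nabla w_k||\nabla H|x_n$ differs from the paper's but is valid. You absorb $\epsilon\iint|w_k|^{p-2}|\nabla w_k|^2x_n$ by feeding it back through your second step into the left-hand side. This is legitimate because $\int_{\pom}|\nabla_Tu|^p\le\|N(\nabla u)\|_p^p$ may be assumed finite. The paper instead bounds this integral by $\int N^{p-2}(w_k)S^2(w_k)$ and applies Young's inequality, placing it in $\delta\big[\|N(\nabla u)\|_p^p+\|S(w_k)\|_p^p\big]$ at the price of a constant $C_\delta$ in front of the $H$-term. That route is more direct, and either suffices for the use in Section~\ref{Sprf}.

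If you keep your version, run the last step componentwise with $|w_k|^p$, as the paper does. Your second step controls $\iint|w_k|^{p-2}|\nabla w_k|^2x_n$, whereas starting from $|\nabla_Tu|^p$ produces $\iint|\nabla_Tu|^{p-2}|\nabla\nabla_Tu|^2x_n$, which that step does not bound.
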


\begin{proof} We apply first the results of Section \ref{SS:43} to the components of the tangential gradients
$\nabla_Tu$. It follows that for all sufficiently small $\delta=\delta(p,k,\lambda,\Lambda)>0$ we have by Theorem \ref{thm.NlessSdT} for $p=4m$, $m\ge2$
\begin{equation}\label{NS-T}
\|N(\nabla_Tu)\|_{L^p(\partial\Omega)}^p\le C\sum_{k=1}^{n-1}\iint_{\Omega}|\partial_k u|^{p-2}|\nabla \partial_k u
|^2x_n\,dX\,dt + C\delta \int_{\partial\Omega}\left[N^p(\nabla u)+S^p(\nabla u)\right]\,dx\,dt.
\end{equation}

The second term on the right-hand side of \eqref{NS-T} already has the desired form. We now estimate the first term. From here up to \eqref{NPe2xx}, the calculation proceeds as in \cite[Lemma 8.5]{DLP1} but with some further modifications as in our case $p\ne 2$ and also the matrix $A$ is not in block form. \vskip2mm

We start with a local estimate on $(0,r)\times Q_r$, where $Q_r=Q_r(y,s)$ is a parabolic cube on the boundary as defined earlier. Let $\zeta=\zeta(x,t)$ be a smooth cutoff function that satisfies
\[
\zeta=\begin{cases}
    1\quad \text{in }Q_r,\\
    0\quad \text{outside }Q_{2r},
\end{cases}
\]
and that for some constant $0<c<\infty$
\[r\abs{\dr_{x_i}\zeta}+r^2\abs{\dr_t\zeta}\le c\quad \text{where }1\le i\le n-1.\]

To lighten notation, we denote $w_k:=\dr_k u$ for $1\le k\le n$. Fix $k\in\set{1,2,\dots, n-1}$. Then
we can write using the ellipticity condition and the fact that $|w_k|^{p-2}=w_k^{p-2}$ as $p-2$ is even
\begin{multline}\label{eq.pfSdk1}\lambda\Lambda^{-1}(p-1)\iint_{Q_r\times (0,r)}|w_k|^{p-2}|\nabla w_k
|^2x_n\,dX\,dt \le\\
    \int_0^r\int_{Q_{2r}}a_{nn}^{-1}A\nabla w_k\cdot \nabla (w_k^{p-1})(\zeta^2x_n)dX\,dt
    =\int_0^r\int_{Q_{2r}}\divg\br{w_k^{p-1}\zeta^2x_n a_{nn}^{-1}A\nabla w_k}\\
    -\int_0^r\int_{Q_{2r}}w_k^{p-1}A\nabla w_k\cdot\nabla\br{a_{nn}^{-1}\zeta^2x_n}
    -\int_0^r\int_{Q_{2r}}\divg(A\nabla w_k)(w_k^{p-1}a_{nn}^{-1}\zeta^2x_n).
\end{multline}
We then use the divergence theorem for the first term on the right-hand side, which gives us a boundary term
\begin{equation}\label{eq.Sdkbdy}
    \frac{r}{p}\int_{Q_{2r}}\dr_n\br{w_k^p}(x,r,t)\zeta(x,t)^2\,dx\,dt
+ \frac{r}{p}\sum_{j<n}\int_{Q_{2r}}a_{nn}^{-1}a_{nj}\dr_j\br{w_k^p}(x,r,t)\zeta(x,t)^2\,dx\,dt.
\end{equation} 
For the second term, we write 
$\nabla\br{a_{nn}^{-1}\zeta^2x_n}=\nabla(a_{nn}^{-1})\zeta^2x_n+\nabla(\zeta^2)a_{nn}^{-1}x_n+\zeta^2a_{nn}^{-1}\nabla x_n$, which gives three terms correspondingly. Notice that $a_{nn}^{-1}A\nabla w_k\cdot\nabla x_n= \dr_n w_k+\sum_{j<n}a_{nn}^{-1}a_{nj}\partial_jw_k$, and so the corresponding term in the integral 
for the leading term
\begin{multline*}
    -\int_0^r\int_{Q_{2r}}w_k^{p-1}\dr_nw_k\zeta^2dX\,dt
    =-\frac1p\int_0^r\int_{Q_{2r}}\dr_n\br{w_k^p\zeta^2}dX\,dt\\
    =\frac1p\int_{Q_{2r}}w_k(x,0,t)^p\zeta(x,t)^pdx\,dt-\frac1p\int_{Q_{2r}}w_k(x,r,t)^p\zeta(x,t)^2dx\,dt,
\end{multline*}
while the terms $j<n$ can be further written as (after two integrations by parts):
\begin{multline}\label{eqwq}
    -\int_0^r\int_{Q_{2r}}w_k^{p-1}a_{nn}^{-1}a_{nj}\partial_jw_k\zeta^2dX\,dt
    =-\frac1p\int_0^r\int_{Q_{2r}}a_{nn}^{-1}a_{nj}\dr_j\br{w_k^p}\zeta^2(\partial_n x_n)dX\,dt\\
    =\frac1p\int_0^r\int_{Q_{2r}}\partial_n(a_{nn}^{-1}a_{nj})\dr_j\br{w_k^p}\zeta^2 x_n\,dX\,dt
    -\frac1p\int_0^r\int_{Q_{2r}}\partial_j(a_{nn}^{-1}a_{nj})\dr_n\br{w_k^p}\zeta^2 x_n\,dX\,dt\\
    -\frac1p\int_{Q_{2r}}a_{nn}^{-1}a_{nj}\partial_j(w_k^p)\zeta^2 r\,dx\,dt-\frac1p\int_0^r\int_{Q_{2r}}a_{nn}^{-1}a_{nj}\dr_n\br{w_k^p}\partial_j(\zeta^2) x_n\,dX\,dt.
\end{multline}
The first term of the last line above cancels part of the terms from the boundary term \eqref{eq.Sdkbdy}. To summarize, the  right-hand side of \eqref{eq.pfSdk1} is equal to
\begin{multline}\label{eq.pfSdk2}
    \frac{r}{p}\int_{Q_{2r}}\dr_n\br{w_k^p}(x,r,t)\zeta(x,t)^2dx\,dt
    +\frac1p\int_{Q_{2r}}w_k(x,0,t)^p\zeta(x,t)^2dx\,dt\\
    -\frac1p\int_{Q_{2r}}w_k(x,r,t)^p\zeta(x,t)^2dx\,dt
    -\int_0^r\int_{Q_{2r}}w_k^{p-1}a_{nn}^{-1}A\nabla w_k\cdot\nabla_x(\zeta^2)x_n\,dX\,dt\\
    -\int_0^r\int_{Q_{2r}}w_k^{p-1}a_{nn}^{-1}a_{nj}(\dr_n w_k)\partial_j(\zeta^2) x_n\,dX\,dt+C
    -\int_0^r\int_{Q_{2r}}a_{nn}^{-1}\divg(A\nabla w_k)(w_k^{p-1}\zeta^2x_n)\,dX\,dt.
\end{multline}
Here by $C$ we have denoted terms that enjoy a Carleson bound (the two terms in the second line of \eqref{eqwq} and the term arising from $\nabla(a_{nn}^{-1})\zeta^2x_n$) which are of the form
$$|C|\lesssim \int_0^r\int_{Q_{2r}}w_k^{p-1}|\nabla w_k||\nabla A|\zeta^2x_n dX\,dt.$$
Using Cauchy-Schwarz, Young's inequality and the Carleson condition for $\nabla A$ this is further bounded by
$$|C|\le \varepsilon \int_0^r\int_{Q_{2r}} |w_k|^{p-2}|\nabla w_k|^2(\zeta^2x_n)dX\,dt+C_\varepsilon \|\mu\|_{C}\int_{Q_{2r}}N(\nabla u)^p dx\,dt.$$
Here we choose $\varepsilon>0$ such that we can hide the first term on the right-hand side into the first term of
the second line of  \eqref{eq.pfSdk1}. The last term above is as required in Theorem~\ref{thm.NlessS}, provided
$\|\mu\|_{C}<\delta$ is sufficiently small. 

Now we use the PDE of $w_k$ to treat the last term of \eqref{eq.pfSdk2}. Since $\divg A\nabla u=\dr_tu$, a direct computation shows that 
\[-\divg A\nabla w_k=-\dr_tw_k+\divg\br{(\dr_kA)\vec{w}},\]
where we have used the notation $\vec{w}:=(w_1,w_2,\cdots,w_n)^T=\nabla u$. Therefore,
\begin{multline*}
     -\int_0^r\int_{Q_{2r}}a_{nn}^{-1}\divg(A\nabla w_k)(w_k^{p-1}\zeta^2x_n)dX\,dt
     =-\int_0^r\int_{Q_{2r}}a_{nn}^{-1}\dr_tw_k(w_k^{p-1}\zeta^2x_n)dX\,dt\\
     +\int_0^r\int_{Q_{2r}}a_{nn}^{-1}\divg\br{(\dr_kA)\vec{w}}w_k^{p-1}\zeta^2x_ndX\,dt
     =:I_1+I_2.
\end{multline*}
We continue to compute
\begin{multline*}
    I_1=-\frac1p\int_0^r\int_{Q_{2r}}\dr_t\br{a_{nn}^{-1}w_k^p\zeta^2x_n}dX\,dt\\+\frac1p\int_0^r\int_{Q_{2r}}a_{nn}^{-1}w_k^p\dr_t(\zeta^2)x_ndX\,dt+\frac1p\int_0^r\int_{Q_{2r}}\dr_t(a_{nn}^{-1})w_k^p\zeta^2x_ndX\,dt\\
=\frac1p\int_0^r\int_{Q_{2r}}a_{nn}^{-1}w_k^p\dr_t(\zeta^2)x_ndX\,dt+\frac1p\int_0^r\int_{Q_{2r}}\dr_t(a_{nn}^{-1})w_k^p\zeta^2x_ndX\,dt,
\end{multline*}    
since the first term equals zero. For term $I_2$ we have
\begin{multline*}
I_2=\int_0^r\int_{Q_{2r}}a_{nn}^{-1}(\dr_kA)(\divg\vec{w})w_k^{p-1}\zeta^2x_ndX\,dt-
\int_0^r\int_{Q_{2r}}(\partial_k a_{nn}^{-1})\nabla A\cdot \vec{w}w_{k}^{p-1}\zeta^{2}x_ndX\,dt\\
-\int_0^r\int_{Q_{2r}} a_{nn}^{-1}\nabla A\cdot \partial_k(\vec{w}w_{k}^{p-1})\zeta^{2}x_ndX\,dt
-\int_0^r\int_{Q_{2r}} a_{nn}^{-1}\nabla A\cdot \vec{w}w_{k}^{p-1}\partial_k(\zeta^{2})x_ndX\,dt.
\end{multline*}
Notice, with the exception of the last term we may again use the Cauchy-Schwarz estimate, Carleson condition and Young's inequality to write
$$I_2=D-\int_0^r\int_{Q_{2r}} a_{nn}^{-1}\nabla A\cdot \vec{w}w_{k}^{p-1}\partial_k(\zeta^{2})x_ndX\,dt,$$
where 
$$|D|\le \varepsilon \int_0^r\int_{Q_{2r}} |w_k|^{p-2}|\nabla w_k|^2(\zeta^2x_n)dX\,dt+C_\varepsilon \|\mu\|_{C}\int_{Q_{2r}}[N(\nabla u)^p+S(\nabla u)^p] dx\,dt.$$
Indeed, we have
\begin{multline}\label{NPe10}
\iint_{2Q\times[0,2r]}|\nabla u|^{p-1}|\nabla^2 u||\nabla
A|x_n\,dX\,dt\\
\le \left(\iint_{2Q\times[0,2r]}|\nabla u|^{p-2}|\nabla^2
u|^2x_n\,dX\,dt\right)^{1/2}\left(\iint_{2Q\times[0,2r]}|\nabla
u|^{p}|\nabla
A|^2x_n\,dX\,dt\right)^{1/2}\\
\le \left(\int_{2Q}N(\nabla u)^{p-2}\iint_{\Gamma(x,t)}|\nabla^2
u(X)|^2{x_n}^{-n}\,dX\,dt\right)^{1/2}\left(\iint_{2Q\times[0,2r]}|\nabla
u|^{p}|\nabla A|^2x_n\,dX\,dt\right)^{1/2}\\
\le \left(\int_{2Q}N(\nabla u)^{p-2}S^2(\nabla
u)\,dx\right)^{1/2}\|\mu\|_{C}^{1/2} \|N(\nabla
u)\|^{p/2}_{L^p(2Q)}\\ \le C\|\mu\|_{C}\left[\|S(\nabla
u)\|_{L^p(2Q)}^p+\|N(\nabla u)\|_{L^p(2Q)}^{p}\right],
\end{multline}
with an analogous estimates for other terms.

We now use \eqref{eq.pfSdk2} and our computation of $I_1$ and $I_2$ to obtain a global estimate on $(0,r)\times\pom$.

Let $\set{Q_{2r}^\ell}_{\ell=1}^\infty$ be a collection of disjoint parabolic cubes of sidelength $2r$ that covers $\pom$. Let $\set{\zeta_\ell^2}_\ell$ be a partition of unity subordinate to this collection, that is, $\supp\zeta_\ell\subset Q_{2r}^\ell$ and $\sum_{\ell=1}^\infty\zeta_\ell^2\equiv 1$ on $\pom$. Moreover, for any $\ell$,
$\zeta_\ell=1$ in $Q^\ell_r$,
and for some constant $0<c<\infty$
\[r\abs{\dr_{x_i}\zeta_\ell}+r^2\abs{\dr_t\zeta_\ell}\le c\quad \text{where }1\le i\le n-1.\]
Note that 
\begin{equation}\label{eq.pou0}
    \sum_\ell\dr_{x_i}\br{\zeta_\ell^2}=0 \quad\text{for }1\le i\le n, \quad \text{and }\sum_\ell\dr_{t}\br{\zeta_\ell^2}=0.
\end{equation}
By \eqref{eq.pfSdk2}, the computation of $I_1$ and $I_2$, and \eqref{eq.pou0}, we can write for each $k<n$
\begin{multline}\label{eq.pfSdk3}
C(\lambda,\Lambda,p)\int_0^r\int_{\pom}|w_k|^{p-2}|\nabla w_k|^2x_ndX\,dt\\
   \le \frac{r}{p}\int_{\pom}\dr_n\br{w_k^p}(x,r,t)dx\,dt
    +\frac1p\int_{\pom}w_k(x,0,t)^pdx\,dt
    -\frac1p\int_{\pom}w_k(x,r,t)^pdx\,dt\\
+C\delta\int_{\partial\Omega}[N(\nabla u)^p+S(\nabla u)^p] dx\,dt
    \underbrace{+\frac1p\int_0^r\int_{\partial\Omega}\dr_t(a_{nn}^{-1})w_k^px_n\,dX\,dt}_J.
\end{multline}
The last term $J$ requires more work. We introduce $x_n=\frac12\partial_n(x_n^2)$ and further integrate by parts. It follows that
\begin{multline*}
    \abs{J}=\underbrace{\frac1{2p}\abs{\int_{\partial\Omega}\partial_t(a_{nn}^{-1})w_k^p(x,r,t)r^2\,dx\,dt}}_{=:J_1}+\underbrace{\frac12
    \int_0^r\int_{\pom}|\dr_tA|w_k^{p-1}|\nabla w_k|\,x_n^2\,dX\,dt}_{:= J_2}\\
    +\abs{\frac1{2p}\int_0^r\int_{\partial\Omega}\dr_t\partial_n(a_{nn}^{-1})w_k^px_n^2\,dX\,dt}
    \le J_1+J_2+\frac12{\int_0^r\int_{\partial\Omega}|\partial_n(a_{nn}^{-1})||w_k|^{p-1}|\partial_t w_k|x_n^2dX\,dt}.
\end{multline*}
We denote the last term $J_3$. 
By Cauchy-Schwarz, the Carleson conditions for $\partial_tA$ and $\nabla A$
and Young's inequality we obtain the following bounds for $J_2$ and $J_3$:
$$|J_2|+|J_3|\le \delta\int_{\partial\Omega}[S(\nabla u)^p+A(\nabla u)^p]dx\,dt+C_\delta\|\mu\|_{C}\int_{\partial\Omega}N(\nabla u)^pdx\,dt.$$
By \eqref{A<S+cN.Lp}, these terms are like on the right-hand side of \eqref{NPe1}. For the term $J_1$ we observe that since $|\partial_t A|r^2\le \delta^{1/2}$, this term is small and can be bounded by $C\delta^{1/2}\int_{\partial\Omega}w_k^p(x,r,t)dx\,dt$.

Fix any $r_0>0$. We integrate \eqref{eq.pfSdk3} in the $r$ variable over $[0,r_0]$ and divide both sides by $r_0$. Since $(\dr_n w^p)x_n=\dr_n(w^px_n)-w^p$, 
\[\int_0^{r_0}\int_{\pom}r\dr_n(w_k^p)(x,r,t)dx\,dt\,dr=r_0\int_{\pom}w_k(x,r_0,t)^pdx\,dt-\int_0^{r_0}\int_{\pom}w_k^p(x,x_n,t)dX\,dt,\]
and hence
\begin{multline*}
    C(\lambda,\Lambda,p)\int_0^{r_0}\int_{\pom}\br{x_n-\frac{x_n^2}{r_0}}|w_k|^{p-2}\abs{\nabla w_k}^2dX\,dt\\
    \le \int_{\pom}w_k(x,r_0,t)^pdx\,dt
    +\int_{\pom}w_k(x,0,t)^pdx\,dt
    -\frac{2-C\delta^{1/2}}{r_0}\int_0^{r_0}\int_{\pom}w_k(x,x_n,t)^pdX\,dt\\+C\delta\int_{\partial\Omega}[N(\nabla u)^p+S(\nabla u)^p] dx\,dt.
\end{multline*}
Truncating the integral on the left-hand side to $[0,\frac{r_0}{2}]$ and summing over all $k<n$ we obtain for all small $\delta>0$ after we let $r_0\to\infty$:
\begin{multline*}
    \sum_{k<n}\int_0^\infty \int_{\pom}|w_k|^{p-2}\abs{\nabla w_k}^2x_n\,dX\,dt
    \le  C\int_{\pom}w_k(x,0,t)^pdx\,dt+C\delta\int_{\partial\Omega}[N(\nabla u)^p+S(\nabla u)^p] dx\,dt,
\end{multline*}
since by the assumed decay of $\nabla u$ we have that $\int_{\pom}w_k(x,r_0,t)^pdx\,dt\to 0$ as $r_0\to\infty$.
Hence by  \eqref{NS-T} it follows that
\begin{equation}          \label{NPe2xx}
\int_{\partial\Omega}N(\nabla_T u)^pdx\,dt\le  C\sum_{k<n}\int_{\pom}w_k(x,0,t)^pdx\,dt+C\delta\int_{\partial\Omega}[N(\nabla u)^p+S(\nabla u)^p] dx\,dt.
\end{equation}

It remains to establish estimates for $\int_{\pom}w_k(x,0,t)^pdx\,dt$. We again localize and work 
on one of the sets $\set{Q_{2r}^\ell}_{\ell=1}^\infty$, from the collection of disjoint parabolic cubes of sidelength $2r$ that covers $\pom$. With cutoffs $\zeta_\ell$ as above such that $(\zeta^2_\ell)_{\ell=1}^\infty$ is a partition of unity we consider another cutoff function 
 $\eta:[0,\infty)\to \mathbb R$ such that
$\eta(x_n)=1$ on $[0,r]$, $|\eta'|\le 5/r$ and its support is contained
in $[0,2r]$. Now let
\begin{equation} \label{cutoff}
\xi_\ell(X,t)=\xi_\ell(x,x_n,t)=\zeta^2_\ell(x,t)\eta(x_n).
\end{equation}

We work on estimating $\int_{\pom}w_k(x,0,t)^p\xi_\ell(x)dx\,dt$ in local
coordinates on $2Q_\ell\times (0,5r)$. We now drop the dependence on $\ell$ (working on a single parabolic cube). For each $k\le n-1$ we have
\begin{multline}
 \int_{{\mathbb
R}^{n}}|w_k(x,0,t)|^p\xi(x)dx\,dt=-\iint_{{\mathbb
R}^{n+1}_+}\partial_n(|w_k|^p\xi)(X)dX\,dt\\
 =-\iint_{{\mathbb
R}^{n+1}_+}|w_k|^{p-2}w_k(\partial_nw_k)\xi\,dX\,dt-\iint_{{\mathbb
R}^{n+1}_+}|w_k|^p\zeta^2\eta'\,dX\,dt=I+II.
\end{multline}

The term $II$ is
controlled by $\frac1r\iint_{K_Q}|\nabla u|^p$, where $K_Q=\{X=(x,x_n,t);(x,t)\in
2Q\text{ and }r\le x_n\le 2r\}$. We deal with the term $I$.
Since $\partial_nw_k=\partial_kw_n$, by writing \[w_n=\frac{a_{ni}}{a_{nn}}w_i-\sum_{i<n}\frac{a_{ni}}{a_{nn}}w_i=\frac{H}{a_{nn}}-\sum_{i<n}\frac{a_{ni}}{a_{nn}}w_i\] we have
\begin{multline}
I=-p\iint_{{\mathbb R}^{n+1}_+}|w_k|^{p-2}w_k(\partial_kw_n)\xi\,dX\,dt\\
=-p\iint_{{\mathbb
R}^{n+1}_+}|w_k|^{p-2}w_k\partial_k\left(\frac{H}{a_{nn}}\right)\xi\,dX\,dt+ p\sum_{i<n}\iint_{{\mathbb
R}^{n+1}_+}|w_k|^{p-2}w_k\partial_k\left(\frac{a_{ni}}{a_{nn}}w_i\right)\xi\,dX\,dt.\label{NPe3}
\end{multline}

The second term of (\ref{NPe3}) can be further written as
\begin{multline}
p\sum_{i<n}\iint_{{\mathbb
R}^{n+1}_+}|w_k|^{p-2}w_k\partial_k\left(\frac{a_{ni}}{a_{nn}}w_i\right)\xi\,dX\,dt\\
=p\sum_{i<n}\iint_{{\mathbb
R}^{n+1}_+}|w_k|^{p-2}w_kw_i\partial_k\left(\frac{a_{ni}}{a_{nn}}\right)\xi\,dX\,dt+\sum_{i<n}\iint_{{\mathbb
R}^{n+1}_+}\partial_i(|w_k|^{p})\frac{a_{ni}}{a_{nn}}\xi\,dX\,dt.\label{NPe4}
\end{multline}
We introduce $\partial_n x_n=1$ into both the terms of (\ref{NPe4})
and integrate by parts. This gives
\begin{multline}\label{NPe5}
-\sum_{i<n}\left[p\iint_{{\mathbb
R}^{n+1}_+}\partial_n\left(|w_k|^{p-2}w_kw_i\partial_k\left(\frac{a_{ni}}{a_{nn}}\right)\xi\right)
x_n\,dX\,dt+\iint_{{\mathbb
R}^{n+1}_+}\partial_n\left(\partial_i(|w_k|^{p})\frac{a_{ni}}{a_{nn}}\xi\right)
x_n\,dX\,dt\right]\\
=-\sum_{i<n}\Bigg[p\iint_{{\mathbb
R}^{n+1}_+}\partial_n(|w_k|^{p-2}w_k)w_i\partial_k\left(\frac{a_{ni}}{a_{nn}}\right)\xi
x_n\,dX\,dt+    \iint_{{\mathbb
R}^{n+1}_+}\partial_i(|w_k|^{p})\partial_n\left(\frac{a_{ni}}{a_{nn}}\right)\xi
x_n\,dX\,dt      \\
+p\iint_{{\mathbb
R}^{n}_+}|w_k|^{p-2}w_k(\partial_nw_i)\partial_k\left(\frac{a_{ni}}{a_{nn}}\right)\xi
x_n\,dX\,dt  +\\
p\iint_{{\mathbb
R}^{n+1}_+}|w_k|^{p-2}w_kw_i\partial_k\left(\frac{a_{ni}}{a_{nn}}\right)\zeta^2\eta'
x_n\,dX\,dt+\iint_{{\mathbb R}^{n+1}_+}\partial_i(|w_k|^{p})\frac{a_{ni}}{a_{nn}}\zeta^2\eta' x_n\,dX\,dt\\
-p\iint_{{\mathbb
R}^{n}_+}|w_k|^{p-2}w_kw_i\partial_n\partial_k\left(\frac{a_{ni}}{a_{nn}}\right)\xi
x_n\,dX\,dt+\iint_{{\mathbb
R}^{n}_+}\partial_n\partial_i(|w_k|^{p})\frac{a_{ni}}{a_{nn}}\xi
x_n\,dX\,dt\Bigg].
\end{multline}

In the last two terms we integrate by parts one more time, moving the $\partial_k$ or $\partial_i$ derivatives.
This gives
\begin{multline}\label{NPe6}
\sum_{i<n}\Bigg[p\iint_{{\mathbb
R}^{n}_+}\partial_k(|w_k|^{p-2}w_k)w_i\partial_n\left(\frac{a_{ni}}{a_{nn}}\right)\xi
x_n\,dX\,dt+p\iint_{{\mathbb R}^{n+1}_+}|w_k|^{p-2}w_k(\partial_k
w_i)\partial_n\left(\frac{a_{ni}}{a_{nn}}\right)\xi
x_n\,dX\,dt                  \\
+ \iint_{{\mathbb R}^{n+1}_+}\partial_n
(|w_k|^p)\partial_i\left(\frac{a_{ni}}{a_{nn}}\right)\xi
x_n\,dX\,dt+
p\iint_{{\mathbb
R}^{n}_+}|w_k|^{p-2}w_kw_i\partial_n\left(\frac{a_{ni}}{a_{nn}}\right)(\partial_k\zeta^2)\eta
x_n\,dX\,dt\\+\iint_{{\mathbb R}^{n+1}_+}\partial_n
(|w_k|^p)\frac{a_{ni}}{a_{nn}}(\partial_i\zeta^2)\eta
x_n\,dX\,dt\Bigg]
\end{multline}

The first three terms on the right-hand side of both (\ref{NPe5}) and
(\ref{NPe6}) enjoy the same bound as \eqref{NPe10} we have already done above.

The fourth term on right-hand side of (\ref{NPe5}) can be estimated
by
\begin{multline}\label{NPe11}
C\iint_{2Q\times[r,2r]}|\nabla u|^{p}|\nabla
A|\textstyle{\frac{x_n}{r}}\,dX\,dt\\
\le \left(\iint_{2Q\times[r,2r]}|\nabla
u|^{p}{\textstyle\frac{x_n}{r^2}}\displaystyle\,dX\,dt\right)^{1/2}\left(\iint_{2Q\times[0,2r]}|\nabla
u|^{p}|\nabla
A|^2x_n\,dX\,dt\right)^{1/2}\\\le\left(\int_{2Q}
N(\nabla u)^p(x)\,dx\,dt \right)^{1/2}  \|\mu\|_{C}^{1/2} \|N(\nabla
u)\|^{p/2}_{L^p(2Q)}=\|\mu\|_{C}^{1/2} \|N(\nabla u)\|^{p}_{L^p(2Q)}.
\end{multline}

The fifth term on right-hand side of (\ref{NPe5}) can be estimated
by
\begin{multline}\label{NPe12}
C\iint_{2Q\times[r,2r]}|\nabla u|^{p-1}|\nabla^2
u|\textstyle{\frac{x_n}{r}}\,dX\,dt\lesssim \displaystyle r^{n+2}
 \left(\fiint_{2Q\times[r,2r]}|\nabla
u|^{p}\displaystyle\,dX\,dt\right)^{1/2}\times\\\left(\fiint_{2Q\times[0,2r]}|\nabla u|^{p-2}|\nabla^2
u|^{2}\,dX\,dt\right)^{1/2}\le
\displaystyle r^{n+2}
 \left(\fiint_{2Q\times[r,2r]}|\nabla
u|^{p}\displaystyle\,dX\,dt\right)^{1/2}\times\\\left(\fint_{2Q}N(\nabla u)dx\,dt\right)^{(p-2)/2}\left(\fiint_{2Q\times [r,2r]}|\nabla ^2 u|^2dX\,dt\right)^{1/2}\\
 \le \frac{C_\delta}r\iint_{\widetilde{K_Q}}|\nabla u|^p\,dX\,dt+\delta\int_{2Q}N(\nabla u)^{p}dx\,dt .
\end{multline}
To get the last line we used some standard parabolic estimates away
from the boundary (such as Caccioppoli inequality for the second gradient which holds when $|\nabla
A|r\le C$). Here and in the sequel, $\widetilde{K_Q}$ is a small enlargement of $K_Q$.

We return now to the first term on the right-hand side of (\ref{NPe3}). 
By introducing $\dr_nx_n=1$ and integrating by parts, we have
\begin{multline}\label{NPe7} -p\iint_{\mathbb{R}_+^{n+1}}|w_k|^{p-2}w_k\partial_k\left(\frac{H}{a_{nn}}\right)\xi\,dX\,dt=-p\iint_{\mathbb{R}_+^{n+1}}|w_k|^{p-2}w_k\partial_k\left(\frac{H}{a_{nn}}\right)\xi(\partial_n
x_n)\,dX\,dt\\
= p\iint_{\mathbb{R}_+^{n+1}}\partial_n(|w_k|^{p-2}w_k)\partial_k\left(\frac{H}{a_{nn}}\right)\xi
x_n\,dX\,dt+ p\iint_{\mathbb{R}_+^{n+1}}|w_k|^{p-2}w_k\partial_k\left(\frac{H}{a_{nn}}\right)\zeta^2\eta'
x_n\,dX\,dt\\
+ p\iint_{\mathbb{R}_+^{n+1}}|w_k|^{p-2}w_k\partial_n\partial_k\left(\frac{H}{a_{nn}}\right)\xi
x_n\,dX\,dt,
\end{multline}
where the last term further yields after moving the  $\partial_k$ derivative:
\begin{equation}
\label{NPe8} -p\iint_{\mathbb{R}_+^{n+1}}\partial_k(|w_k|^{p-2}w_k)\partial_n\left(\frac{H}{a_{nn}}\right)\xi
x_n\,dX\,dt- p\iint_{\mathbb{R}_+^{n+1}}|w_k|^{p-2}w_k\partial_n\left(\frac{H}{a_{nn}}\right)(\partial_k\zeta^2)\eta
x_n\,dX\,dt.
\end{equation}
If the derivative in the first two terms on the right-hand side of
(\ref{NPe7}) and (\ref{NPe8}) falls on $a_{nn}^{-1}$, we obtain terms we have already bounded above (see
(\ref{NPe10}) and (\ref{NPe11})). If the derivative falls on $H$
the first term on the right-hand side of both (\ref{NPe7}) and
(\ref{NPe8}) is bounded by
\begin{multline*}
 C\iint_{\mathbb{R}_+^{n+1}}|w_k|^{p-2}|\nabla
w_k||\nabla H|\xi x_n\,dX\,dt\\
\le C\left(\iint_{\mathbb{R}_+^{n+1}}|w_k|^{p-2}|\nabla
w_k|^2\xi x_n\,dX\,dt\right)^{1/2} \left(\iint_{\mathbb{R}_+^{n+1}}|w_k|^{p-2}|\nabla H|^2\xi x_n\,dX\,dt\right)^{1/2}\\
\lesssim \left(\int_{2Q}N^{p-2}(w_k)(x,t)\int_{\Gamma(x,t)}|\nabla
w_k(Y,s)|^2y_n^{-n} dYds\,dx\,dt\right)^{1/2}\left(\iint_{\mathbb{R}_+^{n+1}}|\nabla_T
u|^{p-2}|\nabla
H|^2 \xi x_n\,dX\,dt\right)^{1/2}\\
=\left(\int_{2Q}N^{p-2}(w_k)(x,t)
S^2(w_k)(x,t)\,dx\,dt\right)^{1/2}\left(\iint_{\mathbb{R}_+^{n+1}}|\nabla_T
u|^{p-2}|\nabla
H|^2\xi x_n\,dX\,dt\right)^{1/2}\\
\le
C_\delta\iint_{\Omega}|\nabla_T
u|^{p-2}|\nabla H|^2 \xi x_n\,dX\,dt+\delta\left[\|N(w_k)\|_{L^p(2Q)}^{p}+\|S(w_k)\|^p_{L^p(2Q)}\right].
\end{multline*}

If the derivative falls on $H$ in the second term of (\ref{NPe7}),
we get terms of the same form as (\ref{NPe11}) and (\ref{NPe12}).

It follows that for all $k\le n-1$ we have
\begin{multline}
\label{NPe13} \int_{{\mathbb R}^{n}}w_k(x,0,t)^p\zeta^2(x,t)dx\,dt
\le C_\delta\iint_{\Omega}|\nabla_T
u|^{p-2}|\nabla H|^2 \xi x_n\,dX\,dt\\+\delta\left[\|N(\nabla u)\|_{L^p(2Q)}^{p}+\|S(w_k)\|^p_{L^p(2Q)}\right]
+
\frac{C}r\iint_{\widetilde{K_Q}}|\nabla u|^p\,dX\,dt+E.
\end{multline}
Here $E$ denotes remainder terms; these are the last two terms of
(\ref{NPe6}) and the last term of (\ref{NPe8}) when the derivative
falls on $H$. We now sum (\ref{NPe13}) over all $k\le n-1$ and
also sum over all coordinate patches $Q_\ell$. We notice that the
error terms $E$ will completely cancel out as $\sum_\ell (\partial_k
\zeta^2_\ell)=0$. This yields a global estimate for all $r>0:$
\begin{multline}\label{NPe13a}
\int_{\partial\Omega}|\nabla_T u|^pdx\,dt\le
C\iint_{\Omega}|\nabla_T
u|^{p-2}|\nabla H|^2 x_n\,dX\,dt\\+C\delta\left[\|N(\nabla u)\|_{L^p(\partial\Omega)}^{p}+\|S(\nabla_Tu)\|^p_{L^p(\partial\Omega)}\right]+\frac{C}r\iint_{\partial\Omega\times (r/2,3r/2)}|\nabla u|^p\,dX\,dt.
\end{multline}
As before since for the energy solutions
$\int_{\pom}\nabla u(x,r,t)^pdx\,dt\to 0$ as $r\to\infty$, it follows that the last term converges to zero and hence
by \eqref{NPe2xx} the desired estimate
(\ref{NPe1}) holds.
\end{proof}

In the following lemma we again assume that $\Omega=\mathbb R^n_+\times\R$.
\begin{lemma}\label{NPl2} Let $p\ge 2$ be an integer, $k$ be an integer such that $0\le k\le p-2$.
Let $A$ be a matrix with bounded measurable coefficients that satisfies the ellipticity condition, the Carleson condition for coefficient matrix $A$.   Let $\LL=-\dr_t+\divg(A\nabla\cdot)$. Then there exists $\delta>0$
such that if $\|\mu\|_{C}<\delta$ then for some
$K=K(\lambda,\Lambda,n,p)>0$ we have for an energy solution $\mathcal Lu=0$ in $\Omega$ the estimate:

\begin{multline} \label{NPe14}
\iint_{\Omega}|\nabla_T u|^{p-k-2}|H|^{k}|\nabla
H|^2x_n\,dX\,dt\le K\int_{\partial\Omega}|H|^{p}\,dx\,dt\\
+K(p-k-2)\iint_{\Omega}|\nabla_T u|^{p-k-3}|H|^{k+1}|\nabla
H|^2x_n\,dX\,dt+K\delta\left[\|N(\nabla u)\|_{L^p(\partial\Omega)}^{p}+\|S(\nabla u)\|^p_{L^p(\partial\Omega)}\right].
\end{multline}
\end{lemma}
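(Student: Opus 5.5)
We will prove \eqref{NPe14} by an integration by parts in $x_n$ against the weight $x_n$, parallel to the first half of the proof of Lemma~\ref{NPl1}, but with $H$ in the r\^ole of the solution. We use the PDE \eqref{eq.Hpde2} for $H$, with $b_{ij}=a_{ji}/a_{nn}$ so that $b_{nn}=1$. Write $v=\nabla_T u$ and set $\Phi:=|v|^{p-k-2}|H|^{k}$, which is nonnegative and smooth in the relevant variables since $p-k-2$ and $k$ may be taken even or handled by $|\cdot|$. By ellipticity of $B=(b_{ij})$,
\[
\lambda' \iint_\Omega \Phi|\nabla H|^2x_n\zeta^2 \le \iint_\Omega \Phi\, B\nabla H\cdot\nabla H\,x_n\zeta^2 .
\]
We rewrite $\Phi B\nabla H\cdot\nabla H$ as $\tfrac12 B\nabla (H^2)\cdot\nabla\cdot$-type expressions. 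Concretely, we write $\Phi\nabla H\cdot B\nabla H = \divg(\Phi H B\nabla H)-H\,B\nabla H\cdot\nabla\Phi - \Phi H\divg(B\nabla H)$ and integrate against $x_n\zeta^2$, with $\zeta$ a cutoff at scale $r$ and a partition of unity as in Lemma~\ref{NPl1}.

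The divergence term, after moving the derivative onto $x_n\zeta^2$, produces three contributions. The first is a boundary-type term $-\iint \Phi H\, b_{nj}\partial_j H\,\zeta^2$. Because $b_{nn}=1$, its leading part is $\iint \Phi H\partial_n H$. For $k=p-2$ this equals $-\frac1p\partial_n$ of $|H|^p$ and gives $\frac1p\int_{\pom}|H|^p$. For $k<p-2$ we instead write $\Phi H\partial_nH=\frac{1}{k+2}|v|^{p-k-2}\partial_n(|H|^{k+2})$ and integrate by parts once more in $x_n$. This yields $\frac{1}{k+2}\int_{\pom}|v|^{p-k-2}|H|^{k+2}$, which Young bounds by $\epsilon\int_{\pom}|v|^p+C_\epsilon\int_{\pom}|H|^p$; the $\int|v|^p$ piece is controlled through \eqref{NPe13a}. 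It also yields a solid term containing $\partial_n|v|^{p-k-2}$, of the form $(p-k-2)\iint|v|^{p-k-3}|\nabla v||H|^{k+2}$. The $j<n$ parts are handled as in \eqref{eqwq}, and the cutoff terms cancel on summing the partition of unity via \eqref{eq.pou0}.

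The term $H B\nabla H\cdot\nabla\Phi$ splits into a piece with $k|H|^{k-1}|v|^{p-k-2}|\nabla H|^2 H$, which is $\ge0$-comparable to the left side and is moved there (its sign is favourable after combining with the ellipticity), and a piece $(p-k-2)|v|^{p-k-3}|H|^{k+1}|\nabla v||\nabla H|$. For the latter, Cauchy--Schwarz splits $|\nabla v||\nabla H|\le \epsilon'|\nabla H|^2 +C|\nabla v|^2$ weighted appropriately. The $|\nabla H|^2$ part with weight $|v|^{p-k-3}|H|^{k+1}$ is exactly the second term on the right of \eqref{NPe14}. The $|\nabla v|^2$ part is bounded using $|H|\lesssim|\nabla u|$ by $\iint |\nabla u|^{p-2}|\nabla^2u|^2x_n\lesssim \int N^{p-2}S^2$, and we expect it can also be reduced to that second term plus admissible errors. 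This bookkeeping is the main obstacle: the factor $(p-k-2)$ must appear in front of every term not already of Carleson type, so that the induction in \eqref{pf.iter} closes.

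The term $\Phi H\divg(B\nabla H)$ is replaced via \eqref{eq.Hpde2} by $\Phi H(a_{nn}^{-1}\partial_tH+T)$. The $\partial_t$ piece becomes $\frac1{k+2}a_{nn}^{-1}|v|^{p-k-2}\partial_t|H|^{k+2}$. After integrating by parts in $t$, the derivative falls on $a_{nn}^{-1}$ and is treated as the term $J$ in Lemma~\ref{NPl1}, using $x_n=\frac12\partial_n x_n^2$ and the area bound \eqref{eq.sqrAAL2}. Alternatively it falls on $|v|^{p-k-2}$, which produces $(p-k-2)$ times an area-type term handled by \eqref{A<S+cN.Lp}. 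The $T$ terms all carry $\nabla A$ or $\partial_tA\, x_n$. We treat them exactly as $J_1$--$J_4$ in Lemma~\ref{lem.NleSp3-2}, using Cauchy--Schwarz, the Carleson condition and the estimate \eqref{NPe10}, which gives $K\delta(\|N(\nabla u)\|_p^p+\|S(\nabla u)\|_p^p)$. Finally, letting $r\to\infty$ and using the decay of energy solutions removes the remaining boundary terms at height $r$, and we absorb the small $\epsilon$ terms into the left side.
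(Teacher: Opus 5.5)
Your skeleton matches the paper's. The identity for $\Phi\,B\nabla H\cdot\nabla H$, with the $k\Phi\,B\nabla H\cdot\nabla H$ piece moved to the left, is the same as writing $|H|^k\partial_iH=\frac1{k+1}\partial_i(|H|^kH)$. Your treatment of the boundary term at $x_n=0$ and of the Carleson-type $T$-terms is also in line with the paper. The gap is the ``bookkeeping'' you postpone, and it is more than bookkeeping. Every term quadratic in second derivatives that carries no Carleson factor must contain a factor $\nabla H$. Only then can Cauchy--Schwarz put the large constant on the admissible term $(p-k-2)\iint|\nabla_Tu|^{p-k-3}|H|^{k+1}|\nabla H|^2x_n$ and the small one on $\|N(\nabla u)\|_p^p+\|S(\nabla u)\|_p^p$. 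Your Young split in the cross term goes the wrong way. Bounding $C\iint|\nabla_Tu|^{p-k-3}|H|^{k+1}|\nabla\nabla_Tu|^2x_n$ by $C\int N(\nabla u)^{p-2}S(\nabla u)^2$ gives a term with no small factor. It is not of the form $K\delta[\cdots]$ and cannot be absorbed when \eqref{NPe14} is used in \eqref{pf.iter}. The split must be reversed, giving $\delta\big[\|N(\nabla u)\|_p^p+\|S(\nabla u)\|_p^p\big]+C_\delta(p-k-2)^2\iint|\nabla_Tu|^{p-k-3}|H|^{k+1}|\nabla H|^2x_n$.

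Two of your terms contain no $\nabla H$ and are left unresolved.

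(i) Consider the solid term produced by $\partial_n|\nabla_Tu|^{p-k-2}$ in your second integration by parts in $x_n$. It carries no power of $x_n$. Since $\iint|\nabla u|^{p-1}|\nabla^2u|$ scales like $\iint|\nabla u|^px_n^{-1}$, Cauchy--Schwarz against the square function cannot estimate it. The paper commutes $\partial_n\nabla_Tu=\nabla_T\partial_nu$ and substitutes $\partial_nu=a_{nn}^{-1}H-\sum_{j<n}a_{nn}^{-1}a_{nj}\partial_ju$. It then inserts $\partial_nx_n=1$ and integrates by parts again. What remains are Carleson terms of type \eqref{NPe20}, cutoff errors, and the terms \eqref{NPe22}, \eqref{nov1}, each containing $\nabla H$ and the factor $p-k-2$.

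(ii) When $\partial_t$ falls on $|\nabla_Tu|^{p-k-2}$, the term $\iint|\nabla_Tu|^{p-k-3}|\partial_t\nabla_Tu||H|^{k+2}x_n$ is not an area-function term. Cauchy--Schwarz against $|\partial_t\nabla u|^2x_n^3$ leaves the divergent $\iint|\nabla u|^px_n^{-1}$. In any case \eqref{A<S+cN.Lp} would give no small factor. The missing idea is to use the equation of $w_k=\partial_ku$. With $q=p-k-2$, this gives $\partial_t|w_k|^q=\divg(A\nabla|w_k|^q)-q|w_k|^{q-2}w_k\divg((\partial_kA)\nabla u)-q(q-1)|w_k|^{q-2}A\nabla w_k\cdot\nabla w_k$. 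The last term has a favourable sign and is dropped, and the middle one is Carleson. After integrating the divergence by parts, the tangential pieces pair $\nabla|w_k|^q$ with $\nabla H$ as in \eqref{eq3.51}. The normal piece is rewritten through $a_{nj}\partial_j|w_k|^q=q|w_k|^{q-2}w_k[\partial_kH-(\partial_ka_{nj})\partial_ju]$.

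A smaller point: bound $\int_{\pom}|\nabla_Tu|^p$ directly by $\int_{\pom}N(\nabla u)^p$. Routing it through \eqref{NPe13a} introduces $\iint|\nabla_Tu|^{p-2}|\nabla H|^2x_n$, which is not on the right of \eqref{NPe14} when $k>0$.
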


\noindent {\it Proof of Lemma \ref{NPl2}.} Fix any $k\in\{0,1,2,\dots,p-2\}$.
Let us choose a cutoff function $\xi$ as in (\ref{cutoff}). We aim to bound
\begin{eqnarray} \nonumber
\iint_{\R^{n-1}\times (0,r)\times \R}|\nabla_T u|^{p-k-2}|H|^{k}|\nabla
H|^2 x_n\,dX\,dt,
\end{eqnarray}
and then let $r\to\infty$.
Clearly, it suffices to get bounds for
\begin{eqnarray} \nonumber
I:=\iint_{\Omega}|\nabla_T
u|^{p-k-2}|H|^{k}b_{ij}(\partial_iH)(\partial_jH)\xi x_n\,dX\,dt
\end{eqnarray}
for some matrix $B$ (to be specified later) satisfying a uniform ellipticity condition and then sum over the decomposition of $\R^n=\bigcup_\ell Q_\ell$.

We integrate $I$ by parts. Since  $|H|^k\partial_i H=1/(k+1)\partial_i(|H|^kH)$, this gives
\begin{multline} \label{NPe17} 
I=-\frac1{k+1} \iint_{\Omega}|\nabla_T
u|^{p-k-2}|H|^{k}H\partial_i (b_{ij}\partial_jH)\xi x_n\,dX\,dt\\
 -\frac1{k+1} \iint_{\Omega}|\nabla_T
u|^{p-k-2}|H|^{k}Hb_{nj}(\partial_jH)\xi\,dX\,dt\\
\quad\qquad-\frac1{k+1} \iint_{\Omega}|\nabla_T
u|^{p-k-2}|H|^{k}Hb_{nj}(\partial_jH)(\partial_i\xi) x_n\,dX\,dt\\
 -\frac{p-k-2}{k+1} \iint_{\Omega}|\nabla_T
u|^{p-k-4}(\nabla_Tu\cdot
\partial_i(\nabla_T u))|H|^{k}Hb_{ij}(\partial_jH)\xi x_n\,dX\,dt .
\end{multline}
The second term only appears in (\ref{NPe17}) if $i=n$ as $\partial_nx_n=1$. 

Consider first the third term of (\ref{NPe17}) for $i=n$. As $|\eta'|\le
2/r$ and $\eta'=0$ on $[0,r]$ we have that this term is bounded by
\begin{multline} \label{NPe17a} 
\iint_{Q \times [r,2r]}|\nabla_Tu|^{p-k-2}|H|^{k+1}|\nabla
H|\textstyle\frac{x_n}{r}\displaystyle\,dX\,dt \\\le \delta \int_{2Q}
N^{p}(\nabla u)\,dx\,dt +
\frac{C_\delta}{r}\iint_{\widetilde{K_Q}}|\nabla u|^p\,dX\,dt,
\end{multline}
since this term is of the same type as  (\ref{NPe12}) we have estimated previously. When $i < n$, the third terms of  \eqref{NPe17}
will cancel each other out after we sum over the indices $\ell$ in the partition of unity $(Q_\ell)_\ell$.

In order  to handle  the second term of
(\ref{NPe17}) we make a choice for the matrix $B$ such that $b_{nn}=1$.
 Then the second term of (\ref{NPe17}) for $j=n$ looks as follows:
\begin{multline}\label{NPe18}
  -\frac{1}{(k+1)(k+2)} \iint_{\Omega}|\nabla_T
u|^{p-k-2}(\partial_n|H|^{k+2})\xi\,dX\,dt\\
= -\frac{1}{(k+1)(k+2)}
\iint_{\Omega}\partial_n(|\nabla_T
u|^{p-k-2}|H|^{k+2}\xi)\,dX\,dt\\
\qquad+\frac{1}{(k+1)(k+2)}
\iint_{\Omega}\partial_n(|\nabla_T
u|^{p-k-2})|H|^{k+2}\xi\,dX\,dt\\
+\frac{1}{(k+1)(k+2)} \iint_{\Omega}|\nabla_T
u|^{p-k-2}|H|^{k+2}(\partial_n\xi)\,dX\,dt.
\end{multline}

Here the last term again can be estimated by a solid integral
$\frac{C}r\iint_{\widetilde{K_Q}}|\nabla u|^p\,dX\,dt$. The first term is equal to a boundary
integral
\begin{multline} \nonumber \frac{1}{(k+1)(k+2)}
\int_{\partial\Omega}|\nabla_T u|^{p-k-2}|H|^{k+2}\xi\,dX\,dt\le C_\delta \int_{2Q}|H|^pdx\,dt\\+
\delta \int_{2Q}|\nabla_Tu|^pdx\,dt\le C_\delta \int_{2Q}|H|^pdx\,dt+
\delta \int_{2Q}N(\nabla u)^pdx\,dt.
\end{multline}

It remains to deal with the second term of (\ref{NPe18}). We
differentiate and change the order of derivatives $\partial_n$ and
$\nabla_T$:
\begin{eqnarray} &&\label{NPe19}\frac{p-k-2}{(k+1)(k+2)}
\iint_{\Omega}|\nabla_T u|^{p-k-4}(\nabla_Tu\cdot
\nabla_T\partial_n u)|H|^{k+2}\xi \,dX\,dt.
\end{eqnarray}
We reintroduce the co-normal derivative $H$ as $\partial_n
u=\frac{H}{a_{nn}}-\sum_{j<n}\frac{a_{nj}}{a_{nn}}w_j$. We also
insert the term $\partial_n x_n=1$ into both integrals. Then we
integrate by parts again in the $\partial_n$ derivative. When
exactly one derivative falls on the coefficients (either $a_{nn}$
or $\frac{a_{nj}}{a_{nn}}$) we obtain terms bounded by
\begin{equation}\label{NPe20}
\iint_{2Q\times(0,2r)}|\nabla A||\nabla u|^{p-1}|\nabla^2 u|x_n\,dX\,dt
\end{equation}
which is the term of type (\ref{NPe10}) and is bounded by  $\|\mu\|_{C}[\|S(\nabla u)\|_{L^p(2Q)}^p+\|N(\nabla
u)\|_{L^p(2Q)}^{p}]$.

If both the $\partial_n$ and $\nabla_T$ derivatives fall on
coefficients, there are two possibilities. The first possibility
is that they fall on the same coefficient and so then we do a
further integration by parts in $\nabla_T$ moving this derivative
on other terms. This again will yield a term of the form (\ref{NPe20}).
The second possibility is that they fall on separate coefficients 
and the term has a simple bound using the Carleson condition by 
$\|\mu\|_{C}\|N(\nabla u)\|_{L^p(2Q)}^p$.

We obtain another error term when
$\partial_n$ falls on $\xi$, however in that case we get  terms of
type (\ref{NPe17a})  we have handled before.

Let us deal with the term
when both derivatives fall on $H$. In that case we have
\begin{eqnarray} &&-\frac{p-k-2}{(k+1)(k+2)}
\iint_{2Q\times(0,2r)}\frac1{a_{nn}}|\nabla_T
u|^{p-k-4}(\nabla_Tu\cdot \nabla_T\partial_n H)|H|^{k+2}\xi x_n\,dX\,dt.
\end{eqnarray}
We move the $\nabla_T$ derivative off $\partial_n H$. We can get a
term of type (\ref{NPe20}) and two terms that can be dominated by
\begin{eqnarray} &&\label{NPe22}C(p-k-2)
\iint_{2Q\times(0,2r)}|\nabla_T u|^{p-k-4}|\nabla(\nabla_Tu)||\nabla
H||H|^{k+2} x_n\,dX\,dt\\
&+&C(p-k-2)\nonumber\iint_{2Q\times(0,2r)}|\nabla_T u|^{p-k-3}|\nabla
H|^2|H|^{k+1} x_n\,dX\,dt.
\end{eqnarray}
Also, when $\nabla_T$ lands on $\xi$ we get error terms which will cancel when we sum over all $Q_\ell$. Observe also that the last term of (\ref{NPe17}) can be controlled by
\begin{equation} \label{nov1}
C(p-k-2) \iint_{\Omega_{2r}}|\nabla_T u|^{p-k-3}|\nabla(\nabla_Tu)||\nabla
H||H|^{k+1} x_n\,dX\,dt
\end{equation}
We now deal with the terms arising from
$-\sum_{j<n}\frac{a_{nj}}{a_{nn}}w_j$. Here we write
\[
\nabla_T\left(\sum_{j<n}\frac{a_{nj}}{a_{nn}}w_j\right) = \sum_{j<n}\nabla_T\left(\frac{a_{nj}}{a_{nn}}\right)\partial_ju + \sum_{j<n}\frac{a_{nj}}{a_{nn}}\partial_j(\nabla_Tu).
\]
The contribution of the first term here, when substituted in \eqref{NPe19}, can be dealt with by again introducing the factor $\partial_n x_n$ and integrating by parts. When $\partial_n$ lands on $\nabla_T(a_{nj}/a_{nn})$, we can move the tangential derivatives off by again integrating by parts. All this yields terms of the form we have handled previously (such as \eqref{NPe20}). Substituting the second term in \eqref{NPe19} yields
\begin{eqnarray} &&\frac{1}{(k+1)(k+2)}\sum_{j<n}
\iint_{\Omega}\frac{a_{nj}}{a_{nn}}\partial_j(|\nabla_T
u|^{p-k-2})|H|^{k+2}\xi (\partial_n x_n)\,dX\,dt.
\end{eqnarray}
Moving $\partial_n$ across using integration by parts and if necessary moving $\partial_j$
we obtain terms bounded by \eqref{NPe20}, \eqref{NPe22} or \eqref{nov1}. Thus the analysis of the
second term of (\ref{NPe17}) for $j=n$ reduces to controlling \eqref{NPe22} and \eqref{nov1}, a task which we will postpone for now. When $j<n$ in the second
term of (\ref{NPe17}) we again introduce $(\partial_n x_n)$. This
gives
\begin{eqnarray}
&&\nonumber -\iint_{\Omega}|\nabla_T
u|^{p-k-2}b_{nj}\partial_j(|H|^{k+2})\xi (\partial_n x_n)\,dX\,dt
\end{eqnarray}
We integrate by parts. When $\partial_n$ falls on $|\nabla_T
u|^{p-k-2}$ we can dominate such a term by \eqref{nov1},
when $\partial_n$ falls on $b_{nj}$ we use Cauchy-Schwarz, Carleson condition and
\eqref{NPe20} (i.e., we must choose matrix $B$ whose coefficients also satisfy the Carleson condition). If $\partial_n$ hits
$\xi$ we get terms which can be bounded by \eqref{NPe11} and \eqref{NPe12}. Finally the remaining term
is
\begin{eqnarray}
&&\nonumber \iint_{\Omega}|\nabla_T
u|^{p-k-2}b_{nj}\partial_j\partial_n(|H|^{k+2})\xi x_n\,dX\,dt.
\end{eqnarray}
We integrate by parts again in $\partial_j$ giving us terms
of type \eqref{NPe20}, \eqref{NPe22} and \eqref{nov1}. The only remaining terms
we have not yet bounded are the first term of \eqref{NPe17},
\eqref{NPe22} and \eqref{nov1}. The second term of (\ref{NPe22}) is already of
desired form (see right-hand side of (\ref{NPe14})). By the
the Cauchy-Schwarz inequality, the first term of (\ref{NPe22}) can be bounded by
\begin{multline*}  C(p-k-2)\left(
\iint_{\Omega}|\nabla_T u|^{p-k-3}|\nabla
H|^2|H|^{k+1} \xi x_n\,dX\,dt\right)^{1/2}\times\\
\left( \iint_{\Omega}|\nabla_T
u|^{p-k-5}|\nabla(\nabla_Tu)|^2|H|^{k+3}
\xi x_n\,dX\,dt\right)^{1/2}\\
\le C(p-k-2)\left( \iint_{\Omega}|\nabla_T
u|^{p-k-3}|\nabla H|^2|H|^{k+1}\xi x_n\,dX\,dt\right)^{1/2}\times
\\\|N(\nabla
u)\|^{p/2-1}_{L^p(2Q)}\|S(\nabla
u)\|_{L^p(2Q)}.
\end{multline*}
The last line can be further bounded by
$$\delta[\|N(\nabla
u)\|^{p}_{L^p(2Q)}+\|S(\nabla
u)\|^{p}_{L^p(2Q)}] +
C_\delta(p-k-2)^2\iint_{\Omega}|\nabla_T u|^{p-k-3}|\nabla
H|^2|H|^{k+1} \xi x_n\,dX\,dt.$$ 
Term \eqref{nov1} can be dealt with in a very similar fashion. We summarize what we have
so far. After summing these inequalities over all cubes $Q_\ell$ we have that

\begin{multline} \label{NPe24}
\iint_{\Omega}|\nabla_T u|^{p-k-2}|H|^{k}(B\nabla H\cdot\nabla H)\,\eta x_n\,dX\,dt\\ \le K(p-k-2)
\iint_{\Omega}|\nabla_T u|^{p-k-3}|H|^{k+1}|\nabla
H|^2\eta x_n\,dX\,dt\\+K\int_{\partial\Omega}|H|^{p}\,dx\,dt+\frac{C}{r}\iint_{\partial\Omega\times (r/2,3r/2)}|\nabla
u|^p\,dX\,dt\\
-\frac{1}{k+1} \iint_{\Omega}|\nabla_T
u|^{p-k-2}|H|^{k}H(\widetilde{L}H)\eta x_n\,dX\,dt.
\end{multline}
Here $\widetilde{L}H=\mbox{ div}(B\nabla H)$.  Clearly, (\ref{NPe24}) is the desired estimate
(\ref{NPe14}) modulo the last two terms. The penultimate term is harmless since $\int_{\partial\Omega}|\nabla u|^p(\cdot,r,\cdot)\to 0$ as $r\to\infty$ and hence this term gets eliminated in the limit.
\vglue2mm

Let us consider now the last term of (\ref{NPe24}). By the calculation in subsection \ref{ss1} we see that by inserting (\ref{eq.Hpde2}) into the last term of (\ref{NPe24}) we obtain several terms. Consider first the terms that do not contain $\partial_t$, i.e., those that arise from the second line of \eqref{eq.Hpde2}. When two derivatives hit one of the coefficients we get terms of the form
\begin{eqnarray} &&\label{NPe27}
\iint_{\Omega}\sum_{j<n}[b_{ij}(\partial_i\partial_j
a_{ij})(\partial_k u)-b_{kn}(\partial_k\partial_j
a_{ji})(\partial_i u)]|\nabla_T u|^{p-k-2}|H|^{k}H\eta x_n\,dX\,dt
\end{eqnarray}
and the remaining ones can be bounded by
\begin{eqnarray} &&\label{NPe28}
\iint_{\Omega}|\nabla u|^{p-1}[|\nabla u||\nabla A||\nabla
B|+|\nabla^2 u||\nabla A||B|+|\nabla^2 u||\nabla B||A|]\eta x_n\,dX\,dt.
\end{eqnarray}
The terms in (\ref{NPe27}) have two derivatives on coefficients $a_{ij}$
however one is $\partial_j$ and $j<n$. We therefore integrate by
parts in $\partial_j$. This yields additional terms, but all are
of the form (\ref{NPe28}). However, by an estimate similar to
(\ref{NPe20}) we get that all the terms of (\ref{NPe28}) are
smaller than $C\|\mu\|_{C}\int_{\partial\Omega} [N^p(\nabla
u)+S^p(\nabla
u)]\,dx\,dt$. Hence for $\|\mu\|_{C}<\delta$ these terms are as expected.

It remains to consider terms arising from the first line of (\ref{eq.Hpde2}). We omit the first term containing $\partial_t H$, the remaining terms give rise to the following terms
\begin{multline}\label{eq3.44}
-\frac{1}{2(k+1)}\iint_\Omega |\nabla_Tu|^{p-k-2}|H|^kH\left[(\partial_i b_{in})\partial_tu+(\partial_ta_{nn}^{-1})H-(\partial_tb_{in})\partial_iu\right]\eta (\partial_n x_n^2)dX\,dt\\
=\frac{1}{2(k+1)}\Bigg[\iint_\Omega |\nabla_Tu|^{p-k-2}|H|^kH\left[(\partial_i b_{in})\partial_tu+(\partial_ta_{nn}^{-1})H-(\partial_tb_{in})\partial_iu\right]\eta'  x_n^2 dX\,dt\\
+\iint_\Omega \partial_n(|\nabla_Tu|^{p-k-2}|H|^kH)(\partial_i b_{in})\partial_tu\,\eta x_n^2dX\,dt
+\iint_\Omega \partial_n(|\nabla_Tu|^{p-k-2}|H|^{k+2})(\partial_t a_{nn}^{-1})\,\eta x_n^2dX\,dt\\
-\iint_\Omega \partial_n(|\nabla_Tu|^{p-k-2}|H|^{k}H\partial_i u)(\partial_t b_{in})\,\eta x_n^2dX\,dt+\iint_\Omega |\nabla_Tu|^{p-k-2}|H|^kH(\partial_i b_{in})\partial^2_{tn}u\\
+\iint_\Omega |\nabla_Tu|^{p-k-2}|H|^kH\left[(\partial^2_{in} b_{in})\partial_tu+(\partial^2_{tn}a_{nn}^{-1})H-(\partial^2_{tn}b_{in})\partial_iu\right]\eta x_n^2 dX\,dt\Bigg].
\end{multline}
We estimate these terms line by line. Starting with the first line on the right-hand side of \eqref{eq3.44}
we see that these terms can be estimated using the Carleson condition. For the term $\partial_tu$
we use $|\partial_tu|\le |\nabla ^2u|+|\nabla A||\nabla u|$. For  $\nabla ^2u$ we then again use Caccioppoli inequality for the second gradient. This yields estimates similar to \eqref{NPe11}-\eqref{NPe12}:
$$|\mbox{1st line of \eqref{eq3.44}}|\le \delta\int_{\partial\Omega}N(\nabla u)^pdx\,dt+\frac{C_\delta}{r}\iint_{\partial\Omega\times(r/2,3r/2)}|\nabla u|^pdX\,dt.$$
The terms in the second and third lines have the following forms:
\begin{multline}\label{eq3.45}
|\mbox{2nd/3rd line of \eqref{eq3.44}}|\le \iint_{\Omega}\Big[|\nabla^2 u|^2|\nabla u|^{p-2}|\nabla A|
+|\nabla^2 u||\nabla A|^2|\nabla u|^{p-1}\\+|\nabla A|^3|\nabla u|^p+|\nabla^2 u||\nabla u|^{p-1}|\partial_t A|+|\nabla u|^{p}|\nabla A||\partial_t A|+|\nabla\partial_t u||\nabla u|^{p-1}|\nabla A| \Big]\eta x_n^2\, dX\,dt.
\end{multline}
For the first five terms of \eqref{eq3.45}, we use:
\begin{itemize}
\item the pointwise bound $|\nabla A|x_n\le\delta^{1/2}$ for terms one
and three, followed respectively by the square-function bound and
spatial Carleson embedding;
\item Cauchy-Schwarz and the Carleson measure $|\nabla A|^2x_n\,dX\,dt$
for term two;
\item Cauchy-Schwarz and $|\partial_tA|^2x_n^3\,dX\,dt$ for term four,
with the spatial Carleson measure as well for term five.
\end{itemize}
For the last term, Cauchy-Schwarz and Young's inequality give
\begin{equation}\label{eq3.55z}
C_\delta\|\mu\|_{C}\int_{\partial\Omega}N(\nabla u)^pdx\,dt+\delta \iint_\Omega|\nabla \partial_t u|^2|\nabla u|^{p-2}x_n^3\eta\,dX\,dt.
\end{equation}
 The second term is bounded by 
 $\delta\int_{\pom}N(\nabla u)^{p-2}A(\nabla u)^2dx\,dt$, which can be bounded  using $S(\nabla u)$ and $N(\nabla u)$ by H\"older's inequality and \eqref{A<S+cN.Lp}.
Hence, each term of \eqref{eq3.45} contains a small term due to the presence of derivatives of $A$ and thus 
$$
|\mbox{2nd/3rd line of \eqref{eq3.44}}|\le C\delta\int_{\partial\Omega}N(\nabla u)^pdx\,dt+C\delta\int_{\partial\Omega}S(\nabla u)^pdx\,dt.
$$

It remains to consider terms in the last line of \eqref{eq3.44}. Each of the terms has
too many derivatives on one of the coefficients of $A$ or $B$. Starting with the first term where we have 
$\partial^2_{in}b_{in}$ we observe that we only need to consider $i<n$ since $b_{nn}=1$ and thus
$\partial^2_{nn}b_{nn}=0$. But for $i<n$ we may integrate by parts in $\partial_i$ and obtain terms we have already considered in  \eqref{eq3.44}. 

Similarly for the remaining terms on the last line of \eqref{eq3.44} we integrate by parts in $\partial_t$ to remove $\partial^2_{tn}$ from the coefficients. This means $\partial_t$ will either fall on $H$
(and then we use \eqref{eq.sqrAAL2}) or $\partial_t$ falls on $\nabla_T u$ and hence it is as in \eqref{eq3.55z}.
It follows that all terms arising from \eqref{eq.Hpde2} have been handled with the exception of the first one. To do that, we need to consider the term 
\begin{multline}
-\frac{1}{k+1}\iint_{\Omega}a_{nn}^{-1}|\nabla_Tu|^{p-k-2}|H|^kH(\partial_t H)\eta x_n\, dX\,dt\\
=-\frac{1}{(k+1)(k+2)}\iint_{\Omega}a_{nn}^{-1}|\nabla_Tu|^{p-k-2}\partial_t (|H|^{k+2})\eta x_n\, dX\,dt\\
=\frac{1}{(k+1)(k+2)}\iint_{\Omega}\left[(\partial_t a_{nn}^{-1})|\nabla_T u|^{p-k-2}|H|^{k+2}+a_{nn}^{-1}
\partial_t(|\nabla_T u|^{p-k-2})|H|^{k+2}\right]\eta x_n\, dX\,dt,
\end{multline}
after integration by parts. The first term on the last line is harmless (we introduce $x_n=\frac12\partial_n(x_n^2)$, integrate by parts in $\partial_n$ and obtain terms analogous to that of 
\eqref{eq3.44}). We omit the details. 

The second term must vanish when $p-k-2=0$ and hence we only consider the case when $q=p-k-2\ge 1$. In a calculation that mimics \eqref{S3:T8:E01-x}


\begin{multline}
\partial_t(|w_k|^{q})=\divg(A\nabla(|w_k|^{q}))+
q|w_k|^{q-2}w_k\left[\partial_t w_k-\divg(A\nabla w_k)\right]\\-q(q-1)|w_k|^{q-2}A\nabla w_k\cdot\nabla w_k
\\=\divg(A\nabla(|w_k|^{q}))-q|w_k|^{q-2}w_k\divg((\partial_kA)\nabla u)-q(q-1)|w_k|^{q-2}A\nabla w_k\cdot\nabla w_k,
\end{multline}
where as before $w_k=\partial_k u$. We have also used the PDE for $w_k$. Replacing $|\nabla_T u|$   by sum  of the terms $k<n$ containing $|w_k|$ we now have:
\begin{multline}
A_k+B_k+C_k:=\frac{1}{(k+1)(k+2)}\Bigg[\iint_{\Omega}a_{nn}^{-1}\divg(A\nabla(|w_k|^q))|H|^{k+2}\eta x_n\, dX\,dt\\
-q\iint_{\Omega}a_{nn}^{-1}|w_k|^{q-2}w_k\divg((\partial_kA)\nabla u)|H|^{k+2}\eta x_n\, dX\,dt\\
-q(q-1)\iint_{\Omega}a_{nn}^{-1}|w_k|^{q-2}A\nabla w_k\cdot\nabla w_k|H|^{k+2}\eta x_n\, dX\,dt
\Bigg].
\end{multline}

Observe that $C_k\le 0$ by ellipticity and it can be dropped, as we are seeking a bound from above. The  term $B_k$ (which we expect to be small) satisfies
\begin{multline*}
B_k=C(p,k)\iint_{\Omega}a_{nn}^{-1}|w_k|^{p-k-4}w_k\partial_k(\divg(A)\nabla u)|H|^{k+2}\eta x_n\, dX\,dt\\
+C(p,k)\iint_{\Omega}a_{nn}^{-1}|w_k|^{p-k-4}w_k(\partial_kA)\nabla^2 u|H|^{k+2}\eta x_n\, dX\,dt.
\end{multline*}
By Cauchy-Schwarz, the second term is bounded by 
\begin{equation}\label{eqzz}
C\iint_{\Omega}|\nabla ^2u||\nabla u|^{p-2}|\nabla A|\eta x_n\le C\|\mu\|^{1/2}_{C}\|S(\nabla u)\|^2\|N(\nabla u)\|^{p-2}.
\end{equation}
For the first term we integrate by parts in $\partial_k$ and obtain terms bounded by
\begin{equation}\label{eqzzz}
C\iint_{\Omega}|\nabla A|^2|\nabla u|^p\eta x_n+C\iint_{\Omega}|\nabla ^2u||\nabla u|^{p-2}|\nabla A|\eta x_n.
\end{equation}
The first term is bounded by $\|\mu\|_{C}\|N(\nabla u)\|^p$, while the second term was handled earlier.

Returning to $A_k$ we consider the divergence in variables $i<n$ first. Hence
$$A_k=A_{k,i<n}+A_{k,i=n}.$$ For $A_{k,i<n}$ we integrate by parts in $\partial_i$ to obtain 
\begin{multline*}
A_{k,i<n}=-\frac{1}{(k+1)(k+2)}\sum_{i<n}\iint_{\Omega}\Bigg[\partial_i(a_{nn}^{-1})a_{ij}(\partial_j |w_k|^{p-k-2})|H|^{k+2}\\
+(k+2)a_{nn}^{-1}a_{ij}(\partial_j|w_k|^{p-k-2})|H|^{k}H(\partial_i H)\Bigg]\eta x_n\,dX\,dt.
\end{multline*}
The first term is again a small term as it is of the type \eqref{eqzz} considered earlier.  For the last term we use Cauchy-Schwarz to get that this term is bounded from above by
\begin{multline}\label{eq3.51}
C\left(\iint_\Omega |\nabla H|^2|H|^{k+1}|\nabla_T u|^{p-k-3}\eta x_n\,dX\,dt\right)^{1/2}
\left(\iint_\Omega |\nabla \nabla_T u|^2|\nabla u|^{p-2}\eta x_n\,dX\,dt\right)^{1/2}\\
\le K_\delta\iint_{\Omega}|\nabla_T u|^{p-k-3}|H|^{k+1}|\nabla
H|^2x_n\,dX\,dt+K\delta\left[\|N(\nabla u)\|_{L^p(\partial\Omega)}^{p}+\|S(\nabla u)\|^p_{L^p(\partial\Omega)}\right],
\end{multline}
by the Young's inequality. These terms correspond to the right-hand side of \eqref{NPe14} as required.

When $i=n$ we notice that
\begin{multline*}
a_{nj}(\partial_j |w_k|^{p-k-2})=(p-k-2)|w_k|^{p-k-4}w_ka_{nj}\left(\partial^2_{jk}u\right)=
(p-k-2)|w_k|^{p-k-4}w_k\big[\partial_k(a_{nj}\partial_j u)\\-(\partial_ka_{nj})\partial_ju\big]
=(p-k-2)|w_k|^{p-k-4}w_k\left[\partial_kH-(\partial_k a_{nj})\partial_j u\right],
\end{multline*}
which implies that 
\begin{multline}\label{eq3.52}
A_{k,i=n}=\frac{p-k-2}{(k+1)(k+2)}\Bigg[\iint_{\Omega}a_{nn}^{-1}\partial_{n}(|w_k|^{p-k-4}w_k\partial_kH)|H|^{k+2} \eta x_n\,dX\,dt\\
-\iint_{\Omega}a_{nn}^{-1}\partial_n[(\partial_ka_{nj})|w_k|^{p-k-4}w_kw_j]|H|^{k+2}\eta x_n\, dX\,dt\Bigg].
\end{multline}
Here in the first term if the derivative $\partial_n$ hits $|w_k|^{p-k-4}w_k$ we get a term that can be bounded by \eqref{eq3.51}. When it falls on $H$ we peel off the $\partial_k$ derivative from $H$ to again get a term like \eqref{eq3.51},
or (if the $\partial_k$ falls on the coefficient) a term like \eqref{eqzz}.

Similarly, for the second term of \eqref{eq3.52}, if $\partial_n$ falls on $|w_k|^{p-k-4}w_kw_j$ we have a term like 
\eqref{eqzz}. Otherwise we have two derivatives on $\partial^2_{nk}(a_{nj})$ and, after peeling off $\partial_k$,
we obtain terms that look like \eqref{eqzz} or like the first term of \eqref{eqzzz}. For both types we have already established the
desired bounds. Thus we have shown the claim.\ep

\ms

\section{$S< N$ for $\nabla u$.}\label{S.S<N}
The goal of this section is to prove the following theorem.
\begin{theorem}\label{thm.S<Np}
Let $\LL=-\dr_t+\divg(A\nabla\cdot)$ be a parabolic operator that satisfies \eqref{E:elliptic}, \eqref{E:1:carl}, and \eqref{E:1:bound}. Let $u$ be an energy solution to $\LL u=0$ in $\om=\Rn_+\times\R$.
For any $p\in(0,\infty)$, there is $C>0$ such that
    \[
    \norm{S(\nabla u)}_{L^p}\le C\norm{N(\nabla u)}_{L^p}. 
    \]
\end{theorem}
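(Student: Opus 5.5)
The plan is the classical route: first a local $L^2$ Carleson-box estimate, then a good-$\lambda$ inequality to reach every $p\in(0,\infty)$, in the spirit of the $S<N$ arguments of \cite{DLP1} (Section 6 there) and \cite{KKPT}. I will treat the components $w_k=\partial_k u$, $1\le k\le n$, separately. Each solves
\[
-\partial_t w_k+\divg(A\nabla w_k)=-\divg((\partial_kA)\nabla u),
\]
and the right-hand side is controlled by the Carleson measure \eqref{E:1:carl}. The bound \eqref{E:1:bound} gives $|\nabla A|\lesssim x_n^{-1}$, so interior Caccioppoli estimates for $\nabla^2 u$ on Whitney balls hold uniformly.

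\textbf{Step 1 (local $L^2$ estimate).} Fix a boundary ball $\Delta_r$ and a cutoff $\Psi$ adapted to the Carleson box $T(\Delta_{2r})$. I test the equation for $w_k$ against $w_k\Psi^2x_n$. The principal term gives $\lambda\iint|\nabla w_k|^2\Psi^2x_n$. The remaining terms are handled as follows.
\begin{enumerate}
\item The term where $\partial_n$ falls on $x_n$ becomes a boundary integral $\int_{2\Delta_r}|w_k(x,0,t)|^2$ after integrating by parts. This is bounded by $\int_{2\Delta_r}N(\nabla u)^2$, since the boundary value is a nontangential limit of Whitney averages.
\item The time-derivative term $\iint \partial_t(w_k^2)\Psi^2x_n$ moves onto $\partial_t\Psi$ and is of size $r^{-2}x_n\lesssim r^{-1}$ on the box.
\item Terms where the derivative falls on $\Psi$ are bounded by $r^{-1}\iint_{T(2\Delta_r)}|\nabla u|^2\lesssim\int N^2$.
\item The forcing term gives $\iint|\partial_kA||\nabla u||\nabla w_k|x_n\Psi^2+\iint|\partial_kA||\nabla u||w_k|\Psi|\nabla\Psi|x_n$. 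By Cauchy--Schwarz and the Carleson embedding, this is at most $\varepsilon\iint|\nabla w_k|^2\Psi^2x_n+C_\varepsilon\|\mu\|_C\int N(\nabla u)^2$.
\end{enumerate}
Absorbing the $\varepsilon$-term and applying Fubini ($x_n\,dX\,dt\sim$ integration over cones) yields
\[
\int_{\Delta_r}S^{r}(\nabla u)^2\lesssim\int_{C\Delta_r}N(\nabla u)^2,
\]
with $S^r$ the cone truncated at height $r$. A priori finiteness of the left side is secured by truncating away from the boundary (integrating over $x_n>\epsilon$) and letting $\epsilon\to0$.

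\textbf{Step 2 (good-$\lambda$).} I will prove
\[
|\{S_a(\nabla u)>2\beta,\ N_b(\nabla u)\le\gamma\beta\}|\le C\gamma^2|\{S_a(\nabla u)>\beta\}|.
\]
Take a Whitney decomposition of the open set $\{S_a>\beta\}$. On each Whitney ball $\Delta$ there is a nearby point where $S_a\le\beta$, so $S_a>2\beta$ on $\Delta$ forces the truncated square function, at height $\sim r(\Delta)$, to exceed $\beta/2$. Replace the box by a sawtooth region above $F=\Delta\cap\{N_b\le\gamma\beta\}$ and run Step 1 there. The key point is that on a sawtooth region the boundary and cutoff terms are controlled by $\sup|\nabla u|^2$ over the region, which is at most $(\gamma\beta)^2$. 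Hence the truncated square function has $L^2(F)$ norm at most $C\gamma^2\beta^2|\Delta|$. Chebyshev then gives the claim. Integrating the good-$\lambda$ inequality over $\beta$ yields the theorem for all $0<p<\infty$, with a standard truncation argument to ensure $\|S\|_{L^p}<\infty$ before absorbing.

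\textbf{Main obstacle.} The hardest step is the sawtooth version of Step 1, specifically the boundary terms on the sawtooth graph and the treatment of the time direction. The sawtooth boundary must be Lipschitz in space and parabolic-Lipschitz in $t$, so that the normal component $\nu_t$ contributes at most $x_n^{-1}$ times surface measure. I would use the enlarged cones \eqref{newG} from Section~\ref{SS:43}, whose graphs satisfy \eqref{derh}, exactly as in the treatment of $II_{10}$ in Lemma~\ref{S3:L8-alt1}. With these cones, the boundary contributions on the graph are bounded by $(\gamma\beta)^2|\Delta|$.
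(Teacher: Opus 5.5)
Your two-stage structure (a local $L^2$ estimate, then good-$\lambda$ with truncation) matches the paper's. However, Step~1 does not close for a general matrix, and this is exactly where the paper departs from \cite[Theorem 9.1]{DLP1}: $A$ is not assumed to be in block form.

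When you test the equation for $w_k$ with $w_k\Psi^2x_n$, the divergence in the forcing term also falls on the weight $x_n$. This produces
\[
-\iint(\partial_k a_{nj})\,\partial_j u\,w_k\,\Psi^2\,dX\,dt ,
\]
which is absent from your item~4. The term carries no factor $x_n$. The hypotheses control $|\nabla A|^2x_n$ and $x_n|\nabla A|$, not $|\nabla A|$ itself, so neither Cauchy--Schwarz against the Carleson measure nor \eqref{E:1:bound} bounds it by $\int N(\nabla u)^2$; both give a logarithmically divergent integral.

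The only remedy is to insert $1=\partial_nx_n$ and integrate by parts in $x_n$, which creates $\partial_n\partial_ka_{nj}$. For $k<n$ the extra derivative can be moved off the coefficient by integrating by parts in the tangential variable $x_k$; this is how the paper treats $J_{24}$ in Lemma~\ref{lem.Sdk}. For $k=n$ you get $\partial_n^2a_{nj}$, which cannot be transferred and is not controlled by the hypotheses. So $w_n=\partial_nu$ cannot be handled through its own equation.

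Your item~1 has the same defect for every $k$. Besides the boundary integral, $\tfrac12\iint a_{nn}\,\partial_n(w_k^2)\Psi^2$ produces the unweighted term $\tfrac12\iint(\partial_na_{nn})w_k^2\Psi^2$. The paper removes it by working with $a_{nn}^{-1}A$, whose $(n,n)$ entry is $1$. The price is a set of $\partial_t(a_{nn}^{-1})$ terms, which need the cut-off Caccioppoli inequality of Lemma~\ref{lem.CacciCO} and the absorption of $\varepsilon\iint|\nabla^2u|^2\Psi^mx_n$.

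The missing idea is to avoid the equation for $\partial_nu$ altogether. The paper estimates $\iint|\nabla\partial_ku|^2\Psi^mx_n$ only for $k<n$ (Lemma~\ref{lem.Sdk}). It handles the conormal derivative $H=\sum_ja_{nj}\partial_ju$ separately, through \eqref{eq.Hpde2} with $B=A^T/a_{nn}$ (Lemma~\ref{lem.SH}). In that equation the third derivatives of $u$ cancel, and the spatial forcing has the divergence form $\sum_{j<n}\partial_jF_j+\partial_iG_i$. The dangerous term $-G_nH$, where the derivative hits $x_n$, equals $\sum_{j<n}(\partial_jb_{nj})H^2$. Because $b_{nn}=1$ it contains only tangential derivatives of coefficients, so the $\partial_nx_n$ trick works there.

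Finally, $\partial_{nn}u$ is recovered from $\partial_nH$, $\nabla\nabla_Tu$ and $|\partial_nA||\nabla u|$ via \eqref{eq.grad2u_split}, which yields Lemma~\ref{lem.S2cutoff}. With that lemma in hand your Step~2 is fine. The paper uses sawtooth cutoffs $\Psi$ with Carleson-controlled derivatives, taken from \cite{DLP1}, rather than sawtooth domains. This sidesteps the graph boundary terms you identify as the main obstacle.
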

To prove Theorem \ref{thm.S<Np}, we first derive an $S<N$ estimate on sawtooth domains using cutoff functions as in \cite[Section 9]{DLP1}.  
Let $\Psi$ be a cutoff function on $\om$ that satisfies the following properties.
\begin{enumerate}
    \item $0\le \Psi\le1$,
    \item for any $p\in(0,\infty)$, the measure given by the density
 \begin{equation}
        \br{\abs{\nabla\Psi(X,t)}^px_n^{p-1}+\abs{\dr_t\Psi(X,t)}^px_n^{2p-1}}dxdx_ndt
    \end{equation}
    is a Carleson measure on $\om$.
\end{enumerate}
\smallskip

The key difference from \cite[Theorem 9.1]{DLP1} is that we do not assume that the coefficient matrix of the operator is in block form. To achieve that, we first treat $S(\nabla_T u)$, and then $S(H)$, where $H=\sum_{1\le j\le n}a_{nj}\dr_j u$. Observe that 
\[
|\nabla^2u|^2\lesssim |\nabla(\nabla_Tu)|^2+|\dr_{nn}u|^2\lesssim  |\nabla(\nabla_Tu)|^2+|\dr_nH|^2+\abs{\dr_nA}^2|\nabla u|^2
\]
and so 
\begin{multline}\label{eq.grad2u_split}
      \iint |\nabla^2u|^2\Psi^mx_n\,dX\,dt\lesssim   \iint |\nabla(\nabla_Tu)|^2\Psi^mx_n\,dX\,dt+  \iint |\dr_nH|^2\Psi^mx_n\,dX\,dt\\
      + \norm{|\dr_nA|^2x_n}_{C} \int_{\pom} N\br{\Psi^{m/2}\nabla u }  ^2dx\,dt.
\end{multline}

\begin{lemma}\label{lem.Sdk}
Let $\LL$ and $u$ be as in Theorem \ref{thm.S<Np}. Let $0\le\Psi\le1$ be a cutoff function that satisfies the following conditions:  
\begin{equation}\label{eq.Psi_bd}
    |\nabla \Psi(X,t)|+x_n|\dr_t \Psi(X,t)|\lesssim x_n^{-1}\quad\text{ for }(X,t)\in\om,
\end{equation}  and the measure $\nu$ given by the density 
\begin{equation}
d\nu=\br{\abs{\nabla\Psi}^2x_n+\abs{\nabla\Psi}+\abs{\dr_t\Psi}x_n}dX\,dt \quad\text{ is a Carleson measure on }\om.
\end{equation}
Let $m\ge 3$ be an integer, and let $k\in\set{1,\dots,n-1}$. Then for any $\varepsilon>0$,  there holds
\begin{multline}
     \iint \abs{\nabla(\dr_ku)}^2\Psi^mx_n\, dX\,dt
    \le \varepsilon \iint|\nabla^2u|^2\Psi^mx_n dX\,dt\\
    + C_\varepsilon\br{1+\norm{\mu}_{C}+\norm{\nu}_{C}}\int_{\pom}\abs{N(\Psi^{\frac{m}{2}-1}\nabla u)}^2dx\,dt.
\end{multline}
The constants may depend on $n$, $m$, the ellipticity and coefficient bounds,
$K$ in \eqref{E:1:bound}, and the implicit constant in \eqref{eq.Psi_bd}.
They are independent of the support of $\Psi$.
\end{lemma}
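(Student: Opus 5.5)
The plan is to run the standard energy argument for $w_k=\partial_k u$, with weight $\Psi^m x_n$, via the equation \eqref{eq.ukPDE}:
$$-\partial_t w_k+\divg(A\nabla w_k)=-\divg((\partial_kA)\nabla u).$$
Ellipticity gives
\[
\lambda\iint|\nabla w_k|^2\Psi^mx_n\le\iint A\nabla w_k\cdot\nabla w_k\,\Psi^mx_n .
\]
We write the right-hand side as $\iint A\nabla w_k\cdot\nabla(w_k\Psi^mx_n)$ minus the terms in which the gradient falls on $\Psi^m$ or on $x_n$. Testing the equation with $w_k\Psi^mx_n$ then produces four contributions: a time term $\iint \partial_t w_k\,w_k\Psi^mx_n$, the forcing term, the $\nabla\Psi$ term, and the $\nabla x_n=e_n$ term. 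The idea throughout is to remove the extra derivative from $\partial_k u$ by integrating by parts in the tangential direction $x_k$ whenever possible. Since $k<n$, this produces no boundary term, and $\partial_k$ commutes with the weight $x_n$.

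\textbf{Step 1: the $\nabla x_n$ and $\nabla\Psi$ terms.} The $\nabla x_n$ term is $\iint w_k A\nabla w_k\cdot e_n\Psi^m$. Its part $a_{nn}w_k\partial_nw_k=\frac12 a_{nn}\partial_n(w_k^2)$ is the dangerous one, since it produces a boundary term $\int w_k^2\Psi^m$ at $x_n=0$. To avoid this, I would instead write $w_k\,\partial_j w_k=\partial_k u\,\partial_k\partial_j u$ and integrate by parts in $x_k$. This moves $\partial_k$ onto $\partial_j u\,a_{nj}\Psi^m$ and leaves $\iint \partial_ku\,\partial_ju\,\partial_k(a_{nj}\Psi^m)$, up to a term $-\iint \partial_{kk}u\,\partial_ju\,a_{nj}\Psi^m$. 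That remaining term is again of the form $\nabla^2u\cdot\nabla u\cdot\Psi^m$ without the weight $x_n$. I would handle it by inserting $1=\partial_n x_n$ and integrating by parts in $x_n$. This gives $\iint \nabla^2 u\,\nabla^2u\,\Psi^m x_n$-type terms, which are bounded by $\varepsilon\iint|\nabla^2u|^2\Psi^mx_n+C_\varepsilon$ times lower order after Cauchy--Schwarz, together with terms carrying $\nabla A$ or $\nabla\Psi$ and the boundary term at $x_n=0$, which vanishes because of the factor $x_n$. The lower-order pieces have the form $\iint|\nabla u|^2(|\nabla A|+|\nabla\Psi|)\Psi^{m-1}$ and $\iint|\nabla u|^2|\nabla\Psi|^2x_n\Psi^{m-2}$. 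These are controlled by the Carleson norms of $\mu$ and $\nu$ (using $|\nabla A|\le (|\nabla A|^2x_n)^{1/2}x_n^{-1/2}$ and Cauchy--Schwarz) times $\int N(\Psi^{m/2-1}\nabla u)^2$. Here $m\ge3$ guarantees that the powers $\Psi^{m-2}\ge\Psi^{m}$ remain nonnegative. The $\nabla\Psi$ term $m\iint w_kA\nabla w_k\cdot\nabla\Psi\,\Psi^{m-1}x_n$ is estimated directly by Cauchy--Schwarz and the bound $|\nabla\Psi|^2x_n\,dX\,dt\in\mathcal C$.

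\textbf{Step 2: the time term.} We have $\iint\partial_tw_k\,w_k\Psi^mx_n=-\frac m2\iint w_k^2\Psi^{m-1}\partial_t\Psi\,x_n$. This is bounded by $\|\nu\|_C\int N(\Psi^{(m-1)/2}\nabla u)^2$ using the Carleson condition on $|\partial_t\Psi|x_n$; no integration by parts in $t$ of the coefficients is needed.

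\textbf{Step 3: the forcing term.} This is $\iint(\partial_kA)\nabla u\cdot\nabla(w_k\Psi^mx_n)$. When the gradient falls on $w_k$, Cauchy--Schwarz gives
\[
\varepsilon\iint|\nabla^2u|^2\Psi^mx_n+C_\varepsilon\|\mu\|_C\int N(\Psi^{m/2}\nabla u)^2 .
\]
The pieces in which the gradient falls on $\Psi^m$ or on $x_n$ are of the lower-order type already treated in Step 1. The piece with $\nabla x_n$, namely $\iint\partial_kA\,\nabla u\, w_k\Psi^m$, lacks the factor $x_n$. Here I would again insert $\partial_nx_n$ and integrate by parts, producing $\nabla^2A$-free terms bounded as in Step 1 by means of \eqref{E:1:bound}. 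This is the point where $K$ enters. Finally, hiding $\varepsilon$-multiples of $\iint|\nabla w_k|^2\Psi^mx_n$ on the left yields the claim.

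\textbf{Main obstacle.} I expect the main difficulty to be Step 1, namely the unweighted term coming from $\nabla x_n$ with $j=n$. The $x_k$-integration by parts must be arranged so that no boundary integral at $x_n=0$ survives. The bookkeeping also has to ensure that only $\nabla^2u$ with the weight $x_n$ appears, which can be absorbed using the $\varepsilon$ term, and never $\partial_{nn}u$ without the weight.
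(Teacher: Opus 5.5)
Your Step~1 does not work, and it fails precisely at the term you flagged. Follow the $j=n$ part of the $\nabla x_n$ term, $-\iint a_{nn}\,\partial_k u\,\partial_{kn}u\,\Psi^m$, through your integrations by parts. You integrate in $x_k$, then in $x_n$ after inserting $\partial_n x_n$, and then in $x_k$ once more to remove the third derivative $\partial_{nkk}u$ that appears. Its leading part is
\[
\iint a_{nn}\big[(\partial_{kn}u)^2-\partial_{kk}u\,\partial_{nn}u\big]\Psi^m x_n .
\]
This is a product of two Hessian entries with a coefficient of size $\Lambda$ and no small or lower-order factor. No Cauchy--Schwarz argument turns it into $\varepsilon\iint|\nabla^2u|^2\Psi^mx_n$ plus lower order.

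The first piece is exactly $a_{nn}|\partial_n w_k|^2\Psi^m x_n$, and it cancels that part of your coercive left-hand side. The second contains $\partial_{kk}u=\partial_k w_k$, so it can only be split as $\varepsilon|\partial_{nn}u|^2+(4\varepsilon)^{-1}|\partial_kw_k|^2$. The resulting $\Lambda(4\varepsilon)^{-1}\iint|\nabla w_k|^2\Psi^mx_n$ cannot be absorbed into $\lambda\iint|\nabla w_k|^2\Psi^mx_n$. This is no accident. The combination $\partial_{kk}u\,\partial_{nn}u-(\partial_{kn}u)^2=\partial_k(\partial_ku\,\partial_{nn}u)-\partial_n(\partial_ku\,\partial_{kn}u)$ is a divergence, and integrating it back by parts simply returns the term you started from, so the manipulation is circular.

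Separately, your treatment of the lower-order pieces is also incorrect. The measure $|\nabla A|\,dX\,dt$ is not a Carleson measure. The split $|\nabla A|=(|\nabla A|^2x_n)^{1/2}x_n^{-1/2}$ leaves $\iint x_n^{-1}|\nabla u|^2\Psi^{m-2}$, which diverges logarithmically at $x_n=0$. Terms such as $\iint(\partial_ka_{nj})\,\partial_ku\,\partial_ju\,\Psi^m$ need the $\partial_nx_n$ device followed by a tangential integration by parts, as in your Step~3.

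The boundary term you were trying to avoid is harmless. It vanishes for cutoffs compactly supported in $\Omega$, and otherwise it is bounded by a constant times $\int_{\partial\Omega}N(\Psi^{m/2-1}\nabla u)^2$. This is what the ``$1+$'' in the constant accounts for; the paper proves the estimate for the compactly supported cutoffs $\Psi\eta_\delta\chi_R$ and passes to the limit.

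The genuine obstruction is the companion term $\tfrac12\iint(\partial_na_{nn})\,w_k^2\,\Psi^m$. It carries no weight, and inserting $\partial_nx_n$ produces $\partial_{nn}^2a_{nn}$, which no tangential integration by parts removes. Meanwhile the hypotheses of Theorem~\ref{thm.S<Np} control only first derivatives of $A$.

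This is why the paper runs the energy argument with $A/a_{nn}$, whose $(n,n)$ entry is $1$. The $j=n$ term is then exactly $-\tfrac12\iint\partial_n(w_k^2)\Psi^m$, which produces only the boundary term and a $\partial_n\Psi$ term. For $j<n$ one integrates by parts in $x_j$ (not $x_k$), inserts $\partial_nx_n$, and moves the resulting $\partial_n\partial_j(a_{nj}/a_{nn})$ off by another integration by parts in $x_j$.

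The price is a term $\partial_t(a_{nn}^{-1})\,w_k^2\Psi^mx_n$ in the time part. The paper handles it by writing $x_n=\tfrac12\partial_n(x_n^2)$, integrating by parts in $t$, and bounding $\iint|\partial_tw_k|^2\Psi^{m+2}x_n^3$ with Lemma~\ref{lem.CacciCO}. So the simplification you claim in Step~2 is only apparent: not normalizing by $a_{nn}$ avoids $\partial_ta_{nn}$, but it forces the unremovable $\partial_na_{nn}$ term in Step~1.
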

\medskip
\begin{lemma}\label{lem.SH}
Let $\LL$ and $u$ be as in Theorem \ref{thm.S<Np}. Let $\Psi$ be a cutoff function as in Lemma \ref{lem.Sdk}. 
Let $m\ge 3$ be an integer. Then for any $\varepsilon>0$,
\begin{multline*}
    \iint \abs{\nabla H}^2\Psi^mx_n\, dX\,dt
    \le \varepsilon \iint|\nabla^2u|^2\Psi^mx_n dX\,dt\\
    +C_\varepsilon\br{1+\norm{\mu}_{C}+\norm{\nu}_{C}}\int_{\pom}\abs{N(\Psi^{\frac{m}{2}-1}\nabla u)}^2dx\,dt.
\end{multline*}
\end{lemma}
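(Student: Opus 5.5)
We argue as for Lemma~\ref{lem.Sdk}, testing the equation \eqref{eq.Hpde1} satisfied by $H=a_{nj}\partial_ju$, with $\tilde L=\divg(A^T\nabla\cdot)$, against $H\Psi^m x_n$. First, by ellipticity of $A^T$,
\[
\lambda\iint|\nabla H|^2\Psi^mx_n\le \iint A^T\nabla H\cdot\nabla H\,\Psi^mx_n .
\]
We then write $A^T\nabla H\cdot\nabla H\,\Psi^mx_n=\divg(A^T\nabla H\, H\Psi^mx_n)-H\,\tilde LH\,\Psi^mx_n-H A^T\nabla H\cdot\nabla(\Psi^m x_n)$ and integrate. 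The divergence term produces only a boundary integral on $x_n=0$, and this vanishes because of the weight $x_n$. This is precisely why we use the weight $x_n$ and never need the Neumann data. The resulting terms are handled as follows.

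\begin{enumerate}
\item Using \eqref{eq.Hpde1}, we replace $\tilde LH$ by $\partial_tH+F$.
\item For $\iint H\partial_tH\Psi^mx_n=\frac12\iint\partial_t(H^2)\Psi^mx_n$, we integrate by parts in $t$. The derivative falls only on $\Psi^m$, and $|\partial_t\Psi|x_n\,dX\,dt$ is Carleson. Hence this term is bounded by $\|\nu\|_C\int N(\Psi^{m/2-1}\nabla u)^2$, using $|H|\lesssim|\nabla u|$.
\item In the gradient terms, when the derivative hits $x_n$ we get $\iint HA^T\nabla H\cdot e_n\Psi^m$. We handle it by Cauchy--Schwarz against $\iint|\nabla H|^2\Psi^m x_n$ and $\iint |H|^2\Psi^{m}x_n^{-1}$. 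That second factor is not Carleson, so instead we write $H\partial_n H$-type pieces exactly: the symmetric part gives $\frac12\partial_n(H^2)$. Integrating by parts in $x_n$ leaves a boundary term $\frac12\int_{\pom}a_{nn}H^2\Psi^m$ (bounded by $\int N(\Psi^{m/2}\nabla u)^2$), plus terms with $\nabla A$ or $\nabla\Psi$. These are bounded by $\|\mu\|_C$ and $\|\nu\|_C$ times $\int N(\Psi^{m/2-1}\nabla u)^2$. The nonsymmetric pieces $a_{jn}H\partial_jH$ with $j<n$ are treated by first writing them as tangential derivatives of $H^2$ and integrating by parts in $x_j$.
\item When the derivative hits $\Psi^m$, we use Cauchy--Schwarz with $|\nabla\Psi|^2x_n$ Carleson. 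This gives $\varepsilon\iint|\nabla H|^2\Psi^m x_n+C_\varepsilon\|\nu\|_C\int N(\Psi^{m/2-1}\nabla u)^2$. The exponent $m\ge3$ leaves $\Psi^{m-2}$, matching $N(\Psi^{m/2-1}\cdot)$.
\end{enumerate}

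The main obstacle is the forcing term $\iint FH\Psi^mx_n$, where $F$ is given by \eqref{eq.Hpde1} and bounded by \eqref{boundsF}. We first replace $\partial_tu$ by $\divg(A\nabla u)$, so that every term is of one of two types, $|\nabla A||\nabla^2u||\nabla u|$ or $(|\partial_tA|+|\nabla A|^2+|\nabla^2A|)|\nabla u|^2$, each multiplied by $\Psi^mx_n$.

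\begin{itemize}
\item Terms of the first type are bounded by $\varepsilon\iint|\nabla^2u|^2\Psi^mx_n+C_\varepsilon\|\mu\|_C\int N(\Psi^{m/2}\nabla u)^2$, using the Carleson measure $|\nabla A|^2x_n$.
\item The term with $|\nabla A|^2$ is directly Carleson.
\item For the terms $\partial_i(b_{ij}\partial_j(a_{nk}\partial_ku))$ containing second derivatives of $A$, one derivative is always tangential ($j<n$). We integrate by parts in that variable, moving it onto $H\Psi^mx_n$, which produces only terms of the previous types together with $\nabla\Psi$ terms.
\item For $(\partial_ta_{ni})\partial_iu\,H\Psi^m x_n$, we insert $x_n=\frac12\partial_n(x_n^2)$ and integrate by parts in $x_n$. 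This gives $|\partial_tA|x_n^2|\nabla^2u||\nabla u|$, bounded by Cauchy--Schwarz using the Carleson measure $|\partial_tA|^2x_n^3$. It also gives $\partial_n\partial_tA$ terms, from which we remove $\partial_t$ by integrating by parts in $t$. That step yields $|\nabla A|x_n^2|\nabla\partial_tu||\nabla u|$, controlled via Lemma~\ref{lem.CacciCO} by $\iint|\nabla^2 u|^2\Psi^{m-2}x_n$-type quantities (times the small factor $|\nabla A|x_n\le K$) and $N$ terms, plus $\partial_t\Psi$ terms.
\end{itemize}

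The $\varepsilon\iint|\nabla^2u|^2\Psi^mx_n$ contributions are left on the right, as the statement allows, and the $\varepsilon\iint|\nabla H|^2\Psi^mx_n$ terms are absorbed into the left-hand side.
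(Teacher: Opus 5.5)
There is a genuine gap in your step 3, and it comes from working with the unnormalized equation \eqref{eq.Hpde1}, $\tilde L=\divg(A^T\nabla\cdot)$, so that the $(n,n)$ entry of the operator is $a_{nn}$ rather than $1$. When the derivative hits $x_n$, the $j=n$ piece is $-\frac12\iint a_{nn}\partial_n(H^2)\Psi^m$. Integrating by parts in $x_n$ produces a boundary term and a $\partial_n\Psi$ term, but also
\[
\tfrac12\iint(\partial_n a_{nn})H^2\Psi^m\,dX\,dt,
\]
which carries \emph{no} power of $x_n$. You assert that such terms are bounded by $\|\mu\|_C\int N(\Psi^{m/2-1}\nabla u)^2$. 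This is false in general, because $|\nabla A|\,dX\,dt$ is not a Carleson measure; only $|\nabla A|^2x_n\,dX\,dt$ is. Cauchy--Schwarz against that measure leaves $\iint H^2\Psi^m x_n^{-1}$, which is exactly the obstruction you noticed one sentence earlier.

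The remedy used for $I_1$ in Lemma~\ref{lem.Sdk} does not help here. That remedy inserts $1=\partial_nx_n$, integrates by parts in $x_n$, and then moves one derivative off the resulting second derivative of the coefficient by a \emph{tangential} integration by parts. It works for your pieces $\partial_j(a_{jn})H^2\Psi^m$ with $j<n$, which you also did not address. For $j=n$, however, it yields $x_n(\partial^2_{nn}a_{nn})H^2\Psi^m$, where both derivatives are normal. Moving one $\partial_n$ back simply returns the original unweighted term.

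The missing idea is the normalization $b_{nn}=1$. The paper uses \eqref{eq.Hpde2} with $B=A^T/a_{nn}$, which is equivalent to testing with $H\Psi^mx_n/a_{nn}$. Then the $x_n$-derivative term is exactly
\[
-\tfrac12\iint\partial_n(H^2)\Psi^m-\tfrac12\sum_{j<n}\iint b_{nj}\partial_j(H^2)\Psi^m,
\]
and no normal derivative ever lands on a coefficient.

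The price is that the time term becomes $a_{nn}^{-1}\partial_tH$, so your step 2 is no longer free. Integration by parts in $t$ now produces $\partial_t(a_{nn}^{-1})H^2\Psi^mx_n$. This is not directly Carleson either, since $|\partial_tA|x_n$ is not. It is handled by writing $x_n=\frac12\partial_n(x_n^2)$, integrating by parts in $t$ again, and controlling $\iint|\partial_tH|^2\Psi^{m+2}x_n^3$ through Lemma~\ref{lem.CacciCO}; this is the paper's term $S_{11*}$.

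Your treatment of the forcing term $F$ is otherwise workable, but track the cutoff powers in the last bullet. Cauchy--Schwarz must put $\Psi^{m+2}$ on the $\nabla\partial_tu$ factor, so that Lemma~\ref{lem.CacciCO} returns $|\partial_tu|^2\Psi^m$ and hence $|\nabla^2u|^2\Psi^mx_n$. The factor $|\nabla A|^2x_n|\nabla u|^2$ then gets $\Psi^{m-2}$. A bound by $\iint|\nabla^2u|^2\Psi^{m-2}x_n$ would not be allowable. Also, the smallness comes from $\varepsilon$ in Young's inequality and the Carleson norm, not from $|\nabla A|x_n\le K$, which is merely bounded.
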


Combining Lemma \ref{lem.Sdk} and \ref{lem.SH} with the estimate \eqref{eq.grad2u_split}, one gets that 
\begin{lemma}\label{lem.S2cutoff}
    Let $\LL$ and $u$ be as in Theorem \ref{thm.S<Np}, and let $\Psi$ be a cutoff function as in Lemma \ref{lem.Sdk}. 
Let $m\ge 3$ be an integer. Then 
\[
    \iint \abs{\nabla^2u}^2\Psi^mx_n\, dX\,dt
    \le C\br{1+\norm{\mu}_{C}+\norm{\nu}_{C}}\int_{\pom}\abs{N(\Psi^{\frac{m}{2}-1}\nabla u)}^2dx\,dt.
\]
\end{lemma}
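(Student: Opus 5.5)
The statement to prove is Lemma~\ref{lem.S2cutoff}. It follows by combining Lemmas~\ref{lem.Sdk} and \ref{lem.SH} with the splitting \eqref{eq.grad2u_split}, and then absorbing the $\varepsilon$-terms. I will first assume a priori that $I:=\iint|\nabla^2u|^2\Psi^m x_n\,dX\,dt<\infty$; finiteness is addressed at the end.

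\textbf{Step 1: the splitting.} Summing over $k=1,\dots,n-1$, the estimate \eqref{eq.grad2u_split} gives
\[
I\le C\sum_{k<n}\iint|\nabla(\dr_ku)|^2\Psi^mx_n+C\iint|\nabla H|^2\Psi^mx_n+C\norm{|\dr_nA|^2x_n}_{C}\int_{\pom}N(\Psi^{m/2}\nabla u)^2 .
\]
In the last term, $\norm{|\dr_nA|^2x_n}_C\le\norm{\mu}_C$. Since $0\le\Psi\le1$ and $m/2\ge m/2-1$, we have $\Psi^{m/2}\le\Psi^{m/2-1}$ pointwise, so
\[
N(\Psi^{m/2}\nabla u)\le N(\Psi^{m/2-1}\nabla u).
\]
The last term is therefore already of the required form.

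\textbf{Step 2: apply the two lemmas.} Apply Lemma~\ref{lem.Sdk} to each tangential term and Lemma~\ref{lem.SH} to the $H$-term, all with the same $\varepsilon$. This yields
\[
I\le Cn\,\varepsilon\, I+C_\varepsilon\big(1+\norm{\mu}_C+\norm{\nu}_C\big)\int_{\pom}N(\Psi^{m/2-1}\nabla u)^2\,dx\,dt .
\]
Choose $\varepsilon=1/(2Cn)$. The constant $C$ depends only on $n$, $\lambda$ and $\Lambda$ (through the pointwise inequality $|\dr_{nn}u|\lesssim|\dr_nH|+|\nabla_T\nabla u|+|\dr_nA||\nabla u|$ and the bound $a_{nn}\ge\lambda$). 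Since $I<\infty$, the term $Cn\varepsilon I=\tfrac12 I$ can be absorbed into the left-hand side. This gives the claimed bound.

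\textbf{Step 3: the main obstacle is the a priori finiteness of $I$.} For a general energy solution, $\iint|\nabla^2u|^2x_n$ need not be finite near the boundary or at infinity. I would handle this by replacing $\Psi$ with $\Psi_j=\Psi\,\phi_j$, where $\phi_j$ is a cutoff vanishing for $x_n<1/j$, for $x_n>j$, and for $\|(x,t)\|>j$. Cut-offs of this type, including the logarithmic construction used in Step~1 of Section~\ref{Sprf}, satisfy \eqref{eq.Psi_bd} and the Carleson condition on $\nu$ uniformly in $j$. The product then satisfies the same bounds, because
\[
|\nabla\Psi_j|\le|\nabla\Psi|+|\nabla\phi_j|,
\]
and the Carleson norms add.

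For each $j$, the support of $\Psi_j$ is a compact subset of $\Omega$. The interior Caccioppoli inequality for $\nabla^2u$, valid since $|\nabla A|x_n\le K$, then gives $I_j<\infty$. Steps~1 and 2 yield
\[
I_j\le C\big(1+\norm{\mu}_C+\norm{\nu}_C\big)\int_{\pom}N(\Psi_j^{m/2-1}\nabla u)^2\,dx\,dt,
\]
with $C$ independent of $j$. Since $\Psi_j\le\Psi$, the right-hand side is at most the corresponding expression with $\Psi$. Monotone convergence as $j\to\infty$ then finishes the proof.

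The only point needing care is that the constants in Lemmas~\ref{lem.Sdk} and \ref{lem.SH} depend only on the structural quantities and not on the support of $\Psi$. Both lemmas state exactly this.
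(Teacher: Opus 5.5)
Your proof is correct and is the paper's argument. It combines \eqref{eq.grad2u_split} with Lemmas~\ref{lem.Sdk} and \ref{lem.SH}, absorbs the $\varepsilon$-term for cutoffs compactly supported in $\Omega$ (where the Hessian integral is finite), and exhausts a general $\Psi$ by products $\Psi\phi_j$ whose pointwise and Carleson bounds are uniform. The only difference is that the paper passes to the limit with Fatou's lemma rather than monotone convergence, so it does not need $\phi_j$ to increase in $j$.
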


Once we have Lemma~\ref{lem.S2cutoff}, Theorem~\ref{thm.S<Np} follows from a good-$\lambda$ inequality (Lemma \ref{lem.gdlambda} below) using the exact same argument as in \cite[Section 9]{DLP1}. We note that, in the analog of the lemma above, namely \cite[Lemma 9.3]{DLP1}, the condition \eqref{eq.Psi_bd} 
has not been required; however, it follows from the construction of the cutoff function adapted to a sawtooth domain in \cite[Definition 9.7]{DLP1}.
For these cutoffs $\|\nu\|_C$ and the constant in \eqref{eq.Psi_bd} are
uniform. At a level $\lambda_0>0$ and for $0<\gamma<1$, each boundary cube $\Delta$ in
the good-$\lambda$ argument has the following bound: the squared maximal-function norm in
Lemma \ref{lem.S2cutoff}, taken over its full boundary support, is at
most $C\gamma^2\lambda_0^2|\Delta|$. Thus the factor
$1+\|\mu\|_C+\|\nu\|_C$ changes only the constant in the good-$\lambda$
inequality and imposes no smallness condition on the coefficients.
To remove any a priori finiteness assumption, first restrict the square
functions to $\varepsilon<h<R$. Spatial difference-quotient Caccioppoli
on Whitney boxes, with $h|\nabla A|\le K$, gives
\[
 S_a^{\varepsilon,R}(\nabla u)
 \le C\big(1+\log(R/\varepsilon)\big)^{1/2}N_{a'}(\nabla u).
\]
Indeed, each dyadic height layer in a cone is covered by boundedly many
Whitney boxes, and its Hessian integral is bounded by the squared
maximal function at the vertex. Thus the truncated square function is
in $L^p$ whenever the right-hand side in Theorem~\ref{thm.S<Np} is
finite. The same good-$\lambda$ construction applies to these truncated
cones: multiply its sawtooth cutoffs by height cutoffs at
$\varepsilon$ and $R$. Their additional Carleson densities are bounded
by $C/h$ on fixed-ratio height layers, uniformly in both truncations.
Lemma~\ref{lem.S2cutoff} therefore gives the good-$\lambda$ inequality
below with uniform constants for the truncated square functions.
Integrating it against $p\nu^{p-1}d\nu$, using change of aperture and
choosing $\gamma$ small, yields
\[
 \|S_a^{\varepsilon,R}(\nabla u)\|_p
 \lesssim_p\|N_{2a}(\nabla u)\|_p.
\]
Monotone convergence as $\varepsilon\downarrow0$ and $R\uparrow\infty$
proves Theorem~\ref{thm.S<Np} without assuming its left-hand side finite.
\medskip

\begin{lemma}[Good-$\lambda$ inequality]\label{lem.gdlambda}
    For any $\nu>0$ and any $\gamma\in(0,1)$, 
 \begin{multline}
        \left|\set{(x,t)\in\R^{n-1}\times\R:\, S_a(\nabla u)(x,t)>\nu, \, N_{2a}(\nabla u)(x,t)\le\gamma\nu}\right|\\
        \le C\gamma^2\abs{\set{(x,t)\in\R^{n-1}\times\R:\, S_{2a}(\nabla u)(x,t)>\nu/2}}.
    \end{multline}
\end{lemma}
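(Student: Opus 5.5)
The proof will follow the standard sawtooth/stopping-time scheme of \cite[Section 9]{DLP1}, with Lemma~\ref{lem.S2cutoff} as the local $L^2$ ingredient.

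Let $E=\{S_{2a}(\nabla u)>\nu/2\}$. This set is open, and we may assume it is a proper subset of $\R^{n-1}\times\R$. Take a Whitney decomposition of $E$ into parabolic boundary cubes $\{\Delta_j\}$ with $C\Delta_j\cap E^c\neq\emptyset$. It then suffices to show, for each $j$,
\[
|\{(x,t)\in\Delta_j: S_a(\nabla u)>\nu,\ N_{2a}(\nabla u)\le\gamma\nu\}|\le C\gamma^2|\Delta_j|,
\]
and to sum over $j$.

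\textbf{Step 1 (localizing the cone).} Fix $\Delta=\Delta_j$ of radius $r$, and pick $(y,s)\in C\Delta\setminus E$, so that $S_{2a}(\nabla u)(y,s)\le\nu/2$. For $(x,t)\in\Delta$ and $a$-cones, the part of $\Gamma_a(x,t)$ above height $\sim r$ lies inside $\Gamma_{2a}(y,s)$. Hence on the bad set $F=\Delta\cap\{S_a>\nu, N_{2a}\le\gamma\nu\}$ the truncated square function satisfies $S_a^{cr}(\nabla u)>\nu/2$.

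\textbf{Step 2 (sawtooth cutoff and Chebyshev).} Let $\Omega_F=\bigcup_{(x,t)\in F}\Gamma_{a}^{cr}(x,t)$ be the sawtooth region over $F$. Build $\Psi$ as in \cite[Definition 9.7]{DLP1}: it equals $1$ on $\Omega_F$, is supported in a slightly larger sawtooth over $F$ with $2a$-aperture cones, and is truncated at height $\sim r$. This $\Psi$ satisfies \eqref{eq.Psi_bd}, and its measure $\nu$ is Carleson with a uniform norm. Then Fubini and Chebyshev give
\[
\tfrac{\nu^2}{4}|F|\le\int_F S_a^{cr}(\nabla u)^2\lesssim\iint|\nabla^2u|^2\Psi^m x_n\,dX\,dt .
\]
By Lemma~\ref{lem.S2cutoff} this is bounded by $C\int_{\pom}N(\Psi^{m/2-1}\nabla u)^2$.

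\textbf{Step 3 (maximal function on the sawtooth).} The support of $\Psi$ lies in the union of $2a$-cones from $F$ below height $\sim r$. So for every boundary point $(z,\tau)$, $N(\Psi^{m/2-1}\nabla u)(z,\tau)$ is dominated by $\sup_F N_{2a}(\nabla u)\le\gamma\nu$. Moreover it vanishes unless $(z,\tau)$ lies in a fixed dilate $C'\Delta$. Hence the right-hand side of Step~2 is at most $C\gamma^2\nu^2|\Delta|$, which gives $|F|\le C\gamma^2|\Delta|$.

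The main obstacle is Step~3. One has to check that every point of the support of $\Psi$, viewed from any boundary vertex, lies within a $2a$-cone with vertex in $F$. This requires arranging the apertures and truncation height consistently, and using Whitney averages for $\tilde N$ versus $N$ if needed. This is exactly the geometric property built into the cutoff of \cite{DLP1}, so I will import it from there, after verifying that $\Psi^{m/2-1}$ inherits it.
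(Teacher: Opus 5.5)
Your proposal is correct and follows the same route as the paper. The paper also runs the Whitney/sawtooth good-$\lambda$ argument of \cite[Section 9]{DLP1}: it applies Lemma~\ref{lem.S2cutoff} to the sawtooth cutoff of \cite[Definition 9.7]{DLP1}, which has a uniform Carleson norm and satisfies \eqref{eq.Psi_bd}, and then bounds $\int_{\pom}N(\Psi^{m/2-1}\nabla u)^2$ by $C\gamma^2\nu^2|\Delta|$ on each Whitney cube.

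The step you flag as the main obstacle is in fact immediate. Since $m\ge3$ and $\operatorname{supp}\Psi\subset\bigcup_{(x,t)\in F}\Gamma_{2a}(x,t)$, you get $|\Psi^{m/2-1}\nabla u|\le\gamma\nu$ pointwise, whichever boundary vertex is used to define $N$.
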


We now prove Lemmas~\ref{lem.Sdk} and \ref{lem.SH}.
We first take $\Psi$ compactly supported in $\om$, so all integrations
stay a positive distance from the boundary. Interior difference quotients
justify the differentiated equations. Mixed derivatives of coefficients
are interpreted distributionally and transferred by integration by parts;
only their first derivatives occur in the estimates. The removal of this
support restriction is given after the proof of Lemma \ref{lem.SH}.

\begin{proof}[Proof of Lemma \ref{lem.Sdk}]
    The proof has a similar idea as that of Lemma \ref{NPl1}, starting from \eqref{eq.pfSdk1}, and in particular, it might be viewed as the special case when $p=2$. But since in the current setting there is a cutoff function $\Psi$, the proof goes differently in multiple places. 
  
    For simplicity, we write $w_k=\dr_k u$. We start by writing
    \begin{multline*}
         \iint \abs{\nabla w_k}^2\Psi^mx_n\, dX\,dt\lesssim 
         \iint \frac{A}{a_{nn}}\nabla w_k\cdot\nabla w_k\Psi^mx_n\,dX\,dt\\
         =\underbrace{-\iint\frac{a_{nj}}{a_{nn}}\dr_jw_k w_k\Psi^m dX\,dt}_{:=I}
         -m\iint \frac{A}{a_{nn}}\nabla w_k\cdot\nabla \Psi\br{\Psi^{m-1}w_kx_n}dX\,dt\\
         \underbrace{-\iint \divg\br{\frac{A}{a_{nn}}\nabla w_k}w_k\Psi^mx_ndX\,dt}_{=:J}
    \end{multline*}
by ellipticity and the divergence theorem. The unlabeled term on the right-hand side can be bounded by the desired quantities using the Cauchy-Schwarz inequality and the assumption that $|\nabla\Psi|^2x_ndX\,dt$ is a Carleson measure: 
\begin{multline*}
    \abs{m\iint \frac{A}{a_{nn}}\nabla w_k\cdot\nabla \Psi\br{\Psi^{m-1}w_kx_n}dX\,dt}\\
    \le \varepsilon \iint  \abs{\nabla w_k}^2\Psi^mx_n\, dX\,dt 
    + C_\varepsilon\norm{\nu}_{C}\int\abs{N(\Psi^{\frac{m}{2}-1}w_k)}^2dx\,dt.
\end{multline*}
For the term $I$, we integrate by parts in $x_j$ to get
\begin{multline*}
\frac12\iint \dr_j\br{\frac{a_{nj}}{a_{nn}}}w_k^2\Psi^mdX\,dt+\frac{m}2\iint\br{\frac{a_{nj}}{a_{nn}}}w_k^2\dr_j\Psi\,\Psi^{m-1}dX\,dt 
+ \frac12\int_{\pom} w_k^2 \Psi^m dx\,dt\\ =:I_1+I_2 +I_3.
\end{multline*}
The term $I_2$ is controlled by $C\norm{\nu}_{C}\int\abs{N(\Psi^{\frac{m-1}{2}}w_k)}^2dx\,dt$ since $|\nabla\Psi|dX\,dt$ is a Carleson measure.
Here $I_3=0$ for the cutoffs currently under consideration. If $u$ and
$\Psi$ extend smoothly to the boundary, the displayed $I_3$ is the
boundary term from $j=n$. Since $0\le\Psi\le1$, it is bounded by
\[
 I_3\le\frac12\int_{\pom}N(\Psi^{m/2-1}\nabla u)^2\,dx\,dt.
\]
This bound has coefficient one, independently of the two Carleson norms.
The cutoff limit below handles general energy solutions without assuming
a boundary trace of $w_k$. For the term $I_1$, observe that $j<n$, and introducing the term $\dr_nx_n$, we write:
\[
I_1=\frac12\sum_{j<n}\iint\dr_j\br{\frac{a_{nj}}{a_{nn}}}w_k^2\Psi^m\dr_nx_n\,dX\,dt.
\]
We then integrate by parts in $\dr_n$ and get three terms: (1) when $\dr_n$ hits $w_k^2$, we use the Cauchy-Schwarz inequality and the assumption that $|\dr_j A|^2x_ndX\,dt$ is a Carleson measure to bound the term by allowable quantities
\(
\varepsilon\iint  \abs{\nabla w_k}^2\Psi^mx_n\, dX\,dt 
    + C_\varepsilon\norm{\mu}_{C}\int\abs{N(\Psi^{\frac{m}{2}}w_k)}^2dx\,dt\);
(2) when $\dr_n$ hits $\Psi^m$, we use the bound $ |\nabla A(X,t)|x_n\lesssim 1$ and the assumption that $|\nabla\Psi|dX\,dt$ is a Carleson measure to bound the term by $C\norm{\nu}_{C}\int\abs{N(\Psi^{\frac{m-1}{2}}w_k)}^2dx\,dt$; and (3) when $\dr_n$ hits $ \dr_j\br{\frac{a_{nj}}{a_{nn}}}$, we integrate by parts again in $x_j$. Since $j<n$, this gives
\[
\sum_{j<n}\iint\dr_n\br{\frac{a_{nj}}{a_{nn}}}\dr_j w_k\,w_k\Psi^mx_n\,dX\,dt
+\frac{m}2\sum_{j<n}\iint\dr_n\br{\frac{a_{nj}}{a_{nn}}}w_k^2\dr_j\Psi\,\Psi^{m-1}x_n\,dX\,dt.\]
Observe that both terms can be estimated in the same way as the previous two terms and are controlled by allowable quantities. 

It remains to estimate the term $J$. Since 
\(
 \divg\br{\frac{A}{a_{nn}}\nabla w_k}=\frac{1}{a_{nn}}\divg(A\nabla w_k)-\frac{\nabla a_{nn}}{a_{nn}^2}\cdot (A\nabla w_k)\),
we can use the PDE for $w_k$ and get
\begin{multline*}
    J=-\iint \frac{1}{a_{nn}}(\dr_tw_k)w_k\Psi^mx_n\,dX\,dt
    +\iint\frac{1}{a_{nn}}\divg\br{(\dr_k A)\nabla u}w_k\Psi^mx_n\,dX\,dt\\
    +\iint \frac{\nabla a_{nn}}{a_{nn}^2} (A\nabla w_k)w_k\Psi^mx_n\,dX\,dt
    =: -J_1-J_2+J_3.
\end{multline*}
Note that $J_3$ can be estimated using Cauchy-Schwarz and the Carleson condition on $\nabla A$ as before and controlled by allowable quantities. For $J_1$, integrating by parts in $t$ gives that 
\[
J_1=-\frac12\iint\dr_t\br{a_{nn}^{-1}}w_k^2\Psi^mx_n\,dX\,dt
-\frac{m}2\iint\frac{1}{a_{nn}}w_k^2(\dr_t\Psi) \Psi^{m-1}x_n=:J_{11}+J_{12}.
\]
Since $|\dr_t\Psi|x_ndX\,dt$ is a Carleson measure, we have 
\(
|J_{12}|\le C\norm{\nu}_{C}\int\abs{N(\Psi^{\frac{m-1}{2}}w_k)}^2dx\,dt\).
For $J_{11}$, we write $x_n=\frac12\dr_n(x_n^2)$ and then integrate by parts in $x_n$. This process gives three terms. When $\dr_n$ hits $w_k^2$ and $\Psi^m$, the two corresponding terms can both be bounded by allowable quantities using that $|\dr_t a_{nn}|^2x_n^3dX\,dt$ is a Carleson measure, that $|\dr_t a_{nn}|x_n^2\lesssim 1$ and that $|\dr_n\Psi|dX\,dt$ is a Carleson measure, respectively. When $\dr_n$ hits $\dr_t\br{a_{nn}^{-1}}$, we integrate by parts again in $\dr_t$ to get 
\[
-\frac12\underbrace{\iint (\dr_tw_k)w_k\dr_n(a_{nn}^{-1})\Psi^mx_n^2\,dX\,dt}_{=:J_{11*}}
-\frac{m}4\iint w_k^2\dr_n(a_{nn}^{-1})(\dr_t\Psi)\Psi^{m-1}x_n^2\,dX\,dt.
\]
Note that the second term can be controlled by an allowable quantity using the bound $|\dr_n a_{nn}|x_n\lesssim 1$ and the Carleson condition on $\dr_t\Psi$. We estimate $J_{11*}$ using Cauchy-Schwarz:
\begin{equation}\label{eq.J11*}
    |J_{11*}|\le \br{\iint |\dr_tw_k|^2\Psi^{m+2}x_n^3dX\,dt}^{1/2}
    \br{\iint|\dr_n a_{nn}|^2x_n\,w_k^2\Psi^{m-2}dX\,dt}^{1/2}.
\end{equation}
The square of the second factor is bounded by $\norm{\mu}_{C}\int_{\pom}N(\Psi^{\frac{m}2-1}\nabla u)^2dx\,dt$ using the Carleson condition on $\dr_n a_{nn}$. To estimate the first term in the product, we do a Whitney decomposition of $\supp\Psi$ and apply Lemma~\ref{lem.CacciCO} to each Whitney cube $Q$. Since the cubes in $\set{2Q}$ have bounded overlap, we deduce that 
\begin{multline}\label{eq.dtwkCO}
   \iint |\dr_tw_k|^2\Psi^{m+2}x_n^3dX\,dt\lesssim \iint |\dr_tu|^2\Psi^mx_n\,dX\,dt+
    \iint |\dr_t A|^2|\nabla u|^2\Psi^{m+2}x_n^3dX\,dt\\
    \lesssim \iint \br{|\nabla A|^2x_n+|\dr_t A|^2x_n^3}|\nabla u|^2\Psi^m dX\,dt
    +\iint |\nabla^2u|^2\Psi^mx_n\,dX\,dt\\
    \lesssim \norm{\mu}_{C}\int_{\pom}N(\Psi^{\frac{m}2}\nabla u)^2dx\,dt +\iint |\nabla^2u|^2\Psi^mx_n\,dX\,dt
\end{multline}
where we have used $|\dr_t u|^2\lesssim |\nabla A|^2|\nabla u|^2+|\nabla^2u|^2$ by the PDE satisfied by $u$ in the second inequality. Plugging these estimates into \eqref{eq.J11*} gives 
\[
|J_{11*}|\le \varepsilon \iint |\nabla^2u|^2\Psi^mx_n\,dX\,dt
+ C_{\varepsilon}\norm{\mu}_{C}\int_{\pom}N(\Psi^{\frac{m}2-1}\nabla u)^2dx\,dt.
\]
We are left with the term $J_2$ from $J$. By the divergence theorem, 
\begin{multline*}
    J_2=\iint (\dr_k A)\nabla u\cdot\nabla(a_{nn}^{-1}) w_k\Psi^mx_n\,dX\,dt
    +\iint \frac{\dr_kA}{a_{nn}}\nabla u\cdot\nabla w_k\br{\Psi^m x_n}dX\,dt\\
    +m\iint \frac{\dr_kA}{a_{nn}}\nabla u\cdot\nabla \Psi\br{w_k\Psi^{m-1} x_n}dX\,dt
    +\iint \frac{\dr_ka_{nj}}{a_{nn}}\dr_j u \br{w_k\Psi^m}dX\,dt=: \sum_{i=1}^4J_{2i}.
\end{multline*}
We have 
\begin{equation*}
    |J_{21}|\lesssim\iint |\nabla A|^2|\nabla u|^2\Psi^mx_n\,dX\,dt
    \le C\norm{\mu}_{C}\int_{\pom}N\br{\Psi^{m/2}\nabla u}^2dx\,dt.
\end{equation*}
The terms $J_{22}$ and $J_{23}$ can be estimated in the same way as the first two terms coming from $I_1$. 

For the term $J_{24}$, we introduce $1=\dr_n(x_n)$ and then integrate by parts in $x_n$. This process gives 4 terms, when $\dr_n$ hits $ \frac{\dr_ka_{nj}}{a_{nn}}$,  $\dr_j u$, $w_k$, and $\Psi^m$, respectively. We only explain the situation when  $\dr_n$ hits $ \frac{\dr_ka_{nj}}{a_{nn}}$, as the other 3 terms are similar to the terms treated before. In this case, we have
\begin{multline*}
    \iint \dr_n\br{\frac{\dr_ka_{nj}}{a_{nn}}}\dr_j u \br{w_k\Psi^mx_n}dX\,dt
    = \iint \dr_n(a_{nn}^{-1})\dr_ka_{nj}\dr_j u \br{w_k\Psi^mx_n}dX\,dt\\
    + \iint a_{nn}^{-1}(\dr_n\dr_ka_{nj})\dr_j u \br{w_k\Psi^mx_n}dX\,dt.
\end{multline*}
The first term on the right-hand side can be estimated as $J_{21}$, while for the second term, we integrate by parts again in $x_k$. Since $k<n$, this process gives 4 terms, each of which can be handled and controlled by the allowable quantities as in the previous step. This completes the proof of the lemma.
\end{proof}
\medskip

To prove Lemma~\ref{lem.SH}, we use the PDE \eqref{eq.Hpde2} satisfied by $H$. Recall that matrix $B$ is defined as $B=A^T/a_{nn}$.
\begin{proof}[Proof of Lemma \ref{lem.SH}]
    We start by writing 
    \begin{multline*}
        \iint |\nabla H|^2\Psi^mx_n\,dX\,dt\lesssim \iint B\nabla H\cdot\nabla H\br{\Psi^m x_n}dX\,dt\\  
        =-\iint H B\nabla H\cdot \nabla\br{\Psi^m x_n}dX\,dt
        \underbrace{-\iint \divg(B\nabla H)H\Psi^mx_n\,dX\,dt}_{=:S}.
    \end{multline*}
    The first term on the right-hand side can be estimated as in the proof of Lemma~\ref{lem.Sdk}; in fact, the argument is slightly simpler here since 
 $b_{nn}=1$. More precisely, we write by the product rule
 \begin{multline*}
      -\iint H B\nabla H\cdot \nabla\br{\Psi^m x_n}dX\,dt\\
      = -m\iint H B\nabla H\cdot \nabla\Psi\br{\Psi^{m-1} x_n}dX\,dt 
       \underbrace{-\frac12\iint b_{nj}\dr_j\br{H^2}\Psi^mdX\,dt}_{=:I} 
      .
 \end{multline*}
Observe that the first term on the right-hand side is bounded by 
\(\varepsilon\iint |\nabla H|^2\Psi^mx_n\,dX\,dt+C_{\varepsilon}\|\nu\|_C\int_{\pom}N(\Psi^{m/2-1}H)^2\), which is allowable because $|H|\lesssim|\nabla u|$. The term $I$ can be estimated in the same way as the term $I$ in the proof of Lemma~\ref{lem.Sdk}. Namely, we integrate by parts in $x_j$, and when $\dr_j$ hits $b_{nj}$, we introduce $1=\dr_nx_n$ and then integrate by parts in $x_n$. For the term that involves $\dr_n\dr_j b_{nj}$, we use the fact that $j<n$ (because $b_{nn}=1$) to integrate by parts again in $x_j$. When $j=n$ we again get a 
boundary term $\frac12\int_{\pom}H^2\Psi^m$. For smooth boundary values,
it is bounded by $C\int_{\pom}N(\Psi^{m/2-1}\nabla u)^2$, with coefficient
independent of $\mu$ and $\nu$, because $|H|\lesssim|\nabla u|$.
It vanishes for the compact interior cutoffs used in the present calculation.

We now estimate the term $S$ using the PDE \eqref{eq.Hpde2}.
\[
S=-\iint \frac{1}{a_{nn}}(\dr_t H) H\Psi^mx_n\,dX\,dt-\iint TH\Psi^mx_n\,dX\,dt=:-S_1-S_2,
\]
where $T$ is defined as in \eqref{eq.Hpde2}.
The term $S_1$ is similar to $J_1$ in the proof of Lemma~\ref{lem.Sdk}. The difference is in the term 
\[
S_{11*}:=\iint(\dr_t H)H\dr_n(a_{nn}^{-1})\Psi^mx_n^2dX\,dt,
\]
which corresponds to $J_{11*}$ in the proof of Lemma~\ref{lem.Sdk}. As in \eqref{eq.J11*}, we need to estimate $\iint|\dr_t H|^2\Psi^{m+2}x_n^3dX\,dt$. Since $H=\sum_{1\le j\le n}a_{nj}\dr_j u$, the product rule gives that 
$|\dr_t H|\lesssim |\dr_t A||\nabla u|+|\dr_t\nabla u|$. So
\[
\iint|\dr_t H|^2\Psi^{m+2}x_n^3dX\,dt\lesssim\iint|\dr_t A|^2|\nabla u|^2\Psi^{m+2}x_n^3dX\,dt+ \iint  |\dr_t\nabla u|^2\Psi^{m+2}x_n^3dX\,dt,
\]
where the first term is further bounded by $C\|\mu\|_C\int_{\pom}N\br{\Psi^{m/2}\nabla u}^2dx\,dt$, and the second term can be estimated exactly as \eqref{eq.dtwkCO}. Hence, we conclude that $S_1$ can be controlled by allowable quantities. 

It remains to estimate $S_2$, which we further split into 3 terms: we write $S_2=S_{21}+S_{22}+S_{23}$, where 
 \[
S_{21}:=\iint (\partial_i b_{in})\partial_tu\,H\Psi^mx_ndX\,dt,
\]
\[
S_{22}:=\iint \left[(\partial_ta_{nn}^{-1})H-(\partial_tb_{in})\partial_iu\right]H\Psi^mx_ndX\,dt,
\]
and 
\[
S_{23}:=\iint \sum_{j<n} \left[\partial_i(b_{ij}\partial_j(a_{nk} \partial_k u))-
\partial_k(b_{kn} \partial_j(a_{ji} \partial_i u))\right] H\Psi^mx_ndX\,dt.
\]
For $S_{21}$, we use the PDE to get that  $|\dr_t u|\lesssim |\nabla A||\nabla u|+|\nabla^2u|$, and hence 
\begin{multline*}
    |S_{21}|\lesssim \iint |\nabla A|^2|\nabla u||H|\Psi^mx_ndX\,dt+\iint |\nabla A||\nabla^2 u||H|\Psi^mx_n\\
    \lesssim (1+C_\varepsilon) \norm{\mu}_{C}\int_{\pom}N\br{\Psi^{m/2}|\nabla u|}^2dx\,dt + \varepsilon \iint |\nabla^2 u|^2\Psi^m x_n.
\end{multline*}
To estimate $S_{22}$, we write $x_n=\frac12\dr_n\br{x_n^2}$ and integrate by parts in $x_n$. This yields 3 terms. When $\dr_n$ hits $H$ or $\Psi^m$, we can control the corresponding terms by allowable quantities using the Carleson condition on $|\dr_t A|$ (and also on $\dr_n\Psi$ when $\dr_n$ hits  $\Psi^m$) and H\"older inequality. When $\dr_n$ hits the quantity in the bracket $[\quad ]$, we essentially need to treat differently (when $\dr_n$ hits on the $H$ or $\dr_i u$ in the bracket, it yields terms similar as before)
\[
\iint \dr_n\dr_t(a_{nn}^{-1})\, H^2\Psi^m x_n^2dX\,dt -\iint (\dr_n\dr_t b_{in}) \dr_i u\, H\Psi^m x_n^2dX\,dt. 
\]
We integrate by parts again in $t$ for each integral, which yields 4 terms. All of these terms are similar to the terms that we have treated already, namely,  $S_{11*}$, and $J_{11*}$ as well as $J_{23}$ in the proof of Lemma~\ref{lem.Sdk}. So $|S_{22}|$ can be bounded by allowable quantities. 
\smallskip

It remains to estimate $S_{23}$. Put $q_j=a_{ji}\partial_i u$ and define,
with repeated indices $i,k$ summed from $1$ to $n$,
\[
 F_j=\partial_i(b_{ij}a_{nk})\partial_k u
       -\partial_k(b_{kn}a_{ji})\partial_i u,\qquad j<n,
\]
\[
 G_i=\sum_{j<n}\big[-(\partial_jb_{ij})H
                         +(\partial_jb_{in})q_j\big],\qquad 1\le i\le n.
\]
Since $b_{ij}a_{nk}=b_{kn}a_{ji}$, the spatial part of $T$ in
\eqref{eq.Hpde2} has the exact divergence form
\[
 T_{\mathrm{sp}}=\sum_{j<n}\partial_jF_j+\partial_iG_i,
 \qquad |F|+|G|\lesssim|\nabla A||\nabla u|.
\]
Indeed, commute $\partial_j$ past the coefficients in the two terms of
$T_{\mathrm{sp}}$. Their difference inside the resulting tangential
derivative is $F_j$, because the second derivatives of $u$ cancel by
the displayed coefficient identity. The two commutators give $G$.
Integrating this formula against $H\Psi^mx_n$ gives terms bounded by
\begin{multline*}
 \varepsilon\iint|\nabla H|^2\Psi^mx_n
 +C_\varepsilon\|\mu\|_C
       \int_{\pom}N(\Psi^{m/2}\nabla u)^2\\
 +C\|\nu\|_C\int_{\pom}N(\Psi^{(m-1)/2}\nabla u)^2,
\end{multline*}
except for $-\iint G_nH\Psi^m$, where the derivative hits $x_n$.
Here $b_{nn}=1$ gives
\[
 -G_nH=\sum_{j<n}(\partial_jb_{nj})H^2.
\]
This is of the same form as $I_1$ in Lemma \ref{lem.Sdk}: insert
$1=\partial_nx_n$, integrate in $x_n$, and transfer any resulting mixed
coefficient derivative to $H^2\Psi^m$ in the tangential variable $x_j$.
The same bounds apply. Absorb the terms with
$\varepsilon\iint|\nabla H|^2\Psi^mx_n$ into the left-hand side.

This completes the proof of Lemma~\ref{lem.SH}.
\end{proof}

\begin{proof}[Completion of the cutoff estimates]
For a cutoff compactly supported in $\om$, sum Lemma \ref{lem.Sdk} over
$k<n$, add Lemma \ref{lem.SH}, and use \eqref{eq.grad2u_split}.
The last term in that estimate is bounded by
$C\|\mu\|_C\|N(\Psi^{m/2-1}\nabla u)\|_2^2$, since $0\le\Psi\le1$.
Choose the small parameters in the two lemmas so the total coefficient
of $\iint|\nabla^2u|^2\Psi^mx_n$ on the right is at most $1/2$.
This proves Lemma \ref{lem.S2cutoff} for these cutoffs.

For a general $\Psi$, choose smooth functions
$0\le\eta_\delta(x_n),\chi_R(X,t)\le1$
such that $\eta_\delta=0$ on $x_n<\delta$, $\eta_\delta=1$ on
$x_n>2\delta$, and $|\eta_\delta'|\lesssim\delta^{-1}$.
Take $\chi_R=1$ on $|X|<R$, $|t|<R^2$, supported where
$|X|<2R$, $|t|<4R^2$, with
$|\nabla\chi_R|\lesssim R^{-1}$ and $|\partial_t\chi_R|\lesssim R^{-2}$.
Set $\Psi_{\delta,R}=\Psi\eta_\delta\chi_R$.
For a cutoff $\zeta$, denote its measure in Lemma \ref{lem.Sdk} by
$\nu_\zeta$. The product rule gives
\begin{equation}\label{eq:cutoffExhaustion}
 \|\nu_{\Psi_{\delta,R}}\|_C\le C(\|\nu_\Psi\|_C+1),\qquad
 N(\Psi_{\delta,R}^{m/2-1}\nabla u)
 \le N(\Psi^{m/2-1}\nabla u).
\end{equation}
To verify the uniform Carleson bound, the density for $\eta_\delta$ is
at most $C\delta^{-1}\boldsymbol1_{\{\delta<x_n<2\delta\}}$.
Its integral on any boundary box of side $s$ is at most $Cs^{n+1}$.
The density for $\chi_R$ is at most $C/R$ on a region of volume
$CR^{n+2}$, so its mass on such a box is at most
$C\min\{s^{n+2}/R,R^{n+1}\}\le Cs^{n+1}$.
The implicit constant in \eqref{eq.Psi_bd} is uniform as well.

Apply the compact-cutoff estimate to $\Psi_{\delta,R}$ and let
$\delta\downarrow0$, $R\to\infty$. Fatou's lemma and
\eqref{eq:cutoffExhaustion} prove Lemma \ref{lem.S2cutoff} as stated.
If its right-hand side is finite, the Hessian integral is now finite,
so the same limit in the two component estimates proves Lemmas
\ref{lem.Sdk} and \ref{lem.SH}, by dominated convergence on their Hessian
terms and Fatou on their left-hand sides. If the right-hand side is
infinite the assertions are immediate. This also explains the unit term
in all three estimates without requiring pointwise boundary traces of
$\nabla u$ or $H$.
\end{proof}

\section{Localization on bounded Lipschitz cylinders}\label{S.bddLip}

In this section we introduce a localization technique that will allow us to establish solvability of the Neumann problem on Lipschitz cylinders of the form $\Omega=\mathcal O\times\R$ for a bounded Lipschitz domain
$\mathcal O\subset\R^n$ (with sufficiently small Lipschitz norm) using previously  established solvability on $\R^n_+\times\R$. This proves Theorem \ref{MainT} for bounded Lipschitz cylinders. 
For unbounded Lipschitz graph domains, the result already follows by the change of variables in Lemmas~\ref{lem:graphflat} and~\ref{lem:graphtransfer}.

In what follows, we follow the localization argument in Section 11 of \cite{DLP1} where a similar decomposition was considered as well as \cite{DL}. Recalling the definition of the Lipschitz domain
$\mathcal O\subset\R^n$ from Definition \ref{DefLipDomain} we restrict ourselves to the case
 \( \diam(\mathcal{O}) < \infty \).     \vglue1mm

Fix $p\in(1,\infty)$. We choose small bounds $\ell(p)\le1$ and
$\mu(p)>0$, and a parameter $L\ge1$, as specified below. We assume
$\ell\le\ell(p)$ and $\|\mu\|_C\le\mu(p)$. Write
$\mathbb B_s(P)$ for a spatial Euclidean ball. Fix a geometric
constant $c_*$ such that a working cylinder of diameter $d$, centred
at $P_j\in\partial\mathcal O$, satisfies
$4\Z_j\subset\mathbb B_{c_*d}(P_j)$ when $\ell\le1$.
Before rescaling, refine the finite boundary cover so that
$(1/2)\Z_j$ cover $\partial\mathcal O$ and each working cylinder has
an original coordinate chart available at the larger scale
$16c_*e^L d$. This is possible by compactness, by decreasing the common
working diameter $d$ and increasing the number of cylinders.
The graph functions are defined on tangential balls of that larger
radius, and their epigraphs agree with $\mathcal O$ in the corresponding
spatial balls. Their Lipschitz constants remain at most $\ell$.
 
 By rescaling the PDE, it may also be assumed that the $\ell$-cylinders $\Z_j$ in the definition above have diameter $d=1$. Let $N,\, C_0$ be as above and hence 
 there are $N$ such $\ell$-cylinders $(1/2)\Z_j$ needed to cover the boundary $\partial\mathcal O$.
 It follows that there exists a partition of unity $(\varphi_j)_{j=1}^N$  such that each  $\varphi_j\in 
 C_0^\infty(\R^n)$, is supported in
$(1/2)\Z_j$, and altogether, 
$$\sum_{j=1}^N \varphi_j(x)=1,\qquad \mbox{ for all }\qquad x\in 
\left\{
x\in\mathcal O:\,  \mathrm{dist}\left( x,{\partial\mathcal O}
\right) \leq \frac{1}{2}\right\}.$$
As each $\varphi_j$ is smooth, and there are $N$ of them, it follows that $\|\nabla \varphi_j\|_{L^\infty}\le M$ for some $M>0$ for all $j$. This completes 
the description of the  partition of the spatial component of our domain $\Omega$. \medskip

Fix a scale $r_0\in (0,1/2]$ to be specified later, let \( \Delta_i  = \partial\Base \times (ir_0^2,(i+1)r_0^2] \) for $i\in\Z$. There exists a nonnegative smooth function $\psi\in C_c^\infty(\R)$, supp $\psi\subset [0,3r_0^2]$ such that for 
$$\psi_i(\cdot)=\psi(\cdot-ir_0^2), \qquad |\psi'|\lesssim r_0^{-2},\qquad \sum_{i\in \Z} \psi_i \equiv 1.$$\medskip

We first take $g$ smooth and compactly supported in time. We do not
require its integral to vanish. Put $g_{ij}=g\psi_i\varphi_j$; only
finitely many of these functions are nonzero. We construct the
corresponding solutions before using the decomposition.

Let $h$ be a smooth datum supported in a bounded time interval
$(a,b)$, and put
\[
 S=\sigma(\partial\mathcal O),\qquad
 I_h=\int_{\partial\Omega}h,\qquad
 F_h(t)=\int_{\partial\mathcal O}h(P,t)\,d\sigma(P).
\]
Fix $\theta\in C_0^\infty(0,1)$ with $\theta\ge0$ and
$\int\theta=1$. For $R\ge1$, set $T_R=b+R$ and
\begin{equation}\label{eq:compensatingPulse}
 h_R(P,t)=h(P,t)-\frac{I_h}{SR}
                       \theta\!\left(\frac{t-T_R}{R}\right).
\end{equation}
Then $\int_{\partial\Omega}h_R=0$, and
\[
 \|h_R-h\|_{L^p(\partial\Omega)}
 =|I_h|S^{1/p-1}R^{1/p-1}\|\theta\|_{L^p(\R)}\longrightarrow0.
\]
Each $h_R$ belongs to the energy-dual trace space. Indeed, writing
$F_R(t)=\int_{\partial\mathcal O}h_R(P,t)\,d\sigma(P)$ and
$m_\zeta(t)=\fint_{\mathcal O}\zeta(X,t)\,dX$, spatial trace and
Poincar\'e inequalities give
\[
 |\langle h_R,\operatorname{Tr}\zeta\rangle|
 \le C\|h_R\|_2\|\nabla\zeta\|_2
       +|\mathcal O|^{-1/2}\|D_t^{-1/2}F_R\|_2
                              \|D_t^{1/2}\zeta\|_2.
\]
The last norm of $F_R$ is finite: $F_R$ is smooth and compactly
supported with integral zero, so $\widehat F_R(\xi)=O(|\xi|)$
near zero. Section~\ref{Def} therefore supplies an energy solution
$V_R$ with datum $h_R$.

We normalize $V_R$ to vanish for $t<a$. To justify this normalization,
its spatial mean is constant on $(-\infty,a)$, by the weak equation.
Subtract that constant. Spatial Poincar\'e and the global gradient
energy give a sequence $s_k\to-\infty$ with
$\|V_R(\cdot,s_k)\|_2\to0$. The homogeneous Neumann energy
inequality from $s_k$ to any $t<a$ implies $V_R(\cdot,t)=0$.
For $R,R'\ge1$, the difference $V_R-V_{R'}$ has zero initial
value and zero Neumann data before $\min(T_R,T_{R'})$.
The same energy inequality yields
\begin{equation}\label{eq:pulseCausality}
 V_R=V_{R'}\quad\hbox{on }
 \mathcal O\times(-\infty,\min(T_R,T_{R'})).
\end{equation}
Thus these solutions define a single function $v[h]$ on every
bounded time interval, by taking $R$ with $T_R$ beyond that interval.
It belongs to $L^2_{\loc}(\R;W^{1,2}(\mathcal O))$, solves the
Neumann problem with datum $h$, and vanishes for $t<a$. The energy
inequality also proves uniqueness in this causal local-energy class
and hence linearity of $h\mapsto v[h]$.
Its mean satisfies
\begin{equation}\label{eq:causalMean}
 \fint_{\mathcal O}v[h](X,t)\,dX
   =\frac1{|\mathcal O|}\int_{-\infty}^tF_h(s)\,ds.
\end{equation}
We require no uniform global energy bound for $V_R$. In particular,
$v[h]$ need not be a global energy solution when $I_h\ne0$.

For each $i,j$ define $v_{ij}=v[g_{ij}]$. Then
\begin{equation}\label{defvij}
\begin{cases}
    \LL v_{ij}=0&\text{in }\Omega,\\
    \partial_\nu^A v_{ij}=g_{ij}&\text{on }\partial\Omega,\\
    v_{ij}=0&\text{for }t<ir_0^2.
\end{cases}
\end{equation}
The finite sum $u=\sum_{i,j}v_{ij}$ is the causal local-energy
solution with datum $g$. If $\int_{\partial\Omega}g=0$, the
energy solution with datum $g$ exists by the preceding trace estimate;
after its zero-past normalization it equals this sum by uniqueness
on every finite time interval. We will pass to general data after
proving the uniform estimate.

Observe that the boundary data of $v_{ij}$ is supported on $(\partial\Base\cap (1/2)\Z_j) \times (ir_0^2,(i+3)r_0^2]$ and hence in $\Delta_i\cup \Delta_{i+1}\cup \Delta_{i+2}$. Furthermore we have that
$$\|g_{ij}\|_{L^{p}(\partial\Omega)}\le C\|g\|_{L^{p}(\Delta_i\cup \Delta_{i+1}\cup \Delta_{i+2})}.$$

\vglue1mm

Let us fix $i,j$ and consider $v=v_{ij}$, where for convenience we drop the indices $i,j$.
Without loss of generality (by shifting the time variable) and relabeling the $\ell$-cylinders $\Z_j$ we may assume that $\partial_\nu^A v$ is supported in $(\partial\Base\cap (1/2)\Z_1) \times (-6r_0^2,-3r_0^2)$ and let
us call $h$ the datum on this set.

We first state a key theorem from \cite{DLP2}:

\begin{theorem}\cite[Theorem 1.4]{DLP2}\label{thm.NtoLoc}
Let $\om=\mathcal O\times\R$ where $\mathcal O$ is either a bounded or unbounded Lipschitz domain in $\Rn$. Let $\mathcal L=-\dr_t +\divg (A\nabla \cdot)$, and let $\mathcal L^*=\dr_t+\divg(A^T\nabla\cdot)$ be the adjoint operator of $\mathcal L$.
    Let $p\in (1,\infty)$. Suppose that $(N_{\mathcal L})_p$ and $(D_{\mathcal L^*})_{p'}$ are solvable in $\om$. Then for any backward parabolic cube $J_r=J_r(x,t):=Q_r(x)\times (t-r^2,t)$ centered at some $(x,t)\in\pom$, and any weak local solution in $J_{2r}\cap \Omega$ to $\mathcal Lu=0$   with zero Neumann data on $J_{2r}\cap\pom$, we have
\begin{equation} \label{locEST}
\|\wt N(|\nabla u|\1_{J_{r}})\|_{L^p(\partial \Omega)} \leq C r^{(n+1)/p}\fiint_{J_{2r}\cap \om} |\nabla u| dX\,dt.
\end{equation}
\end{theorem}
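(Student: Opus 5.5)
The plan is to follow the classical localization scheme for the Neumann problem, in the spirit of the elliptic argument of Kenig--Pipher adapted to the parabolic setting. By translation and parabolic rescaling we take $r=1$ and $(x,t)=(0,0)$. We fix a smooth cutoff $\phi$ with $\phi\equiv1$ on $J_{3/2}$, supported in $J_{2}$ up to the top time slice, with $|\nabla\phi|\lesssim1$ and $|\partial_t\phi|\lesssim1$. Since only a bounded part of $u$ matters, we first subtract the constant $c=\fiint_{J_2\cap\Omega}u$ and work with $u-c$; this changes neither $\nabla u$ nor the Neumann data. The function $w=\phi(u-c)$ then solves an inhomogeneous problem
\[
\LL w=F:=(u-c)(-\partial_t\phi+\divg(A\nabla\phi)\,\cdot)+A\nabla u\cdot\nabla\phi+A\nabla\phi\cdot\nabla u .
\]
Its conormal derivative on $\pom$ is $(u-c)\,A\nabla\phi\cdot\nu$, because the Neumann data of $u$ vanish on $J_2\cap\pom$. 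All the forcing terms are supported in the annular region $J_2\setminus J_{3/2}$.

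The next step is to write $w=w_1+w_2$.

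For $w_1$ we take the causal energy solution of $\LL w_1=0$ with Neumann data $g_1=(u-c)A\nabla\phi\cdot\nu$. If necessary we first add a compensating pulse as in \eqref{eq:compensatingPulse}, so that the data has mean zero. The solvability of $(N)_p^{\LL}$ then gives
\[
\|\wt N(\nabla w_1)\|_{L^p}\lesssim\|g_1\|_{L^p}\lesssim\|u-c\|_{L^p(\pom\cap J_2)}.
\]
For $w_2$ we take the solution of $\LL w_2=F$ with zero Neumann data. To bound it we plan a duality argument against $\LL^*$. For $\wt N(\nabla w_2)$ restricted to $J_1$ the singular part vanishes, because $F$ lives in the annulus. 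Interior Caccioppoli estimates together with boundary Caccioppoli estimates (available because of the homogeneous Neumann condition) then reduce the estimate to $L^1$ or $L^2$ averages of $\nabla w_2$ on Whitney regions. These averages we control through the Green function and Neumann function estimates, which come from $(D)_{p'}^{\LL^*}$ and $(N)_p^{\LL}$.

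The main obstacle is to turn the boundary $L^p$ norm of $u-c$, and the interior terms, into the right-hand side $\fiint_{J_{2}}|\nabla u|$. Two ingredients are needed for this.

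First, a Poincaré/trace-type inequality on the Lipschitz region,
\[
\|u-c\|_{L^p(\pom\cap J_{3/2'})}\lesssim\|\wt N(\nabla u\1_{J_{3/2'}})\|_{L^p},
\]
combined with a parabolic Poincaré inequality in time. The time oscillation of $u$ is controlled by $\nabla u$ through the equation.

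Second, the resulting self-improving inequality. We have an estimate of the shape
\[
\|\wt N(\nabla u\1_{J_\rho})\|_p\le \theta\|\wt N(\nabla u\1_{J_{\rho'}})\|_p+C(\rho'-\rho)^{-M}\fiint|\nabla u|,
\]
valid for $1\le\rho<\rho'\le2$. We intend to close it with the standard iteration lemma. This requires either making $\theta<1$, by using a smallness gained from localizing in a thin boundary layer, or absorbing via the Caccioppoli improvement $L^1\to L^2\to L^\infty$ of $\nabla u$ away from the boundary.

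The a priori finiteness of the left-hand side will be justified by first truncating $\wt N$ at height $\varepsilon$ and then letting $\varepsilon\to0$.
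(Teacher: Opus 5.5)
The paper does not prove this statement. It quotes [DLP2, Theorem 1.4] and only remarks that the $L^1$ average on the right follows from the $L^2$ version via the boundary reverse Hölder inequality. Your sketch therefore has to stand on its own, and at its decisive step it does not.

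\textbf{The gap is the estimate for $w_2$.} Since $F$ vanishes on $J_{3/2}$ and $w_2$ has zero conormal data, $w_2$ is itself a weak local solution in $J_{3/2}\cap\om$ with zero Neumann data on $J_{3/2}\cap\pom$. Bounding $\wt N(\nabla w_2\1_{J_1})$ in $L^p$ is therefore the theorem you are trying to prove.

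\emph{Caccioppoli does not reduce it.} Boundary Caccioppoli and De Giorgi--Nash--Moser give at best $\big(\fiint_{B_\rho(q)\cap\om}|\nabla w_2|^2\big)^{1/2}\lesssim\rho^{-1}\operatorname{osc}_{B_{2\rho}(q)\cap\om}w_2\lesssim\rho^{\alpha-1}$. This is not uniform as $\rho\to0$, whereas $\wt N$ is exactly the supremum of such Whitney averages down to $\pom$.

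\emph{A Neumann-function representation does not reduce it either.} Writing $w_2=\iint\mathcal N(\cdot,Y)F(Y)\,dY$ only moves the problem to $\wt N(\nabla_X\mathcal N(\cdot,Y)\1_{J_1})$, which is again a local zero-Neumann estimate. Moreover, for merely bounded measurable $A$ you must keep $F=\divg((u-c)A\nabla\phi)+A\nabla u\cdot\nabla\phi-(u-c)\dr_t\phi$; the term $\divg(A\nabla\phi)$ in your formula is not a function. The support of $F$ meets $\pom$ along the lateral part of the annulus, so the poles $Y$ approach the boundary.

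\emph{The role of $(D_{\LL^*})_{p'}$ is never specified.} This is exactly where that hypothesis must do real work. One plausible route is to use Green's function of $\LL$ and duality with adjoint Dirichlet solutions to convert the forcing into $L^p$ Neumann data, to which $(N_{\LL})_p$ applies globally. Saying the Whitney averages are ``controlled through Green and Neumann function estimates'' restates the theorem rather than proving it.

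\textbf{The absorption scheme neither closes nor is needed.} As you set it up, $\|g_1\|_p\lesssim(\rho'-\rho)^{-1}\|u-c\|_{L^p}$ is bounded by $\|\wt N(\nabla u\1_{J_{\rho'}})\|_p$ with a constant $\theta\gtrsim1$, so the iteration lemma does not apply. Truncating $\wt N$ at height $\varepsilon$ does not give a priori finiteness either, because the absorbed term comes from the boundary trace of $u-c$, which no truncated $\wt N$ controls.

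Use the zero Neumann data directly instead. Moser iteration up to the boundary, and the parabolic Poincaré inequality (spatial Poincaré plus the weak equation tested with spatial bumps that need not vanish on $\pom$), are both admissible since the conormal data vanish. Together they give $\sup_{J_{7/4}\cap\om}|u-c|\lesssim\fiint_{J_2\cap\om}|\nabla u|$. Choosing $\supp\phi\subset J_{7/4}$, this bounds $\|g_1\|_{L^p}$ by the right-hand side of \eqref{locEST}; with boundary Caccioppoli it bounds all ingredients of $F$ as well, with no absorption. So $w_1$ is the easy half, and the entire difficulty sits in the $w_2$ step the proposal leaves unproved.
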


Here we have stated the theorem in its $L^1$ version instead of the $L^2$ version in the paper, but one follows from the other using boundary reverse H\"older's inequality for the gradient which in turn follows from the boundary Caccioppoli inequality.
The theorem above is useful in that it applies to local solutions with zero Neumann data. This allows us to verify the assumptions of this result (solvability) on a different domain which is unbounded and then apply it to our case of a bounded domain. To explain this more fully, consider a parabolic ball that sits inside one of the coordinate patches of $\partial\mathcal O\times\R$.
In the normalized coordinates of a working cylinder $\Z$, put
$r_*=c_*$ and $R_*=e^Lr_*$. Its centre is a boundary point, which
we take to be the origin. The reserved chart gives a Lipschitz graph
$\phi$ on $\{|x|<16R_*\}$, with $\phi(0)=0$ and Lipschitz constant
at most $\ell$, whose epigraph agrees with $\mathcal O$ in
$\mathbb B_{16R_*}(0)$.
Choose a spatial cutoff of the graph function so that
\[
 \tilde\phi=\phi\quad\text{on }\{|x|\le8R_*\},\qquad
 \tilde\phi=0\quad\text{on }\{|x|\ge16R_*\},\qquad
 \|\nabla\tilde\phi\|_\infty\le C\ell.
\]
The last bound follows from $|\phi(x)|\le\ell|x|$ and is independent
of $R_*$. Thus the graph domain
\[
 \tilde\Omega=\tilde{\mathcal O}\times\R,
 \qquad \tilde{\mathcal O}=\{(x,x_n):x_n>\tilde\phi(x)\},
\]
agrees with $\Omega$ in $\mathbb B_{8R_*}(0)\times\R$.

We extend the coefficients within this common region. Choose
$\chi\in C^\infty(\R)$ with $0\le\chi\le1$, $\chi=1$ on
$(-\infty,0]$ and $\chi=0$ on $[1,\infty)$, and put
\[
 \eta(X)=\chi\left(\frac{\log(|X|/r_*)}{L}\right),\qquad
 L=\log(R_*/r_*),
\]
with $\eta=1$ near the origin. Define
\[
 \tilde A(X,t)=
 \begin{cases}
  I+\eta(X)(A(X,t)-I),&X\in\tilde{\mathcal O}\cap\mathbb B_{R_*}(0),\\
  I,&X\in\tilde{\mathcal O}\setminus\mathbb B_{R_*}(0).
 \end{cases}
\]
Only values of $A$ in the original chart are used. The extension is
uniformly elliptic, with fixed bounds also containing those of $I$.
Since $4\Z\subset\mathbb B_{r_*}(0)$, it satisfies $\tilde A=A$ on
$[4\Z\cap\mathcal O]\times\R$. In particular, this agreement holds
for all times.

Let $\tilde\delta$ denote distance to $\partial\tilde\Omega$, and let
$\mu_{\tilde A}$ be the measure in \eqref{E:1:carl} for this domain
and matrix. Write
$M_A=\|A-I\|_{L^\infty((\mathbb B_{R_*}(0)\cap\mathcal O)\times\R)}$.
We claim that
\begin{equation}\label{eq:localExtensionCarleson}
 \|\mu_{\tilde A}\|_C
 \le C_1\|\mu\|_C+\frac{C_2M_A^2}{L^2},
\end{equation}
where the constants are independent of $L$, the working scale and
the chosen patch.

Indeed, $\partial_t\eta=0$, $|\nabla\eta|\lesssim(L|X|)^{-1}$ and
$\tilde\delta(X)\le|X|$. The latter inequality also shows that $|X|$
is comparable throughout each Whitney ball. Hence
\[
 \sup_{B_{\tilde\delta(X)/2}(X,t)}
        \tilde\delta(Y)|\nabla\eta(Y)|^2
 \lesssim\frac{1}{L^2|X|}.
\]
For $n\ge2$, the measure $|X|^{-1}\,dX\,dt$ is Carleson on
$\tilde\Omega$. For a boundary parabolic ball of radius $s$ far from
the spatial origin, this follows from $|X|\gtrsim s$ there. Otherwise
its spatial projection lies in $\mathbb B_{Cs}(0)$, and
$\int_{\mathbb B_{Cs}(0)}|X|^{-1}\,dX\lesssim s^{n-1}$;
the time interval has length at most $Cs^2$.
This bounds the cutoff contribution by $C_2M_A^2/L^2$.

The inherited coefficient contribution is bounded by the original
Carleson measure. Any Whitney ball whose spatial projection meets
$\mathbb B_{R_*}(0)$ has its spatial centre in $\mathbb B_{2R_*}(0)$
and is contained in
$\mathbb B_{3R_*}(0)\times\R$. At these centres the original and
extended boundary distances agree, since a nearest boundary point
lies in their common region $\mathbb B_{4R_*}(0)$.
For boundary balls of radius $s\le R_*$ meeting this support, the
boundary centre belongs to the original patch. For $s>R_*$, the
supported density is contained in an original parabolic ball of
radius $Cs$ centred spatially at the origin, with the same time
centre. Thus the original Carleson estimate applies with a fixed
factor in both cases. The identities
\[
 \nabla\tilde A=\eta\nabla A+(A-I)\nabla\eta,
 \qquad \partial_t\tilde A=\eta\partial_tA
\]
and the product inequality prove \eqref{eq:localExtensionCarleson}.

We now specify the choice of $L$ used in refining the cover. Since
$M_A\le\Lambda+|I|$, choose $L\ge1$, uniformly for all patches, so that
\[
 \frac{C_2(\Lambda+|I|)^2}{L^2}\le C_1\mu(p).
\]
Then $\|\mu\|_C\le\mu(p)$ implies
$\|\mu_{\tilde A}\|_C\le2C_1\mu(p)$.
Choose $\mu(p)$ and $\ell(p)$ small enough to meet the half-space
solvability thresholds after the graph change of variables below.
These choices depend only on $n,p,\lambda,\Lambda$ and are made
before fixing the refined cover and its partition of unity. The
smaller temporal scale $r_0$ is chosen afterwards.\medskip

Choose $J_{2r}$ inside the common coordinate patch
$[4\Z\cap\mathcal O]\times\R$. A solution $v$ on
$J_{2r}\cap\Omega$ then solves the extended equation on the same
local set $J_{2r}\cap\tilde\Omega$; no extension of $v$ is needed.
To apply Theorem~\ref{thm.NtoLoc}, we require global solvability on
$\tilde\Omega$.
This is an unbounded domain above a graph of a Lipschitz function and hence the solvability of the Neumann problem $(N_{\mathcal L})_p$ and the Dirichlet problem of its adjoint $(D_{\mathcal L^*})_{p'}$ in $\tilde\Omega$ can be deduced from the solvability
of these two problems on the domain $\R^n_+\times\R$. Apply Lemmas \ref{lem:graphflat} and \ref{lem:graphtransfer} to
$\tilde\phi$ and $\tilde A$. They give a smooth interior bi-Lipschitz map
$\Phi:\R^n_+\to\tilde\OO$ with constant determinant $c_0$.
For a global solution $u_0$ on $\tilde\Omega$, put
$\rho(z,t)=(\Phi(z),t)$ and
\[
 U=u_0\circ\rho,\qquad
 -\partial_tU+\divg(B\nabla U)=0,\qquad
 B=(D\Phi)^{-1}(\tilde A\circ\rho)(D\Phi)^{-T}.
\]
The constant Jacobian permits division of the transformed weak equation
by $c_0$. By \eqref{eq:graphCM}, the Carleson norm of $B$ is small when
both $\ell(p)$ and $\mu(p)$ are small. Neumann data transform by
\eqref{eq:graphg}, and the adjoint Dirichlet problem transfers with the
same exponents.\medskip

It follows that the PDE for $U$ is of the type considered in Theorem \ref{MainT} on $\R^n_+\times\R$
for which the solvability of the Neumann problem has been established.
 Applying Theorem \ref{MainT} for $\Rn_+\times\R$ to the operator $\LL_1$, we have that 
 for any $1<p<\infty$, 
the $L^p$ Neumann problem for the operator $\LL_1$ is solvable on $\R^n_+\times\R$ if the Carleson norm of the matrix $B$ is sufficiently small (i.e., if $\ell(p)$ and $\mu(p)$ are sufficiently small). Specifically, for some $C=C(A,\lambda,\Lambda,\ell, n,p)>0$,
$$\|\tilde N(\nabla U)\|_{L^p(\partial(\R^n_+\times\R))}\le C\|\wt g\|_{{L}^p(\partial(\R^n_+\times\R))},$$
for Neumann datum $\wt g$ of $U$. 

As $U=u_0\circ\rho$ and $\rho$ is bi-Lipschitz, the same estimate holds for $u_0$ on $\tilde\Omega$. Thus $(N_{\tilde{\mathcal L}})_p$ is solvable on $\tilde{\Omega}$.
Notice that the assumption that $(D_{\tilde{\mathcal L}^*})_{p'}$ is solvable is satisfied because under the above conditions on the matrix $A$ the small Carleson case was fully resolved in \cite{DDH,DH18} and thus we may apply the result of Theorem \ref{thm.NtoLoc} for $v$ with zero Neumann data on $J_{2r}\cap \tilde{\Omega}$. But on  
$J_{2r}\cap \tilde{\Omega}=J_{2r}\cap {\Omega}$ we have $A=\tilde{A}$ and hence \eqref{locEST} does apply to
our $v$ on $J_{2r}\cap \Omega$. We also note that because of the interior regularity of coefficients $(|\nabla A|\lesssim \delta(X)^{-1})$ we can here and below replace the averaged version of the nontangential maximal function $\tilde{N}$ by its stronger sup-version $N$. We do that from now onwards. \medskip

By construction $v=0$ for $t\le-6r_0^2$. Let
\[
 J_j=(1/2)\Z_j\times(-3,0),\qquad
 2J_j=\Z_j\times(-6,0),\qquad j=1,\ldots,N.
\]
We apply the local theorem on these sets using fixed finite coverings
by backward parabolic cubes with their enlargements in $2J_j$.
The datum $h$ can meet several of these patches. For each $j$,
choose a spatial cutoff $\chi_j$ equal to one on $\Z_j$ and
supported in $2\Z_j$. Transfer $h_j=\chi_jh$ to the boundary
of the corresponding graph domain $\tilde\Omega_j$, extending it
by zero outside the common patch. Let $w_j$ be its graph-domain
energy solution. These smooth compact data are admissible for the
graph-domain energy construction, and the already established
solvability there gives
\begin{equation}\label{solvb-w}
 \|N(\nabla w_j)\|_{L^p(\partial\tilde\Omega_j)}
 \le C\|h_j\|_{L^p(\partial\tilde\Omega_j)}
 \le C\|h\|_{L^p(\partial\Omega)}.
\end{equation}
On $2J_j\cap\Omega$, the function $v-w_j$ solves the common
PDE with zero Neumann data. Theorem~\ref{thm.NtoLoc} therefore gives
\begin{equation}\label{locEST2}
 \|N(|\nabla(v-w_j)|\1_{J_j})\|_{L^p(\partial\Omega)}
 \le C_p\fiint_{2J_j\cap\Omega}|\nabla(v-w_j)|,
 \qquad j=1,\ldots,N.
\end{equation}
Here, as above, the maximal functions of the localized fields are
compared on the common boundary patches, with fixed aperture changes.
The local volume integral of $|\nabla w_j|$ is bounded by
$C\|h\|_p$ using \eqref{solvb-w}, integration along the graph
coordinates, and H\"older on the bounded boundary boxes. Hence
\begin{equation}\label{locEST3}
 \|N(|\nabla v|\1_{J_j})\|_{L^p(\partial\Omega)}
 \le C\|h\|_{L^p(\partial\Omega)}
       +C\iint_{2J_j\cap\Omega}|\nabla v|.
\end{equation}
The local theorem applies for the fixed exponent $p$: its graph-domain
solvability hypotheses were ensured when choosing $\ell(p)$ and
$\mu(p)$ above.

Summing over the finite cover and using interior estimates on the
remaining compact part of $\mathcal O$, with backward cylinders
ending at or before zero, gives
\begin{equation}\label{eq:localFiniteN}
 \|N((\nabla v)\1_{\mathcal O\times(-\infty,0)})\|_{L^p(\partial\Omega)}
 \le C\|h\|_{L^p(\partial\Omega)}
       +C\iint_{\mathcal O\times(-6r_0^2,0)}|\nabla v|.
\end{equation}
The interior estimates here are the first-gradient local estimates
used in passing from $\tilde N$ to $N$ above, followed by the
interior gradient $L^2$-to-$L^1$ estimate proved below.
All fixed-volume factors are included in $C$.
The right-hand side is finite because $v$ has finite local energy
on this bounded time interval. Thus the maximal norm is finite
before either absorption below is performed. The construction by
\eqref{eq:pulseCausality} lets us use one energy solution $V_R$ on
all the time intervals in this calculation. No estimate for its
maximal function after the compensating pulse starts is used.
\medskip

Let $$\mathcal O_\varepsilon=\mathcal O\cap \{X:\, d(X,\partial \mathcal O)>\varepsilon\}.$$
Since trivially,
$$\iint_{(\mathcal O\setminus \mathcal O_\varepsilon) \times (-3,0)}|\nabla v|\lesssim \varepsilon\|N((\nabla v)\1_{\mathcal O\times (-3,0)})\|_{L^1(\partial \Omega)}\lesssim \varepsilon \| N((\nabla v)\1_{\mathcal O\times (-\infty,0)})\|_{L^p(\partial \Omega)},$$
it follows that we can fix $\varepsilon>0$ small such that the estimate 
\begin{equation}\label{est-Brownlikef}
\| N((\nabla v)\1_{\mathcal O\times (-\infty,0)})\|_{L^p(\partial \Omega)}\le 2C\|h\|_{L^p(\partial\Omega)}+C\iint_{\mathcal O_\varepsilon\times (-6r_0^2,0)}|\nabla v|.
\end{equation}
holds for all $v$ we consider with constant only depending on the coefficients of our PDE and the Lipschitz constant of the graph. 

\subsection{An interior estimate on a short time interval.}

Put $\tau=6r_0^2\le3/2$ and $I_\tau=(-\tau,0)$. We use only
that $v$ is a local energy solution of $v_t=\divg(A\nabla v)$ and
vanishes for $t\le-\tau$. The parameter $\varepsilon>0$ has already
been fixed, and we may assume $\varepsilon<1$.
Choose spatial cutoffs $0\le\zeta,\varphi\le1$ such that
\[
 \begin{split}
 &\zeta=1\text{ on }\mathcal O_\varepsilon,\qquad
   \supp\zeta\subset\mathcal O_{\varepsilon/2},\\
 &\varphi=1\text{ on }\mathcal O_{\varepsilon/2},\qquad
   \supp\varphi\subset\mathcal O_{\varepsilon/4},\qquad
   |\nabla\zeta|+|\nabla\varphi|\le C\varepsilon^{-1}.
 \end{split}
\]
Testing the original equation with $v\zeta^2$, integrating from
$-\tau$ to $0$, and dropping the nonnegative final-time term gives
Caccioppoli's inequality. Together with Cauchy--Schwarz it yields
\begin{equation}\label{eq:shortTimeCaccioppoli}
 \iint_{\mathcal O_\varepsilon\times I_\tau}|\nabla v|
 \le C\varepsilon^{-1}
       \|v\|_{L^2(\mathcal O_{\varepsilon/2}\times I_\tau)}.
\end{equation}
Here we used $\tau\le3/2$ and the boundedness of $\mathcal O$.

Next, testing with $(-t)v\varphi^2$ gives the time-weighted identity
\begin{align*}
 \iint_{\mathcal O\times I_\tau}v^2\varphi^2
 &=-2\iint_{\mathcal O\times I_\tau}(-t)\varphi^2
                                      A\nabla v\cdot\nabla v\\
 &\quad-4\iint_{\mathcal O\times I_\tau}(-t)v\varphi
                                      A\nabla v\cdot\nabla\varphi.
\end{align*}
There is no time-boundary contribution: $v$ vanishes initially and
$-t=0$ at the final time. Both energy tests are justified by Steklov
averages and limiting time cutoffs, so no pointwise time derivative
is required. Drop the nonpositive first term and use $0\le-t\le\tau$.
Young's inequality then gives
\[
 \iint v^2\varphi^2
 \le \frac12\iint v^2\varphi^2
       +C\tau^2\varepsilon^{-2}
          \iint_{\mathcal O_{\varepsilon/4}\times I_\tau}|\nabla v|^2.
\]
Consequently,
\begin{equation}\label{eq:shortTimeEnergy}
 \|v\|_{L^2(\mathcal O_{\varepsilon/2}\times I_\tau)}
 \le C\tau\varepsilon^{-1}
       \|\nabla v\|_{L^2(\mathcal O_{\varepsilon/4}\times I_\tau)}.
\end{equation}

We use the following interior estimate on backward cylinders
$J_\rho(Y,s)=B_\rho(Y)\times(s-\rho^2,s)$:
\[
 \left(\fint_{J_\rho}|\nabla v|^2\right)^{1/2}
 \le C\fint_{J_{8\rho}}|\nabla v|,
 \qquad J_{8\rho}\subset\Omega.
\]
Indeed, Caccioppoli bounds the left-hand side by
$C\rho^{-1}(\fint_{J_{2\rho}}|v-c|^2)^{1/2}$.
Local boundedness for the scalar solution $v-c$, followed by the
parabolic $L^1$ Poincar\'e inequality, bounds this by
$C\rho^{-1}\fint_{J_{4\rho}}|v-c|
 \le C\fint_{J_{8\rho}}|\nabla v|$ for a suitable constant $c$.
The $L^1$ form of local boundedness follows from its $L^2$ form by
interpolation on nested cylinders. The Poincar\'e inequality here follows from spatial Poincar\'e and
$\frac{d}{dt}\int v\eta=-\int A\nabla v\cdot\nabla\eta$ for a
normalized spatial bump $\eta$; local boundedness is the scalar
estimate of \cite[Theorem B]{Aro68}.
All cylinders in this argument have the same final time as their
respective enlargements.

Cover $\mathcal O_{\varepsilon/4}\times(-2,0)$ by finitely many
such cylinders of radius comparable to $\varepsilon$, with enlarged
cylinders contained in $\mathcal O_{\varepsilon/8}\times(-\infty,0)$.
Their number and the constants depend on $\varepsilon$ and the fixed
domain, but not on $\tau$. Since $v=0$ for $t\le-\tau$, summing the
preceding estimate gives
\[
 \|\nabla v\|_{L^2(\mathcal O_{\varepsilon/4}\times I_\tau)}
 \le C_\varepsilon
       \iint_{\mathcal O_{\varepsilon/8}\times I_\tau}|\nabla v|.
\]
Combining this with \eqref{eq:shortTimeCaccioppoli} and
\eqref{eq:shortTimeEnergy}, we obtain
\[
 \iint_{\mathcal O_\varepsilon\times I_\tau}|\nabla v|
 \le C_\varepsilon\tau
       \iint_{\mathcal O_{\varepsilon/8}\times I_\tau}|\nabla v|.
\]
Finally, put $N^-=N((\nabla v)\1_{\mathcal O\times(-\infty,0)})$.
For $X\in\mathcal O_{\varepsilon/8}$, the point $(X,t)$ belongs to
the cones with vertices $(P,t)$ for $P$ in a surface ball of radius
$c\varepsilon$ about a nearest boundary point, after decreasing
$\varepsilon$ within the fixed chart scale if necessary.
Averaging $N^-(P,t)$ over that ball and then integrating in $X$ and
$t\in I_\tau$ bounds the last integral by
$C_\varepsilon\int_{\partial\mathcal O\times I_\tau}N^-$.
Thus
\begin{equation}\label{eqFF}
 \iint_{\mathcal O_\varepsilon\times(-6r_0^2,0)}|\nabla v|
 \le C_\varepsilon r_0^2
       \int_{\partial\mathcal O\times(-6r_0^2,0)}
          N((\nabla v)\1_{\mathcal O\times(-\infty,0)}).
\end{equation}
The constant is independent of $r_0$. All local estimates use only
past times and boundedness and ellipticity of $A$.

\subsection{Solvability for small $r_0$} We are now ready to combine \eqref{eqFF} and  \eqref{est-Brownlikef}. Since $\varepsilon>0$ was fixed earlier we get that
\begin{multline}\label{est-Brownlikef2}
\| N((\nabla v)\1_{\mathcal O\times (-\infty,0)})\|_{L^p(\partial \Omega)}\le 2C\|h\|_{L^p(\partial\Omega)}+
C_\varepsilon r_0^2\int_{\partial\mathcal O\times (-6r_0^2,0)} N((\nabla v) 1_{\mathcal O\times (-\infty,0)})\\
\le 2C\|h\|_{L^p(\partial\Omega)}+C_{\varepsilon,p}r_0^2 \| N((\nabla v)\1_{\mathcal O\times (-\infty,0)})\|_{L^p(\partial \Omega)}.
\end{multline}
Hence if $r_0\in (0,1/2]$ is chosen small enough such that $C_{\varepsilon,p}r_0^2\le 1/2$
the last term of \eqref{est-Brownlikef2} can be absorbed by the left-hand side and we obtain that
\begin{equation}\label{est-BrownlikefF}
\| N((\nabla v)\1_{\mathcal O\times (-\infty,0)})\|_{L^p(\partial \Omega)}\le 4C\|h\|_{L^p(\partial\Omega)}.
\end{equation}
This is the desired estimate for $v$ with nonvanishing data in $(1/2)\Z_j\times (-6r_0^2, -3r_0^2)$.
From now on, we consider $r_0$ fixed, chosen such that $r_0=\min\{1/2,(2C_{\varepsilon,p})^{-1/2}\}$.
\medskip

The next ingredient we introduce is the exponential decay of $\nabla v$ on parts of the parabolic cylinder
where the Neumann data vanish. The following is proven in \cite{DL}:

\begin{proposition}\label{prop:Smooth Exponential Decay}
    Let \( u \) be such that \( {L}u = 0 \) in \( \Base \times (0,\infty) \) and 
    moreover suppose that \( \partial_\nu^A u=0 \) on \( \partial \Base \times [0,\infty) \). 
    Then there exists a \( \beta >0 \) such that for all \( t>0 \) and $c=\fint_{\mathcal O} u(\cdot,0)$
    \begin{align}
        \| u(\cdot,t)-c \|_{L^2(\mathcal{O})} 
        \lesssim e^{-\beta t} \| u(\cdot,0) -c\|_{L^2(\mathcal{O})}
        \lesssim e^{-\beta t} \| \nabla u(\cdot,0)\|_{L^2(\mathcal{O})}
 .
    \end{align}
\end{proposition}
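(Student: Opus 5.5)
We prove the proposition by the classical energy method for the homogeneous Neumann problem on the bounded base $\mathcal O$, using that the spatial mean is conserved and that the spatial Poincaré inequality turns the dissipation into a coercive term.

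\textbf{Conservation of the mean.} Testing the weak formulation with a function of time only (spatially constant, admissible since the Neumann data vanish) gives $\frac{d}{dt}\int_{\mathcal O}u(\cdot,t)\,dX=0$, justified via Steklov averages. Hence $\fint_{\mathcal O}u(\cdot,t)=c$ for all $t\ge0$, and $w=u-c$ is again a solution with zero Neumann data and zero spatial mean at every time.

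\textbf{Energy decay.} Test with $w$ itself, again through Steklov averages and a time cutoff, using the energy-class regularity on finite time intervals. This yields
\[
\frac12\frac{d}{dt}\|w(\cdot,t)\|_{L^2(\mathcal O)}^2=-\int_{\mathcal O}A\nabla w\cdot\nabla w\,dX\le-\lambda\|\nabla w(\cdot,t)\|_{L^2(\mathcal O)}^2,
\]
with no boundary term because $\partial_\nu^Aw=0$. Since $w(\cdot,t)$ has mean zero and $\mathcal O$ is a bounded Lipschitz domain, the Poincaré inequality gives $\|w\|_{L^2(\mathcal O)}\le C_P\|\nabla w\|_{L^2(\mathcal O)}$. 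Therefore
\[
\frac{d}{dt}\|w\|_2^2\le-2\lambda C_P^{-2}\|w\|_2^2 .
\]
Grönwall's inequality then gives the first inequality with $\beta=\lambda C_P^{-2}$.

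\textbf{Second inequality.} The second inequality is the Poincaré inequality applied at $t=0$, since $c$ is the mean of $u(\cdot,0)$.

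\textbf{Main obstacle.} The only delicate point is rigor at low regularity. We need a meaning for $u(\cdot,0)$, obtained from the $C(\R;L^2)$ continuity of weak solutions with $\nabla u\in L^2_{\loc}$. We also need to justify the integrated energy identity, which is done by Steklov averaging, so that the Grönwall argument is carried out in integral form:
\[
\|w(t)\|_2^2+2\lambda\int_s^t\|\nabla w\|_2^2\le\|w(s)\|_2^2 .
\]
The estimate then follows from the integral form of Grönwall.
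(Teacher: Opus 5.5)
Your proposal is correct. Testing with functions of time alone conserves the spatial mean, and the energy identity for $w=u-c$ has no lateral boundary term. Poincar\'e for mean-zero functions plus Gr\"onwall then gives the first inequality with $\beta=\lambda C_P^{-2}$, and Poincar\'e at $t=0$ gives the second. The regularity you need is $u\in L^2_{\loc}([0,\infty);W^{1,2}(\mathcal O))$, global in space, since the zero Neumann data then put $\partial_tu$ in $L^2_{\loc}([0,\infty);W^{1,2}(\mathcal O)^*)$ and give continuity into $L^2(\mathcal O)$.

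The paper does not prove this itself but cites \cite{DL}. Its later remark that the decay argument ``uses energy inequalities on finite time intervals and subtracts the constant spatial mean'' indicates essentially the same approach.
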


For purposes that will soon become clear, we introduce a modified nontangential maximal function which we denote by ${N}^{r_0}$. In any $L^p$ norm, ${N}$ and ${N}^{r_0}$ are equivalent, with a constant depending on $r_0$ (which was already fixed and hence it suffices to work with ${N}^{r_0}$). 

For a point $(P,\tau)\in\partial\Omega$ let $\Gamma(P,\tau)$ be the usual nontangential cone of some fixed aperture. Let 
$$\Gamma^{r_0}(P,\tau)=\Gamma(P,\tau)\cap \{(X,t)\in\Omega:\delta(X,t)\le r_0\},$$
$$I^{r_0}(P,\tau)=\{X\in\mathcal O:d(X,\partial \mathcal O)>r_0\}\times (\tau-r_0^2,\tau+r_0^2).$$
For a locally bounded function $v:\Omega\to\R$ let 
$${N}^{r_0}(v)(P,\tau)=\sup_{(X,t)\in\Gamma^{r_0}(P,\tau)}|v(X,t)|+\sup_{(X,t)\in I^{r_0}(P,\tau)}|v(X,t)|.$$
Observe that the main difference from the usual nontangential maximal function is that we limit the maximal 
width of nontangential cones (in time) to $r_0^2$ and hence for a point $(P,\tau)\in\partial\Omega$ the value of ${N}^{r_0}(v)(P,\tau)$ will depend on values of $v$
only for time in the set $\mathcal O\times (\tau-r_0^2,\tau+r_0^2)$, i.e., a set whose width in the time variable is proportional to $r_0^2$.\medskip

Let us now return to our solution $v$ with Neumann data supported in $(1/2)\Z_j\times (-6r_0^2, -3r_0^2)$.
Notice that thanks to the estimate \eqref{est-BrownlikefF} we have,
for $\Delta_i=\partial\mathcal O\times (ir_0^2,(i+1)r_0^2]$, $i\in\Z$, that
\begin{equation}
\|N^{r_0}(\nabla v)\|_{L^p(\Delta_{-2})}\le \| N((\nabla v)\1_{\mathcal O\times (-\infty,0)})\|_{L^p(\partial \Omega)}\le 4C\|h\|_{L^p(\partial\Omega)}.
\end{equation}

We claim the following lemma holds (see also {\cite[Lemma 5.10]{DiS}} for a version of this lemma that applies to the Dirichlet problem and \cite[Lemma 4.3]{DL} for an $L^1$ Neumann problem version under different assumptions):

\begin{lemma}\label{lemma:Exponential Decay} Let $p>1$. There exist $\ell(p)>0$ and $\mu(p)>0$ with the following property. Assume that 
\begin{itemize}
\item the Carleson measure $\mu$ of the coefficients of $\LL u=-\partial_t u+\divg(A\nabla u)=0$ on $\mathcal O\times\R$ satisfies $\|\mu\|_{C}\le \mu(p)$,
\item for each $j=1,2,\dots, N$ we have $\ell\le \ell(p)$ in the Definition \ref{DefLipDomain}, i.e., 
locally the Lipschitz norm of the graph describing $\partial\mathcal O$ is sufficiently small.
\end{itemize}
 Let $v$ be a solution to $\LL v=0$ and
    suppose that $\partial^A_\nu v=0$  on \( \partial\Base \times (ir_0^2,\infty) \) for some $i\in\Z$.
    There exists an \( \alpha=\alpha(r_0)>0 \) such that for all $k>i$ we have that
    \begin{align*}
        \| {N}^{r_0}(\nabla v) \|_{L^p(\Delta_k)}
        \lesssim e^{-\alpha|i-k|} \| { N}^{r_0}(\nabla v) \|_{L^p(\Delta_{i})}.     
    \end{align*}
\end{lemma}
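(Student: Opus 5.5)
The plan is to combine the $L^2$ exponential decay of Proposition~\ref{prop:Smooth Exponential Decay} with the localized estimate \eqref{est-Brownlikef}, applied on successive time windows. Fix $k>i$ and write $t_m=mr_0^2$. Since $\nabla v$ and $N^{r_0}$ are unchanged when a constant is subtracted from $v$, I may subtract the spatial mean $c$ of $v(\cdot,t_{i+1})$. This is admissible because the conormal datum vanishes after $t_i$, so the mean is constant on $(t_i,\infty)$.

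\emph{Step 1 (upper window bound).} On a short window ending at $t_{k+2}$, I would bound $\|N^{r_0}(\nabla v)\|_{L^p(\Delta_k)}$ by a local interior quantity. I would repeat the derivation of \eqref{locEST3} and \eqref{eq:localFiniteN}, with $w_j\equiv0$ since the Neumann data vanish there, but drop the zero-past normalization. The local Theorem~\ref{thm.NtoLoc}, applied on backward cubes of size $\sim r_0$ in each patch, together with interior estimates on $\mathcal O_{r_0}$ (needed for the $I^{r_0}$ part of $N^{r_0}$), should give
\[
\|N^{r_0}(\nabla v)\|_{L^p(\Delta_k)}\le C(r_0)\iint_{\mathcal O\times(t_{k-2},t_{k+2})}|\nabla v|
\le C(r_0)\|\nabla v\|_{L^2(\mathcal O\times(t_{k-2},t_{k+2}))}.
\]
This is where the smallness thresholds $\ell(p)$ and $\mu(p)$ enter: they are the same ones used to make Theorem~\ref{thm.NtoLoc} available via the extended graph domains $\tilde\Omega_j$.

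\emph{Step 2 (energy decay).} Testing the equation with $(v-c)\zeta(t)$, where $\zeta$ is a time cutoff, gives a Caccioppoli-in-time bound
\[
\|\nabla v\|_{L^2(\mathcal O\times(t_{k-2},t_{k+2}))}\lesssim r_0^{-1}\|v-c\|_{L^2(\mathcal O\times(t_{k-3},t_{k+2}))}.
\]
No lateral boundary term appears because the Neumann data are zero. Proposition~\ref{prop:Smooth Exponential Decay}, started at any $s\in(t_i,t_{i+1})$, then bounds the right side by $e^{-\beta(k-i-4)r_0^2}\|\nabla v(\cdot,s)\|_{L^2(\mathcal O)}$. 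Averaging in $s$ over $(t_i,t_{i+1})$ converts this into $r_0^{-1}\|\nabla v\|_{L^2(\mathcal O\times(t_i,t_{i+1}))}$.

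\emph{Step 3 (lower window bound).} It remains to show $\|\nabla v\|_{L^2(\mathcal O\times(t_i,t_{i+1}))}\lesssim \|N^{r_0}(\nabla v)\|_{L^p(\Delta_i)}$. Near the boundary, each point of $\mathcal O\times(t_i,t_{i+1})$ with $\delta\le r_0$ lies in cones $\Gamma^{r_0}(P,\tau)$ with $\tau\in(t_i,t_{i+1}]$. Points at distance greater than $r_0$ lie in $I^{r_0}(P,\tau)$. So $|\nabla v|\le N^{r_0}(\nabla v)(P,\tau)$ pointwise for those $(P,\tau)\in\Delta_i$, after taking a slightly larger aperture so that time-edge points are still captured. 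Averaging over $(P,\tau)$ and using H\"older on the bounded set $\Delta_i$ gives the bound with $L^p$, $p>1$.

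\emph{Conclusion and main obstacle.} Combining the three steps yields the lemma with $\alpha=\beta r_0^2$, after absorbing the finitely many shifts into the constant; for $k-i\le 4$ the bound is trivial. The main obstacle is Step 1. The local $N$ estimate in Theorem~\ref{thm.NtoLoc} is stated for backward cubes, and $N^{r_0}$ on $\Delta_k$ also looks forward in time by $r_0^2$. I would handle this by enlarging the window to $(t_{k-2},t_{k+2})$ and covering with backward cubes whose final times reach $t_{k+2}$. I would also need to check that the boundary reverse H\"older step converting the $L^1$ average into $L^2$ is still valid for non-causal local solutions. If it is not, I would instead use the $L^2$ version of \eqref{locEST} directly.
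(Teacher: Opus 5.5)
Your architecture matches the paper's: a local $N$ bound on the late window via Theorem~\ref{thm.NtoLoc}, $L^2$ decay from Proposition~\ref{prop:Smooth Exponential Decay}, and conversion of $N^{r_0}$ on $\Delta_i$ into energy at an early slice. However, Step~3 fails for $1<p<2$, and the lemma must cover that range.

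Average the pointwise bound $|\nabla v(X,t)|\le N^{r_0}(\nabla v)(P,\tau)$ over the shadow of $(X,t)$, whose measure is comparable to $\delta(X,t)^{n+1}$. This gives $\iint_{\Omega_i}|\nabla v|^q\lesssim_{r_0}\|N^{r_0}(\nabla v)\|_{L^q(\Delta_i)}^q$ with the \emph{same} exponent $q$ on both sides. To reach $L^2$ you therefore need $N^{r_0}(\nabla v)\in L^2(\Delta_i)$. H\"older on the bounded set $\Delta_i$ gives $\|F\|_{L^2(\Delta_i)}\lesssim\|F\|_{L^p(\Delta_i)}$ only when $p\ge2$. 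For $p<2$ your argument yields only $\iint_{\Omega_i}|\nabla v|\lesssim_{r_0}\|N^{r_0}(\nabla v)\|_{L^p(\Delta_i)}$. That cannot be fed into the $L^2$ statement of Proposition~\ref{prop:Smooth Exponential Decay}.

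The missing ingredient is a reverse H\"older inequality for $\nabla v$ up to the boundary. This is where the vanishing conormal data on $(ir_0^2,\infty)$ must be used a second time. Three facts combine:
\begin{itemize}
\item boundary Caccioppoli, with no lateral term;
\item local boundedness up to the Neumann boundary;
\item a parabolic Poincar\'e inequality, whose time part comes from $\frac{d}{dt}\int v\eta=-\int A\nabla v\cdot\nabla\eta$, again without a boundary term.
\end{itemize}
Together they give $(\fiint_{J}|\nabla v|^2)^{1/2}\lesssim\fiint_{8J}|\nabla v|$ on backward cylinders $J$ whose enlargements lie in $\mathcal O\times(t_i,t_{i+1})$. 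The cylinders must stay after $t_i$, because the conormal data need not vanish earlier.

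Cover $\mathcal O\times(t_i+r_0^2/2,t_{i+1})$ by such cylinders. This bounds its $L^2$ gradient norm by $C(r_0)\iint_{\Omega_i}|\nabla v|$. Then take your starting slice $s$ in this later subinterval. This is how the paper obtains its good slice $\tau$, and with this insertion your argument goes through.

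Your Step~2 is a legitimate simplification of the paper's route. You use the global energy inequality in time with the zero conormal data directly, whereas the paper passes through a boundary supremum bound and boundary Caccioppoli.

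Finally, the cases $k-i\le4$ are not trivial. The lemma asserts the bound there too, and it requires the same local estimates without the decay factor, with all windows and cylinders kept in $t>t_i$. The paper handles these $k$ by appealing to a maximum principle.
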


\begin{proof} Assume that $\|{N}^{r_0}(\nabla v) \|_{L^p(\Delta_{i})}<\infty$ and that $\partial_\nu^Av\big|_{\partial\Omega}\equiv0$ for all times $\ge ir_0^2$. Let \( \Omega_i := \Base \times (ir_0^2,(i+1)r_0^2] \).
Clearly, then  
$$\int_{\Omega_{i}}|\nabla v|\lesssim \| {N}^{r_0}(\nabla v) \|_{L^1(\Delta_{i})}\lesssim_{r_0} \| {N}^{r_0}(\nabla v) \|_{L^p(\Delta_{i})}.$$
Hence using the condition $\partial_\nu^A v=0$ for $\tau>ir_0^2$ and $L^2$ integrability of the gradient (by the boundary Caccioppoli inequality), we get that for some $\tau\in (ir_0^2,(i+1)r_0^2)$
$$\left(\int_{\mathcal O\times\{\tau\}}|\nabla v|^2\right)^{1/2}\lesssim \|{N}^{r_0}(\nabla v) \|_{L^p(\Delta_{i})}<\infty.$$
Hence, by Sobolev embedding on $\mathcal O\times\{\tau\}$ (for $c$ as in Proposition \ref{prop:Smooth Exponential Decay})
$$\left(\int_{\mathcal O\times\{\tau\}}|v-c|^2\right)^{1/2}\lesssim \| {N}^{r_0}(\nabla v) \|_{L^p(\Delta_{i})}<\infty.$$

Fix now some $k>i+3$. We propagate the $L^2$ initial data given on $\mathcal O\times\{\tau\}$ using  Proposition \ref{prop:Smooth Exponential Decay} to $t\in ((k-3)r_0^2,(k+4)r_0^2)$. It follows that for all such $t$ we have 

$$\| v(\cdot,t)-c \|_{L^2(\mathcal{O})} \lesssim e^{-\beta r_0^2(k-i-4)} \| v(\cdot,\tau)-c \|_{L^2(\mathcal{O})}
 \lesssim e^{-\beta r_0^2 |i-k|} \|{N}^{r_0}(\nabla v) \|_{L^p(\Delta_{i})}<\infty.$$
By integrating over the interval $((k-3)r_0^2,(k+4)r_0^2)$ and then applying boundary H\"older regularity for solutions 
\begin{equation}
\sup_{\Omega_{k-2}\cup\Omega_{k-1}\cup\Omega_{k}\cup\Omega_{k+1}\cup\Omega_{k+2}}|v-c|\lesssim e^{-\beta r_0^2 |i-k|} \| {N}^{r_0}(\nabla v) \|_{L^p(\Delta_{i})}<\infty.
\end{equation}
Observe that, by the boundary Caccioppoli inequality, the  $\sup_{\Omega_{k-2}\cup\Omega_{k-1}\cup\Omega_{k}\cup\Omega_{k+1}\cup\Omega_{k+2}}|v-c|$
controls the $L^2$ norm of $\nabla v$ over the smaller set $\mathcal O\times ((k-1)r_0^2, (k+2)r_0^2)$. Applying
\eqref{locEST} over a covering of $\partial \mathcal O\times (kr_0^2, (k+1)r_0^2)$ by backwards parabolic cubes $J$ as in Theorem  \ref{thm.NtoLoc}, we have for all $k>i+3$:
$$
\int_{\Delta_k} { N}^{r_0}(\nabla v)^p \lesssim e^{-\alpha p|i-k|}\| { N}^{r_0}(\nabla v )\|_{L^p(\Delta_{i})}^p,
$$
which is our claim.  Here we are using the assumptions of Lemma \ref{lemma:Exponential Decay} (smallness of $\mu(p)$ and $\ell(p)$) which allow us to verify the assumptions of Theorem \ref{thm.NtoLoc} on an unbounded domain; i.e., we use the process described in detail below Theorem \ref{thm.NtoLoc}.\medskip

The fact that our claim  holds for $k>i$ (and not just $k>i+3$ as proven) can be seen by using the maximum principle in this region (where there is no decay).
\end{proof}
\medskip

The decay argument uses energy inequalities on finite time intervals
and subtracts the constant spatial mean after the datum stops. It
therefore applies to the causal local-energy solutions constructed
above, even when their global half-time energy is infinite.
In particular, we can apply Lemma \ref{lemma:Exponential Decay} to each $v_{ij}$ defined by \eqref{defvij}.
It follows that
\begin{equation}\label{eq60}
\int_{\Delta_k} {N}^{r_0}(\nabla v_{ij})^p \lesssim e^{-\alpha p(k-i)}\| { N}^{r_0}(\nabla v_{ij}) \|_{L^p(\Delta_{i+4})}^p,\qquad\mbox{for all }k=i+5,i+6,\dots.
\end{equation}
Meanwhile, \eqref{est-BrownlikefF} means that for each $v_{ij}$ we have
\begin{equation}\label{eq60a}
\sum_{k=i-1}^{i+4}\|N^{r_0}(\nabla v_{ij})\|_{L^p(\Delta_k)}^p
\lesssim\|g_{ij}\|_{L^p(\partial\Omega)}^p
\le C\|g\|_{L^p(\Delta_i\cup\Delta_{i+1}\cup\Delta_{i+2})}^p.
\end{equation}
And since $v_{ij}\equiv0$ for $t<ir_0^2$ it follows that $\| { N}^{r_0}(\nabla v_{ij}) \|_{L^p(\Delta_{k})}=0$ when $k<i-1$.\medskip
  
We sum the localized estimates using Young's inequality for sequences.
Put
\[
 a_k:=\|g\|_{L^p(\Delta_k\cup\Delta_{k+1}\cup\Delta_{k+2})},
 \qquad b_i:=\|N^{r_0}(\nabla u)\|_{L^p(\Delta_i)},
\]
and define
\[
 \kappa_j=\begin{cases}
 0,&j<-1,\\
 1,&-1\le j\le4,\\
 e^{-\alpha j},&j>4.
 \end{cases}
\]
Subadditivity, \eqref{eq60}, \eqref{eq60a} and the vanishing for
$k<i-1$ give
\[
 b_i\lesssim_N\sum_{k\in\Z}\kappa_{i-k}a_k.
\]
Since $\kappa\in\ell^1(\Z)$ and the sets
$\Delta_k\cup\Delta_{k+1}\cup\Delta_{k+2}$ have bounded overlap,
Young's inequality yields
\[
 \|N^{r_0}(\nabla u)\|_{L^p(\partial\Omega)}
 =\|b\|_{\ell^p}
 \lesssim_N\|\kappa\|_{\ell^1}\|a\|_{\ell^p}
 \lesssim\|g\|_{L^p(\partial\Omega)}.
\]
The equivalence of $N$ and $N^{r_0}$ therefore gives
\[
 \|N(\nabla u)\|_{L^p(\partial\Omega)}
 \lesssim_{r_0}\|g\|_{L^p(\partial\Omega)},
\]
for smooth data compactly supported in time. This estimate has been
proved for the causal local-energy solutions constructed above; its
constant does not depend on the time support or the total integral
of the datum. In particular it holds for differences of such solutions.

We finish the passage to general data, keeping track of the spatial
constant. Choose smooth compactly supported $g_m\to g$ in
$L^p(\partial\Omega)$ and let $u_m=v[g_m]$. Subtract a constant
from each $u_m$ so that $\fint_{\mathcal O}u_m(X,0)\,dX=0$.
Writing $m_m(t)=\fint_{\mathcal O}u_m(X,t)\,dX$, the weak equation
and \eqref{eq:causalMean} give, on each bounded interval $J$,
\[
 \sup_{t\in J}|m_m(t)-m_l(t)|
 \le C_{J,p,\mathcal O}\|g_m-g_l\|_p,
 \qquad
 \|N(\nabla(u_m-u_l))\|_p\le C\|g_m-g_l\|_p.
\]
Integration in boundary charts and over the interior part of the
domain, followed by spatial $L^1$ Poincar\'e, yields
\[
 \|\nabla(u_m-u_l)\|_{L^1(\mathcal O\times J)}
 +\|u_m-u_l\|_{L^1(\mathcal O\times J)}
 \le C_{J,p,\mathcal O}\|g_m-g_l\|_p.
\]
Thus $u_m$ and their gradients converge locally in $L^1$, including
up to the boundary. Interior scalar local boundedness and Caccioppoli
also give convergence in the interior local-energy norms. Passing
to the weak identities shows that the limit $u$ satisfies
\[
 \iint_\Omega[A\nabla u\cdot\nabla\zeta-u\partial_t\zeta]
   =\int_{\partial\Omega}g\zeta
\]
for every smooth test $\zeta$ compactly supported in
$\overline\Omega$. Fatou on the interior Whitney averages and then
on the boundary gives
\begin{equation}\label{eq:boundedCompletion}
 \|\tilde N(\nabla u)\|_p\le C\|g\|_p,
 \qquad
 \|\tilde N(\nabla(u_m-u))\|_p\le C\|g_m-g\|_p\longrightarrow0.
\end{equation}
The same difference estimates show independence of the approximation,
with the stated normalization. In particular we do not interpret the
infinite sum of the $v_{ij}$ without a normalization; the finite-sum
estimate is extended by this completion.

For completeness, when $g$ also belongs to the energy-dual trace
space, the approximation can be chosen to converge in that norm
as well, with $\int_{\partial\Omega}g_m=0$. Here is a density
argument that preserves the cancellation. First apply smooth time Fourier cutoffs compactly supported away from zero,
and then regularize in boundary charts. The time cutoffs converge
strongly in both $L^p$ and the energy-dual norm; for each fixed
time cutoff the boundary regularizations do so as well.
For a fixed regularization $f$, its time primitive $H$ is in
$H^1(\R;L^2(\partial\mathcal O))$, and $H,\partial_tH=f$
belong to $L^p(\partial\Omega)$. For $\chi\in C_0^\infty(\R)$ equal
to one near zero, put
\[
 f_M=\partial_t\big[\chi(t/M)H\big].
\]
Then $f_M$ is smooth and compactly supported in time, has zero time
integral at every boundary point, and converges to $f$ in $L^p$ and
$L^2$. Moreover $\chi(t/M)H\to H$ in
$H^{1/2}(\R;L^2(\partial\mathcal O))$. The trace--Poincar\'e
bound used above for $h_R$, applied to $f_M-f$, therefore gives
convergence in the energy-dual norm: its spatial flux term is the
time derivative of the boundary integral of $(\chi(t/M)-1)H$.
A diagonal choice gives the asserted approximation.

For these $g_m$, the finite-sum solution is the energy solution,
as proved after \eqref{defvij}. Coercivity of the energy form gives
convergence of $u_m$ to the energy solution for $g$ in $\dot\E$,
modulo constants. The local limit above consequently agrees with
that solution modulo a constant. Hence \eqref{eq:boundedCompletion}
proves $(N)_p^{\mathcal L}$ in the sense of Section~\ref{Def}, and
also constructs its extension to arbitrary $L^p$ data. This proves
Theorem~\ref{MainT} for bounded Lipschitz cylinders. \qed
\medskip

\appendix
\section{Remarks on the nontangential maximal function of $D^{1/2}_t u$}\label{APA}

Recall that in \cite{DLP1} we established nontangential estimates for
$\tilde N(\nabla u)$. For the parabolic Regularity problem,
\cite[Theorem 1.1]{Din23} then gives the corresponding bounds for
$\tilde N(D^{1/2}_t u)$ and $\tilde N(H_tD^{1/2}_t u)$, provided the
solution attains its boundary data nontangentially and those data belong
to the parabolic Sobolev space of \cite[Definition 1.1]{Din23}.
Here $H_t$ denotes the Hilbert transform in time.
An analogous bound was shown in \cite{AEN}
where the $L^2$ solvability was considered for the Regularity and Neumann problems for special matrices.\medskip

Hence, a natural question arises: Does the solution $u$, the existence of which is guaranteed by Theorem \ref{MainT}, together with the bound for $\tilde N(\nabla u)$, also enjoy an estimate for $\tilde N(D^{1/2}_t u)$?
\medskip

In this appendix we will show that for the Neumann problem the answer depends on whether the base set $\mathcal O\subset\R^n$ in our domain $\Omega=\mathcal O\times\R$ is bounded or not.

We prove that in the case where this set is bounded the answer is negative (i.e., such a bound does not exist) for all $1<p\le \infty$. In the unbounded case we show that such estimates do hold for all $1<p<\infty$. This answer applies to both $\tilde N(D^{1/2}_t u)$ and $\tilde N(H_tD^{1/2}_t u)$ by similar considerations. We focus here on
$\tilde N(D^{1/2}_t u)$ for simplicity, given the similarity of the two bounds.

\subsection {The bounded case}

Let $\mathcal O$ be a bounded Lipschitz domain. For every $1<p\le\infty$ we prove that there is no constant $C=C(\mathcal L,\mathcal O,p)$ such that
\begin{equation}\label{A1}
\|\tilde N(D^{1/2}_t u)\|_{L^p(\pom)}\le C\|g\|_{L^p(\pom)}
\end{equation}
for all energy solutions with Neumann datum $g$. We use smooth, compactly supported functions of time whose integral over $\pom$ is zero.\medskip

We first justify spatial integration of the half derivative. If $u\in\dot\E(\mathcal O\times\R)$ and $v(t):=\int_{\mathcal O}u(X,t)\,dX$, then
\begin{equation}\label{eq:massHalf}
D^{1/2}_tv=\int_{\mathcal O}D^{1/2}_tu(X,\cdot)\,dX
\quad\hbox{in }L^2(\R),\qquad
\|D^{1/2}_tv\|_2\le |\mathcal O|^{1/2}\|D^{1/2}_tu\|_{L^2(\Omega)}.
\end{equation}
Indeed, for a smooth function compactly supported in $\overline\Omega$ the identity follows by taking the Fourier transform in time, and the inequality follows by Cauchy--Schwarz in $X$. Let $u_j$ be such functions converging to $u$ in $\dot\E$. The same inequality for $u_j-u_k$ shows that $\int_{\mathcal O}u_j\,dX$ converges in $\dot H^{1/2}(\R)$, modulo constants. After subtracting constants so that the averages on $\mathcal O\times(-1,1)$ agree, spatial Poincar\'e and the one-dimensional fractional Poincar\'e inequality give $u_j\to u$ in $L^2(\mathcal O\times I)$ for every bounded time interval $I$. Thus the limit of the spatial integrals is $v$, modulo a constant. The right-hand sides in \eqref{eq:massHalf} converge in $L^2(\R)$ by Cauchy--Schwarz, which proves the identity.\medskip

We also verify that the data used below give energy solutions. Write $S:=\sigma(\partial\mathcal O)$ and let $g(x,t)=S^{-1}F(t)$, where $F\in C_0^\infty(\R)$ and $\int_\R F=0$. The function $D_t^{-1/2}F$, defined by the Fourier multiplier $|\tau|^{-1/2}$, belongs to $L^2(\R)$: near zero $\widehat F(\tau)=O(|\tau|)$, and at infinity $\widehat F$ decays rapidly. For a smooth test function $\psi$, put $m_\psi(t):=|\mathcal O|^{-1}\int_{\mathcal O}\psi(X,t)\,dX$. The spatial trace and Poincar\'e inequalities, followed by \eqref{eq:massHalf}, give
\begin{align*}
\left|\int_{\pom}g\psi\,d\sigma\right|
&\le \left|S^{-1}\int_\R F(t)\int_{\partial\mathcal O}(\psi-m_\psi)\,d\sigma\,dt\right|
   +\left|\int_\R Fm_\psi\,dt\right|\\
&\le C(\mathcal O)\|F\|_2\|\nabla\psi\|_{L^2(\Omega)}
  +|\mathcal O|^{-1/2}\|D_t^{-1/2}F\|_2\|D_t^{1/2}\psi\|_{L^2(\Omega)}.
\end{align*}
Hence $g$ defines a bounded functional on the energy space, and the coercive form $a_\delta$ from Section~\ref{Def} gives an energy solution with this Neumann datum. Testing its weak equation with a function of time alone gives, in distributions on $\R$,
\begin{equation}\label{flux-flow}
 v'(t)=\frac{d}{dt}\int_{\mathcal O}u(X,t)\,dX
       =\int_{\partial\mathcal O}g(x,t)\,d\sigma(x)=F(t).
\end{equation}
Here the time term in \eqref{eq.NeumannBdy} is $\langle v',\psi\rangle$ by \eqref{eq:massHalf} and $\partial_t=D_t^{1/2}H_tD_t^{1/2}$.\medskip

Next we estimate a fixed time average of the spatial integral. Put $d:=\diam(\mathcal O)$ and $\tau_0:=d^2$. Choose $\rho\in C_0^\infty(-1,1)$ with $\rho\ge0$ and $\int\rho=1$, and write $\rho_a(t):=a^{-1}\rho(t/a)$ for $a>0$. We claim that, for every energy solution,
\begin{equation}\label{eq:boundedAvg}
\big\|(D^{1/2}_tv)*\rho_{\tau_0}\big\|_{L^p(\R)}
\le C(\mathcal O,p)\|\tilde N(D^{1/2}_tu)\|_{L^p(\pom)},
\qquad 1<p\le\infty.
\end{equation}
Let $w=D_t^{1/2}u$ and cover $\Omega$ by Whitney boxes $W$ of parabolic size $\ell\lesssim d$. After a fixed subdivision, each box is contained in an admissible averaging ball of comparable measure. It has a boundary shadow $\Delta_W$ with $|W|\simeq\ell|\Delta_W|$, on which its $L^2$ average is bounded by $\tilde N(w)$ at a fixed larger aperture. Consequently,
$$\iint_W|w|\,dX\,ds
\le |W|\left(\fiint_W|w|^2\right)^{1/2}
\lesssim\ell\int_{\Delta_W}\tilde N(w)\,d\sigma.$$
The shadows have bounded overlap at each dyadic size. If $W$ meets $\mathcal O\times(t-\tau_0,t+\tau_0)$, its shadow lies in $\partial\mathcal O\times(t-C_0\tau_0,t+C_0\tau_0)$ for a fixed geometric constant $C_0$. Summing first at each size and then using $\sum_{\ell\lesssim d}\ell\lesssim d$, we obtain
\begin{equation}\label{eq:massWhitney}
\int_{\mathcal O}\int_{|s-t|<\tau_0}|w(X,s)|\,ds\,dX
\lesssim d\int_{|s-t|<C_0\tau_0}\int_{\partial\mathcal O}\tilde N(w)(x,s)\,d\sigma(x)\,ds.
\end{equation}
By \eqref{eq:massHalf}, Fubini on this finite time interval and \eqref{eq:massWhitney},
$$\big|(D^{1/2}_tv)*\rho_{\tau_0}(t)\big|
\le \frac{\|\rho\|_\infty}{\tau_0}
\int_{\mathcal O}\int_{|s-t|<\tau_0}|w(X,s)|\,ds\,dX
\le C(\mathcal O)\int_{|s-t|<C_0\tau_0}B(s)\,ds,$$
where $B(s):=\int_{\partial\mathcal O}\tilde N(w)(x,s)\,d\sigma(x)$. Young's inequality in time and H\"older's inequality on $\partial\mathcal O$ give
$$\left\|\int_{|s-\cdot|<C_0\tau_0}B(s)\,ds\right\|_{L^p(\R)}
\le 2C_0\tau_0\|B\|_{L^p(\R)},\qquad
\|B\|_{L^p(\R)}\le S^{1-1/p}\|\tilde N(w)\|_{L^p(\pom)}.$$
The global change-of-aperture estimate returns the last norm to the prescribed aperture. This also holds for $p=\infty$, since the essential supremum over the boundary is the supremum of the same interior averages. Thus \eqref{eq:boundedAvg} follows, with constants independent of the datum.\medskip

Fix a nonzero function $\phi\in C_0^\infty(\R)$ and, for $T\ge1$, set
$$F_T(t):=\phi'(t/T),\qquad g_T(x,t):=S^{-1}F_T(t).$$
Each $F_T$ has integral zero, so let $u_T$ be the energy solution constructed above. By \eqref{flux-flow}, after subtracting a constant from $u_T$, its spatial integral and half derivative are
$$v_T(t)=T\phi(t/T),\qquad
D_t^{1/2}v_T(t)=T^{1/2}(D_t^{1/2}\phi)(t/T).$$
In particular,
$$\big\|(D_t^{1/2}v_T)*\rho_{\tau_0}\big\|_{L^p(\R)}
=T^{1/2+1/p}\big\|(D_t^{1/2}\phi)*\rho_{\tau_0/T}\big\|_{L^p(\R)}.$$
We use $1/p=0$ when $p=\infty$. The function $D_t^{1/2}\phi$ is nonzero, continuous and tends to zero at infinity, with decay $O(|t|^{-3/2})$. The approximate identity therefore converges in $L^p(\R)$ for every $1<p<\infty$ and uniformly for $p=\infty$. The last norm is consequently bounded below by a positive constant for all sufficiently large $T$. On the other hand,
$$\|g_T\|_{L^p(\pom)}=S^{1/p-1}T^{1/p}\|\phi'\|_{L^p(\R)}.$$
If \eqref{A1} held, \eqref{eq:boundedAvg} would now give
$$cT^{1/2+1/p}\le C\|\tilde N(D_t^{1/2}u_T)\|_{L^p(\pom)}
\le C\|g_T\|_{L^p(\pom)}\le C'T^{1/p},$$
with constants independent of $T$. This is impossible as $T\to\infty$, and proves the failure of \eqref{A1} for every $1<p\le\infty$.\qed\medskip

The same proof applies to $H_tD_t^{1/2}u$: the Hilbert transform commutes with spatial integration in $L^2$, and $H_tD_t^{1/2}\phi$ has the same continuity and decay properties used above.

The obstruction comes from the spatial mean: by \eqref{flux-flow}, it is a time primitive of the total boundary flux. Its half derivative therefore acquires the factor $T^{1/2}$ under a dilation of time. In particular, cancellation of the integral of $g$ over $\pom$ does not restore \eqref{A1}; the data above have that cancellation and their spatial masses vanish at both ends of the time axis.

\subsection{The graph domain case $\mathcal O=\{x_n>\phi(x)\}$ (unbounded)}

We now consider unbounded Lipschitz graph domains. The spatial integration argument above uses the finite measure of $\mathcal O$ and does not apply to this case.\medskip

We first reduce to the half-space. Apply Lemmas \ref{lem:graphflat} and \ref{lem:graphtransfer} to the graph
defining $\OO$. The resulting map has constant spatial Jacobian, so the
pulled-back operator has the form $-\partial_t+\divg(B\nabla\cdot)$ and
satisfies the same coefficient conditions, with constants depending also
on $\|\nabla\phi\|_\infty$. The time half derivative and Hilbert transform
commute with this pullback, the relevant maximal-function norms are
equivalent, and Neumann solvability transfers at the same exponent. It therefore
suffices to prove the estimates on $\R^n_+\times\R$. No smallness of the
Lipschitz constant is required in this reduction. All constants in the
graph-domain conclusions below may depend on the Lipschitz character.

\begin{lemma}[interior bounds]\label{lem:int}
Let $W=B_{h/2}(Y)\times(s-h^2/4,s+h^2/4)$ with $Y=(y,y_n)$, $y_n\ge h$, and let
$W^*=B_h(Y)\times(s-h^2,s+h^2)$ be its parabolic double. If $u$ is a weak
solution of $\LL u=0$, then under \eqref{E:elliptic}, \eqref{E:1:carl} and
\eqref{E:1:bound},
\[
\sup_W|\nabla u|\lesssim\Big(\fint_{W^*}|\nabla u|^2\Big)^{1/2},
\qquad
\sup_W|\partial_tu|\lesssim h^{-1}\Big(\fint_{W^*}|\nabla u|^2\Big)^{1/2} .
\]
The constants depend only on $n$, the ellipticity constants and $K$ in
\eqref{E:1:bound}; the suprema are essential suprema. Consequently, if
$(x,s)$ is a boundary point for which $W^*\subset\Gamma(x,s)$, then
\[
\sup_W|\nabla u|+h\sup_W|\partial_tu|
\lesssim\tilde N(\nabla u)(x,s).
\]
\end{lemma}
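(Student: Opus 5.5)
The plan is to rescale to unit size and then run a standard interior regularity bootstrap. Set $\tilde u(Z,\sigma)=u(Y+hZ,s+h^2\sigma)$. Then $\tilde u$ solves $-\partial_\sigma\tilde u+\divg(\tilde A\nabla\tilde u)=0$ on the unit cylinder $B_1(0)\times(-1,1)$, with $\tilde A(Z,\sigma)=A(Y+hZ,s+h^2\sigma)$. Since $y_n\ge h$, every point of $W^*$ has $\delta\simeq h$, up to a factor at most $2$ coming from the lower edge. By \eqref{E:1:bound} the rescaled coefficients satisfy $|\nabla_Z\tilde A|+|\partial_\sigma\tilde A|\lesssim K$ on this cylinder. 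In other words, $\tilde A$ is uniformly Lipschitz in the parabolic sense, with constants independent of $h$. The claimed estimates are scale invariant: $\nabla u$ scales like $h^{-1}\nabla\tilde u$ and $\partial_t u$ like $h^{-2}\partial_\sigma\tilde u$. So it suffices to show
\[
\sup_{B_{1/2}\times(-1/4,1/4)}\big(|\nabla\tilde u|+|\partial_\sigma\tilde u|\big)\lesssim\|\nabla\tilde u\|_{L^2(B_1\times(-1,1))}.
\]

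\textbf{Gradient bound.} Because the coefficients are Lipschitz in space, I differentiate the equation as in Section~\ref{S.Caccio}. Using spatial difference quotients to justify this, $w_k=\partial_k\tilde u$ solves
\[
-\partial_\sigma w_k+\divg(\tilde A\nabla w_k)=-\divg((\partial_k\tilde A)\nabla\tilde u),
\]
with right-hand side in divergence form and bounded coefficient $\partial_k\tilde A$. The $L^2$ Caccioppoli estimate for $w_k$ (the computation preceding Lemma~\ref{lem.CacciCO}) gives $\nabla^2\tilde u\in L^2$ on a smaller cylinder, controlled by $\|\nabla\tilde u\|_{L^2}$. I then bootstrap the integrability of the forcing $F=(\partial_k\tilde A)\nabla\tilde u$, exactly as in Step 5 of Section~\ref{Sprf}. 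Each round applies Lemma~\ref{Lemma-moser} or its local version, using Sobolev gains on nested cylinders, with the cutoff error terms bounded since the cylinders shrink a fixed amount. After finitely many rounds $F\in L^\infty_tL^q_X$ with $q>n$. The local boundedness theorem \cite[Theorem B]{Aro68} then gives $\sup|w_k|\lesssim\|\nabla\tilde u\|_{L^2}$ on $B_{3/4}\times(-1/2,1/2)$.

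\textbf{Time derivative.} Differentiating in $\sigma$ is legitimate since $\partial_\sigma\tilde A$ is bounded (again via time difference quotients). The function $\partial_\sigma\tilde u$ solves
\[
-\partial_\sigma(\partial_\sigma\tilde u)+\divg(\tilde A\nabla\partial_\sigma\tilde u)=-\divg((\partial_\sigma\tilde A)\nabla\tilde u).
\]
The forcing is now in $L^\infty$ by the previous step. Its $L^2$ size is controlled by the equation, since $|\partial_\sigma\tilde u|\lesssim|\nabla^2\tilde u|+K|\nabla\tilde u|$ and $\nabla^2\tilde u\in L^2$ was already obtained. Applying \cite[Theorem B]{Aro68} once more bounds $\sup|\partial_\sigma\tilde u|$ on the half cylinder. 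Undoing the scaling gives both displayed bounds.

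\textbf{Main obstacle and final claim.} The expected main obstacle is the regularity justification: the equations for the derivatives must hold in the weak sense, and the constants must depend only on $n,\lambda,\Lambda,K$. I would handle this by mollifying $A$, proving the bounds for smooth coefficients with constants depending only on the Lipschitz bound, and passing to the limit using interior H\"older convergence. For the last statement of the lemma, if $W^*\subset\Gamma(x,s)$ then $W^*$ is covered by boundedly many Whitney balls $B_{\delta/2}$ centered in a cone of fixed enlarged aperture. Each such ball has comparable measure, so $(\fint_{W^*}|\nabla u|^2)^{1/2}\lesssim\tilde N(\nabla u)(x,s)$, after a change of aperture.
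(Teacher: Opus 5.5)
Your interior argument works once one small error is repaired. The claim that $\delta\simeq h$ on all of $W^*$ is false: for $y_n=h$ the ball $B_h(Y)$ reaches down to $x_n=0$. So the bound $|\nabla_Z\tilde A|+|\partial_\sigma\tilde A|\lesssim K$ holds only on a smaller cylinder such as $B_{3/4}\times(-9/16,9/16)$, where the original height is at least $h/4$. Since every step of your argument shrinks cylinders anyway, just start the difference-quotient and Caccioppoli estimates there.

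With that fix, your sequential route is a legitimate alternative to the paper's proof. You run a componentwise Sobolev bootstrap via Lemma~\ref{Lemma-moser} until $\nabla\tilde u\in L^\infty_tL^q_X$ with $q>n$, then apply \cite[Theorem B]{Aro68} to $\nabla\tilde u$, then apply it once more to $\partial_\sigma\tilde u$, whose forcing is now bounded. The paper instead runs a single Moser iteration on the vector $V=(\nabla U,\partial_\tau U)$. It tests with $\zeta^2|V|^{p-2}V_\alpha$ and uses that all components share the same scalar principal matrix. The paper's version is shorter and self-contained; yours reuses the Step~5 machinery of Section~\ref{Sprf} at the cost of several localization rounds.

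The genuine gap is in the final assertion. The hypothesis $W^*\subset\Gamma(x,s)$ puts no upper bound on $y_n/h$: a tiny $W^*$ placed high on the axis of the cone qualifies. Then $\delta\simeq y_n\gg h$ on $W^*$, and the Whitney balls $B_{\delta/2}$ covering $W^*$ have measure about $(y_n/h)^{n+2}|W^*|$, not comparable to $|W^*|$. Your chain therefore only gives $\big(\fint_{W^*}|\nabla u|^2\big)^{1/2}\lesssim (y_n/h)^{(n+2)/2}\,\tilde N(\nabla u)(x,s)$, which is not uniform.

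Even when $y_n\simeq h$, the lower bound $\delta\gtrsim_a h$ on $W^*$ needed for a bounded covering must come from the cone condition, not from your claim about $\delta$. For instance, the points of $W^*$ with $|t-s|\ge h^2/2$ force $x_n>h/(\sqrt2(1+a))$.

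The fix, which is what the paper does, is not to pass through $\fint_{W^*}$ at all. For a.e.\ $P\in W$, apply the first part of the lemma with centre $P$ and scale $\delta(P)/4$. Its parabolic double lies in $B_{\delta(P)/2}(P)$ with comparable volume, so $|\nabla u(P)|+\delta(P)|\partial_tu(P)|\lesssim\big(\fint_{B_{\delta(P)/2}(P)}|\nabla u|^2\big)^{1/2}\le\tilde N(\nabla u)(x,s)$. Finally, $h\le2\delta(P)$ for $P\in W$ handles the time-derivative term.
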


\begin{proof}
Set $U(\xi,\tau)=u(Y+h\xi,s+h^2\tau)$,
$\mathsf A(\xi,\tau)=A(Y+h\xi,s+h^2\tau)$ and
$\mathcal C_\rho=B_\rho(0)\times(-\rho^2,\rho^2)$. Thus $W$ and $W^*$
correspond to $\mathcal C_{1/2}$ and $\mathcal C_1$. We work inside
$\mathcal C_{3/4}$, where the original height is at least $h/4$, even when
$y_n=h$. Hence
\[
\|\nabla_\xi\mathsf A\|_{L^\infty(\mathcal C_{3/4})}
+\|\partial_\tau\mathsf A\|_{L^\infty(\mathcal C_{3/4})}\le C K.
\]
Subtracting the constant skew-symmetric part of $\mathsf A(0,0)$ changes
neither the equation nor these derivatives. We may therefore also take
$\|\mathsf A\|_{L^\infty(\mathcal C_{3/4})}\le C(\Lambda+K)$.
All derivatives and divergences below, except $\partial_\tau$, are spatial.

First, spatial difference quotients and the energy test with a squared
cutoff in $\mathcal C_{3/4}$ give
\begin{equation}\label{eq:intL2}
\int_{\mathcal C_{2/3}}\big(|\nabla^2U|^2+|\partial_\tau U|^2\big)
\le C\int_{\mathcal C_{3/4}}|\nabla U|^2.
\end{equation}
Indeed, the equation for $U_k=\partial_{\xi_k}U$ is
\[
\partial_\tau U_k-\divg(\mathsf A\nabla U_k)
=\divg\big((\partial_{\xi_k}\mathsf A)\nabla U\big).
\]
Testing with $\zeta^2U_k$, summing over $k$, and absorbing the terms with
$\zeta\nabla U_k$ bounds the Hessian integral by
\[
C\int_{\mathcal C_{3/4}}
\big(|\nabla\zeta|^2+|\zeta\partial_\tau\zeta|
+\zeta^2|\nabla\mathsf A|^2\big)|\nabla U|^2,
\]
where $\zeta=1$ on $\mathcal C_{2/3}$. The other term in
\eqref{eq:intL2} follows from
$\partial_\tau U=\mathsf A:\nabla^2U+(\divg\mathsf A)\cdot\nabla U$.
Here the calculations for $U_k$ are first made with spatial difference
quotients, whose forcing has the same uniform bound.

Time difference quotients now show that $\partial_\tau U$ also belongs
locally to the parabolic energy class in $\mathcal C_{2/3}$: their $L^2$
norms are bounded by that of $\partial_\tau U$ in \eqref{eq:intL2}, and
their divergence forcing is a time difference quotient of $\mathsf A$
times $\nabla U$, uniformly bounded in $L^2$. The same cutoff energy test
therefore bounds their spatial gradients. Passing to the limit justifies
the differentiated equation for $\partial_\tau U$ without any second
derivatives of $\mathsf A$.

For completeness, the coupled Moser estimate needed here follows directly
by testing the differentiated equations together. Put
$V=(U_1,\ldots,U_n,\partial_\tau U)$. Its components satisfy
\[
\partial_\tau V_\alpha-\divg(\mathsf A\nabla V_\alpha)
=\divg F_\alpha,\qquad
F_k=(\partial_{\xi_k}\mathsf A)\nabla U,\quad
F_{n+1}=(\partial_\tau\mathsf A)\nabla U,
\qquad |F|\le CK|V|.
\]
For $p\ge2$, test with $\zeta^2|V|^{p-2}V_\alpha$ and sum over $\alpha$.
The common scalar principal matrix is essential: its two principal terms
are bounded below by
\[
\lambda |V|^{p-2}|\nabla V|^2
+\lambda(p-2)|V|^{p-2}|\nabla|V||^2,
\]
also when $\mathsf A$ is nonsymmetric. Young's inequality for the forcing
and cutoff terms yields
\begin{equation}\label{eq:intMoser}
\begin{split}
&\sup_\tau\int \zeta^2|V|^p
+\iint\big|\nabla(\zeta|V|^{p/2})\big|^2\\
&\qquad\le Cp^4\iint
\big(|\nabla\zeta|^2+|\zeta\partial_\tau\zeta|
+K^2\zeta^2\big)|V|^p.
\end{split}
\end{equation}
Time averaging and truncated, regularized powers justify these tests in
the energy class. For $1/2\le r<R<2/3$, choose $\zeta=1$ on
$\mathcal C_r$, supported in $\mathcal C_R$, with
$|\nabla\zeta|\lesssim(R-r)^{-1}$ and
$|\partial_\tau\zeta|\lesssim(R-r)^{-2}$. The parabolic Sobolev inequality,
with $\chi=1+2/n$, then gives
\[
\|V\|_{L^{p\chi}(\mathcal C_r)}
\le\big[Cp^4(R-r)^{-2}\big]^{1/p}
\|V\|_{L^p(\mathcal C_R)}.
\]
Iterating with $p=2\chi^j$ on cylinders decreasing from
$\mathcal C_{5/8}$ to $\mathcal C_{1/2}$, and using \eqref{eq:intL2}, gives
\[
\sup_{\mathcal C_{1/2}}\big(|\nabla U|+|\partial_\tau U|\big)
\le C\|V\|_{L^2(\mathcal C_{5/8})}
\le C\Big(\fint_{\mathcal C_1}|\nabla U|^2\Big)^{1/2}.
\]
Rescaling proves the two interior bounds.

Finally, at an interior point $P=(Z,\tau)$ of height $d=\delta(P)$,
apply these bounds with centre $P$ and scale $d/4$. The corresponding
double lies inside the averaging ball $B_{d/2}(P)$ and has comparable
volume, so, for almost every $P$,
\[
|\nabla u(P)|+d|\partial_tu(P)|
\le C\Big(\fint_{B_{d/2}(P)}|\nabla u|^2\Big)^{1/2}.
\]
If $P\in W\subset\Gamma(x,s)$, this average is bounded by
$\tilde N(\nabla u)(x,s)$ and $h\le2\delta(P)$. Taking suprema on $W$
proves the last assertion without assuming an upper bound on $y_n/h$.
\end{proof}

\subsubsection*{Interior and area estimates with drift}

We now allow the first-order term in Theorem~\ref{Thalf}. We first work
in the half-space and write $h=x_n$, $z=(x,t)$ and
\[
 \mathcal L_{\mathbf b}u=-\partial_tu+\divg(A\nabla u)
                         +\mathbf b\cdot\nabla u=0,
 \qquad \kappa=(\|\mu\|_C+\|\mu_{\mathbf b}\|_C)^{1/2}.
\]
The drift hypotheses are \eqref{eq:halfDriftHyp}. By an energy solution
we mean $u\in\dot\E(\Omega)$ satisfying this equation in distributions;
equivalently, the weak identity has the additional term
$-\iint(\mathbf b\cdot\nabla u)\zeta$ on its left-hand side.
Interior difference quotients give the reinforced weak regularity used
below. We make no existence or uniqueness assertion for this equation.

For a field $F(z,h)$ put
\[
 \mathcal V(F)(z)=\left(\int_0^\infty|F(z,h)|^2\frac{dh}{h}\right)^{1/2},
 \qquad
 \mathcal C(F)(z)=\left(\int_0^\infty\int_{\Delta_h(z)}
       |F(y,s,h)|^2\frac{dy\,ds\,dh}{h^{n+2}}\right)^{1/2},
\]
where $\Delta_h(z)$ is a parabolic boundary ball. Fixed changes of
aperture affect only constants, and
\[
 \mathcal C(h\nabla^2u)\simeq S(\nabla u),\qquad
 \mathcal C(h^2\partial_t\nabla u)\simeq A(\nabla u).
\]

\begin{lemma}[Carleson square-function estimate]\label{lem:driftCM}
Suppose $d\nu=|a(z,h)|^2\,dz\,dh/h$ is Carleson. For $1<p<\infty$,
\[
 \|\mathcal C(av)\|_p
 \lesssim_p\|\nu\|_C^{1/2}\|N(v)\|_p.
\]
\end{lemma}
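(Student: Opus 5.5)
The plan is to prove the estimate by duality with tent spaces, combined with the Carleson embedding $\iint |v|\,d\nu\lesssim \|\nu\|_C\|N(v)\|_{L^1}$. Write $\mathcal C(F)$ as the conical square function of $F$ with respect to the measure $dy\,ds\,dh/h^{n+2}$. Then $\mathcal C(av)(z)^2$ is the integral of $|a|^2|v|^2$ over the cone $\Gamma(z)$, weighted by $h^{-(n+1)}$ times $dh/h$.

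The proof splits according to the range of $p$.

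\textbf{Case $p\ge 2$.} Integrate $\mathcal C(av)^2$ against a nonnegative $\varphi\in L^{(p/2)'}$ and use Fubini. This gives
\[
\int \mathcal C(av)^2\varphi\,dz\simeq\iint |a|^2|v|^2\,(\mathcal A\varphi)(y,s,h)\,\frac{dy\,ds\,dh}{h},
\]
where $\mathcal A\varphi(y,s,h)=\fint_{\Delta_h(y,s)}\varphi$. Next bound $|v|^2\,\mathcal A\varphi$ pointwise by a nontangential quantity. At height $h$ above $(y,s)$ we have $|v|^2\le N(v)^2$ on $\Delta_h(y,s)$. Hence the integrand is controlled by the function $F(y,s,h)=\fint_{\Delta_h}N(v)^2\varphi$, and the nontangential maximal function of $F$ satisfies $N(F)\lesssim M(N(v)^2\varphi)$. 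The Carleson embedding then yields
\[
\int \mathcal C(av)^2\varphi\lesssim \|\nu\|_C\,\|M(N(v)^2\varphi)\|_{L^1},
\]
but this right-hand side is not finite in general. The fix is to avoid $M$ on the product: bound $\mathcal A\varphi\le M\varphi$ evaluated at the vertex. The Carleson estimate with weight, $\iint|v|^2\mathcal A\varphi\,d\nu\lesssim\|\nu\|_C\int N(v)^2 M\varphi$, follows from the standard level-set/tent argument, since $N(|v|^2\mathcal A\varphi)\lesssim N(v)^2M\varphi$. Finally apply Hölder and the $L^{(p/2)'}$-boundedness of $M$, which holds because $(p/2)'>1$ when $p>2$. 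This gives the case $p>2$; the case $p=2$ is the direct Carleson embedding.

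\textbf{Case $1<p<2$.} Here duality in $L^{p/2}$ is unavailable, so use a good-$\lambda$ or stopping-time argument instead. For $\beta>0$ let $E_\beta=\{N(v)>\beta\}$ and split $v=v\1_{\Omega\setminus T(E^*_\beta)}+v\1_{T(E_\beta^*)}$, where $E^*_\beta=\{M\1_{E_\beta}>1/2\}$ and $T$ denotes the tent.

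For the good part, the estimate holds off $E_\beta^*$. Using $|v|\le\beta$ outside the tent, the $L^2$ Carleson bound gives
\[
\int_{(E^*_\beta)^c}\mathcal C(av\1_{\mathrm{good}})^2\lesssim\|\nu\|_C\int\min(N(v),\beta)^2 .
\]
For the bad part, $\mathcal C(av\1_{T(E^*_\beta)})$ vanishes on the complement of a slight enlargement of $E^*_\beta$ once the aperture is reduced, and $|E^*_\beta|\lesssim|E_\beta|$. Then
\[
|\{\mathcal C(av)>\beta\}|\lesssim|E_\beta|+\beta^{-2}\|\nu\|_C\int\min(N(v),\beta)^2 .
\]
Integrating against $p\beta^{p-1}\,d\beta$ and using $\int_0^\infty\beta^{p-3}\min(N,\beta)^2\,d\beta\simeq_p N^p$, which is finite for $p<2$, gives $\|\mathcal C(av)\|_p^p\lesssim(1+\|\nu\|_C)\|N(v)\|_p^p$. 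Homogeneity in $a$ (scaling $a\mapsto ta$) converts the factor $(1+\|\nu\|_C)$ into $\|\nu\|_C$.

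The main obstacle is the weighted Carleson embedding in the case $p\ge2$. It must be proved cleanly via the tent covering of the level sets of $N(v)^2M\varphi$, using Whitney regions in the parabolic geometry. Aperture changes are handled by the standard equivalence of norms noted earlier.
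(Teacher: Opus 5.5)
Your argument is correct. For $p\ge2$ it is the paper's proof. You dualize $\mathcal C(av)^2$ against $\varphi\in L^{(p/2)'}$ and note that $N\big(|v|^2\fint_{\Delta_h}\varphi\big)\lesssim N(v)^2M\varphi$. You then apply Carleson embedding, H\"older's inequality and the maximal theorem. The false start through $M(N(v)^2\varphi)$ can simply be deleted.

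For $1<p<2$ you take a genuinely different route.

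\emph{The paper's route.} It decomposes each level set $O_k=\{N(v)>2^k\}$ into parabolic Whitney cubes $Q$ and assigns the layer $\{2^k<|v|\le2^{k+1}\}$ to regions over these cubes. On each region it bounds the square function of $a$ in $L^2$ by $C\|\nu\|_C|Q|$, with support of measure $\lesssim|Q|$, and applies H\"older's inequality on that support. It then sums using subadditivity of $s\mapsto s^{p/2}$. This is in effect a tent-space atomic decomposition, and it produces the factor $\|\nu\|_C^{p/2}$ directly.

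\emph{Your route.} You prove the distributional inequality
\[
|\{\mathcal C(av)>\beta\}|\lesssim|E_\beta|+\beta^{-2}\|\nu\|_C\int\min(N(v),\beta)^2
\]
by a good/bad splitting along the tent over $E_\beta^*$, and integrate in $\beta$. Here $p<2$ enters through $\int_N^\infty\beta^{p-3}\,d\beta<\infty$, playing the role of the paper's subadditivity. You then recover the sharp homogeneity by the rescaling $a\mapsto ta$ with $t=\|\nu\|_C^{-1/2}$.

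\emph{Comparison.} Your route avoids the Whitney-cube geometry, namely the assigned regions and their conical projections. The cost is the rescaling step and some aperture bookkeeping. The cones in $\mathcal C$ must be no wider than the tent, so that the bad part is supported in (an enlargement of) $E_\beta^*$. The tent must be no wider than the cones defining $N(v)$, so that $|v|\le\beta$ off the tent. The enlargement from $E_\beta$ to $E_\beta^*$ is harmless but not really needed.
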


\begin{proof}
For $1<p<2$, let $G=N(v)$ and $O_k=\{G>2^k\}$, with the aperture
fixed so that $|v(z,h)|>2^k$ implies $\Delta_{ch}(z)\subset O_k$.
Decompose $O_k$ into parabolic Whitney cubes $Q$ and assign each point
of $\{2^k<|v|\le2^{k+1}\}$ to the cube containing its projection.
Its height is at most $C\ell(Q)$. The assigned region lies in a
Carleson box over $CQ$, and its conical projection is contained in
another fixed dilate of $Q$.
The square function of $a$ restricted to this region has squared
$L^2$ norm at most $C\|\nu\|_C|Q|$ and support of measure at most
$C|Q|$. H\"older on this support bounds its $p$th power integral by
$C\|\nu\|_C^{p/2}|Q|$. Subadditivity of the power $p/2$ gives
\[
 \|\mathcal C(av)\|_p^p
 \lesssim\|\nu\|_C^{p/2}\sum_k2^{kp}|O_k|
 \lesssim\|\nu\|_C^{p/2}\|G\|_p^p.
\]
Truncation and monotone convergence justify the decomposition.
At $p=2$, Fubini and ordinary Carleson embedding suffice.

For $p>2$, dualize $\mathcal C(av)^2$ against a nonnegative
$w\in L^{(p/2)'}$. Fubini gives, up to a geometric constant,
\[
 \int\mathcal C(av)^2w
 =C\iint |v(z,h)|^2\fint_{\Delta_h(z)}w\,d\nu(z,h).
\]
The nontangential maximal function of the integrand is at most
$C N(v)^2Mw$, at fixed larger apertures. Carleson embedding, H\"older
and the maximal inequality on $L^{(p/2)'}$ prove the assertion.
\end{proof}

\begin{lemma}[interior and area estimates with drift]\label{lem:driftInputs}
Lemma~\ref{lem:int} holds for $\mathcal L_{\mathbf b}$, with constants
depending also on $K_{\mathbf b}$. Moreover, for $1<p<\infty$,
\begin{equation}\label{eq:driftInputs}
 \|A(\nabla u)\|_p+\|\mathcal C(h\partial_tu)\|_p
 \lesssim\|S(\nabla u)\|_p+\kappa\|\tilde N(\nabla u)\|_p.
\end{equation}
\end{lemma}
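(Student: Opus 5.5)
The plan has two parts: extend the interior bounds of Lemma~\ref{lem:int} to $\mathcal L_{\mathbf b}$, and then prove the area estimate \eqref{eq:driftInputs} by the Whitney-box Caccioppoli argument of \cite[Section 6]{DLP1}, sending all drift and coefficient terms into Lemma~\ref{lem:driftCM}.

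\textbf{Interior bounds.} Rescale as in the proof of Lemma~\ref{lem:int}. Under $U(\xi,\tau)=u(Y+h\xi,s+h^2\tau)$ the drift becomes $\mathsf b=h\,\mathbf b(Y+h\xi,s+h^2\tau)$. On $\mathcal C_{3/4}$ the hypotheses \eqref{eq:halfDriftHyp} give
\[
|\mathsf b|+|\nabla_\xi\mathsf b|+|\partial_\tau\mathsf b|\le CK_{\mathbf b},
\]
since the original height there is comparable to $h$. The differentiated equations keep the common scalar principal part $\divg(\mathsf A\nabla\cdot)$. The only new terms are $\mathsf b\cdot\nabla V_\alpha$ and the lower-order forcing $(\partial\mathsf b)\cdot\nabla U$. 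The bound $|(\partial\mathsf b)\cdot\nabla U|\le CK_{\mathbf b}|V|$ keeps the lower-order forcing controlled by $|V|$. In the Moser test with $\zeta^2|V|^{p-2}V_\alpha$, the first-order term is bounded by
\[
K_{\mathbf b}\zeta^2|V|^{p-1}|\nabla V|\le \epsilon\zeta^2|V|^{p-2}|\nabla V|^2+C_\epsilon K_{\mathbf b}^2\zeta^2|V|^p,
\]
and the $\epsilon$-term is absorbed. So \eqref{eq:intL2} and \eqref{eq:intMoser} hold with $K^2$ replaced by $K^2+K_{\mathbf b}^2$. For the $\partial_\tau U$ bound we use
\[
\partial_\tau U=\mathsf A:\nabla^2U+(\divg\mathsf A+\mathsf b)\cdot\nabla U,
\]
and the rest of the proof of Lemma~\ref{lem:int} is unchanged.

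\textbf{Term $\mathcal C(h\partial_tu)$.} The equation gives, pointwise,
\[
|\partial_tu|\lesssim|\nabla^2u|+(|\nabla A|+|\mathbf b|)|\nabla u|.
\]
Hence $\mathcal C(h\partial_tu)\lesssim\mathcal C(h\nabla^2u)+\mathcal C(a\nabla u)$ with $a=h(|\nabla A|+|\mathbf b|)$. The first term is comparable to $S(\nabla u)$. For the second, $|a|^2\,dz\,dh/h$ is dominated by $d\mu+d\mu_{\mathbf b}$, so Lemma~\ref{lem:driftCM} bounds it by $\kappa\|N(\nabla u)\|_p$. Lemma~\ref{lem:int}, now valid with drift, replaces $N$ by $\tilde N$ at a larger aperture.

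\textbf{Term $A(\nabla u)$.} Apply Lemma~\ref{lem.CacciCO} on Whitney boxes, carrying the extra forcing. The function $w=\partial_tu$ solves
\[
-\partial_tw+\divg(A\nabla w)+\mathbf b\cdot\nabla w=-\divg((\partial_tA)\nabla u)-(\partial_t\mathbf b)\cdot\nabla u.
\]
Testing with $w\eta^2$, the term $\mathbf b\cdot\nabla w$ is absorbed using $|\mathbf b|\lesssim K_{\mathbf b}/r$ on a box of size $r$. This gives
\[
\iint_Q|\nabla\partial_tu|^2\lesssim r^{-2}\iint_{2Q}|\partial_tu|^2+\iint_{2Q}\big(|\partial_tA|^2+r^2|\partial_t\mathbf b|^2\big)|\nabla u|^2.
\]
Multiply by $r^{2-n}$ and sum over the Whitney boxes in a cone. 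The first term reproduces $\mathcal C(h\partial_tu)^2$, already controlled above. The second is $\mathcal C(a'\nabla u)^2$ with
\[
|a'|^2\simeq h^4|\partial_tA|^2+h^6|\partial_t\mathbf b|^2 .
\]
The corresponding measure $|a'|^2\,dz\,dh/h$ is exactly the time part of $\mu$ and $\mu_{\mathbf b}$, so Lemma~\ref{lem:driftCM} applies again. Since $A(\nabla u)\simeq\mathcal C(h^2\partial_t\nabla u)$ up to aperture changes, this completes \eqref{eq:driftInputs}.

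The main obstacle is the first step. The differentiated Moser system must stay coupled under the common principal matrix, and this needs the drift to enter only through first-order and zero-order terms with uniformly bounded rescaled coefficients. Once that holds, the rest is bookkeeping: checking that each Carleson density matches the weights in \eqref{eq:halfDriftHyp}.
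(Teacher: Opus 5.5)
Your proposal is correct and follows the paper's proof essentially step for step. The shared steps are the rescaled drift with bounded rescaled derivatives, absorption of $\mathsf b\cdot\nabla V_\alpha$ by Young's inequality in the coupled Moser test, the drifted Caccioppoli inequality for $\partial_t u$ on Whitney boxes, and Lemma~\ref{lem:driftCM} applied to the densities $h|\nabla A|^2$, $h|\mathbf b|^2$, $h^3|\partial_tA|^2$, $h^5|\partial_t\mathbf b|^2$. One small correction: on $\mathcal C_{3/4}$ the original height is only bounded below by $h/4$, and it is not comparable to $h$ when $y_n\gg h$. That lower bound is all your derivative bounds for $\mathsf b$ actually use, so the argument is unaffected.
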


\begin{proof}
Use the rescaling of Lemma~\ref{lem:int}, and put
$\boldsymbol\beta(\xi,\tau)=h\mathbf b(Y+h\xi,s+h^2\tau)$.
On $\mathcal C_{3/4}$,
\[
 |\boldsymbol\beta|+|\nabla_\xi\boldsymbol\beta|
       +|\partial_\tau\boldsymbol\beta|\le C K_{\mathbf b}.
\]
The equations for $V=(\nabla_\xi U,\partial_\tau U)$ become
\[
 \partial_\tau V_\alpha-\divg(\mathsf A\nabla V_\alpha)
 =\divg F_\alpha+\boldsymbol\beta\cdot\nabla V_\alpha+G_\alpha,
\]
where $F_\alpha$ is as in that lemma and
\[
 G_k=(\partial_{\xi_k}\boldsymbol\beta)\cdot\nabla U,
 \qquad G_{n+1}=(\partial_\tau\boldsymbol\beta)\cdot\nabla U,
 \qquad |F|+|G|\le C(K+K_{\mathbf b})|V|.
\]
The spatial difference-quotient energy estimate first gives
\eqref{eq:intL2}, with the additional bounded coefficients
$|\boldsymbol\beta|^2+|\nabla\boldsymbol\beta|$ on the right.
Time difference quotients then give the equation and local energy
regularity of $\partial_\tau U$. In the coupled Moser test, the drift
term is bounded by Young's inequality against the principal gradient
term and $C|\boldsymbol\beta|^2|V|^p$; the term $G$ is bounded by
$C|V|^p$. Thus \eqref{eq:intMoser} and its iteration apply. In
particular, $N(\nabla u)\lesssim\tilde N(\nabla u)$.
Only the displayed weak first derivatives of $A$ and $\mathbf b$ are
used in these difference-quotient arguments.

Put $v=\partial_tu$. Its differentiated equation is
\[
 \partial_tv-\divg(A\nabla v)-\mathbf b\cdot\nabla v
 =\divg((\partial_tA)\nabla u)+(\partial_t\mathbf b)\cdot\nabla u.
\]
Testing with a squared cutoff times $v$ on nested interior Whitney
boxes of size $r$ gives
\begin{multline}\label{eq:driftTimeCacc}
 \iint_W|\nabla v|^2\lesssim
 r^{-2}\iint_{W^*}|v|^2
 +\iint_{W^*}|\partial_tA|^2|\nabla u|^2\\
 +r^2\iint_{W^*}|\partial_t\mathbf b|^2|\nabla u|^2.
\end{multline}
Here $r|\mathbf b|\lesssim K_{\mathbf b}$; Young's inequality bounds
the drift contribution by an absorbable gradient term and
$Cr^{-2}|v|^2$. No smallness is needed. The PDE also gives
\begin{equation}\label{eq:driftPDEbound}
 |\partial_tu|\lesssim|\nabla^2u|
                       +( |\nabla A|+|\mathbf b|)|\nabla u|.
\end{equation}
Sum \eqref{eq:driftTimeCacc} over Whitney boxes in a cone, using
\eqref{eq:driftPDEbound}. The resulting coefficient measures have
densities $h|\nabla A|^2$, $h|\mathbf b|^2$,
$h^3|\partial_tA|^2$ and $h^5|\partial_t\mathbf b|^2$.
Lemma~\ref{lem:driftCM} proves the area estimate in
\eqref{eq:driftInputs}. The other estimate follows directly from
\eqref{eq:driftPDEbound} and the same lemma.
\end{proof}

\begin{lemma}[boundary trace and the half-time estimate]\label{lem:boundaryBridge}
Let $1<p<\infty$ and let $u$ be a reinforced weak solution of
$\mathcal L_{\mathbf b}u=0$ in the half-space under the coefficient
and drift hypotheses of Theorem~\ref{Thalf}. Suppose that
$G:=\tilde N_a(\nabla u)\in L^p(\pom)$, with a fixed sufficiently large
aperture $a$. Then $u$ has a nontangential trace $f\in L^p_{\loc}(\pom)$,
and, for a.e.\ $t$ and a.e.\ $x,y\in\R^{n-1}$,
\begin{equation}\label{eq:spatialHajlasz}
 |f(x,t)-f(y,t)|\le C|x-y|\big(G(x,t)+G(y,t)\big).
\end{equation}
If also $D_t^{1/2}f\in L^p(\pom)$, then
\begin{equation}\label{A1a}
\|\tilde N(D^{1/2}_t u)\|_{L^p(\pom)}+
\|\tilde N(H_tD^{1/2}_t u)\|_{L^p(\pom)}
\lesssim \|\tilde N(\nabla u)\|_{L^p(\pom)}+
\|D^{1/2}_t f\|_{L^p(\pom)}.
\end{equation}
For an energy solution this $f$ agrees with its energy trace.
\end{lemma}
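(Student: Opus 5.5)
We split the proof into three steps: the trace and its spatial Lipschitz bound, a time-oscillation estimate for $u$ at interior points, and then the maximal-function bound for $D_t^{1/2}u$ and $H_tD_t^{1/2}u$.

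\textbf{Trace and \eqref{eq:spatialHajlasz}.} By Lemma~\ref{lem:driftInputs}, the interior bounds of Lemma~\ref{lem:int} hold for $\mathcal L_{\mathbf b}$. Hence $N(\nabla u)\lesssim G$ and $h|\partial_t u|\lesssim G$ pointwise on cones.

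Fix $(x,t)$ with $G(x,t)<\infty$. For $0<h<h'$,
\[
|u(x,h,t)-u(x,h',t)|\le\int_h^{h'}|\partial_n u|\le (h'-h)\,G(x,t).
\]
Thus $u(x,h,t)$ is Cauchy as $h\downarrow0$. Averaging over Whitney balls in the cone gives a nontangential limit $f(x,t)$. Moreover $|u(x,h,t)-f(x,t)|\le hG(x,t)$, which also yields $f\in L^p_{\loc}$.

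For \eqref{eq:spatialHajlasz}, set $h=|x-y|$ and split
\[
f(x,t)-f(y,t)=[f(x,t)-u(x,h,t)]+[u(x,h,t)-u(y,h,t)]+[u(y,h,t)-f(y,t)].
\]
The two outer terms are bounded by $hG(x,t)$ and $hG(y,t)$. For the middle term we integrate $\nabla_x u$ along the segment at height $h$. That segment lies in the cones of aperture $a$ at both endpoints provided $a$ is large, so the middle term is at most $hG(x,t)$. For energy solutions, the energy trace is the $L^2_{\loc}$ limit of $u(\cdot,h,\cdot)$, so the two traces coincide.

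\textbf{Time oscillation and half derivatives.} We decompose
\[
D_t^{1/2}u(X,t)=D_t^{1/2}\big[u(X,\cdot)-f(x,\cdot)\big](t)+D_t^{1/2}f(x,t),
\]
and similarly for $H_tD_t^{1/2}$. Using the kernel formula, we split the integral in $s$ into $|t-s|<h^2$ and $|t-s|>h^2$, where $h=x_n$.

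On the short range, $|u(X,t)-u(X,s)|\le|t-s|\,\sup|\partial_t u|\lesssim |t-s|h^{-1}G$. This contributes at most
\[
h^{-1}G\int_{|\tau|<h^2}|\tau|^{-1/2}\,d\tau\lesssim G.
\]
On the long range we compare $u(X,s)-f(x,s)$ with $hG(x,s)$. This is the delicate point, because $G$ is evaluated at the variable time $s$ rather than at $t$. The resulting term is
\[
h\int_{|t-s|>h^2}G(x,s)\,|t-s|^{-3/2}\,ds\lesssim M_tG(x,t),
\]
by the standard bound of a radially decreasing integrable kernel at scale $h^2$ by the one-dimensional maximal function. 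The remaining piece, $D_t^{1/2}f$, is handled by its own Whitney average: the $L^2$ average over $B_{h/2}$ of $D_t^{1/2}f$ is controlled by the parabolic Hardy--Littlewood maximal function of $|D_t^{1/2}f|^2$, raised to the power $1/2$. To work with $L^2$ Whitney averages in the first two pieces, we apply these pointwise estimates on each Whitney ball.

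Combining the three pieces gives
\[
\tilde N(D_t^{1/2}u)\lesssim M(G)+M(M_tG)+\big(M|D_t^{1/2}f|^2\big)^{1/2},
\]
where $M$ is the parabolic maximal function. The first two terms are bounded in $L^p$ for all $p>1$. The third requires $p>2$, so for $1<p\le2$ we instead use the nontangential convergence of the heat-type extension, replacing the constant in $s$ by a local average.

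\textbf{Hilbert transform and the main obstacle.} For $H_tD_t^{1/2}$ we run the same argument with the kernel of $H_tD_t^{1/2}$, which is $c\,\mathrm{sgn}(t-s)|t-s|^{-3/2}$. The short range now needs cancellation: after subtracting a linear term, $\int_{|\tau|<h^2}\mathrm{sgn}(\tau)|\tau|^{-1/2}\,d\tau=0$, and the estimate $|\partial_t u|\lesssim h^{-1}G$ applies again.

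We expect the main obstacle to be the long-range term when $1<p\le2$. There the crude bound by $\big(M|D_t^{1/2}f|^2\big)^{1/2}$ fails, and we must show directly that
\[
\tilde N\Big(\int_{|t-s|>h^2}\big(f(x,t)-f(x,s)\big)|t-s|^{-3/2}\,ds\Big)
\]
is controlled by a truncated maximal operator of $D_t^{1/2}f$ together with $M_tG$, via a Cotlar-type inequality. The $x$-variation within the Whitney ball is controlled by \eqref{eq:spatialHajlasz}.
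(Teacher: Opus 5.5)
Your first step matches the paper: vertical integration gives the trace, transport at height $|x-y|$ gives \eqref{eq:spatialHajlasz}, and the trace is identified with the energy trace. For nontangential attainment, run the explicit spatial and time segments using $h|\partial_tu|\lesssim G$, as the paper does, rather than ``averaging over Whitney balls''.

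For \eqref{A1a} the paper does no direct estimate. By \eqref{eq:spatialHajlasz}, $\|f\|_{\dot M^{1,p}_x}\lesssim\|G\|_{L^p}$, so together with $D_t^{1/2}f\in L^p$ the datum lies in the space $\dot L^p_{1,1/2}(\pom)$ of \cite{Din23}. Since $u$ attains $f$ nontangentially, the half-space is uniform with Ahlfors regular boundary, and $|\mathbf b|\le K_{\mathbf b}/\delta$, \cite[Theorem 1.1]{Din23} gives \eqref{A1a} for every $1<p<\infty$. That is why the Haj\l asz bound is proved at all.

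Your direct substitute for that citation has a genuine gap.
\begin{itemize}
\item \emph{The bookkeeping is inconsistent.} Suppose you split the kernel of $D_t^{1/2}u$ at $|t-s|=h^2$ and subtract $f$ only in the long range. What remains is the truncation $T_{h^2}f(x,t)$, where $T_rf(x,t):=c_0\int_{|t-s|>r}\frac{f(x,t)-f(x,s)}{|t-s|^{3/2}}\,ds$, not $D_t^{1/2}f(x,t)$. If instead you split $D_t^{1/2}[u-f]$, its short range contains $f(x,t)-f(x,s)$, which $|t-s|\sup|\partial_tu|$ does not control.
\item \emph{The averaging is mishandled.} Your bounds use $G$ and $M_tG$ at the projection of each interior point, while $\tilde N$ takes $L^2$ averages over Whitney balls. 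What you actually get is $M[G^2]^{1/2}+M[(M_tG)^2]^{1/2}+\cdots$, which is $L^p$-bounded only for $p>2$. The claimed bound by $M(G)+M(M_tG)$ is not justified.
\item \emph{The case $1<p\le2$ is left open.} You flag the decisive step as the main obstacle and do not carry it out.
\end{itemize}

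No Cotlar inequality is needed to close it; two observations suffice.

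First, $T_r$ has multiplier $|\tau|^{1/2}\phi(r\tau)$, where $\phi(\rho)$ is a multiple of $\int_{|\lambda|>|\rho|}(1-\cos\lambda)|\lambda|^{-3/2}\,d\lambda$ normalized so that $\phi(0)=1$. Near $0$, $\phi(\rho)=1-c|\rho|^{3/2}+\cdots$, and at infinity $\phi(\rho)\sim c'|\rho|^{-1/2}$. Hence $T_rf=k_r*D_t^{1/2}f$ with $|k(t)|\lesssim\min(|t|^{-1/2},|t|^{-5/2})$, so $\sup_r|T_rf|\lesssim M_t(D_t^{1/2}f)$. The truncation of $H_tD_t^{1/2}$ is likewise an approximate identity applied to $H_tD_t^{1/2}f$.

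Second, make every bound pointwise at the cone vertex $(x_0,t_0)$. Let $G'$ be the maximal function at a larger aperture, and let $(Y,s')$ lie at height $h$ in a Whitney ball of $\Gamma(x_0,t_0)$.
\begin{itemize}
\item Compare $u(Y,\sigma)$ with $f(x_0,\sigma)$; the error is $\lesssim hG'(x_0,\sigma)$.
\item Compare $u(Y,s')$ with $f(x_0,t_0)$; the error is $\lesssim hG'(x_0,t_0)$.
\item Move the truncation centre from $s'$ to $t_0$ using the time analogue of your Haj\l asz bound, $|f(x_0,t)-f(x_0,s)|\lesssim|t-s|^{1/2}\big(G'(x_0,t)+G'(x_0,s)\big)$. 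This follows by passing through the interior point at height $|t-s|^{1/2}$.
\end{itemize}
Together these give, pointwise at $(x_0,t_0)$, $\tilde N(D_t^{1/2}u)\lesssim G'+M_tG'+M_t(D_t^{1/2}f)$, and similarly for $H_tD_t^{1/2}u$. Hence \eqref{A1a} holds for all $p>1$, once $f(x_0,\cdot)$ is identified, up to a constant, with the half-integral of $D_t^{1/2}f(x_0,\cdot)$.

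Without this argument, or the appeal to \cite{Din23}, your proposal does not prove the statement for $1<p\le2$.
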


\begin{proof}
By Lemma \ref{lem:driftInputs}, integration on vertical segments gives
$$f(x,t):=\lim_{h\downarrow0}u(x,h,t),$$
with
\begin{equation}\label{eq:bridgeVertical}
 |u(x,h,t)-f(x,t)|\le ChG(x,t)
\end{equation}
for a.e.\ $(x,t)$. At a fixed positive height the solution is locally
bounded, so this also gives $f\in L^p_{\loc}(\pom)$.
To check nontangential attainment, fix a cone aperture $a_0$ and take
$(y,h,s)$ with $|y-x|+|s-t|^{1/2}<a_0h$.
Join $(x,h,t)$ to $(y,h,t)$ and then to $(y,h,s)$.
The spatial segment has length at most $a_0h$, and the time segment
has length at most $a_0^2h^2$. Both lie in a fixed larger cone at $(x,t)$.
Lemma \ref{lem:driftInputs} bounds the spatial gradient there by $CG(x,t)$
and the time derivative by $Ch^{-1}G(x,t)$, after fixing $a$ large enough.
Together with \eqref{eq:bridgeVertical}, this gives
$$|u(y,h,s)-f(x,t)|\le C_{a_0}hG(x,t)\longrightarrow0.$$
The interior locally Lipschitz representative supplied by the same lemma
justifies these segment integrations. For energy solutions, convergence
in \eqref{eq:bridgeVertical} in $L^p_{\loc}$ and the energy trace theorem
identify $f$ with the energy trace.

For \eqref{eq:spatialHajlasz}, put $r=|x-y|>0$ and transport both
boundary points, at the same time $t$, to $(x,r,t)$.
The vertical segments and the horizontal segment at height $r$ lie in
fixed cones at the two boundary points. Their total length is at most
$3r$, so Lemma \ref{lem:driftInputs} gives the asserted bound.
In the notation of \cite[Definition 1.1]{Din23}, this means
$$\|f\|_{\dot M_x^{1,p}(\pom)}\lesssim\|G\|_{L^p(\pom)},$$
since the spatial Haj\l asz seminorm is the infimum of the $L^p$ norms
of functions satisfying \eqref{eq:spatialHajlasz} with constant one.
Consequently $D_t^{1/2}f\in L^p$ implies
$f\in\dot L^p_{1,1/2}(\pom)$, whose seminorm is the sum of this spatial
seminorm and $\|D_t^{1/2}f\|_p$.
The half-space is a uniform domain with Ahlfors regular boundary,
and the drift has size $|\mathbf b|\le K_{\mathbf b}/\delta$.
With $B=-\mathbf b$ in the sign convention of that paper, all the hypotheses of
\cite[Theorem 1.1]{Din23} now hold, and its estimate gives \eqref{A1a}.
Changes of aperture affect only the constant. The graph-domain
transfer is given in the proof of Proposition~\ref{prop:halfDrift}.
\end{proof}

\subsubsection*{The case $p=2$, including drift}

Let $u$ be an energy solution of $\mathcal L_{\mathbf b}u=0$ under
the hypotheses of Theorem~\ref{Thalf}, with
$\tilde N(\nabla u),S(\nabla u)\in L^2(\pom)$. We shall prove the estimate
\begin{equation}\label{A2}
\|D^{1/2}_t u\big|_{\pom}\|_{L^2(\pom)}\lesssim \|\tilde N(\nabla u)\|_{L^2(\pom)}+\|S(\nabla u)\|_{L^2(\pom)},
\end{equation}
Lemma~\ref{lem:boundaryBridge} then gives the two half-time maximal
estimates at $p=2$. When $\mathbf b=0$, Theorem~\ref{thm.S<Np}
removes the square-function assumption and its term on the right.

Put $w=D_t^{1/2}u$ and write
$$\mathcal N:=\|\tilde N(\nabla u)\|_2,\qquad
\mathcal S:=\|S(\nabla u)\|_2,\qquad
\mathcal A:=\|A(\nabla u)\|_2,$$
where all three norms are on $\pom$. The square-function norm is
finite by hypothesis, and Lemma~\ref{lem:driftInputs} gives
$\mathcal A\lesssim\mathcal S+\kappa\mathcal N<\infty$. Fubini in the definitions of the square
and area functions, the PDE, and the Carleson condition give
\begin{equation}\label{eq:p2WeightedInputs}
\begin{split}
 \|x_n^{1/2}\nabla^2u\|_{L^2(\Omega)}&\lesssim\mathcal S,\qquad
 \|x_n^{3/2}\partial_t\nabla u\|_{L^2(\Omega)}\lesssim\mathcal A,\\
 \|x_n^{1/2}\partial_tu\|_{L^2(\Omega)}
 &\lesssim\mathcal S+\kappa\mathcal N.
\end{split}
\end{equation}
For the last inequality use
$\partial_tu=\divg(A\nabla u)+\mathbf b\cdot\nabla u$ and
\[
 |\partial_tu|\lesssim|\nabla^2u|
                   +( |\nabla A|+|\mathbf b|)|\nabla u|.
\]
The coefficient term is bounded by Carleson embedding and
$N(\nabla u)\lesssim\tilde N(\nabla u)$ at a fixed larger aperture.
These estimates do not use the boundary value of $w$.\medskip

\noindent\emph{Time regularization.} Choose real even functions
$p_j\in C_0^\infty(\R\setminus\{0\})$, $0\le p_j\le1$, such that
$p_j(\tau)\to1$ for every $\tau\ne0$. Let $P_j$ be their Fourier
multipliers in time and put $u_j=P_ju$, $w_j=D_t^{1/2}u_j=P_jw$.
These functions are well-defined modulo the energy normalization;
in fact $u_j\in L^2(\Omega)$ because $p_j$ is supported away from zero
and $w\in L^2(\Omega)$. Also
$$w_j\in L^2(\Omega),\qquad
\partial_nw_j=D_t^{1/2}P_j\partial_nu\in L^2(\Omega),$$
since $|\tau|^{1/2}p_j(\tau)$ is bounded for each $j$.
Thus, with $\mathcal H:=L^2(\R^{n-1}\times\R)$,
$w_j\in H^1((0,\infty);\mathcal H)$ and $w_j(0)\in\mathcal H$ exists.
The bounds at this stage may depend on $j$.
The multipliers commute with derivatives and are contractions on each
weighted space in \eqref{eq:p2WeightedInputs}, since the weights are
independent of time. In particular those three estimates hold uniformly
for $u_j$. Here
$\partial_tu_j=P_j[\divg(A\nabla u)+\mathbf b\cdot\nabla u]$;
we do not commute $P_j$ with either coefficient.\medskip

\noindent\emph{The height identities.} Write $h=x_n$ and, with inner
products taken in $\mathcal H$, set
$$F_j(h):=\|w_j(h)\|_{\mathcal H}^2,\qquad
B_j(h):=\|\partial_nw_j(h)\|_{\mathcal H}^2.$$
The functions $F_j$ and $B_j$ are locally absolutely continuous on
$(0,\infty)$, and $F_j,B_j,F_j'\in L^1(0,\infty)$.
For $B_j$ this local regularity follows from the Hessian bound on every
strip $a<h<b$ with $a>0$ and the bounded time-frequency support.
Self-adjointness of $D_t^{1/2}$ gives, for a.e.\ $h>0$,
\begin{equation}\label{eq:p2Bprime}
 B_j'(h)=2\langle\partial_{nn}u_j,
                    -H_t\partial_t\partial_nu_j\rangle_{\mathcal H},
 \qquad
 \int_0^\infty h^2|B_j'(h)|\,dh\lesssim\mathcal S\mathcal A.
\end{equation}
The last estimate is Cauchy--Schwarz with $h^2=h^{1/2}h^{3/2}$ and
\eqref{eq:p2WeightedInputs}.
Since $B_j'\in L^1(1,\infty)$ and $B_j\in L^1(0,\infty)$, its limit
at infinity is zero. Therefore
$B_j(h)=-\int_h^\infty B_j'(s)\,ds$, and Tonelli's theorem gives
\begin{equation}\label{eq:p2NormalEnergy}
 \int_0^\infty hB_j(h)\,dh
 \le\frac12\int_0^\infty s^2|B_j'(s)|\,ds
 \lesssim\mathcal S\mathcal A.
\end{equation}
In particular the weighted normal energy is a conclusion of the calculation.

Differentiating $F_j'=2\langle w_j,\partial_nw_j\rangle_{\mathcal H}$
once more on interior strips, and moving the half derivative in time,
we obtain
\begin{equation}\label{eq:p2Fsecond}
 F_j''(h)=2B_j(h)+2C_j(h),\qquad
 C_j(h):=\langle-H_t\partial_tu_j,\partial_{nn}u_j\rangle_{\mathcal H}.
\end{equation}
The $L^2$ isometry of $H_t$ and \eqref{eq:p2WeightedInputs} imply
$$\int_0^\infty h|C_j(h)|\,dh
\lesssim\mathcal S\big(\mathcal S+\kappa\mathcal N\big).$$
Together with \eqref{eq:p2NormalEnergy}, this proves
$\int_0^\infty h|F_j''(h)|\,dh<\infty$, uniformly in $j$.
It follows that $F_j'$ has limit zero at infinity: its derivative is
integrable on $(1,\infty)$, and $F_j'\in L^1(0,\infty)$.
Likewise $F_j(\infty)=0$ because $F_j,F_j'\in L^1(0,\infty)$.
Consequently, Fubini with the preceding absolute bound yields
\begin{equation}\label{eq:p2TraceRegularized}
\begin{split}
 \|w_j(0)\|_{\mathcal H}^2
 &=-\int_0^\infty F_j'(h)\,dh
   =\int_0^\infty hF_j''(h)\,dh\\
 &\lesssim\mathcal S\mathcal A+
       \mathcal S\big(\mathcal S+\kappa\mathcal N\big)
 \lesssim\mathcal S^2+\kappa^2\mathcal N^2,
\end{split}
\end{equation}
where the last step uses the drifted area bound
\eqref{eq:driftInputs}. These identities account for
both height endpoints, with no boundary integrability assumed for $w$.
\medskip

\noindent\emph{Identification of the trace.} Let $f=\operatorname{Tr}u$
be the energy trace. The energy trace theorem and approximation by smooth
functions give
$$w_j(0)=D_t^{1/2}P_jf.$$
Indeed, time multipliers commute with the energy trace, and $D_t^{1/2}P_j$
is bounded on the energy space for each fixed $j$.
Moreover $P_jf\to f$ in
$\dot H^{1/4}_{\partial_t-\Delta_x}(\pom)$.
The multiplier inequality
$$|\tau|\le\big(|\xi|^4+\tau^2\big)^{1/2}$$
shows that $D_t^{1/2}$ maps this space continuously into
$\dot H^{-1/4}_{\partial_t-\Delta_x}(\pom)$.
Thus $w_j(0)\to D_t^{1/2}f$ in distributions.
By \eqref{eq:p2TraceRegularized} a subsequence also converges weakly in
$L^2(\pom)$, so its limit is $D_t^{1/2}f$ and
$$\|D_t^{1/2}f\|_2^2\lesssim
\mathcal S^2+\kappa^2\mathcal N^2.$$
Finally, vertical translations of $u$ converge to $u$ in the energy norm
as their heights tend to zero. Continuity of the energy trace and the same
multiplier bound identify $D_t^{1/2}f$ with the distributional boundary
trace of $D_t^{1/2}u$. This proves \eqref{A2}.\medskip

\subsection{$p>2$ with drift}\label{P>2}
Throughout this subsection let $u$ be an energy solution of
$\mathcal L_{\mathbf b}u=0$ under the hypotheses of Theorem~\ref{Thalf},
with $\tilde N(\nabla u),S(\nabla u)\in L^p(\pom)$.
Write $w=D_t^{1/2}u$ and $b=w|_{\pom}$; the scalar $b$ denotes
the boundary half derivative, while $\mathbf b$ denotes the drift.
The constants may depend on $K_{\mathbf b}$ and both Carleson norms;
no smallness is assumed. We shall prove
\begin{equation}\label{eq:target}
\|D^{1/2}_t u\big|_{\pom}\|_{L^p(\pom)}\lesssim \|\tilde N(\nabla u)\|_{L^p(\pom)}+\|S(\nabla u)\|_{L^p(\pom)},
\end{equation}
Equation \eqref{A1a} gives the two maximal bounds in Theorem~\ref{Thalf}.
Only when $\mathbf b=0$ do we use Theorem~\ref{thm.S<Np} to
bound the right-hand side by $C\|\tilde N(\nabla u)\|_{L^p}$. The $L^2$ proof \eqref{A2} uses
self-adjointness of $D_t^{1/2}$; pairing against $|b|^{p-2}b$ does not
permit the same argument. Instead we prove a Fefferman-Stein estimate
on finite cubes and let their sides tend to infinity, using the
normalization supplied by Lemma \ref{lem:traceNormalization}.
We work on $\om=\R^n_+\times\R$; the drifted graph-domain
transfer is verified in Proposition~\ref{prop:halfDrift}.

\subsubsection*{Notation}
$\pom=\R^{n-1}\times\R$ carries the parabolic metric and the measure
$d\sigma=dx\,dt$; a boundary parabolic cube $Q=Q_x\times I_Q$ of side $r$ as in
\eqref{eqdef.bdypcube} has $|I_Q|=2r^2$ and $|Q|\sim r^{n+1}$, with centre
$(x_Q,t_Q)$. We write $T(Q):=Q_x\times(0,r)\times\R$ for the (time-unrestricted)
box over $Q$ and $T^*(Q):=Q_x\times(0,r)\times CI_Q$ for its restriction to a
fixed dilate of the time interval of $Q$, and recall $\delta(X,t)=x_n$ on $\R^n_+\times\R$. The functionals
$\tilde N$, $N$, $S$ and $A$ are those of Section \ref{Def}, with an aperture
large enough for Lemma \ref{lem:vert}; by the remark following
\eqref{A<S+cN.Lp} the enlargements of aperture we make below change the $L^p$
norms of all of them only by a constant. Lemma \ref{lem:driftInputs} gives
$N(\nabla u)\lesssim\tilde N(\nabla u)$ pointwise for solutions, after such an
enlargement. The same lemma supplies the area bound
\eqref{eq:driftInputs}, without a smallness assumption. $M$ is the parabolic Hardy-Littlewood
maximal operator on $\pom$, $M_x$ and $M_t$ the maximal operators in the spatial
and the time variables separately, and
\[
M^{\sharp}:=M_x\circ M_t ,
\]
\emph{in that order}: the inner average is in time over a long interval, the
outer one in space over a ball of radius comparable to the side of the cube.
$M^{\sharp}$ is bounded on $L^\theta(\pom)$ for every $\theta>1$, being a
composition of two maximal operators acting in complementary variables. 

Recall that $\partial_t=D^{1/2}_tH_tD^{1/2}_t$ and \eqref{eq:algebra} holds.

We fix a boundary parabolic cube $Q\subset\pom$ of side $r$. Let $\chi=\chi(t)$ be
smooth with
\begin{equation}\label{eq:chi}
\chi\equiv1\ \mbox{on }|t-t_Q|\le(2r)^2,\quad
\supp\chi\subset\{|t-t_Q|<(4r)^2\},\quad
|\chi'|\lesssim r^{-2},
\end{equation}
so that the plateau of $\chi$ contains a fixed dilate of the time interval $I_Q$
of $Q$. Since $D^{1/2}_t$ annihilates constants, we can write
\begin{equation}\label{eq:split}
b=\Big[\ \underbrace{D^{1/2}_t\big[(u-c_Q)\chi\big]}_{=:\ w_{\mathrm{loc}}}
\ +\ \underbrace{D^{1/2}_t\big[(u-c_Q)(1-\chi)\big]}_{=:\ w_{\mathrm{far}}}\ \Big]\Big|_{\pom}
\end{equation}
for any constant $c_Q$, fixed in \eqref{eq:cQ} below. Here $D^{1/2}_t$ acts in
$t$ alone at each fixed $X$, so $w_{\mathrm{loc}}$ and $w_{\mathrm{far}}$ are
defined at every interior point, and on $\pom$ they are given by the same two
formulas applied to the boundary trace of $u$, as Step 0 of the proof of
Proposition \ref{prop:L} shows for the local part. The far part will be
evaluated at interior points, at height $\sim r$. We shall prove the following estimates.
\begin{itemize}
\item \textbf{Local}: $\fint_Q|w_{\mathrm{loc}}|^2\,d\sigma$
is shown to be controlled by the $L^2$-averaged maximal functions of $\tilde N(\nabla u)$,
$S(\nabla u)$, $A(\nabla u)$ at any point of $Q$.
\item \textbf{Far}: $w_{\mathrm{far}}$ behaves like a constant on $Q$
up to an $M^{\sharp}[\tilde N(\nabla u)]$ error, on average over $Q$.
\end{itemize}

\noindent {\bf Interior and local estimates.} The estimates of this part are local estimates on a single weak solution.

\begin{lemma}[vertical and horizontal transport]\label{lem:vert}
Let $Y=(y,y_n)$ with $0\le y_n\le r$ and let $\hat Y=(y,r)$. Let $Z=(z,r)$ with
$|z-y|\le Cr$. Then, for a.e.\ $s$,
\begin{equation}\label{eq:vert}
|u(Y,s)-u(Z,s)|\ \lesssim_C\ r\,\tilde N(\nabla u)(y,s),
\end{equation}
where $\tilde N$ is taken with an aperture $a\gtrsim C$.
\end{lemma}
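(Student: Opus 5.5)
We will join $(Y,s)$ to $(Z,s)$ by a two-segment path at the fixed time $s$. First a vertical segment from $Y=(y,y_n)$ up to $\hat Y=(y,r)$, then a horizontal segment at height $r$ from $\hat Y$ to $Z=(z,r)$. Along both segments we integrate the spatial gradient. The interior locally Lipschitz representative from Lemma~\ref{lem:driftInputs} (the drifted version of Lemma~\ref{lem:int}) justifies the fundamental theorem of calculus along these segments for a.e.\ $s$ and a.e.\ base point. Since both endpoints are at the same time, no time derivative enters.

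For the vertical segment, every point $(y,h,s)$ with $y_n\le h\le r$ lies directly above the boundary point $(y,s)$, hence in $\Gamma(y,s)$ for any aperture. We apply the pointwise bound of Lemma~\ref{lem:int}, as extended in Lemma~\ref{lem:driftInputs}, at scale comparable to $h$. The parabolic double of the Whitney box around $(y,h,s)$ lies in a cone of fixed aperture at $(y,s)$. This gives $|\nabla u(y,h,s)|\lesssim \tilde N(\nabla u)(y,s)$. Integrating over $h\in(y_n,r)$ yields $|u(Y,s)-u(\hat Y,s)|\le r\,\tilde N(\nabla u)(y,s)$ up to a constant.

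When $y_n=0$ we interpret $u(Y,s)$ as the nontangential trace from Lemma~\ref{lem:boundaryBridge}. In that case the same bound follows from \eqref{eq:bridgeVertical} combined with the estimate above by a limit $y_n\downarrow0$.

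For the horizontal segment, the points $(y+\theta(z-y),r,s)$ with $\theta\in[0,1]$ satisfy $|x-y|\le Cr$ at height $r$. They therefore lie in $\Gamma_a(y,s)$ once the aperture $a\gtrsim C$. The enlarged Whitney doubles of radius $\sim r$ remain in a slightly larger cone, which we absorb by choosing $a$ larger still, depending only on $C$. Lemma~\ref{lem:int} again gives $|\nabla u|\lesssim\tilde N(\nabla u)(y,s)$ on this segment. Its length is at most $Cr$, so $|u(\hat Y,s)-u(Z,s)|\lesssim_C r\,\tilde N(\nabla u)(y,s)$.

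Adding the two bounds gives \eqref{eq:vert}. The only delicate point is bookkeeping rather than analysis. We must check that all Whitney doubles used by Lemma~\ref{lem:int} sit in $\Gamma_a(y,s)$ for an aperture depending only on $C$. We must also check the a.e.\ validity of the segment integrations, which follows from Fubini applied to the Lipschitz representative.
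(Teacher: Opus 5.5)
Your proof is correct and follows the paper's argument: the same two-segment path $Y\to\hat Y\to Z$ at fixed time $s$, with the interior pointwise gradient bound of Lemma~\ref{lem:driftInputs} applied along each segment, which lies in $\Gamma(y,s)$ once $a\gtrsim C$. Your separate treatment of the case $y_n=0$ through the vertical trace estimate \eqref{eq:bridgeVertical} is a small point that the paper leaves implicit.
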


\begin{proof}
Join $Y\to\hat Y\to Z$. Every point $(y,\rho)$, $\rho\le r$, of the vertical line
segment lies in $\Gamma(y,s)$, and every point of the horizontal segment lies at
height $r$ within distance $Cr$ of $y$, hence in $\Gamma(y,s)$ once the aperture $a$ of the cone is
$a\gtrsim C$. We integrate $|\nabla u|$ along the two segments and apply
Lemma \ref{lem:driftInputs} on Whitney balls centred at the segment points, whose
doubles lie in $\Gamma(y,s)$ (after if needed a further enlargement of $a$). Each line segment contributes at most
$Cr\sup|\nabla u|$.
\end{proof}

We now fix the constant in \eqref{eq:split}. Let $\varphi_Q\in C_0^\infty(\R^n_+)$
be a bump function in spatial variables only with
\begin{equation}\label{eq:bump}
\varphi_Q\ge0,\quad\int\varphi_Q=1,\quad
\supp\varphi_Q\subset B_{r/2}(\hat X_Q),\quad
\|\varphi_Q\|_\infty\lesssim r^{-n},\quad
\|\nabla\varphi_Q\|_\infty\lesssim r^{-n-1},
\end{equation}
where $\hat X_Q:=(x_Q,r)$, and set
\begin{equation}\label{eq:cQ}
\mathcal U_Q(s):=\int_{\R^n_+}u(X,s)\varphi_Q(X)\,dX,
\qquad c_Q:=\mathcal U_Q(t_Q) .
\end{equation}
Note $\supp\varphi_Q$ sits at heights between $r/2$ and $3r/2$, and is therefore
compactly contained in $\R^n_+$. Write
$D_Q:=4Q_x\times(0,4r)$, $I^*_Q:=\{|s-t_Q|<(4r)^2\}$ and
$W_Q:=B_{r/2}(\hat X_Q)\times I^*_Q$.

\begin{lemma}[oscillation of $u$]\label{lem:osc}
Set $\Theta_Q^2:=\fint_{CQ}\tilde N(\nabla u)^2$. Then
$\mathcal U_Q\in W^{1,1}_{\loc}(\R)$ with
\begin{equation}\label{eq:Uprime}
\mathcal U_Q'(s)=-\int_{\R^n_+}A\nabla u\cdot\nabla\varphi_Q
                   +\int_{\R^n_+}(\mathbf b\cdot\nabla u)\varphi_Q ,
\end{equation}
and, with $c_Q$ as in \eqref{eq:cQ},
\begin{align}
\int_{D_Q}|u(X,s)-\mathcal U_Q(s)|^2dX
&\ \lesssim\ r^2\int_{D_Q}|\nabla u(X,s)|^2dX
&&\mbox{for a.e.\ }s,\label{eq:poin}\\
|\mathcal U_Q(s)-\mathcal U_Q(s')|
&\ \lesssim\ r\Big(\fint_{W_Q}|\nabla u|^2\Big)^{1/2}\ \lesssim\ r\,\Theta_Q
&&\mbox{for }s,s'\in I^*_Q,\label{eq:timeosc}\\
\iint_{D_Q\times I^*_Q}|u-c_Q|^2\,dX\,dt&\ \lesssim\ r^{n+4}\,\Theta_Q^2 .
&&\label{eq:osc}
\end{align}
\end{lemma}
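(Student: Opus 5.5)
The plan is to derive \eqref{eq:Uprime} by testing the weak form of $\mathcal L_{\mathbf b}u=0$ with $\varphi_Q(X)\psi(t)$ for $\psi\in C_0^\infty(\R)$. Since $\varphi_Q$ is compactly supported at heights in $(r/2,3r/2)$, this is a legitimate interior test function. The identity $-\int\mathcal U_Q\psi'=\int\big[-\int A\nabla u\cdot\nabla\varphi_Q+\int(\mathbf b\cdot\nabla u)\varphi_Q\big]\psi\,dt$ then shows that the distributional derivative of $\mathcal U_Q$ is the displayed function. That function lies in $L^1_{\loc}$, indeed in $L^2_{\loc}$, because $\nabla u\in L^2(\Omega)$ and $|\mathbf b|\le K_{\mathbf b}/x_n\lesssim K_{\mathbf b}/r$ on the support. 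Hence $\mathcal U_Q\in W^{1,1}_{\loc}$.

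For \eqref{eq:poin}, fix a.e.\ $s$ with $u(\cdot,s)\in W^{1,2}_{\loc}$. The quantity $\mathcal U_Q(s)$ is a weighted average of $u(\cdot,s)$ with weight $\varphi_Q$. This weight is bounded by $r^{-n}$ and has integral one, and it is supported in a ball of radius $r/2$ contained in the box $D_Q$, which is convex with side comparable to $r$. The standard weighted Poincaré inequality on a convex domain then gives $\int_{D_Q}|u-\mathcal U_Q|^2\lesssim r^2\int_{D_Q}|\nabla u|^2$. To see this, compare with the unweighted mean: the difference is bounded by $\int|u-m|\varphi_Q\le r^{-n}\int_{D_Q}|u-m|$, followed by Cauchy--Schwarz. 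The constants depend only on the fixed shape ratios.

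For \eqref{eq:timeosc}, integrate \eqref{eq:Uprime} over $[s,s']\subset I^*_Q$, which has length at most $2(4r)^2$. Using $|\nabla\varphi_Q|\lesssim r^{-n-1}$ and $|\mathbf b|\varphi_Q\lesssim K_{\mathbf b}r^{-n-1}$ gives
\[|\mathcal U_Q(s)-\mathcal U_Q(s')|\lesssim r^{-n-1}\iint_{W_Q}|\nabla u|\lesssim r^{-n-1}|W_Q|\Big(\fint_{W_Q}|\nabla u|^2\Big)^{1/2}\simeq r\Big(\fint_{W_Q}|\nabla u|^2\Big)^{1/2}.\]
To pass to $\Theta_Q$, observe that $W_Q$ sits at height $\simeq r$. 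Every point of $W_Q$ therefore lies in the cone $\Gamma(x,s)$ for every boundary point $(x,s)\in CQ$ once $C$ and the aperture are fixed large. Lemma~\ref{lem:driftInputs}, or directly the Whitney averages in the definition of $\tilde N$, then bounds $\sup_{W_Q}|\nabla u|$ by $\tilde N(\nabla u)(x,s)$ for each such point. Averaging the square over $CQ$ gives $\lesssim\Theta_Q$.

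For \eqref{eq:osc}, split $u-c_Q=(u-\mathcal U_Q(s))+(\mathcal U_Q(s)-\mathcal U_Q(t_Q))$. By \eqref{eq:timeosc}, the second term contributes at most $|D_Q||I^*_Q|r^2\Theta_Q^2\simeq r^{n+4}\Theta_Q^2$. By \eqref{eq:poin}, the first contributes $r^2\iint_{D_Q\times I^*_Q}|\nabla u|^2$. This is the main obstacle, because $D_Q$ reaches the boundary, so no single Whitney average controls it. I would handle it by the Whitney-box decomposition used in \eqref{eq:massWhitney}. Each box $W\subset D_Q\times I^*_Q$ of size $\ell$ satisfies $\iint_W|\nabla u|^2\lesssim\ell\int_{\Delta_W}\tilde N(\nabla u)^2$, and the shadows have bounded overlap at each dyadic size and lie in $CQ$. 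Summing $\ell\lesssim r$ geometrically gives $\iint_{D_Q\times I^*_Q}|\nabla u|^2\lesssim r\int_{CQ}\tilde N(\nabla u)^2\simeq r^{n+2}\Theta_Q^2$. Multiplying by $r^2$ yields the claim.
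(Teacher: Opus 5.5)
Your proposal is correct and follows essentially the same route as the paper. The paper likewise tests the weak form with $\varphi_Q(X)\psi(t)$ for \eqref{eq:Uprime}, and uses spatial Poincar\'e on the convex box $D_Q$ plus a comparison of the $\varphi_Q$-weighted mean with the box mean for \eqref{eq:poin}; for \eqref{eq:timeosc} it integrates \eqref{eq:Uprime} over $W_Q$ and covers $W_Q$ by boundedly many Whitney balls lying in cones over $CQ$, and for \eqref{eq:osc} it combines the split $u-c_Q=(u-\mathcal U_Q(s))+(\mathcal U_Q(s)-\mathcal U_Q(t_Q))$ with the same Whitney-box/shadow summation. The only cosmetic difference is that you place $W_Q$ in the cone of \emph{every} point of $CQ$, which needs the aperture large relative to $C$, while the paper uses only a fixed fraction of $CQ$; under the paper's convention of a fixed large aperture, both work.
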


\begin{proof}
$D_Q$ is a convex box of diameter $\lesssim r$ and $u(\cdot,s)\in W^{1,2}(D_Q)$
for a.e.\ $s$. Poincar\'e in space gives \eqref{eq:poin} with
$\langle u\rangle_{D_Q}$ in place of $\mathcal U_Q(s)$; since $\varphi_Q$ is a
probability density supported in $D_Q$ with
$\|\varphi_Q\|_\infty\lesssim|D_Q|^{-1}$,
$|\langle u\rangle_{D_Q}-\mathcal U_Q(s)|
\lesssim(\fint_{D_Q}|u-\langle u\rangle_{D_Q}|^2)^{1/2}$. Note that no boundary condition on $\partial D_Q$ is used,
and $D_Q$ may touch $\pom$.

Identity \eqref{eq:Uprime} is the weak formulation
$\iint_\om[A\nabla u\cdot\nabla\Phi-u\,\partial_t\Phi
                   -(\mathbf b\cdot\nabla u)\Phi]=0$ tested against
$\Phi(X,t)=\varphi_Q(X)\eta(t)$, $\eta\in C_0^\infty(\R)$, admissible precisely
because $\supp\varphi_Q\Subset\R^n_+$. On this support,
$|\mathbf b|\le C K_{\mathbf b}/r$, so
$|\mathbf b|\,|\varphi_Q|\lesssim K_{\mathbf b}r^{-n-1}$.
Integrating the two terms in $s$ over $W_Q$, of measure
$\sim r^{n+2}$, gives the first inequality in \eqref{eq:timeosc}. Since $W_Q$
sits at height $\sim r$, this is the interior oscillation in time of the spatial
average of a solution over a Whitney region, \cite[Lemma 3.1]{KL}, in the form in
which \cite{Din23} uses it; $W_Q$ is covered by boundedly many Whitney balls,
each in the cone over a fixed fraction of $CQ$, which gives the second
inequality.

For the spatial term in \eqref{eq:osc}, cover $D_Q\times I^*_Q$ by Whitney boxes at dyadic heights $h\le4r$. At each height their boundary shadows have bounded overlap inside a fixed dilate $CQ$, and the ratio of box volume to shadow measure is comparable to $h$. Hence
$$\iint_{D_Q\times I^*_Q}|\nabla u|^2
\lesssim\sum_{h\le4r}h\int_{CQ}\tilde N(\nabla u)^2
\lesssim r\,|Q|\Theta_Q^2.$$
Here the sum is over dyadic heights, and $C$ is chosen large enough to contain all the shadows. Split
$u-c_Q=(u-\mathcal U_Q(s))+(\mathcal U_Q(s)-\mathcal U_Q(t_Q))$.
Integrating \eqref{eq:poin} over $I^*_Q$ and using the preceding estimate bounds the first term by $Cr^{n+4}\Theta_Q^2$. Equation \eqref{eq:timeosc} and $|D_Q||I^*_Q|\sim r^{n+2}$ give the same bound for the second term. This proves \eqref{eq:osc}.
\end{proof}

\begin{remark} A bump function of radius $\sim r$
compactly contained in the domain exists at every scale on $\R^n_+\times\R$, but
on a bounded cylinder only for $r\lesssim\diam\OO$. This is the place we use that our domain is unbounded.
\end{remark}

\begin{lemma}[normalization of the half derivative]\label{lem:traceNormalization}
Let $2\le p<\infty$, let $u$ solve $\mathcal L_{\mathbf b}u=0$ in $\R^n_+\times\R$ under the
coefficient hypotheses of this subsection, and put
$\Theta=\tilde N(\nabla u)$ and $E_p=\|\Theta\|_{L^p(\pom)}<\infty$.
Then the boundary trace $f=u|_{\pom}$ has a distributional half derivative
$b=D_t^{1/2}f$. This is also the distributional boundary trace of
$D_t^{1/2}u$.

Write $D=n+1$ for the homogeneous dimension of $\pom$. Fix a smooth test
function $\psi$ supported in the unit boundary cube, and set
\[
 \psi_R(x,t)=R^{-D}\psi\big((x-x_0)/R,(t-t_0)/R^2\big),\qquad R>0.
\]
Then, uniformly in $(x_0,t_0)$ and $R$,
\begin{equation}\label{eq:traceNormalization}
 |\langle b,\psi_R\rangle|\le C_{p,\psi}E_p R^{-D/p}.
\end{equation}
In particular these pairings tend to zero as $R\to\infty$.
\end{lemma}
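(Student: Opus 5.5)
The plan is to define $b$ through the time regularizations $P_j$ used in the $p=2$ argument. The weak limit of $w_j(0)=D_t^{1/2}P_jf$ does not exist there without $L^2$ bounds, so the construction will be by duality instead. For a test function $\psi$ I will set
\[
\langle b,\psi\rangle:=\langle f,D_t^{1/2}\psi\rangle .
\]
This is meaningful because $D_t^{1/2}\psi$ decays like $|t|^{-3/2}$ in time and is compactly supported in space. The first task is therefore to check that $f$ pairs with such functions, which requires a growth bound on $f$.

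To obtain it, I will combine the vertical transport bound \eqref{eq:bridgeVertical} with the spatial Haj\l asz estimate \eqref{eq:spatialHajlasz} and the time oscillation \eqref{eq:timeosc} of the averages $\mathcal U_Q$. For a cube $Q$ of side $r$ this gives $\fint_Q|f-c_Q|\lesssim r\,\Theta_Q$, with $\Theta_Q\le (\fint_{CQ}\Theta^2)^{1/2}\lesssim E_p|Q|^{-1/p}$, using $p\ge2$ and H\"older. Telescoping over the dyadic parabolic cubes $2^kQ$ then shows that $|c_{2^{k+1}Q}-c_{2^kQ}|\lesssim 2^kr\,E_p(2^kr)^{-D/p}$. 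Hence $f$ grows at most like $R^{1-D/p}$ on cubes of side $R$, which is sublinear in the parabolic metric and hence $o(|t|^{1/2})$ in time. This is integrable against the $|t|^{-3/2}$ tail of $D_t^{1/2}\psi$. Because $D_t^{1/2}\psi$ has integral zero in $t$, constants drop out, so the pairing is well defined modulo constants.

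For \eqref{eq:traceNormalization} I will use scaling. The function $D_t^{1/2}\psi_R$ equals $R^{-1}$ times a rescaled copy of $D_t^{1/2}\psi$. Split the pairing dyadically into the regions $2^kQ_R$, and subtract $c_{2^kQ_R}$ on each, which is allowed by the mean-zero property. On $2^kQ_R$ the oscillation bound gives a contribution of order
\[
R^{-1}\,2^{-3k}\,(2^kR)\,E_p\,(2^kR)^{-D/p},
\]
where the factor $2^{-3k}$ comes from the decay of $D_t^{1/2}\psi$, namely $(2^kR)^{-3}$ relative to the time scale $R^2$. The sum over $k$ converges and yields $C_{p,\psi}E_pR^{-D/p}$. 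In the same way I will show that $\langle f, D_t^{1/2}P_j\psi\rangle\to\langle b,\psi\rangle$, and that vertical translates $u(\cdot,h,\cdot)$ converge to $f$ in the weighted $L^1$ space dual to this decay. Here I will again use \eqref{eq:bridgeVertical}, $|u(x,h,t)-f(x,t)|\le hG$, with $G\in L^p$. This identifies $b$ with the distributional boundary trace of $D_t^{1/2}u$.

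I expect the main obstacle to be the growth and telescoping step for $f$ in the time direction. The Haj\l asz estimate \eqref{eq:spatialHajlasz} is purely spatial, so the time increments must come from \eqref{eq:timeosc}. This in turn requires the bump averages at height $\sim r$ to be linked to boundary values through Lemma \ref{lem:vert}, uniformly across scales, including the drift term in \eqref{eq:Uprime}. The drift contribution is controlled by $K_{\mathbf b}$ on the support of the bump, as in the proof of Lemma \ref{lem:osc}.
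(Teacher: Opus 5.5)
Your outline is the same as the paper's proof. You define $b$ by pairing $f$ minus a constant against $D_t^{1/2}\psi$, control the time tails with the bump constants $c_{2^kQ}$ and their telescoping differences, and identify the trace through \eqref{eq:bridgeVertical} and H\"older. However, the central shell estimate does not go through as you wrote it.

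First, the vanishing time integral of $D_t^{1/2}\psi_R$ lets you subtract one constant, not a different constant $c_k:=c_{2^kQ_R}$ on each time shell $S_k$. Indeed,
\[
 \langle f,D_t^{1/2}\psi_R\rangle=\sum_k\int_{S_k}(f-c_k)D_t^{1/2}\psi_R+\sum_k(c_k-c_0)\int_{S_k}D_t^{1/2}\psi_R,
\]
and the second sum does not vanish. On the $k$th shell you must carry the accumulated difference $|c_k-c_0|\lesssim E_p\sum_{i<k}(2^iR)^{1-D/p}$. This produces the double series $\sum_k2^{-k}\sum_{i\le k}2^{i(1-D/p)}$, which you sum by interchanging the order. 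That also covers $p=D$, where the growth of the constants is logarithmic rather than a power.

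Second, $D_t^{1/2}\psi_R$ keeps the spatial support $(Q_R)_x$ of $\psi_R$. On $S_k$ it therefore lives on the slab $(Q_R)_x\times\{|t-t_0|\sim4^kR^2\}$, which is only a fraction $2^{-k(D-2)}$ of $2^kQ_R$. Its $L^1$ mass there is $\sim R^{-1}2^{-k}$, not $R^{-1}2^{-3k}$. A bound on the average of $|f-c_k|$ over the full cube $2^kQ_R$ controls the slab integral only with a loss of $2^{k(D-2)}$. The resulting bound $E_pR^{-D/p}2^{k(D-2-D/p)}$ is not summable when $D-2\ge D/p$. The same objection applies to your well-definedness step: sublinear growth of cube averages, or ``$o(|t|^{1/2})$'', is not what makes the $|t|^{-3/2}$ tail integrable.

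What you need, with $r_j=2^jR$ and $Q_j$ the cube of side $r_j$ centred at $(x_0,t_0)$, is the pointwise bound
\[
 |f(x,t)-c_{Q_j}|\lesssim r_j\Theta(x,t)+E_pr_j^{1-D/p}\quad\text{for }x\in(Q_j)_x,\ |t-t_0|<r_j^2 .
\]
It follows from Lemma~\ref{lem:vert} and \eqref{eq:timeosc}. Then apply H\"older on each slab, using $\Theta\in L^p(\pom)$ globally. The term $r_j\Theta$ grows linearly in $r_j$, yet it contributes only $E_pR^{-D/p}2^{-2j/p}$ on the $j$th shell. The constant terms contribute $E_pR^{-D/p}2^{-j}\sum_{i\le j}2^{i(1-D/p)}$. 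Both series are summable, which gives \eqref{eq:traceNormalization}, and the same estimate for general compactly supported tests defines $b$. Your ingredients already contain this pointwise bound, but your argument replaces it by a full-cube average, and that is where it fails.
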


\begin{proof}
Lemma \ref{lem:driftInputs} and vertical integration give the trace $f$ for a.e.\
$(x,t)$ and
\begin{equation}\label{eq:verticalTraceP}
 |u(x,h,t)-f(x,t)|\lesssim h\Theta(x,t),\qquad h>0.
\end{equation}
In particular $f\in L^p_{\loc}(\pom)$. We must also control its time tails.
Let $Q_j$ be the cube centred at $(x_0,t_0)$ of side $r_j=2^jR$, and let
$a_j=c_{Q_j}$ be the constants in \eqref{eq:cQ}. By Lemmas
\ref{lem:vert} and \ref{lem:osc}, followed by H\"older's inequality,
\begin{equation}\label{eq:traceGrowthP}
 |f(x,t)-a_j|\lesssim r_j\Theta(x,t)+E_p r_j^{1-D/p}
 \quad\text{if }x\in(Q_j)_x,\ |t-t_0|<r_j^2.
\end{equation}
Indeed, transport to the support of $\varphi_{Q_j}$ gives the first term,
and \eqref{eq:timeosc} gives the second since
$\Theta_{Q_j}\le C r_j^{-D/p}E_p$. Comparing the two constants on a common
Whitney region by \eqref{eq:osc} also gives
\[
 |a_{j+1}-a_j|\lesssim E_p r_j^{1-D/p}.
\]
Consequently, on the region in \eqref{eq:traceGrowthP},
\begin{equation}\label{eq:traceGrowthSum}
 |f(x,t)-a_0|\lesssim r_j\Theta(x,t)
             +E_p\sum_{i=0}^j r_i^{1-D/p}.
\end{equation}

The kernel formula and scaling give
\[
 |D_t^{1/2}\psi_R(x,t)|\le C_\psi R^{-D-1}
 \boldsymbol1_{(Q_0)_x}(x)
 \big(1+|t-t_0|/R^2\big)^{-3/2}.
\]
Split time into $|t-t_0|<R^2$ and the shells
$r_{j-1}^2\le|t-t_0|<r_j^2$, $j\ge1$. On the $j$th region the last
bound is at most $C_\psi R^{2-D}r_j^{-3}$. Since
$|(Q_0)_x|\simeq R^{D-2}$, H\"older's inequality and
\eqref{eq:traceGrowthSum} yield
\begin{multline}\label{eq:traceTailSum}
 \iint_{\pom}|f-a_0|\,|D_t^{1/2}\psi_R|\,dx\,dt
 \lesssim E_p R^{-D/p}
 \left[\sum_{j\ge0}2^{-2j/p}
 +\sum_{j\ge0}2^{-j}\sum_{i=0}^j2^{i(1-D/p)}\right]
 \lesssim E_p R^{-D/p}.
\end{multline}
For the second series we interchange the sums:
\[
 \sum_{j\ge0}2^{-j}\sum_{i=0}^j2^{i(1-D/p)}
 =\sum_{i\ge0}2^{i(1-D/p)}\sum_{j\ge i}2^{-j}
 =2\sum_{i\ge0}2^{-iD/p}<\infty.
\]
This covers both $p=2$ and $p=D$.
The same estimates give absolute integrability against the time weight
$(1+|t|)^{-3/2}$ on every bounded spatial set.

Define $\langle b,\psi_R\rangle=\iint(f-a_0)D_t^{1/2}\psi_R$.
This is independent of $a_0$, because the time integral of
$D_t^{1/2}\psi_R$ is zero for each $x$. The preceding bounds, applied to
arbitrary compactly supported smooth tests, define a distribution and
prove \eqref{eq:traceNormalization}. They also justify the time tails
in \eqref{eq:split}, at the boundary and at every fixed positive height.
Finally \eqref{eq:verticalTraceP} gives, for such a test $\zeta$,
\[
 |\langle D_t^{1/2}u(\cdot,h,\cdot)-b,\zeta\rangle|
 \lesssim hE_p\|D_t^{1/2}\zeta\|_{L^{p'}}\longrightarrow0.
\]
This proves the distributional trace assertion without a global energy
or boundary solvability assumption.
\end{proof}

\noindent{\bf The local estimate.}
Throughout this subsection $Q$ is a fixed boundary parabolic cube of side $r$ and
$(x_0,t_0)\in Q$ is a fixed point. Write
\begin{equation}\label{eq:good}
\Xi_Q:=M\big[\tilde N(\nabla u)^2\big]^{1/2}+M\big[S(\nabla u)^2\big]^{1/2}
+M\big[A(\nabla u)^2\big]^{1/2}\Big|_{(x_0,t_0)} .
\end{equation}
The strategy of our proof is to repeat the global calculation done for $w$ in the $L^2$ case but locally
with cutoffs of size $Q$. The bound we obtain is again $L^2$ based, but local.
Since $(x_0,t_0)\in CQ$ and $\int_{CQ}F^2\le|CQ|M[F^2](x_0,t_0)$, the
following four inequalities hold:
For $0<a\le4r$,
\begin{align}
\iint_{4Q_x\times(0,a)\times CI_Q}|\nabla u|^2
&\ \lesssim\ a\,\Xi_Q^2|Q|,
\label{eq:G0}\\
\iint_{T^*(2Q)}|\nabla^2u|^2x_n&\ \lesssim\ \Xi_Q^2|Q|,
\label{eq:G2}\\
\iint_{T^*(2Q)}|\partial_t\nabla u|^2x_n^3&\ \lesssim\ \Xi_Q^2|Q|,
\label{eq:G3}\\
\iint_{T^*(2Q)}\big(|\nabla A|^2+|\mathbf b|^2\big)|\nabla u|^2x_n
&\ \lesssim\ \kappa^2\Xi_Q^2|Q|.
\label{eq:G4}
\end{align}
Estimate \eqref{eq:G0} is proved one dyadic layer at a time: cover
$\{x_n\sim h\}$ by Whitney boxes $W$ of size $\sim h$ with bounded overlap and
let $\Delta_W$ be the boundary cube of side $\sim h$ below $W$; then
$\iint_W|\nabla u|^2\le|W|\fint_{\Delta_W}\tilde N(\nabla u)^2$ with
$|W|/|\Delta_W|\sim h$, and then we sum over $h\le a$. Multiplying by
$x_n^{2j-1}$ and summing again gives, for $j=\tfrac12,1,2$,
\begin{equation}\label{eq:G1}
\iint_{T^*(2Q)}|\nabla u|^2x_n^{2j-1}\ \lesssim\ r^{2j}\Xi_Q^2|Q|.
\end{equation}
For \eqref{eq:G4}, apply Carleson embedding to $\nabla u$
restricted to $T^*(2Q)$. Its nontangential supremum has boundary
support in $CQ$ and is bounded there by $C\tilde N(\nabla u)$,
by Lemma~\ref{lem:driftInputs}. Thus the added drift contribution is
at most $C\|\mu_{\mathbf b}\|_C\int_{CQ}\tilde N(\nabla u)^2$.
Note $\Theta_Q\le\Xi_Q$, so we may use these with $\Xi_Q$.
The bounds \eqref{eq:G0}--\eqref{eq:G1} apply to time-restricted integrals.
The half derivative and the Hilbert transform of a compactly supported
function need not be compactly supported. We therefore keep their
integrals over all of $\R$. Only after applying the global $L^2$ identities
or inequalities below do we use the supports of the inputs, which contain
a factor $\chi$ or $\chi'$, to apply \eqref{eq:G0}--\eqref{eq:G1}.

Set
\[
U:=(u-c_Q)\chi,\qquad V:=D^{1/2}_tU ,
\]
so that $V=w_{\mathrm{loc}}$; we write $V$ for it throughout this subsection,
the estimate being carried out in the interior and only then traced. We see that
\begin{equation}\label{eq:vderivs}
\begin{split}
 \nabla V&=D^{1/2}_t(\chi\nabla u),\qquad D^{1/2}_tV=-H_t\partial_tU,\\
 D^{1/2}_tV&=-H_t\big[\chi\big(\divg(A\nabla u)+\mathbf b\cdot\nabla u\big)\big]
             -H_t\big[(u-c_Q)\chi'\big].
\end{split}
\end{equation}
Let $\Psi(X)=\psi_1(x')\psi_2(x_n)$ be independent of $t$, with
\begin{equation}
\Psi\equiv1\ \mbox{on }Q_x\times(0,r),\qquad
\supp\Psi\subset2Q_x\times(0,2r),\qquad
|\nabla\Psi|\lesssim r^{-1},
\end{equation}
so in particular $\psi_2\equiv1$ on $[0,r)$ and
\begin{equation}\label{eq:dnPsi}
\partial_n\Psi\equiv0\ \mbox{on }\{x_n<r\},\qquad
\supp\partial_n\Psi\subset2Q_x\times[r,2r).
\end{equation}
Fix an integer $m\ge2$ and put $\mathcal R:=\supp\partial_n\Psi\times\R$, a
region contained in $2Q_x\times[r,2r)\times\R$ by \eqref{eq:dnPsi}. Both
integrations by parts below are in $x_n$, so $\nabla\Psi$ enters only through
$\partial_n\Psi$ and every error term they produce is supported on $\mathcal R$, at
height $\sim r$; this is what keeps the two auxiliary lemmas below off the
boundary.

\begin{proposition}[local estimate]\label{prop:L}
Let $\om=\R^n_+\times\R$ and let $u$ be an energy solution of
$\mathcal L_{\mathbf b}u=0$ under the coefficient and drift hypotheses
of Theorem~\ref{Thalf}, with
$Q$, $\Psi$, $\Xi_Q$ and $w_{\mathrm{loc}}$ as above. Then
\[
\int_{\pom}w_{\mathrm{loc}}^2\,\Psi^m\,d\sigma\ \lesssim\ \Xi_Q^2|Q|,
\qquad\mbox{hence}\qquad
\fint_Q|w_{\mathrm{loc}}|^2\,d\sigma\ \lesssim\ \Xi_Q^2 .
\]
\end{proposition}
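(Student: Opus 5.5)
We repeat the height-identity argument of the case $p=2$ (equations \eqref{eq:p2Bprime}--\eqref{eq:p2TraceRegularized}), now applied to $V=w_{\mathrm{loc}}$ and weighted by the time-independent cutoff $\Psi^m$. Put
\[
F(h):=\int_{\R^{n-1}\times\R}V(x,h,t)^2\Psi^m(x,h)\,dx\,dt,\qquad
B(h):=\int|\partial_nV|^2\Psi^m\,dx\,dt .
\]
Since $\Psi$ vanishes for $x_n\ge2r$, we have $F(2r)=F'(2r)=0$. Two integrations by parts in $h$ then give
\[
\int_{\pom}V^2\Psi^m\,d\sigma=F(0)=\int_0^{2r}hF''(h)\,dh,
\]
provided $F(h)\to F(0)$ and $hF'(h)\to0$ as $h\downarrow0$. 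As in Step 0, we justify both limits by first applying the time multipliers $P_j$, which make $V$ lie in $H^1((0,2r);L^2)$, and then letting $j\to\infty$. We use the trace normalization of Lemma \ref{lem:traceNormalization} to identify the boundary value of $V$ with $D_t^{1/2}[(f-c_Q)\chi]$. Because only $x_n$-derivatives are taken, $\nabla\Psi$ enters only through $\partial_n\Psi$, and every term containing it is supported in $\mathcal R$, at height comparable to $r$.

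Next we expand $F''$. Differentiating twice in $h$, with $\Psi$ independent of $t$, gives three kinds of terms.
\begin{enumerate}
\item[(a)] The positive term $2\int|\partial_nV|^2\Psi^m$, together with cross terms involving $\partial_n\Psi^m$ or $\partial_n^2\Psi^m$. These cross terms live on $\mathcal R$ and are bounded by $r^{-2}\iint_{\mathcal R}V^2+\iint_{\mathcal R}|\partial_nV|^2$.
\item[(b)] The term $2\int V\,\partial_{nn}V\,\Psi^m$. Using self-adjointness of $D_t^{1/2}$ and the second line of \eqref{eq:vderivs}, we rewrite it as
\[
2\int\partial_{nn}U\cdot\big(-H_t\partial_tU\big)\Psi^m .
\]
Here $\partial_tU=\chi\,\partial_tu+(u-c_Q)\chi'$. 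The $\chi\,\partial_tu$ part is paired with $\chi\,\partial_{nn}u$ by Cauchy--Schwarz, using the $L^2$ isometry of $H_t$, with weight $h=h^{1/2}h^{1/2}$. The bound \eqref{eq:G2}, together with $|\partial_tu|\lesssim|\nabla^2u|+(|\nabla A|+|\mathbf b|)|\nabla u|$ and \eqref{eq:G4}, controls this by $\Xi_Q^2|Q|$. The $\chi'$ part is controlled by $\|h^{1/2}\nabla^2u\|\cdot r^{-2}\|h^{1/2}(u-c_Q)\|$ on $T^*(2Q)$, and \eqref{eq:osc} gives $r^{-2}\|u-c_Q\|_{L^2(D_Q\times I_Q^*)}\lesssim r^{(n+2)/2-1}\Theta_Q\cdot r$, which after the $h^{1/2}\lesssim r^{1/2}$ weight is again $\lesssim\Xi_Q|Q|^{1/2}$.
\item[(c)] The $B$-term, $\int_0^{2r}hB\,dh$. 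Following \eqref{eq:p2NormalEnergy}, we write $B(h)=-\int_h^{2r}B'$. Then $B'$ involves $\partial_{nn}U$ paired with $H_t\partial_t\partial_nU$ (again by self-adjointness), plus $\partial_n\Psi^m$ terms supported on $\mathcal R$. With weight $h^2=h^{1/2}h^{3/2}$, the main part is bounded by \eqref{eq:G2}--\eqref{eq:G3}, where $\partial_t\partial_nU=\chi\,\partial_t\partial_nu+\chi'\partial_nu$ and the last piece is handled by \eqref{eq:G1}.
\end{enumerate}

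Every input in these estimates carries a factor $\chi$ or $\chi'$, so after applying the global $L^2$ identities (isometry of $H_t$, and $\|D_t^{1/2}g\|_2^2=\langle g,-H_t\partial_tg\rangle$) the integrals are restricted to $T^*(2Q)$, and \eqref{eq:G0}--\eqref{eq:G1} apply.

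The main obstacle is the error terms on $\mathcal R$, namely $r^{-2}\iint_{\mathcal R}V^2$ and $\iint_{\mathcal R}|\partial_nV|^2$. Here $V=D_t^{1/2}[(u-c_Q)\chi]$ is nonlocal in time, and we need its $L^2$ norm over all times at height comparable to $r$. We plan to use
\[
\|D_t^{1/2}g\|_2^2\le\|g\|_2\,\|\partial_tg\|_2
\]
with $g=(u-c_Q)\chi$ at each fixed $X\in2Q_x\times[r,2r)$. Integrating in $X$ and applying Cauchy--Schwarz, the factor $\|u-c_Q\|_{L^2(D_Q\times I_Q^*)}$ is at most $r^{(n+4)/2}\Theta_Q$ by \eqref{eq:osc}. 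For $\|\partial_t[(u-c_Q)\chi]\|$ we use Lemma \ref{lem:driftInputs}, which gives $|\partial_tu|\lesssim r^{-1}\tilde N(\nabla u)$ at height $r$, and this contributes $r^{-2}\cdot r^{(n+4)/2}\Theta_Q$. Multiplying these and including the factor $r^{-2}$ yields $\lesssim\Xi_Q^2|Q|$. The bound for $\partial_nV=D_t^{1/2}(\chi\,\partial_nu)$ follows in the same way, using the interior bounds of Lemma \ref{lem:driftInputs} for $\partial_t\nabla u$ at height $r$. Collecting terms gives $\int V^2\Psi^m\lesssim\Xi_Q^2|Q|$. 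Since $\Psi=1$ on $Q_x$, the averaged statement follows.
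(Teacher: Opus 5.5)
Your proposal is correct and is essentially the paper's proof. It uses the same height identity for $F$, with all cutoff errors supported in $\mathcal R$. It rewrites $V\,\partial_{nn}V$ as $\chi\,\partial_{nn}u$ paired with $-H_t\partial_tU$, and the normal-energy term as $\chi\,\partial_{nn}u$ paired with $H_t\partial_t(\chi\,\partial_nu)$ under the splitting $h^2=h^{1/2}h^{3/2}$. It then reduces the errors on $\mathcal R$ to the bounds of Lemmas~\ref{lem:A} and~\ref{lem:sqv}. The differences from the paper are technical, and both of your variants work:
\begin{itemize}
\item \emph{Endpoint $h\downarrow0$.} The paper truncates at $x_n=\epsilon$, chooses $\epsilon_j$ with $\epsilon_jF'(\epsilon_j)\le o(1)$, and identifies a weak $L^2$ limit. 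You instead regularize with $P_j$, as in the case $p=2$.
\item \emph{Bound on $\iint_{\mathcal R}V^2$.} The paper tests the equation with $H_tU$ times a squared spatial cutoff. You use $\|D_t^{1/2}g\|_2^2\le\|g\|_2\|\partial_tg\|_2$ together with the interior bound on $\partial_tu$.
\end{itemize}
Your powers of $r$ need one correction: in (a) you dropped the weight $h\sim r$. The cross terms are in fact bounded by $r^{-1}\iint_{\mathcal R}V^2+r\iint_{\mathcal R}|\partial_nV|^2$, and this is exactly what your estimates on $\mathcal R$ bound by $\Xi_Q^2|Q|$.
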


Before we prove this claim we state two further auxiliary estimates.

\begin{lemma}[$L^2$ size of $V$ at height $\sim r$]\label{lem:A}
$\displaystyle\iint_{\mathcal R}V^2\,dX\,dt\ \lesssim\ r\,\Xi_Q^2|Q|$.
\end{lemma}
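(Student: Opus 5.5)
The estimate follows from the $L^2$ boundedness of $D^{1/2}_t$ as a map from $\dot H^{1/2}$ in time: we bound $\|V\|_{L^2}$ at each fixed $X$ by a time-local quantity, then integrate over the spatial projection of $\mathcal R$. Fix $X\in 2Q_x\times[r,2r)$. By Plancherel in $t$,
\[
\int_\R V(X,t)^2\,dt=\int_\R |D^{1/2}_tU(X,t)|^2\,dt=\int_\R U\,(-H_t\partial_tU)\,dt\le \|U(X,\cdot)\|_{L^2(\R)}\|\partial_tU(X,\cdot)\|_{L^2(\R)}.
\]
By \eqref{eq:chi}, $U$ is supported in $I^*_Q$ and $\partial_t U=\chi\partial_tu+(u-c_Q)\chi'$. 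Integrating in $X$ and applying Cauchy--Schwarz gives
\[
\iint_{\mathcal R}V^2\le \Big(\iint_{\mathcal R'}|u-c_Q|^2\Big)^{1/2}\Big(\iint_{\mathcal R'}|\partial_tu|^2+r^{-4}\iint_{\mathcal R'}|u-c_Q|^2\Big)^{1/2},
\]
where $\mathcal R':=(2Q_x\times[r,2r))\times I^*_Q$.

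The first factor is controlled by \eqref{eq:osc}, since $\mathcal R'\subset D_Q\times I^*_Q$. This gives $\iint_{\mathcal R'}|u-c_Q|^2\lesssim r^{n+4}\Theta_Q^2\le r^{n+4}\Xi_Q^2$, which is $\sim r^3\Xi_Q^2|Q|$ because $|Q|\sim r^{n+1}$. The $\chi'$ part of the second factor is then $\lesssim r^{-1}\Xi_Q^2|Q|$.

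For $\partial_t u$ we work at height $\sim r$, where Lemma~\ref{lem:driftInputs} (the drift version of Lemma~\ref{lem:int}) gives $|\partial_tu|\lesssim r^{-1}\tilde N(\nabla u)$ on Whitney boxes whose shadows lie in $CQ$. This yields
\[
\iint_{\mathcal R'}|\partial_tu|^2\lesssim r^{-2}\cdot r\int_{CQ}\tilde N(\nabla u)^2\lesssim r^{-1}\Xi_Q^2|Q|.
\]
Alternatively, one may use \eqref{eq:G3} with \eqref{eq:driftPDEbound} after inserting the weight $x_n^{3}\sim r^3$; either route gives the same bound. Multiplying the two factors gives
\[
\big(r^3\Xi_Q^2|Q|\big)^{1/2}\big(r^{-1}\Xi_Q^2|Q|\big)^{1/2}=r\,\Xi_Q^2|Q|,
\]
which is the claim.

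The main point needing care is justifying the Plancherel identity $\|D^{1/2}_tU\|_2^2=\langle U,-H_t\partial_tU\rangle$ for the interior function $U(X,\cdot)$. This holds because $U$ is compactly supported in time and lies in $W^{1,2}$ in $t$ at a.e.\ interior $X$ (by the interior regularity of Lemma~\ref{lem:driftInputs}), so $U\in \dot H^{1/2}$ and the identity follows from \eqref{eq:algebra}. No tails arise here, since $\chi$ localizes $U$ itself; tails appear only after applying $D^{1/2}_t$, and these are handled by the global $L^2$ identity above rather than by any support argument.
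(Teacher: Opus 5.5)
Your argument is correct, but it takes a different route from the paper. The paper introduces a time-independent spatial cutoff $\Phi$, compactly supported in $\R^n_+$ and equal to one on $\supp\partial_n\Psi$. It tests the equation with $H_tU\Phi^2$, which gives $\iint|D^{1/2}_tU|^2\Phi^2=\iint(\partial_tU)(H_tU)\Phi^2$. It then replaces $\partial_tU$ by $\chi[\divg(A\nabla u)+\mathbf b\cdot\nabla u]+(u-c_Q)\chi'$ and integrates the divergence term by parts in $X$. After removing $H_t$ by its $L^2(dt)$ isometry, it bounds the four resulting terms using only \eqref{eq:G0} for $\chi\nabla u$, \eqref{eq:osc} for $U$, $|\chi'|\lesssim r^{-2}$ and $|\mathbf b|\lesssim K_{\mathbf b}/r$.

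You instead apply, at each fixed $X$, the Fourier inequality $\|D^{1/2}_tf\|_{L^2_t}^2\le\|f\|_{L^2_t}\|\partial_tf\|_{L^2_t}$, which is the device the paper uses in Lemma~\ref{lem:sqv}. You then control $\partial_tu$ at height $\sim r$ pointwise by $r^{-1}\tilde N(\nabla u)$ via Lemma~\ref{lem:driftInputs}. The scaling $(r^3\Xi_Q^2|Q|)^{1/2}(r^{-1}\Xi_Q^2|Q|)^{1/2}=r\,\Xi_Q^2|Q|$ checks out, and the shadows of points in $2Q_x\times[r,2r)\times I^*_Q$ do lie in a fixed dilate of $Q$. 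Your justification via the interior Lipschitz representative is also adequate.

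Your version is shorter and needs no auxiliary cutoff or spatial integration by parts. The drift simply disappears into the interior estimate. The price is that you use the higher interior regularity of Lemma~\ref{lem:driftInputs}, namely the coupled Moser bound for $(\nabla u,\partial_tu)$, which needs the derivative bounds on $A$ and $\mathbf b$. The paper's proof of this step uses only the weak formulation, boundedness of $A$ and the size bound on $\mathbf b$. Since Lemma~\ref{lem:driftInputs} is already in force in this appendix, and the same bound $|\partial_t u|\lesssim r^{-1}\tilde N(\nabla u)$ is used in Lemma~\ref{lem:endpoint} and in the far-field estimate, nothing is lost.

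One small slip concerns your alternative route. There \eqref{eq:G3} controls $\partial_t\nabla u$, not $\partial_tu$. To bound $\iint|\partial_tu|^2$ through \eqref{eq:driftPDEbound} with the weight $x_n\sim r$, you need \eqref{eq:G2} and \eqref{eq:G4} instead.
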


\begin{proof}
Points $Y=(y,h)$ of $\mathcal R$ have $h\sim r$, so $\mathcal R$ is at a
positive distance from $\pom$ and a spatial cutoff around it may be taken
to be compactly supported in $\R^n_+$. Let $\Phi=\Phi(X)$ be such a cutoff,
independent of $t$, with $\Phi\equiv1$ on $\supp\partial_n\Psi$,
$\supp\Phi\subset 3Q_x\times(r/2,3r)\Subset\R^n_+$ and
$|\nabla\Phi|\lesssim r^{-1}$. Localizing the energy function $u$ in
space and time gives $U\Phi\in L^2$ with $D_t^{1/2}(U\Phi)\in L^2$;
the latter also follows by time mollification and passage to the limit.
Thus $H_tU\Phi^2$ is an admissible energy test function. Since $\Phi$
is independent of $t$, \eqref{eq:algebra} gives
\[
\iint_\om|D^{1/2}_tU|^2\Phi^2
=\iint_\om\big(\partial_tU\big)\big(H_tU\big)\Phi^2 ,
\]
by the properties of $D^{1/2}_t$ and $H_t$. This is analogous to the way \cite{Din23} estimates the local part of $D^{1/2}_tu$ and by
which \cite{DLP1} localises $D^{1/2}_t$ in space. Using
$\partial_tU=\chi[\divg(A\nabla u)+\mathbf b\cdot\nabla u]+(u-c_Q)\chi'$
and integrating
the first term by parts in $X$ this gives
\begin{multline*}
\iint_\om|D^{1/2}_tU|^2\Phi^2
=-\iint\chi A\nabla u\cdot H_t(\chi\nabla u)\Phi^2\\
-2\iint\chi A\nabla u\cdot\nabla\Phi\,(H_tU)\Phi
+\iint(u-c_Q)\chi'(H_tU)\Phi^2
+\iint\chi(\mathbf b\cdot\nabla u)(H_tU)\Phi^2 .
\end{multline*}
As $H_t$ is an isometry of $L^2(dt)$, after Cauchy-Schwarz it disappears.
Since $\supp\Phi\subset D_Q$ and $\chi$ is supported in $I^*_Q$, \eqref{eq:G0}
with $a=3r$ gives $\iint\chi^2|\nabla u|^2\Phi^2\lesssim r\Xi_Q^2|Q|$, and by
\eqref{eq:osc}, $\iint_{\supp\Phi}|U|^2\le\iint_{D_Q\times I^*_Q}|u-c_Q|^2
\lesssim r^{n+4}\Xi_Q^2=r^2\cdot r\Xi_Q^2|Q|$. The first three terms are
therefore bounded respectively by
\[
r\Xi_Q^2|Q|,\qquad
r^{-1}\big(r\Xi_Q^2|Q|\big)^{1/2}\big(r^{n+4}\Xi_Q^2\big)^{1/2}
= r\Xi_Q^2|Q|,\qquad
r^{-2}\,r^{n+4}\Xi_Q^2= r\Xi_Q^2|Q|,
\]
using $|\chi'|\lesssim r^{-2}$ in the third term. On the support
of $\Phi$, $|\mathbf b|\lesssim K_{\mathbf b}/r$, so the fourth term
is bounded by
\[
 \frac{C K_{\mathbf b}}r
 \left(\iint\chi^2|\nabla u|^2\Phi^2\right)^{1/2}
 \left(\iint|U|^2\Phi^2\right)^{1/2}
 \lesssim K_{\mathbf b}r\Xi_Q^2|Q|.
\]
Here the Hilbert transform is removed by its $L^2(dt)$ isometry
before restricting time to the support of $\chi$.
The left-hand side dominates
$\iint_{\mathcal R}V^2$, thus proving Lemma \ref{lem:A}.
\end{proof}

\begin{lemma}[weighted square function of $V$]\label{lem:sqv}
$\displaystyle\iint_{T(2Q)}|\nabla V|^2x_n^2\,dX\,dt\lesssim r\Xi_Q^2|Q|$.
\end{lemma}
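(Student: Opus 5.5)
Since $\nabla V=D^{1/2}_t(\chi\nabla u)$ by \eqref{eq:vderivs} and $x_n$ is independent of time, Plancherel in $t$ at each fixed $X$ gives
\[
\iint_{T(2Q)}|\nabla V|^2x_n^2
=\iint_{2Q_x\times(0,2r)\times\R}\big(\chi\nabla u\big)\cdot\big(-H_t\partial_t(\chi\nabla u)\big)x_n^2\,dX\,dt ,
\]
using $D^{1/2}_tD^{1/2}_t=-H_t\partial_t$ and self-adjointness of $D^{1/2}_t$.

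Write $\partial_t(\chi\nabla u)=\chi\,\partial_t\nabla u+\chi'\nabla u$. The Hilbert transform is an $L^2(dt)$ isometry, so Cauchy--Schwarz removes it. This leaves integrals over the time support of $\chi$, contained in $I_Q^*\subset CI_Q$, and over the spatial region $2Q_x\times(0,2r)$. The $\chi'$ term is then at most
\[
r^{-2}\iint_{T^*(2Q)}|\nabla u|^2x_n^2\lesssim r^{-2}\,r^{3}\,\Xi_Q^2|Q|,
\]
by \eqref{eq:G1} with $j=3/2$, which follows from \eqref{eq:G0} by the same dyadic-layer argument. The main term is
\[
\Big(\iint_{T^*(2Q)}|\nabla u|^2x_n\Big)^{1/2}\Big(\iint_{T^*(2Q)}|\partial_t\nabla u|^2x_n^3\Big)^{1/2}.
\]
By \eqref{eq:G1} with $j=1$ and by \eqref{eq:G3}, this is at most $(r^2\Xi_Q^2|Q|)^{1/2}(\Xi_Q^2|Q|)^{1/2}=r\,\Xi_Q^2|Q|$. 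Both pieces therefore give the claimed bound $r\,\Xi_Q^2|Q|$.

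\textbf{Main obstacle: justification.} The formal identity needs $\chi\nabla u$ to have enough time regularity for the pairing of $\nabla V$ with itself to make sense near $x_n=0$. I would handle this in three steps.
\begin{itemize}
\item First work on truncated regions $\varepsilon<x_n<2r$, where interior regularity gives $\partial_t\nabla u\in L^2_{\loc}$ (Lemma~\ref{lem:driftInputs}).
\item Alternatively, apply the time multipliers $P_j$ from the $p=2$ argument, which commute with $\chi$-free operations. Estimate the truncated or regularized quantity with constants independent of $\varepsilon$ and $j$.
\item Pass to the limit by monotone convergence in $\varepsilon$, and by Fatou's lemma with weak convergence in $j$.
\end{itemize}
No smallness of the coefficients is needed, since only \eqref{eq:G1} and \eqref{eq:G3} enter.
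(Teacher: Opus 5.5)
Your proof is correct and is essentially the paper's: at each fixed $X$ you bound $\|D_t^{1/2}(\chi\nabla u)\|_{L^2_t}^2$ by $\|\chi\nabla u\|_{L^2_t}\|\partial_t(\chi\nabla u)\|_{L^2_t}$, which the paper obtains via Plancherel as $\int|\tau||\hat f|^2\le\|f\|_2\|\partial_tf\|_2$, and then apply Cauchy--Schwarz in $X$ with $x_n^2=x_n^{1/2}x_n^{3/2}$ together with \eqref{eq:G1} and \eqref{eq:G3}. The differences are minor. You treat the $\chi'$ term separately with weight $x_n^2$ on both factors, using \eqref{eq:G1} at $j=3/2$ (which the same dyadic-layer argument indeed gives), whereas the paper keeps it in the $x_n^3$-weighted factor and uses \eqref{eq:G1} at $j=2$. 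Also, the Fourier-side inequality holds for a.e.\ fixed $X$ with no further hypotheses, so your height truncation and $P_j$ regularization are not actually needed.
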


\begin{proof}
Since $u$ has finite energy, $\nabla u(X,\cdot)\in L^2(dt)$ for a.e.\ $X$.
By \eqref{eq:vderivs}, $\nabla V=D^{1/2}_t(\chi\nabla u)$, so Plancherel
in $t$, first for time mollifications if necessary, gives
$\|D^{1/2}_tf\|^2_{L^2_t}=\int|\tau||\hat f|^2
\le\|f\|_{L^2_t}\|\partial_tf\|_{L^2_t}$. We apply Cauchy-Schwarz in $X$ with
$x_n^2=x_n^{1/2}\cdot x_n^{3/2}$ which gives
\[
\iint|\nabla V|^2x_n^2\le
\Big(\iint|\chi\nabla u|^2x_n\Big)^{1/2}
\Big(\iint\big(|\chi\partial_t\nabla u|+|\chi'\nabla u|\big)^2x_n^3\Big)^{1/2}.
\]
After applying \eqref{eq:G1} with $j=1$ to the first factor and \eqref{eq:G3}
together with \eqref{eq:G1} for $j=2$ (using $|\chi'|\lesssim r^{-2}$) to
the second we obtain the bound.
\end{proof}

\begin{proof}[Proof of Proposition \ref{prop:L}]
The identity \eqref{eq:ident} below and the two integrations by parts that
produce it are analogous to the proof of \eqref{A2} given earlier. We have however replaced
$D^{1/2}_tu$ by $V=D^{1/2}_tU=D^{1/2}_t((u-c_Q)\chi)$. Whatever weights we used previously in the form of $x_n^k$
are replaced by  $\Psi^mx_n^k$ and the global norms
$\|S(\nabla u)\|_{L^2}$, $\|A(\nabla u)\|_{L^2}$ are replaced by the local bounds
\eqref{eq:G2}--\eqref{eq:G1}. We must also justify the truncation at $x_n=0$ and estimate the error terms produced by the two cutoffs.
We give these terms in full.

\smallskip\emph{Step 0: truncation and the boundary value.} Both integrations by parts below are in
$x_n$ and cross $\{x_n=0\}$, where $\Psi\equiv1$. Each integrand carries a
positive power of $x_n$ there, but the factor $\partial_nV$ it multiplies is
not known to be bounded as $x_n\to0$, so the boundary terms must be produced and discarded
rather than ignored. For this reason we work on $\om_\epsilon:=\{x_n>\epsilon\}$, $0<\epsilon<r$,
and set $F(\epsilon):=\int_{\{x_n=\epsilon\}}V^2\Psi^m\,dx'dt\ge0$. We may assume
$\Xi_Q<\infty$, so that $\tilde N(\nabla u)\in L^2(CQ)$ for every fixed dilate of $Q$.
On each closed height interval inside $(0,r)$, local energy regularity
of $U$ and Lemma \ref{lem:sqv} give $V\Psi^{m/2}$ and its height
derivative in $L^2$.
Its squared $L^2$ norm $F$ is therefore locally absolutely continuous.
We claim the following:
\begin{enumerate}[(a)]
\item\label{it:liminf} There is a sequence $\epsilon_j\downarrow0$ with
$\epsilon_jF'(\epsilon_j)\le o(1)$: $F$ is locally absolutely continuous, and if
$\epsilon F'(\epsilon)\ge\delta_0>0$ on $(0,\epsilon_0)$ then
$F(\epsilon)\le F(\epsilon_0)-\delta_0\log(\epsilon_0/\epsilon)\to-\infty$, which would
contradict $F\ge0$.
\item\label{it:trace} If $\liminf_jF(\epsilon_j)<\infty$ then
$\int_{\pom}w_{\mathrm{loc}}^2\Psi^m\le\liminf_jF(\epsilon_j)$.
\end{enumerate}

We prove \ref{it:trace} in the weak topology of $L^2(\pom)$, rather than by
comparing $V(\cdot,\epsilon,\cdot)$ with its boundary value at almost every
point: the energy class gives the trace of $u$, and not a vertical limit of its
half time derivative. Since $\psi_2\equiv1$ on $[0,r)$, the cutoff $\Psi$ does not
depend on $x_n$ on $\{x_n<r\}$, and we write $\Psi_0(x)$ for it there, so that
$\Psi=\Psi_0$ on $\pom$. Pass first to a subsequence along which $F(\epsilon_j)$
tends to its lower limit, and then to a further one along which the functions
$V(\cdot,\epsilon_j,\cdot)\Psi_0^{m/2}$, whose $L^2(\pom)$ norms are
$F(\epsilon_j)^{1/2}$, converge weakly in $L^2(\pom)$. Their limit $G$ satisfies
$\|G\|^2_{L^2(\pom)}\le\liminf_jF(\epsilon_j)$, and it remains to identify $G$
with $w_{\mathrm{loc}}\Psi_0^{m/2}$.

Lemma \ref{lem:driftInputs} makes $h\mapsto u(x,h,t)$ Lipschitz on $(0,r)$ with constant
$\lesssim\tilde N(\nabla u)(x,t)$, which is finite at a.e.\ point of $CQ$. Hence
the trace $f_u:=\lim_{h\downarrow0}u(\cdot,h,\cdot)$ exists a.e.\ there and
satisfies $|u(x,h,t)-f_u(x,t)|\lesssim h\tilde N(\nabla u)(x,t)$, which is
\eqref{eq:verticalTraceP}. Let $\zeta\in C_0^\infty(\pom)$. The factor
$\Psi_0^{m/2}$ is independent of $t$, so $D^{1/2}_t$ falls on $\zeta$ alone, and
its self-adjointness at the fixed height $\epsilon$ gives
\[
\int_{\pom}V(x,\epsilon,t)\Psi_0(x)^{m/2}\zeta(x,t)\,dx\,dt
=\int_{\pom}U(x,\epsilon,t)\Psi_0(x)^{m/2}\big(D^{1/2}_t\zeta\big)(x,t)\,dx\,dt .
\]
The right-hand integrand is supported in $\supp\Psi_0\times I^*_Q$, a compact
subset of $CQ$ on which $D^{1/2}_t\zeta$ is bounded. There
$U(\cdot,\epsilon,\cdot)\to(f_u-c_Q)\chi$ in $L^1$ as $\epsilon\downarrow0$, by
the vertical estimate above and $\tilde N(\nabla u)\in L^2(CQ)$. Therefore
$G=D^{1/2}_t\big[(f_u-c_Q)\chi\big]\Psi_0^{m/2}$ and
$\|G\|^2_{L^2(\pom)}=\int_{\pom}w_{\mathrm{loc}}^2\Psi^m$, which proves
\ref{it:trace}. This is also the sense in which the local part of
\eqref{eq:split} is restricted to $\pom$: its boundary value is the half time
derivative of the trace of $u$.

By \eqref{eq:dnPsi}, $\partial_n\Psi\equiv0$ on $\{x_n<r\}$, so for
$\epsilon<r$
\begin{equation}\label{eq:Fprime}
F'(\epsilon)=2\int_{\{x_n=\epsilon\}}V\,(\partial_nV)\,\Psi^m .
\end{equation}
All estimates below are uniform in $\epsilon$, each being an integral of an
absolute value over a subregion of $\om$.

\smallskip\emph{Steps 1--2: the identity.} Since $\Psi$ has compact support in
the $X$ variable,
$$F(\epsilon)=-\iint_{\om_\epsilon}\partial_n(V^2\Psi^m)\,dX\,dt.$$
Inserting $\partial_n(x_n)$ into this solid integral as in the proof of
\eqref{A2}, and using \eqref{eq:Fprime}, we obtain
\begin{equation}\label{eq:ident}
F(\epsilon)=\epsilon F'(\epsilon)
+\underbrace{2\iint_{\om_\epsilon}|\partial_nV|^2\Psi^mx_n}_{=:A_1}
+\underbrace{2\iint_{\om_\epsilon}V\,D^{1/2}_t(\chi\partial^2_{nn}u)\Psi^mx_n}_{=:A_2}
+E_0+E_1,
\end{equation}
where $E_0:=-m\iint_{\om_\epsilon}V^2\Psi^{m-1}\partial_n\Psi$ and
$E_1:=2m\iint_{\om_\epsilon}V(\partial_nV)\Psi^{m-1}(\partial_n\Psi)x_n$ are
supported in $\mathcal R$. The boundary term at $x_n=\epsilon$ is exactly
$\epsilon F'(\epsilon)$; it has no sign, and is disposed of along the sequence
of Step 0\ref{it:liminf}. By Lemma \ref{lem:A},
$|E_0|\lesssim r^{-1}\iint_{\mathcal R}V^2\lesssim\Xi_Q^2|Q|$, and
by Cauchy-Schwarz with Lemmas \ref{lem:A} and \ref{lem:sqv},
$|E_1|\lesssim r^{-1}(r\Xi_Q^2|Q|)^{1/2}(r\Xi_Q^2|Q|)^{1/2}
=\Xi_Q^2|Q|$.

\smallskip\emph{Step 3: the term $A_2$.} $\Psi^mx_n$ is independent of $t$ and
$\om_\epsilon$ is a product region in $t$, so $D^{1/2}_t$ may be moved onto $V$
by self-adjointness. Then \eqref{eq:vderivs} replaces $D^{1/2}_tV$ and
$A_2=A_2'+A_{2,\mathbf b}+E_2$ with
\[
\begin{split}
 A_2'&=-2\iint H_t\big[\chi\divg(A\nabla u)\big]
                         (\chi\partial^2_{nn}u)\Psi^mx_n,\\
 A_{2,\mathbf b}&=-2\iint H_t[\chi\mathbf b\cdot\nabla u]
                         (\chi\partial^2_{nn}u)\Psi^mx_n,\\
 E_2&=-2\iint H_t\big[(u-c_Q)\chi'\big]
                         (\chi\partial^2_{nn}u)\Psi^mx_n.
\end{split}
\]
$A_2'$ is the term estimated in \eqref{A2} since $H_t$ is an isometry of $L^2(dt)$ and
the weight is $t$-independent, so it may be discarded inside each
Cauchy-Schwarz factor, and
$|\divg(A\nabla u)|\lesssim|\nabla^2u|+|\nabla A||\nabla u|$ with
\eqref{eq:G2}, \eqref{eq:G4} gives
$|A_2'|\lesssim(1+\kappa)\Xi_Q^2|Q|$.
For the drift term, the weight $\Psi^mx_n$ is independent of time.
Apply Cauchy--Schwarz over all time and then the isometry of $H_t$:
\[
 |A_{2,\mathbf b}|
 \le2\left(\iint\chi^2|\mathbf b|^2|\nabla u|^2\Psi^mx_n\right)^{1/2}
       \left(\iint\chi^2|\partial_{nn}u|^2\Psi^mx_n\right)^{1/2}
 \lesssim\kappa\Xi_Q^2|Q|,
\]
by \eqref{eq:G4} and \eqref{eq:G2}. In particular no derivative
is moved onto $\mathbf b$, and no smallness is needed.
For $E_2$, note
that $\supp\chi'$ is exactly where $\chi$ may vanish, so the correct input is
the $\chi$-free Lemma \ref{lem:osc}:
$\iint|u-c_Q|^2|\chi'|^2x_n\le4r\|\chi'\|^2_\infty
\iint_{D_Q\times I^*_Q}|u-c_Q|^2\lesssim r^{-4}\cdot r^4\Xi_Q^2|Q|$,
whence $|E_2|\lesssim\Xi_Q^2|Q|$. 

\smallskip\emph{Step 4: the term $A_1$.} As in \eqref{A2} we insert $x_n^2$;
with $f:=|\partial_nV|^2\Psi^m\ge0$ and $\partial_n(x_n^2)=2x_n$,
\[
A_1=\iint_{\om_\epsilon}f\,\partial_n(x_n^2)
=-\epsilon^2\!\!\int_{\{x_n=\epsilon\}}\!\!f-\iint_{\om_\epsilon}(\partial_nf)x_n^2
\ \le\ -\iint_{\om_\epsilon}(\partial_nf)x_n^2 .
\]
Here the boundary term has a sign and is dropped; monotone convergence as
$\epsilon\to0$ then makes the finiteness of $A_1$ a conclusion. Since $\partial_nV=D^{1/2}_t(\chi\partial_nu)$ and
$\partial^2_{nn}V=D^{1/2}_t(\chi\partial^2_{nn}u)$, the two terms of $\partial_nf$
in which $\partial_n$ falls on a $D^{1/2}_t$-factor are equal, so
$A_1\le2A_{11}+E_3$ with
$A_{11}=-\iint D^{1/2}_t(\chi\partial^2_{nn}u)D^{1/2}_t(\chi\partial_nu)\Psi^mx_n^2$
and $E_3=-m\iint|\partial_nV|^2\Psi^{m-1}(\partial_n\Psi)x_n^2$.
Lemma \ref{lem:sqv} gives $|E_3|\lesssim r^{-1}\iint_{\mathcal R}|\nabla V|^2x_n^2
\lesssim\Xi_Q^2|Q|$. Moving one $D^{1/2}_t$ across and using
$D^{1/2}_tD^{1/2}_t=-H_t\partial_t$,
\[
A_{11}=\iint(\chi\partial^2_{nn}u)H_t\big[\chi\partial_t\partial_nu\big]\Psi^mx_n^2
+\iint(\chi\partial^2_{nn}u)H_t\big[\chi'\partial_nu\big]\Psi^mx_n^2.
\]
Finally a Cauchy-Schwarz with $x_n^2=x_n^{1/2}x_n^{3/2}$ and using
\eqref{eq:G2}, \eqref{eq:G3}, \eqref{eq:G1} for $j=2$, gives
$|A_{11}|\lesssim\Xi_Q^2|Q|$.

\smallskip\emph{Step 5: combining the estimates.} Combining estimates given above (with constants independent of
$\epsilon$), we have
$F(\epsilon)\le\epsilon F'(\epsilon)+C(1+\kappa)\Xi_Q^2|Q|$
for $0<\epsilon<r$; evaluating along $\epsilon_j$ and applying Step
0\ref{it:trace} finishes the proof. The second assertion follows since
$\Psi^m\equiv1$ on $Q$.
\end{proof}

\noindent{\bf The far-field estimate.}
The far part will be differenced against its values at the interior points
$Z=(z,r)$, $z\in B_r(x_Q)$, taken at the centre time $t_Q$; we call these the
anchors. Figure \ref{fig:far} shows the configuration. For $s$ in the set
$\{s:\,\chi(s)=1\}$ --- which by \eqref{eq:chi} contains the time interval of $Q$, and in
particular $t_Q$ --- the argument of $D^{1/2}_t$ in
\eqref{eq:split} for $w_{\mathrm{far}}$ vanishes at $\sigma=s$, so the principal value disappears and
\[
w_{\mathrm{far}}(Y,s)=-c_0\int_\R\big(u(Y,\sigma)-c_Q\big)\big(1-\chi(\sigma)\big)
|s-\sigma|^{-3/2}\,d\sigma .
\]
We keep in the integral above the smooth factor $1-\chi$, since replacing it by the indicator of
$\{|\sigma-t_Q|>(2r)^2\}$ would make the estimate of $\mathrm{II}$ below
depend on a point value of $\tilde N(\nabla u)$ at the cutoff point.

\begin{figure}[htpb]
\centering
\resizebox{0.98\textwidth}{!}{%
\begin{tikzpicture}[x=0.86cm,y=1.15cm,>=Stealth,font=\footnotesize]

\fill[black!7] (-8.4,0) rectangle (-2,2.35);
\fill[black!7] (2,0) rectangle (8.4,2.35);
\node at (-5.2,2.05) {far times: $1-\chi\neq0$};
\node at (5.2,2.05) {far times: $1-\chi\neq0$};

\fill[black!17] (-4,0.5) rectangle (4,1.5);
\draw[black!45] (-4,0.5) rectangle (4,1.5);
\node[black!60] at (-3.35,0.73) {$W_Q$};

\draw[very thick] (-8.6,0) -- (8.7,0) node[right] {$t$};
\draw[thick,->] (-8.6,0) -- (-8.6,2.6) node[above] {$x_n$};
\node[left] at (-8.7,0) {$\pom$};

\draw[dashed] (-8.6,1) -- (8.5,1);
\draw (-8.72,1) -- (-8.48,1);
\node[left] at (-8.7,1) {$r$};

\draw[line width=2.2pt] (-1,0) -- (1,0);
\draw[decorate,decoration={brace,amplitude=4pt,mirror}] (-1,-0.52) -- (1,-0.52)
   node[midway,below=3pt] {$I_Q$};

\foreach \x in {-4,-2,0,2,4} \draw (\x,0.08) -- (\x,-0.08);
\node[below] at (0,-0.1) {$t_Q$};
\node[below] at (2,-0.1) {$(2r)^2$};
\node[below] at (4,-0.1) {$(4r)^2$};
\node[below] at (-2,-0.1) {$-(2r)^2$};
\node[below] at (-4,-0.1) {$-(4r)^2$};

\draw[dotted] (0,0) -- (0,0.94);
\fill (0,1) circle (2.3pt);
\node[above=2pt] at (0,1.02) {$(Z,t_Q)$};
\fill (0.55,0) circle (2.3pt);
\node[above right=0pt] at (0.5,0.06) {$(Y,s)$};

\draw[line width=1.7pt] (-2,-1.25) -- (2,-1.25);
\node at (0,-1.53) {$\chi\equiv1$};
\draw[line width=1.7pt,black!45,dash pattern=on 3pt off 2pt] (2,-1.25) -- (4,-1.25);
\draw[line width=1.7pt,black!45,dash pattern=on 3pt off 2pt] (-4,-1.25) -- (-2,-1.25);
\node[black!60] at (3,-1.53) {$\mathrm{supp}\,\chi'$};
\node[black!60] at (-3,-1.53) {$\mathrm{supp}\,\chi'$};
\draw[dotted,line width=1.2pt] (4,-1.25) -- (8.4,-1.25);
\draw[dotted,line width=1.2pt] (-8.4,-1.25) -- (-4,-1.25);
\node at (6.2,-1.53) {$\chi\equiv0$};
\node at (-6.2,-1.53) {$\chi\equiv0$};

\fill (3.2,0) circle (2.0pt);
\node[above left=-2pt] at (3.18,0.04) {$\sigma$};
\draw[<->,thick] (3.2,0.07) -- (3.2,0.93);
\node[right=2pt] at (3.25,0.5) {I};
\fill (3.2,1) circle (2.0pt);

\draw[<-,thick] (-8.1,1) -- (-4.3,1);
\node[above=1pt] at (-6.2,1.02) {II};

\draw[decorate,decoration={brace,amplitude=4pt}] (2,2.42) -- (8.4,2.42)
   node[midway,above=3pt] {$|\sigma-t_Q|\sim2^{2k}r^2$, $k\ge1$};

\end{tikzpicture}%
}
\caption{The far-field estimate in the $(t,x_n)$ variables. The spatial variable
$x$ is suppressed.}
\label{fig:far}
\end{figure}

The integration by parts in $\sigma$ below generates endpoint terms at
$\sigma\to\pm\infty$. The following lemma gives sequences along which these terms vanish.

\begin{lemma}[the endpoint terms vanish]\label{lem:endpoint}
Let $1<p<\infty$, let $\|\tilde N(\nabla u)\|_{L^p(\pom)}<\infty$, let $Z=(z,r)$
and let $\mathcal K(\sigma)$ be any function with
$|\mathcal K(\sigma)|\lesssim r^2|\sigma-t_Q|^{-3/2}$. Then for a.e.\
$z\in B_r(x_Q)$ there are sequences $\sigma_j\to+\infty$ and
$\sigma_j'\to-\infty$ along which
\[
\big(u(Z,\sigma_j)-c_Q\big)\mathcal K(\sigma_j)\longrightarrow0,
\qquad
\big(u(Z,\sigma_j')-c_Q\big)\mathcal K(\sigma_j')\longrightarrow0 .
\]
\end{lemma}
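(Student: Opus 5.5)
The plan is to bound the growth of $u(Z,\sigma)-c_Q$ in $\sigma$ by a locally integrable majorant and to beat it with the decay of $\mathcal K$. Fix $z\in B_r(x_Q)$ and let $(z,t_Q)$ be the boundary point below $Z$. For $\sigma$ with $|\sigma-t_Q|\sim 2^{2k}r^2$, $k\ge1$, let $Q_k$ be the boundary cube centred at $(x_Q,t_Q)$ of side $r_k=2^kr$, and let $a_k=c_{Q_k}$ be the constants of \eqref{eq:cQ}, with $a_0=c_Q$.

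Repeating the telescoping argument from the proof of Lemma~\ref{lem:traceNormalization}, I would show
\[
|u(Z,\sigma)-c_Q|\le |u(Z,\sigma)-a_k|+\sum_{i<k}|a_{i+1}-a_i|\lesssim r_k\,\Theta(z,\sigma)+E_p\sum_{i\le k} r_i^{1-D/p},
\]
where $\Theta=\tilde N(\nabla u)$, $E_p=\|\Theta\|_{L^p}$ and $D=n+1$. The second term comes from \eqref{eq:timeosc} and \eqref{eq:osc}, exactly as in \eqref{eq:traceGrowthP}--\eqref{eq:traceGrowthSum}. For the first term, I would use Lemma~\ref{lem:vert}: transport $Z$ at height $r\le r_k$ to the support of $\varphi_{Q_k}$. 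The point $(Z,\sigma)$ lies in the cone at $(z,\sigma)$, and the transport path at time $\sigma$ stays in a cone of aperture $\gtrsim 2^k$. To keep a fixed aperture, I would go down vertically first: compare $u(Z,\sigma)$ with $u(z,r_k,\sigma)$ along the vertical segment above $(z,\sigma)$, and then move horizontally at height $r_k$. This costs $\lesssim r_k\Theta(z,\sigma)$ with a fixed aperture, since $|z-x_Q|\le r\le r_k$.

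The deterministic part is harmless. Multiplying by $|\mathcal K|\lesssim r^2 r_k^{-3}$ gives $r^2r_k^{-3}E_p\sum_{i\le k}r_i^{1-D/p}\lesssim r^{-1}2^{-2k}E_p\to0$, since each $r_i^{1-D/p}\le r_i\le r_k$.

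The main obstacle is the pointwise term $r_k\Theta(z,\sigma)\,r^2r_k^{-3}=r^2r_k^{-2}\Theta(z,\sigma)$. Here $\Theta(z,\cdot)$ is only known to lie in $L^p$ in time for a.e.\ $z$, by Fubini, so it need not decay pointwise. I would use the fact that a function $g\in L^p(\R)$ satisfies $\liminf_{\sigma\to\pm\infty}|g(\sigma)|=0$. Otherwise $|g|\ge c>0$ on a half-line, which contradicts integrability. Actually more is true: since $r^2r_k^{-2}\lesssim r^2|\sigma-t_Q|^{-1}$, it suffices to have $\liminf |\sigma|^{-1}\Theta(z,\sigma)=0$, which holds a fortiori. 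Fubini gives $\Theta(z,\cdot)\in L^p(\R)$ for a.e.\ $z\in B_r(x_Q)$. I would choose $\sigma_j\to+\infty$ with $\Theta(z,\sigma_j)\le 1$, and similarly $\sigma_j'\to-\infty$, avoiding the null set where the vertical-transport bound fails. Then both products tend to zero, which proves the lemma.
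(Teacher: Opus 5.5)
Your argument is correct for $2\le p<\infty$, but as written it does not cover $1<p<2$, which is part of the statement. The deterministic part of your bound is imported from \eqref{eq:traceGrowthP}--\eqref{eq:traceGrowthSum}, that is, from Lemma~\ref{lem:traceNormalization}, and that lemma assumes $2\le p<\infty$. Its key step is $\Theta_{Q_k}\lesssim r_k^{-D/p}E_p$, where $\Theta_{Q_k}=(\fint_{CQ_k}\tilde N(\nabla u)^2)^{1/2}$. You use this step in the second inequality of \eqref{eq:timeosc} and in comparing $a_{i+1}$ with $a_i$ through \eqref{eq:osc}. It bounds an $L^2$ average of $\tilde N(\nabla u)$ by its $L^p$ average, which holds only when $p\ge2$. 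For $p<2$ the hypothesis gives no local square integrability of $\tilde N(\nabla u)$ at all. So $\Theta_{Q_k}$ may be infinite, and your chain of inequalities gives nothing. (In the paper the lemma is only applied in Proposition~\ref{prop:far} for $p>2$, but the statement is for all $1<p<\infty$.) For $p\ge2$ there is one harmless slip: $r_i^{1-D/p}\le r_i$ is false when $r_i<1$. The conclusion survives, because $\sum_{i\le k}r_i^{1-D/p}\le(k+1)\max\{r^{1-D/p},r_k^{1-D/p}\}$ is still killed by $r^2r_k^{-3}$.

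The paper avoids the problem by working at the single scale $r$ and bounding the time derivative of the bump average pointwise rather than in $L^2$. The support of $\varphi_Q$ lies at height $\sim r$ within distance $\lesssim r$ of $z$. So \eqref{eq:Uprime}, \eqref{eq:bump} and the interior bounds of Lemma~\ref{lem:driftInputs} give $|\mathcal U_Q'(s)|\lesssim r^{-1}\tilde N(\nabla u)(z,s)$ at every time $s$. Integrating along the slice $\{z\}\times\R$ and applying H\"older gives $|\mathcal U_Q(\sigma)-c_Q|\lesssim r^{-1}|\sigma-t_Q|^{1-1/p}\|\tilde N(\nabla u)(z,\cdot)\|_{L^p(\R)}$. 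Against $|\mathcal K|\lesssim r^2|\sigma-t_Q|^{-3/2}$ this tends to zero for every $p>1$. The remaining piece $u(Z,\sigma)-\mathcal U_Q(\sigma)$ is $\lesssim r\,\tilde N(\nabla u)(z,\sigma)$ and is handled exactly as your pointwise term.

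No dyadic family $Q_k$ and no global norm $E_p$ are needed. You could also repair your own route for $p<2$: the regions $W_{Q_k}$ and the bumps sit at height $\sim r_k$, so $\sup_{W_{Q_k}}|\nabla u|$ is bounded by the infimum of $\tilde N(\nabla u)$ over a boundary cube of side $\sim r_k$, and hence by $r_k^{-D/p}E_p$ for any $p\ge1$. You would then compare $a_{i+1}$ and $a_i$ by interior transport at time $t_Q$ instead of via \eqref{eq:osc}. What your multiscale route buys is a better, $z$-uniform growth bound for the deterministic part (bounded when $p<D$). Since $\mathcal K$ decays like $|\sigma-t_Q|^{-3/2}$, the paper's slice-wise polynomial growth is already enough.
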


\begin{proof}
Write $\Theta(z,\sigma):=\tilde N(\nabla u)(z,\sigma)$. By hypothesis
$\Theta\in L^p(\pom)=L^p(\R^{n-1}\times\R)$, so by Fubini
$\Theta(z,\cdot)\in L^p(\R)$ for a.e.\ $z$. Fix one such $z$ and split
\[
u(Z,\sigma)-c_Q=\underbrace{\big[u(Z,\sigma)-\mathcal U_Q(\sigma)\big]}_{\mathrm{(i)}}
+\underbrace{\big[\mathcal U_Q(\sigma)-\mathcal U_Q(t_Q)\big]}_{\mathrm{(ii)}} .
\]

\emph{The contribution of (ii).} By \eqref{eq:Uprime},
\eqref{eq:bump} and Lemma \ref{lem:driftInputs} at scale $r$, available at every time
since $\om=\R^n_+\times\R$ is invariant under translation in $t$,
\begin{align*}
|\mathcal U_Q'(s)|&\le
\left(\Lambda\|\nabla\varphi_Q\|_\infty
       +C K_{\mathbf b}r^{-1}\|\varphi_Q\|_\infty\right)
\int_{\supp\varphi_Q}|\nabla u(\cdot,s)|\\
&\lesssim r^{-1}\sup_{\supp\varphi_Q}|\nabla u(\cdot,s)|
\lesssim r^{-1}\Theta(z,s).
\end{align*}
Hence by H\"older
$|\mathrm{(ii)}|\le\int_{t_Q}^{\sigma}|\mathcal U_Q'|
\lesssim r^{-1}|\sigma-t_Q|^{1-1/p}\|\Theta(z,\cdot)\|_{L^p(\R)}$, and
therefore
\begin{equation}\label{eq:endpt}
|\mathrm{(ii)}\cdot\mathcal K(\sigma)|
\ \lesssim\ r\,|\sigma-t_Q|^{-1/2-1/p}\|\Theta(z,\cdot)\|_{L^p(\R)}
\ \longrightarrow\ 0\qquad(\sigma\to\pm\infty),
\end{equation}
the exponent being negative for every $p\in(1,\infty)$. \medskip

\emph{(i) tends to zero along a sequence.} Every point of a path from $Z$ to a
point of $\supp\varphi_Q$ of length $\lesssim r$ may be taken at height
$\ge r/2$ and within distance $Cr$ of $z$, hence lies in $\Gamma(z,\sigma)$ once
the aperture is $\gtrsim C$; so the argument of Lemma \ref{lem:vert} gives
$|\mathrm{(i)}|\lesssim r\,\Theta(z,\sigma)$, and
$|\mathrm{(i)}\cdot\mathcal K(\sigma)|\lesssim r^3|\sigma-t_Q|^{-3/2}\Theta(z,\sigma)$.
Since $\Theta(z,\cdot)\in L^p(\R)$ we have
$\liminf_{\sigma\to+\infty}\Theta(z,\sigma)=0$, so there is
$\sigma_j\to+\infty$ with $\Theta(z,\sigma_j)\to0$; along that sequence both
$\mathrm{(i)}\cdot\mathcal K$ and, by \eqref{eq:endpt},
$\mathrm{(ii)}\cdot\mathcal K$ tend to $0$. The same argument at $-\infty$ gives
$\sigma_j'$.
\end{proof}

\begin{proposition}[far-field estimate]\label{prop:far}
Let $c^*:=\fint_{B_r(x_Q)}w_{\mathrm{far}}\big((z,r),t_Q\big)\,dz$. Then
\begin{equation}\label{eq:far}
\fint_Q|w_{\mathrm{far}}-c^*|\,d\sigma\ \lesssim\
M^{\sharp}\big[\tilde N(\nabla u)\big](x_Q,t_Q).
\end{equation}
\end{proposition}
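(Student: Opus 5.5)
We will compare $w_{\mathrm{far}}(Y,s)$ for $(y,s)\in Q$, $Y=(y,0)$ (the trace), with the anchor values $w_{\mathrm{far}}((z,r),t_Q)$, $z\in B_r(x_Q)$. Averaging the pointwise difference over $z$ and over $Q$ then bounds $\fint_Q|w_{\mathrm{far}}-c^*|$. For $s\in I_Q$ we use the principal-value-free formula displayed before Lemma \ref{lem:endpoint} and split the difference into two parts. The first is
\[
\mathrm{I}=-c_0\int_\R\big(u(Y,\sigma)-u(Z,\sigma)\big)(1-\chi(\sigma))|s-\sigma|^{-3/2}\,d\sigma,
\]
the vertical and horizontal transport at fixed $\sigma$. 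The second is
\[
\mathrm{II}=-c_0\int_\R\big(u(Z,\sigma)-c_Q\big)(1-\chi(\sigma))\big(|s-\sigma|^{-3/2}-|t_Q-\sigma|^{-3/2}\big)\,d\sigma,
\]
the kernel change at the anchor height.

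For $\mathrm{I}$, Lemma \ref{lem:vert} gives $|u(Y,\sigma)-u(Z,\sigma)|\lesssim r\,\tilde N(\nabla u)(y,\sigma)$. On the support of $1-\chi$ we have $|\sigma-t_Q|\ge(2r)^2$, so $|s-\sigma|\simeq|\sigma-t_Q|$. Summing over dyadic shells $|\sigma-t_Q|\sim 2^{2k}r^2$ gives
\[
|\mathrm{I}|\lesssim\sum_{k\ge1}r\,(2^{2k}r^2)^{-3/2}\int_{|\sigma-t_Q|\sim 2^{2k}r^2}\tilde N(\nabla u)(y,\sigma)\,d\sigma\lesssim\sum_k 2^{-k}M_t[\tilde N(\nabla u)](y,t_Q).
\]
Averaging in $y\in Q_x$ is then bounded by $M_x M_t[\tilde N(\nabla u)](x_Q,t_Q)=M^\sharp[\tilde N(\nabla u)](x_Q,t_Q)$, which is the order of the maximal operators fixed in the Notation.

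For $\mathrm{II}$ the kernel difference gains $|s-t_Q|/|\sigma-t_Q|\lesssim 2^{-2k}$ on each shell. However, $u(Z,\sigma)-c_Q$ may grow in $\sigma$, so we integrate by parts in $\sigma$. Write the kernel as $\mathcal K_s(\sigma)=(1-\chi)(|s-\sigma|^{-3/2}-|t_Q-\sigma|^{-3/2})$ and let $\mathcal G_s$ be its primitive vanishing at $\pm\infty$, so that $|\mathcal G_s(\sigma)|\lesssim r^2|\sigma-t_Q|^{-3/2}$. Then
\[
\mathrm{II}=c_0\int \partial_\sigma u(Z,\sigma)\,\mathcal G_s(\sigma)\,d\sigma
\]
plus endpoint terms. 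Lemma \ref{lem:endpoint} applies with $\mathcal K=\mathcal G_s$, so the endpoint terms vanish along suitable sequences for a.e.\ $z$. We do not integrate $\partial_\sigma u(Z,\sigma)$ pointwise. Instead we insert $\mathcal U_Q$ and write $\partial_\sigma u(Z,\sigma)=\partial_\sigma(u(Z,\sigma)-\mathcal U_Q(\sigma))+\mathcal U_Q'(\sigma)$.
\begin{itemize}
\item The $\mathcal U_Q'$ part is controlled by \eqref{eq:Uprime} and Lemma \ref{lem:driftInputs}: $|\mathcal U_Q'|\lesssim r^{-1}\tilde N(\nabla u)(z,\sigma)$. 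Against $r^2|\sigma-t_Q|^{-3/2}$ this gives $M_t[\tilde N(\nabla u)](z,t_Q)$ after the shell summation.
\item For the remaining part we integrate back by parts, which returns the kernel $\partial_\sigma\mathcal G_s=\mathcal K_s$ paired with $u(Z,\sigma)-\mathcal U_Q(\sigma)$. By the transport argument in Lemma \ref{lem:endpoint}(i) this quantity is bounded by $r\,\tilde N(\nabla u)(z,\sigma)$. The kernel gain $2^{-2k}$ then makes the shells summable. Near $\operatorname{supp}\chi'$, i.e.\ where $|\sigma-t_Q|\sim r^2$, the smooth factor $1-\chi$ keeps $\mathcal G_s$ bounded by $r^{-1}$ and avoids point values.
\end{itemize}
Averaging over $z\in B_r(x_Q)$ again produces $M^\sharp$.

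\textbf{Main obstacle.} The main difficulty is the treatment of $\mathrm{II}$: rigorously justifying the integration by parts, and checking that the shell near $|\sigma-t_Q|\sim r^2$, where $\chi'$ lives, is bounded by a time average of $\tilde N(\nabla u)$ rather than a point value. This is exactly why $1-\chi$ is kept smooth. If the $\mathcal U_Q$ decomposition proves awkward, a fallback is to bound $|u(Z,\sigma)-c_Q|$ directly by the telescoping estimate \eqref{eq:traceGrowthSum} at scale $2^k r$. The cost is that this estimate only gives $\Theta_{Q_k}$, that is $L^2$-averages over growing cubes, which are again bounded by $M[\tilde N^2]^{1/2}$. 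It would therefore give the result with a slightly weaker maximal function, which suffices if $p>2$.
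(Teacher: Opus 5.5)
Your split of $w_{\mathrm{far}}(Y,s)-w_{\mathrm{far}}(Z,t_Q)$ into $\mathrm I$ and $\mathrm{II}$ matches the paper's. So does your treatment of $\mathrm I$: Lemma \ref{lem:vert}, dyadic shells with gain $2^{-k}$, and averaging in $y$ to reach $M_x\circ M_t$.

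In $\mathrm{II}$ you take a somewhat different path. The paper keeps $1-\chi$ on $u$ and integrates by parts once, against a primitive of the bare kernel difference defined separately on the two far intervals. It then bounds $\partial_\sigma u(Z,\sigma)$ pointwise by $r^{-1}\tilde N(\nabla u)(z,\sigma)$, using the interior time-derivative estimate of Lemmas \ref{lem:int} and \ref{lem:driftInputs}, so integrating $\partial_\sigma u$ pointwise is legitimate. The resulting $\chi'$ term is handled separately with the transport bound and \eqref{eq:Uprime}.

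Your ``integrate by parts and back'' is equivalent to splitting $u(Z,\sigma)-c_Q=(u(Z,\sigma)-\mathcal U_Q(\sigma))+(\mathcal U_Q(\sigma)-c_Q)$ and integrating by parts only the second piece; make this split directly.
\begin{itemize}
\item The first piece is bounded by $r\tilde N(\nabla u)(z,\sigma)$ and needs no integration by parts. The kernel is $O(r^{-3})$ on the transition region and $O(r^2|\sigma-t_Q|^{-5/2})$ beyond it, so you get time averages with summable coefficients.
\item The second piece uses only \eqref{eq:Uprime}.
\end{itemize}
This variant is valid. It needs only interior gradient bounds, produces no separate $\chi'$ term, and its endpoint terms $(\mathcal U_Q(\sigma)-c_Q)\mathcal G_s(\sigma)$ vanish along every sequence by \eqref{eq:endpt} (for a.e.\ $z$). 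The subsequence part of Lemma \ref{lem:endpoint} is therefore not needed. The paper's version is shorter once the pointwise bound on $\partial_\sigma u$ at height $r$ is used.

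One step must be corrected. Because $1-\chi$ sits inside $\mathcal K_s$, there is in general no primitive vanishing at both $+\infty$ and $-\infty$. The kernel formula for $D_t^{1/2}$ gives $\int_\R\mathcal K_s\,d\sigma=c_0^{-1}\big(D_t^{1/2}\chi(s)-D_t^{1/2}\chi(t_Q)\big)$. When $\chi$ is even about $t_Q$ this is strictly positive for $s\neq t_Q$, by convexity of $x\mapsto x^{-3/2}$.

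A primitive vanishing at $+\infty$ therefore tends to a nonzero constant at $-\infty$. Three things then break there: the bound $|\mathcal G_s|\lesssim r^2|\sigma-t_Q|^{-3/2}$ fails, $\int\mathcal U_Q'\,\mathcal G_s$ need not converge, and Lemma \ref{lem:endpoint} does not dispose of that endpoint.

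The repair is to define $\mathcal G_s$ separately on $\{\sigma>t_Q\}$ and $\{\sigma<t_Q\}$, vanishing at $+\infty$ and $-\infty$ respectively.
\begin{itemize}
\item It is constant, of size $\lesssim r^{-1}$, on each half of the plateau of $\chi$. Use this bound there, since $r^2|\sigma-t_Q|^{-3/2}$ is not integrable at $t_Q$.
\item The jump term at $t_Q$ carries the factor $\mathcal U_Q(t_Q)-c_Q$, which is $0$ by \eqref{eq:cQ}.
\end{itemize}
With this repair your argument proves \eqref{eq:far}. The fallback through \eqref{eq:traceGrowthSum} is not needed, and it would only give an $L^2$-type maximal function rather than the stated $M^{\sharp}$ bound.
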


\begin{proof}
Fix $(y,s)\in Q$ and $z\in B_r(x_Q)$, write $Y:=(y,0)$ and $Z:=(z,r)$, and
split
\[
w_{\mathrm{far}}(Y,s)-w_{\mathrm{far}}(Z,t_Q)=\mathrm I(y,s,z)+\mathrm{II}(s,z),
\]
\begin{align*}
\mathrm I&:=-c_0\int_\R\big(u(Y,\sigma)-u(Z,\sigma)\big)(1-\chi(\sigma))
|s-\sigma|^{-3/2}d\sigma,\\
\mathrm{II}&:=-c_0\int_\R\big(u(Z,\sigma)-c_Q\big)(1-\chi(\sigma))
\Big[|s-\sigma|^{-3/2}-|t_Q-\sigma|^{-3/2}\Big]d\sigma .
\end{align*}
Averaging over $z$ and then over $(y,s)\in Q$ bounds the left-hand side of
\eqref{eq:far}, since $c^*$ is the $z$-average of $w_{\mathrm{far}}(Z,t_Q)$.

\smallskip\noindent\emph{Term $\mathrm I$: the spatial difference.} Both $Y$ and
$Z$ project into a fixed dilate of $Q_x$ and $|z-y|\lesssim r$, so
Lemma \ref{lem:vert} gives
$|u(Y,\sigma)-u(Z,\sigma)|\lesssim r\,\tilde N(\nabla u)(y,\sigma)$. Since
$1-\chi$ is supported in $\{|\sigma-t_Q|\ge(2r)^2\}$ and
$|s-\sigma|\sim|\sigma-t_Q|$ there, summing dyadically over
$|\sigma-t_Q|\sim2^{2k}r^2$, $k\ge1$,
\begin{multline}\label{eq:termI}
|\mathrm I|\ \lesssim\ r\sum_{k\ge1}(2^{2k}r^2)^{-3/2}(2^{2k}r^2)
\fint_{I_k}\tilde N(\nabla u)(y,\cdot)\\
=\sum_{k\ge1}2^{-k}\fint_{I_k}\tilde N(\nabla u)(y,\cdot)
\ \lesssim\ M_t\big[\tilde N(\nabla u)\big](y,t_Q),
\end{multline}
where $I_k:=\{|\sigma-t_Q|<2^{2k+4}r^2\}$ contains the
corresponding shell and has comparable length; the same dyadic sum bounds
the interior analogue in \cite{Din23}. The gain
$2^{-k}$ is the spatial scale $r$ against the kernel scale $2^kr$.
Averaging over $y\in Q_x$,
\begin{equation}\label{eq:termIav}
\fint_Q|\mathrm I|\,d\sigma\ \lesssim\
\fint_{Q_x}M_t\big[\tilde N(\nabla u)\big](y,t_Q)\,dy
\ \le\ M_x\circ M_t\big[\tilde N(\nabla u)\big](x_Q,t_Q).
\end{equation}

\smallskip\noindent\emph{Term $\mathrm{II}$: the kernel difference.} Since
$|s-t_Q|\lesssim r^2$ while $|\sigma-t_Q|\gtrsim r^2$ on the support of
$1-\chi$, the mean value theorem gives
$|\Delta K(\sigma)|\lesssim r^2|\sigma-t_Q|^{-5/2}$ for
$\Delta K(\sigma):=|s-\sigma|^{-3/2}-|t_Q-\sigma|^{-3/2}$. Hence
\[
\mathcal K(\sigma):=
\begin{cases}
-\displaystyle\int_\sigma^\infty\Delta K(\tau)\,d\tau,
   &\sigma>t_Q+(2r)^2,\\[2mm]
\displaystyle\int_{-\infty}^{\sigma}\Delta K(\tau)\,d\tau,
   &\sigma<t_Q-(2r)^2.
\end{cases}
\]
Thus $\mathcal K'=\Delta K$ on each far interval and
$|\mathcal K(\sigma)|\lesssim r^2|\sigma-t_Q|^{-3/2}$. Integration by parts
gives three terms:
\[
\mathrm{II}=c_0\int(\partial_\sigma u)(Z,\sigma)(1-\chi)\mathcal K\,d\sigma
-c_0\int\big(u(Z,\sigma)-c_Q\big)\chi'(\sigma)\mathcal K\,d\sigma
+\ \mbox{(endpoints at }\sigma\to\pm\infty).
\]

For the first term, $Z$ lies at the fixed height $r$, so Lemma \ref{lem:driftInputs} applies
at scale $r$ at every time $\sigma$, $\om=\R^n_+\times\R$ being invariant under
translation in $t$, and yields
$|\partial_\sigma u(Z,\sigma)|\lesssim r^{-1}\tilde N(\nabla u)(z,\sigma)$.
Hence, dyadically as before,
\begin{equation}\label{eq:termII}
\Big|\int(\partial_\sigma u)(1-\chi)\mathcal K\Big|
\lesssim\sum_{k\ge1}r^{-1}\cdot r^2(2^{2k}r^2)^{-3/2}(2^{2k}r^2)
\fint_{I_k}\tilde N(\nabla u)(z,\cdot)
=\sum_{k\ge1}2^{-k}\fint_{I_k}\tilde N(\nabla u)(z,\cdot),
\end{equation}
which is $\lesssim M_t[\tilde N(\nabla u)](z,t_Q)$; averaging over
$z\in B_r(x_Q)$ gives $M^{\sharp}[\tilde N(\nabla u)](x_Q,t_Q)$ as in
\eqref{eq:termIav}.

For the second term, put $J:=I^*_Q$ and
$\Theta(z,\sigma):=\tilde N(\nabla u)(z,\sigma)$. On $\supp\chi'$ we have
$|\chi'\mathcal K|\lesssim r^{-3}$. The spatial transport estimate and
\eqref{eq:Uprime}, with the bump in \eqref{eq:bump}, give
\[
 |u(Z,\sigma)-c_Q|
 \le |u(Z,\sigma)-\mathcal U_Q(\sigma)|
       +|\mathcal U_Q(\sigma)-\mathcal U_Q(t_Q)|
 \lesssim r\Theta(z,\sigma)+r^{-1}\int_J\Theta(z,\tau)\,d\tau .
\]
Here the path between $Z$ and the support of $\varphi_Q$ stays at height
comparable to $r$, and the interval joining $t_Q$ to $\sigma$ is contained
in $J$. Keeping the time integral, we obtain
\begin{equation}\label{eq:transitionaverage}
 \left|\int(u(Z,\sigma)-c_Q)\chi'(\sigma)\mathcal K(\sigma)\,d\sigma\right|
 \lesssim r^{-3}\int_J|u(Z,\sigma)-c_Q|\,d\sigma
 \lesssim \fint_J\Theta(z,\sigma)\,d\sigma .
\end{equation}
Averaging over $z\in B_r(x_Q)$ gives the required bound by
$M^\sharp[\tilde N(\nabla u)](x_Q,t_Q)$.

At the inner endpoints, near $\sigma=t_Q$, the factor $1-\chi$ vanishes
identically and nothing is produced; at $\sigma\to\pm\infty$ the endpoint terms
vanish along a sequence by Lemma \ref{lem:endpoint}, the integration by parts
being performed on $[t_Q,\sigma_j]$ and followed by the limit $j\to\infty$; the surviving
integral converges absolutely by \eqref{eq:termII}.
\end{proof}

\begin{remark}[the estimate is an average over $Q$, not a supremum]
Proposition \ref{prop:far} cannot be upgraded to
$\sup_{Q}|w_{\mathrm{far}}-c^*|\lesssim M^{\sharp}[\tilde N(\nabla u)](x_Q,t_Q)$:
by \eqref{eq:vert} term $\mathrm I$ at a boundary point $(y,0)$ is controlled only
by $M_t[\tilde N(\nabla u)](y,\cdot)$, and $\sup_{y\in Q_x}M_t[\tilde N(\nabla u)]$
is bounded on no $L^\theta$. The averaged form \eqref{eq:far} is what the sharp
function needs, so nothing is lost.
\end{remark}

\subsubsection*{The sharp function estimate}

Recall the Fefferman-Stein sharp function
$$b^\sharp(x,t):=\sup_{Q\ni(x,t)}\ \inf_{c\in\R}\ \fint_Q|b-c|\,d\sigma.$$

\begin{proposition}\label{prop:sharp}
For every boundary parabolic cube $Q\subset\pom$,
\begin{multline*}
\inf_{c}\fint_Q|b-c|\,d\sigma\ \lesssim\
\inf_{(x_0,t_0)\in Q}\Big\{M\big[\tilde N(\nabla u)^2\big]^{1/2}
+M\big[S(\nabla u)^2\big]^{1/2}\\
+M\big[A(\nabla u)^2\big]^{1/2}
+M^{\sharp}\big[\tilde N(\nabla u)\big]\Big\}(x_0,t_0).
\end{multline*}
\end{proposition}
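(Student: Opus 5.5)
The proof assembles Propositions \ref{prop:L} and \ref{prop:far} through the splitting \eqref{eq:split}. Fix a boundary cube $Q$ of side $r$ and a point $(x_0,t_0)\in Q$. Choose $c_Q$ as in \eqref{eq:cQ}, so that $b=w_{\mathrm{loc}}+w_{\mathrm{far}}$ on $\pom$. Lemma \ref{lem:traceNormalization} guarantees that $b$ is a well-defined distribution; its tail bounds make both pieces meaningful at the boundary, and they show the decomposition holds in $L^1_{\loc}$. We then test the infimum over $c$ with the specific constant $c=c^*$ from Proposition \ref{prop:far}. The triangle inequality gives
\[
\fint_Q|b-c^*|\,d\sigma\le\fint_Q|w_{\mathrm{loc}}|\,d\sigma+\fint_Q|w_{\mathrm{far}}-c^*|\,d\sigma .
\]

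For the first term we apply Cauchy--Schwarz and Proposition \ref{prop:L}, which bounds it by $C\,\Xi_Q$ with $\Xi_Q$ as in \eqref{eq:good}. This quantity is evaluated at the fixed point $(x_0,t_0)$, and the three maximal functions appearing in it are exactly the first three terms in the statement. For the second term, \eqref{eq:far} gives a bound by $M^\sharp[\tilde N(\nabla u)](x_Q,t_Q)$, which is evaluated at the centre and not at $(x_0,t_0)$.

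The only point needing care is moving this bound from the centre to an arbitrary point of $Q$. Inspecting the proof of Proposition \ref{prop:far}, the actual bound obtained is an average over $y\in B_{Cr}(x_Q)$ of $M_t[\tilde N(\nabla u)](y,t_Q)$. Each such $M_t$ is in turn controlled by averages over time intervals centred at $t_Q$ of length $\gtrsim r^2$. Since $|t_0-t_Q|\lesssim r^2$, any such interval is contained in an interval about $t_0$ of comparable length, so $M_t[\cdot](y,t_Q)\lesssim M_t[\cdot](y,t_0)$. Similarly, $B_{Cr}(x_Q)\subset B_{C'r}(x_0)$ with comparable measure. Together these give the bound $\lesssim M_x\circ M_t[\tilde N(\nabla u)](x_0,t_0)=M^\sharp[\tilde N(\nabla u)](x_0,t_0)$. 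If the proposition is read only as stated, one instead repeats its proof with $(x_0,t_0)$ in place of the centre; the dyadic time shells are unaffected since $|t_0-t_Q|\lesssim r^2\le|\sigma-t_Q|$ on $\supp(1-\chi)$.

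Combining the two terms and then taking the infimum over $(x_0,t_0)\in Q$ gives the claim, with constants uniform in $Q$. I expect this recentring step to be the main obstacle, though it is mild. Everything else is routine, provided one checks that Proposition \ref{prop:L} already permits an arbitrary point of $Q$, which it does because $\Xi_Q$ is defined at any $(x_0,t_0)\in Q$.
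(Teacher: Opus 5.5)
Your proposal is correct and follows the paper's proof. You take $c=c^*$, bound the local part by Cauchy--Schwarz and Proposition \ref{prop:L} (whose $\Xi_Q$ is already evaluated at an arbitrary $(x_0,t_0)\in Q$), and recentre the far part. For the recentring you keep the long-interval averages behind \eqref{eq:termI}, \eqref{eq:termII} and \eqref{eq:transitionaverage} before passing to maximal functions, and enlarge their supports to contain $(x_0,t_0)$.

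One imprecision: the inequality $M_t[\cdot](y,t_Q)\lesssim M_t[\cdot](y,t_0)$ is false for the full maximal function, because short intervals around $t_Q$ are not controlled from $t_0$. It should be stated only for the averages over intervals of length $\gtrsim r^2$ centred at $t_Q$ that actually occur, which is what your reasoning uses.
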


\begin{proof}
Take $c=c^*$ from Proposition \ref{prop:far}. By \eqref{eq:split} and
Cauchy-Schwarz,
\[
\fint_Q|b-c^*|
\le\Big(\fint_Q|w_{\mathrm{loc}}|^2\Big)^{1/2}+\fint_Q|w_{\mathrm{far}}-c^*|
\ \lesssim\ \Xi_Q+M^{\sharp}\big[\tilde N(\nabla u)\big](x_Q,t_Q)
\]
by Propositions \ref{prop:L} and \ref{prop:far}. To estimate the far
term at an arbitrary $(x_0,t_0)\in Q$, retain the averages in
\eqref{eq:termI}, \eqref{eq:termII} and \eqref{eq:transitionaverage}
before bounding them by maximal functions. After the spatial averaging,
each term is an average of $\tilde N(\nabla u)$ over $Q_x\times I_k$,
$B_r(x_Q)\times I_k$, or $B_r(x_Q)\times J$. The spatial radii are
comparable to $r$ and the time lengths are at least a fixed multiple of
$r^2$. Enlarge these sets by fixed factors to contain $(x_0,t_0)$.
Each average is then bounded by
$C M_xM_t[\tilde N(\nabla u)](x_0,t_0)$, and the coefficients $2^{-k}$
in the two dyadic sums are summable. The local estimate already holds
with $\Xi_Q$ evaluated at this arbitrary point. We therefore obtain the
stated bound for every $(x_0,t_0)\in Q$ and may take the infimum.
\end{proof}

We can now prove \eqref{eq:target} in the following form.

\begin{proposition}\label{prop:star}
Let $2<p<\infty$ and let $u$ be an energy solution of
$\mathcal L_{\mathbf b}u=0$ in $\om=\R^n_+\times\R$ under
the hypotheses of Theorem~\ref{Thalf}, with
$\tilde N(\nabla u),S(\nabla u)\in L^p(\pom)$.
Then \eqref{eq:target} holds.
\end{proposition}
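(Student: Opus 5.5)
The plan is to combine the pointwise sharp-function bound of Proposition~\ref{prop:sharp} with the Fefferman--Stein inequality $\|b\|_{L^p}\lesssim\|b^\sharp\|_{L^p}$ on the space of homogeneous type $\pom=\R^{n-1}\times\R$ with the parabolic metric. That inequality is only valid once we know a priori that $b$ is not a nonzero constant modulo $BMO$, for instance that $b$ lies in some $L^{p_0}$, or at least that $M b\in L^{p}$ on some exhausting family. Since we have no a priori integrability of $b=D_t^{1/2}u|_{\pom}$, we will instead run the argument on finite cubes and use Lemma~\ref{lem:traceNormalization} for the normalization.

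First we take the supremum over cubes $Q\ni(x,t)$ in Proposition~\ref{prop:sharp}. This gives the pointwise bound
\[
b^\sharp\lesssim M[\tilde N(\nabla u)^2]^{1/2}+M[S(\nabla u)^2]^{1/2}+M[A(\nabla u)^2]^{1/2}+M^\sharp[\tilde N(\nabla u)]=:G .
\]
Since $p>2$, the operator $F\mapsto M[F^2]^{1/2}$ is bounded on $L^p$, and $M^\sharp=M_x\circ M_t$ is bounded on $L^p$. Together with the area bound \eqref{eq:driftInputs}, this gives
\[
\|G\|_p\lesssim \|\tilde N(\nabla u)\|_p+\|S(\nabla u)\|_p .
\]

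Second, we need a localized Fefferman--Stein inequality. For a large cube $Q_R$ of side $R$ and the constant $c_R$ realizing (up to a factor) the infimum in Proposition~\ref{prop:sharp} on $Q_R$, the standard good-$\lambda$ proof via Calderón--Zygmund decomposition of $|b-c_R|$ inside $Q_R$ gives
\[
\|b-c_R\|_{L^p(Q_R)}\lesssim \|G\|_{L^p(CQ_R)}+|Q_R|^{1/p}\fint_{Q_R}|b-c_R| .
\]
The last term is at most $|Q_R|^{1/p}\inf_{Q_R}G\le\|G\|_{L^p(Q_R)}$, so $\|b-c_R\|_{L^p(Q_R)}\lesssim\|G\|_p$ uniformly in $R$. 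Here $b\in L^1_{\mathrm{loc}}$ is needed; this follows from Proposition~\ref{prop:L} and the far-field bound applied to each cube, which show $\fint_Q|b|<\infty$.

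Third, we must show that the constants $c_R$ do not drift and can be taken to tend to $0$. We pair with the normalized bump $\psi_R$ of Lemma~\ref{lem:traceNormalization}, choosing $\psi$ with $\int\psi=1$ and support in the unit cube. Then
\[
|c_R|=\Bigl|\langle b,\psi_R\rangle-\langle b-c_R,\psi_R\rangle\Bigr|\lesssim E_pR^{-D/p}+\|\psi_R\|_{L^{p'}}\|b-c_R\|_{L^p(Q_R)}\lesssim R^{-D/p}\bigl(E_p+\|G\|_p\bigr)\to0 .
\]
Hence $\|b\|_{L^p(Q_R)}\le\|b-c_R\|_{L^p(Q_R)}+|c_R||Q_R|^{1/p}\lesssim\|G\|_p+E_p$. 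Letting $R\to\infty$ by monotone convergence gives \eqref{eq:target}, since $E_p=\|\tilde N(\nabla u)\|_p$.

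The main obstacle is the localized Fefferman--Stein step. One must check that the good-$\lambda$ argument only uses oscillations over subcubes of $Q_R$, which are all controlled by the same $G$, and that the dyadic-cube structure on the parabolic space is compatible with the boundary cubes of \eqref{eqdef.bdypcube}. If this is delicate, we will instead apply the global inequality to the truncations $\min(|b-c_R|,\lambda)\mathbf 1_{Q_R}$.
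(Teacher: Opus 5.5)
Your proposal is correct and follows the paper's proof essentially step for step. The steps are: the pointwise bound $b^\sharp\lesssim G$ from Proposition~\ref{prop:sharp}; $L^p$ boundedness of $G$ for $p>2$, combined with \eqref{eq:driftInputs}; a finite-cube Fefferman--Stein inequality on $Q_R$; and the normalization via the bump $\psi_R$ of Lemma~\ref{lem:traceNormalization}. The paper proves the finite-cube inequality by a stopping-time (sparse) argument rather than good-$\lambda$, but your good-$\lambda$ version also needs no a priori integrability once the $\lambda$-integral is truncated, since $|Q_R|<\infty$. The remaining differences are cosmetic: you use a near-minimizing constant $c_R$ and bound $\|b\|_{L^p(Q_R)}$ uniformly before letting $R\to\infty$, whereas the paper shows $b_{Q_R}\to0$ and applies Fatou.
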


\begin{proof}
The assumed square-function bound and Lemma~\ref{lem:driftInputs}
give $A(\nabla u)\in L^p(\pom)$. Lemma
\ref{lem:traceNormalization} defines $b$ as a distribution and justifies
the tails of the far part. Propositions \ref{prop:L} and \ref{prop:far}
then give $b\in L^1_{\loc}(\pom)$, and Proposition \ref{prop:sharp} gives
\[
b^\sharp\ \lesssim\ M\big[\tilde N(\nabla u)^2\big]^{1/2}
+M\big[S(\nabla u)^2\big]^{1/2}
+M\big[A(\nabla u)^2\big]^{1/2}+M^{\sharp}\big[\tilde N(\nabla u)\big].
\]
Each term is bounded in $L^p$: for the first three,
$\|M[F^2]^{1/2}\|_{L^p}=\|M[F^2]\|_{L^{p/2}}^{1/2}\lesssim\|F\|_{L^p}$
\emph{if and only if $p>2$}; for the fourth, $M^{\sharp}$ is bounded on $L^p$ for
$p>1$. Hence
\begin{equation}\label{eq:sharpLp}
\|b^\sharp\|_{L^p}\ \lesssim\ \|\tilde N(\nabla u)\|_{L^p}+\|S(\nabla u)\|_{L^p}
+\|A(\nabla u)\|_{L^p} ,
\end{equation}
with no a priori $L^q$ assumption on $b$.

We give the finite-cube form of the sharp function inequality needed here.
For a boundary parabolic cube $Q$, write $b_Q=\fint_Qb$. Then
\begin{equation}\label{eq:localFS}
 \|b-b_Q\|_{L^p(Q)}\le C_p\|b^\sharp\|_{L^p(Q)}.
\end{equation}
To see this for merely locally integrable $b$, subdivide each spatial side
of $Q$ into two and its time interval into four, and repeat. For each cube
$P$ in this grid put $\alpha_P=\fint_P|b-b_P|$. Stop at the maximal proper
subcubes $P'$ for which $\fint_{P'}|b-b_P|>2\alpha_P$.
Their total measure is at most $|P|/2$. On the complement
$|b-b_P|\le2\alpha_P$ a.e., and on each stopped cube
$|b_{P'}-b_P|\le2^{n+2}\alpha_P$, by maximality and the fixed ratio of
child and parent measures. Recursion gives
\[
 |b-b_Q|\le C_n\sum_{P\in\mathcal S}\alpha_P\boldsymbol1_P
 \quad\text{a.e. on }Q,
\]
where $\mathcal S$ is the collection of stopped cubes, including $Q$.
The sets $E_P$ obtained by removing the next stopped cubes from $P$ are
pairwise disjoint and satisfy $|E_P|\ge|P|/2$. Cubes with $\alpha_P=0$
need not be subdivided. The exceptional set of infinite stopping chains
has measure zero, since its measure decreases by a factor of at least
two at each generation.

For a nonnegative $v\in L^{p'}(Q)$, let $M_Qv$ be its dyadic maximal
function in this grid. Since $\alpha_P\le2\inf_P b^\sharp$, we have
\[
 \int_Q\sum_{P\in\mathcal S}\alpha_P\boldsymbol1_P v
 \le4\sum_{P\in\mathcal S}\int_{E_P}b^\sharp M_Qv
 \le4\|b^\sharp\|_{L^p(Q)}\|M_Qv\|_{L^{p'}(Q)}.
\]
The dyadic maximal inequality and duality prove \eqref{eq:localFS}.
One can first take finite sums and then use monotone convergence, so no
finiteness of $\|b\|_{L^p(Q)}$ has been assumed.

Finally take cubes $Q_R$ centred at the origin with side $R\to\infty$.
Choose $\psi\in C_0^\infty(Q_1)$ with $\int\psi=1$ and use $\psi_R$ from
Lemma \ref{lem:traceNormalization}. By that lemma and \eqref{eq:localFS},
\begin{align*}
 |b_{Q_R}|
 &\le |\langle b,\psi_R\rangle|
       +\left|\int_{Q_R}(b-b_{Q_R})\psi_R\right|\\
 &\lesssim R^{-(n+1)/p}
 \big(\|\tilde N(\nabla u)\|_{L^p}+\|b^\sharp\|_{L^p}\big)
 \longrightarrow0.
\end{align*}
Fatou's lemma applied to \eqref{eq:localFS} now gives
$\|b\|_{L^p}\lesssim\|b^\sharp\|_{L^p}$. Combining this with
\eqref{eq:sharpLp} and the drifted area bound
\eqref{eq:driftInputs} proves \eqref{eq:target}.
\end{proof}

\subsection{The estimate for $1<p<2$}\label{ss:halfDrift}

\begin{proposition}[the boundary estimate below two]\label{prop:bilinearTrace}
Let $1<p<2$ and let $u$ be an energy solution of
$\mathcal L_{\mathbf b}u=0$ under the hypotheses of Theorem~\ref{Thalf}.
If $\tilde N(\nabla u),S(\nabla u)\in L^p$, then, for $f=\operatorname{Tr}u$,
\begin{equation}\label{eq:bilinearTrace}
 \|D_t^{1/2}f\|_p
 \lesssim \|\tilde N(\nabla u)\|_p+\|S(\nabla u)\|_p.
\end{equation}
\end{proposition}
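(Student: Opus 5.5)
The plan is to argue by duality, using a bilinear version of the height identity from the proof of \eqref{A2}. Since $p'>2$, all Littlewood--Paley square-function bounds for a caloric-type extension of the dual test function are available. Fix $g\in C_0^\infty(\pom)$ with $\|g\|_{L^{p'}}\le1$. Let $G(x,h,t)=(\Phi_h*g)(x,t)$, where $\Phi$ is a fixed smooth, rapidly decaying kernel, $\Phi_h$ its parabolic dilate at scale $h$, and $\int\Phi=1$. The facts I need about $G$ are these: $N(G)\lesssim Mg$, and the conical square functions
\[
\mathcal C(h\nabla G),\quad \mathcal C(h^2\partial_tG),\quad \mathcal C(hD_t^{1/2}G),\quad \mathcal C(h^2D_t^{1/2}\nabla G),\quad \mathcal C(h^2\partial_n^2G)
\]
are all bounded in $L^{p'}$ by $\|g\|_{p'}$. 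Each is a standard vector-valued Littlewood--Paley estimate, because every kernel involved except $\Phi$ itself has integral zero. I regularize in time exactly as in the $p=2$ case, with $w_j=P_jD_t^{1/2}u$. I then set $\Phi_j(h)=\langle w_j(h),G(h)\rangle_{\mathcal H}$, which is locally absolutely continuous with integrable derivatives. The identity I aim for is
\[
\langle w_j(0),g\rangle=-\int_0^\infty\Phi_j'(h)\,dh=\int_0^\infty h\,\Phi_j''(h)\,dh,
\]
with the endpoint terms at $h=\infty$ vanishing as before. The trace identification $w_j(0)\to D_t^{1/2}f$ in distributions is then copied from the $p=2$ argument.

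Next I expand $\Phi_j''=\langle\partial_{nn}w_j,G\rangle+2\langle\partial_nw_j,\partial_nG\rangle+\langle w_j,\partial_{nn}G\rangle$ and, in each term, move $D_t^{1/2}$ from $u$ onto $G$ using self-adjointness.
\begin{itemize}
\item The first term becomes $\iint h\,\partial_{nn}u\,D_t^{1/2}G$. Since $h\nabla^2u$ and $hD_t^{1/2}G$ are paired with weight $h^{-1}=h\cdot h^{-2}$, this is a $T^p_2\times T^{p'}_2$ tent-space pairing. It is bounded by $\int S(\nabla u)\,\mathcal C(hD_t^{1/2}G)\le\|S(\nabla u)\|_p$.
\item The cross term is $\iint h\,\partial_nu\,D_t^{1/2}\partial_nG$. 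I write $h=\tfrac12\partial_n(h^2)$ and integrate by parts once more in $h$. This produces $\iint h^2\partial_{nn}u\,D_t^{1/2}\partial_nG$, again a $T_2\times T_2$ pairing with $\mathcal C(h^2D_t^{1/2}\nabla G)$. It also produces a term $\iint h^2\partial_nu\,D_t^{1/2}\partial_{nn}G$, which I combine with the third term.
\item For the third term, $\iint h\,\partial_nu\cdot$(derivatives of $G$), I convert derivatives using the equation. The time half derivative on $G$ is moved back to produce $H_t\partial_tu$. Then \eqref{eq:p2WeightedInputs}, in its $L^p$ form $\|\mathcal C(h\partial_tu)\|_p\lesssim\|S(\nabla u)\|_p+\kappa\|\tilde N(\nabla u)\|_p$ from Lemma~\ref{lem:driftInputs}, is paired with $\mathcal C(h\,\cdot\,)$ of the smoothed $G$. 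Terms with $h^2\partial_t\nabla u$ are handled through the area function in \eqref{eq:driftInputs}.
\end{itemize}

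I expect the main obstacle to be the terms in which an undifferentiated $\nabla u$ is paired with a derivative of $G$. Only $N(\nabla u)\in L^p$ is available for such a factor, so they require a $T^p_\infty\times T^{p'}_1$ duality. That needs an $L^1$-cone (Carleson-type) bound on the corresponding $G$ quantity, and this fails for general Littlewood--Paley pieces. The first thing I would try is to avoid such terms altogether, by choosing the order of the integrations by parts in $h$ so that every surviving term carries either a second derivative of $u$ or $\partial_tu$. Every pairing is then of type $T_2\times T_2$. If a residual term of the form $\iint|\nabla u|\,|h\nabla^2G|$ remains, I would choose $\Phi$ so that $G$ solves a parabolic equation, say $\partial_h^2G+\Delta_xG-\partial_t\cdot$ (suitably scaled) $=0$. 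This trades $\partial_{nn}G$ for tangential and time derivatives, which can be integrated by parts back onto $u$, producing $\nabla^2u$ or $\partial_tu$.

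Finally, taking the supremum over $g$ and letting $j\to\infty$ along a weakly convergent subsequence gives \eqref{eq:bilinearTrace}. The drift enters only through $\partial_tu=\divg(A\nabla u)+\mathbf b\cdot\nabla u$ and the Carleson bound on $h|\mathbf b|^2$ from Lemma~\ref{lem:driftCM}, just as in the $p=2$ case.
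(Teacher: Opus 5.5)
Your overall skeleton (duality against $g\in L^{p'}$, the bilinear height identity $\langle w_j(0),g\rangle=\int_0^\infty h\Phi_j''(h)\,dh$, time regularization, endpoint arguments) matches the paper. However, the step you lead with cannot work, and the term that forces the issue is misidentified.

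The third term of $\Phi_j''$ is $\iint h\,w_j\,\partial_{nn}G=\iint h\,u\,D_t^{1/2}\partial_{nn}G$. It contains $u$ itself, not $\partial_nu$. No reordering of integrations by parts in $h$ can remove such a factor. Moving $\partial_n$ onto $hu$ yields $\iint u\,D_t^{1/2}\partial_nG$, which is still of order zero in $u$, and integrating that by parts just returns the boundary pairing $\langle D_t^{1/2}f,g\rangle$ you are trying to bound. For the same reason, with a general kernel $\Phi$ only one half derivative is available, so ``moving it back'' cannot produce $H_t\partial_tu$.

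Your fallback therefore has to be the plan from the start. Take $G$ to be the caloric extension, $\widehat G(\xi,h,\tau)=e^{-h\sqrt{|\xi|^2+i\tau}}\,\widehat g(\xi,\tau)$. This is a parabolic dilation family with $\partial_{nn}G=\partial_tG-\Delta_xG$. The third term then becomes $-\iint h\,(\partial_tu+\Delta_xu)\,D_t^{1/2}G$. The leftover piece $\iint h^2\partial_nu\,D_t^{1/2}\partial_{nn}G$ from your cross term becomes, up to signs, a pairing of $h^2\partial_t\partial_nu$ with $D_t^{1/2}G$ plus a pairing of $h^2\nabla_x\partial_nu$ with $D_t^{1/2}\nabla_xG$. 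Every term is now a tent-space pairing of $h\nabla^2u$, $h\partial_tu$ or $h^2\partial_t\nabla u$ against $hD_t^{1/2}G$ or $h^2D_t^{1/2}\nabla G$. By Lemma~\ref{lem:driftInputs}, all of these are bounded by $\|S(\nabla u)\|_p+\kappa\|\tilde N(\nabla u)\|_p$. You do not need $N(G)\lesssim Mg$ or $\mathcal C(h^2\partial_n^2G)$.

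Two further points need care. First, the test-side bounds are not just a matter of cancellation. The symbols $h|\tau|^{1/2}e^{-h\sqrt{|\xi|^2+i\tau}}$ are not smooth at $\tau=0$, and the kernels decay only like $|t|^{-3/2}$. So you need a Hilbert-space-valued Marcinkiewicz-type multiplier theorem for the vertical square functions, followed by conical $\lesssim$ vertical in $L^{p'}$ for $p'>2$. Second, before moving $P_j$ onto $G$ you must choose the $P_j$ uniformly bounded on $L^{p'}$ (for instance with symbols $\chi(\tau/j)-\chi(j\tau)$). The $p=2$ choice does not guarantee this.

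With these changes your argument is a genuine alternative to the paper's. The paper never needs an equation for the test. It uses the pure time-multiplier family $D_t^{1/2}m(hD_t^{1/2})\varphi$ with $m(s)=(1+s)e^{-s}$. Each $h$-derivative of the test then produces one more $D_t^{1/2}$, and $D_t^{1/2}D_t^{1/2}=-H_t\partial_t$ transfers it onto $u$. As a result $u$ enters only through $hu_{hh}$, $h^2\partial_tu_h$ and $h\partial_tu$, with no spatial integration by parts and no second integration by parts in $h$. The paper works with vertical square functions throughout. On the solution side it uses the vertical--conical comparison of \cite{AHM}, which holds for $p<2$; this is exactly where that restriction enters. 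On the test side it uses a one-dimensional Hilbert-valued time multiplier theorem.

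Your version works with $S$, $A$ and $\mathcal C(h\partial_tu)$ directly, through $T^p_2$--$T^{p'}_2$ duality. The cost is a space-time extension, more terms, and heavier parabolic Littlewood--Paley theory for the test. The hypothesis $p<2$ is then used on the test side ($p'>2$, so conical is dominated by vertical) rather than on the solution side.
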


\begin{proof}
For $0<p<2$, the vertical--conical comparison
\cite[Proposition 2.1(b), Remark 2.2]{AHM} gives
\[
 \|\mathcal V(F)\|_p\lesssim_p\|\mathcal C(F)\|_p.
\]
The extension in that remark applies to the parabolic boundary metric,
whose balls have measure comparable to $h^{n+1}$; this gives precisely
the measure in the definition of $\mathcal C$. Lemma~\ref{lem:driftInputs} therefore gives
\begin{equation}\label{eq:bilinearInputs}
 \|\mathcal V(h u_{hh})\|_p
 +\|\mathcal V(h^2\partial_tu_h)\|_p
 +\|\mathcal V(h\partial_tu)\|_p
 \lesssim \|S(\nabla u)\|_p+\kappa\|\tilde N(\nabla u)\|_p.
\end{equation}
Write $D=D_t^{1/2}$ and set
\[
 m(s)=(1+s)e^{-s},\qquad P_h=m(hD).
\]
Initially let $\varphi$ be smooth, compactly supported in $x$ and
Schwartz in $t$, with time Fourier support in a compact subset of
$\R\setminus\{0\}$. Put $F(h)=\langle u(h),DP_h\varphi\rangle$,
with pairings in $(x,t)$. Differentiating twice in height and using
$\langle a,D^2b\rangle=\langle\partial_ta,H_tb\rangle$ gives
\begin{equation}\label{eq:bilinearIdentity}
\begin{split}
 \int_0^\infty hF''(h)\,dh
 ={}&\int_0^\infty\langle h u_{hh},hD m(hD)\varphi\rangle\frac{dh}{h}\\
 &+2\int_0^\infty\langle h^2\partial_tu_h,H_tm'(hD)\varphi\rangle\frac{dh}{h}\\
 &+\int_0^\infty\langle h\partial_tu,hH_tD m''(hD)\varphi\rangle\frac{dh}{h}.
\end{split}
\end{equation}
The three scalar test multipliers are
\[
 \psi_1(s)=s(1+s)e^{-s},\qquad
 \psi_2(s)=-se^{-s},\qquad
 \psi_3(s)=s(s-1)e^{-s}.
\]
For each of them, time Littlewood--Paley theory gives
\[
 \|\mathcal V(\psi_i(hD)\varphi)\|_q
 +\|\mathcal V(H_t\psi_i(hD)\varphi)\|_q
 \lesssim_q\|\varphi\|_q,\qquad 1<q<\infty.
\]
Indeed, for $\mathcal H=L^2((0,\infty),dh/h)$, the symbol
$K_i(\tau)(h)=\psi_i(h\sqrt{|\tau|})$ satisfies
\[
 \|K_i(\tau)\|_{\mathcal H}^2=\int_0^\infty|\psi_i(s)|^2\frac{ds}{s},
 \qquad
 |\tau|^2\|\partial_\tau K_i(\tau)\|_{\mathcal H}^2
 =\tfrac14\int_0^\infty|s\psi_i'(s)|^2\frac{ds}{s}<\infty.
\]
The Hilbert-space-valued multiplier theorem in one dimension, followed
by integration in $x$, proves the assertion. The factors $H_t$ may
be applied to $\varphi$ first. The cancellation $m'(0)=0$ is needed
for the second family.

Cauchy--Schwarz in $dh/h$, H\"older in $(x,t)$ and
\eqref{eq:bilinearInputs} show that
\begin{equation}\label{eq:bilinearAbsolute}
 \int_0^\infty h|F''(h)|\,dh
 \lesssim\big(\|S(\nabla u)\|_p
                +\kappa\|\tilde N(\nabla u)\|_p\big)\|\varphi\|_{p'}.
\end{equation}
We next identify the height endpoints. Choose a real even smooth time
multiplier $R$, supported away from zero and equal to one on the time
Fourier support of $\varphi$. Since $Du,\partial_hu\in L^2(\Omega)$,
\[
 u_R=Ru\in H^1((0,\infty);L^2(\pom)).
\]
Thus $F$ has a continuous value at zero, and commutation of $R$ with
the energy trace gives $F(0)=\langle Df,\varphi\rangle$.
The test family $DP_h\varphi$ and its height derivative are bounded
at zero and decay exponentially at infinity in $L^2(\pom)$.
Using $u_R,\partial_hu_R\in L^2(\Omega)$ gives $F,F'\in L^1(0,\infty)$.
The formula for $F''$ holds distributionally on interior height
intervals, and \eqref{eq:bilinearAbsolute} makes its right-hand side
locally integrable. Hence $F'$ is locally absolutely continuous.
Its limit at infinity exists and is zero, since $F''\in L^1(1,\infty)$
and $F'\in L^1$. Similarly $F(\infty)=0$. Absolute Fubini now gives
\[
 F(0)=-\int_0^\infty F'(h)\,dh
     =\int_0^\infty hF''(h)\,dh.
\]
All estimates in \eqref{eq:bilinearAbsolute} use derivatives of the
original $u$ and multipliers on the tests. We do not commute $R$ with
$A$ or $\mathbf b$ or treat $u_R$ as a solution.

For an arbitrary $\varphi\in C_0^\infty(\pom)$, apply this bound to
$P_j\varphi$, where $P_j$ has symbol
$\chi(\tau/j)-\chi(j\tau)$ and $\chi$ is real, even, smooth, compactly
supported and equal to one near zero. These multipliers are uniformly
bounded on $L^{p'}$ and converge strongly to the identity there.
Also $P_jf\to f$ in the energy trace space. The multiplier bound
used after \eqref{eq:p2TraceRegularized} gives
$D P_jf\to Df$ in distributions. We therefore obtain
\[
 |\langle Df,\varphi\rangle|
 \lesssim\big(\|S(\nabla u)\|_p
                +\kappa\|\tilde N(\nabla u)\|_p\big)\|\varphi\|_{p'}.
\]
Duality proves \eqref{eq:bilinearTrace}, without any prior boundary
integrability of $Df$.
\end{proof}

\begin{proposition}[the estimate with drift]\label{prop:halfDrift}
Under the hypotheses of Theorem~\ref{Thalf}, every energy solution
with $\tilde N(\nabla u),S(\nabla u)\in L^p$, $1<p<\infty$, satisfies
\eqref{eq:halfDriftConclusion}.
\end{proposition}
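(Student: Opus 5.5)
The plan is to assemble the three exponent ranges already treated and then transfer to graph domains. First I work on the half-space $\R^n_+\times\R$. For $p=2$ the boundary bound \eqref{A2} holds. For $2<p<\infty$ I use Proposition~\ref{prop:star}, which gives \eqref{eq:target}. For $1<p<2$ I use Proposition~\ref{prop:bilinearTrace}, which gives \eqref{eq:bilinearTrace}. In every case this yields
\[
 \|D_t^{1/2}f\|_{L^p(\pom)}\lesssim\|\tilde N(\nabla u)\|_{L^p(\pom)}+\|S(\nabla u)\|_{L^p(\pom)},
\]
where $f=\operatorname{Tr}u$. Lemma~\ref{lem:boundaryBridge} identifies the nontangential trace with the energy trace. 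Its estimate \eqref{A1a} then bounds $\tilde N(D_t^{1/2}u)$ and $\tilde N(H_tD_t^{1/2}u)$ by $\|\tilde N(\nabla u)\|_p+\|D_t^{1/2}f\|_p$. Together these give \eqref{eq:halfDriftConclusion} on the half-space. When $\mathbf b=0$, Theorem~\ref{thm.S<Np} bounds $\|S(\nabla u)\|_p$ by $C\|\tilde N(\nabla u)\|_p$. This removes both the square-function hypothesis and the corresponding term on the right.

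Second, I transfer to a graph domain $\Omega=\{x_n>\phi(x)\}\times\R$ using Lemmas~\ref{lem:graphflat} and \ref{lem:graphtransfer}. Put $\rho(z,t)=(\Phi(z),t)$ and $U=u\circ\rho$. Then $U$ solves $-\partial_tU+\divg(B\nabla U)+\tilde{\mathbf b}\cdot\nabla U=0$, with $B$ as in \eqref{eq:graphB} and $\tilde{\mathbf b}=(D\Phi)^{-1}(\mathbf b\circ\rho)$. The constant Jacobian allows division by $c_0$.

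The step requiring real work is to check that $\tilde{\mathbf b}$ still satisfies \eqref{eq:halfDriftHyp}, since Lemma~\ref{lem:graphtransfer} only treats $A$. I would argue as follows.
\begin{itemize}
\item The pointwise bounds transfer directly: $\delta\simeq h$, and $D\Phi$, $(D\Phi)^{-1}$ are bounded.
\item For the first spatial derivatives, $\nabla_z\tilde{\mathbf b}$ involves $D^2\Phi\,(\mathbf b\circ\rho)$ and $(D\Phi)^{-1}(\nabla\mathbf b\circ\rho)D\Phi$. The first term has size $\lesssim h^{-1}|\mathbf b|\lesssim K_{\mathbf b}h^{-2}$, using $h|D^2\Phi|\le C_\ell\ell$.
\item For the Carleson density $h^3|\nabla\tilde{\mathbf b}|^2$, the $D^2\Phi$ contribution is at most $h^3|D^2\Phi|^2|\mathbf b|^2\lesssim K_{\mathbf b}^2\,h|D^2\Phi|^2$. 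This is controlled by the geometric Carleson measure $\nu_\Phi$ in \eqref{eq:graphGeomCM}, and its time-independence gives the parabolic Carleson bound.
\item The time derivative simply commutes with the pullback.
\end{itemize}
The rest of the transfer is routine, since the maximal-function and square-function norms are preserved under $\rho$. Cones map into cones with fixed aperture changes, and Whitney regions are comparable. $D_t^{1/2}$ and $H_t$ commute with $\rho$ because time is unchanged. The energy class is also preserved. The square function transfers once one notes that $\nabla^2U$ involves $D^2\Phi\,\nabla u$, and $h|D^2\Phi|^2\,dz\,dt$ is Carleson, which gives $\|S(\nabla U)\|_p\lesssim\|S(\nabla u)\|_p+\|\tilde N(\nabla u)\|_p$. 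For $\mathbf b=0$ the transformed equation has no drift, so the drift-free improvement applies after transfer as well.

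The main obstacle I anticipate is this coefficient bookkeeping for $\tilde{\mathbf b}$, specifically the $D^2\Phi$ term in the Carleson condition and the analogous transfer of $S$. Everything else assembles results already proved.
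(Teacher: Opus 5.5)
Your proposal is correct and follows the paper's proof essentially step for step. On the half-space you assemble \eqref{A2}, Proposition~\ref{prop:star} and Proposition~\ref{prop:bilinearTrace}, then use Lemma~\ref{lem:boundaryBridge} for the maximal estimates. For graph domains you pull back by $\Phi$ and control the extra $D^2\Phi$ terms in $\nabla\tilde{\mathbf b}$ and $\nabla^2U$ through $\nu_\Phi$; for the $L^p$ square-function step the paper cites Lemma~\ref{lem:driftCM}, which you use only implicitly, and the $\mathbf b=0$ refinement you add is placed in the paper in the proof of Theorem~\ref{Thalf} instead.
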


\begin{proof}
In the half-space, the boundary estimate follows from
Proposition~\ref{prop:bilinearTrace} for $1<p<2$, from the direct
height calculation \eqref{A2} for $p=2$, and from
Proposition~\ref{prop:star} for $p>2$. All three arguments now use
$\mathcal L_{\mathbf b}$ under \eqref{eq:halfDriftHyp}; their
constants require no smallness. Lemma~\ref{lem:boundaryBridge}
gives the two maximal estimates for every $1<p<\infty$.

Finally, for a graph domain use the map $\Phi$ of
Lemma~\ref{lem:graphflat}. The pulled-back drift is
\[
 \mathbf b_0=(D\Phi)^{-1}(\mathbf b\circ\Phi).
\]
The map preserves time and has constant Jacobian, so the weak
equation has this drift and no additional time coefficient.
The pointwise drift bounds follow from $h|D^2\Phi|\lesssim1$.
The measures involving $\mathbf b$ and $\partial_t\mathbf b$
transfer by change of variables. Differentiating $\mathbf b_0$
in space gives in addition a term bounded by
$C|D^2\Phi||\mathbf b\circ\Phi|$; its Carleson norm is controlled
by $C K_{\mathbf b}^2\|\nu_\Phi\|_C$.
Likewise the chain rule gives
\[
 |\nabla^2(u\circ\Phi)|
 \lesssim|\nabla^2u|\circ\Phi+|D^2\Phi|\,|\nabla u|\circ\Phi.
\]
Lemma~\ref{lem:driftCM} and \eqref{eq:graphGeomCM} therefore bound
the pulled-back square function by the original square function
plus the gradient maximal function. Energy, boundary measure and
the time operators transfer as in Lemma~\ref{lem:graphtransfer}.
This proves the graph-domain assertion.
\end{proof}

\subsection{Proof of Theorem \ref{Thalf}}

We first give the trace estimate needed to pass from energy data to
general $L^p$ data when $1<p<2$.

\begin{lemma}[weighted boundary trace below two]\label{lem:weightedTrace}
Let $1<q<2$ and let $v$ be a weak solution of $\mathcal Lv=0$ in
$\R^n_+\times\R$ under the coefficient hypotheses of
Theorem~\ref{Thalf}, with zero drift. Fix an aperture $a$ large enough
for Lemmas~\ref{lem:int} and~\ref{lem:vert}, and suppose
$G_v:=\tilde N_a(\nabla v)\in L^q(\pom)$. For each spatial cube
$K\subset\R^{n-1}$ there is a constant $c_v$ such that its boundary
trace $f_v$ satisfies, with $\omega(s)=(1+|s|)^{-3/2}$,
\begin{equation}\label{eq:weightedtrace}
\int_K\int_\R |f_v(x,s)-c_v|\omega(s)\,ds\,dx\le C_{K,q}\|G_v\|_{L^q(\pom)}.
\end{equation}
Moreover, $D_t^{1/2}f_v$ is well-defined in distributions and is the
distributional boundary trace of $D_t^{1/2}v$.
\end{lemma}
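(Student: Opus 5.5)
The plan is to repeat the dyadic tail argument of Lemma~\ref{lem:traceNormalization}, but for $q<2$ and with the weight $\omega$ in place of the half-derivative kernel. Only $L^q$ integrability of $G_v$ is available, with no energy and no smallness.

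\emph{Step 1: the trace.} Lemma~\ref{lem:int} (zero drift) and integration on vertical segments give $f_v=\lim_{h\downarrow0}v(\cdot,h,\cdot)$ a.e., together with
\[
|v(x,h,t)-f_v(x,t)|\lesssim hG_v(x,t).
\]

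\emph{Step 2: growth of $f_v$ away from the anchor cube.} Let $K$ have side $r_0$ and centre $x_K$. Consider the nested boundary cubes $Q_j$ centred at $(x_K,0)$ with sides $r_j=2^j\max(r_0,1)$. Let $a_j=c_{Q_j}$ be the constants of \eqref{eq:cQ}, and set $c_v:=a_0$. Lemma~\ref{lem:osc} holds with $\mathbf b=0$; its proof uses only interior estimates and translation invariance. Combining it with Lemma~\ref{lem:vert} and H\"older with exponent $q$ gives
\[
|f_v(x,t)-a_j|\lesssim r_j G_v(x,t)+r_j\Theta_{Q_j}, \qquad \Theta_{Q_j}\le \Big(\fint_{CQ_j}G_v^{q}\Big)^{1/q}\lesssim r_j^{-D/q}\|G_v\|_q ,
\]
for $x\in K$ and $|t|<r_j^2$, where $D=n+1$. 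The same comparison on a common Whitney region gives
\[
|a_{j+1}-a_j|\lesssim r_j^{1-D/q}\|G_v\|_q ,
\]
and summing yields
\[
|f_v-c_v|\lesssim r_jG_v+\|G_v\|_q\sum_{i\le j}r_i^{1-D/q}
\]
on the $j$-th time shell.

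\emph{Step 3: integration against $\omega$.} Split time into $|t|<r_0^2$ and the shells $r_{j-1}^2\le|t|<r_j^2$. On the $j$-th shell, $\omega\lesssim r_j^{-3}$ and the region has measure $\simeq|K|r_j^2$.

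\begin{itemize}
\item For the $G_v$ term, H\"older gives
\[
r_j\cdot r_j^{-3}\|G_v\|_{L^q}(|K|r_j^2)^{1-1/q}\simeq r_j^{-2/q}\|G_v\|_q ,
\]
which is summable.
\item For the constant term we obtain $r_j^{-1}\sum_{i\le j}r_i^{1-D/q}$. Interchanging the sums as in \eqref{eq:traceTailSum} makes it summable as long as the inner series converges, since the exponent $1-D/q$ may be positive or negative.
\end{itemize}

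This is the main obstacle. Unlike in Lemma~\ref{lem:traceNormalization}, the inner series $\sum_{i\le j}r_i^{1-D/q}$ may grow like $r_j^{1-D/q}$ when $q>D$. With $q<2\le D$ this cannot happen: $1-D/q<0$, so the inner series is bounded and the double sum converges. The central region $|t|<r_0^2$ is handled directly by Step 1 and the $j=0$ estimate. Altogether this gives \eqref{eq:weightedtrace}, with constants depending on $K$ and $q$.

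\emph{Step 4: the distributional statements.} Since $D_t^{1/2}\zeta=O(\omega)$ uniformly on bounded spatial sets for $\zeta\in C_0^\infty$, and $\int_\R D_t^{1/2}\zeta\,dt=0$, we define
\[
\langle D_t^{1/2}f_v,\zeta\rangle:=\iint(f_v-c_v)D_t^{1/2}\zeta .
\]
This is independent of $c_v$, and \eqref{eq:weightedtrace} makes it a distribution. Rerunning Steps 2--3 at a fixed height $h$ gives the same weighted bound for $v(\cdot,h,\cdot)$. Step 1 then gives
\[
|\langle D_t^{1/2}v(\cdot,h,\cdot)-D_t^{1/2}f_v,\zeta\rangle|\lesssim h\|G_v\|_q\|D_t^{1/2}\zeta\|_{q'}\to0,
\]
which identifies $D_t^{1/2}f_v$ as the distributional boundary trace of $D_t^{1/2}v$.
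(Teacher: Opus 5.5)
There is a gap in Step~2, at the inequality $\Theta_{Q_j}\le(\fint_{CQ_j}G_v^q)^{1/q}$; for $q<2$ this step fails. In Lemma~\ref{lem:osc}, $\Theta_Q$ is the $L^2$ average $(\fint_{CQ}\tilde N(\nabla u)^2)^{1/2}$. Both the time oscillation \eqref{eq:timeosc} and the comparison of constants through \eqref{eq:osc} are $L^2$ estimates. For $q<2$ H\"older runs the other way, $(\fint G_v^q)^{1/q}\le(\fint G_v^2)^{1/2}$. Under the sole hypothesis $G_v\in L^q(\pom)$, nothing makes $\Theta_{Q_j}$ finite. This is exactly where Lemma~\ref{lem:traceNormalization} uses $p\ge2$, so its dyadic argument cannot simply be repeated below two.

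The difficulty you single out instead is not one. After interchanging the sums as in \eqref{eq:traceTailSum}, $r_j^{-1}\sum_{i\le j}r_i^{1-D/q}$ is summable whatever the sign of $1-D/q$.

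The gap can be closed with the ingredient the paper's proof rests on. At height $\sim r$, Lemma~\ref{lem:int} gives the pointwise-in-time bound $|\mathcal U_Q'(s)|\lesssim r^{-1}G_v(z,s)$ for every $z\in B_r(x_Q)$, as in the proof of Lemma~\ref{lem:endpoint}. Integrating in $s$ and averaging in $z$ yields $|\mathcal U_Q(s)-\mathcal U_Q(s')|\lesssim r\fint_{CQ}G_v\le r(\fint_{CQ}G_v^q)^{1/q}$. This is an $L^1$ average, for which H\"older now runs the right way. Transporting between the two bumps at each fixed time and then averaging gives $|a_{j+1}-a_j|\lesssim r_j^{1-D/q}\|G_v\|_{L^q(\pom)}$. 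With this replacement your Steps~2--4 go through, in fact for every $1<q<\infty$.

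The paper's proof is single-scale and shorter. It uses one bump at the scale $\varrho$ of $K$ and the global bounds
\[
\|f_v-\Phi_v\|_{L^q(K\times\R)}\lesssim\varrho\|G_v\|_{L^q(\pom)},\qquad \|\Phi_v'\|_{L^q(\R)}\lesssim\varrho^{-1}|K|^{-1/q}\|G_v\|_{L^q(\pom)}.
\]
It then uses the H\"older growth $|\Phi_v(s)-c_v|\le|s|^{1-1/q}\|\Phi_v'\|_{L^q(\R)}$, which is integrable against $\omega$ exactly when $q<2$. So in the paper the restriction $q<2$ is what makes this crude global growth admissible. Your multiscale route, once repaired, does not need it.
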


\begin{proof}
Let $\varrho$ be the side length of $K$ and $x_K$ its centre.
Choose a nonnegative spatial bump $\varphi\in C_0^\infty(\R^n_+)$ with
$$\int\varphi=1,\qquad\supp\varphi\subset B_{\varrho/2}((x_K,\varrho)),\qquad
\|\varphi\|_\infty\lesssim\varrho^{-n},\qquad\|\nabla\varphi\|_\infty\lesssim\varrho^{-n-1},$$
and set $\Phi_v(s):=\int_{\R^n_+}v(X,s)\varphi(X)\,dX$. The vertical segment argument of Lemma \ref{lem:vert} gives the boundary trace $f_v:=v\big|_{\pom}$. The paths in that proof join $(x,0)$, for $x\in K$, to every point of $\supp\varphi$ with length $\lesssim\varrho$. Their portions away from the vertical segment lie at heights comparable to $\varrho$. Lemma \ref{lem:int} therefore gives, for a.e.\ $(x,s)\in K\times\R$,
$$|f_v(x,s)-\Phi_v(s)|\lesssim\varrho G_v(x,s).$$
The weak equation, tested with $\varphi(X)$ times a smooth compactly supported function of time, gives
$$\Phi_v'(s)=-\int_{\R^n_+}A(X,s)\nabla v(X,s)\cdot\nabla\varphi(X)\,dX.$$
Every point of $\supp\varphi$ lies at height comparable to $\varrho$ within distance $C\varrho$ of each $x\in K$. The same interior bound thus gives $|\Phi_v'(s)|\lesssim\varrho^{-1}G_v(x,s)$ for a.e.\ $(x,s)\in K\times\R$. Integrating these two inequalities over the whole time axis yields
\begin{equation}\label{eq:traceglobal}
\|f_v-\Phi_v\|_{L^q(K\times\R)}\lesssim\varrho\|G_v\|_{L^q(\pom)},\qquad
\|\Phi_v'\|_{L^q(\R)}\lesssim\varrho^{-1}|K|^{-1/q}\|G_v\|_{L^q(\pom)}.
\end{equation}
In particular $\Phi_v\in W^{1,q}_{\loc}(\R)$. Using its continuous representative, put $c_v:=\Phi_v(0)$. H\"older's inequality gives
$$|\Phi_v(s)-c_v|\le |s|^{1-1/q}\|\Phi_v'\|_{L^q(\R)}.$$
Let $\omega(s):=(1+|s|)^{-3/2}$ and $q'=q/(q-1)$. Since $\omega\in L^{q'}(\R)$ and
$$\int_\R |s|^{1-1/q}\omega(s)\,ds<\infty\qquad\mbox{for }1<q<2,$$
splitting $f_v-c_v=(f_v-\Phi_v)+(\Phi_v-c_v)$ and using \eqref{eq:traceglobal} proves \eqref{eq:weightedtrace}.

Let $\psi\in C_0^\infty(\pom)$ have spatial support in $K$. The kernel formula for $D^{1/2}_t$ gives
$$|D^{1/2}_t\psi(x,s)|\le C_\psi\omega(s),\qquad
\int_\R D^{1/2}_t\psi(x,s)\,ds=0.$$
The half derivative preserves the spatial support of $\psi$. Hence \eqref{eq:weightedtrace} makes the distributional pairing
$$\langle D^{1/2}_tf_v,\psi\rangle:=\int_K\int_\R(f_v(x,s)-c_v)D^{1/2}_t\psi(x,s)\,ds\,dx$$
absolutely convergent and independent of the subtracted constant. This also identifies $D^{1/2}_tf_v$ with the distributional boundary trace of $D^{1/2}_tv$. Indeed, the vertical segment in Lemma \ref{lem:vert}, now of length $h$, gives
$$\|v(\cdot,h,\cdot)-f_v\|_{L^q(K\times\R)}\lesssim h\|G_v\|_{L^q(\pom)},\qquad 0<h<\varrho.$$
Pairing this difference with $D^{1/2}_t\psi\in L^{q'}(K\times\R)$ shows that $D^{1/2}_tv(\cdot,h,\cdot)\to D^{1/2}_tf_v$ in distributions as $h\downarrow0$.

\end{proof}

\begin{proof}[Proof of Theorem \ref{Thalf}]
Proposition~\ref{prop:halfDrift} proves the estimate with drift for
every $1<p<\infty$. If $\mathbf b=0$, Lemma~\ref{lem:int} and
Theorem~\ref{thm.S<Np} give
$\|S(\nabla u)\|_p\lesssim\|\tilde N(\nabla u)\|_p$.
Thus the square-function hypothesis follows from the gradient maximal
bound and \eqref{eq:halfDriftConclusion} gives the stronger assertion.

For the Neumann-data assertion assume $(N)_p^{\mathcal L}$, with
$\mathcal L=\mathcal L_0$ and $1<p<\infty$. On energy data, the
solutionwise estimate just proved gives immediately
\[
 \|D_t^{1/2}\operatorname{Tr}u\|_p
 +\|\tilde N(D_t^{1/2}u)\|_p
 +\|\tilde N(H_tD_t^{1/2}u)\|_p
 \lesssim\|\tilde N(\nabla u)\|_p\lesssim\|g\|_p.
\]
We give the extension to arbitrary $g\in L^p(\pom)$.

Choose $g_j\in C_0^\infty(\pom)$ with zero integral and $g_j\to g$
in $L^p$. Such an approximation is obtained by subtracting from each
compact smooth approximation a smooth dilated bump of the same integral,
whose $L^p$ norm tends to zero with its scale. These data belong to the
energy-dual trace space, since their Fourier transforms are bounded
near zero and decay rapidly at infinity, and the squared dual weight
$(|\xi|^4+\tau^2)^{-1/4}$ is locally integrable for $n\ge2$.
Let $u_j$ be their energy solutions. The assumed Neumann estimate gives
\begin{equation}\label{eq:generalDataCauchy}
 \|\tilde N(\nabla(u_j-u_k))\|_p
 \lesssim\|g_j-g_k\|_p.
\end{equation}

Normalize the locally Lipschitz representatives by $u_j(0,1,0)=0$.
Lemma~\ref{lem:int} and integration over boundary shadows imply,
for each compact $K\Subset\Omega$,
\[
 \|\nabla(u_j-u_k)\|_{L^\infty(K)}
 +\|\partial_t(u_j-u_k)\|_{L^\infty(K)}
 \le C_K\|\tilde N(\nabla(u_j-u_k))\|_p.
\]
Indeed, the interior bound at a point of height $h$ is controlled by
the maximal function at every point of a boundary ball of radius
comparable to $h$. Integrating its $p$th power over that ball gives
the asserted bound away from height zero. Joining points to $(0,1,0)$
by spatial and time segments in a fixed interior compact set gives
the same bound for $u_j-u_k$. Thus $u_j$ converges locally uniformly,
and its spatial and time derivatives converge locally in $L^\infty$,
to a reinforced weak solution $u$.

For passage to the boundary weak identity we use $L^1$ convergence.
Put $G_{jk}=\tilde N(\nabla(u_j-u_k))$. At points $(x,h,t)$ above
a fixed bounded boundary box, Lemma~\ref{lem:int} gives
$|\nabla(u_j-u_k)(x,h,t)|\le C G_{jk}(x,t)$.
Integration in $h$, followed by H\"older on the boundary box, controls
the local $L^1$ norm of the gradient by $C\|G_{jk}\|_p$.
Vertical integration from a fixed positive height, using the interior
normalization above, gives the same bound for $u_j-u_k$.
Consequently $u_j$ and $\nabla u_j$ converge in $L^1$ on bounded
boxes meeting the boundary. This argument applies also when $p<2$.
Passing to the weak identities gives, for every smooth test $\zeta$
compactly supported in $\overline\Omega$,
\[
 \iint_\Omega[A\nabla u\cdot\nabla\zeta-u\partial_t\zeta]
 =\int_{\pom}g\zeta.
\]
Thus $u$ has Neumann datum $g$. Fatou on Whitney averages, first
locally and then in the boundary norm, gives
\begin{equation}\label{eq:generalDataGradient}
 \|\tilde N(\nabla(u_j-u))\|_p
 \lesssim\|g_j-g\|_p\longrightarrow0.
\end{equation}
The same estimates show that the normalized limit is independent of
the approximating sequence. For energy data it agrees with the energy
solution, by applying $(N)_p^{\mathcal L}$ to the differences.
This is the general-data solution furnished by completion of the
energy solution operator, and $\|\tilde N(\nabla u)\|_p\lesssim\|g\|_p$.

Put $f_j=\operatorname{Tr}u_j$ and $b_j=D_t^{1/2}f_j$.
The solutionwise estimate, applied to $u_j-u_k$, gives
\[
 \|b_j-b_k\|_p\lesssim\|g_j-g_k\|_p.
\]
Hence $b_j\to\beta$ in $L^p$, with $\|\beta\|_p\lesssim\|g\|_p$.
Lemma~\ref{lem:boundaryBridge} supplies the nontangential trace $f$
of $u$. For $p\ge2$, Lemma~\ref{lem:traceNormalization} and
\eqref{eq:generalDataGradient} identify $\beta=D_t^{1/2}f$.
For $1<p<2$, apply Lemma~\ref{lem:weightedTrace} to $v=u_j-u$.
For each $\psi\in C_0^\infty(\pom)$, it gives
\[
 |\langle b_j-D_t^{1/2}f,\psi\rangle|
 \le C_{p,\psi}\|\tilde N(\nabla(u_j-u))\|_p\longrightarrow0.
\]
The same trace estimates identify $D_t^{1/2}f$ with the distributional
boundary trace of $D_t^{1/2}u$. Finally \eqref{A1a} gives
\[
 \|\tilde N(D_t^{1/2}u)\|_p+
 \|\tilde N(H_tD_t^{1/2}u)\|_p\lesssim\|g\|_p.
\]
This uses only $(N)_p^{\mathcal L}$ at the specified exponent.
\end{proof}

\bibliographystyle{alpha}

\end{document}